\numberwithin{equation}{section}
\newcommand{\1}{\mbox{1\hspace{-.15em}l}}
\newcommand{\im}{\mathrm{Im}}
\newcommand{\re}{\mathrm{Re}}
\newcommand{\R}{{\mathbb R}}
\newcommand{\Z}{{\mathbb Z}}
\newcommand{\C}{{\mathbb C}}
\newcommand{\HH}{ \mathbb{H} }
\newcommand{\sh}{\operatorname{sh}}
\newcommand{\trho}{\Tilde{\rho}}
\newcommand{\PPsi}{\widetilde{\Psi}}
\newcommand{\UUppsi}{\widetilde{\Uppsi}}
\newcommand{\tc}{\Tilde{c}}
\newcommand{\tr}{\Tilde{r}}
\newcommand{\tR}{\Tilde{R}}
\newcommand{\calW}{\mathcal{W}}
\newcommand{\calL}{\mathcal{L}}
\newcommand{\calD}{\mathcal{D}}
\def\spec{\mathrm{spec}}
\newcommand{\FL}{\mathfrak{L}}
\newcommand{\FT}{\mathfrak{T}}
 \newcommand{\tvarepsilon}{\Tilde{\varepsilon}}
\numberwithin{equation}{section}
\newtheorem{theorem}{Theorem}[section]
\newtheorem{defi}[theorem]{Definition}
\newtheorem{lemma}[theorem]{Lemma}
\newtheorem{remark}[theorem]{Remark}
\newtheorem{prop}[theorem]{Proposition}
\newtheorem{coro}[theorem]{Corollary}
\newtheorem{claim}[theorem]{Claim}
\newtheorem{assumption}{Assumption}
\begin{document}
\title[The distorted Fourier transform for the linearized Gross-Pitaevskii equation]{The distorted Fourier transform for the linearized Gross-Pitaevskii equation 
in the Hyperbolic plane}


\begin{abstract}
 Motivated by the stability problem for Ginzburg-Landau vortices on the hyperbolic plane, we develop the distorted Fourier transform for a general class of radial non-self-adjoint matrix Schr\"odinger operators on the hyperbolic plane. This applies in particular to the operator obtained by linearizing the equivariant Ginzburg-Landau equation on the hyperbolic plane around the degree one vortex. We systematically
 construct the distorted Fourier transform by writing the Stone formula for complex energies and taking the limit as the energy tends to the spectrum of the operator on the real line. This approach entails a careful analysis of the resolvent for complex energies in a neighborhood of the real line. It is the analogue of the approaches used in \cite{KS,ES2, LSS25}, where the limiting operator as $r\to\infty$ is not self-adjoint and which we carry out for all energies. Our analysis serves as the starting point for the study of the stability of the Ginzburg-Landau vortex under equivariant perturbations.
\end{abstract}

\author[O. Landoulsi]{Oussama Landoulsi }
\address{Department of Mathematics \\ University of Massachusetts \\ Amherst, MA 01003, USA}
\email{olandoulsi@umass.edu}
 
 \author[S. Shahshahani]{Sohrab Shahshahani}
\address{Department of Mathematics \\ University of Massachusetts \\ Amherst, MA 01003, USA}
\email{sshahshahani@umass.edu}

\thanks{The second author was partially supported by the Simons Foundation grant 639284. The authors thank Jonas L\"uhrmann and Wilhelm Schlag for helpful discussions.}
\maketitle 
 \tableofcontents

\section{Introduction}
In this paper, we consider the Gross-Pitaevskii equation on the real hyperbolic plane $\mathbb{H}^2$:
\begin{equation} 
\label{eq:GP2}
    i\partial_t\Psi+\Delta_{\HH^2} \Psi+(1-|\Psi|^2)\Psi=0.
\end{equation}
It is well-known that the analogous problem on $\mathbb{R}^2$ admits stationary solutions, or vortices, of the form $V_n(r,\theta)=\rho_n(r)e^{in\theta}$ with $\rho_n(0)=0$ and $\lim_{r\to\infty}\rho_n(r)=1$. See for instance \cite{HerveHerve94, ChenElliott94}. The integer $n$ is called the degree of the vortex. Formally, $V_n$ is a critical point of the Ginzburg-Landau energy functional 
\begin{align*}
    \int \Big(\frac{1}{2}|\nabla \Psi|^2+\frac{1}{4}(1-|\Psi|^2)^2\Big)\,d x.
\end{align*}
However, due to the asymptotic behavior of $\rho_n$ at infinity, these stationary solution have logarithmically divergent $\dot{H}^1$ norm, and in particular do not have finite energy. In contrast, in view of the exponential growth of the volume of the ball of radius $r$ in $\mathbb{H}^2$, the corresponding problem on the hyperbolic plane admits finite energy solutions. See for instance Appendix~\ref{sec:appOde}. We continue to refer to these solutions as vortices.  Our concern in this paper is to initiate the study of the asymptotic stability of the degree one vortex on $\mathbb{H}^2$. As a first step we consider the problem under the additional equivariance ansatz that the perturbations are of the form $\phi(t,r)e^{in\theta}$.  An important challenge, which is present for both $\mathbb{R}^2$ and $\mathbb{H}^2$, is that the linearized operator about the vortex is not self-adjoint, even asymptotically as $r\to\infty$. Despite this difficulty, the linear analysis in the Euclidean case was recently taken up in \cite{collotGermainEliot25,LSS25}. In particular, these papers constructed a distorted Fourier transform for the linearized operator, which serves as a first natural step in studying the asymptotic stability of vortices. The main result of the current work is a complete construction of the corresponding distorted Fourier transform for the problem on $\mathbb{H}^2$. As a corollary, we establish an $L^2$-bound of the operator norm, in contrast to the Euclidean case where a growth rate of the $L^2$-norm was obtained. On the hyperbolic space, the spectrum of the linearized operator is real but exhibits a spectral gap near $0$, which makes the spectral analysis delicate. Nevertheless, we are able to precisely determine the essential spectrum and prove the absence of threshold or embedded eigenvalues. However, a complete classification of the intermediate behavior of solutions at the edge of the spectrum remains an open problem. For this reason, we consider a more general class of operators and, in particular, allow for different intermediate behaviors of threshold solutions at the spectral edge. See Section~\ref{subsec:DS-iL} below for more precise statements. Our motivation for this work is two-fold: On the one hand dispersive equations and soliton stability problems on hyperbolic spaces have received considerable attention in recent years. See for instance \cite{banicaCarlesDuyckaerts08,banicaCarlesStaffilani08,banicaDuyckaerts14, LOS1,IS2013}. On the other hand, the problem at hand provides a natural context to generalize the methods developed in \cite{LSS25}. Specifically, in this instance the main differences are the presence of a spectral gap and the different (hyperbolic rather than Euclidean) large $r$ asymptotics of the operators.\\

Before proceeding with a more precise description of the results, we make a few remarks about the geometry of the hyperbolic plane. First, in this work we choose to work with the hyperbolic plane of constant curvature $-\frac{1}{2}$. The motivation for this choice comes from the corresponding gauged self-dual Ginzburg-Landau problem. In that context an $SO(3)$ cylindrical symmetry reduction of the four dimensional Yang-Mills equations on the Euclidean space gives rice to the Ginzburg-Landau problem on the hyperbolic plane of curvature $-\frac{1}{2}$. Moreover, for this choice of the curvature the resulting elliptic equations can be integrated to give explicit formulas for the vortices. We refer the reader to \cite[Chapter~7]{mantonSutcliffe04}, \cite{OS25,jaffeeTaubes80,witten77} for a more complete discussion of the gauged-problem and these observations. Even though we are not aware of such remarkable structures in the non-gauged, or global, problem, we have decided to work with this choice of the curvature. Throughout the paper we will use polar coordinates $(r,\theta)\in (0,\infty)\times [0,2\pi)$ to describe the hyperbolic plane. Then  the Riemannian metric in these coordinates takes the form
\[
2dr^2 + 2(\sinh r)^2 \, d\theta^2.
\]
The corresponding Riemannian volume element is given by
\[
dv = 2 (\sinh r) \, dr \, d\theta,
\]
and the Laplace–Beltrami operator in $\HH^2$ with curvature $-\frac{1}{2}$ reads
\[
\Delta_{\mathbb{H}^2} = \frac{1}{2}\partial_r^2 + \frac{1}{2} \coth r \, \partial_r + \frac{1}{2}(\sinh r)^{-2} \,\partial_{\theta}^2 .
\]
It is well-known that $\spec\,\Delta_{\HH^2}=[\frac{1}{8},\infty)$. See for instance \cite{Helgason94}.

\subsection{The linearized operator around the degree-one vortex}
Consider the complex Ginzburg-Landau equation in the hyperbolic space $\HH^2,$
\begin{equation} 
\label{eq:complex_GL}
 \Delta_{\HH^2} \Psi + (1 - |\Psi|^2)\Psi = 0, 
\end{equation}
under the equivariance condition $\Psi_n(r,\theta)=e^{ in \theta} \rho_n(r)$, $n \in \Z \backslash\{0\}$. For $\Psi_n$ to be a solution $\rho_n$ have to satisfy the following ordinary differential equation
\begin{align}
\label{eq:ode2}
&\frac{1}{2} \partial_r^2\rho_n  + \frac{1}{2} \coth(r) \partial_r \rho_n  - \frac{n^2}{2 \sinh^2( r)} \rho_n + \rho_n - \rho_n^3=0.
\end{align}

\begin{theorem}
\label{th:ode}
    For any real $n > 0,$ the equation \eqref{eq:ode2}, 
    admits a unique smooth solution satisfying
    \begin{align}
          \label{eq:boundary_condition-ode2}
&\lim_{r \to 0} \rho_n(r)=0, \quad \lim_{r \to \infty} \rho_n(r) =1 , \quad \rho_n(r) \geq 0, \quad \forall r >0.  
    \end{align}
This solution satisfies
\begin{align}
 & 0 < \rho_n(r) < 1, \quad \rho_n'(r) >0, \quad \forall r>0, \\
  \label{eq:rho_n-asymptotics-near-0}
 & \rho_n(r) = a r^n   - \frac{a r^{n+2}}{2n+2}+ o(r^{2n+2}) \quad \text{as} \quad r \to 0,  \\ \qquad 
  \label{eq:rho_n-asymptotics-near-infty}
 & \rho_n(r) = 1 - 2 n^2 e^{-2r} + o(e^{-2r}) \quad \text{as} \quad r \to \infty.
\end{align}
\end{theorem}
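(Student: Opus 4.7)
The plan is to combine a Frobenius analysis at the regular singular point $r=0$ with a shooting argument for the global solution, and a perturbative construction of the decay at infinity.

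\textit{Local analysis.} The ODE \eqref{eq:ode2} has a regular singular point at $r=0$ with indicial equation $\mu^2=n^2$. Requiring $\rho(0)=0$ selects the regular branch, yielding a smooth one-parameter family $\{\rho_a\}_{a\in\mathbb{R}}$ of local solutions with $\rho_a(r)=ar^n+O(r^{n+2})$, constructed by a contraction-mapping argument in the weighted space $r^n C^0([0,r_0])$ using the integral formulation with Green's kernel built from the two Frobenius branches $\phi_1\sim r^n$ and $\phi_2\sim r^{-n}$; the map $a\mapsto\rho_a$ is analytic. Plugging $\rho_a=ar^n+br^{n+2}+\cdots$ into \eqref{eq:ode2} and matching coefficients at order $r^n$ gives \eqref{eq:rho_n-asymptotics-near-0}.

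\textit{Shooting and global behavior.} A maximum-principle observation shows $\rho_a$ cannot attain an interior local maximum strictly greater than $1$: at such a point $\rho_a'=0$, $\rho_a''\leq 0$, and the ODE forces $\rho_a(1-\rho_a^2)\geq \tfrac{n^2\rho_a}{2\sinh^2 r}\geq 0$, contradicting $\rho_a>1$. Thus $\rho_a$ extends globally while it remains bounded, and one introduces the open sets
\[
A_-=\{a>0:\ \rho_a \text{ crosses zero downward at some } r>0\},\quad A_+=\{a>0:\ \rho_a \text{ crosses one upward at some } r>0\},
\]
both nonempty (small $a$: the linearization about zero drives $\rho_a$ back through zero since the restoring term $\rho_a-\rho_a^3\approx \rho_a$ is too weak to reach $1$; large $a$: $\rho_a$ overshoots $1$ rapidly by comparison with the ODE neglecting the $\tfrac{n^2}{\sinh^2 r}\rho_a$ term). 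By connectedness of $(0,\infty)$ there exists $a^*$ in the complement, and $\rho_{a^*}(r)\in(0,1)$ for all $r>0$. A phase-plane analysis comparing the pseudo-energy $\tfrac12(\rho_{a^*}')^2+\tfrac12(1-\rho_{a^*}^2)^2$ at successive critical points rules out interior oscillations, giving monotonicity $\rho_{a^*}'>0$; hence $\rho_{a^*}$ converges to some $\rho_\infty\in(0,1]$, and passing to the limit in the ODE forces $\rho_\infty=1$. Uniqueness of $a^*$ follows from monotonicity of $a\mapsto\rho_a$: if $a_1<a_2$ both worked, the ordering $\rho_{a_1}<\rho_{a_2}$ persists and contradicts the common asymptote $1$.

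\textit{Asymptotics at infinity and main obstacle.} Setting $u=1-\rho_{a^*}$ and using $\coth r=1+O(e^{-2r})$, $\sinh^{-2}r=4e^{-2r}+O(e^{-4r})$, the equation for $u$ becomes
\[
u''+u'-4u=-4n^2 e^{-2r}+O(u^2)+O(e^{-4r}).
\]
The homogeneous operator has characteristic roots $\tfrac{-1\pm\sqrt{17}}{2}$; the positive root is excluded by $u\to 0$, while the decaying root satisfies $\tfrac{-1-\sqrt{17}}{2}<-2$, so the relevant homogeneous mode decays strictly faster than $e^{-2r}$. The particular solution $u_p=2n^2 e^{-2r}$ thus captures the leading behavior, and a fixed-point argument in the weighted space $\{u:|u(r)|\le Ce^{-2r}\}$ yields \eqref{eq:rho_n-asymptotics-near-infty}. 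The main obstacle is the shooting/monotonicity step: proving that $\rho_{a^*}$ is monotone increasing and that only one value $a^*$ produces a full vortex (ruling out partial solutions with $\rho_\infty\in(0,1)$). Classical arguments of Herv\'e--Herv\'e and Chen--Elliott--Qi adapt to the hyperbolic setting with modified weights in the energy identities; alternatively, a variational characterization of $\rho_{a^*}$ as the minimizer of the equivariant Ginzburg--Landau energy subject to the prescribed boundary conditions provides a parallel route to both existence and uniqueness.
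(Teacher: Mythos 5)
Your overall plan — Frobenius at $r=0$, shooting, perturbative analysis at infinity — matches the paper's strategy (which follows Chen--Elliott), and the local analysis and the $r\to\infty$ asymptotics are correct. The shooting step, however, organizes the phase space differently from the paper, and the uniqueness step has a real gap.

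On the shooting: the paper partitions $(0,\infty)$ into
$\mathcal{A}=\{\alpha: v_r(\alpha,\cdot) \text{ vanishes somewhere}\}$,
$\mathcal{B}=\{\alpha: v_r>0 \text{ and } \max v>1\}$,
$\mathcal{C}=\{\alpha: v_r>0, v\le1\}$, which are exhaustive by definition, so nonemptiness of $\mathcal C$ follows once $\mathcal A,\mathcal B$ are shown open and nonempty; a separate argument (Lemma~\ref{lem:C-sol}) then shows that $\alpha\in\mathcal C$ gives $v\to1$. Your sets $A_-$ (crosses zero) and $A_+$ (crosses one) do not exhaust the bad parameters by definition, so you must prove directly that every $a\notin A_-\cup A_+$ yields a monotone solution with limit $1$. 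Your pseudo-energy $E=\tfrac12(\rho')^2+\tfrac12(1-\rho^2)^2$ can in fact do this: with the equation rewritten as $\rho''+\coth r\,\rho'-\tfrac{n^2}{\sinh^2 r}\rho+2\rho-2\rho^3=0$, one has $E'=-\coth r\,(\rho')^2+\tfrac{n^2}{\sinh^2 r}\rho\,\rho'$, which is strictly negative on any arc with $\rho>0$ and $\rho'<0$; comparing $E$ at a hypothetical local max and the subsequent local min (or limit value) gives the desired contradiction. This is a legitimate alternative to the paper's Lemma~\ref{lem:Aopen}, but your proposal does not spell out the sign computation nor handle the degenerate cases ($\rho'=\rho''=0$, or a local max followed by monotone decay to a limit), all of which need to be addressed for the argument to close.

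The genuine gap is uniqueness. You write that if $a_1<a_2$ both produce a solution, then ``the ordering $\rho_{a_1}<\rho_{a_2}$ persists and contradicts the common asymptote~$1$.'' Neither half of this sentence is a proof. First, persistence of the ordering is not automatic for a second-order ODE: it is exactly the content of the paper's Lemma~\ref{lem:uniqKey}, which establishes the stronger quantitative comparison $\partial_\alpha v(\alpha,r)>\tfrac1\alpha v(\alpha,r)$ via a sliding argument on the ratio $v^\alpha/w$. Second, and more seriously, a strict pointwise ordering of two positive functions both tending to~$1$ is not a contradiction by itself — they may simply converge at different rates. The paper's uniqueness proof (Theorem in Appendix~\ref{sec:appOde}) integrates the quantitative monotonicity over $\alpha\in[a,A]$ to get $v(A,r)>v(a,r)+\log(A/a)\,\min_{\alpha\in[a,A]}v(\alpha,r)$, and only after sending $r\to\infty$ does the inequality $1\ge 1+\log(A/a)$ force $A=a$. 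Without a quantitative estimate of this type, your uniqueness argument does not go through, and this is precisely the technically delicate part of the shooting method that you flag as the ``main obstacle'' but then leave unresolved.
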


\begin{remark}
  The proof of the existence and uniqueness is similar to one in the Euclidean space, see \cite{HerveHerve94,ChenElliott94}. In particular, the asymptotics near the origin are the almost the same. For completeness, we give a sketch of the proof in Appendix \ref{sec:appOde}. 
\end{remark}

 The solutions of the form $\Psi_n(r,\theta)=e^{ in \theta} \rho_n(r),$ are time independent solution to the  Gross-Pitaevskii equation \eqref{eq:GP2}.  They are prototypical examples of topological solitons and understanding their stability is a central question in the analysis of the equation’s dynamics. 
As a first step we study the Gross-Pitaevskii equation linearized around $\rho_1(r) e^{i \theta}$.  Consider an equivariant solution $\Psi$ of the nonlinear Gross-Pitaevskii equation \eqref{eq:GP2} close to the degree-one vortex $\rho_1(r) e^{i \theta}.$ 
We decompose $\Psi(t,x)$ as 
\begin{align*}
    \Psi(t,x) = \big( \rho_1(r) + h(t,r) \big) e^{i\theta},
 \end{align*}
Plugging this decomposition to the equation \eqref{eq:GP2}, we obtain the linearized evolution equation 
\begin{align*}
  \big(  \partial_t - \mathbf{L} \big) \mathbf{h}(t,r) = N(h), 
\end{align*}
where $N(h)$ contains the nonlinear terms on $h,$ 
 $ \mathbf{h}(t,r):= \begin{pmatrix}
     \mathrm{Re} \,h(t,r) \\
     \mathrm{Im} \, h(t,r) 
 \end{pmatrix} $ 
 and 
 \begin{align*}
\mathbf{L} := \begin{pmatrix}
   0  & -\Delta_{\mathrm{rad}} + \frac{1}{\sinh^2( r)}  + \rho_1^2 - 1 \\ 
   - \big( -\Delta_{\mathrm{rad}} + \frac{1}{\sinh^2( r)} + 3 \rho_1^2 - 1 \big) & 0
 \end{pmatrix}.
\end{align*}
Conjugating the corresponding linearized operator to the half-line by the weight $\sinh^{\frac12}$ gives rise to the operator
\begin{align*}
  \mathcal{L}_{GP}:= \sinh^{\frac{1}{2}}r\cdot\mathbf{L}\cdot\sinh^{-\frac{1}{2}}r=\begin{pmatrix} 
  0 & \mathfrak{L}_1 \\
  -\mathfrak{L}_2 & 0 
   \end{pmatrix},
\end{align*}
where
\begin{equation*}
\begin{aligned}
  \mathfrak{L}_1 := -\frac{1}{2}\partial_r^2 + \frac{3}{8 \sinh^2(r)} + \rho_1^2(r) - 1+\frac{1}{8}, \quad \mathfrak{L}_2 := - \frac{1}{2}\partial_r^2 + \frac{3}{8 \sinh^2(r)} + 3\rho_1^2(r) - 1 + \frac{1}{8}.
 \end{aligned}
\end{equation*}
Note that the scalar Schr\"odinger operators $\mathfrak{L}_1$ and $\mathfrak{L}_2$ can be written as
\begin{align}\label{eq:L012def1}
    \mathfrak{L}_1 = \mathfrak{L}_0+\rho_1^2-1, \quad \mathfrak{L}_2 = \mathfrak{L}_0+3\rho_1^2-1, \quad \text{with} \quad \mathfrak{L}_0 := -\frac{1}{2 }\partial_r^2+\frac{3}{8  \sinh^2(r)} + \frac{1}{8}.
\end{align}
It is standard that $\mathfrak{L}_0$ is limit point at both $r=0$ and $r=\infty$, and that it is essentially self-adjoint.
We denote the domain of $\mathfrak{L}_0$ by $\mathcal{D},$
\begin{equation*}
    \label{eq:defDomainL_0}
    \mathcal{D}:=\{ f \in L^2_r(0,\infty) \,|\;  \mathfrak{L}_0f \in L^2_r(\R_+) \}.
\end{equation*}
The operator $ \mathcal{L}_{GP}$ can be decomposed as
\begin{align} \label{eq:defL_GP-V_GP}
  \mathcal{L}_{GP}=\mathcal{L}_0  +V_{GP}, \quad \mathcal{L}_0:=\begin{pmatrix} 
  0 & \mathfrak{L}_0 \\
  -(\mathfrak{L}_0+2) & 0 
   \end{pmatrix}, \quad V_{GP}:=\begin{pmatrix}
       0  & \rho_1^2-1 \\
       -3(\rho_1^2-1) & 0 
   \end{pmatrix}.
\end{align}

\subsection{Main results}

In this work, we study the time evolution generated by the semigroup $e^{t\mathcal{L}}$ on the Hilbert space $L^2_r(\mathbb{R}_+) \times L^2_r(\mathbb{R}_+)$, where $\mathcal{L}$ is of the same form as $\mathcal{L}_{GP}$. Specifically, we consider an operator of the form  
\begin{align} 
\label{eq:def1-general-L}
\mathcal{L} := 
\begin{pmatrix} 
0 & \mathfrak{L}_0 \\
-(\mathfrak{L}_0 + 2) & 0 
\end{pmatrix} 
+ V, 
\qquad 
V := 
\begin{pmatrix}
0 & V_1(r) \\
V_2(r) & 0
\end{pmatrix},
\end{align}
where we assume that the potentials $V_1(r)$ and $V_2(r)$ satisfy the following conditions:

\begin{assumption}[\textbf{Decay and Regularity}] \label{asmp:1}
   The functions $V_1(r)$ and $V_2(r)$ are smooth, bounded and decay exponentially at infinity. Specifically,
$$
|V_j(r)| \lesssim C e^{-2 r}, \quad \text{for } j = 1, 2. 
$$
\end{assumption}

\begin{assumption}[\textbf{Spectral Condition}] \label{asmp:2}
The essential spectrum of $i \mathcal{L}$ is $(-\infty, -\frac{\sqrt{17}}{8}] \cup [\frac{\sqrt{17}}{8},\infty)$ and its the spectrum is real,  $\spec(i\mathcal{L}) \subseteq \R.$ Moreover, $ i \mathcal{L} $ has an empty discrete spectrum, in particular, no gap eigenvalues.
\end{assumption}

\begin{assumption}[\textbf{Absence of Embedded Eigenvalues}] \label{asmp:3}
$ i \mathcal{L} $ has no threshold or embedded eigenvalues. Moreover,
 $L_1 f=0$ and $L_2 f =0$ have a positive solution that is $L^2(0,1)$ but not in $L^2(1,\infty).$
\end{assumption}

\begin{remark}
In Section~\ref{subsec:DS-L_GP}, we will prove that $i\mathcal{L}_{GP}$ satisfies Assumptions~\ref{asmp:1},~\ref{asmp:2},~\ref{asmp:3}, with the exception that its discrete spectrum is empty. Although, we explicitly determine the essential spectrum of $i\mathcal{L}_{GP}$, the precise location of its discrete spectrum is more delicate and remains 
undetermined at this stage. Nevertheless, we can show that, if the discrete spectrum is non-empty, then it is contained in $\left(-\tfrac{\sqrt{17}}{8}, -\eta\right] \cup \left[\eta, \tfrac{\sqrt{17}}{8}\right),$ for some $\eta>0$, see also Remark~\ref{rem:DS-empty} for further discussion.  Moreover, $i \mathcal{L}$ represents a general class of operators that  may admit a \textit{threshold resonance}, in the sense that will be made precise later, see Definition \ref{def:resonance}.\footnote{Note that our use of the terminology \emph{threshold resonance} is not necessarily standard, see Remark~\ref{lem:resterminlogy}.} While the precise status of a threshold resonance for $i\mathcal{L}_{GP}$ cannot be conclusively determined at present, our analysis is designed to accommodate both possibilities. In particular, we construct the distorted Fourier transform for $i \calL$ and establish the corresponding $L^2$-bounds in both scenarios: when a resonance is present and when it is absent. 
\end{remark}
\begin{remark}
\label{rem:DS-empty}
The assumption that the discrete spectrum is empty is made simply for convenience. Although gap eigenvalues within the interval  $
\left(-\tfrac{\sqrt{17}}{8}, -\eta\right] \cup \left[\eta, \tfrac{\sqrt{17}}{8}\right) ,$ for $\eta>0$ can play an important role in the nonlinear dynamics, they can be handled relatively easily in the linear analysis. In fact, in the presence of gap eigenvalues, the linear analysis of this paper still applies to the essential spectrum; see, for example, \cite{ES2}, \cite{KS}.\\
The assumption of the zero energy solution of $L_1$ and $L_2$, which is consistent with the linearized operator $\mathcal{L}_{GP}$ for the  Gross-Pitaevskii equation, is made in order to obtain a classification of the behavior of the solution to the threshold problem $i \mathcal{L}\Phi=\pm \frac{\sqrt{17}}{8} \Phi.$ Instead, it would suffice to assume that this classification holds. See sections \ref{subsec:non-resonance} and \ref{subsec:resonance}. In fact, the assumption on $L^1$ and $L^2$ can be used to rule out threshold and embedded eigenvalues. Indeed, this is precisely how we prove absence of threshold and embedded eigenvalues for $\mathcal{L}_{GP}$. This argument, which is inspired by \cite{collotGermainEliot25}, uses only the existence of positive solutions of $L_1f=0$ and $L_2f=0$ that are $L^2$ near $r=0$ but not globally. See Section~\ref{subsec:DS-L_GP}.
\end{remark}

The main achievement of this paper is the construction of the distorted Fourier transform for the operator $\mathcal{L}$. This construction is a key point in establishing decay estimates for the linearized operator and for  initiating the analysis of the nonlinear asymptotic stability problem, which is our ultimate objective. In the present work, we restrict attention to the $L^2$ estimates, leaving the study of other related questions to future work.

\begin{theorem} \label{Main-theorem}
   The operator $\mathcal{L}$ is closed on $\mathcal{D}\times \mathcal{D} \subseteq L^2_r(\R^{+}) \times L^2_r(\R^{+}). $ The semi-group $\{   e^{t\mathcal{L} }\}_{ t \in \R} $ is defined as a bounded operator on $ L^2_r(\R^{+}) \times L^2_r(\R^{+})$ via the Hille-Yosida theorem. 
   For any $\psi \in L^2,$ and for all $t\in \R$, we have
    \begin{equation}
        \left\| e^{ t \mathcal{L}} \psi \right\|_{L^2} \lesssim \left\| \psi \right\|_{L^2}. 
    \end{equation}
Moreover, there exists a function $\mathbf{\Theta}(r,\lambda)=\begin{pmatrix}
    {\uptheta}_1(r,\lambda) \\  {\uptheta}_2(r,\lambda)
\end{pmatrix}$ such that for all $t>0$ and for any $\Phi,\Psi \in L^2_r(0,\infty)$ and with $\sigma_1$ denoting the Pauli matrix $\begin{pmatrix}
       0 &1 \\1 & 0
   \end{pmatrix}$, 
we have 
    \begin{align*}
      <  e^{t \mathcal{L}} \Phi, \Psi> = \frac{1}{2 \pi i } \int_{(-\infty, -\frac{\sqrt{17}}{8}] \cup [\frac{\sqrt{17}}{8},\infty)} e^{it \lambda} \left<  \mathbf{\Theta}(\cdot,\lambda)  , \sigma_1 \Phi(\cdot) \right>_{L^2_r} \left< \mathbf{\Theta}(\cdot,\lambda) , \Psi(\cdot) \right>_{L^2_r} d \lambda.
\end{align*}
Here  ${\uptheta}_1$ is an even function in $\lambda$, ${\uptheta}_2$ is an odd function in $\lambda$.
\end{theorem}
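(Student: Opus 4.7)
The plan is to construct the resolvent $(i\mathcal{L}-z)^{-1}$ explicitly via Jost-type solutions and then deduce the spectral representation by passing to the real axis in the Stone formula. The closedness of $\mathcal{L}$ on $\mathcal{D}\times\mathcal{D}$ is immediate from the decomposition $\mathcal{L}=\mathcal{L}_0+V$: the operator $\mathcal{L}_0$ is closed on this domain since $\mathfrak{L}_0$ is essentially self-adjoint with domain $\mathcal{D}$, and $V$ is bounded by Assumption~\ref{asmp:1}. Combined with Assumption~\ref{asmp:2}, which forces $\mathrm{spec}(\mathcal{L})\subseteq i\mathbb{R}$, the Hille--Yosida theorem yields the group $\{e^{t\mathcal{L}}\}_{t\in\mathbb{R}}$ once uniform resolvent bounds are available off the spectrum, and these bounds will come from the explicit Green's-function construction described below.

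The heart of the argument is the construction of the distorted Fourier transform. First I would build two families of Jost solutions to $(i\mathcal{L}-\lambda)\Phi=0$ depending holomorphically on $\lambda$ in a cut neighborhood of the essential spectrum. On the one hand, solutions $\mathbf{e}_j(r,\lambda)$ specified by their Frobenius behavior at $r=0$, governed by the $\tfrac{3}{8\sinh^2 r}\sim \tfrac{3}{8r^2}$ singularity whose indicial roots are $\tfrac12$ and $\tfrac32$; on the other hand, solutions $\mathbf{f}_\pm(r,\lambda)$ with prescribed oscillatory asymptotics as $r\to\infty$ dictated by the constant-coefficient limiting system. The dispersion relation $\lambda^2=(\tfrac{k^2}{2}+\tfrac18)(\tfrac{k^2}{2}+\tfrac{17}{8})$ of this limit accounts for the thresholds $\pm\tfrac{\sqrt{17}}{8}$, and each Jost family is produced by iterating an appropriate Volterra integral equation, whose convergence up to the edges of the spectrum is guaranteed by the exponential decay of $V_1,V_2$ in Assumption~\ref{asmp:1}.

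With these ingredients in hand, the resolvent kernel $(i\mathcal{L}-z)^{-1}(r,r')$ admits the standard Green's-function representation built from products $\mathbf{e}(r_<,z)\mathbf{f}(r_>,z)/W(z)$, where $W(z)$ is the Wronskian of a suitably chosen pair of Jost solutions. Assumption~\ref{asmp:3} (absence of embedded and threshold eigenvalues) ensures $W(\lambda\pm i0)\neq 0$ on the interior of the essential spectrum, so the boundary values $(i\mathcal{L}-\lambda\mp i0)^{-1}$ exist and their jump is a rank-one-type kernel of the form $\mathbf{\Theta}(r,\lambda)\otimes\sigma_1\mathbf{\Theta}(r',\lambda)$. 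Inserting this jump into the Stone formula $\langle e^{t\mathcal{L}}\Phi,\Psi\rangle=\frac{1}{2\pi i}\int_\mathbb{R}e^{it\lambda}\langle[(i\mathcal{L}-\lambda-i0)^{-1}-(i\mathcal{L}-\lambda+i0)^{-1}]\Phi,\Psi\rangle\,d\lambda$ produces exactly the claimed representation. The parity of $\uptheta_1,\uptheta_2$ in $\lambda$ reflects a discrete symmetry relating $\mathbf{f}_+$ on one branch of the spectrum to $\mathbf{f}_-$ on the opposite branch via conjugation by $\sigma_1$. The uniform $L^2$-bound is then a Plancherel-type consequence: the distorted Fourier transform $\Phi\mapsto\langle\mathbf{\Theta}(\cdot,\lambda),\Phi\rangle$ extends to an $L^2$-bounded map onto its image, and since $|e^{it\lambda}|=1$ on the real spectrum, multiplication by $e^{it\lambda}$ is an isometry on the Fourier side.

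The principal obstacle is the behavior at the thresholds $\pm\tfrac{\sqrt{17}}{8}$, where $k\to 0$ and the two asymptotic Jost solutions coalesce. Depending on whether a threshold resonance exists, the Wronskian $W(\lambda)$ either vanishes to an appropriate order or remains nonvanishing at the threshold, and in each case the density $\mathbf{\Theta}(r,\lambda)$ must be renormalized so that the limiting spectral integral is convergent near the edges. Carrying out this renormalization uniformly, and ensuring that the formula holds in both the resonant and non-resonant scenarios, is the most delicate part of the proof; it is precisely here that the methods of \cite{KS,ES2,LSS25} provide the guiding template, adapted here to the hyperbolic (as opposed to Euclidean) large-$r$ asymptotics and the presence of a spectral gap.
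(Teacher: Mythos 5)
Your outline of the construction of $\mathbf{\Theta}$ follows the paper's strategy closely: Jost-type solutions at $r=0$ and $r=\infty$ built by Volterra/Lyapunov--Perron iteration, the quartic dispersion relation producing the thresholds $\pm\frac{\sqrt{17}}{8}$, the Green's function with Wronskian denominator, non-vanishing of the Wronskian on the interior of the spectrum from the absence of embedded eigenvalues, and the rank-one jump of the resolvent inserted into a Stone-type formula. Two smaller omissions: you need a limiting absorption principle (a bound of the form $|z|^{1/2}\|\langle\cdot\rangle^{-\sigma}(i\mathcal{L}-z)^{-1}f\|\lesssim\|\langle\cdot\rangle^{\sigma}f\|$ for large $|\re z|$) to kill the vertical segments of the deformed contour before you may write the Stone formula as an integral over the real spectrum; and the parity of $\uptheta_1,\uptheta_2$ comes from the symmetry $F_1(r,z)=-\sigma_3F_1(r,-z)\sigma_3$, $\Psi^\pm(r,z)=\sigma_3\Psi^\mp(r,-z)$, i.e.\ conjugation by $\sigma_3$, not $\sigma_1$.

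The genuine gap is your last step. You assert that the uniform $L^2$ bound is ``a Plancherel-type consequence'' because $|e^{it\lambda}|=1$ on the spectrum. This fails: $i\mathcal{L}$ is not self-adjoint, the representation $\frac{\kappa(\lambda)}{d^+(\lambda)d^-(\lambda)}\Theta(r,\lambda)\Theta(s,\lambda)^t\sigma_1$ is not an orthogonal projection-valued spectral measure, and the two pairings in the formula are taken against $\sigma_1\Phi$ and $\Psi$ respectively, so no isometry of the map $\Phi\mapsto\langle\mathbf{\Theta}(\cdot,\lambda),\Phi\rangle$ is available. That this cannot be soft is shown by the Euclidean analogue treated in \cite{LSS25,collotGermainEliot25}, where the same type of distorted Fourier representation exists but $\|e^{t\mathcal{L}}\|_{L^2\to L^2}$ genuinely grows in $t$. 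The bound here must be proved by hand: one splits the spectral integral into small and large energies $\xi=\lambda\mp\frac{\sqrt{17}}{8}$ and the spatial variable into $r\lesssim\varepsilon/\sqrt{|\xi|}$ and $r\gtrsim\varepsilon/\sqrt{|\xi|}$, uses the representation of $\Theta$ in terms of the $\varphi_j$ near the origin and in terms of the $\Psi_j^{\pm}$ near infinity, tracks the precise $\sqrt{|\xi|}$-weights carried by $\kappa/(d^+d^-)$ and the connection coefficients $\omega_{ij}$, $\gamma_j$, and then establishes $L^2_r\to L^2_\xi$ bounds for each piece by Schur tests and a Cotlar--Stein almost-orthogonality argument for the oscillatory pieces $e^{ik_1(\lambda)r}$. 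Without this analysis the $L^2$ estimate in the theorem is unproved.
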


\begin{remark}
    The main result in the theorem is the construction of the distorted Fourier transform. The $L^2$ bound is proved as a corollary of this construction. Note that using energy estimates one can show that the $H^1$ norm remains bounded in the evolution. In view of the spectral gap for the hyperbolic Laplacian, this also proves that $\left\| e^{ t \mathcal{L}} \psi \right\|_{L^2}\lesssim 1$ for all $t$, but with a constant that may a priori depend on $\|\psi\|_{H^1}$.
\end{remark}
Our construction follows the general strategy of \cite{LSS25}, which in turn builds on \cite{ES2,KS}. The only difference arises in the proof of the absence of threshold and embedded eigenvalues for the operator $i\mathcal{L}_{GP}$, where our approach is inspired by \cite{collotGermainEliot25}. Let us note that in our hyperbolic setting the construction of the distorted Fourier transform allows us to establish an $L^2$-bound for the evolution, in contrast with the Euclidean case, where a sharp growth rate of the $L^2$ norm was derived in \cite{collotGermainEliot25,LSS25}. The main steps in the construction of the distorted Fourier transform as as follows:

\begin{enumerate}
    \item Using the Laplace transform we represent $e^{t\mathcal{L}}$ as a sum of two integrals on $\im \, z=\pm b$, with $b>0$ sufficiently large. The convergence of this integral relies on the Hille-Yosida bound for the evolution. We then deform the contour to pass the region of integration to the spectrum of $i\calL$ which, by assumption (or inspection in the case of $\mathcal{L}_{GP}$), is contained in the real line. This will naturally lead us to study the jump of the resolvent across the spectrum between positive and negative imaginary parts. 
    \item To justify passing the limit above to the real line we need two ingredients. One is limiting absorption principle. This is used to justify that the vertical lines $\re \,z=\pm R$ in the deformed contour of integration do not contribute as $R\to\infty$. The second ingredient, which requires the bulk of the work, is to analyze the resolvent and justify the exchange of limits and integrals. The hardest part is the analysis for small energies, that is, near the thresholds $\pm\frac{\sqrt{17}}{8}$. For intermediate frequencies we rely on the absence of embedded eigenvalues to show that there can be no singularities that prevent us from passing to the limit.
    \item After passing to the limit onto the spectrum, we can study the structure of the jump of the resolvent. This, which goes hand in hand with the previous step, requires the construction of the Green's function for large and small energies. In both cases this is done using the Lyapunov-Perron method separately for $r$ near zero and $r$ large.  It is important to be able to extend the constructions from $r=0$ and $r\to\infty$ to an overlapping region, where the Wronskians between the solutions can be computed. This is the connection problem. With the aid of the Wronskians we can then obtain our global Green's function. The following is a curious technical aspect of the construction for large $r$: The study of the equation at large $r$ naturally leads to an ansatz of the form $e^{ik_j(z)r}$, $j=1,2,3,4$, where $z$ denotes the energy and $k_j(z)$ are the roots of a fourth order polynomial. It turns out that the natural domain in the complex plane where the roots $k_j$ can be continued analytically excludes the spectrum $(-\infty,-\frac{\sqrt{17}}{8}]\cup[\frac{\sqrt{17}}{8},\infty)$, which is the region of interest. Therefore, to understand the behavior of the Weyl-Titchmarsh solutions for large $r$, one has to carefully compute the limits of these roots as $z$ approaches the spectrum from the upper and lower half planes to observe favorable cancellations.
 \end{enumerate}

\subsection{References} The Ginzburg-Landau and Gross-Pitaevskii equations arise in the study of superconductivity and superfluids respectively. See \cite{Chen21,CL1,CG23,DelMas20,GP20,GermPusZhang22,LL2,LS1}. These problems on $\mathbb{R}^2$ have been studied extensively. Global existence for finite energy solutions of the Gross-Piraevskii equation was proved in \cite{Gerard}. The asymptotic stability of small solutions (that is, close to a constant) was studied in the works \cite{GuoHaniNakanishi18,GNT1,GNT2}. The existence of vortices was proved in \cite{ChenElliott94,HerveHerve94}. Orbital stability using a renormalized energy is proved in \cite{GPS21}, see also \cite{dPFK,Wein}. The construction of the radial distorted Fourier transform for the linearized operator was carried out in \cite{collotGermainEliot25,LSS25}.  The work \cite{collotGermainEliot25} also contains dispersive estimates for the linearized flow. For other related works see \cite{BJS,BS,CP2,CP1,CP3,CJ,JS,JS2,JOST,LX,OS97,OS98,OS2002,PP24} and their references. Our construction of the distorted Fourier transform builds on insights from \cite{BP,ES2,GZ,KMS,KS,KST}. We also mention \cite{CL1,CG23,KS,Li23,LL2} as other examples of works that involve distorted Fourier theory for non-self-adjoint matrix operators. Finally we refer the reader to \cite{AnPi09, banica07,BaDu07, LLOS2,IPS, BorMar1, MT11, MTay12, MWYZ1, wilsonYu25, ZeLi1} and their references for related works on dispersive equations on hyperbolic spaces.

\subsection{Organization of the paper}
The paper is organized as follows: In section~\ref{sec:spectrum},
we study the spectral properties of $i\calL$ and $i \calL_{GP}.$ First, we classify the behavior of the solutions at the spectrum threshold for $i \calL.$ We distinguish two possible behaviors that lead to the characterization of the resonance and non resonance cases. Next, we define the evolution $e^{t \calL}$ via the Hille–Yosida theorem and establish a Stone-type formula. We also prove that the spectrum of $i\calL_{GP}$ is real and its essential spectrum is $(-\infty, - \frac{\sqrt{17}}{8}]\cup [\frac{\sqrt{17}}{8},\infty).$ We also show that $i\calL_{GP}$ has no threshold or embedded eigenvalues in its essential spectrum. In Sections \ref{sec:Sol-near-0-non-resonance} and \ref{sec:Sol-near-0-resonance}, we construct the fundamental matrix system of solutions to $i \calL F =z F$ near $r=0,$ for small and large $|z|,$ in the non-resonance and resonance case respectively. In Sections \ref{sec:nearinfty-small-xi} and \ref{sec:nearinfty-large-xi}, we construct the Weyl-Titchmarsh solutions to $i \calL \Psi^{\pm} =z \Psi^{\pm}$ near $r=\infty,$ for small and large $|z|,$ respectively. In Sections \ref{sec:GreenKernel-non-resonance} and \ref{sec:GreenKernel-resonance}, we compute the connection coefficients of solutions constructed near the origin and at $\infty.$ Then, we study the Green's kernel of $i \calL -z .$ Moreover, we derive an integral representation of the evolution $e^{t \calL}$ in terms of the resolvent. This representation relies on the Stone-type formula obtained on in \S \ref{subsec:STF}, the fact that the coefficients appearing in the resolvent kernel expressions are sufficiently regular (by analysis obtained in \S \ref{sec:GreenKernel-non-resonance}-\ref{sec:GreenKernel-resonance}) and the absence of embedded eigenvalues. Finally, we extract the distorted Fourier transform representation of evolution $e^{t \calL}$ for all energies. In Section \ref{sec:L2bounds}, we establish the $L^2$-bound for the operator norm $\|e^{t \calL}\|$ and conclude the proof of the main Theorem~\ref{Main-theorem}. In Appendix \ref{sec:appOde}, we present the proof of Theorem \ref{th:ode}. In Appendix \ref{sec:resol-iL0}, we compute the resolvent kernel for $(i\calL_0 -z)^{-1}. $

\subsection{Notation and conventions} For simplicity, we write $\rho$ in place of $\rho_1$ in what follows. We denote by $C>0$ an absolute constant that may change from line to line. For non-negative $f$ and $g$ we write $f \lesssim  g,$ if $f \leq C g,$ and if $f \ll g$ to indicate that the implicit constant is small. We use the notation $f=O(g)$ if $|f| \lesssim g. $ The notation $\mathbf{J} $ and $J_2$ stands for the matrices

\begin{align*}
\mathbf{J}:=\begin{pmatrix}
    0 & -1 \\ -1 & 0
\end{pmatrix}, \qquad 
J_2:= \begin{pmatrix}
    1 & 1 \\ 1 &1 
\end{pmatrix} 
\end{align*} 
and we denote  by $\sigma_1,\sigma_2,\sigma_3$ Pauli matrices 
   \begin{align*}
   \sigma_1=\begin{pmatrix}
       0 &1 \\1 & 0
   \end{pmatrix} 
   \quad    \sigma_2=\begin{pmatrix}
       0 &-i \\i & 0
   \end{pmatrix} 
   \quad 
      \sigma_3=\begin{pmatrix}
       1 &0 \\0 & -1
   \end{pmatrix} 
\end{align*}
For real-valued scalar functions $f$ and $g$ we use the notation
\begin{align*}
    \langle f,g\rangle:=\int_0^\infty f(r)g(r) d r.
\end{align*}
For $\R^2$-valued functions $\mathbf{f} =\begin{pmatrix}
    f_1 \\ f_2
\end{pmatrix}$ and $\mathbf{g} =\begin{pmatrix}
    g_1 \\ g_2
\end{pmatrix}, $ we use the notation $\langle \mathbf{f} ,\mathbf{g}\rangle:=\langle f_1,g_1\rangle+\langle f_2,g_2\rangle$ for real vector inner product. We denote the Wronskian between two vectors $\mathbf{f}$ and $\mathbf{g}$ by  $
    W(\mathbf{f},\mathbf{g}) := \langle \mathbf{f}, \sigma_3 \mathbf{g}^{\prime} \rangle,$ so that $W(\mathbf{f},\mathbf{f}) = 0$.  Next, we define the matrix Wronskian:
\begin{align*}
    W(F,G) :&= F^t \sigma_3 G^{\prime} - (F^{\prime})^t \sigma_3 G \\
    &= \begin{pmatrix}
        W(F_1,G_1) & W(F_1,G_2) \\ 
        W(F_2,G_1) & W(F_2,G_2)
    \end{pmatrix},
\end{align*}
where $F = \begin{bmatrix} F_1 & F_2 \end{bmatrix}$ and 
$G = \begin{bmatrix} G_1 & G_2 \end{bmatrix}$ are $2 \times 2$ matrices, with $F_i$ and $G_i$ denoting their respective columns.

The norm $\left\| \cdot \right\|_{L^2_r}$ denotes the  $L^2$ norm on the half-line with respect to the measure $d r$. We also denote by $X_0:=L^2(\R^{+})$, $\R^{+}=[0,\infty),$ with respect to the measure $dr$ and let $X_1:=X_0 \times X_0.$ We also denote by  $   \mathcal{D}:=\{ f \in L^2_r(0,\infty) \,|\;  \mathfrak{L}_0f \in L^2_r(\R_+) \}.$

Throughout the paper, the expression “large \(|\xi|\)” (or “large \(|z|\)”) will refer to the regime where the real part is large while the imaginary part remains bounded. We also use the expression “small \(|\xi|\)” (or “small \(|z|\)”) to refer to energies close to the spectral threshold $\pm \frac{\sqrt{17}}{8}.$\\
Note that, throughout the rest of the paper, we denote $\rho_1(r)$ by $\rho(r)$ and $\trho(r)=\sinh^{\frac{1}{2}}(r)\rho(r)$

\section{Spectral properties of the linearized operator} \label{sec:spectrum}
In this section, we establish basic spectral properties of $i\mathcal{L}$ at the spectrum threshold and show that the linearized operator $\mathcal{L}_{GP}$ associated with the Gross–Pitaevskii equation satisfies Assumptions~\ref{asmp:1}–\ref{asmp:3}, except for the emptiness of the discrete spectrum. In \S\ref{subsec:DS-iL}, we classify the behavior of solutions to the threshold problem, $i \calL \Phi=\pm \frac{\sqrt{17}}{8} \Phi$. We distinguish two possible behaviors of solutions, which lead to the characterization of the threshold resonance and non-resonance cases. In \S\ref{subsec:HY}, we define the evolution operator $e^{t\mathcal{L}}$ and derive an exponential bound via the Hille–Yosida theorem. In \S\ref{subsec:STF}, we obtain an integral representation of the evolution. Finally, in \S\ref{subsec:DS-L_GP}, we study the spectrum of the linearized operator $i\mathcal{L}_{GP}$, proving that it is real, that its essential spectrum is $(-\infty,-\frac{\sqrt{17}}{8}] \cup [\frac{\sqrt{17}}{8}, \infty)$, and that there are no threshold or embedded eigenvalues in the essential spectrum.\\
Throughout the rest of the paper, we denote $\rho_1(r)$ by $\rho(r)$ and $\trho(r)=\sinh^{\frac{1}{2}}(r)\rho(r)$. \\

We first define the following operators in the Hilbert space 
$L^2(\R_+)$ relative to the Lebesgue measure $dr$:
\begin{align*}
    L_1 &:= \mathfrak{L}_0 + V_1 , \quad L_2:=  \mathfrak{L}_0 +2+ V_2
\end{align*}
with domain $C^2_c(\R_+).$ We also define the matrix operator
\begin{align*}
    \mathcal{L} &:=  \begin{pmatrix}
        0 & L_1 \\ -L_2 & 0 
    \end{pmatrix},
\end{align*} 
with domain $C^2_c(\R_+) \times C^2_c(\R_+)$. For the linearized operator $\mathcal{L}_{GP}$ associated with the Gross–Pitaevskii equation, we define
\begin{align*}
    \mathcal{L}_{GP} &:=  \begin{pmatrix}
        0 & \mathfrak{L}_1 \\ -\mathfrak{L}_2 & 0 
    \end{pmatrix},
\end{align*}
where
\begin{align*}
      \mathfrak{L}_1 = \mathfrak{L}_0+\rho^2-1, \quad \mathfrak{L}_2 =   \mathfrak{L}_1 + 2 \rho^2 =\mathfrak{L}_0+3\rho^2-1, \quad \text{with} \quad \mathfrak{L}_0 := -\frac{1}{2 }\partial_r^2+\frac{3}{8  \sinh^2(r)} + \frac{1}{8}. 
\end{align*}

\subsection{Discrete spectrum of $i\mathcal{L}$}
\label{subsec:DS-iL}

In this section, we establish the basic discrete spectrum properties at the spectrum threshold using Assumptions \ref{asmp:1}, \ref{asmp:2}, \ref{asmp:3}. Let $\lambda= \pm \frac{\sqrt{17}}{8},$ and $\Phi := \begin{pmatrix} \phi \\ \psi \end{pmatrix} .$ Then a solution to  $ i \mathcal{L} \Phi= \lambda \Phi,$ satisfies 
 \begin{equation}
\begin{cases}
\label{sys:L_1=lambda-L2=lambda}
    i L_1 \psi &= \lambda \phi ,\\
    - i L_2 \phi &= \lambda \psi    
\end{cases}    
 \end{equation}

Note that, for small $r$, the operator 
$
-\frac{1}{2} \partial_r^2 + \frac{3}{8 r^2}
$ 
provides a good approximation for both $L_1$ and $L_2$.  Therefore, by a perturbative analysis, the possible asymptotics of the fundamental system for the full system \eqref{sys:L_1=lambda-L2=lambda} near $0$ are
\begin{equation} \label{eq:behav-near-0} 
\left\{
\begin{pmatrix}
    r^{\frac{3}{2}}(1+O(r^2))  \\ O(r^{\frac{7}{2}}) 
\end{pmatrix},\quad
\begin{pmatrix}
    O(r^{\frac{7}{2}}) \\ r^{\frac{3}{2}}(1+O(r^2)) 
\end{pmatrix},\quad
\begin{pmatrix}
    r^{-\frac{1}{2}}(1+O(r^2\log r) \\ O(r^{\frac{3}{2}}\log r) 
\end{pmatrix},\quad
\begin{pmatrix}
    O(r^{\frac{3}{2}}\log r) \\ r^{-\frac{1}{2}}(1+O(r^2\log r) 
\end{pmatrix} 
\right\}.
\end{equation}
For instance a solution with the first asymptotics can be obtained by writing $\phi=r^{\frac{3}{2}}+\tilde{\phi}$ and solving the Voltera integral fixed point for $r\in [0,r_0]$, $r_0\ll1$ (with suitable constant $c_1$ and $c_2$)
\begin{align*}
    \tilde{\phi}(r)=c_1\int_0^r(r^{\frac{3}{2}}s^{-\frac{1}{2}}-r^{-\frac{1}{2}}s^{\frac{3}{2}})(\lambda\psi(s)+iV_2(s)(s^{\frac{3}{2}}+\tilde{\phi}(s)))d s,\\
     \psi(r)=c_2\int_0^r(r^{\frac{3}{2}}s^{-\frac{1}{2}}-r^{-\frac{1}{2}}s^{\frac{3}{2}})(\lambda(s^{\frac{3}{2}}+\tilde{\phi}(s))-iV_1(s)\psi(s))d s.
\end{align*}
Similarly the for the third displayed asymptotics we write $\phi=r^{-\frac{1}{2}}+\tilde{\phi}$ and use
\begin{align*}
    \tilde{\phi}(r)&=c_1\int_r^{r_0} r^{\frac{3}{2}}s^{-\frac{1}{2}}(\lambda\psi(s)+iV_2(s)(s^{-\frac{1}{2}}+\tilde{\phi}(s)))d s\\
    &\quad+c_1\int_0^r r^{-\frac{1}{2}}s^{\frac{3}{2}}(\lambda\psi(s)+iV_2(s)(s^{-\frac{1}{2}}+\tilde{\phi}(s)))d s,\\
     \psi(r)&=c_2\int_r^{r_0} r^{\frac{3}{2}}s^{-\frac{1}{2}}(\lambda((s^{-\frac{1}{2}}+\tilde{\phi}(s)))-iV_1(s)\psi(s))d s\\
     &\quad+c_2\int_0^r r^{-\frac{1}{2}}s^{\frac{3}{2}}(\lambda((s^{-\frac{1}{2}}+\tilde{\phi}(s)))-iV_1(s)\psi(s))d s.
\end{align*}

 For large $r$  first observe that applying $i L_1$ to the second equation in \eqref{sys:L_1=lambda-L2=lambda}, we obtain a fourth-order equation 
\begin{equation}
\label{eq:L_1L2=lambda-square}
   L_1 L_2 \phi = \lambda^2 \phi. 
\end{equation}
Here the operators $\tilde{L}_1:=( -\frac{1}{2} \partial_r^2 + \frac{1}{8}) $ and $\tilde{L}_2:=( -\frac{1}{2} \partial_r^2 + \frac{17}{8})$ provide a good approximation for $L_1$ and $L_2,$ respectively. Therefore, the equation \eqref{eq:L_1L2=lambda-square} is, near $r = \infty$, approximately equivalent to
\begin{align*}
   ( -\frac{1}{2} \partial_r^2 + \frac{1}{8})   ( -\frac{1}{2} \partial_r^2 + \frac{17}{8}) \phi =\frac{17}{64}\phi
\end{align*}
Thus, the the possible asymptotics for the fourth-order equation near $\infty$ are given by  
$$\{e^{\frac{3}{\sqrt{2}} r},e^{-\frac{3}{\sqrt{2}} r}, 1 , r \}.$$ 
Therefore, the possible asymptotics of the fundamental system for the full system \eqref{sys:L_1=lambda-L2=lambda} for $r\gg1$ are
$$\bigg\{ f_{1,\infty}^{\pm}:=\begin{pmatrix}
    1 \\  \mp i \sqrt{17}
\end{pmatrix} , f_{2,\infty}^{\pm}:=\begin{pmatrix}
   e^{-\frac{3}{\sqrt{2}} r} \\ \pm i \frac{1}{\sqrt{17}}   e^{-\frac{3}{\sqrt{2}} r}
\end{pmatrix},f_{3,\infty}^{\pm}:=\begin{pmatrix}
  r \\  \mp i \sqrt{17}r
\end{pmatrix}, f_{4,\infty}^{\pm}:=\begin{pmatrix}
   e^{\frac{3}{\sqrt{2}} r} \\ \pm i \frac{1}{\sqrt{17}}   e^{\frac{3}{\sqrt{2}} r}
\end{pmatrix}
\bigg\}. $$
Indeed this can be seen by setting up a fixed point problem for the integral representations as before. The formula for the Green's functions are a bit more complicated compared to $r\ll1$ due to the coupling at the linear level. We will derive such matrix Green's functions systematically in Section~\ref{sec:Sol-near-0-non-resonance} below. The final result here can be written as follows. Let $\mathcal{V}$ be defined by $i\mathcal{L}=\begin{pmatrix}0&i\tilde{L_1}\\-i\tilde{L}_2&0\end{pmatrix}+\mathcal{V}$. Let $d_1=\frac{1}{18}$ and $d_2=\frac{17\sqrt{2}}{108}$. These are the nonzero Wronskians among the $f_{j,\infty}^{\pm}.$ Then we define $f_j^{\pm}$, $j=1,2,3,4$, by solving the following fixed point problems for $r>r_\infty\gg1$ (with suitable constants $c_j$): 
\begin{align*}
    f_1^{\pm}(r)&=c_1\int_{r_\infty}^r(d_1 f_{1,\infty}^{\pm}(r)f_{3,\infty}^{\pm}(s)^t +d_2 f_{2,\infty}^{\pm}(r) f_{4,\infty}^{\pm}(s)^t) \sigma_1 \mathcal{V} (s)f_1^{\pm} (s)ds\\
    &\quad+c_1\int_r^\infty(d_1 f_{3,\infty}^{\pm}(r)f_{1,\infty}^{\pm}(s)^t +d_2 f_{4,\infty}^{\pm}(r) f_{2,\infty}^{\pm}(s)^t) \sigma_1 \mathcal{V}(s) f_1^{\pm}(s) ds,
\end{align*}
\begin{align*}
    f_2^{\pm}(r)&=c_2\int_r^\infty(d_1 f_{1,\infty}^{\pm}(r)f_{3,\infty}^{\pm}(s)^t +d_2 f_{2,\infty}^{\pm}(r) f_{4,\infty}^{\pm}(s)^t) \sigma_1 \mathcal{V}(s) f_2^{\pm} (s)ds\\
    &\quad-c_2\int_r^\infty (d_1 f_{3,\infty}^{\pm}(r)f_{1,\infty}^{\pm}(s)^t +d_2 f_{4,\infty}^{\pm}(r) f_{2,\infty}^{\pm}(s)^t) \sigma_1 \mathcal{V} (s)f_2^{\pm}(s) ds,
\end{align*}
\begin{align*}
    f_3^{\pm}(r)&=c_3\int_{r_\infty}^r(d_1 f_{1,\infty}^{\pm}(r)f_{3,\infty}^{\pm}(s)^t +d_2 f_{2,\infty}^{\pm}(r) f_{4,\infty}^{\pm}(s)^t) \sigma_1 \mathcal{V}(s) f_3^{\pm}(s) ds\\
    &\quad+c_3\int_r^\infty (d_1 f_{3,\infty}^{\pm}(r)f_{1,\infty}^{\pm}(s)^t +d_2 f_{4,\infty}^{\pm}(r) f_{2,\infty}^{\pm}(s)^t) \sigma_1 \mathcal{V}(s) f_3^{\pm}(s) ds,
\end{align*}
\begin{align*}
    f_4^{\pm}(r)&=c_4\int_{r_\infty}^r(d_1 f_{1,\infty}^{\pm}(r)f_{3,\infty}^{\pm}(s)^t +d_2 f_{2,\infty}^{\pm}(r) f_{4,\infty}^{\pm}(s)^t) \sigma_1 \mathcal{V}(s) f_4^{\pm}(s) ds\\
    &\quad-c_4\int_{r_\infty}^r(d_1 f_{3,\infty}^{\pm}(r)f_{1,\infty}^{\pm}(s)^t +d_2 f_{4,\infty}^{\pm}(r) f_{2,\infty}^{\pm}(s)^t) \sigma_1 \mathcal{V}(s) f_4^{\pm}(s) ds,
\end{align*}

Under Assumption \ref{asmp:2}, we know that $\lambda = \pm \frac{\sqrt{17}}{8}$ is not an eigenvalue. Therefore, any exponentially decaying solution, i.e., $L^2(1,\infty)$  must exhibit singular behavior near the origin, i.e., not an $L^2(0,1)$-solution. For instance, any solution that behaves like $e^{- \frac{3}{\sqrt{2}}r}$ near infinity  must grow  like $r^{-\frac{1}{2}}$ as $r \to 0$.
Moreover, any solution that behaves like $r^{\frac{3}{2}}$ near the origin can either grow at infinity or be a constant.  Next, observe that under Assumption \ref{asmp:3}, using an argument similar to that in section \ref{subsec:DS-L_GP}, particularly Lemma \ref{lem:no-eigenvalue-singular-r0}, one can show that  there exists a solution that behaves like \(r^{\frac{3}{2}}\) near zero and has exponential growth at infinity.\\

On the other hand, solutions that exhibit intermediate behavior at infinity (i.e., bounded like $1$ or growing polynomially like $r$) may behave near the origin either like $r^{\frac{3}{2}}$, which is  $L^2(0,1)$, or like $r^{-\frac{1}{2}}$, which is not square-integrable on $(0,1)$.

\begin{defi} \label{def:resonance}
We distinguish two possible behaviors of bounded solutions near $\infty$.
\begin{itemize}
    \item \textbf{Case 1 (Non-resonance case):} If the constant behavior at infinity connects to the $r^{-\frac{1}{2}}$ behavior near the origin, the solution does not lie in $L^\infty(0,\infty).$
    
    \item \textbf{Case 2 (resonance case):} If the constant behavior at infinity connects to the $r^{\frac{3}{2}}$ behavior near the origin (i.e., square-integrable on $(0,1)$), hence the solution lies in $L^\infty(0,\infty).$ 
\end{itemize}
    
\end{defi}

\begin{remark}\label{lem:resterminlogy}
The terminology \emph{resonance} and \emph{non-resonance} is not  standard here. In particular we make no claim about mapping properties of the resolvent on weighted spaces. For scalar operators resonances can occur when there is a (non-$L^2$) solution connecting the subordinate behaviors near $r=0$ and $r=\infty$. Here there are four possible behaviors  for large $r$. A solution connecting the subordinate behaviors would be in $L^2$ due to the exponential decay. The resonant/non-resonant terminology is simply used to distinguish between the two intermediate behaviors at large $r$.
\end{remark}

\subsubsection{Case $1$ :No Resonance}
\label{subsec:non-resonance}
Based on the above observation, we conclude that, for $\lambda=\frac{\sqrt{17}}{8}:$ \\

There exist two solutions $F_{1}^{+}(r)$ and $F_{2}^{+}(r)$ solutions to $i \mathcal{L} F_j^{+}(r)=  \frac{\sqrt{17}}{8}F_j^{+}(r),$ for $j=1,2,$ such that $F_{1}^{+}(r) \in L^2(0,1)$ and it is unique fundamental matrix solution of the spectral problem, with this property, up to the transformation $F_1 \mapsto F_1 B,$ where $ B \in \mathrm{GL}(2, \mathbb{C}).$ \\

Moreover, $F_{1}^{+}(r)=\begin{bmatrix}
    \varphi_1^{+}(r)& \varphi_2^{+}(r)
\end{bmatrix}$ and $F_{2}^{+}(r)=\begin{bmatrix}
   \varphi_3^{+}(r)& \varphi_4^{+}(r) 
\end{bmatrix},$ where $\varphi_i^{+}$ satisfy the following asymptotics 
\begin{align}
\label{eq:asumptotic-phi_j(r)-non-resonance}
   \varphi_1^{+}(r)&= \begin{cases} \nonumber
        \begin{pmatrix}
         0 \\ 
           c_1^{2} r^{\frac{3}{2}}
         \end{pmatrix} + O(r^{\frac{7}{2}}), \; r \to 0  \\ \\
          \begin{pmatrix}
           \tc_1^1 e^{\frac{3}{\sqrt{2} }r} \\ 
           \tc_1^2 e^{\frac{3}{\sqrt{2}}r}
         \end{pmatrix}(1+O(r^{-1})) , \; r \to \infty 
   \end{cases} 
 \quad \varphi_2^{+}(r)= \begin{cases}
        \begin{pmatrix}
         c_2^{1} r^{\frac{3}{2}} \\ 
         c_2^2 r^{\frac{3}{2}}
         \end{pmatrix}  (1+ O(r^2)) , \; r \to 0  \\ \\
          \begin{pmatrix}
           \tc_2^1 r  \\ 
          \tc_2^2 r 
         \end{pmatrix} (1+O(r^{-1})) , \; r \to \infty 
   \end{cases} \\ 
    \varphi_3^{+}(r)&= \begin{cases}
        \begin{pmatrix}
         c_3^1 r^{-\frac 12} \\ 
            c_3^2 r^{-\frac{1}{2}}
         \end{pmatrix} (1+ O(r^2\log(r)), \; r \to 0  \\ \\
          \begin{pmatrix}
          \tc_3^1 e^{-\frac{3}{\sqrt{2} }r}  \\ 
           \tc_3^2 e^{-\frac{3}{\sqrt{2}}r}
         \end{pmatrix}  (1+O(r^{-1})) , \; r \to \infty 
   \end{cases} 
 \quad \varphi_4^{+}(r)= \begin{cases}
        \begin{pmatrix}
         c_4^1 r^{-\frac{1}{2}} \\ 
         0
         \end{pmatrix} + O(r^{\frac{3}{2}}\log(r)) , \; r \to 0  \\ \\
          \begin{pmatrix}
          \tc_4^1  \\ 
           \tc_4^2 
         \end{pmatrix} (1+O(r^{-1})), \; r \to \infty 
   \end{cases}
\end{align}

\begin{remark}
Note that the set $\{ \varphi_1^{+}, \varphi_2^{+}, \varphi_3^{+},\varphi_3^{+}\} $ forms a fundamental system for the equation $i \mathcal{L} F^{+}_j = \frac{\sqrt{17}}{8} F^{+}_j.$
Our choice of the behavior of $ \varphi_1^{+} $ and $ \varphi_1^{+} $ near the origin is made so that it matches the one for $  \mathcal{L}_{GP} ,$ see \S \ref{subsec:DS-L_GP}. 
\end{remark}
Next, we define $F^{-}_1(r)=\begin{bmatrix}
    \varphi_1^{-}(r) & \varphi_2^{-}(r)
\end{bmatrix}$ and $F^{-}_2(r)= \begin{bmatrix}
    \varphi_3^{-}(r) & \varphi_4^{-}(r)
\end{bmatrix}
$ solutions to  $i \mathcal{L} F_j^{-}(r)= - \frac{\sqrt{17}}{8}F_j^{-}(r),$ for $j=1,2,$ such that 
\begin{align*}
   F_1^{-}(r)&=-\sigma_3 F_1^{+}(r) \sigma_3\\
    F_2^{-}(r)&=\sigma_3 F_2^{+}(r) \sigma_3
\end{align*}
where $F^{+}_1(r)$ and $F^{+}_2(r)$ are solutions to the equation $i \mathcal{L} F_j^{+}(r)=  \frac{\sqrt{17}}{8}F_j^{-}(r).$ Moreover, $\varphi_j^{-}(r)$ satisfy the same asymptotics as $\varphi_j^{+}(r)$ for $r$ near $0$ and infinity, i.e., 
\begin{align*}
    \varphi_1^{-}(r)&=-\sigma_3 \varphi_1^{+}(r), \; \; \qquad \varphi_3^{-}(r)=-\sigma_3 \varphi_3^{+}(r), \\
   \varphi_2^{-}(r)&=\sigma_3 \varphi_2^{+}(r), \quad \; \qquad \varphi_4^{-}(r)=\sigma_3 \varphi_4^{+}(r). 
\end{align*}
\subsubsection{Case 2: Resonance Case}
\label{subsec:resonance}
In this case, we note that $ \varphi_1$ and $ \varphi_3 $ exhibit the same behavior as in the previous case. However, the asymptotic behavior of $ \varphi_2 $ and $ \varphi_4 $ at infinity is reversed. Recall that, in this case the constant behavior at infinity corresponds to a smooth behavior near the origin $r^{\frac{3}{2}}$. Consequently, the solution $ \varphi_2$ must belong to $ L^\infty(0,\infty) $. We adopt the notation $ \varphi_j^R $ to indicate that the solution arises in the resonance case. This convention will be used consistently throughout the paper.
Based on the above observation, we conclude that, for $\lambda=\frac{\sqrt{17}}{8}:$ \\

There exist two solutions $F_{1}^{R,+}(r)$ and $F_{2}^{R,+}(r)$ solutions to $i \mathcal{L} F_j^{R,+}(r)=  \frac{\sqrt{17}}{8}F_j^{R,+}(r),$ for $j=1,2,$ such that $F_{1}^{R,+}(r) \in L^2(0,1)$ and it is a unique fundamental matrix solution of the spectral problem, with this property, up to the transformation $F_1 \mapsto F_1 B,$ where $ B \in \mathrm{GL}(2, \mathbb{C}).$ \\

Moreover, $F_{1}^{R,+}(r)=\begin{bmatrix}
    \varphi_1^{R,+}(r) &  \varphi_2^{R,+}(r)
\end{bmatrix}$ and $F_{2}^{R,+}(r)=\begin{bmatrix}
    \varphi_3^{R,+}(r) & \varphi_4^{R,+}(r)
\end{bmatrix},$ where $\varphi_i^{R,+}$ satisfy the following asymptotics 

\begin{align*}
   \varphi_1^{R,+}(r)&= \begin{cases}
        \begin{pmatrix}
         0 \\ 
           c_1^{2} r^{\frac{3}{2}}
         \end{pmatrix} + O(r^{\frac{7}{2}}), \; r \to 0  \\ \\
          \begin{pmatrix}
           \tc_1^1 e^{\frac{3}{\sqrt{2} }r} \\ 
           \tc_1^2 e^{\frac{3}{\sqrt{2}}r}
         \end{pmatrix}(1+O(r^{-1})) , \; r \to \infty 
   \end{cases} 
 \quad \varphi_2^{R,+}(r)= \begin{cases}
        \begin{pmatrix}
         c_2^{1} r^{\frac{3}{2}} \\ 
         c_2^2 r^{\frac{3}{2}}
         \end{pmatrix} (1+ O(r^2)) , \; r \to 0  \\ \\
          \begin{pmatrix}
           \tc_2^1   \\ 
          \tc_2^2  
         \end{pmatrix} (1+O(r^{-1})) , \; r \to \infty 
   \end{cases} \\ 
    \varphi_3^{R,+}(r)&= \begin{cases}
        \begin{pmatrix}
         c_3^1 r^{-\frac 12} \\ 
            c_3^2 r^{-\frac{1}{2}}
         \end{pmatrix} (1+ O(r^2\log(r)), \; r \to 0  \\ \\
          \begin{pmatrix}
          \tc_3^1 e^{-\frac{3}{\sqrt{2} }r}  \\ 
           \tc_3^2 e^{-\frac{3}{\sqrt{2}}r}
         \end{pmatrix}  (1+O(r^{-1})) , \; r \to \infty 
   \end{cases} 
 \quad \varphi_4^{R,+}(r)= \begin{cases}
        \begin{pmatrix}
         c_4^1 r^{-\frac{1}{2}} \\ 
         0
         \end{pmatrix} + O(r^{\frac{3}{2}}\log(r) , \; r \to 0  \\ \\
          \begin{pmatrix}
          \tc_4^1 r \\ 
           \tc_4^2 r
         \end{pmatrix} (1+O(r^{-1})), \; r \to \infty 
   \end{cases}
\end{align*}

Similarly to the non-resonance case, we define $F^{R,-}_1(r)=[\varphi_1^{R,-}(r) \; \varphi_2^{R,-}(r)]$ and $F^{-}_2(r)=[\varphi_3^{R,-}(r) \; \varphi_4^{R,-}(r)]$ solutions to  $i \mathcal{L} F_j^{R,-}(r)= - \frac{\sqrt{17}}{8}F_j^{R,-}(r),$ for $j=1,2,$ such that 
\begin{align*}
   F_1^{R,-}(r)&=-\sigma_3 F_1^{R,+}(r) \sigma_3\\
    F_2^{R,-}(r)&=\sigma_3 F_2^{R,+}(r) \sigma_3
\end{align*}
where $F^{R,+}_1(r)$ and $F^{R,+}_2(r)$ are solutions to the equation $i \mathcal{L} F_j^{R,+}(r)=  \frac{\sqrt{17}}{8}F_j^{R,-}(r).$ Moreover, $\varphi_j^{R,-}(r)$ satisfy the same asymptotics as $\varphi_j^{R,+}(r)$ for $r$ near $0$ and infinity, i.e., 

\begin{align*}
    \varphi_1^{R,-}(r)&=-\sigma_3 \varphi_1^{R,+}(r), \; \; \qquad \varphi_3^{R,-}(r)=-\sigma_3 \varphi_3^{R,+}(r), \\
   \varphi_2^{R,-}(r)&=\sigma_3 \varphi_2^{R,+}(r), \quad \; \qquad \varphi_4^{R,-}(r)=\sigma_3 \varphi_4^{R,+}(r). 
\end{align*}

\subsection{Hille-Yosida bound} \label{subsec:HY}
The main goal of this section is to define the evolution $e^{t\calL }$ via the Hille-Yosida theorem. 

\begin{lemma} \label{lem:HY}
The matrix operator $\mathcal{L}$  is closed in $\mathcal{D} \times \mathcal{D} \subset X_1$ and it generates a group of bounded operator on $X_1$ satisfying $\left\| e^{t \mathcal{L}}\right\|_{X_1} \leq e^{ a t} $ for $t \geq 0,$ and for some $a>0.$
\end{lemma}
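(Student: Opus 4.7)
The plan is to realize $\mathcal{L}$ as a bounded perturbation of an essentially skew-adjoint operator on $X_1$ and then combine Stone's theorem with the bounded-perturbation theorem for $C_0$ groups. The key algebraic starting point is the identity
\begin{align*}
  \mathcal{L} = \mathbb{J}(\mathfrak{L}_0 + 1) + \mathbb{K} + V, \qquad \mathbb{J} := \begin{pmatrix} 0 & 1 \\ -1 & 0 \end{pmatrix}, \qquad \mathbb{K} := \begin{pmatrix} 0 & -1 \\ -1 & 0 \end{pmatrix},
\end{align*}
which is immediate from the definition of $\mathcal{L}_0$ in \eqref{eq:def1-general-L} and from $\mathcal{L} = \mathcal{L}_0 + V$. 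In this splitting, $\mathcal{A} := \mathbb{J}(\mathfrak{L}_0 + 1)$ will play the role of the skew-adjoint part and $\mathcal{B} := \mathbb{K} + V$ the role of a bounded perturbation.

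First I would verify that $\mathcal{A}$, initially defined on $C^2_c(\mathbb{R}_+)^2$, is essentially skew-adjoint with closure having domain exactly $\mathcal{D}\times \mathcal{D}$. Since $\mathbb{J}$ has real entries with $\mathbb{J}^\top = -\mathbb{J}$, and commutes with the scalar operator $\mathfrak{L}_0 + 1$ (they act on distinct tensor factors of $\mathbb{R}^2 \otimes L^2_r$), the formal computation $\mathcal{A}^* = (\mathfrak{L}_0 + 1)^* \mathbb{J}^* = -\mathbb{J}(\mathfrak{L}_0 + 1) = -\mathcal{A}$, combined with the essential self-adjointness of $\mathfrak{L}_0$ on $C^2_c(\mathbb{R}_+)$ (stated in the text and following from the limit-point character of $\mathfrak{L}_0$ at both $r = 0$ and $r = \infty$), upgrades to essential skew-adjointness of $\mathcal{A}$ with closure on $\mathcal{D}\times \mathcal{D}$. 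Stone's theorem then produces a strongly continuous group of isometries $\{e^{t\mathcal{A}}\}_{t \in \mathbb{R}}$ on $X_1$.

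To conclude, $\mathbb{K}$ has operator norm $1$, and under Assumption~\ref{asmp:1} the potential $V$ is a bounded multiplication operator on $X_1$ with $\|V\|_{X_1 \to X_1} \leq \|V_1\|_\infty + \|V_2\|_\infty < \infty$; hence $\mathcal{B}$ is bounded. The bounded-perturbation theorem for $C_0$ (semi)groups (see, e.g., Engel--Nagel) then yields that $\mathcal{L} = \mathcal{A} + \mathcal{B}$ generates a $C_0$ group on $X_1$ whose domain coincides with that of $\mathcal{A}$, namely $\mathcal{D} \times \mathcal{D}$, and which satisfies
\begin{align*}
  \|e^{t\mathcal{L}}\|_{X_1 \to X_1} \leq e^{\|\mathcal{B}\|\, |t|}, \qquad t \in \mathbb{R}.
\end{align*}
Taking $a := \|\mathcal{B}\|$ gives the claimed bound for $t \geq 0$, and closedness of $\mathcal{L}$ on $\mathcal{D}\times\mathcal{D}$ is automatic from its role as the generator of a $C_0$ semigroup. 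The only substantive step in this plan is the essential skew-adjointness of $\mathcal{A}$, which through the commutation with $\mathbb{J}$ reduces to the quoted essential self-adjointness of $\mathfrak{L}_0$; the remainder is a direct application of Stone's theorem and the bounded-perturbation theorem.
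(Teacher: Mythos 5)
Your proof is correct, and it takes a different (and in some respects cleaner) route than the paper. The algebraic starting point is the same: in both arguments the key structure is that $\mathcal{L}$ is a skew-adjoint ``core'' plus a bounded perturbation. Your decomposition $\mathcal{L}=\mathbb{J}(\mathfrak{L}_0+1)+\mathbb{K}+V$ is valid, $\mathcal{A}=\mathbb{J}(\mathfrak{L}_0+1)$ is unitarily equivalent (via a matrix $U$ diagonalizing $\mathbb{J}$, which is exactly the matrix the paper conjugates by) to $\mathrm{diag}(-i(\mathfrak{L}_0+1),\,i(\mathfrak{L}_0+1))$, so essential skew-adjointness of $\mathcal{A}$ on $C^2_c\times C^2_c$ with closure on $\mathcal{D}\times\mathcal{D}$ does reduce to the essential self-adjointness of $\mathfrak{L}_0$, as you claim. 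Stone's theorem then gives an isometry group, and since $\mathbb{K}+V$ is bounded, the bounded-perturbation theorem yields the group, the exponential bound with $a=\|\mathbb{K}+V\|$, and closedness of $\mathcal{L}$ on the unchanged domain $\mathcal{D}\times\mathcal{D}$, all at once.

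The paper instead conjugates the full operator by $U$, reaching $i\mathcal{H}=-U\mathcal{L}U^{-1}$ whose diagonal is $\mathrm{diag}(i\mathbf{L},-i\mathbf{L})$ with $\mathbf{L}=\mathfrak{L}_0+1+\tfrac{V_1+V_2}{2}$, and then \emph{directly verifies} the Hille--Yosida resolvent bound by a Neumann-series argument against the off-diagonal bounded part $\mathfrak{V}$; closedness of $\mathcal{L}$ is argued separately from closedness of the self-adjoint $L_1,L_2$. The difference is primarily one of packaging: the paper carries part of the potential into the ``free'' self-adjoint piece and does an explicit resolvent estimate, while you keep the free piece minimal, diagonalize $\mathbb{J}$ at the level of the skew-adjoint part only, and delegate both the group estimate and the closedness to two standard theorems. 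Your route is shorter and avoids the explicit resolvent computation (and incidentally also avoids the paper's claim that $\mathbf{L}$ is positive, which is not needed -- self-adjointness alone already gives $\|(i\mathbf{L}-\lambda)^{-1}\|\le 1/\lambda$ for real $\lambda>0$), at the cost of invoking the bounded-perturbation theorem as a black box. Both are complete.
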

\begin{proof}
The proof follows similar argument to that of Lemma 2.5 in \cite{LSS25} which is in turn based on \cite{ES2,KS}. Since $L_1$ and $L_2$ are self-adjoint, and hence closed, it follows that $\mathcal{L}$ is closed on $\calD \times \calD$. Next, it is more convenient to consider a unitarily equivalent operator to $\calL.$ Let $U= \frac{1}{\sqrt{2}}\begin{pmatrix}
    1 & i \\ 1 & -i
\end{pmatrix}$ and define 
\begin{align*}
   i \mathcal{H}= - U \mathcal{L} U^{-1}=i \begin{pmatrix}
       \mathbf{L} & \frac{V_2(r)-V_1(r)}{2}+1 \\
       \frac{V_1(r)-V_2(r)}{2}-1 & -\mathbf{L} 
   \end{pmatrix} , 
\end{align*}
where $\mathbf{L}:=\mathfrak{L}_0 + 1 +   \frac{V_1(r)+V_2(r)}{2}.$ We have the following resolvent identity 
\begin{align*}
    ( i \mathcal{H} - \lambda)^{-1} = R(\lambda) ( \mathrm{Id} + \mathfrak{V} R(\lambda) )^{-1}, 
\end{align*}
where \begin{align*}
  R(\lambda):= \begin{pmatrix}
      (i \mathbf{L}-\lambda)^{-1} & 0 \\ 
      0 &  (i \mathbf{L}-\lambda)^{-1}
  \end{pmatrix}  , \qquad \mathfrak{V}:= i \begin{pmatrix}
      0 & \frac{V_2(r)-V_1(r)}{2}+1  \\
     \frac{V_1(r)-V_2(r)}{2}-1  & 0   
  \end{pmatrix}.                                                                       
\end{align*}
Using the fact $\mathbf{L}$ is positive and self-adjoint operator, we have  for $\lambda > a, $ for some $a>0$
\begin{align*}
   \left\| ( i \mathcal{H} - \lambda)^{-1} \right\| \leq \frac{1}{\lambda} \left( 1- \frac{\left\| \mathfrak{V} \right\|_{\infty}}{\lambda} \right)^{-1} \leq \frac{1}{\lambda -a}
\end{align*}
Applying the Hille-Yosida theorem to $i \mathcal{H},$ we obtain the existence of the group of bounded operators $e^{it \mathcal{H}}$ and the exponential bound $\left\| e^{i t \mathcal{H}} \right\| \leq e^{a t}.$ Since $i \mathcal{H}$ and $\calL$ are unitary equivalent, we obtain $\left\| e^{ t \mathcal{L}} \right\| \leq e^{a t}.$
\end{proof}
\begin{remark}
Note that, if $\calL=\calL_{GP},$ we can take $a=1$ by inspection of the $L^{\infty}$-norm of the matrix $\mathfrak{V}.$
\end{remark}

\subsection{A Stone-type formula.} \label{subsec:STF}
The main goal of this section is to derive a Stone-type formula for the evolution $e^{t \calL}$. For this purpose, we first study the resolvent of $(i \calL_0-z)^{-1}$ of the free operator 
\begin{align*}
  \calL_0:= \begin{pmatrix}
      0 & \mathfrak{L}_0 \\
      -(\mathfrak{L}_0 +2) & 0 
  \end{pmatrix},
\end{align*}
where $\mathfrak{L}_0= -\frac{1}{2} \partial_r^2 + \frac{3}{8 \sh^2(r) } + \frac{1}{8}.$
Consider the following differential equation
\begin{align}
    \label{eq:ODE-L0}
-\partial_r^2 h + \frac{3}{4 \sh(r)^2} h = z^2 h   . 
\end{align}  
Let $\varphi_0(r,z)$ be the elementary spherical functions in $\HH^4$ satisfying $(-\Delta_{\HH^4}-\frac{9}{4}) \varphi_0(r,z)= z^2 \varphi_0(r,z).$ Then $\phi_0(r,z)=\sh^{\frac{3}{2}}(r) \varphi_0(r,z) ,$ is a solution to the equation \eqref{eq:ODE-L0}. It well known that in view of the asymptotics of $\varphi_0$ that $\phi_0(r,z)=c \,r^{\frac{3}{2}} + O(r^{\frac{5}{2}})$ for small $r$ and $\phi_0(r,z) \sim c_1 \, e^{i z r} + c_2 \, e^{-i z r}$ for large $r$.
Let $\psi_0(r,z)$ be a solution to \eqref{eq:ODE-L0} such that $(\phi_0,\psi_0)$ forms a fundamental system of solutions to \eqref{eq:ODE-L0}. Notice that only possible behavior for large $r$ are $e^{i z r}$ and $e^{-i z r}.$ We consider $\psi_0(r,z)$ to be solution with asymptotic 
$e^{i z r}$ for large $r.$ Therefore, in view of the possible behaviors near the origin we must have $\psi_0(r,z)=\tc_1 r^{-\frac{1}{2}} + \tc_2 r^{\frac{3}{2}} + o(1)$ for small $r. $ \\

Next, suppose that $\Phi:= \begin{pmatrix}
    u \\ v
\end{pmatrix}$ solves the $i\mathcal{L}_0\Phi=z \Phi.$ Then, $u(r,z)$ and $v(r,z)$ must satisfy \begin{align}
\begin{cases}
     \qquad  \quad   i \mathfrak{L}_0 v = z u, \\  
       -i (\mathfrak{L}_0 +2) u = z v. 
\end{cases}
\end{align}
Therefore, $u(r,z)$ must satisfy $\mathfrak{L}_0(\mathfrak{L}_0 +2) u =z^2 u$ and 
corresponding $v(r,z)= -\frac{i}{z} (\mathfrak{L}_0 +2) u(r,z).$ The ansatz $u(r,z)=\psi_0(r,k(z))$ leads to the equation 
\begin{align}
\label{eq:P(k,z)-1}
P(k,z):&=    \frac{1}{4} k^4 + \frac{9}{8} k^2 +\frac{17}{64}  - z^2 =0, \quad \text{with } \; k \equiv k(z)
\end{align} 
This equation admits four distinct roots, which we denote $k_j(z)$ for $j=1,2,3,4.$ See section~\ref{sec:nearinfty-small-xi} and Lemma~\ref{lem:k_j-behavior} for the derivation and behavior of these complex roots. In view of the behavior of $k_j,$ we will only use $k_1(z)$ and $k_2(z).$ Then, in view of Lemma \ref{lem:k_j-behavior}, we obtain the Weyl-Titchmarsh matrix solutions of the operator $i \calL_0:$
\begin{align*}
    \Psi_{(0)}^{\pm}(r,z):=\begin{bmatrix}
        \Psi_{(0,1)}^{\pm}(r,z) & \Psi_{(0,2)}^{\pm}(r,z)
    \end{bmatrix} = \begin{pmatrix}
        \psi_0(r,k_1(z))  &  \psi_0(r,k_2(z)) \\
        c_1(z) \psi_0(r,k_1(z))  &     c_2(z) \psi_0(r,k_2(z))
    \end{pmatrix}  
\end{align*}
and \begin{align*}
   \PPsi_{(0)}^{\pm}(r,z):=\begin{bmatrix}
        \Psi_{(0,3)}^{\pm}(r,z) & \Psi_{(0,4)}^{\pm}(r,z)
       \end{bmatrix} =  \begin{pmatrix}
        \phi_0(r,k_1(z))  &  \phi_0(r,k_2(z)) \\
        c_1(z) \phi_0(r,k_1(z))  &     c_2(z) \phi_0(r,k_2(z))
    \end{pmatrix} 
\end{align*}
where,  \begin{align*}
   c_j(z)= c(k_j(z),z)&:=     - i     \frac{ ( \frac{1}{2} k_j(z)^2 + \frac{17}{8}) }{ z} \qquad \text{for} \; j=1,2.
\end{align*}
Note that, $c_j(-z)=-c_j(z)$ and $ \Psi_{(0)}^{\pm} $ is $L^2$ near $\infty$ and  $\PPsi_{(0)}^{\pm}$ is $L^2$ near $r=0.$  

We define the Green's function as 
\begin{align*}
     \mathfrak{R}_{ij}^{\pm}(r,s,z):=i\bigg( \Psi_{(0,i)}^{\pm}(r,z) \Psi_{(0,j)}^{\pm }(s,z)^t \mathbb{1}_{\{ 0\leq s \leq r\} } + \Psi_{(0,j)}^{\pm}(r,z) \Psi_{(0,i)}^{\pm  }(s,z)^t \mathbb{1}_{\{ r\leq s \leq \infty \} } \bigg) \sigma_1
\end{align*}
and we denote by 
\begin{align*}
   \mu_{ij}^{0,\pm}(z)=W(\Psi_{(0,i)}^{\pm}(\cdot,z),\Psi_{(0,j)}^{\pm}(\cdot,z))
\end{align*}

\begin{lemma}
\label{lem::kernel-resol-L0}
The kernel of the resolvent $(i \calL_0 -z)^{-1}$ is given by 
\begin{align}
\label{eq:kernelR0}
    \mathcal{R}_0^{\pm}(r,s,z):=\frac{1}{\mu_{13}^{0,\pm}(z)}  \mathfrak{R}_{13}^{\pm}(r,s,z) + \frac{1}{\mu_{24}^{0,\pm}(z)}  \mathfrak{R}_{24}^{\pm}(r,s,z),
\end{align}
Moreover, for large $|z|$ we have\begin{align}
\label{eq:mu_ij-0}
     |\mu_{13}^{0,\pm}(z)|\simeq|\mu_{24}^{0,\pm}(z)|\simeq \sqrt{|z|}
\end{align}
\end{lemma}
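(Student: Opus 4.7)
The starting observation is that the ansatz $u(r,z)=\psi_0(r,k)$, $v=-\frac{i}{z}(\mathfrak{L}_0+2)u$ reduces $(i\mathcal{L}_0-z)\Phi=0$ to the scalar equation \eqref{eq:ODE-L0} at spectral parameter $k^2$, conditional on the algebraic constraint $P(k,z)=0$. The four-dimensional solution space therefore splits into two independent scalar modes, one for each root $k_j(z)^2=-\tfrac94\pm 2\sqrt{1+z^2}$, $j=1,2$, and the kernel formula \eqref{eq:kernelR0} reflects exactly this decomposition: each summand is a scalar-type Green's function built from the Weyl--Titchmarsh pair $(\Psi_{(0,j)}^\pm,\Psi_{(0,j+2)}^\pm)$, both parametrized by the single root $k_j(z)$, with $\Psi_{(0,j)}^\pm$ chosen $L^2$ near $\infty$ and $\Psi_{(0,j+2)}^\pm$ chosen $L^2$ near $0$. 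The $\sigma_1$ appearing in $\mathfrak{R}_{ij}^\pm$ is dictated by the identity $\sigma_1\mathcal{L}_0=\mathcal{L}_0^t\sigma_1$ (equivalently, $\sigma_1\mathcal{L}_0$ is formally self-adjoint), so that the natural resolvent pairing is the $\sigma_1$-twisted one.

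\textbf{Verification of \eqref{eq:kernelR0}.} First I would check that $(i\mathcal{L}_0-z)\mathcal{R}_0^\pm(\cdot,s,z)=\delta(\cdot-s)\,I$ as distributions. For $r\neq s$ each $\mathfrak{R}_{ij}^\pm(\cdot,s,z)$ is a linear combination in $r$ of the columns of $\Psi_{(0,i)}^\pm$ and $\Psi_{(0,j)}^\pm$, hence lies in $\ker(i\mathcal{L}_0-z)$. At $r=s$ the jump of $\sigma_3\partial_r\mathfrak{R}_{ij}^\pm$ equals, up to the $i\sigma_1$ prefactor, the matrix Wronskian $\mu_{ij}^{0,\pm}(z)\,I$, which is constant in $r$ precisely because $\Psi_{(0,i)}^\pm,\Psi_{(0,j)}^\pm\in\ker(i\mathcal{L}_0-z)$. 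Dividing by $\mu_{ij}^{0,\pm}$ normalizes the jump to $\delta(r-s)I$ in the mode $k_j(z)$, and summing over $j=1,2$ accounts for the full solution space. Combined with the boundary integrability of the Weyl--Titchmarsh solutions, this identifies $\mathcal{R}_0^\pm$ as the resolvent kernel.

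\textbf{Wronskian asymptotics.} Writing $\Psi_{(0,j)}^\pm=\psi_0(r,k_j(z))\,\mathbf{v}_j(z)$ and $\Psi_{(0,j+2)}^\pm=\phi_0(r,k_j(z))\,\mathbf{v}_j(z)$ with $\mathbf{v}_j(z)=(1,c_j(z))^t$, a direct expansion of $W(\mathbf{f},\mathbf{g})=\mathbf{f}^t\sigma_3\mathbf{g}'-(\mathbf{f}')^t\sigma_3\mathbf{g}$ yields
\[
\mu_{j,j+2}^{0,\pm}(z)\;=\;(1-c_j(z)^2)\,W_s\bigl(\psi_0(\cdot,k_j(z)),\phi_0(\cdot,k_j(z))\bigr),
\]
where $W_s$ is the scalar Wronskian for \eqref{eq:ODE-L0}. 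From the biquadratic formula one reads $|k_j(z)|\simeq\sqrt{|z|}$ as $|z|\to\infty$. Substituting into $c_j(z)=-i(\tfrac12 k_j(z)^2+\tfrac{17}{8})/z$ gives $c_1(z)\to -i$ and $c_2(z)\to i$, so the algebraic factor $1-c_j(z)^2\to 2$ is bounded away from $0$. Finally, a standard computation using the asymptotic $\psi_0(r,k)\sim e^{ikr}$ at infinity together with the connection expansion of $\phi_0(r,k)$ in terms of $e^{\pm ikr}$ (via the $\mathbb{H}^4$ spherical function asymptotics) produces $|W_s(\psi_0,\phi_0)(k)|\simeq|k|$ uniformly for large $|k|$ in the relevant sector. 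Combining these ingredients gives $|\mu_{j,j+2}^{0,\pm}(z)|\simeq|k_j(z)|\simeq\sqrt{|z|}$ for both $j=1,2$.

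\textbf{Main obstacle.} The principal technical point is controlling the scalar Wronskian $W_s(\psi_0,\phi_0)(k)$ uniformly in the complex sector where $k_j(z)$ lives for large $|z|$, which requires the sheet information supplied by Lemma~\ref{lem:k_j-behavior} and the non-degeneracy of the connection coefficient of $\phi_0$ there. Once this sectorial information is in hand, verifying the jump condition and extracting the $\sqrt{|z|}$-scaling reduce to explicit algebra in $k_j(z)$ and $c_j(z)$.
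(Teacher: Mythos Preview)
Your plan is essentially correct and close in spirit to the paper's, but one step in the jump verification is misstated and would not go through as written.

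\textbf{The jump is rank one, not $\mu_{ij}^{0,\pm}I$.} For the pair $(i,j)=(1,3)$ (and likewise $(2,4)$) both columns are proportional to the same vector $\mathbf{v}_j=(1,c_j(z))^t$, so the jump of $\partial_r\mathfrak{R}_{j,j+2}^\pm$ at $r=s$ is the rank-one matrix
\[
i\bigl[\Psi_{(0,j)}'(s)\Psi_{(0,j+2)}(s)^t-\Psi_{(0,j+2)}'(s)\Psi_{(0,j)}(s)^t\bigr]\sigma_1
=-i\,W_s(k_j)\,\mathbf{v}_j\mathbf{v}_j^t\sigma_1,
\]
not a multiple of the identity. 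Dividing by $\mu_{j,j+2}^{0,\pm}=(1-c_j^2)W_s(k_j)$ leaves $\frac{-i}{1-c_j^2}\mathbf{v}_j\mathbf{v}_j^t\sigma_1$, still rank one. It is only after summing $j=1,2$ and using the algebraic identity $c_1(z)c_2(z)=1$ (equivalently $\mu_{14}^{0,\pm}=\mu_{23}^{0,\pm}=0$) that the two rank-one pieces combine to a multiple of $I$; this identity is the heart of the decoupling and must be checked explicitly in your route. The paper handles this differently: it applies the general matrix-inversion construction of \S\ref{subsec:GreenKernel} (via Lemma~\ref{inver-2-matrix}) to get
\[
\mathcal{R}_0^\pm=i\,\Psi_{(0)}^\pm(\mathfrak{D}^\pm)^{-t}\PPsi_{(0)}^{\pm\,t}\sigma_1\,\mathbb{1}_{s\le r}
+i\,\PPsi_{(0)}^\pm(\mathfrak{D}^\pm)^{-1}\Psi_{(0)}^{\pm\,t}\sigma_1\,\mathbb{1}_{r\le s},
\]
and then observes that $\mathfrak{D}^\pm=W(\Psi_{(0)}^\pm,\PPsi_{(0)}^\pm)$ is \emph{diagonal} precisely because $c_1c_2=1$, which immediately yields the two-term formula \eqref{eq:kernelR0}. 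Your direct-verification route is more elementary but must carry out this algebra by hand.

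\textbf{Wronskian asymptotics.} Your factorization $\mu_{j,j+2}^{0,\pm}=(1-c_j^2)W_s(k_j)$ and the limit $1-c_j^2\to 2$ match the paper. The paper, however, evaluates $W_s(\psi_0,\phi_0)$ at $r=0$ using the small-$r$ behaviors $\psi_0\sim(k r)^{-1/2}$, $\phi_0\sim(k r)^{3/2}$, which gives $W_s\simeq k$ by a one-line Wronskian computation with no sectorial analysis. Your large-$r$ route via $\phi_0\sim A(k)e^{ikr}+B(k)e^{-ikr}$ gives $W_s=-2ikB(k)$ and requires $|B(k)|\simeq 1$ in the relevant sector; with the paper's normalization of $\phi_0$ at the origin this holds, but it is more work than the $r=0$ evaluation, and the answer is sensitive to the normalization of $\phi_0$ (with the standard spherical-function normalization $\phi_0(0^+)\sim r^{3/2}$ one has $|B(k)|\simeq|k|^{-3/2}$ instead).
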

\begin{proof}
The proof of this lemma relies on details and techniques introduced in later sections, in particular in sections \ref{sec:nearinfty-small-xi}-\ref{sec:GreenKernel-non-resonance}. Since it is independent of the rest of the material in this section, we postpone the proof to Appendix~\ref{sec:resol-iL0}.
\end{proof}

\begin{lemma} \label{lem:ST1}
Let $t\in \R.$   For any $\phi,\psi \in \calD \times \calD,$ we have 
\begin{align}
\label{eq:ST1}
    \langle e^{t \mathcal{L}} \Phi, \Psi \rangle&= \lim_{R \to \infty} \lim_{b \to 0^{+}}  \frac{1}{2\pi i} \int_{ib-R}^{i b+R} e^{it \lambda} \langle \left( e^{-bt}(i\mathcal{L}-(\lambda + i b))^{-1} - e^{bt}(i\mathcal{L}-(\lambda - i b))^{-1}  \right) , \Phi, \Psi \rangle d \lambda
\end{align}
\end{lemma}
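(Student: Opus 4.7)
The plan is to obtain the representation via the standard Laplace--Bromwich inversion of the semigroup provided by Lemma~\ref{lem:HY}, followed by a contour deformation that exploits the spectral assumption $\spec(i\mathcal{L})\subset\mathbb{R}$ from Assumption~\ref{asmp:2} (equivalently, $\spec(\mathcal{L})\subset i\mathbb{R}$). This is the approach adopted in \cite{KS,ES2,LSS25}, and it is essentially the non-self-adjoint analogue of the classical Stone formula.

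First, from Lemma~\ref{lem:HY} we have $\|e^{t\mathcal{L}}\|\le e^{a|t|}$ for some $a>0$. The standard Laplace identity then gives
\begin{align*}
(\mu-\mathcal{L})^{-1}\Phi&=\int_0^\infty e^{-s\mu}e^{s\mathcal{L}}\Phi\,ds,\qquad\re(\mu)>a,\\
(\mu-\mathcal{L})^{-1}\Phi&=-\int_{-\infty}^0 e^{-s\mu}e^{s\mathcal{L}}\Phi\,ds,\qquad\re(\mu)<-a.
\end{align*}
Applying Bromwich inversion (valid on $\mathcal{D}\times\mathcal{D}$, weakly against $\Psi\in\mathcal{D}\times\mathcal{D}$) to the two sides yields, for every $t\ne 0$ and every $B>a$,
\begin{align*}
\langle e^{t\mathcal{L}}\Phi,\Psi\rangle=\frac{1}{2\pi i}\lim_{R\to\infty}\biggl\{\int_{B-iR}^{B+iR}-\int_{-B-iR}^{-B+iR}\biggr\}e^{t\mu}\langle(\mu-\mathcal{L})^{-1}\Phi,\Psi\rangle\,d\mu,
\end{align*}
each single Bromwich integral recovering $e^{t\mathcal{L}}\Phi$ for the correct sign of $t$ and annihilating it for the other. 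Because the resolvent is analytic on both $\{\re(\mu)>0\}$ and $\{\re(\mu)<0\}$, the two contours can be deformed from $\re(\mu)=\pm B$ to $\re(\mu)=\pm b$ for any $b\in(0,B)$: the connecting horizontal segments at $\im(\mu)=\pm R$ vanish as $R\to\infty$ thanks to the resolvent bound $\|(\mu-\mathcal{L})^{-1}\Phi\|=O(|\mu|^{-2})$ for $\Phi\in D(\mathcal{L}^2)$, combined with $|e^{t\mu}|\le e^{|t|B}$ on the strip $|\re(\mu)|\le B$. Changing variables through the identity $(\mu-\mathcal{L})^{-1}=-i(i\mathcal{L}-i\mu)^{-1}$, parameterizing the shifted contours by $\lambda\pm ib$ with $\lambda\in\mathbb{R}$, and regrouping the exponentials, the pair of integrals rearranges (up to the sign conventions of the statement, fixed by the substitution $\lambda\mapsto-\lambda$) into the factors $e^{\pm bt}$ multiplying the resolvents $(i\mathcal{L}-(\lambda\pm ib))^{-1}$ that appear in the claimed formula.

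The main subtle point is the iterated limit $\lim_{R\to\infty}\lim_{b\to 0^+}$. By the contour deformation above, for each fixed $R$ the value of the rearranged integral is in fact \emph{independent} of $b\in(0,B)$, so the $b\to 0^+$ limit encodes only the distributional jump of the resolvent across the essential spectrum. Although each of the two resolvents $(i\mathcal{L}-(\lambda\pm ib))^{-1}$ may individually diverge on $\spec(i\mathcal{L})\cap[-R,R]$, their weighted difference, when tested against $\Phi,\Psi\in\mathcal{D}\times\mathcal{D}$, remains locally integrable in $\lambda$ by the limiting-absorption-type estimates obtained from the detailed resolvent analysis of Sections~\ref{sec:Sol-near-0-non-resonance}--\ref{sec:GreenKernel-resonance}. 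Once the inner limit is identified with this jump, the outer limit $R\to\infty$ simply extends the integration to the full essential spectrum. The real technical hurdle thus lies in the boundary-value analysis of the resolvent at the spectral thresholds and at intermediate frequencies; for the purposes of the present lemma, however, the fact that the rearranged formula holds identically for each $b\in(0,B)$ suffices to conclude that both iterated limits exist and reproduce $\langle e^{t\mathcal{L}}\Phi,\Psi\rangle$.
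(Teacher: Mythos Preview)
Your overall approach matches the paper's: Bromwich inversion of the Hille--Yosida group followed by a contour shift toward the imaginary axis. The genuine gap is the resolvent bound you invoke to kill the connecting segments. You assert $\|(\mu-\mathcal{L})^{-1}\Phi\|=O(|\mu|^{-2})$ on the segment $\re\mu\in[b,B]$, $\im\mu=\pm R$, but give no proof. The Hille--Yosida estimate $\|(\mu-\mathcal{L})^{-1}\|\le(\re\mu-a)^{-1}$ applies only for $\re\mu>a$, and since $\mathcal{L}$ is neither self-adjoint nor normal, mere analyticity of the resolvent on $\{\re\mu>0\}$ (from Assumption~\ref{asmp:2}) gives no quantitative control in the strip $0<\re\mu<a$. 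The resolvent-identity expansion $(\mu-\mathcal{L})^{-1}\Phi=\mu^{-1}\Phi+\mu^{-2}\mathcal{L}\Phi+\mu^{-2}(\mu-\mathcal{L})^{-1}\mathcal{L}^2\Phi$ still requires an a priori bound on $(\mu-\mathcal{L})^{-1}$ on that strip in order to close. The paper addresses exactly this point by proving a limiting absorption principle (Claim~\ref{clm:absorption-prinp}): a weighted estimate $|z|^{1/2}\|\langle\cdot\rangle^{-\sigma}(i\mathcal{L}-z)^{-1}f\|_{L^2}\lesssim\|\langle\cdot\rangle^{\sigma}f\|_{L^2}$ for large $|\re z|$, obtained by first computing the free resolvent kernel of $i\mathcal{L}_0$ explicitly (Lemma~\ref{lem::kernel-resol-L0}) and then perturbing via the resolvent identity.

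A secondary issue: your claim that ``for each fixed $R$ the value of the rearranged integral is in fact independent of $b\in(0,B)$'' is not correct. The Cauchy deformation from $\re\mu=B$ to $\re\mu=b$ produces the connecting segments as a $b$-dependent error; independence of $b$ emerges only in the limit $R\to\infty$, and only once those segments are shown to vanish there---which is precisely the limiting absorption step above. This also muddies your treatment of the iterated limit. Finally, the lemma as stated does not pass the $b\to 0^+$ limit inside the integral; that passage is carried out later (Proposition~\ref{jump-resol}) using the detailed kernel analysis. Your forward reference to Sections~\ref{sec:Sol-near-0-non-resonance}--\ref{sec:GreenKernel-resonance} for the inner limit is therefore neither needed nor appropriate at this stage.
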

\begin{proof}
The proof very similar to \cite[Lemma 6.8]{KS}, \cite[Lemma 12]{ES2}, and \cite[Lemma 2.7]{LSS25}. First notice that, using the Laplace transform with the Hille-Yosida operator, we can write the resolvent for $\re(z)>a$ ($a$ as in Lemma \ref{lem:HY}) as
\begin{align} \label{eq:Resol-int}
    (i\calL-z)^{-1} \phi=-\int_0^{\infty} e^{-sz} e^{s \calL} \phi ds.
\end{align}
Then arguing as the works mentioned above, we can invert the Laplace transform to conclude that for $t\in \mathbb{R}$ and for any $\phi,\psi \in \calD \times \calD$ and any $b>a$ we have 
\begin{align}
\label{eq:Weak-represt-L-R}
      \langle e^{t \calL} \phi ,\psi \rangle=\frac{1}{2\pi i} \lim_{R \to \infty} \bigg[ \int_{ib-R}^{ib+R} e^{-it z} \langle(i \calL -z)^{-1}\phi, \psi \rangle dz  - \int_{-ib-R}^{-ib+R} e^{-it z} \langle(i \calL -z)^{-1}\phi, \psi \rangle dz   \bigg].
\end{align}
Next, we consider the first integral in \eqref{eq:Weak-represt-L-R} over the closed curve $\Gamma^{+}_R,$  which is the rectangle with vertices $\pm R+ib$ and $\pm R+i\varepsilon,$ for some $\varepsilon>0.$ Similarly, the second integral in \eqref{eq:Weak-represt-L-R} is taken over the closed curve $\Gamma^{-}_R,$ which is the reflection of $\Gamma^+_R$ below the real axis. Write $\Gamma^{\pm}_R=\bigcup_{1 \leq j \leq 4} \gamma^{j,\pm}_R,$ where $\gamma^{1,\pm}_R$ and $\gamma^{2,\pm}_R$ are the top and bottom horizontal curves, and $\gamma^{3,\pm}_R$ and $\gamma^{4,\pm}_R$ are the right and left sides of the rectangles, respectively. Applying the residue theorem to the contour integrals over the closed curves $\Gamma^{+}_R$ and $\Gamma^{-}_R,$ one can see the integral over $\gamma^{1,\pm}_R$ in \eqref{eq:Weak-represt-L-R} is equal to contour integral over $\gamma^{j,\pm}_R,$ for $j=2,3,4.$ 
Therefore, we obtain 
\begin{align*}
    \langle e^{t \mathcal{L}} \Phi, \Psi \rangle&= \lim_{R \to \infty} \lim_{\varepsilon \to 0^{+}}  \frac{1}{2\pi i} \int_{\varepsilon-iR}^{\varepsilon+iR} e^{it \lambda} \langle \left( e^{-\varepsilon t}(i\mathcal{L}-(\lambda + i \varepsilon))^{-1} - e^{\varepsilon t}(i\mathcal{L}-(\lambda - i \varepsilon))^{-1}  \right) , \Phi, \Psi \rangle d \lambda \\
    &+ \lim_{R \to \infty} \lim_{\varepsilon \to 0^{+}} J_{R,\varepsilon}^3(\Phi, \Psi)+\lim_{R \to \infty} \lim_{\varepsilon \to 0^{+}} J_{R,\varepsilon}^4(\Phi, \Psi),
\end{align*}
where $J_{R,\varepsilon}^3$ and $J_{R,\varepsilon}^3$ are the contour integral over the horizontal segments $\gamma^{3,\pm}_R$ and $\gamma^{4,\pm}_R$.

Next, we will prove that the contribution of the horizontal segments $\gamma^{3,\pm}_R$ and $\gamma^{4,\pm}_R$ vanishes as $R \to \infty.$ This is achieved via limiting absorption principle, which will be established in  
 the following Claim. 
\begin{claim} \label{clm:absorption-prinp}
   Let $\Lambda_0 \gg 1,$ and $\sigma>\frac{1}{2},$ then we have 
   \begin{align}
   \label{eq:absorption-prinp}
       \sup_{ |\im(z)|>0|, |\re(z)| >\Lambda_0} |z|^{\frac{1}{2}} \left\| \langle \cdot \rangle^{-\sigma} (i\calL-z)^{-1} f \right\|_{L^2_r(0,\infty)} \lesssim \left\| \langle \cdot \rangle^{\sigma} f \right\|_{L^2_r(0,\infty)}. 
   \end{align}
\end{claim}
\begin{proof}
We only give the proof for $\im(z)>0$ and the case $\im(z)<0$ follows similarly. We first prove the limiting absorption estimate for $i \calL_0,$ i.e., we show that for $\sigma>\frac{1}{2},$
   \begin{align}
    \label{eq:absorption-prinp-L-0}
       \sup_{\im(z)>0, |\re(z)| >a} |z|^{\frac{1}{2}} \left\| \langle \cdot \rangle^{-\sigma} (i\calL_0-z)^{-1} f \right\|_{L^2_r(0,\infty)} \lesssim \left\| \langle \cdot \rangle^{\sigma} f \right\|_{L^2_r(0,\infty)}. 
   \end{align}
In view of the kernel of the resolvent $ \mathcal{R}_0^{+}(r,s,z)$ in \eqref{eq:kernelR0} and \eqref{eq:mu_ij-0}, it suffices to show that 
\begin{align*}
   \sup_{\im(z)>0, |\re(z)| >a}  \left\| \langle \cdot \rangle^{-\sigma}  \mathcal{T}_{ij}^{(0) }f \right\|_{L^2_r} \lesssim \left\| \langle \cdot \rangle^{\sigma} f \right\|_{L^2_r}, \qquad (i,j) \in \{ (1,3), (2,4) \}
\end{align*}
where 
\begin{align*}
 \mathcal{T}_{ij}^{(0) }f(r,z):=\int_0^{\infty} \mathfrak{R}_{ij}^{+}(r,s,z) f(s) ds.   
\end{align*}
Recall that, 
\begin{align*}
     \mathfrak{R}_{ij}^{\pm}(r,s,z):=i\bigg( \Psi_{(0,i)}^{\pm}(r,z) \Psi_{(0,j)}^{\pm }(s,z)^t \mathbb{1}_{\{ 0\leq s \leq r\} } + \Psi_{(0,j)}^{\pm}(r,z) \Psi_{(0,i)}^{\pm }(s,z)^t \mathbb{1}_{\{ r\leq s \leq \infty \} } \bigg) \sigma_1
\end{align*}
We only give the proof for $(i,j)=(1,3)$ and the other case follows similarly. Denote by 
\begin{align*}
 \mathfrak{R}_{13}^{+}(r,s,z)= {R}_{13}^{1}(r,s,z)+ {R}_{13}^{2}(r,s,z),
\end{align*}
where 
\begin{align*}
    {R}_{13}^{1}(r,s,z) &:= i  \Psi_{(0,1)}^{+}(r,z) \Psi_{(0,3)}^{+ }(s,z)^t \sigma_1  \mathbb{1}_{\{ 0\leq s \leq r\} } \\
  {R}_{13}^{2}(r,s,z) &:= i \Psi_{(0,3)}^{+}(r,z) \Psi_{(0,1)}^{+  }(s,z)^t \mathbb{1}_{\{ r\leq s \leq \infty \} } \sigma_1
\end{align*}

In view of the asymptotics of $\Psi_{(0,j)}^{+}$, we divide into $3$ regions depending on $r,s,|z|^{-\frac{1}{2}}$ as follows 
\begin{align*}
  T_l^j:= \int_0^\infty \left( \mathbb{1}_{\{ 0\leq s \leq r\} }  K_l^j(r,s,z) + \mathbb{1}_{\{ r \leq s \} } \widetilde{K}_l^j(r,s,z)  \right)  f(s) ds, \qquad l=1,2,3, \quad j=1,2, 
\end{align*}
where 
\begin{align*}
  K_1^j(r,s,z)&=  {R}_{13}^{j}(r,s,z)  \mathbb{1}_{\{   s \leq r \leq |z|^{-\frac{1}{2}} \}}\; \qquad  \widetilde{K}_1^j(r,s,z)=  {R}_{13}^{j}(r,s,z)  \mathbb{1}_{\{   r \leq s \leq |z|^{-\frac{1}{2}} \}} \\
  K_2^j(r,s,z)&=  {R}_{13}^{j}(r,s,z) \mathbb{1}_{\{  s \leq |z|^{-\frac{1}{2}}  \leq r \}} \qquad  \widetilde{K}_2^j(r,s,z)=  {R}_{13}^{j}(r,s,z) \mathbb{1}_{\{  r \leq |z|^{-\frac{1}{2}}  \leq s \}}, \\
    K_3^j(r,s,z) &=  {R}_{13}^{j}(r,s,z)  \mathbb{1}_{\{ |z|^{-\frac{1}{2}} \leq s \leq r \}}\,  \qquad    \widetilde{K}_3^j(r,s,z) =  {R}_{13}^{j}(r,s,z)  \mathbb{1}_{\{ |z|^{-\frac{1}{2}} \leq r \leq s \}}  .
\end{align*}

Therefore, it is suffices to prove that \begin{align*}
    \left\| \langle \cdot \rangle^{-\sigma}   T_l^j f \right\|_{L^2_r} \lesssim \left\| \langle \cdot \rangle^{\sigma} f \right\|_{L^2_r}, \qquad l=1,2,3, \quad j=1,2.
\end{align*}
Using the asymptotic of $\psi_0(r,k(z))$ and $\phi_0(r,k(z)),$ i.e., 
\begin{align*}
    \psi_0(r,z) &\sim c \, e^{i k(z) r} , \qquad \qquad \qquad   \psi_0(r,z) \sim    (k(z)r)^{-\frac{1}{2}}, \\
    \phi_0(r,z) &\sim c_1 e^{ik(z) r } + c_2 e^{-i k(z) r} , \quad \phi_0(r,z) \sim (k(z) r)^{\frac{3}{2}}.
\end{align*}
 we obtain 
 \begin{align*}
  \sum_{j=1}^2 |K_1^j(r,s,z) | + |\widetilde{K}_1^j(r,s,z) | &\lesssim  \mathbb{1}_{\{ s \leq r \leq |z|^{-\frac{1}{2}}  \}} (|z|^{\frac{1}{2}} s )^{\frac{3}{2}} r^{-\frac{1}{2}} + \mathbb{1}_{\{ r \leq s \leq |z|^{-\frac{1}{2}}   \}} (|z|^{\frac{1}{2}} r )^{\frac{3}{2}} s^{-\frac{1}{2}} \lesssim 1\\ 
 \sum_{j=1}^2 |K_2^j(r,s,z) | + |\widetilde{K}_2^j(r,s,z) | &\lesssim  \mathbb{1}_{\{ s \leq |z|^{-\frac{1}{2}} \leq r  \}} (|z|^{\frac{1}{2}} s )^{\frac{3}{2}} e^{-\im(k_1(z) r } + \mathbb{1}_{\{ r \leq |z|^{-\frac{1}{2}} \leq s  \}} (|z|^{\frac{1}{2}} r )^{\frac{3}{2}} e^{-\im(k_1(z) s}  \lesssim 1\\ 
  \sum_{j=1}^2 |K_3^j(r,s,z) | + |\widetilde{K}_3^j(r,s,z) |  &\lesssim e^{-\im(k_1(z)|r-s|} \lesssim 1  
 \end{align*}
 Thus, for $\sigma > \frac{1}{2}$ we have  \begin{align*}
    \left\| \langle \cdot \rangle^{-\sigma}   T_l^j f \right\|_{L^2_r}^2  \lesssim \int_0^{\infty}   \langle r \rangle^{-2 \sigma} \left( \int_0^\infty |f(s)| ds \right)^2 dr  \lesssim \left\| \langle \cdot \rangle^{\sigma} f \right\|_{L^2_r}^2.
\end{align*}
To obtain \eqref{eq:absorption-prinp}, we write $i\calL:=i\calL_0 + V .$ Since $|V(r)| \lesssim e^{-2r}$  then by \eqref{eq:absorption-prinp-L-0} for $\Lambda_0$ large we have
\begin{align*}
    \sup_{ \im(z)>0, \re(z)> \Lambda_0} \left\| V (i \calL_0 -z)^{-1} \right\| \leq \frac{1}{2},
\end{align*}
where $\|\cdot\|$ here denotes the operator nom for a map from $\langle \cdot \rangle^{-\sigma} L^2 \times \langle \cdot \rangle^{-\sigma} L^2   \to \langle \cdot \rangle^{\sigma} L^2 \times \langle \cdot \rangle^{\sigma} L^2 .$ This proves the existence of $(\mathrm{Id}+V (i \calL_0 -z)^{-1})$ as bounded operator from $\langle \cdot \rangle^{\sigma} L^2 \times \langle \cdot \rangle^{\sigma} L^2 $ to $\langle \cdot \rangle^{-\sigma} L^2 \times \langle \cdot \rangle^{-\sigma} L^2   .$
Finally, we obtain \eqref{eq:absorption-prinp}  as a consequence of the resolvent identity 
\begin{align*}
    (i \calL -z)^{-1} :=   (i \calL_0 -z)^{-1} (\mathrm{Id}+V  (i \calL_0 -z)^{-1})^{-1},
\end{align*}
which holds as an identity between operators from $\langle \cdot \rangle^{\sigma} L^2 \times \langle \cdot \rangle^{\sigma} L^2   \to \langle \cdot \rangle^{-\sigma} L^2 \times \langle \cdot \rangle^{-\sigma} L^2 ,$ for $\sigma>\frac{1}{2}.$ This concludes the proof of Claim \ref{clm:absorption-prinp}.
\end{proof}
This completes the proof of Lemma \ref{lem:ST1}.
\end{proof}
 
\subsection{Spectrum of $i\mathcal{L}_{GP}$}
\label{subsec:DS-L_GP}   
In this section, we study the spectrum of the linearized operator $\mathcal{L}_{GP}$ associated with the Gross–Pitaevskii equation.  Firstm notice that from the definition of $V_{GP}$ in \eqref{eq:defL_GP-V_GP} and the asymptotic behavior of $\rho$ in \eqref{eq:rho_n-asymptotics-near-infty}, it follows that $i\mathcal{L}_{GP}$ satisfies Assumption~\ref{asmp:1}. We further show that the spectrum of $i\mathcal{L}_{GP}$ is real, in particular, $\spec(i\calL_{GP})\subset (-\infty,-\eta]\cup[\eta,\infty), $ for some $\eta>0.$ In addition, we prove that its essential spectrum is $(-\infty,-\tfrac{\sqrt{17}}{8}] \cup [\tfrac{\sqrt{17}}{8},\infty),$ and it contains no threshold or embedded eigenvalues. Hence, if the discrete spectrum is non-empty, it must be contained in $
\left(-\tfrac{\sqrt{17}}{8}, -\eta\right] \cup \left[\eta, \tfrac{\sqrt{17}}{8}\right).$
Consequently, Assumptions~\ref{asmp:1}–\ref{asmp:3} are verified for $i\mathcal{L}_{GP}$, apart from the discrete spectrum condition. \\

Let  $ \Phi = \begin{pmatrix} \phi \\ \psi \end{pmatrix} $ 
be a solution to the equation
$ i \mathcal{L}_{GP}  \Phi = \lambda \Phi. $
We decompose the components into their real and imaginary parts by setting
$ \phi = \phi_1 + i \phi_2 $  and  $ \psi = \psi_1 + i \psi_2, $ where $\phi_1, \phi_2, \psi_1, \psi_2$ are real-valued functions. Then, we have 

\begin{align*}
 i \mathcal{L}_{GP} \begin{pmatrix}  i \phi_2 \\  \psi_1    \end{pmatrix}  = \lambda \begin{pmatrix}  i \phi_2 \\  \psi_1    \end{pmatrix}  \quad \text{ and } \quad  
 i \mathcal{L}_{GP} \begin{pmatrix}  \phi_1 \\  i \psi_2    \end{pmatrix}  = \lambda \begin{pmatrix}   \phi_1 \\  i \psi_2    \end{pmatrix} .
\end{align*}

Thus, if $\phi $ is real  then $\psi$ is pure imaginary. Assume $f= i \psi,  $ then 
\begin{align} \label{eq:sys-L-GP}
\begin{cases}
\mathfrak{L}_1 f = \lambda \phi \\
\mathfrak{L}_2 \phi = \lambda f  ,
\end{cases}
\end{align}
where 
\begin{align*}
    \mathfrak{L}_1 &= -\frac{1}{2} \partial_r^2 + \frac{3}{8 \sh^2(r)}  + (\rho^2 - 1) + \frac{1}{8} \\ 
    \mathfrak{L}_2 &= -\frac{1}{2} \partial_r^2 + \frac{3}{8 \sh^2(r)} + 3 (\rho^2 - 1) +  \frac{17}{8}
\end{align*}

\begin{lemma}
\label{L1=0}
The  two linearly independent solutions of $  \mathfrak{L}_1 f=0,$ are $\trho(r):=\sinh^{\frac{1}{2}}(r) \rho(r) $ and 
$$ g(r) = 2 \trho(r) \int_r^\infty \frac{1}{\trho(s)^2} ds.$$ 
Moreover, they satisfy 
\begin{align*}
    \trho(r)= a r^{\frac{3}{2}}(1 + O(r^{2})), \quad g(r)= c r^{-\frac{1}{2}}(1 +O(r^{2}\log(r))), 
\end{align*}
when $r \to 0 ,$ for some $a,c>0,$ and 
\begin{align*}
    \trho(r)=e^{\frac{1}{2}r} ( 1 -2 e^{-2 r} +O(e^{-\frac{5}{2} r})), \quad g(r)= e^{-\frac{1}{2}r}(1 + O(e^{-2r})) ,  \text{ as }r \to \infty
\end{align*}

\end{lemma}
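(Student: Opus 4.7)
The plan is to first verify by direct substitution that $\tilde{\rho}$ solves $\mathfrak{L}_1 f = 0$, then obtain the second independent solution by reduction of order, and finally read off the asymptotics from the known expansions of $\rho = \rho_1$ and $\sinh(r)$.

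For the first step I would start from the ODE \eqref{eq:ode2} with $n=1$, namely
\[
\rho'' = -\coth(r)\rho' + \frac{1}{\sinh^2 r}\rho - 2\rho + 2\rho^3,
\]
and compute $\tilde{\rho}'$ and $\tilde{\rho}''$ in terms of $\rho$, $\rho'$ from $\tilde{\rho} = \sinh^{1/2}(r)\rho$. After substituting the expression for $\rho''$, the $\rho'$ terms cancel in pairs, $\cosh^2 r$ can be replaced by $1 + \sinh^2 r$, and one obtains
\[
\tilde{\rho}'' = \frac{3}{4\sinh^2 r}\tilde{\rho} + 2(\rho^2 - 1)\tilde{\rho} + \frac{1}{4}\tilde{\rho},
\]
which is exactly $-\tfrac{1}{2}\tilde{\rho}'' + \left(\tfrac{3}{8\sinh^2 r} + \rho^2 - 1 + \tfrac{1}{8}\right)\tilde{\rho} = 0$, i.e.\ $\mathfrak{L}_1 \tilde{\rho} = 0$. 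This is the only genuinely computational step; it is tedious but routine.

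For the second solution I would apply reduction of order. Writing $\mathfrak{L}_1 f = -\tfrac{1}{2}f'' + V f$ with $V = \tfrac{3}{8\sinh^2 r} + \rho^2 - 1 + \tfrac{1}{8}$, the function
\[
g(r) := 2\,\tilde{\rho}(r)\int_r^\infty \frac{ds}{\tilde{\rho}(s)^2}
\]
is well defined because $\tilde{\rho}(s) \sim \tfrac{1}{\sqrt{2}} e^{s/2}$ as $s\to\infty$ (from Theorem~\ref{th:ode}), so the integral converges at $\infty$. A direct differentiation shows $g'' = 2\tilde{\rho}''\int_r^\infty \tilde{\rho}^{-2}\,ds$, whence $\mathfrak{L}_1 g = -2(\mathfrak{L}_1 \tilde{\rho})\int_r^\infty \tilde{\rho}^{-2}\,ds = 0$. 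Computing the Wronskian gives $W(\tilde{\rho},g) = \tilde{\rho}g' - \tilde{\rho}' g = -2 \neq 0$, so $\tilde{\rho}$ and $g$ are linearly independent. By the two-dimensional nature of the solution space, any other solution is a linear combination of these two.

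Finally, for the asymptotics: near $r=0$ one uses $\sinh^{1/2}(r) = r^{1/2}(1 + O(r^2))$ and $\rho(r) = ar(1 - \tfrac{r^2}{4} + o(r^3))$ to get $\tilde{\rho}(r) = a r^{3/2}(1+O(r^2))$. Then
\[
\int_r^\infty \frac{ds}{\tilde{\rho}(s)^2} = \int_r^{r_0} \frac{ds}{a^2 s^3(1+O(s^2))} + O(1) = \frac{1}{2a^2 r^2}(1 + O(r^2\log r)),
\]
so $g(r) = a^{-1} r^{-1/2}(1 + O(r^2\log r))$. For $r\to\infty$, $\sinh^{1/2}(r) = 2^{-1/2}e^{r/2}(1 + O(e^{-2r}))$ combined with $\rho(r) = 1 - 2e^{-2r} + o(e^{-2r})$ yields the claimed expansion of $\tilde{\rho}$; integrating $\tilde{\rho}^{-2} \sim 2 e^{-s}(1+O(e^{-2s}))$ from $r$ to $\infty$ gives $\int_r^\infty \tilde{\rho}^{-2} = 2 e^{-r}(1 + O(e^{-2r}))$ and hence the asymptotic $g(r) \sim e^{-r/2}(1+O(e^{-2r}))$. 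I expect no genuine obstacle; the only care needed is bookkeeping the error terms in the integration for $g$ (especially tracking the $r^2 \log r$ correction near $0$, which comes from iterating the $O(r^2)$ corrections in $\tilde{\rho}^{-2}$).
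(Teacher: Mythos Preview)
Your proposal is correct and follows essentially the same approach as the paper: verify $\mathfrak{L}_1\tilde\rho=0$ from the vortex ODE, obtain $g$ by reduction of order, and read off the asymptotics from those of $\rho$ and $\sinh$. The only cosmetic difference is that the paper invokes the conjugation identity $\mathfrak{L}_1 = \sinh^{1/2}\!\cdot\bigl(-\tfrac{1}{2}\Delta_{\mathrm{rad}}+\tfrac{1}{2\sinh^2 r}+\rho^2-1\bigr)\cdot\sinh^{-1/2}$ to see $\mathfrak{L}_1\tilde\rho=0$ at once, whereas you carry out the equivalent direct computation (and note that your sign in $\mathfrak{L}_1 g = -2(\mathfrak{L}_1\tilde\rho)\int$ should be $+2$, though this is immaterial since the right-hand side vanishes).
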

\begin{proof}
We have $ \mathfrak{L}_1 \trho =\sinh^{\frac{1}{2}}(r) ( \frac{1}{2 }\Delta_{{\mathrm{rad}}} + \frac{1}{2 \sinh^2(r)}  +  (\rho^2-1))\rho  = 0,$ where $\Delta_{{\mathrm{rad}}}:=\frac{1}{2} \partial_r^2 + \frac{1}{2}\coth(r) \partial_r.$ Using the ordinary differential equation \eqref{eq:ode2}, we get $ \mathfrak{L}_1 \trho =0 .$ Next, we seek another linearly independent solution to the equation $\mathfrak{L}_1 f = 0$, of the form $g = \tilde{\rho} \, h$, where $h$ satisfies the equation 
$$-\frac{1}{2} \Tilde{\rho}  \partial_r^2 h  - \partial_r \Tilde{\rho} \partial_r h =0 .$$
This yields, $  \partial_r( -\frac{1}{2} \trho^2 \partial_r h ) = 0 ,$ and therefore 
\begin{align*}
    g(r) =2 \trho(r) \int_r^\infty \frac{1}{\trho(s)^2} ds.
\end{align*}

The asymptotics of $\trho$ and $g(r)$ follow from their definitions, along with the asymptotics of $\trho$ given in \eqref{eq:rho_n-asymptotics-near-0} near $0$ and in \eqref{eq:rho_n-asymptotics-near-infty} as $r \to \infty$. Thus, in view of the asymptotic behavior of $\trho(r)$ and $g(r)$ near the origin and at infinity, they form a linearly independent pair of solutions to the second-order homogeneous equation $\mathfrak{L}_1 f = 0$.

\end{proof}

\begin{lemma}
\label{L_1f=F}
Consider two  functions $f$ and $F$ solving the equation 
\begin{equation*}
  \mathfrak{L}_1 f = F,  
\end{equation*}

 \begin{itemize}
    \item If $f$ and $F$ are bounded near $0,$ then there exists $C_1 \in \R$ such that \begin{equation*}
  f(r)=  C_1\trho(r)  -2 \trho(r)  \int_0^r \frac{1}{\trho^2(t)} \int_0^t \trho(s) F(s) ds dt 
  \end{equation*}
  \item If $f$ converges to $0,$ and $F$ decays exponentially when $r \to \infty$ then 
  \begin{equation*}
     f(r)= -2 \trho(r) \int_r^{\infty} \frac{1}{\trho^2(t)} \int_t^{\infty} \trho(s) F(s) ds.  
  \end{equation*}
\end{itemize}
\end{lemma}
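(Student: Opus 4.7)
The plan is to carry out a reduction of order using the known homogeneous solution $\tilde\rho$ from Lemma \ref{L1=0}. Setting $f = \tilde\rho h$, a direct computation using $\mathfrak{L}_1\tilde\rho=0$ and the identity
\begin{equation*}
    \mathfrak{L}_1(\tilde\rho h) = -\tilde\rho' h' - \tfrac{1}{2}\tilde\rho h'' = -\frac{1}{2\tilde\rho}\bigl(\tilde\rho^2 h'\bigr)'
\end{equation*}
reduces the inhomogeneous equation $\mathfrak{L}_1 f = F$ to the first-order ODE $(\tilde\rho^2 h')' = -2\tilde\rho F$. The two formulas in the lemma then correspond to integrating this identity twice, once starting at $r=0$ and once starting at $r=\infty$, with the choice of endpoint dictated by the prescribed boundary behavior of $f$.

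For the first case, I would integrate $(\tilde\rho^2 h')' = -2\tilde\rho F$ from $0$ to $r$, obtaining an expression of the form $\tilde\rho^2(r) h'(r) = c_0 - 2\int_0^r \tilde\rho F\,ds$. The key point is that the integration constant $c_0$ must vanish: if $c_0\neq 0$, then since $\tilde\rho(r)\sim a r^{3/2}$ near the origin one gets $h'(r)\sim c_0 r^{-3}$, and a second integration produces a term in $h$ that, after multiplication by $\tilde\rho$, becomes a nonzero multiple of $g(r)\sim r^{-1/2}$ in $f$, contradicting the boundedness of $f$ near $0$. With $c_0=0$, a second integration from $0$ to $r$, with integration constant renamed $C_1=h(0)$, produces the stated formula. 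Convergence of the iterated integral at $0$ is straightforward from the asymptotics $\tilde\rho\sim r^{3/2}$ (so the inner integral is $O(t^{5/2})$ and the integrand is $O(t^{-1/2})$ near $0$).

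For the second case, I would integrate $(\tilde\rho^2 h')' = -2\tilde\rho F$ from $r$ to $\infty$. The exponential decay of $F$ together with $\tilde\rho(s)\sim e^{s/2}$ guarantees absolute convergence of $\int_r^\infty \tilde\rho F\,ds$. Writing $\tilde\rho^2 h' = \tilde\rho f' - \tilde\rho' f$, the boundary term $\lim_{R\to\infty}\tilde\rho^2(R)h'(R)$ is controlled by combining the hypothesis $f(r)\to 0$ with the decay of $F$: propagating the latter through the equation $\mathfrak{L}_1 f = F$ upgrades $f\to 0$ to a decay rate strictly faster than $\tilde\rho'/\tilde\rho\sim \tfrac{1}{2}$, which annihilates the boundary term. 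This yields $h'(r) = \tfrac{2}{\tilde\rho^2(r)}\int_r^\infty \tilde\rho F\,ds$, and a second integration from $r$ to $\infty$, using $h(\infty)=0$, followed by multiplication by $\tilde\rho$, gives the stated integral representation.

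The main technical point is the justification of the vanishing boundary terms in the second case, since the $L^2$-decay of $f$ alone only guarantees $f\to 0$ pointwise, while ruling out the (still decaying) $g$-component of a homogeneous correction requires genuine rate information. The remaining difficulty is verifying absolute convergence of the iterated integrals at the two endpoints, which is a direct consequence of the asymptotics of $\tilde\rho$ recorded in Lemma \ref{L1=0} and of the hypotheses on $F$.
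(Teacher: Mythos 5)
Your reduction of order is exactly the paper's argument: set $f=\trho h$, observe $\mathfrak{L}_1(\trho h)=-\tfrac{1}{2\trho}(\trho^2 h')'$, and integrate twice from the appropriate endpoint. Your treatment of the first bullet is correct and in fact more careful than the paper's, which simply asserts that boundedness of $F$ and $\trho\sim r^{3/2}$ force the first integration constant to vanish; your observation that a nonzero constant would reintroduce the singular branch $g\sim r^{-1/2}$ into $f$ is the right justification.

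However, the step you single out as the main technical point in the second case does not work as you describe. You claim that the exponential decay of $F$, propagated through $\mathfrak{L}_1 f=F$, upgrades $f\to 0$ to decay strictly faster than $e^{-r/2}$, which would kill the boundary term $\lim_{R\to\infty}\trho^2(R)h'(R)=\lim_{R\to\infty}\bigl(\trho f'-\trho' f\bigr)(R)$. This is false: $g(r)=2\trho(r)\int_r^\infty\trho(s)^{-2}\,ds\sim e^{-r/2}$ is a homogeneous solution that tends to zero, so if $f$ satisfies the hypotheses then so does $f+Bg$ for any $B$, and no decay rate beyond $e^{-r/2}$ can be extracted from the hypotheses. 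Moreover, on the $g$-component the quantity $\trho f'-\trho' f$ is precisely the (nonzero, constant) Wronskian of $g$ and $\trho$, so the boundary term does not vanish for it. The correct reading, consistent with how the paper uses the lemma (it states the second formula without proof and then employs it constructively in Lemma~\ref{lem:no-eigenvalue-singular-r0} as the definition of a fixed-point map), is that the displayed integral is \emph{one particular} solution converging to $0$; the hypotheses determine $f$ only modulo a multiple of $g$, and uniqueness would require strengthening the decay assumption on $f$ to a rate strictly faster than $e^{-r/2}$. Your convergence checks for the iterated integrals at both endpoints are fine.
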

\begin{proof}

Consider the equation $\mathfrak{L}_1 f =F,$ with $f$ and $F$ bounded near $0.$ Let $f=\trho y, $ then $y $ satisfies $ \trho \partial_r^2 y +2 \partial_r \trho \partial_r y =- 2 F ,$ i.e., $ \partial_r ( \trho^2 \partial_r y) = - 2 F \trho.$ Since $F$ is bounded near $0,$ and $\trho(r) \sim r^{\frac{3}{2}}$, we have  \begin{align*}
   \partial_r   y(r) = \frac{-2}{\trho^2(r)} \int_0^r \trho(s) F(s) ds
\end{align*}
which yields, \begin{align*}
   y(r) =-2 \int_0^r  \frac{1}{\trho^2(t)} \int_0^t \trho(s) F(s) dsdt
\end{align*}
Thus, if  $f$ and $F$ are bounded near $0,$  then there exists $C_1 $ such that 
\begin{align*}
    f(r)=   C_1\trho(r)  -2 \trho(r)  \int_0^r \frac{1}{\trho^2(t)} \int_0^t \trho(s) F(s) ds dt .   
\end{align*}
If $f$ converges to $0,$ and $F$ decays exponentially when $r \to \infty$ then 
  \begin{equation*}
     f(r)=  -2 \trho(r) \int_r^{\infty} \frac{1}{\trho^2(t)} \int_t^{\infty} \trho(s) F(s) ds.  
  \end{equation*}
  \end{proof}

\begin{lemma} \label{lem:FL_2=0}
 There exist two linear independent solutions $u_1$ and $u_2$ of $$\mathfrak{L}_2 u = 0,$$ 
such that 
 \begin{align*}
     u_1(r)= r^{\frac{3}{2}} (1 + O(r)), \qquad u_2(r)=r^{-\frac{1}{2}}(1 + O(r))
 \end{align*}
 when $r \to 0$ and 
  \begin{align*}
     u_1(r)= e^{\frac{\sqrt{17}}{2} r} (1 + O(r^{-1})), \qquad u_2(r)=e^{-\frac{\sqrt{17}}{2} r}(1 + O(r^{-1}))
 \end{align*}
    when $r \to \infty$. Moreover, both $u_1(r)$ and $u_2(r)$ are strictly positive functions. 
\end{lemma}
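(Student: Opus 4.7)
The plan is to construct $u_1$ and $u_2$ locally via Frobenius/Volterra arguments at each endpoint, and then use the strict positivity of $\mathfrak{L}_2$, derived from the positive ground state of $\mathfrak{L}_1$, both to match the two local solutions and to upgrade local positivity to global positivity. First, near $r=0$ the potential satisfies $V(r):=\frac{3}{8\sinh^2 r}+3(\rho^2-1)+\frac{17}{8}=\frac{3}{8r^2}+O(1)$, so $\mathfrak{L}_2 u=0$ is a bounded perturbation of the Euler equation $u''=\frac{3}{4r^2}u$ whose fundamental pair is $r^{3/2},\,r^{-1/2}$. Standard Volterra fixed-point equations with kernel proportional to $s^{-1/2}r^{3/2}-s^{3/2}r^{-1/2}$ yield two independent solutions $u_1^{(0)}(r)=r^{3/2}(1+O(r))$ and $u_2^{(0)}(r)=r^{-1/2}(1+O(r))$ on some $(0,r_0]$. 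A symmetric argument at $r=\infty$, based on the constant-coefficient limit $u''=\frac{17}{4}u$ with exponentially decaying perturbation, produces $u_1^{(\infty)}(r)=e^{\frac{\sqrt{17}}{2}r}(1+O(r^{-1}))$ and $u_2^{(\infty)}(r)=e^{-\frac{\sqrt{17}}{2}r}(1+O(r^{-1}))$ on $[R,\infty)$. These extend uniquely to global solutions; what remains is to verify that the $r^{3/2}$ germ at $0$ connects to the $e^{\frac{\sqrt{17}}{2}r}$ germ at infinity (and $r^{-1/2}$ to $e^{-\frac{\sqrt{17}}{2}r}$) and to prove global positivity.

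The key input for both points is that $\mathfrak{L}_2$ is strictly positive as a quadratic form. I would use Lemma \ref{L1=0}, which provides the positive zero-energy solution $\trho$ of $\mathfrak{L}_1\trho=0$. The ground-state factorization $\phi=\trho\psi$ for $\phi\in C_c^\infty((0,\infty))$, combined with two integrations by parts, gives
\[
\langle \mathfrak{L}_1\phi,\phi\rangle=\frac{1}{2}\int_0^\infty \trho^2(\psi')^2\,dr\geq 0.
\]
Since $\mathfrak{L}_2=\mathfrak{L}_1+2\rho^2$ and $\rho>0$ on $(0,\infty)$, it follows that $\langle\mathfrak{L}_2\phi,\phi\rangle\geq 2\int_0^\infty \rho^2\phi^2\,dr>0$ for every nonzero $\phi$ in the form domain, so in particular $\mathfrak{L}_2$ has no $L^2$ null state. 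This resolves the matching: writing the global continuation of $u_1^{(0)}$ as $Au_1^{(\infty)}+Bu_2^{(\infty)}$ near infinity, one must have $A\neq 0$, for otherwise $u_1^{(0)}$ would be $L^2$ at both endpoints and hence an $L^2$ null state of $\mathfrak{L}_2$. After a global rescaling I obtain $u_1$ with the advertised asymptotics; the mirror argument applied to $u_2^{(\infty)}$ yields $u_2$ with $r^{-1/2}$ behavior at $0$ and $e^{-\frac{\sqrt{17}}{2}r}$ at infinity.

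The step I expect to be the most delicate is global positivity. For $u_1$ I would argue by contradiction: if $r_0\in(0,\infty)$ is its smallest zero, set $\phi:=u_1\mathbb{1}_{[0,r_0]}$. The decay $u_1\sim r^{3/2}$ at $0$ and the vanishing $u_1(r_0)=0$ place $\phi$ in the form domain of $\mathfrak{L}_2$ (the singular potential $1/\sinh^2 r$ is absorbed since $\phi^2/r^2\sim r\to 0$), and $\phi$ is approximable in form norm by $C_c^\infty((0,r_0))$ cutoffs of $u_1$. Multiplying $\mathfrak{L}_2 u_1=0$ by $u_1$ and integrating by parts on $[0,r_0]$, the boundary terms $\tfrac{1}{2}u_1u_1'$ vanish at both ends (at $0$ because $u_1u_1'\sim r^2\to 0$, at $r_0$ because $u_1(r_0)=0$), so $\langle \mathfrak{L}_2\phi,\phi\rangle=0$, contradicting the strict lower bound of the previous paragraph. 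Therefore $u_1>0$ on $(0,\infty)$. For $u_2$ the ODE has no first-order term, so the Wronskian $W(u_1,u_2)=u_1u_2'-u_1'u_2$ is constant, and a direct computation from the infinity asymptotics gives $W=-\sqrt{17}\neq 0$; hence $(u_2/u_1)'=W/u_1^2<0$, so $u_2/u_1$ is strictly decreasing on $(0,\infty)$. Choosing the sign of $u_2$ so that $u_2>0$ near infinity makes $u_2/u_1\to 0^+$ at infinity, and a strictly decreasing ratio with a positive limit at $\infty$ must be positive throughout, yielding $u_2>0$. The main technical subtlety is verifying the form-domain membership of the cutoff $\phi$ at the singular endpoint $r=0$; once that approximation is justified, the positivity arguments reduce to a one-line contradiction for $u_1$ and a Wronskian monotonicity for $u_2$.
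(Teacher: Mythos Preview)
Your proof is correct, but the positivity argument for $u_1$ takes a genuinely different route from the paper. The paper proceeds by a direct ODE substitution: writing $u_1=\trho\,v$ and using $\mathfrak{L}_1\trho=0$ together with $\mathfrak{L}_2=\mathfrak{L}_1+2\rho^2$, one obtains $(\trho^2 v')'=4\rho^2\trho^2 v$. With initial data $v(0)=C>0$, $v'(0)=0$, the integral form $v'(r)=\trho(r)^{-2}\int_0^r 4\rho^2\trho^2 v\,ds$ immediately gives $v>0$ and $v'>0$ on $(0,\infty)$, hence $u_1>0$. You instead invoke the strict positivity of the quadratic form $\langle\mathfrak{L}_2\phi,\phi\rangle\ge 2\int\rho^2\phi^2$ and derive a contradiction from a hypothetical first zero by testing against $\phi=u_1\mathbb{1}_{[0,r_0]}$. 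The paper's approach is more elementary and self-contained (pure ODE, no form-domain approximation needed), whereas yours is a clean variational argument that would generalize to any Schr\"odinger operator with a positive ground-state factorization; the price is the technical verification that the truncated $\phi$ lies in the form domain at the singular endpoint, which you correctly flag. For $u_2$ the two arguments are essentially the same: the paper writes $u_2=u_1\int_r^\infty u_1^{-2}\,ds$ explicitly, which is your Wronskian monotonicity $(u_2/u_1)'=W/u_1^2<0$ in integrated form. The matching step is also the same in substance---both reduce to ``$\mathfrak{L}_2$ has no $L^2$ null state''---with the paper citing the later Lemma~\ref{lem:FL_1-L_2-Spec} and you deriving it directly from the ground-state representation.
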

\begin{proof}
Recall that $\mathfrak{L}_2 = -\frac{1}{2} \partial_r^2 + \frac{3}{8 \sh^2(r)} + 3 (\rho^2 - 1) +  \frac{17}{8}.$ Then the operators $ \mathfrak{L}_2^0=-\frac{1}{2} \partial_r^2 + \frac{3}{8 r^2} $ and $\mathfrak{L}_2^{\infty}= -\frac{1}{2}\partial_r^2 + \frac{17}{8}$ are a good approximation of $\mathfrak{L}_2$ near $r=0$ and $r=\infty,$ respectively. The fundamental system is given by $\{r^{\frac{3}{2} }, r^{-\frac{1}{2}}  \}$ near zero and $\{e^{\frac{\sqrt{17}}{2} r},e^{-\frac{\sqrt{17}}{2} r}\} $ at infinity. Note that
the spectrum of $\mathfrak{L}_2$ is contained in $[\eta_2,\infty),$ for some $\eta_2 >0,$ see Lemma \ref{lem:FL_1-L_2-Spec}. In particular zero is not an eigenvalue and hence
    \begin{align*}
     u_1(r)\sim
 \begin{cases}
        r^{\frac{3}{2}}, \quad r \longrightarrow 0 ,  \\
        e^{\frac{\sqrt{17}}{2} r}, \quad r \longrightarrow \infty ,
    \end{cases} 
\quad \text{and} \quad 
  u_2 (r)\sim
 \begin{cases}
       r^{-\frac{1}{2}}, \quad r \longrightarrow 0 ,\\
        e^{-\frac{\sqrt{17}}{2} r}, \quad r \longrightarrow \infty.
    \end{cases}  
\end{align*}
Next, we prove that these solutions are strictly positive. For that, we consider solution of the form $u_1(r)=\trho(r) v(r),$ which lead to the equation $\partial_r ( \partial_rv \, \trho)=4 \rho^2 \trho^2 v.$ Consider this equation with the condition $\partial_rv(0)=0$ and $v(r)=C>0,$ for some constant $C,$ leads to equation $$\partial_rv(r) =\frac{1}{\trho^2(r)}\int_0^r 4 \rho^2(s) \trho^2(s) v(s) ds.$$
This implies that $v$ and $\partial_r v$ is positive. Then $u_1(r)$ is strictly positive.  Finally, consider solution of the form $u_2(r)=u_1(r) g(r),$ implies that $u_2(r)=u_1(r) \int_r^{\infty} \frac{1}{u_1(s)^2} ds,$ which is strictly positive.
\end{proof}

\begin{lemma}
\label{L_2u=U}
Consider two  functions $u$ and $U$ solving the equation 
\begin{equation*}
  \mathfrak{L}_2 u = U,  
\end{equation*}

 \begin{itemize}
    \item If $u$ and $U$ are bounded near $0,$ then there exists $C_2 \in \R$ such that \begin{equation*}
  u(r)= C_2 u_1(r) -2  u_1(r) \int_0^r \frac{1}{u_1(t)^2} \int_0^t u_1(s) U(s) ds dt.  
  \end{equation*}
  \item If $u$ and $U$ decays exponentially, when $r \to \infty$ then 
  \begin{equation*}
     u(r)= -2  u_1(r) \int_r^{\infty} \frac{1}{u_1(t)^2} \int_t^{\infty} u_1(s) U(s) dsdt.  
  \end{equation*}
\end{itemize}
\end{lemma}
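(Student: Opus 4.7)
The argument parallels that of Lemma \ref{L_1f=F}, with the positive fundamental solution $u_1$ from Lemma \ref{lem:FL_2=0} playing the role of $\trho$. The plan is to perform variation of parameters: set $u = u_1 y$. Using $\mathfrak{L}_2 u_1 = 0$, a direct expansion of $\mathfrak{L}_2(u_1 y)$ causes the terms proportional to $y$ to cancel, yielding the identity
\begin{equation*}
\mathfrak{L}_2(u_1 y) = -\frac{1}{2 u_1}(u_1^2 y')',
\end{equation*}
so that $\mathfrak{L}_2 u = U$ is equivalent to the first-order equation $(u_1^2 y')' = -2\, u_1 U$. Both statements of the lemma then follow by integrating this ODE twice, with different choices of boundary data.

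For the first statement I would integrate from $0$ to $r$. Since $u_1(r) \sim r^{3/2}$ while the other fundamental solution $u_2$ behaves like $r^{-1/2}$, a bounded solution $u$ of $\mathfrak{L}_2 u = U$ with $U$ bounded near the origin cannot have a $u_2$-component. Hence the boundary term $u_1^2 y' = u_1 u' - u_1' u$ vanishes as $r \to 0^+$, giving $u_1^2(r) y'(r) = -2 \int_0^r u_1(s) U(s)\,ds$. A further integration from $0$ is convergent (the integrand behaves like $r^{-1/2}$) and yields
\begin{equation*}
y(r) = C_2 - 2 \int_0^r \frac{1}{u_1(t)^2} \int_0^t u_1(s) U(s)\,ds\,dt,
\end{equation*}
so that multiplying by $u_1$ produces the stated formula with $C_2$ parametrizing the homogeneous solution $u_1$.

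For the second statement I would instead integrate from $r$ to $\infty$. Since $u_1(r) \sim e^{\sqrt{17}\,r/2}$ grows exponentially while $u$ and $U$ decay exponentially, both $\lim_{t \to \infty} u_1^2 y'(t) = \lim_{t\to \infty}(u_1 u' - u_1' u)(t)$ and $\lim_{t\to \infty} y(t) = \lim_{t\to \infty} u(t)/u_1(t)$ vanish. Integrating $(u_1^2 y')' = -2 u_1 U$ from $r$ to $\infty$ yields $y'(r) = 2\, u_1^{-2}(r) \int_r^\infty u_1(s) U(s)\,ds$, and a further integration from $r$ to $\infty$ using $y(\infty) = 0$ gives the claimed representation for $u = u_1 y$.

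The main technical point, as in Lemma \ref{L_1f=F}, is justifying the vanishing of the boundary terms in each case. Near $r=0$ one has to rule out the singular $r^{-1/2}$ mode using the boundedness of $u$ and $U$; near $r = \infty$ one needs the exponential decay of $u$ and $U$ to dominate the exponential growth of $u_1$, for which a decay rate exceeding $\sqrt{17}/2$ is implicit in the hypothesis. Granted these observations, the integrability of all integrands in the claimed double integrals follows from the asymptotics of $u_1$ recorded in Lemma \ref{lem:FL_2=0}, and the remaining verification is a routine computation.
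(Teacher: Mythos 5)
Your proposal is correct and is essentially the paper's own argument: the paper proves this lemma by declaring it ``similar to Lemma~\ref{L_1f=F}'', whose proof is exactly the variation-of-parameters reduction $u = u_1 y$, $(u_1^2 y')' = -2\,u_1 U$, followed by two integrations with the boundary terms handled via the asymptotics of $u_1$ from Lemma~\ref{lem:FL_2=0}. Your added remarks on why the boundary terms vanish (no $r^{-1/2}$ mode at the origin; decay rate exceeding $\sqrt{17}/2$ at infinity, which indeed holds where the lemma is applied, e.g.\ with $e^{-3r/\sqrt{2}}$ decay) are accurate and fill in precisely the details the paper omits.
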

\begin{proof}
  The proof follows by similar arguments as in Lemma~\ref{L_1f=F}, and we omit the details.
\end{proof}

\begin{lemma}\label{lem:no-eigenvalue-expGrowth}
Let $\lambda=\pm \frac{\sqrt{17}}{8},$ then there exist $f$ and $\phi$ solution to 
    \begin{align}
    \label{sys-L_1f_L2phi}
\begin{cases}
\mathfrak{L}_1 f = \lambda \phi \\
\mathfrak{L}_2 \phi = \lambda f  
\end{cases}
\end{align}
satisfying 
\begin{align*}
    f(r)= c_1 r^{\frac{3}{2}} (1 + O(r^{2})), \qquad \phi(r)=c_2 r^{\frac{7}{2}}(1 + O(r))
 \end{align*}
 when $r \to 0,$ for some constants  $c_1,c_2$   and 
  \begin{align*}
     f(r)= \tc_1 e^{\frac{3}{\sqrt{2}} r} (1 + O(r^{-1})), \qquad \phi(r)=\tc_2 e^{\frac{3}{\sqrt{2}} r}(1 + O(r^{-1}))
 \end{align*}
    when $r \to \infty,$ for some constants $\tc_1,\tc_2 .$ 
\end{lemma}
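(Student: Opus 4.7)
I prove the lemma in three stages: a local construction near $r=0$, global extension via ODE theory, and identification of the asymptotic at infinity, with the main obstacle being the nonvanishing of the $e^{3r/\sqrt{2}}$ coefficient.

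\textbf{Local construction near $r=0$.} Viewing the system as the fourth-order equation $\mathfrak{L}_2\mathfrak{L}_1 f=\lambda^2 f$ together with $\phi=\lambda^{-1}\mathfrak{L}_1 f$, the indicial exponents of the leading operator $(-\tfrac12\partial_r^2+\tfrac{3}{8r^2})^2$ at $r=0$ are $\{-\tfrac12,\tfrac32,\tfrac32,\tfrac72\}$. Among the two independent power-series solutions with leading $r^{3/2}$, I select the one for which the leading $r^{3/2}$ coefficient of $\phi=\lambda^{-1}\mathfrak{L}_1 f$ vanishes. Using the expansions $\mathfrak{L}_0 r^{3/2}=\tfrac{r^{7/2}}{40}+O(r^{11/2})$, $\mathfrak{L}_0 r^{7/2}=-4r^{3/2}+O(r^{11/2})$, and $(\rho^2-1)r^{3/2}=-r^{3/2}+a^2 r^{7/2}+O(r^{11/2})$, this selection forces the subleading coefficient of $f$ to be $-c_1/4$, giving $f(r)=c_1 r^{3/2}(1-\tfrac{r^2}{4}+O(r^4))$, and a direct computation then yields $\phi(r)=c_2 r^{7/2}(1+O(r))$ with $c_2=c_1(a^2+\tfrac{11}{40})/\lambda$. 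Existence of the convergent series solution is established by a Volterra integral fixed-point argument analogous to the one sketched after \eqref{eq:behav-near-0}.

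\textbf{Extension and asymptotics at infinity.} By standard ODE theory the solution extends uniquely to $r\in(0,\infty)$. Since $\rho^2-1=O(e^{-2r})$ at infinity, $\mathfrak{L}_1$ and $\mathfrak{L}_2$ approach $\widetilde{\mathfrak{L}}_1=-\tfrac12\partial_r^2+\tfrac18$ and $\widetilde{\mathfrak{L}}_2=-\tfrac12\partial_r^2+\tfrac{17}{8}$. The limit fourth-order equation has characteristic polynomial $k^4-\tfrac{9}{2}k^2=0$ with roots $\{0,0,\pm\tfrac{3}{\sqrt{2}}\}$, so there are four fundamental modes $\{1,r,e^{-3r/\sqrt{2}},e^{3r/\sqrt{2}}\}$. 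A Volterra scheme at infinity gives the corresponding fundamental system, and the globally extended solution is a unique linear combination $\alpha+\beta r+\gamma e^{-3r/\sqrt{2}}+\delta e^{3r/\sqrt{2}}$. Once $\delta\neq 0$ is known, $f(r)=\tilde{c}_1 e^{3r/\sqrt{2}}(1+O(r^{-1}))$; applying $\phi=\lambda^{-1}\mathfrak{L}_1 f$ and using $\widetilde{\mathfrak{L}}_1 e^{3r/\sqrt{2}}=-\tfrac{17}{8}e^{3r/\sqrt{2}}$ then gives $\phi(r)=\tilde{c}_2 e^{3r/\sqrt{2}}(1+O(r^{-1}))$ with $\tilde{c}_2=-\tfrac{17\tilde{c}_1}{8\lambda}$.

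\textbf{Main obstacle: $\delta\neq 0$.} This is the delicate step. I argue by contradiction: suppose $\delta=0$, so $(f,\phi)$ is at most linearly growing at infinity. I then pair the equations $\mathfrak{L}_1 f=\lambda\phi$ and $\mathfrak{L}_2\phi=\lambda f$ against the strictly positive zero-energy solutions $\trho$ of $\mathfrak{L}_1 u=0$ (Lemma~\ref{L1=0}, satisfying $\trho\sim e^{r/2}$) and $u_1$ of $\mathfrak{L}_2 u=0$ (Lemma~\ref{lem:FL_2=0}, satisfying $u_1\sim e^{\sqrt{17}r/2}$), and integrate on $[r_0,R]$. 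Using $\mathfrak{L}_1\trho=0$, $\mathfrak{L}_2 u_1=0$, and integration by parts (as in Lemmas~\ref{L_1f=F} and \ref{L_2u=U}), the bulk integrals $\int_{r_0}^R\trho\,\mathfrak{L}_1 f\,dr$ and $\int_{r_0}^R u_1\,\mathfrak{L}_2\phi\,dr$ reduce to Wronskian boundary terms at $R$, which under the polynomial-growth hypothesis grow strictly slower than $e^{3R/\sqrt{2}}$. Combined with the boundary data at $r_0$ inherited from the $r^{3/2}$ behavior of $f$ near the origin, the resulting identities are incompatible, producing a contradiction. This argument parallels the method used to rule out threshold eigenvalues later in this subsection (and is close in spirit to the strategy indicated in the remark following Assumption~\ref{asmp:3}). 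Once $\delta\neq 0$ is secured, the lemma follows.
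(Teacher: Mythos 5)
Your local construction at $r=0$ and the general structure (indicial exponents $\{-\tfrac12,\tfrac32,\tfrac32,\tfrac72\}$, four fundamental modes $\{1,r,e^{\pm 3r/\sqrt2}\}$ at infinity) are correct and match the paper's framework. But the argument you give for the crucial nonvanishing $\delta\neq 0$ has a genuine gap, and this is precisely the nontrivial content of the lemma.

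The difficulty with your contradiction argument is that the Wronskian identities you propose do not manifestly produce a contradiction. Pairing $\mathfrak{L}_1 f=\lambda\phi$ against $\trho$ on $[r_0,R]$ gives $\lambda\int_{r_0}^R \trho\,\phi\,dr = -\tfrac12\,W(\trho,f)\big|_{r_0}^R$; under the hypothesis $\delta=0$ (polynomial growth of $(f,\phi)$), both sides grow like $e^{R/2}\cdot\mathrm{poly}(R)$. Similarly pairing $\mathfrak{L}_2\phi=\lambda f$ against $u_1$ gives both sides of order $e^{\sqrt{17}R/2}\cdot\mathrm{poly}(R)$. In each identity the two sides have identical growth orders, so comparing magnitudes gives nothing; to extract a contradiction you would need to control signs and actual constants, and your proposal does not do this. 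Remarking that the boundary terms grow ``strictly slower than $e^{3R/\sqrt2}$'' is beside the point once you have assumed $\delta=0$: there is no $e^{3R/\sqrt2}$ anywhere to compare against.

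The paper's actual mechanism for this step is a positivity/monotonicity argument rather than a Wronskian comparison. It writes the chosen solution explicitly as fixed point of the coupled Volterra representation
\begin{align*}
 f(r)&= -\trho(r)-2\lambda\,\trho(r)\int_0^r\frac{1}{\trho^2(t)}\int_0^t\trho(s)\phi(s)\,ds\,dt,\\
 \phi(r)&= -2\lambda\,u_1(r)\int_0^r\frac{1}{u_1^2(t)}\int_0^t u_1(s)f(s)\,ds\,dt,
\end{align*}
establishes $f<0$ and $\phi>0$ for all $r>0$ by a barrier/first-sign-change contradiction, and then uses these strict signs to bootstrap: $|f|>|\trho|\sim e^{r/2}$, then the second integral forces $|\phi|\gtrsim e^{\sqrt{17}r/2}$, and feeding this back into the first gives $|f|\gtrsim e^{3r/\sqrt2}$, which is exactly the assertion $\delta\neq0$. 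The sign information is what prevents cancellation in the nested integrals; a magnitude-only argument cannot rule out cancellation. You should either reproduce a sign argument of this kind, or make the Wronskian identities genuinely contradictory by carefully tracking signs — which will ultimately require the same positivity input.
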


\begin{proof}
We will only prove the case $\lambda=\frac{\sqrt{17}}{8},$ and similarly one can obtain the same results for $\lambda=-\frac{\sqrt{17}}{8}.$ 
By Lemma \ref{L_1f=F} and \ref{L_2u=U}, we consider the solution with $C_1=0$ and $C_2=-1$

\begin{align}
\label{eq_f}
 f(r)&= - \trho(r) -2 \lambda \trho(r) \int_0^r \frac{1}{\trho^2(s)} \int_0^t \trho(s) \phi(s) ds dt. \\
 \label{eq_phi}
\phi(r) &=   - 2 \lambda u_1(r) \int_0^r \frac{1}{u_1(s)^2} \int_0^t u_1(s) f(s) ds dt.
\end{align}

One can prove that $(f,\phi)$ is a solution to the system \eqref{sys-L_1f_L2phi}.
using the  fixed point Theorem on the space $(X,\left\| \cdot \right\|_X),$ where $\left\|(f,\phi) \right\|_X = \displaystyle \sup_{r<1} {r^{-\frac{3}{2}}} ( | f(r)|+| \phi(r)|  ).$   Define $T(f,\phi): X  \longrightarrow X $, $(f,\phi) \longrightarrow (T_1(f,\phi), T_2(f,\phi)) ,$ where $T_1(f,\phi)$ and $T_2(f,\phi)$ are given by \eqref{eq_f} and \eqref{eq_phi} respectively. Using the fact that $\trho(r)\sim u_1(r) \sim r^{\frac{3}{2}}$ for $r$ small, one can check that $T$ is a contraction on $X$ and therefore $T(f,\phi)=(f,\phi) $ is a solution to \eqref{sys-L_1f_L2phi}.\\ 

Using the behavior of $\phi$ and $\trho$ for small $r,$ we have  $ f(r)<0$ near $0$. Indeed, 
\begin{align*}
   f(r) = - r^{\frac{3}{2}} - 2 \lambda r^{\frac{3}{2}} \int_0^r ( t^{-3} + O(t^{-2}) ) \int_0^t s^{\frac{3}{2}} s^{\frac{3}{2}} ds dt = - r^{\frac{3}{2}} - 2 \lambda ( r^{\frac{3}{2} + 2 }  + O(r^{\frac{9}{2}}))  
\end{align*}
Similarly, we have $\phi(r) >0$ for small $r$, indeed, 

\begin{align*}
   \phi(r) =  - 2 \lambda r^{\frac{3}{2}} \int_0^r ( t^{-3} + O(t^{-2}) ) \int_0^t s^{\frac{3}{2}} (-s)^{\frac{3}{2}} ds dt =  2 \lambda ( r^{\frac{7}{2} }  +  O(r^{\frac{9}{2}}))
\end{align*}
Thus, $f(r)<0$ and $\phi(r)>0$ for $r>0$ small. Assume by contradiction, there exists $r_0$ such that either $f(r)$ or $\phi(r)$ changes sign for $r>r_0$. Let $r_0$ be the smallest such value, that is, $r_0 := \inf \{ r > 0 : \phi(r) = 0 \text{ or } f(r) = 0 \}.$ Then $f(r)<0$ and $\phi(r)>0$ for all $r<r_0$. Since $u_1(r)>0$ and $\trho (r)>0$  for all $r,$
 then by the definition \eqref{eq_phi} of $f,$  we have $f(r_0) <0,$ and by \eqref{eq_phi}, $\phi(r_0)<0,$ which is a contradiction. First, notice that, $|f(r)|>|\trho(r)| \sim e^{\frac{1}{2}r} ,$ and by \eqref{eq_phi} we have $|\phi(r)|> e^{\frac{\sqrt{17}}{2}r}  $. Using \eqref{eq_f}, we also have $|f(r)|> 2 \lambda e^{\frac{1}{2} r} \int_0^r e^{- t } \int_0^t e^{ ( \frac{1}{2}  + \frac{\sqrt{17}}{2} )s} ds dt \sim r e^{\frac{\sqrt{17}}{2}r}$ and this implies $|f(r)| \sim e^{\frac{3}{\sqrt{2}}r} $ and $|\phi(r)|\sim e^{\frac{3}{\sqrt{2}}r}$ for large $r.$ 
\end{proof}

\begin{lemma} \label{lem:no-eigenvalue-singular-r0}
Let $\lambda=\pm \frac{\sqrt{17}}{8},$ and let $f$ and $\phi$ be two exponentially decaying functions near $\infty$ satisfying  
    \begin{align}
    \label{sys-L_1f_L2phi-2}
\begin{cases}
\mathfrak{L}_1 f = \lambda \phi \\
\mathfrak{L}_2 \phi = \lambda f  
\end{cases}
\end{align}
then   
\begin{align*}
    f(r)= c_1 r^{-\frac{1}{2}} (1 + O(r)), \qquad \phi(r)=c_2 r^{-\frac{1}{2}} (1 + O(r)),
 \end{align*}
 when $r \to 0,$ for some constants  $c_1,c_2$ and 
  \begin{align*}
     f(r)= \tc_1 e^{-\frac{3}{\sqrt{2}} r}  (1 + O(r^{-1})), \qquad \phi(r)= \tc_2 
     e^{-\frac{3}{\sqrt{2}} r}(1 + O(r^{-1}))
 \end{align*}
    when $r \to \infty,$ for some constants $\tc_1,\tc_2 .$ 
\end{lemma}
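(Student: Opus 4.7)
The plan is to combine a Volterra fixed-point construction at infinity, the integral representations of Lemmas \ref{L_1f=F} and \ref{L_2u=U} (in their decay-at-infinity cases), and a sign-bootstrap paralleling the one used in the proof of Lemma \ref{lem:no-eigenvalue-expGrowth}. Near $r=\infty$, the fourth-order equation $\mathfrak{L}_1 \mathfrak{L}_2 \phi = \lambda^2 \phi$, approximated by $(-\tfrac{1}{2}\partial_r^2 + \tfrac{1}{8})(-\tfrac{1}{2}\partial_r^2 + \tfrac{17}{8})\phi = \tfrac{17}{64}\phi$, admits the four asymptotics $\{1, r, e^{-3r/\sqrt{2}}, e^{3r/\sqrt{2}}\}$; only $e^{-3r/\sqrt{2}}$ decays exponentially. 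A Volterra fixed-point argument in a space weighted by $e^{3r/\sqrt{2}}$ (analogous to the construction of $f_j^{\pm}$ at the end of \S\ref{subsec:DS-iL}) produces a one-parameter family of decaying solutions with $f(r) = \tilde c_1 e^{-3r/\sqrt{2}}(1+O(r^{-1}))$ and $\phi(r) = \tilde c_2 e^{-3r/\sqrt{2}}(1+O(r^{-1}))$, where the ratio $\tilde c_1/\tilde c_2 = -1/\sqrt{17}$ is fixed by the leading-order coupling of the first-order system (corresponding to the vector $f_{2,\infty}^{\pm}$ of \S\ref{subsec:DS-iL}).

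Since $f$ and $\phi$ decay exponentially, the second parts of Lemmas \ref{L_1f=F} and \ref{L_2u=U} apply with $F=\lambda \phi$ and $U=\lambda f$, giving
\[
f(r) = -2\lambda\, \trho(r) \int_r^{\infty} \frac{1}{\trho(t)^2} \int_t^{\infty} \trho(s)\phi(s)\,ds\,dt,
\]
\[
\phi(r) = -2\lambda\, u_1(r) \int_r^{\infty} \frac{1}{u_1(t)^2} \int_t^{\infty} u_1(s) f(s)\,ds\,dt.
\]
Set $I_0 := \int_0^{\infty} \trho(s)\phi(s)\,ds$ and $J_0 := \int_0^{\infty} u_1(s) f(s)\,ds$, both finite since $\sqrt{17}/2 < 3/\sqrt{2}$ makes $\trho\cdot\phi$ and $u_1\cdot f$ decay at infinity, while near zero both integrands behave like $r$. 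Using $\trho(r) = a r^{3/2}(1+O(r^2))$ and $u_1(r) = r^{3/2}(1+O(r))$, a direct expansion gives $\int_r^{r_0} t^{-3}\,dt \sim \frac{1}{2 r^2}$ for the outer integrals, yielding
\[
f(r) = -\frac{\lambda I_0}{a}\, r^{-1/2}(1+O(r)),\qquad \phi(r) = -\lambda J_0\, r^{-1/2}(1+O(r)),
\]
and identifying $c_1 = -\lambda I_0/a$ and $c_2 = -\lambda J_0$.

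To conclude $c_1, c_2 \neq 0$, I will show by a sign-bootstrap that $\phi > 0$ and $f < 0$ on all of $(0,\infty)$. Normalizing so that $\tilde c_2 > 0$, we have $\phi > 0$ and $f < 0$ on some interval $(R,\infty)$. Suppose for contradiction that $\phi(r_1)=0$ at some largest $r_1>0$; then $\phi>0$ and (after analogously ruling out zeros of $f$) $f<0$ on $(r_1,\infty)$, so $J(t) := \int_t^{\infty} u_1(s) f(s)\,ds < 0$ for $t>r_1$. Evaluating the integral representation at $r_1$ and using $u_1(r_1)>0$ (Lemma \ref{lem:FL_2=0}), we obtain $\phi(r_1) = -2\lambda u_1(r_1) \int_{r_1}^{\infty} J(t)/u_1(t)^2\,dt > 0$ strictly, a contradiction. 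The case $f(r_1)=0$ is handled symmetrically using the formula for $f$. Thus $\phi>0$ and $f<0$ on $(0,\infty)$, and since $\trho, u_1>0$ (Lemmas \ref{L1=0}, \ref{lem:FL_2=0}), $I_0>0$ and $J_0<0$, both nonzero. The main technical obstacle is the Stage 1 Volterra construction, which must accommodate both the genuine exponential behaviors $e^{\pm 3r/\sqrt{2}}$ and the subordinate polynomial behaviors $\{1,r\}$ arising from the double root $k=0$ of the characteristic polynomial; once this construction is in place, the near-zero analysis and the sign-bootstrap follow cleanly from the integral representations and the positivity of the reference solutions $\trho$ and $u_1$.
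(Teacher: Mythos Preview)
Your proposal is correct and follows essentially the same approach as the paper: the integral representations from Lemmas~\ref{L_1f=F} and~\ref{L_2u=U} serve both as the fixed-point map for the construction at infinity and as the tool for extracting the $r^{-1/2}$ behavior at the origin, with the sign-bootstrap in between guaranteeing $I_0,J_0\neq 0$. The only organizational wrinkle is your handling of the coupled bootstrap: rather than treating the largest zero of $\phi$ and then ``analogously ruling out zeros of $f$'' (which is slightly circular as stated), the paper takes $r_0:=\sup\{r>0:\phi(r)=0\text{ or }f(r)=0\}$ so that both $\phi>0$ and $f<0$ hold on $(r_0,\infty)$ simultaneously, and then derives the contradiction at $r_0$ for whichever function vanishes there.
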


\begin{proof}
We will only prove the case $\lambda=\frac{\sqrt{17}}{8},$ and similarly one can obtain the same results for $\lambda=-\frac{\sqrt{17}}{8}.$  
By Lemma \ref{L_1f=F} and \ref{L_2u=U}, we have     
\begin{align}
\label{eq_f_infty}
f(r)&= -2  \lambda \trho(r) \int_r^{\infty} \frac{1}{\trho^2(t)} \int_t^{\infty} \trho(s) \phi(s) ds.\\
\label{eq_phi_infty}
\phi(r) &= -2 \lambda  u_1(r) \int_r^{\infty} \frac{1}{u_1(t)^2} \int_t^{\infty} u_1(s) f(s) dsdt.  
\end{align}
One can prove that $(f,\phi)$ is a solution to the system \eqref{sys-L_1f_L2phi-2}.
using the  fixed point Theorem on the space $(X,\left\| \cdot \right\|_X),$ where $\left\|(f,\phi) \right\|_X = \displaystyle \sup_{r>1} e^{\frac{3}{ \sqrt{2}} r} ( | f(r)|+| \phi(r)|  ).$   Define $T(f,\phi): X  \longrightarrow X $, $(f,\phi) \longrightarrow (T_1(f,\phi), T_2(f,\phi)) ,$ where $T_1(f,\phi)$ and $T_2(f,\phi)$ are given by \eqref{eq_f_infty} and \eqref{eq_phi_infty} respectively. Using the fact that $\trho(r) \sim e^{\frac{1}{2}r}$ and $u_1(r) \sim e^{\frac{\sqrt{17}}{2}r}$ for $r$ large. One can check that $T$ is a contraction on $X$ and therefore $T(f,\phi)=(f,\phi) $ is a solution to \eqref{sys-L_1f_L2phi-2}.\\

We have \begin{align*} 
    f(r) &= - 2 \lambda e^{\frac{1}{2}r}(1+O(e^{-2r})) \int_r^{\infty} e^{-t} (1+O(e^{-2t}) \int_t^{\infty} e^{\frac{1}{2} s}(1 +O(e^{-2s}) e^{-\frac{3}{\sqrt{2}}s}  dsdt \\
    &= 2 \lambda \frac{1}{\frac{1}{2}+ \frac{3}{\sqrt{2}}} \frac{1}{\frac{1}{2}- \frac{3}{\sqrt{2}}} e^{-\frac{3}{\sqrt{2}}r}(1 + O(r^{-1}))= 2 \lambda \frac{-4}{5} e^{-\frac{3}{\sqrt{2}}r} (1 + O(r^{-1})).
\end{align*}

Therefore, 
\begin{align*}
    \phi(r)& = - 2 \lambda e^{\frac{\sqrt{17}}{2}r} (1+O(r^{-1} ))  \int_r^{\infty} e^{-\sqrt{17}t} (1+O(t^{-1})) \int_t^{\infty} e^{\frac{\sqrt{17}}{2} s}  \lambda \frac{-8}{5} e^{-\frac{3}{\sqrt{2}}s}(1+O(s^{-1})) dsdt \\
        &= \frac{16}{5} \lambda^2 4 e^{-\frac{3}{\sqrt{2}} r}(1 + O(r^{-1}))  
\end{align*}
Therefore, we have $f(r)<0$ and $\phi(r)>0$ for large $r.$ Assume by contradiction, there exists $r_0$ such that either $f(r)$ or $\phi(r)$ changes sign for $r>r_0$. Let $r_0$ be the smallest such value, that is, $r_0 := \sup_{r>1} \{ r > 0 : \phi(r) = 0 \text{ or } f(r) = 0 \}.$ Then $\phi(r)>0$ and $f(r)<0$ for all $r>r_0.$ Since $\trho$ and $u_1$ are positive for all $r,$ then by \eqref{eq_phi_infty} we have $\phi(r_0)>0,$ and by \eqref{eq_f_infty} we have $f(r_0)<0,$ which is a contraction. \\

For the behavior of $\phi$ and $f$ near $0,$ we use \eqref{eq_f_infty} and \eqref{eq_phi_infty} with the fact that $\trho(r)= a r^{\frac{3}{2}}(1 + O(r^{2}))$ and $  u_1(r)= r^{\frac{3}{2}} (1 + O(r))$ for small $r.$ Then, as $r \to 0$  we have 
\begin{align*}
 \phi(r)= - 2 \lambda r^{\frac{3}{2}} (1 + O(r)) \int_r^{1} (t^{-3} + O(t^{-2})) \int_0^{\infty} u_1(s) f(s) ds  dt + O(1) = c_2 r^{-\frac{1}{2}}(1+O(r))
\end{align*}
Similarly, we obtain $f(r)=c_1 r^{-\frac{1}{2}}(1+O(r))$ for small $r.$
\end{proof}

\begin{lemma}
\label{lem:GP-thres-emb-EV}
    The matrix operator $i\mathcal{L}_{GP}$ of the linearized equation associated with the Gross–Pitaevskii equation has no eigenvalues in $(-\infty, -\frac{\sqrt{17}}{8}]\cup [\frac{\sqrt{17}}{8},\infty).$
\end{lemma}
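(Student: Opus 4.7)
The plan is a proof by contradiction. Suppose that $\lambda\in(-\infty,-\tfrac{\sqrt{17}}{8}]\cup[\tfrac{\sqrt{17}}{8},\infty)$ is an eigenvalue of $i\mathcal{L}_{GP}$ with eigenvector $\Phi\in L^2\times L^2$. By the real/imaginary reduction at the start of \S\ref{subsec:DS-L_GP}, we may reduce to the scalar system $\mathfrak{L}_1 f=\lambda\phi$, $\mathfrak{L}_2\phi=\lambda f$ for real $f,\phi\in L^2(\mathbb{R}_+)$; taking complex conjugates of $\Phi$ replaces $\lambda$ by $-\lambda$, so it suffices to treat $\lambda\geq\tfrac{\sqrt{17}}{8}$. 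At $r=\infty$ the associated fourth order equation $\mathfrak{L}_1\mathfrak{L}_2\phi=\lambda^2\phi$ is governed by the characteristic equation $(\tfrac12 k^2+\tfrac18)(\tfrac12 k^2+\tfrac{17}{8})=\lambda^2$. For $\lambda^2\geq 17/64$ its roots always include two real rates $\pm\beta(\lambda)$ with $\beta(\lambda)^2=\tfrac94+2\sqrt{1+\lambda^2}\geq\tfrac92$, together with $\{1,r\}$ (at the threshold) or oscillating $\{e^{\pm i\alpha r}\}$ (embedded). Only $e^{-\beta r}$ lies in $L^2$ near infinity, so the $L^2$-at-$\infty$ subspace of the coupled system is one-dimensional, and an $L^2$ eigenfunction must coincide (up to a scalar) with the unique $L^2$-decaying-at-$\infty$ solution.

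The core step is to construct this solution explicitly and compute its behavior at $r=0$, extending the Volterra fixed-point construction of Lemma~\ref{lem:no-eigenvalue-singular-r0} from $\lambda=\pm\tfrac{\sqrt{17}}{8}$ to all $\lambda\geq\tfrac{\sqrt{17}}{8}$. Using the exponentially decaying Green's function formulas of Lemmas~\ref{L_1f=F} and~\ref{L_2u=U},
\begin{align*}
f(r)&=-2\lambda\,\trho(r)\int_r^\infty\trho(t)^{-2}\int_t^\infty\trho(s)\phi(s)\,ds\,dt,\\
\phi(r)&=-2\lambda\,u_1(r)\int_r^\infty u_1(t)^{-2}\int_t^\infty u_1(s)f(s)\,ds\,dt,
\end{align*}
I set up a contraction on $\{(f,\phi):\sup_{r>1}e^{\beta r}(|f(r)|+|\phi(r)|)<\infty\}$. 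Convergence of the iterated integrals follows from $\beta>1/2$ (valid throughout $\lambda\geq\tfrac{\sqrt{17}}{8}$) together with the large-$r$ asymptotics $\trho\sim e^{r/2}$ and $u_1\sim e^{\sqrt{17}\,r/2}$ of Lemmas~\ref{L1=0} and~\ref{lem:FL_2=0}. The fixed point produces a unique pair with leading behavior $\phi(r)\sim c\,e^{-\beta r}$ at infinity. Explicit computation of the leading iterate determines the signs $\phi>0$, $f<0$ for large $r$, and a first-zero/contradiction argument (if $r_0$ is the largest point where either vanishes, the integral formulas applied at $r_0$ combined with positivity of $\trho,u_1$ force strict inequality) propagates these signs to all of $(0,\infty)$.

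The strict positivity of $\phi$ on $(0,\infty)$ yields the strict inequality $\int_0^\infty\trho(s)\phi(s)\,ds>0$, and similarly $\int_0^\infty u_1(s)f(s)\,ds<0$. Plugging into the integral formula for $f$, using $\trho(r)\sim a\,r^{3/2}$ and $\trho(t)^{-2}\sim a^{-2}t^{-3}$ near the origin, yields
\begin{align*}
f(r)\sim-\frac{\lambda}{a}\Bigl(\int_0^\infty\trho(s)\phi(s)\,ds\Bigr)r^{-1/2},\qquad r\to 0^+,
\end{align*}
and analogously $\phi(r)\sim c_2\,r^{-1/2}$ with $c_2\neq 0$. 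Since $r^{-1/2}\notin L^2(0,1)$, the unique $L^2$-decaying-at-infinity solution fails to be $L^2$ near $0$, contradicting $\Phi\in L^2\times L^2$.

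The main obstacle, and the reason Assumption~\ref{asmp:3} is essential, is the sign-propagation step that guarantees the moments are nonzero. A short Wronskian identity using $\mathfrak{L}_1\trho=0$ and $\mathfrak{L}_2 u_1=0$ (computing $W(\trho,f)'=-2\lambda\trho\phi$ and checking the boundary terms vanish when $f$ is $L^2$-at-$0$ with $f\sim r^{3/2}$ and decays exponentially at $\infty$) shows that \emph{if} a genuine $L^2$-at-both-ends solution existed, both moments $\int\trho\phi$ and $\int u_1 f$ would vanish, and the subleading $r^{3/2}$ behavior would dominate at $r=0$, yielding no obstruction. It is precisely the existence of the positive zero-energy solutions $\trho$ and $u_1$ constructed in Lemmas~\ref{L1=0} and~\ref{lem:FL_2=0} that powers the sign argument and forces the constructed $L^2$-at-$\infty$ solution to have nonzero moments, and therefore singular $r^{-1/2}$ behavior at $0$, closing the contradiction.
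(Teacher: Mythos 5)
Your proof is correct and takes essentially the same route as the paper, whose argument for $\lambda$ beyond the threshold consists precisely of extending the fixed-point and sign-propagation construction of Lemma~\ref{lem:no-eigenvalue-singular-r0} to the decay rate $\alpha_1(\lambda)=\beta(\lambda)$ and concluding that the unique $L^2$-at-infinity branch must connect to the non-square-integrable $r^{-1/2}$ behavior at the origin. The only nitpick is that convergence of the inner integral in the $\phi$-equation requires $\beta>\sqrt{17}/2$ rather than merely $\beta>1/2$; this holds since $\beta\geq 3/\sqrt{2}>\sqrt{17}/2$.
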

\begin{proof}
Using Lemma \ref{lem:no-eigenvalue-expGrowth}, and \ref{lem:no-eigenvalue-singular-r0},  one can see that the operator $\mathcal{L}_{GP}$ has no eigenvalue at $\lambda= \pm \frac{\sqrt{17}}{8}.$ Next, we prove that $\calL_{GP}$ has no eigenvalues for $\lambda>\frac{\sqrt{17}}{8}$.  Let 
$\Phi=\begin{pmatrix}
    \phi \\ \psi 
\end{pmatrix}$ be solution to $i \calL_{GP} \Phi=\pm \lambda \Phi, $ where $\lambda>\frac{\sqrt{17}}{8},$ i.e., satisfies \eqref{eq:sys-L-GP}. Similar computation as in Subsection \ref{subsec:DS-iL},  lead us to the fourth-order equation 
\begin{align*}
    \mathfrak{L}_1\mathfrak{L}_2 \phi = \lambda^2 \phi.
\end{align*}
Therefore, we obtain the same asymptotics near $0$ as in \eqref{eq:behav-near-0}. However,  the possible behaviors near $\infty$ are $  \{ e^{ \alpha_1 r },e^{- \alpha_1 r }, e^{i \alpha_2 r },e^{ - i \alpha_2 r }    \}$ 
where 
\begin{align*}
    \alpha_1&:= \sqrt{2\sqrt{\lambda^2 +1} + \frac{9}{4}},  \quad    \alpha_2:= \sqrt{2\sqrt{\lambda^2 +1} - \frac{9}{4}}.
\end{align*}
Observe that near $\infty$, the possible behaviors are exponentially growing and decaying solutions, as well as oscillating solutions. The only possibility to have an embedded eigenvalue is if an $L^2(0,1)$ behavior near $0$, i.e., $r^{\tfrac{3}{2}}$, connects to the exponentially decaying behavior near $\infty$. However, since $\alpha_1 > \tfrac{3}{\sqrt{2}}$, using the same argument as in Lemma~\ref{lem:no-eigenvalue-singular-r0}, we obtain that any exponentially decaying solution near $\infty$ must connect to the $r^{-\frac{1}{2}}$ asymptotic near $0$. Then, the operator $i \mathcal{L}_{GP}$ has no eigenvalues for $\lambda > \frac{\sqrt{17}}{8}.$ Similarly, we can prove that $i \mathcal{L}_{GP}$ has no eigenvalues for $\lambda <- \frac{\sqrt{17}}{8}.$
\end{proof}

\begin{lemma} \label{lem:FL_1-L_2-Spec}
The operators $\FL_1$ and $\FL_2$ are self-adjoint, positive operators with domain $\calD$. Moreover, the essential spectrum of $\FL_1$ is $\spec_{ess}(\FL_1)=[\frac{1}{8},\infty)$ and $\spec(\FL_1) \subset [\eta_1,\infty),$ for some $\eta_1>0$. The spectrum of $\FL_2$ 
lies in $[\eta_2,\infty)$ for some $\eta_2>0$, and its essential spectrum satisfies $ \spec_{ess}(\FL_2)=[\tfrac{17}{8},\infty).$
\end{lemma}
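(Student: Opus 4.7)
The plan assembles the lemma from three standard ingredients: self-adjointness via a bounded symmetric perturbation, essential spectrum via Weyl's theorem, and the spectral gap via a ground-state conjugation argument combined with the absence of an $L^2$ zero mode.

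For self-adjointness and the essential spectrum, I would observe that $V_1 := \rho^2 - 1$ and $V_2 - 2 = 3(\rho^2 - 1)$ are bounded, real-valued, and decay exponentially at infinity by the asymptotics of $\rho$ recorded in Theorem~\ref{th:ode}. Since $\FL_0$ is essentially self-adjoint on $\calD$, Kato--Rellich applied to these bounded symmetric perturbations yields self-adjointness of $\FL_1$ and $\FL_2$ on $\calD$. Moreover, $V_1$ and $V_2 - 2$ induce $\FL_0$-compact perturbations (truncating to $\{r\le R\}$ gives a compact multiplier on $(\FL_0-i)^{-1}$, and the truncation error tends to zero in operator norm by the exponential decay), so Weyl's theorem yields
\[
\spec_{ess}(\FL_1)=\spec_{ess}(\FL_0)=[\tfrac{1}{8},\infty), \qquad \spec_{ess}(\FL_2)=\spec_{ess}(\FL_0+2)=[\tfrac{17}{8},\infty).
\]

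The substantive step is non-negativity. Here I would use the positive zero-energy solutions $\trho$ and $u_1$ supplied by Lemmas~\ref{L1=0} and~\ref{lem:FL_2=0}. For $\phi \in C_c^\infty(0,\infty)$, set $\psi := u\phi$ with $u \in \{\trho, u_1\}$; using $\FL_j u = 0$ and two integrations by parts produces the standard ground-state identity
\[
\langle \psi, \FL_j \psi \rangle \;=\; \tfrac{1}{2}\int_0^\infty u(r)^2\,\phi'(r)^2\,dr \;\ge\; 0.
\]
Since $u > 0$ on $(0,\infty)$, the set $\{u\phi : \phi \in C_c^\infty(0,\infty)\}$ is dense in $L^2(0,\infty)$, hence $\FL_j \ge 0$.

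To upgrade this to the gap $\spec(\FL_j) \subset [\eta_j,\infty)$ with $\eta_j > 0$, it suffices to rule out $0$ as an eigenvalue. For $\FL_1$ the fundamental system $\{\trho, g\}$ of Lemma~\ref{L1=0} consists of non-$L^2$ solutions ($\trho \sim e^{r/2}$ at infinity, $g \sim r^{-1/2}$ near $0$), so no nontrivial linear combination lies in $L^2(0,\infty)$; the analogous statement for $\FL_2$ follows from Lemma~\ref{lem:FL_2=0} using $\{u_1, u_2\}$. Since the spectrum of a self-adjoint operator below its essential spectrum is purely discrete and can accumulate only at the essential threshold, absence of $0$ as an eigenvalue, combined with $\FL_j \ge 0$, forces $\inf \spec(\FL_j) > 0$. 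The only nonroutine step in the proof is the ground-state conjugation identity; everything else is standard Weyl-type bookkeeping.
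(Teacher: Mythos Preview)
Your argument is correct and follows essentially the same route as the paper: Weyl's criterion for the essential spectrum, and positivity via a strictly positive zero-energy solution together with the absence of an $L^2$ zero mode. One point of care: for $\FL_2$ you invoke the asymptotics of $u_1,u_2$ from Lemma~\ref{lem:FL_2=0}, but in the paper that lemma cites the present lemma to pin down those asymptotics, so as written this is circular. The paper avoids this by working directly with $\tilde\rho$ for $\FL_2$ as well: since $\FL_2\tilde\rho = 2\rho^2\tilde\rho > 0$, the positive strict supersolution $\tilde\rho$ already yields $\FL_2\ge 0$ and rules out a zero eigenvalue by a comparison argument, without ever constructing $u_1$. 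Your route is equally valid once you observe that the positivity and growth of $u_1$ can be read off the ODE construction in Lemma~\ref{lem:FL_2=0} independently of any spectral input (namely $u_1=\tilde\rho\,v$ with $v$ positive and increasing, hence $u_1\ge C\tilde\rho\to\infty$), and that $u_2\sim r^{-1/2}$ near $0$ follows from $u_1\sim r^{3/2}$ alone.
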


\begin{proof}
The assertions about the essential spectrum follow from the Weyl criterion. Next we show that the spectrum of $\mathfrak{L}_2$ is contained in $[\eta_2, \infty), $ where $\eta_2>0$. Indeed $\mathfrak{L}_2\tilde{\rho}=2\rho^2\tilde{\rho}>0$ and a comparison argument can be used to show that zero is not an eigenvalue. Since the essential spectrum of $\mathfrak{L}_2$ is $[\frac{17}{8},\infty)$ this proves the claim for $\FL_2$. The argument for $\FL_1$ is similar where we use that $\FL_1\tilde{\rho}=0$ to prove that zero is not an eigenvalue.
\end{proof}

\begin{lemma}  \label{lem:GP-spec}
The spectrum of the operator $i\mathcal{L}_{GP}$ is real and its essential spectrum  is $(-\infty, -\frac{\sqrt{17}}{8}] \cup [\frac{\sqrt{17}}{8},\infty).$ 
\end{lemma}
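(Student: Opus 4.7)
The plan is to treat the essential spectrum and the reality of the spectrum separately. For the former, I would compare $i\mathcal{L}_{GP}$ with the free operator $i\mathcal{L}_0$ and invoke stability of the essential spectrum under a compact perturbation of the resolvent. For the latter, I would exploit the algebraic identity $(i\mathcal{L}_{GP})^2=\mathrm{diag}(\mathfrak{L}_1\mathfrak{L}_2,\mathfrak{L}_2\mathfrak{L}_1)$ together with the fact that each diagonal block is similar to a self-adjoint positive operator whose spectrum is bounded below by a strictly positive constant.

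For the essential spectrum, I would first diagonalize $i\mathcal{L}_0$: the spectral theorem for $\mathfrak{L}_0$ (with $\spec(\mathfrak{L}_0)=[\tfrac{1}{8},\infty)$) reduces $i\mathcal{L}_0$ to pointwise multiplication by the $2\times 2$ matrices $\begin{pmatrix}0 & i\lambda \\ -i(\lambda+2) & 0\end{pmatrix}$, whose eigenvalues are $\pm\sqrt{\lambda(\lambda+2)}$. As $\lambda$ ranges over $[\tfrac18,\infty)$, this sweeps out $(-\infty,-\tfrac{\sqrt{17}}{8}]\cup[\tfrac{\sqrt{17}}{8},\infty)$, so $\spec(i\mathcal{L}_0)=\spec_{\mathrm{ess}}(i\mathcal{L}_0)$ equals the claimed set. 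Next, from \eqref{eq:rho_n-asymptotics-near-infty} one has $|V_{GP}|\lesssim e^{-2r}$, and combining this exponential decay with the explicit kernel $\mathcal{R}_0^{\pm}(r,s,z)$ of Lemma \ref{lem::kernel-resol-L0} shows that $iV_{GP}(i\mathcal{L}_0-z)^{-1}$ is compact (in fact Hilbert--Schmidt). The resolvent identity then yields compactness of $(i\mathcal{L}_{GP}-z)^{-1}-(i\mathcal{L}_0-z)^{-1}$, whence $\spec_{\mathrm{ess}}(i\mathcal{L}_{GP})=\spec_{\mathrm{ess}}(i\mathcal{L}_0)=(-\infty,-\tfrac{\sqrt{17}}{8}]\cup[\tfrac{\sqrt{17}}{8},\infty)$.

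For reality, I would compute $(i\mathcal{L}_{GP})^2=\mathrm{diag}(\mathfrak{L}_1\mathfrak{L}_2,\mathfrak{L}_2\mathfrak{L}_1)$. By Lemma \ref{lem:FL_1-L_2-Spec}, each $\mathfrak{L}_j$ is self-adjoint with $\spec(\mathfrak{L}_j)\subset[\eta_j,\infty)$ for some $\eta_j>0$, so $\mathfrak{L}_j^{\pm 1/2}$ are well defined. The quadratic form $f\mapsto\langle \mathfrak{L}_1\mathfrak{L}_2^{1/2}f,\mathfrak{L}_2^{1/2}f\rangle\ge \eta_1\eta_2\|f\|^2$ defines a self-adjoint positive operator, namely $\mathfrak{L}_2^{1/2}\mathfrak{L}_1\mathfrak{L}_2^{1/2}$, with spectrum in $[\eta_1\eta_2,\infty)$. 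The formal similarity
\[
\mathfrak{L}_2^{1/2}(\mathfrak{L}_1\mathfrak{L}_2)\mathfrak{L}_2^{-1/2}=\mathfrak{L}_2^{1/2}\mathfrak{L}_1\mathfrak{L}_2^{1/2},
\]
valid on a common core such as $C_c^2(\R_+)$ and extended by closure, then gives $\spec(\mathfrak{L}_1\mathfrak{L}_2)\subset[\eta_1\eta_2,\infty)$, and likewise for $\mathfrak{L}_2\mathfrak{L}_1$. Hence $\spec\!\big((i\mathcal{L}_{GP})^2\big)\subset[\eta_1\eta_2,\infty)$. Finally, for any $\mu\in\spec(i\mathcal{L}_{GP})$ the factorization $(i\mathcal{L}_{GP})^2-\mu^2=(i\mathcal{L}_{GP}-\mu)(i\mathcal{L}_{GP}+\mu)$ and the easy direction of the polynomial spectral mapping theorem force $\mu^2\in\spec\!\big((i\mathcal{L}_{GP})^2\big)\subset(0,\infty)$, so $\mu\in\R$.

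The main obstacle is the relative-compactness step: since $i\mathcal{L}_0$ is non-self-adjoint, the standard Weyl theorem does not apply verbatim, and one cannot simply quote a self-adjoint perturbation result. I would handle it by directly estimating the Hilbert--Schmidt norm of $iV_{GP}(i\mathcal{L}_0-z)^{-1}$ via the kernel of Lemma \ref{lem::kernel-resol-L0}, using the region decomposition already introduced in the proof of Claim \ref{clm:absorption-prinp} to exploit the exponential decay of $V_{GP}$ against the kernel bounds and conclude square integrability. A secondary subtlety is the care needed with domains when implementing the similarity transformation on the unbounded products $\mathfrak{L}_1\mathfrak{L}_2$ and $\mathfrak{L}_2\mathfrak{L}_1$; I would resolve this by establishing the identity on $C_c^2(\R_+)$ and passing to the closure.
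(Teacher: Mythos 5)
Your overall architecture coincides with the paper's. Reality is reduced to positivity and self-adjointness of $\FT_0=\FL_2^{1/2}\FL_1\FL_2^{1/2}$ together with a conjugation by $\FL_2^{1/2}$; the paper reaches the same point through a triangular factorization of $\calL_{GP}-z$ and the operator $\FT(z)=z^2\FL_2^{-1}+\FL_1=\FL_2^{-1/2}(z^2+\FT_0)\FL_2^{-1/2}$, which is your $(i\calL_{GP})^2$ argument in slightly different packaging. The essential spectrum is likewise obtained by comparison with the free operator plus compactness of the resolvent difference, exactly as in the paper (note $\calL_{GP}^{\infty}=\calL_0$, so your comparison operator is the paper's). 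Your Hilbert--Schmidt estimate for $V_{GP}(i\calL_0-z)^{-1}$ and your diagonalization of $i\calL_0$ in the spectral representation of $\FL_0$ are sound and, if anything, more explicit than the paper's treatment of that half.

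The genuine gap is in the step you dismiss as a ``secondary subtlety.'' The paper's proof spends essentially all of its effort showing that $\FT_0$ is self-adjoint \emph{on the specific domain} $\FL_2^{-2}X_0$: it reduces this to the boundedness on $X_0$ of $B_{1/2}=\FL_2^{1/2}\rho^2\FL_2^{-1/2}$, proved by complex interpolation between $B_{\pm1}$ using the explicit Green's function of $\FL_2$ from Lemma~\ref{lem:FL_2=0}. Your form-theoretic shortcut does produce \emph{a} self-adjoint operator bounded below by $\eta_1\eta_2$ (namely $T^{*}T$ with $T$ the closure of $\FL_1^{1/2}\FL_2^{1/2}$), but it does not identify this operator with the one that actually arises when you conjugate $\FL_1\FL_2$, and ``establishing the identity on $C_c^2(\R_+)$ and passing to the closure'' cannot repair this: conjugation by the unbounded $\FL_2^{1/2}$ transfers spectra only if one additionally verifies the resolvent mapping property $(\FT_0-\mu)^{-1}:\FL_2^{1/2}X_0\to\FL_2^{-1/2}X_0$ (this is precisely \eqref{eq:FT0-X0--FL_2}, which the paper isolates and defers to \cite{LSS25}); an algebraic identity on a core says nothing about whether the two closed operators it generates have the same resolvent set. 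Without the domain identification and this mapping property, the inclusion $\spec(\FL_1\FL_2)\subset[\eta_1\eta_2,\infty)$ --- and hence the reality of $\spec(i\calL_{GP})$ --- is not established. A minor further point: the essential-spectrum comparison needs a common point of the two resolvent sets, which is only supplied by the reality/spectral-gap statement, so the two halves of the proof cannot be treated as fully independent.
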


\begin{proof}
First, we show that the spectrum of $i \mathcal{L}_{GP}$ is real, in particular,  $\spec(i \mathcal{L}_{GP}) \subset (-\infty, -\eta] \cup [\eta,\infty),$ for some $\eta>0.$ For that, we write 
\begin{align*}
    \calL_{GP} -z =\begin{pmatrix}
        1 & z \mathfrak{L}_2^{-1} \\
        0 & 1 
    \end{pmatrix}
    \begin{pmatrix}
        0 & \FT(z) \\
        - \FL_2 & -z 
    \end{pmatrix},
\end{align*}
where  $\mathfrak{T}(z):=z^2 \mathfrak{L}_2^{-1}+ \mathfrak{L}_1,$ which closed on $\calD \subset X_0.$ Next, we prove that for any $z \in \mathfrak{X} := \C \setminus \big( (-i\infty, -i\eta] \cup [i\eta, i\infty) \big),$ where $\eta$ will be determined below, $(\calL_{GP} - z)^{-1}$ defines a bounded operator on $X_0 \times X_0$. It suffices to show that for all $z \in \mathfrak{X},$ the operator $\FT(z)$ admits a bounded inverse $\FT(z)^{-1} : X_0 \to X_0.$ Indeed, if this is the case, then 
\begin{align*}
 (\calL_{GP}-z)^{-1}= \begin{pmatrix}
        -z \FL_2^{-1} \FT(z)^{-1} & z^2  \FL_2^{-1} \FT(z)^{-1} \FL_2^{-1}- \FL_2^{-1}  \\ 
        \FT(z)^{-1} & -z \FT(z)^{-1} \FL_2^{-1}
    \end{pmatrix}
\end{align*}
To see the invertibility of $\FT$, we write 
$\FT(z)=\FL_2^{-\frac{1}{2}} (z^2+ \FT_0) \FL_2^{-\frac{1}{2}},$ where $\FT_0:=\FL_2^{\frac{1}{2}} \FL_1 \FL_2^{\frac{1}{2}}=\FL_2^{2}-2 \FL_2^{\frac{1}{2}} \rho^2 \FL_2^{\frac{1}{2}}.$ 
Note that $\FT_0$ is symmetric. Recall that from  Lemma \ref{lem:FL_1-L_2-Spec}, we have $\spec(\FL_1)\subseteq[\eta_1,\infty)$ and $ \spec(\FL_2)\subseteq[\eta_2,\infty)$ for some $\eta_1,\eta_2>0$.
Then we have $\FT_0 - \eta >0$ for  $\eta=\eta_1\eta_2>0$. Indeed, 
\begin{align*}
    \langle (\FT_0 - \eta_1 \eta_2) f,f\rangle=    \langle (\FL_1-\eta_1)  \FL_2^{\frac{1}{2}} f,\FL_2^{\frac{1}{2}}f\rangle  + \eta_1  \langle  (\FL_2 - \eta_2) f,f  \rangle \geq 0
\end{align*}
Next, we show that $\FT_0$ is self-adjoint with domain $\FL_2^{-1} \calD=\FL_2^{-2} X_0.$ Let $g,h \in X_0$ satisfy 
\begin{align*}
       \langle \FT_0f,g\rangle= \langle f,h \rangle, \quad \forall f \in \FL_2^{-2} X_0.
 \end{align*}
Our aim is to show that $g \in \FL_2^{-2}X_0.$ Therefore, we write $f=\FL_2^{-2} \tilde{f }$ with $\tilde{f} \in X_0,$ which yields 
\begin{align} \label{eq:adj1}
     \langle \tilde{f},g \rangle =   \langle \tilde{f},\FL_2^{-2}h \rangle + 2   \langle \FL_2^{\frac{1}{2}} \rho^2 \FL_2^{-\frac{1}{2}} \FL_2^{-1} \tilde{f},g \rangle.
\end{align}
Next, we show that $B_{\frac{1}{2}}:=\FL_2^{\frac{1}{2}} \rho^2 \FL_2^{-\frac{1}{2}}$ is bounded on $X_0.$ Using complex interpolation, it suffices to show that $B_{\alpha}:=\FL_2^{\alpha} \rho^2 \FL_2^{-\alpha}$ is bounded on the lines $\re(\alpha)=1$ and $\re(\alpha)=-1.$ By the spectral theorem we have $\FL_2^{i \alpha}$ is unitary for $\alpha \in \R.$ Therefore, it remains only to check that $B_1$ and $B_{-1}$ are bounded on $X_0$. To see this, we write $$B_1=\rho^2 - \frac{1}{2}( \partial_r^2 \rho^2 + 2 \partial_r  \rho^2 \partial_r ) \FL_2^{-1}. $$  By Lemma \ref{lem:FL_2=0}, we have 
\begin{align} \label{eq:FL2-1}
    (\FL_2 f)^{-1}(r):=c_1 \int_0^r u_2(r) u_1(s) f(s) ds + c_2 \int_r^{\infty} u_1(r) u_2(s) f(s) ds,
\end{align}
for some suitable constants $c_1$ and $c_2.$ Thus, we have 
\begin{align*}
 \partial_r   (\FL_2 f)^{-1}(r):=c_1 \int_0^r \partial_ru_2(r) u_1(s) f(s) ds + c_2 \int_r^{\infty}\partial_r u_1(r) u_2(s) f(s) ds. 
\end{align*}
Using Cauchy-Schwarz inequality and the asymptotics of $u_1(r)$ and $u_2(r)$ from Lemma \ref{lem:FL_2=0}, we obtain $\| r  \partial_r   (\FL_2 f)^{-1} \|_{L^2(0,1)}  + \| e^{-2r}  \partial_r   (\FL_2 f)^{-1} \|_{L^2(1,\infty)}\lesssim \| f\|_{L^2}.$ Hence, $B_1$ is bounded on $X_0.$
For $\alpha=-1$ one can see that $B_{-1}f=\FL_{2}^{-1} \rho^2 \FL_{2}f=\rho^2f - \frac{1}{2}\FL_2^{-1}(  \partial_r^2 \rho^2 f  + 2 \partial_r  \rho^2 \partial_r f ) .$ Using \eqref{eq:FL2-1} and integration by parts together with Cauchy-Schwarz together we obtain that $B_{-1}$ is bounded on $X_0.$ Taking the adjoint in the first term in \eqref{eq:adj1}, and using the fact that $B_{-\frac{1}{2}}$ is bounded on $X_0,$ we obtain 
\begin{align*}
    g= 2 \FL_2^{-\frac{3}{2}} \rho^2 \FL_2^{\frac{1}{2}} g + \FL_2^{-2} h= \FL_2^{-1} \left(2  B_{-\frac{1}{2}} g + \FL_2^{-1} h  \right)\in \FL_{2}^{-1}X_0 .
\end{align*}
Using the fact that $B_{\frac{1}{2}}$ is bounded on $X_0$, we obtain 
\begin{align*}
    g=  \FL_2^{-2} \left(2  B_{\frac{1}{2}} \FL_2 g  +  h \right)  \in \FL_{2}^{-2}X_0 .
\end{align*}
Thus, $\FT_0$ is self-adjoint. Moreover, for all $z^2 \in (-\infty,-\eta^2),$ the bounded inverse operator $(z^2+ \FT_0)^{-1}:X_0 \to X_0$ exists. In order to obtain that $\FT(z)^{-1}$ is bounded on $X_0,$ we have to show that \begin{align} \label{eq:FT0-X0--FL_2}
    (z^2+\FT_0)^{-1}: \FL_2^{\frac{1}{2}} X_0 \to \FL_2^{-\frac{1}{2}} X_0 .
\end{align} 
The proof of \eqref{eq:FT0-X0--FL_2} is similar to that of Lemma~2.5 in \cite{LSS25} and we therefore omit the details. Therefore, we have for all $z \in \mathfrak{X}, $ we have $(\calL_{GP} -z)^{-1}$ is a bounded operator on $X_0 \times X_0.$ We conclude that the spectrum of $i \mathcal{L}_{GP}$ is real, and, $\spec(i \mathcal{L}_{GP}) \subset (-\infty, -\eta] \cup [\eta,\infty),$ for some $\eta>0.$  Thus, it remains to prove that $\spec_{ess}(i\calL_{GP})=(-\infty, -\frac{\sqrt{17}}{8}] \cup [\frac{\sqrt{17}}{8},\infty).$ \\
Let $i\calL_{GP}=i\calL_{GP}^{\infty}+ \mathcal{V}_{GP}(r) ,$ where
\begin{align*}
   \calL_{GP}^{\infty}:=\begin{pmatrix}
    0 & -\frac{1}{2} \partial_r^2+ \frac{3}{8\sinh^2(r)} + \frac{1}{8} \\
   \frac{1}{2} \partial_r^2 -\frac{3}{8\sinh^2(r)}- \frac{17}{8} & 0  
\end{pmatrix} , \quad \mathcal{V}_{GP}(r):=\begin{pmatrix}
    0 &   (\rho^2-1) \\
    -3(\rho^2-1) & 0
\end{pmatrix}
\end{align*}
Observe that, using similar arguments as above with $\eta_1=\frac{1}{8}$ and $\eta_2=\frac{17}{8}$ one can show that $\spec(i\calL_{GP}^{\infty})=\spec_{ess}(i\calL_{GP}^{\infty})=(-\infty, -\frac{\sqrt{17}}{8}] \cup [\frac{\sqrt{17}}{8},\infty). $ Note that for $z$ in the resolvent set of $i\calL_{GP}^{\infty}$ and $i\calL_{GP},$ we have $(i\calL_{GP}^{\infty}-z)^{-1}-(i\calL_{GP}-z)^{-1}$ is compact. Since $\spec(i \mathcal{L}_{GP}) \subset (-\infty, -\eta] \cup [\eta,\infty)$  then $0 $ is in the resolvent set of $i \mathcal{L}_{GP}$ and by the above observation  $0$ is resolvent set of $i \mathcal{L}_{GP}^{\infty}.$ At this stage, we can invoke the argument from the proof of the Weyl criterion in \cite[Theorem~XIII.14]{ReSi.T4} to obtain that $\spec_{ess}(i\calL_{GP})=(-\infty, -\frac{\sqrt{17}}{8}] \cup [\frac{\sqrt{17}}{8},\infty).$ Indeed, let $A=\calL_{GP}^\infty$, $B=\calL_{GP}$, $D=A^{-1}$, $E=B^{-1}$. In view of \cite[Lemma~XIII.4.2]{ReSi.T4}, if we can show that $\spec_{ess}(E)=\spec_{ess}(D)$ then $\spec_{ess}(A)=\spec_{ess}(B)$. But the equality  $\spec_{ess}(E)=\spec_{ess}(D)$ follows from \cite[Lemma~XIII.4.3]{ReSi.T4}.
\end{proof}

\section{Solution to $i\mathcal{L}-z$ near $0$ in the non-resonance case}
\label{sec:Sol-near-0-non-resonance}
In this section, we start the construction of the distorted Fourier transform associated with the linearized operator $\mathcal{L},$ in the absence of the resonance. The main goal is to construct the fundamental matrix solutions $F_1(\cdot,z)$ and $F_2(\cdot, z)$ to 
\begin{equation}
\label{eq:sec3:iLF=zF}
   i \mathcal{L} F(\cdot, z) = z F(\cdot, z),  
\end{equation}
for small and large $|z|.$ Here we denote by $F_1(\cdot,z)$ a solution branch that is $L^2$ near $r=0.$  \\

We define three Green's functions as 
\begin{align}
\label{Greens-function-F_j}
\begin{split}
   \mathcal{G}_1^{+}(r,s)&:= F_1^{+}(r) S_1^{+}(s) \mathbb{1}_{\{ 0\leq s \leq r\} } + F_2^{+}(r) T_1^{+}(s) \mathbb{1}_{  \{ 0\leq s \leq r \} }  \\
     \mathcal{G}_2^{+}(r,s)&:= F_1^{+}(r) S_2^{+}(s) \mathbb{1}_{\{ r\leq s \leq r_0 \} } + F_2^{+}(r) T_2^{+}(s) \mathbb{1}_{  \{ 0\leq s \leq r \} } \\
    \mathcal{G}_3^{+}(r,s)&:= F_1^{+}(r) S_3^{+}(s) \mathbb{1}_{\{ r\leq s \leq r_0 \} } + F_2^{+}(r) T_3^{+}(s) \mathbb{1}_{  \{ r\leq s \leq r_0 \} }        
\end{split}
\end{align}
where for $i=1,2,3$ we require the matrices $S_i^{+}(r)$ and $T_i^{+}(r),$ for $\mathcal{G}_i^{+}(r,s)$   to satisfy

\begin{align}
\label{eq:S_1T_1condition}
\begin{pmatrix}
F_1^{+}(r) &     F_2^{+}(r)\\
  \partial_r F_1^{+}(r) & \partial_r F_2^{+}(r) 
\end{pmatrix} 
\begin{pmatrix}
    S_1^{+}(r) \\ T_1^{+}(r)
\end{pmatrix}
&= \begin{pmatrix}
    0 \\ \sigma_2
\end{pmatrix} , \\ 
\label{eq:S_2T_2condition}
 \begin{pmatrix}
F_1^{+}(r) &     F_2^{+}(r)\\
  \partial_r F_1^{+}(r) & \partial_r F_2^{+}(r) 
\end{pmatrix} 
\begin{pmatrix}
   - S_2^{+}(r) \\ T_2^{+}(r)
\end{pmatrix}
&= \begin{pmatrix}
    0 \\ \sigma_2
\end{pmatrix}, \\ 
\label{eq:S_3T_3condition}
 \begin{pmatrix}
F_1^{+}(r) &     F_2^{+}(r)\\
  \partial_r F_1^{+}(r) & \partial_r F_2^{+}(r) 
\end{pmatrix} 
\begin{pmatrix}
   - S_3^{+}(r) \\- T_3^{+}(r)
\end{pmatrix}
&= \begin{pmatrix}
    0 \\ \sigma_2
\end{pmatrix}.
\end{align}

Denote by \begin{align*}
   F_1(r,z)= \begin{bmatrix}
       \upvarphi_1(r,z) &  \upvarphi_2(r,z)
   \end{bmatrix}    \quad \text{and} \quad  F_2(r,z)= \begin{bmatrix}
        \upvarphi_3(r,z) & \upvarphi_4(r,z)
   \end{bmatrix}  
\end{align*} 
and  
\begin{align*}
 \Gamma_i^{+}(r,z,\upvarphi (r,z))&:= \int_0^\infty \mathcal{G}^{+}_i(r,s) \, z \upvarphi (s,z)  ds  \qquad \text{for } i=1,2, 3.
\end{align*}
\\

Then a solution to \eqref{eq:sec3:iLF=zF} for $z$ near $\frac{\sqrt{17}}{8}$ and $-\frac{\sqrt{17}}{8}$ is given by the following two scenarios:\\

\textbf{Scenario I:} $z$ near $\frac{\sqrt{17}}{8}$: Let $z= \frac{\sqrt{17}}{8}+\xi,$ $\re (z)>0$, then we will define $F_1(r,z)$ and $F_2(r,z)$ as a solution of the fixed point problem  
\begin{align}
\label{eq:defF_1(r,z)-scenarioI}
   F_1(r,z)&:=  C_1  F_1^{+} (r) + \mathcal{T}_1^{+}(r,z, F_1(r,z))  \\
\label{eq:defF_2(r,z)-scenarioI}
   F_2(r,z)&:=  C_2  F_2^{+} (r) + \mathcal{T}_2^{+}(r,z, F_2(r,z))  
\end{align}

where, $F_j^{+}(r)$ are the solutions to $ i \mathcal{L}F_j^{+}(r)=\frac{\sqrt{17}}{8}F_j^{+}(r)$ from Section~\ref{subsec:non-resonance}, and 
\begin{align*}
 \mathcal{T}_1^{+}(r,z, F_1(r,z))  &:= \Gamma_1^{+}(r,z, \upvarphi_1(r,z))+ \Gamma_2^{+}(r,z, \upvarphi_2(r,z))  \\
  & =\int_0^{\infty} \mathcal{G}_1^{+}(r,s) \, \xi  \upvarphi_1(s,z) ds  + \int_0^{\infty} \mathcal{G}_2^{+}(r,s)  \, \xi  \upvarphi_2(s,z) ds  
\end{align*}
\begin{align*}
  \mathcal{T}_2^{+}(r,z, F_2(r,z))  &:= \Gamma_3^{+}(r,z,\upvarphi_3 (r,z))+ \Gamma_2^{+}(r,z,\upvarphi_4 (r,z))  \\
  & =\int_0^{\infty} \mathcal{G}_3^{+}(r,s)  \, \xi  \upvarphi_3(s,z) ds  + \int_0^{\infty} \mathcal{G}_2^{+}(r,s)  \, \xi  \upvarphi_4(s,z) ds  
\end{align*}
\textbf{Scenario II:} $z$ near $\frac{-\sqrt{17}}{8}$: Let $z= -\frac{\sqrt{17}}{8}+\xi,$ $\re(z)<0$, then we define, 
\begin{align*}
   F_1(r,z)&:=  -\sigma_3 F_1(r,-z) \sigma_3  \\
   F_2(r,z)&:=  -\sigma_3 F_2(r,-z) \sigma_3 
\end{align*}
Therefore,  $\upvarphi_j(r,z)$ satisfy 
\begin{align}
\label{eq:sym-varphi_j}
\begin{split}
    \upvarphi_1(r,z)&=-\sigma_3 \upvarphi_1(r,-z), \; \;
    \qquad \upvarphi_3(r,z)=-\sigma_3 \upvarphi_3(r,-z), \\
   \upvarphi_2 (r,z)&=\sigma_3 \upvarphi_2(r,-z), \qquad  \qquad \upvarphi_4(r,z)=\sigma_3 \varphi_4(r,-z).  
\end{split}
\end{align}

We will apply a Lyapunov–Perron-type argument to construct a solution via the associated integral equations defined below, and show that the integral operator is a contraction in a suitable space for small and large $|\xi|.$ \\

First, we point out the following inversion identity from \cite[Lemma 5.4]{LSS25}. 

\begin{lemma}[\cite{LSS25}]
\label{inver-2-matrix}
    Let $F(r)$ and $G(r)$ be two ($r$-dependent) $2 \times 2 $ matrices. Suppose that $\mathcal{W}(F,F)=\mathcal{W}(G,G)=0.$ Moreover, suppose that $D:=\mathcal{W}(F,G)$ is invertible. Then,
we have 
\begin{align*}
 \begin{pmatrix}
        F & G \\
        F^{\prime} & G^{\prime} 
    \end{pmatrix}^{-1} = 
    \begin{pmatrix}
        (D^t)^{-1} & 0 \\ 
        0 & D^{-1}
    \end{pmatrix} \begin{pmatrix}
        0 & -I \\
        I & 0 
    \end{pmatrix}
    \begin{pmatrix}
        F^t & (F^{\prime})^t \\
       G^t & (G^{\prime})^t
    \end{pmatrix}
   \begin{pmatrix}
       0 & \sigma_3 \\
       - \sigma_3 & 0 
   \end{pmatrix} .
\end{align*} 
\end{lemma}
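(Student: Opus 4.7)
The plan is to verify the identity by a direct block-matrix computation, relying only on the defining formula of the matrix Wronskian $W(\cdot,\cdot)$, the hypotheses $W(F,F)=W(G,G)=0$, the invertibility of $D:=W(F,G)$, and the symmetry $\sigma_3^{t}=\sigma_3$. Setting $M:=\begin{pmatrix}F & G\\F' & G'\end{pmatrix}$ and denoting the right-hand side by $N$, it suffices to check $NM=I_{4}$, since $M$ and $N$ are $4\times 4$ matrices.

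The first step is to simplify $N$ by carrying out the three rightmost products. Multiplying
$\begin{pmatrix}0&-I\\I&0\end{pmatrix}\begin{pmatrix}F^{t}&(F')^{t}\\G^{t}&(G')^{t}\end{pmatrix}\begin{pmatrix}0&\sigma_3\\-\sigma_3&0\end{pmatrix}$
and then applying $\mathrm{diag}((D^{t})^{-1},D^{-1})$ on the left yields
\begin{align*}
N=\begin{pmatrix}(D^{t})^{-1}(G')^{t}\sigma_3 & -(D^{t})^{-1}G^{t}\sigma_3 \\ -D^{-1}(F')^{t}\sigma_3 & D^{-1}F^{t}\sigma_3\end{pmatrix}.
\end{align*}
I would then compute the four $2\times 2$ blocks of $NM$. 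The $(2,2)$-block equals $D^{-1}\bigl(F^{t}\sigma_3 G'-(F')^{t}\sigma_3 G\bigr)=D^{-1}W(F,G)=I$. For the $(1,1)$-block, transposing $W(F,G)=F^{t}\sigma_3 G'-(F')^{t}\sigma_3 G$ and using $\sigma_3^{t}=\sigma_3$ gives $(G')^{t}\sigma_3 F-G^{t}\sigma_3 F'=W(F,G)^{t}=D^{t}$, so the block reduces to $(D^{t})^{-1}D^{t}=I$. The $(2,1)$-block collapses to $D^{-1}W(F,F)=0$ and, analogously, the $(1,2)$-block to $-(D^{t})^{-1}W(G,G)^{t}=0$; these are precisely the steps where the two Wronskian hypotheses are invoked.

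The only real subtlety is bookkeeping of transposes, signs, and the order of factors when $\sigma_3$ is interlaced with $F$, $G$ and their derivatives. No analytic estimate is needed — the hypotheses are tailored exactly so that the off-diagonal blocks vanish while the invertibility of $D$ (equivalently of $D^{t}$) converts each diagonal Wronskian into the identity.
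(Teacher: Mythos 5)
Your proof is correct and follows essentially the same route as the paper: both amount to the direct block-matrix verification that the proposed inverse times $M$ equals $I_4$, using the transpose identity $(G')^{t}\sigma_3 F-G^{t}\sigma_3 F'=W(F,G)^{t}=D^{t}$ for the diagonal blocks and the vanishing of $W(F,F)$, $W(G,G)$ for the off-diagonal ones. (The stray minus sign in your expression for the $(1,2)$-block is immaterial since $W(G,G)=0$.)
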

\begin{proof}
 Under the assumption that $\mathcal{W}[F,F] = \mathcal{W}[G,G] = 0$, we have that
 \begin{align*}
  \begin{pmatrix} 0 & - I \\ I & 0 \end{pmatrix} \begin{pmatrix} F^t & F'^t \\ G^t & G'^t \end{pmatrix} \begin{bmatrix} 0 & \sigma_3 \\ -\sigma_3 & 0 \end{bmatrix} \begin{pmatrix} F & G \\ F' & G' \end{pmatrix} = \begin{pmatrix} - \mathcal{W}[G,F] & 0 \\ 0 & \mathcal{W}[F,G] \end{pmatrix}.
 \end{align*}
Using the fact that $D^t = -\mathcal{W}[G,F]$, and the above identity we obtain the desired result.
\end{proof}

In order to compute $S_i^{+}(r)$ and $T_i^{+}(r)$, we first need to invert the matrix defined above. To do this, we begin by computing the Wronskians between $F_1^{\pm}(r)$ and $F_2^{\pm}(r)$.

\begin{lemma}
\label{D_0}
    Let $D_0^{+}=\mathcal{W}(F_1^{+}(\cdot),F_2^{+}(\cdot)).$ Then, we have  
\begin{align*}D_0^{+}=
\begin{pmatrix}
      2c_1^2 c_3^2 & 0 \\ \\
      0  & -2c_2^1 c_4^1
   \end{pmatrix}
\qquad \text{and} \qquad 
   (D_0^{+})^{-1}  =(D_0^{+})^{-t}= \begin{pmatrix}
 \frac{1}{2c_1^2 c_3^2}      &  0 \\ \\
0       & -\frac{1}{2c_2^1 c_4^1}
   \end{pmatrix}
  : =  \begin{pmatrix}
d_1    &  0 \\ \\
0      &  d_2
   \end{pmatrix}
\end{align*}
\end{lemma}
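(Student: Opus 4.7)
The plan is to use that for any two solutions $\varphi_i^+,\varphi_j^+$ of $i\mathcal{L}\varphi=\tfrac{\sqrt{17}}{8}\varphi$, each scalar Wronskian $W_{\mathrm{scalar}}((\varphi_i^+)_k,(\varphi_j^+)_k)=(\varphi_i^+)_k(\varphi_j^+)_k'-(\varphi_i^+)_k'(\varphi_j^+)_k$, and hence the matrix entry
$W(\varphi_i^+,\varphi_j^+)=W_{\mathrm{scalar}}((\varphi_i^+)_1,(\varphi_j^+)_1)-W_{\mathrm{scalar}}((\varphi_i^+)_2,(\varphi_j^+)_2)$, is independent of $r$. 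Thus each entry of $D_0^+$ may be evaluated in whichever asymptotic regime ($r\to 0$ or $r\to\infty$) is most convenient, using the expansions from Subsection~\ref{subsec:non-resonance}.

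For the diagonal entries I work near $r=0$. For $(D_0^+)_{11}=W(\varphi_1^+,\varphi_3^+)$, the first component of $\varphi_1^+$ is $O(r^{7/2})$, so the first scalar Wronskian vanishes in the limit; the second yields
\[
W_{\mathrm{scalar}}(c_1^2 r^{3/2},c_3^2 r^{-1/2})
= c_1^2 c_3^2\Big(r^{3/2}\cdot(-\tfrac{1}{2})r^{-3/2}-\tfrac{3}{2}r^{1/2}\cdot r^{-1/2}\Big)=-2c_1^2 c_3^2,
\]
so $(D_0^+)_{11}=0-(-2c_1^2c_3^2)=2c_1^2c_3^2$. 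An analogous computation for $W(\varphi_2^+,\varphi_4^+)$, using that the second component of $\varphi_4^+$ is $O(r^{3/2}\log r)$, gives $(D_0^+)_{22}=-2c_2^1 c_4^1$. For the off-diagonal entry $(D_0^+)_{12}=W(\varphi_1^+,\varphi_4^+)$, the first component of $\varphi_1^+$ and the second component of $\varphi_4^+$ are both subordinate to the leading $r^{3/2}$ and $r^{-1/2}$ profiles, so each scalar Wronskian is $O(r^2)+O(r^2\log r)\to 0$ as $r\to 0$, and $(D_0^+)_{12}=0$.

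For $(D_0^+)_{21}=W(\varphi_2^+,\varphi_3^+)$ the near-origin computation is delicate, since the leading products $r^{3/2}\cdot r^{-1/2}$ contribute to both components and one would have to invoke algebraic relations between the coefficients $c_j^k$ imposed by the ODE system to see the cancellation. Instead I evaluate at $r\to\infty$: since $\varphi_3^+$ decays like $e^{-3r/\sqrt{2}}$ while $\varphi_2^+$ grows at most linearly, both scalar Wronskians are $O(re^{-3r/\sqrt{2}})\to 0$, so $(D_0^+)_{21}=0$. Having shown that $D_0^+$ is diagonal with the stated entries, the formulas for $(D_0^+)^{-1}$ and $(D_0^+)^{-t}$ are immediate, since a diagonal matrix equals its transpose and its inverse is diagonal with reciprocal entries. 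The only (mild) obstacle is the choice of asymptotic regime for the $(2,1)$ entry, which avoids having to extract coefficient identities from the spectral system.
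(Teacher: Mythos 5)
Your proposal is correct and follows essentially the same route as the paper: both evaluate the $(1,1)$, $(1,2)$, and $(2,2)$ entries of $D_0^+$ in the limit $r\to 0$ using the near-origin expansions, and both switch to the $r\to\infty$ regime for the $(2,1)$ entry $W(\varphi_2^+,\varphi_3^+)$, where the exponential decay of $\varphi_3^+$ against the polynomial growth of $\varphi_2^+$ makes the vanishing immediate. Your remark explaining \emph{why} the $(2,1)$ entry is handled differently (the $r^{3/2}\cdot r^{-1/2}$ products survive in both components at the origin, so cancellation would require coefficient identities from the spectral system) is a nice explicit articulation of what the paper does implicitly; otherwise the argument, including the $r$-independence of the Wronskian and the trivial inversion of a diagonal matrix, is the same.
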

\begin{proof}
    Let
$  D^{+}= \begin{pmatrix}
       \delta^{+} & \gamma^{+} \\
       \beta^{+} & \alpha^{+}
   \end{pmatrix} 
$, then we have 
\begin{align*}
    \delta^{+}&= W(\varphi_1^{+}(\cdot), \varphi_3^{+}(\cdot)) = W\left( \begin{pmatrix} 0\\   c_1^2 r^{\frac{3}{2}} \end{pmatrix} + O(r^{\frac{7}{2}}), \begin{pmatrix}
        c_3^1 r^{-\frac{1}{2}} \\
         c_3^2 r^{-\frac{1}{2}}
    \end{pmatrix} (1+O(r^{2} \log(r))) \, \right)=2c_1^2 c_3^2, \\
    \gamma^{+}&= W(\varphi_1^{+}, \varphi_4^{+}) = W\left( \begin{pmatrix} 0\\   c_1^2 r^{\frac{3}{2}} \end{pmatrix}+O(r^{\frac{7}{2}}), \begin{pmatrix}
         c_4^1 r^{-\frac{1}{2}}\\
       0
    \end{pmatrix} + O(r^{\frac{3}{2}} \log(r))  \, \right)=0, \\
    \alpha^{+} &=  W(\varphi_2^{+}, \varphi_4^{+}) = W\left(\begin{pmatrix}
           c_2^1 r^{\frac{3}{2}}  \\ 
          c_2^2 r^{\frac{3}{2}} 
         \end{pmatrix} (1+O(r^{2})) , \begin{pmatrix}
        c_4^1r^{-\frac{1}{2}} \\
        0
    \end{pmatrix}+O(r^{\frac{3}{2}}\log(r) ) \right)=-2c_2^1 c_4^1,  \\
    \beta^{+} &= W(\varphi_2^{+}, \varphi_3^{+}) = W\left( \begin{pmatrix}
           \tc_2^1 r  \\ 
          \tc_2^2 r 
         \end{pmatrix} (1+O(r^{-2})),  \begin{pmatrix}
          \tc_3^1 e^{-\frac{3}{\sqrt{2}}r}\\
           \tc_3^2 e^{-\frac{3}{\sqrt{2}}r}
    \end{pmatrix} (1 + O(r^{-1})) \right)=0. 
\end{align*}

Since, the Wronskians are independent of $r$ and can be evaluated from the asymptotics behavior of $\varphi_j^{+}(r).$ Therefore, we computed the first three Wronskians for small $r$ and last one for large $r.$ 
Thus \begin{align*}
   D_0^{+} = \begin{pmatrix}
      2c_1^2 c_3^2 & 0 \\ \\
      0  & -2c_2^1 c_4^1
   \end{pmatrix}
\quad \text{and}
\quad
    (D_0^{+})^{-t}= \frac{1}{d}  \begin{pmatrix}
 -2c_2^1 c_4^1     & 0  \\ \\
   0    &  2c_1^2 c_3^2
   \end{pmatrix}  : =  \begin{pmatrix}
d_1    &  0 \\ \\
0      &  d_2
   \end{pmatrix}
\end{align*}
where, 
 $ d^{+}={\rm{det}}(D)= -4 c_1^2 c_3^2 c_2^1 c_4^1\neq 0, $  $d_1:=\frac{1}{2c_1^2 c_3^2}$ and $d_2:=-\frac{1}{2 c_2^1 c_4^1}$
\end{proof}

 Next, we compute $S_i^{+}(r)$ and $T_i^{+}(r)$, using Lemma \ref{inver-2-matrix} and \ref{D_0}.
\begin{lemma} 
\label{lem:def_S-T-near 0}
We have
\begin{align*}
&\begin{cases}
S_1^{+}(r)&= - (D_0^{+})^{-t} F_2^{+}(r)^t \sigma_3 \sigma_2 \\
    T_1^{+}(r)&= (D_0^{+})^{-1}F_1^{+}(r)^t \sigma_3 \sigma_2   
\end{cases}
\qquad 
\begin{cases}
S_2^{+}(r)&=  (D_0^{+})^{-t} F_2^{+}(r)^t \sigma_3 \sigma_2 \\
T_2^{+}(r)&= (D_0^{+})^{-1}F_1^{+}(r)^t \sigma_3 \sigma_2   
\end{cases}\\
&\begin{cases}
S_3^{+}(r)&=  (D_0^{+})^{-t} F_2^{+}(r)^t \sigma_3 \sigma_2 \\
T_3^{+}(r)&= -(D_0^{+})^{-1}F_1^{+}(r)^t \sigma_3 \sigma_2   
\end{cases}
\end{align*}

 and 
 \begin{align*}
\mathcal{G}_1^{+}(r,s):&= \begin{cases}
    i   F_1^{+}(r) (D_0^{+})^{-t} F_2^{+}(s)^t \sigma_1, \qquad 0 < s \leq r, \\ \\
  - i  F_2^{+}(r) (D_0^{+})^{-1} F_1^{+}(s) ^t \sigma_1 , \qquad 0 < s \leq r.
\end{cases} \\ \\
\mathcal{G}_2^{+}(r,s):&= \begin{cases}
  -  i   F_1^{+}(r) (D_0^{+})^{-t} F_2^{+}(s)^t \sigma_1, \qquad r \leq s \leq r_0, \\ \\
  - i  F_2^{+}(r) (D_0^{+})^{-1} F_1^{+}(s) ^t \sigma_1 , \qquad 0 < s \leq r.
\end{cases} \\ \\
\mathcal{G}_3^{+}(r,s):&= \begin{cases}
   - i   F_1^{+}(r) (D_0^{+})^{-t} F_2^{+}(s)^t \sigma_1, \qquad r \leq s \leq r_0, \\ \\
   i  F_2^{+}(r) (D_0^{+})^{-1} F_1^{+}(s) ^t \sigma_1 , \qquad r \leq s \leq r_0.
\end{cases}
\end{align*}
\end{lemma}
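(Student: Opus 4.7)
The plan is to solve, in turn, the three linear systems \eqref{eq:S_1T_1condition}, \eqref{eq:S_2T_2condition}, \eqref{eq:S_3T_3condition} for $(S_i^{+}, T_i^{+})$ by inverting the block matrix
$M(r) := \begin{pmatrix} F_1^{+}(r) & F_2^{+}(r) \\ \partial_r F_1^{+}(r) & \partial_r F_2^{+}(r) \end{pmatrix}$
via Lemma~\ref{inver-2-matrix}, and then to substitute the resulting expressions into the definition \eqref{Greens-function-F_j} of $\mathcal{G}_i^{+}(r,s)$.

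Before applying Lemma~\ref{inver-2-matrix} I would first verify its hypotheses. The invertibility of $D_0^{+} = \mathcal{W}(F_1^{+}, F_2^{+})$ is precisely the content of Lemma~\ref{D_0}. The remaining conditions $\mathcal{W}(F_j^{+}, F_j^{+}) = 0$ for $j=1,2$ reduce, via the antisymmetry of the scalar Wronskian, to the vanishings $W(\varphi_1^{+}, \varphi_2^{+}) = 0$ and $W(\varphi_3^{+}, \varphi_4^{+}) = 0$. A short direct computation shows that $W(\mathbf{f}, \mathbf{g})$ is independent of $r$ for any pair of solutions of $i\mathcal{L}\Phi = \lambda\Phi$: differentiating and using the system once, the $\pm 2i\lambda$ coupling terms between the two components cancel exactly. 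The two vanishings then follow by evaluating the (constant) Wronskians in the limit $r \to 0$ for $W(\varphi_1^{+}, \varphi_2^{+})$ and in the limit $r \to \infty$ for $W(\varphi_3^{+}, \varphi_4^{+})$, using the leading asymptotics in~\S\ref{subsec:non-resonance}, exactly in the spirit of the computations of $\gamma^{+}$ and $\beta^{+}$ in Lemma~\ref{D_0}.

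With the hypotheses in place, Lemma~\ref{inver-2-matrix} provides an explicit formula for $M^{-1}$. Reading off $(S_1^{+}, T_1^{+})^t = M^{-1}(0,\sigma_2)^t$ reduces, after noting that $\begin{pmatrix} 0 & \sigma_3 \\ -\sigma_3 & 0 \end{pmatrix}\begin{pmatrix} 0 \\ \sigma_2 \end{pmatrix} = \begin{pmatrix} \sigma_3\sigma_2 \\ 0 \end{pmatrix}$ and tracking the remaining multiplications by $\begin{pmatrix} 0 & -I \\ I & 0 \end{pmatrix}$ and by $\mathrm{diag}((D_0^{+})^{-t}, (D_0^{+})^{-1})$, to the displayed formulas for $S_1^{+}, T_1^{+}$. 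The expressions for $(S_2^{+}, T_2^{+})$ and $(S_3^{+}, T_3^{+})$ follow identically, the only change being the sign flips prescribed in \eqref{eq:S_2T_2condition}, \eqref{eq:S_3T_3condition}, which propagate directly.

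Finally, substituting the formulas for $S_i^{+}, T_i^{+}$ into \eqref{Greens-function-F_j} and applying the Pauli identity $\sigma_3\sigma_2 = -i\sigma_1$ produces the claimed piecewise expressions for $\mathcal{G}_i^{+}(r,s)$; the indicator functions are preserved unchanged. I expect no essential obstacle: the only step requiring genuine verification is the constancy of the scalar Wronskian (a one-line ODE check), and everything else is careful bookkeeping of transpositions, Pauli-matrix products, and the sign conventions of \eqref{eq:S_1T_1condition}--\eqref{eq:S_3T_3condition}.
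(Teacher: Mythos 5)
Your proposal is correct and follows essentially the same route as the paper: invert the block matrix via Lemma~\ref{inver-2-matrix}, read off $S_i^{+},T_i^{+}$ from the conditions \eqref{eq:S_1T_1condition}--\eqref{eq:S_3T_3condition}, and simplify with $\sigma_3\sigma_2=-i\sigma_1$. Your extra step of verifying the hypothesis $\mathcal{W}(F_j^{+},F_j^{+})=0$ (constancy of the Wronskian plus the threshold asymptotics) is a sound addition that the paper leaves implicit.
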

\begin{proof}
Using \eqref{eq:S_1T_1condition} and Lemma \ref{inver-2-matrix}, we obtain
 \begin{align*}
  \begin{pmatrix}
    S_1^{+}(r) \\ T_1(r)
\end{pmatrix} &=    \begin{pmatrix}
F_1^{+}(r) &     F_2^{+}(r)\\
  \partial_r F_1^{+}(r) & \partial_r F_2^{+}(r) 
\end{pmatrix}^{-1} 
\begin{pmatrix}
    0 \\ \sigma_2
\end{pmatrix} \\
&=  \begin{pmatrix}
       (D_0^{+})^{-t}  & 0 \\ 
        0 & (D_0^{+})^{-1} 
    \end{pmatrix} \begin{pmatrix}
        0 & -I \\
        I & 0 
    \end{pmatrix}
    \begin{pmatrix}
        F_1^{+}(r)^t & (\partial_r F_1^{+}(r) )^t \\
       F_2^{+}(r)^t & (\partial_r F_2^{+}(r))^t
    \end{pmatrix}
   \begin{pmatrix}
       0 & \sigma_3 \\
       - \sigma_3 & 0 
   \end{pmatrix} 
   \begin{pmatrix}
    0 \\ \sigma_2
\end{pmatrix} \\
& = \begin{pmatrix}
    -(D_0^{+})^{-t}  F_2^{+}(r)^t \sigma_3 \sigma _2 \\
   (D_0^{+})^{-1}  F_1^{+}(r)^t \sigma_3 \sigma_2 
\end{pmatrix}
 \end{align*}
 Using the fact that $S_2^{+}(r)=-S_1^{+}(r)$, $T_2^{+}(r)=T_1^{+}(r)$ and $S_3^{+}(r)=-S_1^{+}(r)$, $T_3^{+}(r)=-T_1^{+}(r),$ we get 
\begin{align*}
     \begin{pmatrix}
    S_2^{+}(r) \\ T_2^{+}(r)
\end{pmatrix} = \begin{pmatrix}
   (D_0^{+})^{-t}  F_2^{+}(r)^t \sigma_3 \sigma _2 \\
  (D_0^{+})^{-1}  F_1^{+}(r)^t \sigma_3 \sigma_2 
\end{pmatrix}
\qquad \text{ and }
\qquad 
 \begin{pmatrix}
    S_3^{+}(r) \\ T_3^{+}(r)
\end{pmatrix} = \begin{pmatrix}
   (D_0^{+})^{-t}  F_2^{+}(r)^t \sigma_3 \sigma _2 \\
   - (D_0^{+})^{-1}  F_1^{+}(r)^t \sigma_3 \sigma_2 
\end{pmatrix}   
\end{align*}
Using the fact that $\sigma_3 \sigma_2= -i \sigma_1,$ we obtain the desired formula for $\mathcal{G}^{+}_j(r,s).$
\end{proof}

\subsection{Small $|\xi|$}

Using the Lyapunov–Perron method, we construct solutions to $i \mathcal{L}F=(\frac{\sqrt{17}}{8}+ \xi ) F ,$ for small $\xi,$ via fixed point argument. In particular, we show that the associated integral operators, defined in \eqref{eq:defF_1(r,z)-scenarioI} and \eqref{eq:defF_2(r,z)-scenarioI}, are contractions on the following function spaces.

\begin{defi}
\label{def:SpaceX_j}
Let $r_0>0$, and $F=\begin{bmatrix}
 F^{(1)}&   F^{(2)}
\end{bmatrix} , $ where $F^{(i)}$ denote the columns of a matrix $F.$ We define the following norms:
\begin{align*}
   \left\| F \right\|_{\mathcal{X}_1} &:= \| F^{(1)} \|_{X_1}  +  \| F^{(2)} \|_{X_2} \quad \text{and} \quad \left\| F \right\|_{\mathcal{X}_1^{\prime}} := \| F^{(1)} \|_{X_1^{\prime}}  +  \| F^{(2)} \|_{X_2^{\prime}}, \\
     \left\| F \right\|_{\mathcal{X}_2} &:= \| F^{(1)} \|_{X_3}  +  \| F^{(2)} \|_{X_4} \quad \text{and} \quad  \left\| F \right\|_{\mathcal{X}_2^{\prime}} := \| F^{(1)} \|_{X_3^{\prime}}  +  \| F^{(2)} \|_{X_4^{\prime}},
\end{align*} 
where
\begin{align*}
  \| F^{(1)} \|_{X_1} &:=  \sup_{0 < r < 1}    | r^{-\frac{3}{2}} F^{(1)} (r) |   + \sup_{1 \leq r\leq r_{0}} | e^{-\frac{3}{\sqrt{2}}r} F^{(1)}(r) | ,  \\
  \| F^{(2)} \|_{X_2} &:=  \sup_{0 < r < 1}    | r^{-\frac{3}{2}} F^{(2)} (r) |   + \sup_{1 \leq r\leq r_{0}} |r^{-1}  F^{(2)}(r) | , \\
  \| F^{(1)} \|_{X_3} &:=  \sup_{0 < r < 1}    | r^{\frac{1}{2}} F^{(1)} (r) |   + \sup_{1 \leq r\leq r_{0}} | e^{\frac{3}{\sqrt{2}}r} F^{(1)}(r) | , \\
  \| F^{(2)} \|_{X_4} &:=  \sup_{0 < r < 1}    | r^{\frac{1}{2}} F^{(2)} (r) |   + \sup_{1 \leq r\leq r_{0}} | F^{(2)}(r) |  .
\end{align*}
and 
\begin{align*}
  \| F^{(1)} \|_{X_1^{\prime}} &:=  \sup_{0 < r < 1}    | r^{-\frac{1}{2}} F^{(1)} (r) |   + \sup_{1 \leq r\leq r_{0}} | e^{-\frac{3}{\sqrt{2}}r} F^{(1)}(r) | ,  \\
  \| F^{(2)} \|_{X_2^{\prime}} &:=  \sup_{0 < r < 1}    | r^{-\frac{1}{2}} F^{(2)} (r) |   + \sup_{1 \leq r\leq r_{0}} |  F^{(2)}(r) | , \\
  \| F^{(1)} \|_{X_3^{\prime}} &:=  \sup_{0 < r < 1}    | r^{\frac{3}{2}} F^{(1)} (r) |   + \sup_{1 \leq r\leq r_{0}} | e^{\frac{3}{\sqrt{2}}r} F^{(1)}(r) | , \\
  \| F^{(2)} \|_{X_4^{\prime}} &:=  \sup_{0 < r < 1}    | r^{\frac{3}{2}} F^{(2)} (r) |   + \sup_{1 \leq r\leq r_{0}} |r^{2} F^{(2)}(r) |  .
\end{align*}

\end{defi}

\begin{prop}
\label{Prop:contraction-T_j(F_j)-xi-small}
Let $0<|\xi| \leq \delta_0 < 1 ,$ and $r_0 :=\frac{\varepsilon_0}{\sqrt{|\xi|}},$ where $0<\delta_0\ll \varepsilon_0\ll1$. Then $\mathcal{T}^{+}_j$ is a contraction on $\mathcal{X}_j,$ for $j=1,2,$  i.e., $$ \|\mathcal{T}^{+}_j(F(\cdot,z)) \|_{\mathcal{X}_j} \lesssim \varepsilon_0 \left\| F(\cdot,z) \right\|_{\mathcal{X}_j} . $$
Moreover, the fixed points $F_j(\cdot,z)$ of \eqref{eq:defF_1(r,z)-scenarioI} and \eqref{eq:defF_2(r,z)-scenarioI} satisfy $$ \|  \mathcal{T}^{+}_j(F_j(\cdot,z)) \|_{\mathcal{X}_j}+ \| \partial_r \mathcal{T}^{+}_j(F_j(\cdot,z)) \|_{\mathcal{X}_j^{\prime}} \lesssim \varepsilon_0.$$ 
\end{prop}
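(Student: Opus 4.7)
The plan is a Banach fixed point argument for each of \eqref{eq:defF_1(r,z)-scenarioI} and \eqref{eq:defF_2(r,z)-scenarioI}; I describe it for $\mathcal{T}_1^+$ since $\mathcal{T}_2^+$ is entirely analogous, differing only by having $F_2$'s columns (with $X_3,X_4$ profiles) integrated against $\mathcal{G}_3^+$ and $\mathcal{G}_2^+$ in place of $\mathcal{G}_1^+$ and $\mathcal{G}_2^+$. The first step is to extract pointwise bounds on the Green's kernels. From Lemma~\ref{lem:def_S-T-near 0}, each $\mathcal{G}_i^+(r,s)$ is a fixed constant matrix sandwich of outer products $F_1^+(r)F_2^+(s)^t$ or $F_2^+(r)F_1^+(s)^t$. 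Substituting the asymptotics \eqref{eq:asumptotic-phi_j(r)-non-resonance} yields explicit pointwise size bounds on each matrix entry of $\mathcal{G}_i^+(r,s)$ in the four subregions of $[0,r_0]^2$ cut by $r=1$ and $s=1$: for $s\le r\le 1$ the entries are of order $r^{3/2}s^{-1/2}$ or $r^{-1/2}s^{3/2}$ (with possible logarithmic corrections from $\varphi_3^+$), while for $1\le s\le r\le r_0$ they contribute factors like $r\,e^{-3s/\sqrt{2}}$ or $e^{3r/\sqrt{2}}$ depending on which piece of the outer product is taken.

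The second step is to estimate the integrals column by column. For $F \in \mathcal{X}_1$ one writes
\[
\mathcal{T}_1^+(F)(r)=\xi\int_0^r\mathcal{G}_1^+(r,s)F^{(1)}(s)\,ds+\xi\int_r^{r_0}\mathcal{G}_2^+(r,s)F^{(2)}(s)\,ds+(\text{supports from Lemma~\ref{lem:def_S-T-near 0}}),
\]
and splits each $s$-integral at $s=1$, inserting $|F^{(1)}(s)|\lesssim s^{3/2}\|F^{(1)}\|_{X_1}$ on $[0,1]$ and $|F^{(1)}(s)|\lesssim e^{3s/\sqrt{2}}\|F^{(1)}\|_{X_1}$ on $[1,r_0]$, and correspondingly for $F^{(2)}$. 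Verifying that the resulting $r$-dependent bound matches the $X_1$ profile on the first column and $X_2$ profile on the second, with prefactor $\lesssim\varepsilon_0$, reduces the contraction estimate to checking that each integration yields the correct asymptotic rate. The central mechanism is that the cutoff $r_0=\varepsilon_0/\sqrt{|\xi|}$ converts every polynomial-in-$r_0$ factor arising from $s$-integration into a power of $\varepsilon_0$: one has $|\xi|\,r_0^2=\varepsilon_0^2$ and $|\xi|\,r_0=\varepsilon_0\sqrt{|\xi|}$. In particular, the worst case, where $F^{(1)}$ grows like $e^{3s/\sqrt{2}}$ on $[1,r_0]$ and is paired with the exponentially growing branch of $F_1^+$ in the kernel, produces at worst $|\xi|\,r_0\,e^{3r/\sqrt{2}}\lesssim\varepsilon_0\sqrt{|\xi|}\,e^{3r/\sqrt{2}}\lesssim\varepsilon_0\,e^{3r/\sqrt{2}}$, matching the $X_1$ profile.

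The derivative bound is obtained by differentiating the fixed point identity. The boundary contributions coming from $\partial_r$ hitting the indicator functions cancel by the defining conditions \eqref{eq:S_1T_1condition}--\eqref{eq:S_3T_3condition}, and one is left with integrals of the same type but with $F_1^+, F_2^+$ in the front replaced by $\partial_r F_1^+, \partial_r F_2^+$. The asymptotics of $\partial_r\varphi_i^+$, obtained by termwise differentiation of \eqref{eq:asumptotic-phi_j(r)-non-resonance}, give exactly the profiles controlled by the primed norms $X_j^\prime$, so the same decomposition yields the derivative estimate $\|\partial_r\mathcal{T}_1^+(F_1(\cdot,z))\|_{\mathcal{X}_1^\prime}\lesssim\varepsilon_0$. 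Banach's theorem then produces a unique fixed point $F_j(\cdot,z)$ in a ball of radius $O(\varepsilon_0)$ centered at $C_jF_j^+$ in $\mathcal{X}_j$, and linearity of $\mathcal{T}_j^+$ in $F$ gives the contraction estimate as stated.

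The main obstacle is the piecewise bookkeeping across the four $(r,s)$-subregions, particularly at the interface $r,s\sim 1$ where polynomial asymptotics meet exponential ones and at the outer cutoff $r\sim r_0$ where the linear profile of $\upvarphi_2$ and the exponential profile of $\upvarphi_1$ compete. One must track constants precisely enough to ensure the final bounds carry the claimed $\varepsilon_0$ smallness rather than only absolute constants, and must verify that whenever an exponentially growing kernel branch pairs with an exponentially growing column of $F$ over $[1,r_0]$, the compensating $|\xi|$ factor together with the length $r_0-1$ produces the required $\varepsilon_0$ decay without spoiling the $e^{3r/\sqrt{2}}$ profile in the output.
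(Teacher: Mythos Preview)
Your proposal is correct and follows essentially the same approach as the paper: the paper organizes the computation into four Claims (one for each of small $r$ versus large $r$, and for each of $\mathcal{T}_1^+$ and $\mathcal{T}_2^+$), splits each $\Gamma_i^+$ integral at $s=1$, inserts the asymptotics \eqref{eq:asumptotic-phi_j(r)-non-resonance}, and closes with exactly the $|\xi|\,r_0^2=\varepsilon_0^2$ and $|\xi|\,r_0=\varepsilon_0\sqrt{|\xi|}$ mechanism you identify. The derivative bound is likewise handled by invoking the cancellation of boundary terms via \eqref{eq:S_1T_1condition}--\eqref{eq:S_3T_3condition}, just as you describe.
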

\begin{proof}
We will only proof that $\mathcal{T}^{+}_j$ is a contraction on $\mathcal{X}_j,$ and the derivative bounds can be obtained using similar estimates and required conditions \eqref{eq:S_1T_1condition},\eqref{eq:S_2T_2condition} and \eqref{eq:S_3T_3condition}. Recall that 
\begin{align*}
 \mathcal{T}_1^{+}(r,z, F(r,z))  &:= \Gamma_1^{+}(r,z, \upvarphi_1(r,z))+ \Gamma_2^{+}(r,z, \upvarphi_2(r,z))  \\
  & =\int_0^{\infty} \mathcal{G}_1^{+}(r,s) \, \xi  \upvarphi_1(s,z) ds  + \int_0^{\infty} \mathcal{G}_2^{+}(r,s)  \, \xi  \upvarphi_2(s,z) ds  
\end{align*}
\begin{align*}
  \mathcal{T}_2^{+}(r,z, F(r,z))  &:= \Gamma_3^{+}(r,z,\upvarphi_3 (r,z))+ \Gamma_2^{+}(r,z,\upvarphi_4 (r,z))  \\
  & =\int_0^{\infty} \mathcal{G}_3^{+}(r,s)  \, \xi  \upvarphi_3(s,z) ds  + \int_0^{\infty} \mathcal{G}_2^{+}(r,s)  \, \xi  \upvarphi_4(s,z) ds  
\end{align*}
where we have denoted the columns of $F$ by $\varphi_1$, $\varphi_2$ in the case of $\mathcal{T}_1^+$ and by $\varphi_3$, $\varphi_4$ in the case of $\mathcal{T}_2^+$.
Therefore, we will only provide the estimates for $\Gamma_j^{+}$, from which the corresponding bounds for $\mathcal{T}_j^{+}$ can be deduced. In view of the asymptotic behavior of $F_1^{+}(r)$ and $F_2^{+}(r)$, we divide the analysis into two regions: the small-$r$ regime ($r < 1$) and the large-$r$ regime ($r \geq 1$).

\begin{claim}
\label{Claim:Gamma_1-and-2-small-r-non-resonance}
    For small $r<1,$ we have 
\begin{align*}
   \sup_{0 < r < 1}    | r^{-\frac{3}{2}} \Gamma_1^{+}(r,z, \upvarphi_1(r,z)) |
& \lesssim \xi \sup_{0 < r < 1}    | r^{-\frac{3}{2}}  \upvarphi_1(r,z)) | \\
   \sup_{0 < r < 1} | r^{-\frac{3}{2}}   \Gamma_2^{+}(r,z, \upvarphi_2(r,z)) |  &\lesssim \xi \sup_{0 < r < 1} | r^{-\frac{3}{2}}    \upvarphi_2(r,z) | + r_{0}^{2} \, \xi \sup_{1 \leq r \leq r_0 } | r^{-1}    \upvarphi_2(r,z) |
\end{align*}
\end{claim}

\begin{proof}

We have 
    \begin{align*}
        \Gamma_1^{+}(r,z, \upvarphi_1(r,z))&=\int_0^{\infty} \mathcal{G}_1^{+}(r,s) \xi  \upvarphi_1(s,z) ds \\
        &= \int_0^r  i   F_1^{+}(r) (D_0^{+})^{-t} F_2^{+}(s)^t \sigma_1 \xi  \upvarphi_1(s,z) ds  \\
        & - \int_0^r  i  F_2^{+}(r) (D_0^{+})^{-1} F_1^{+}(s)^t \sigma_1 \xi  \upvarphi_1(s,z) ds \\
 &= i \int_0^r \begin{bmatrix} \varphi_1^{+}(r)  & \varphi_2^{+} (r) \end{bmatrix}   \begin{pmatrix}
        d_1 & 0 \\
        0 & d_2
    \end{pmatrix}
    \begin{bmatrix} (\varphi_3^{+})^{t} (s)  \\ (\varphi_4^{+})^{t} (s) \end{bmatrix}  \sigma_1 \xi  \upvarphi_1(s,z) ds  \\
 &  - i \int_0^r \begin{bmatrix} \varphi_3^{+}(r)  & \varphi_4^{+} (r) \end{bmatrix}   \begin{pmatrix}
        d_1 & 0 \\
        0 & d_2
    \end{pmatrix}
    \begin{bmatrix} (\varphi_1^{+})^{t} (s)  \\ (\varphi_2^{+})^{t} (s) \end{bmatrix}  \sigma_1 \xi  \upvarphi_1(s,z) ds \\
&= i \int_0^r   \left(  d_1 \varphi_1^{+}(r)   (\varphi_3^{+}(s))^{t}  +  d_2 \varphi_2^{+}(r)   (\varphi_4^{+} (s)) ^{t} \right)  \sigma_1 \xi  \upvarphi_1(s,z) ds \\
&- i \int_0^r    \left(  d_1 \varphi_3^{+}(r)   (\varphi_1^{+}(s))^{t} +  d_2 \varphi_4^{+}(r)   (\varphi_2^{+}(s)) ^{t}  \right)  \sigma_1\xi  \upvarphi_1(s,z) ds \\
\end{align*}


Using the asymptotics of $\varphi_j^{+}(r)$ near $0,$ we obtain 

\begin{align}
\label{Gamma_1-varphi-1}
  \Gamma_1^{+}(r,z, \upvarphi_1(r,z))   & \lesssim r^{\frac{3}{2}} \left|  \int_0^{r}   s^{- \frac{1}{2}}  \xi  \upvarphi_1(s,z) ds   \right| + r^{-\frac{1}{2}} \left|  \int_0^{r}   s^{ \frac{3}{2}}  \xi  \upvarphi_1(s,z) ds   \right| 
\end{align}
Thus,

\begin{align}
   \sup_{0 < r < 1} |   r^{-\frac{3}{2}} \Gamma_1^{+}(r,z, \upvarphi_1(r,z)) | \lesssim \xi \sup_{0 < r < 1} | r^{-\frac{3}{2}}    \upvarphi_1(r,z) | .
\end{align}

Next, we estimate $\Gamma_2^{+}(r,z, \upvarphi_2(r,z)).$ Similarly, we obtain 
\begin{align*}
    \Gamma_2^{+}(r,z, \upvarphi_2(r,z))&= \int_0^{\infty} \mathcal{G}_2^{+}(r,s) 
\, \xi  \upvarphi_2(s,z) ds \\
  &= -  i  \int_r^{r_0}   F_1^{+}(r) (D_0^{+})^{-t} F_2^{+}(s)^t \sigma_1 \xi  \upvarphi_2(s,z) ds  \\
        &  - i \int_0^r   F_2^{+}(r) (D_0^{+})^{-1} F_1^{+}(s)^t \sigma_1 \xi  \upvarphi_2(s,z) ds \\
 &= -  i  \int_r^{1}  \left(  d_1 \varphi_1^{+}(r)   (\varphi_3^{+}(s))^{t}  +  d_2 \varphi_2^{+}(r)   (\varphi_4^{+} (s)) ^{t} \right)  \sigma_1 \xi  \upvarphi_2(s,z) ds\\
 & -  i  \int_{1}^{r_0}  \left(  d_1 \varphi_1^{+}(r)   (\varphi_3^{+}(s))^{t}  +  d_2 \varphi_2^{+}(r)   (\varphi_4^{+} (s)) ^{t} \right)  \sigma_1 \xi  \upvarphi_2(s,z) ds \\
&- i \int_0^r    \left(  d_1 \varphi_3^{+}(r)   (\varphi_1^{+}(s))^{t} +  d_2 \varphi_4^{+}(r)   (\varphi_2^{+}(s)) ^{t}  \right)  \sigma_1\xi  \upvarphi_2(s,z) ds 
\end{align*}

Using the asymptotics of $\varphi_j^{+}(r)$ near $0,$ and for $r \geq 1$, we obtain 
\begin{align*}
    \Gamma_2^{+}(r,z, \upvarphi_2(r,z)) & \lesssim  r^{\frac{3}{2}} \left|  \int_r^{1}   s^{- \frac{1}{2}}  \xi  \upvarphi_2(s,z) ds   \right|  + r^{\frac{3}{2}} \left|   \int_{1}^{r_0}  (e^{-\frac{3}{\sqrt{2}}s}+ 1) \xi  \upvarphi_2(s,z) ds  \right| \\
&+ r^{-\frac{1}{2}} \left| \int_0^r   s^{\frac{3}{2}}  \sigma_1 \xi  \upvarphi_2(s,z) ds \right| 
\end{align*}
Thus, 

\begin{align*}
  \sup_{0 < r < 1} | r^{-\frac{3}{2}}   \Gamma_2^{+}(r,z, \upvarphi_2(r,z)) |  \lesssim \xi \sup_{0 < r < 1} | r^{-\frac{3}{2}}    \upvarphi_2(r,z) | + r_{0}^{2} \, \xi \sup_{1 \leq r \leq r_0 } | r^{-1}    \upvarphi_2(r,z) |
\end{align*}
\end{proof}

\begin{claim}
\label{Claim:Gamma_3-and-2-small-r-non-resonance}
    For small $r \leq 1,$ we have
    \begin{align*}
 \sup_{0 < r\leq  1 } | r^{\frac{1}{2}}   \Gamma_3^{+}(r,z,\upvarphi_3 (r,z))  | & \lesssim   \xi  \sup_{0 < r < 1} | r^{\frac{1}{2}}  \upvarphi_3(r,z) | + \xi  r_0  \sup_{1 \leq r \leq r_0 } | e^{\frac{3}{\sqrt{2}}r}  \upvarphi_3(r,z) |   \\
   \sup_{0 < r < 1} | r^{\frac{1}{2}}   \Gamma_2^{+}(r,z, \upvarphi_4(r,z))  |  &\lesssim   \xi  \sup_{0 < r < 1} | r^{\frac{1}{2}}    \upvarphi_4(r,z) |+  r_0 \,  \xi    \sup_{1 \leq r \leq r_0 } |     \upvarphi_4(r,z) |.
    \end{align*}
\end{claim}
\begin{proof}
We have 
\begin{align*}
    \Gamma_3^{+}(r,z, \upvarphi_3(r,z))&= \int_0^{\infty} \mathcal{G}_3^{+}(r,s) 
\, \xi  \upvarphi_3(s,z) ds \\
  &= - i  \int_r^{r_0}    F_1^{+}(r) (D_0^{+})^{-t} F_2^{+}(s)^t \sigma_1 \xi  \upvarphi_3(s,z) ds  \\
        & + i \int_r^{r_0}   F_2^{+}(r) (D_0^{+})^{-1} F_1^{+}(s)^t \sigma_1 \xi  \upvarphi_3(s,z) ds \\
 &= -  i  \int_r^{1}  \left(  d_1 \varphi_1^{+}(r)   (\varphi_3^{+}(s))^{t}  +  d_2 \varphi_2^{+}(r)   (\varphi_4^{+} (s)) ^{t} \right)  \sigma_1 \xi  \upvarphi_3(s,z) ds\\
 & -  i  \int_{1}^{r_0}  \left(  d_1 \varphi_1^{+}(r)   (\varphi_3^{+}(s))^{t}  +  d_2 \varphi_2^{+}(r)   (\varphi_4^{+} (s)) ^{t} \right)  \sigma_1 \xi  \upvarphi_3(s,z) ds \\
&+  i  \int_r^{1}     \left(  d_1 \varphi_3^{+}(r)   (\varphi_1^{+}(s))^{t} +  d_2 \varphi_4^{+}(r)   (\varphi_2^{+}(s)) ^{t}  \right)  \sigma_1\xi  \upvarphi_3(s,z) ds \\
&+  i  \int_{1}^{r_0}      \left(  d_1 \varphi_3^{+}(r)   (\varphi_1^{+}(s))^{t} +  d_2 \varphi_4^{+}(r)   (\varphi_2^{+}(s)) ^{t}  \right)  \sigma_1\xi  \upvarphi_3(s,z) ds 
\end{align*}

Using the asymptotics of $\varphi_j^{+}(r)$ near $0,$ and for $r \geq 1$, we obtain 
\begin{align*}
 \Gamma_3^{+}(r,z, \upvarphi_3(r,z))   & \lesssim  r^{\frac{3}{2}} \left|  \int_r^{1}   s^{- \frac{1}{2}}  \xi  \upvarphi_3(s,z) ds   \right|  + r^{\frac{3}{2}} \left|   \int_{1}^{r_0}  (e^{-\frac{3}{\sqrt{2}}s}+ 1) \xi  \upvarphi_3(s,z) ds  \right| \\
&+ r^{-\frac{1}{2}} \left| \int_r^1   s^{\frac{3}{2}}  \sigma_1 \xi  \upvarphi_3(s,z) ds \right| + r^{-\frac{1}{2}} \left|   \int_{1}^{r_0}  (e^{\frac{3}{\sqrt{2}}s}+ s) \xi  \upvarphi_3(s,z) ds  \right| \\
& \lesssim r^{\frac{3}{2}} |\log(r)| \xi  \sup_{0 < r < 1} | r^{\frac{1}{2}}  \upvarphi_3(r,z) |  + r^{\frac{3}{2}} \xi    \sup_{1 \leq r \leq r_0 }  | e^{\frac{3}{\sqrt{2}}r}  \upvarphi_3(r,z) |  \\
&+ r^{-\frac{1}{2}}  \xi  \sup_{0 < r < 1} | r^{\frac{1}{2}}  \upvarphi_3(r,z) | +  r^{-\frac{1}{2}} r_0    \xi    \sup_{1 \leq r \leq r_0 }  | e^{\frac{3}{\sqrt{2}}r}  \upvarphi_3(r,z) |
\end{align*}
Thus, 
\begin{align*}
 \sup_{0 < r\leq  1 } | r^{\frac{1}{2}}   \Gamma_3^{+}(r,z,\upvarphi_3 (r,z))  | & \lesssim   \xi  \sup_{0 < r < 1} | r^{\frac{1}{2}}  \upvarphi_3(r,z) | + \xi  r_0  \sup_{1 \leq r \leq r_0 } | e^{\frac{3}{\sqrt{2}}r}  \upvarphi_3(r,z) | 
\end{align*}

Similarly, to the estimate for $ \Gamma_2^{+}(r,z, \upvarphi_2(r,z)),$ we have
\begin{align*}
    \Gamma_2^{+}(r,z, \upvarphi_4(r,z))&= \int_0^{\infty} \mathcal{G}_2(r,s) 
\xi  \upvarphi_4(s,z) ds \\
 &= -  i  \int_r^{1}  \left(  d_1 \varphi_1^{+}(r)   (\varphi_3^{+}(s))^{t}  +  d_2 \varphi_2^{+}(r)   (\varphi_4^{+} (s)) ^{t} \right)  \sigma_1 \xi  \upvarphi_4(s,z) ds\\
 & -  i  \int_{1}^{r_0}  \left(  d_1 \varphi_1^{+}(r)   (\varphi_3^{+}(s))^{t}  +  d_2 \varphi_2^{+}(r)   (\varphi_4^{+} (s)) ^{t} \right)  \sigma_1 \xi  \upvarphi_4(s,z) ds \\
&- i \int_0^r    \left(  d_1 \varphi_3^{+}(r)   (\varphi_1^{+}(s))^{t} +  d_2 \varphi_4^{+}(r)   (\varphi_2^{+}(s)) ^{t}  \right)  \sigma_1\xi  \upvarphi_4(s,z) ds \\
 & \lesssim  r^{\frac{3}{2}} \left|  \int_r^{1}   s^{- \frac{1}{2}}  \xi  \upvarphi_4(s,z) ds   \right|  + r^{\frac{3}{2}} \left|   \int_{1}^{r_0}  (e^{-\frac{3}{\sqrt{2}}s}+ 1) \xi  \upvarphi_4(s,z) ds  \right| \\
&+ r^{-\frac{1}{2}} \left| \int_0^r   s^{\frac{3}{2}}  \sigma_1 \xi  \upvarphi_4(s,z) ds \right| \\
& \lesssim r^{\frac{3}{2}} |\log(r)|  \xi  \sup_{0 < r < 1} | r^{\frac{1}{2}}  \upvarphi_4(r,z) | + r^{\frac{3}{2}} r_0 \xi   \sup_{1 \leq r \leq r_0 } |  \upvarphi_4(r,z) | \\
& + r^{\frac{3}{2}} \xi  \sup_{0 < r < 1} | r^{\frac{1}{2}}  \upvarphi_4(r,z) | 
\end{align*}
Then 
\begin{align*}
  \sup_{0 < r < 1} | r^{\frac{1}{2}}   \Gamma_2^{+}(r,z, \upvarphi_4(r,z))  |  \lesssim   \xi  \sup_{0 < r < 1} | r^{\frac{1}{2}}    \upvarphi_4(r,z) |+  r_0 \,  \xi    \sup_{1 \leq r \leq r_0 } |     \upvarphi_4(r,z) |.
\end{align*}

\end{proof}

\begin{claim}
\label{Claim:Gamma_1-and-2-large-r-non-resonance}

    For large $r>1,$ we have 

\begin{align}
\label{eq:Gamma1-large-r-small-xi}
  \sup_{1 \leq r\leq r_{0}} | e^{-\frac{3}{\sqrt{2}}r}  \Gamma_1^{+}(r,z, \upvarphi_1(r,z)) | &\lesssim r_0 \xi \sup_{0< r \leq 1 } | r^{-\frac{3}{2}} \upvarphi_1(r,z) | + r_0 \xi \sup_{1 \leq r\leq r_{0}} |e^{-\frac{3}{\sqrt{2}}r}  \upvarphi_1(r,z)|   \\
 \label{eq:Gamma2-large-r-small-xi} 
\sup_{1 \leq r \leq r_0 }  |     r^{-1}     \Gamma_2^{+}(r,z, \upvarphi_2(r,z)) | &\lesssim  \xi    \sup_{0 < r < 1} |r^{\frac{3}{2}}  \upvarphi_2(r,z)| +  r_0^2 \xi \sup_{1 \leq r \leq r_0 } |r^{-1}  \upvarphi_2(r,z)|
\end{align}
\end{claim}
\begin{proof}

We have 
    \begin{align*}
        \Gamma_1^{+}(r,z, \upvarphi_1(r,z))&=\int_0^{\infty} \mathcal{G}_1^{+}(r,s) \xi  \upvarphi_1(s,z) ds \\
&= i \int_0^r   \left(  d_1 \varphi_1^{+}(r)   (\varphi_3^{+}(s))^{t}  +  d_2 \varphi_2^{+}(r)   (\varphi_4^{+} (s)) ^{t} \right)  \sigma_1 \xi  \upvarphi_1(s,z) ds \\
&- i \int_0^r    \left(  d_1 \varphi_3^{+}(r)   (\varphi_1^{+}(s))^{t} +  d_2 \varphi_4^{+}(r)   (\varphi_2^{+}(s)) ^{t}  \right)  \sigma_1\xi  \upvarphi_1(s,z) ds \\
&= i \int_0^1   \left(  d_1 \varphi_1^{+}(r)   (\varphi_3^{+}(s))^{t}  +  d_2 \varphi_2^{+}(r)   (\varphi_4^{+} (s)) ^{t} \right)  \sigma_1 \xi  \upvarphi_1(s,z) ds \\
&+i \int_1^r   \left(  d_1 \varphi_1^{+}(r)   (\varphi_3^{+}(s))^{t}  +  d_2 \varphi_2^{+}(r)   (\varphi_4^{+} (s)) ^{t} \right)  \sigma_1 \xi  \upvarphi_1(s,z) ds \\
&- i \int_0^1    \left(  d_1 \varphi_3^{+}(r)   (\varphi_1^{+}(s))^{t} +  d_2 \varphi_4^{+}(r)   (\varphi_2^{+}(s)) ^{t}  \right)  \sigma_1\xi  \upvarphi_1(s,z) ds \\
&- i \int_1^r    \left(  d_1 \varphi_3^{+}(r)   (\varphi_1^{+}(s))^{t} +  d_2 \varphi_4^{+}(r)   (\varphi_2^{+}(s)) ^{t}  \right)  \sigma_1\xi  \upvarphi_1(s,z) ds \\
\end{align*}

Using the asymptotics of $\varphi_j^{+}(r)$ near $0,$ and for $r \geq 1$, we obtain 
 \begin{align*}
        \Gamma_1^{+}(r,z, \upvarphi_1(r,z))& \lesssim  e^{\frac{3}{\sqrt{2}}r} \left| \int_0^1  s^{-\frac{1}{2}}     \xi      \upvarphi_1(s,z) ds \right|  +  r \left| \int_0^1  s^{-\frac{1}{2}}  \xi  \upvarphi_1(s,z) ds \right|   \\ 
 & + e^{-\frac{3}{\sqrt{2}}r} \left|  \int_0^1 s^{\frac{3}{2}}   \xi      \upvarphi_1(s,z) ds \right| + \left|   \int_0^1  s^{\frac{3}{2}} \;  \xi  \upvarphi_1(s,z) ds \right|   \\
 &+e^{\frac{3}{\sqrt{2}}r} \left|  \int_1^r      e^{-\frac{3}{\sqrt{2}}s}  \xi     \upvarphi_1(s,z) ds \right| + r \left|  \int_1^r   \xi  \upvarphi_1(s,z) ds \right|  \\ 
 & + e^{-\frac{3}{\sqrt{2}}r}  \left| \int_1^r  e^{\frac{3}{\sqrt{2}}s}  \xi      \upvarphi_1(s,z) ds  \right| + \left|   \int_1^r  s \;  \xi  \upvarphi_1(s,z) ds \right| \\
 & \lesssim  e^{\frac{3}{\sqrt{2}}r} \xi \sup_{0< r \leq 1 } | r^{-\frac{3}{2}} \upvarphi_1(r,z) | + r \xi \sup_{0< r \leq 1 } | r^{-\frac{3}{2}} \upvarphi_1(r,z) | \\
& +e^{\frac{3}{\sqrt{2}}r} \xi \sup_{0< r \leq 1 } | r^{-\frac{3}{2}} \upvarphi_1(r,z) | +  \xi \sup_{0< r \leq 1 } | r^{-\frac{3}{2}} \upvarphi_1(r,z) | \\
& + e^{\frac{3}{\sqrt{2}}r} \xi \sup_{1 \leq r\leq r_{0}} |e^{-\frac{3}{\sqrt{2}}r}  \upvarphi_1(r,z)| + r e^{\frac{3}{\sqrt{2}}r} \xi \sup_{1 \leq r\leq r_{0}} |e^{-\frac{3}{\sqrt{2}}r}  \upvarphi_1(r,z)| 
\end{align*}

Therefore, 
\begin{align}
     \sup_{1 \leq r\leq r_{0}} | e^{-\frac{3}{\sqrt{2}}r}  \Gamma_1^{+}(r,z, \upvarphi_1(r,z)) | \lesssim r_0 \xi \sup_{0< r \leq 1 } | r^{-\frac{3}{2}} \upvarphi_1(r,z) | + r_0 \xi \sup_{1 \leq r\leq r_{0}} |e^{-\frac{3}{\sqrt{2}}r}  \upvarphi_1(r,z)| 
\end{align}

Next, we estimate $\Gamma_2^{+}(r,z, \upvarphi_2(r,z)).$ Similarly, we obtain 

\begin{align*}
\Gamma_2^{+}(r,z, \upvarphi_2(r,z))&= \int_0^{\infty} \mathcal{G}_2^{+}(r,s) 
\xi  \upvarphi_2(s,z) ds \\
&= - i \int_r^{r_0}   \left(  d_1 \varphi_1^{+}(r)   (\varphi_3^{+}(s))^{t}  +  d_2 \varphi_2^{+}(r)   (\varphi_4^{+} (s)) ^{t} \right)  \sigma_1 \xi  \upvarphi_2(s,z) ds \\
&- i \int_0^r    \left(  d_1 \varphi_3^{+}(r)   (\varphi_1^{+}(s))^{t} +  d_2 \varphi_4^{+}(r)   (\varphi_2^{+}(s)) ^{t}  \right)  \sigma_1\xi  \upvarphi_2(s,z) ds \\
&= - i \int_r^{r_0}   \left(  d_1 \varphi_1^{+}(r)   (\varphi_3^{+}(s))^{t}  +  d_2 \varphi_2^{+}(r)   (\varphi_4^{+} (s)) ^{t} \right)  \sigma_1 \xi  \upvarphi_2(s,z) ds \\
&- i \int_0^1    \left(  d_1 \varphi_3^{+}(r)   (\varphi_1^{+}(s))^{t} +  d_2 \varphi_4^{+}(r)   (\varphi_2^{+}(s)) ^{t}  \right)  \sigma_1\xi  \upvarphi_2(s,z) ds \\
&- i \int_1^r    \left(  d_1 \varphi_3^{+}(r)   (\varphi_1^{+}(s))^{t} +  d_2 \varphi_4^{+}(r)   (\varphi_2^{+}(s)) ^{t}  \right)  \sigma_1\xi  \upvarphi_2(s,z) ds \\
\end{align*}
Using the asymptotics of $\varphi_j^{+}(r)$ near $0,$ and for $r \geq 1$, we obtain 
\begin{align*}
  \Gamma_2^{+}(r,z, \upvarphi_2(r,z))& \lesssim    e^{-\frac{3}{\sqrt{2}}r}  
\left|  \int_0^1 s^{\frac{3}{2}}   \xi      \upvarphi_2(s,z) ds \right|  +  \left|  \int_0^1  s^{\frac{3}{2}} \;  \xi  \upvarphi_2(s,z) ds \right|  \\
 &+ e^{\frac{3}{\sqrt{2}}r} \left|   \int_r^{r_0}      e^{-\frac{3}{\sqrt{2}}s}  \xi      \upvarphi_2(s,z) ds  \right| +   r \left|   \int_r^{r_0}  \xi   \upvarphi_2(s,z) ds \right|  \\ 
 & +e^{-\frac{3}{\sqrt{2}}r}  \left| \int_1^r  e^{\frac{3}{\sqrt{2}}s}  \xi      \upvarphi_2(s,z) ds\right|  +  \left|   \int_1^r  s \;  \xi  \upvarphi_2(s,z) ds \right| \\
 & \lesssim  e^{-\frac{3}{\sqrt{2}}r}  \xi    \sup_{0 < r < 1} |r^{\frac{3}{2}}  \upvarphi_2(r,z)| + \xi  \sup_{0 < r < 1} |r^{\frac{3}{2}}  \upvarphi_2(r,z)| \\ 
 & +  r \xi \sup_{1 \leq r \leq r_0 } |r^{-1}  \upvarphi_2(r,z)|  +  r r_0^2 \xi \sup_{1 \leq r \leq r_0 } |r^{-1}  \upvarphi_2(r,z)| \\
& +  r \xi \sup_{1 \leq r \leq r_0 } |r^{-1}  \upvarphi_2(r,z)| + r^3 \xi \sup_{1 \leq r \leq r_0 } |r^{-1}  \upvarphi_2(r,z)|
\end{align*}

\begin{align}
\sup_{1 \leq r \leq r_0 }  |     r^{-1}     \Gamma_2^{+}(r,z, \upvarphi_2(r,z)) | \lesssim  \xi    \sup_{0 < r < 1} |r^{\frac{3}{2}}  \upvarphi_2(r,z)| +  r_0^2 \xi \sup_{1 \leq r \leq r_0 } |r^{-1}  \upvarphi_2(r,z)|
\end{align}
 \end{proof}

\begin{claim}
\label{Claim:Gamma_3-and-2-large-r-non-resonance}

    For large $r>1,$ we have  \\
    \begin{align*}
     \sup_{1 \leq r \leq r_0}  | e^{\frac{3}{\sqrt{2}}r}  \Gamma_3^{+}(r,z, \upvarphi_3(r,z)) | & \lesssim r_0   \xi  \sup_{1 \leq r \leq r_0} |e^{\frac{3}{\sqrt{2}}r}  \upvarphi_3(r,z)|   \\
       \sup_{1 \leq r \leq r_0 } | \Gamma_2^{+}(r,z, \upvarphi_4(r,z))| &\lesssim 
   \xi  \sup_{0 < r < 1} |r^{\frac{1} {2}}  \upvarphi_4(r,z)| +  r_0^2 \xi \sup_{1 \leq r \leq r_0 } |  \upvarphi_4(r,z)| 
        \end{align*}
\end{claim}
\begin{proof}
We have 
\begin{align*}
    \Gamma_3^{+}(r,z, \upvarphi_3(r,z))&= \int_0^{\infty} \mathcal{G}_3^{+}(r,s) 
\, \xi  \upvarphi_3(s,z) ds \\
 &=  - i  \int_r^{r_0}  \left(  d_1 \varphi_1^{+}(r)   (\varphi_3^{+}(s))^{t}  +  d_2 \varphi_2^{+}(r)   (\varphi_4^{+} (s)) ^{t} \right)  \sigma_1 \xi  \upvarphi_3(s,z) ds\\
&+  i \int_r^{r_0}     \left(  d_1 \varphi_3^{+}(r)   (\varphi_1^{+}(s))^{t} +  d_2 \varphi_4^{+}(r)   (\varphi_2^{+}(s)) ^{t}  \right)  \sigma_1 \xi  \upvarphi_3(s,z) ds \\
& \lesssim e^{\frac{3}{\sqrt{2}}r}  \left|  \int_r^{r_0}      e^{-\frac{3}{\sqrt{2}}s}  \xi      \upvarphi_3(s,z) ds \right| + r \left| \int_r^{r_0}  \xi  \upvarphi_3(s,z) ds  \right| \\ 
 & + e^{-\frac{3}{\sqrt{2}}r}  \left| \int_r^{r_0}  e^{\frac{3}{\sqrt{2}}s}  \xi      \upvarphi_3(s,z) ds \right| +  \left|   \int_r^{r_0}  s \;  \xi  \upvarphi_3(s,z) ds \right| 
\end{align*}

Using the asymptotics of $\varphi_j^{+}(r)$ near $0,$ and for $r \geq 1$, we obtain
\begin{align*}
\Gamma_3^{+}(r,z, \upvarphi_3(r,z))   & \lesssim  e^{-\frac{3}{\sqrt{2}}r}  \xi  \sup_{1 \leq r \leq r_0} |e^{\frac{3}{\sqrt{2}}r}  \upvarphi_3(r,z)| + r e^{-\frac{3}{\sqrt{2}}r}  \xi  \sup_{1 \leq r \leq r_0} |e^{\frac{3}{\sqrt{2}}r}  \upvarphi_3(r,z)| \\
& +  r_0 e^{-\frac{3}{\sqrt{2}}r}  \xi  \sup_{1 \leq r \leq r_0} |e^{\frac{3}{\sqrt{2}}r}  \upvarphi_3(r,z)| + r e^{-\frac{3}{\sqrt{2}}r}  \xi  \sup_{1 \leq r \leq r_0} |e^{\frac{3}{\sqrt{2}}r}  \upvarphi_3(r,z)|
\end{align*}
Therefore, we have 

\begin{align*}
    \sup_{1 \leq r \leq r_0}  | e^{\frac{3}{\sqrt{2}}r}  \Gamma_3^{+}(r,z, \upvarphi_3(r,z)) | \lesssim r_0   \xi  \sup_{1 \leq r \leq r_0} |e^{\frac{3}{\sqrt{2}}r}  \upvarphi_3(r,z)| 
\end{align*}

Next, we estimate $\Gamma_2^{+}(r,z, \upvarphi_4(r,z)).$ Similarly, we have 

\begin{align*}
\Gamma_2^{+}(r,z, \upvarphi_4(r,z))&= \int_0^{\infty} \mathcal{G}_2(r,s) 
\xi  \upvarphi_4(s,z) ds \\
&= - i \int_r^{r_0}   \left(  d_1 \varphi_1^{+}(r)   (\varphi_3^{+}(s))^{t}  +  d_2 \varphi_2^{+}(r)   (\varphi_4^{+} (s)) ^{t} \right)  \sigma_1 \xi  \upvarphi_4(s,z) ds \\
&- i \int_0^r    \left(  d_1 \varphi_3^{+}(r)   (\varphi_1^{+}(s))^{t} +  d_2 \varphi_4^{+}(r)   (\varphi_2^{+}(s)) ^{t}  \right)  \sigma_1\xi  \upvarphi_4(s,z) ds \\
&= - i \int_r^{r_0}   \left(  d_1 \varphi_1^{+}(r)   (\varphi_3^{+}(s))^{t}  +  d_2 \varphi_2^{+}(r)   (\varphi_4^{+} (s)) ^{t} \right)  \sigma_1 \xi  \upvarphi_4(s,z) ds \\
&- i \int_0^1    \left(  d_1 \varphi_3^{+}(r)   (\varphi_1^{+}(s))^{t} +  d_2 \varphi_4^{+}(r)   (\varphi_2^{+}(s)) ^{t}  \right)  \sigma_1\xi  \upvarphi_4(s,z) ds \\
&- i \int_1^r    \left(  d_1 \varphi_3^{+}(r)   (\varphi_1^{+}(s))^{t} +  d_2 \varphi_4^{+}(r)   (\varphi_2^{+}(s)) ^{t}  \right)  \sigma_1\xi  \upvarphi_4(s,z) ds \\
\end{align*}

Using the asymptotics of $\varphi_j^{+}(r)$ near $0,$ and for $r \geq 1$, we obtain

\begin{align*}
 \Gamma_2^{+}(r,z, \upvarphi_4(r,z))& \lesssim  e^{-\frac{3}{\sqrt{2}}r}    \left|  \int_0^1 s^{\frac{3}{2}}   \xi     \upvarphi_4(s,z) ds  \right| + \left|  \int_0^1  s^{\frac{3}{2}} \;  \xi  \upvarphi_4(s,z) ds \right|  \\
 &+e^{\frac{3}{\sqrt{2}}r}   \left|  \int_r^{r_0}      e^{-\frac{3}{\sqrt{2}}s}  \xi   \upvarphi_4(s,z) ds \right| +  r \left|  \int_r^{r_0}  \xi   \upvarphi_4(s,z) ds \right|  \\ 
 & +e^{-\frac{3}{\sqrt{2}}r}  \left|  \int_1^r  e^{\frac{3}{\sqrt{2}}s}  \xi      \upvarphi_4(s,z) ds \right|+  \left|  \int_1^r  s \; \xi  \upvarphi_4(s,z) ds\right|  \\
 & \lesssim  e^{-\frac{3}{\sqrt{2}}r}   \xi  \sup_{0 < r < 1} |r^{\frac{1} {2}}  \upvarphi_4(r,z)| + \xi  \sup_{0 < r < 1} |r^{\frac{1} {2}}  \upvarphi_4(r,z)| \\
 & +\xi   \sup_{1 \leq r \leq r_0 } |  \upvarphi_4(r,z)|   + r^2 \xi   \sup_{1 \leq r \leq r_0 } |  \upvarphi_4(r,z)|  
\end{align*}
Thus, 
\begin{align*}
    \sup_{1 \leq r \leq r_0 } | \Gamma_2^{+}(r,z, \upvarphi_4(r,z))| \lesssim 
   \xi  \sup_{0 < r < 1} |r^{\frac{1} {2}}  \upvarphi_4(r,z)| +  r_0^2 \xi \sup_{1 \leq r \leq r_0 } |  \upvarphi_4(r,z)|    
\end{align*}
\end{proof}
Combining the estimates from Claim \ref{Claim:Gamma_1-and-2-small-r-non-resonance} and \ref{Claim:Gamma_1-and-2-large-r-non-resonance} one can be deduce the corresponding bounds for $\mathcal{T}_1^+$ and similarly for $\mathcal{T}_2^+$ by using Claim \ref{Claim:Gamma_3-and-2-small-r-non-resonance} and \ref{Claim:Gamma_3-and-2-large-r-non-resonance}.
\end{proof}

\subsection{Large $|\xi|$}
Similarly to the case of small $|\xi|$, we construct solutions to $i \mathcal{L}F=(\frac{\sqrt{17}}{8}+ \xi ) F ,$ for large $\xi,$ using the Banach fixed-point theorem. Specifically, we show that the integral equations defined in \eqref{eq:defF_1(r,z)-scenarioI} and \eqref{eq:defF_2(r,z)-scenarioI} yield solutions as fixed points in the following function spaces. Note that by large $|\xi|$ (or equivalently large $|z|$), we mean the regime where the real part is large while the imaginary part remains small.

\begin{defi}
\label{def:Y_j-space}
Let $\tr_0>0$, and $F=\begin{bmatrix}
 F^{(1)}&   F^{(2)}
\end{bmatrix} , $ where $F^{(i)}$ denote the columns of a matrix $F.$ We define the following norms:
\begin{align*}
   \left\| F \right\|_{\mathcal{Y}_1} &:= \| F^{(1)} \|_{Y_1}  +  \| F^{(2)} \|_{Y_1} \quad \text{and} \quad \left\| F \right\|_{\mathcal{Y}_1^{\prime}} := \| F^{(1)} \|_{Y_1^{\prime}}  +  \| F^{(2)} \|_{Y_1^{\prime}}, \\
     \left\| F \right\|_{\mathcal{Y}_2} &:= \| F^{(1)} \|_{Y_2}  +  \| F^{(2)} \|_{Y_2} \quad \text{and} \quad  \left\| F \right\|_{\mathcal{Y}_2^{\prime}} := \| F^{(1)} \|_{Y_2^{\prime}}  +  \| F^{(2)} \|_{Y_2^{\prime}},
\end{align*} 
where
\begin{align*}
  \| f \|_{Y_1} &:=  \sup_{0 < r < \tr_0}    | r^{-\frac{3}{2}} f (r) | \quad \text{and} \quad   \| f \|_{Y_1^{\prime}} :=  \sup_{0 < r < \tr_0}    | r^{-\frac{1}{2}}f  (r) |  \\
  \| f \|_{Y_2} &:=  \sup_{0 < r < \tr_0}   | r^{\frac{1}{2}} f (r) |   \quad \text{and} \quad   \|f \|_{Y_2^{\prime}} :=  \sup_{0 < r < \tr_0}    | r^{\frac{3}{2}} f (r) | .
\end{align*}

\end{defi}

\begin{prop}
\label{prop:contraction-large-xi}
Let $ |\xi| > \Lambda_0>1 ,$ and $\tr_0 :=\frac{\tvarepsilon_0}{\sqrt{|\xi|}},$ where $0<\tvarepsilon_0\ll1$ and $\Lambda_0\gg {\tilde{\varepsilon}}_0^{-1}$. Then $\mathcal{T}^{+}_j$ is a contraction on $\mathcal{Y}_j,$ for $j=1,2,$  i.e., $$ \|\mathcal{T}^{+}_j(F(\cdot,z)) \|_{\mathcal{Y}_j} \lesssim \varepsilon_0 \left\| F(\cdot,z) \right\|_{\mathcal{Y}_j} . $$
Moreover, the fixed points $F_j(\cdot,z)$ of \eqref{eq:defF_1(r,z)-scenarioI} and \eqref{eq:defF_2(r,z)-scenarioI} satisfy $$ \|  \mathcal{T}^{+}_j(F_j(\cdot,z)) \|_{\mathcal{Y}_j}+ \| \partial_r \mathcal{T}^{+}_j(F_j(\cdot,z)) \|_{\mathcal{Y}_j^{\prime}} \lesssim \varepsilon_0 .$$ 

\end{prop}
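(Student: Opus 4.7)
The plan is to apply the Banach fixed-point theorem to \eqref{eq:defF_1(r,z)-scenarioI} and \eqref{eq:defF_2(r,z)-scenarioI} in the spaces $\mathcal{Y}_j$, with the structural setup (Green kernels $\mathcal{G}_i^+$, matrices $S_i^+, T_i^+$) identical to that of Proposition~\ref{Prop:contraction-T_j(F_j)-xi-small}. The essential simplification compared to the small-$|\xi|$ case is that $\tilde{r}_0=\tilde{\varepsilon}_0/\sqrt{|\xi|}\ll 1$, since $|\xi|>\Lambda_0\gg\tilde{\varepsilon}_0^{-1}$. Consequently every integral occurring in $\Gamma_i^+$ is supported in a small neighborhood of the origin, where only the small-$r$ asymptotics of $\varphi_j^+$ are needed, and there is no need to split the analysis into $r<1$ and $1\le r\le\tilde{r}_0$ as before.

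The core scaling estimate is $|\xi|\,r^2\le \tilde{\varepsilon}_0^2$ on $(0,\tilde{r}_0)$. Inserting $|\varphi_{1,2}^+(r)|\lesssim r^{3/2}$ and $|\varphi_{3,4}^+(r)|\lesssim r^{-1/2}$ into the kernels, I would bound each piece of $\Gamma_i^+$ by sums of terms of the form $r^{3/2}\int_0^r s^{-1/2}(\cdot)\,ds$ and $r^{-1/2}\int_0^r s^{3/2}(\cdot)\,ds$, together with the analogous $\int_r^{\tilde{r}_0}$ contributions coming from $\mathcal{G}_2^+$ and $\mathcal{G}_3^+$. A direct power-counting then shows that, when measured in the weights defining $Y_1$ or $Y_2$, each contribution is controlled by $|\xi|\,r^2\|F\|_{\mathcal{Y}_j}\lesssim\tilde{\varepsilon}_0^2\|F\|_{\mathcal{Y}_j}$. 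Summing the four contributions yields $\|\mathcal{T}_j^+(F)\|_{\mathcal{Y}_j}\lesssim\tilde{\varepsilon}_0^2\|F\|_{\mathcal{Y}_j}$, hence the contraction property and, by the fixed-point theorem, the existence of $F_j(\cdot,z)\in\mathcal{Y}_j$ with $\|\mathcal{T}_j^+(F_j)\|_{\mathcal{Y}_j}\lesssim\tilde{\varepsilon}_0$.

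For the derivative bound I would differentiate $\Gamma_i^+$ under the integral sign. The conditions \eqref{eq:S_1T_1condition}--\eqref{eq:S_3T_3condition} were imposed precisely so that the boundary contributions from $\partial_r$ acting on the characteristic functions cancel, reducing $\partial_r\Gamma_i^+$ to integrals of the same form but with $F_j^+$ replaced by $\partial_r F_j^+$. Using $|\partial_r \varphi_{1,2}^+(r)|\lesssim r^{1/2}$ and $|\partial_r\varphi_{3,4}^+(r)|\lesssim r^{-3/2}$ near the origin, the same power-counting gives a gain of $|\xi|\,r^2\lesssim\tilde{\varepsilon}_0^2$ measured in the shifted weights of $\mathcal{Y}_j^{\prime}$, yielding $\|\partial_r\mathcal{T}_j^+(F_j)\|_{\mathcal{Y}_j^{\prime}}\lesssim\tilde{\varepsilon}_0$.

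I do not anticipate a serious obstacle, only a minor technicality: the subleading $O(r^{3/2}\log r)$ corrections in the asymptotics of $\varphi_3^+,\varphi_4^+$ introduce at worst a factor $|\log\tilde{r}_0|\lesssim\log|\xi|$ after integration. This is absorbed by enlarging $\Lambda_0$ so that $\tilde{\varepsilon}_0^2\log|\xi|\ll\tilde{\varepsilon}_0$; the condition $\Lambda_0\gg\tilde{\varepsilon}_0^{-1}$ is compatible with this. The parameter $z=\tfrac{\sqrt{17}}{8}+\xi$ enters the Volterra problem only multiplicatively through $\xi$, so joint analyticity of the fixed point in $z$ follows automatically from the contraction mapping construction.
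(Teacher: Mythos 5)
Your proposal is correct and follows essentially the same route as the paper: reduce everything to the small-$r$ asymptotics on $(0,\tilde r_0)$, power-count the kernels $r^{3/2}\int_0^r s^{-1/2}(\cdot)\,ds$ and $r^{-1/2}\int_0^r s^{3/2}(\cdot)\,ds$ (plus the $\int_r^{\tilde r_0}$ pieces) to extract the factor $\xi\,\tilde r_0^{2}=\tilde\varepsilon_0^{2}$, and handle $\partial_r\mathcal{T}_j^+$ via the conditions \eqref{eq:S_1T_1condition}--\eqref{eq:S_3T_3condition}. One small correction on your ``minor technicality'': the logarithm enters as $r^{3/2}|\log(\tilde r_0/r)|$, and since $\sup_{0<r\le\tilde r_0} r^{2}\log(\tilde r_0/r)\lesssim \tilde r_0^{2}$ it is absorbed outright, with no $\log|\xi|$ loss; your proposed fix of enlarging $\Lambda_0$ would not work anyway, since $|\xi|$ ranges over all of $(\Lambda_0,\infty)$ and a genuine $\log|\xi|$ factor could not be made uniformly small that way.
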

\begin{proof}

We will only proof that $\mathcal{T}^{+}_j$ is a contraction on $\mathcal{Y}_j,$ and the derivative bounds can be obtained using similar estimates and required conditions \eqref{eq:S_1T_1condition},\eqref{eq:S_2T_2condition} and \eqref{eq:S_3T_3condition}. In view of the definitions of $\mathcal{T}_j^{+},$ we will only provide the estimates for $\Gamma_j^{+}$, from which one can obtain the corresponding bounds for $\mathcal{T}_j^{+}.$  

\begin{claim}
\label{Gamma_j-beahvior-large-xi}
    \begin{align}
    \label{eq:Gamma1-phi1-large-xi}
   \sup_{0 < r \leq \tr_0 } |   r^{-\frac{3}{2}} \Gamma_1^{+}(r,z, \upvarphi_1(r,z)) |  &\lesssim \tr_0^2 \xi \sup_{0 < r \leq \tr_0 } | r^{-\frac{3}{2}}    \upvarphi_1(r,z) | , \\
      \label{eq:Gamma2-phi2-large-xi}
   \sup_{0 < r \leq \tr_0 } | r^{-\frac{3}{2}}   \Gamma_2^{+}(r,z, \upvarphi_2(r,z)) |  & \lesssim \tr_0^2 \xi \sup_{0 < r \leq \tr_0 } | r^{-\frac{3}{2}}    \upvarphi_2(r,z) | , \\
   \label{eq:Gamma3-phi3-large-xi}
    \sup_{0 < r\leq  \tr_0 } | r^{\frac{1}{2}}   \Gamma_3^{+}(r,z,\upvarphi_3 (r,z))  | & \lesssim  \tr_0^2 \xi  \sup_{0 < r \leq \tr_0 } | r^{\frac{1}{2}}  \upvarphi_3(r,z) | \\
     \label{eq:Gamma2-phi4-large-xi}
    \sup_{0 < r \leq \tr_0 }   | r^{\frac{1}{2}}   \Gamma_2^{+}(r,z,\upvarphi_4 (r,z))  | & \lesssim  \tr_0^2 \xi    \sup_{0 < r \leq \tr_0 } | r^{\frac{1}{2}}  \upvarphi_4(r,z) |
\end{align} 
\end{claim}
\begin{proof}
Recall that by \eqref{Gamma_1-varphi-1}, we have
\begin{align*}
  \Gamma_1^{+}(r,z, \upvarphi_1(r,z))   & \lesssim r^{\frac{3}{2}} \left|  \int_0^{r}   s^{- \frac{1}{2}}  \xi  \upvarphi_1(s,z) ds   \right| + r^{-\frac{1}{2}} \left|  \int_0^{r}   s^{ \frac{3}{2}}  \xi  \upvarphi_1(s,z) ds   \right|  
\end{align*}
Thus,

\begin{align*}
   \sup_{0 < r \leq \tr_0 } |   r^{-\frac{3}{2}} \Gamma_1^{+}(r,z, \upvarphi_1(r,z)) | \lesssim \tr_0^2 \xi \sup_{0 < r \leq \tr_0 } | r^{-\frac{3}{2}}    \upvarphi_1(r,z) | .
\end{align*}

Next, we estimate $\Gamma_2^{+}(r,z, \upvarphi_2(r,z)).$ We have
\begin{align*}
    \Gamma_2^{+}(r,z, \upvarphi_2(r,z))&= \int_0^{\infty} \mathcal{G}_2^{+}(r,s) 
\, \xi  \upvarphi_2(s,z) ds \\
  &= -  i  \int_r^{\tr_0}   F_1^{+}(r) (D_0^{+})^{-t} F_2^{+}(s)^t \sigma_1 \xi  \upvarphi_2(s,z) ds  \\
        &  - i \int_0^r   F_2^{+}(r) (D_0^{+})^{-1} F_1^{+}(s)^t \sigma_1 \xi  \upvarphi_2(s,z) ds \\
 &= -  i  \int_r^{\tr_0}  \left(  d_1 \varphi_1^{+}(r)   (\varphi_3^{+}(s))^{t}  +  d_2 \varphi_2^{+}(r)   (\varphi_4^{+} (s)) ^{t} \right)  \sigma_1 \xi  \upvarphi_2(s,z) ds\\
&- i \int_0^r    \left(  d_1 \varphi_3^{+}(r)   (\varphi_1^{+}(s))^{t} +  d_2 \varphi_4^{+}(r)   (\varphi_2^{+}(s)) ^{t}  \right)  \sigma_1\xi  \upvarphi_2(s,z) ds 
\end{align*}

Using the asymptotics of $\varphi_j^{+}(r)$ near $0,$ we obtain 
\begin{align}
\label{Gamma_2-varphi-2-xi-large}
    \Gamma_2^{+}(r,z, \upvarphi_2(r,z)) & \lesssim  r^{\frac{3}{2}} \left|  \int_r^{\tr_0}   s^{- \frac{1}{2}}  \xi  \upvarphi_2(s,z) ds   \right|  + r^{-\frac{1}{2}} \left| \int_0^r   s^{\frac{3}{2}}  \sigma_1 \xi  \upvarphi_2(s,z) ds \right| 
\end{align}
Thus, 

\begin{align*}
  \sup_{0 < r \leq \tr_0 } | r^{-\frac{3}{2}}   \Gamma_2^{+}(r,z, \upvarphi_2(r,z)) |  \lesssim \tr_0^2 \xi \sup_{0 < r \leq \tr_0 } | r^{-\frac{3}{2}}    \upvarphi_2(r,z) | .
\end{align*}

Next, we estimate $ \Gamma_3^{+}(r,z, \upvarphi_3(r,z)).$ We have 
\begin{align*}
    \Gamma_3^{+}(r,z, \upvarphi_3(r,z))&= \int_0^{\infty} \mathcal{G}_3^{+}(r,s) 
\, \xi  \upvarphi_3(s,z) ds \\
  &= - i  \int_r^{\tr_0}    F_1^{+}(r) (D_0^{+})^{-t} F_2^{+}(s)^t \sigma_1 \xi  \upvarphi_3(s,z) ds  \\
        & + i \int_r^{\tr_0}   F_2^{+}(r) (D_0^{+})^{-1} F_1^{+}(s)^t \sigma_1 \xi  \upvarphi_3(s,z) ds \\
 & =-  i  \int_{r}^{\tr_0}  \left(  d_1 \varphi_1^{+}(r)   (\varphi_3^{+}(s))^{t}  +  d_2 \varphi_2^{+}(r)   (\varphi_4^{+} (s)) ^{t} \right)  \sigma_1 \xi  \upvarphi_3(s,z) ds \\
&+  i  \int_{r}^{\tr_0}      \left(  d_1 \varphi_3^{+}(r)   (\varphi_1^{+}(s))^{t} +  d_2 \varphi_4^{+}(r)   (\varphi_2^{+}(s)) ^{t}  \right)  \sigma_1\xi  \upvarphi_3(s,z) ds 
\end{align*}

Using the asymptotics of $\varphi_j^{+}(r)$ near $0,$  we obtain 
\begin{align*}
 \Gamma_3^{+}(r,z, \upvarphi_3(r,z))   & \lesssim  r^{\frac{3}{2}} \left|  \int_r^{\tr_0}   s^{- \frac{1}{2}}  \xi  \upvarphi_3(s,z) ds   \right|   + r^{-\frac{1}{2}} \left| \int_r^{\tr_0}   s^{\frac{3}{2}}  \sigma_1 \xi  \upvarphi_3(s,z) ds \right| \\
& \lesssim r^{\frac{3}{2}} |\log(\frac{\tr_0}{r})| \xi  \sup_{0 < r \leq \tr_0 } | r^{\frac{1}{2}}  \upvarphi_3(r,z) |   + r^{-\frac{1}{2}} \tr_0^2 \xi  \sup_{0 < r \leq \tr_0 } | r^{\frac{1}{2}}  \upvarphi_3(r,z) |
\end{align*}
Thus, 
\begin{align*}
   \sup_{0 < r \leq \tr_0 }   | r^{\frac{1}{2}}   \Gamma_3^{+}(r,z,\upvarphi_3 (r,z))  | & \lesssim  \tr_0^2 \xi    \sup_{0 < r \leq \tr_0 } | r^{\frac{1}{2}}  \upvarphi_3(r,z) |
\end{align*}

By \eqref{Gamma_2-varphi-2-xi-large}, we have 
\begin{align*}
    \Gamma_2^{+}(r,z, \upvarphi_4(r,z)) & \lesssim  r^{\frac{3}{2}} \left|  \int_r^{\tr_0}   s^{- \frac{1}{2}}  \xi  \upvarphi_4(s,z) ds   \right|  + r^{-\frac{1}{2}} \left| \int_0^r   s^{\frac{3}{2}}  \sigma_1 \xi  \upvarphi_4(s,z) ds \right| \\
    & \lesssim r^{\frac{3}{2}} |\log(\frac{\tr_0}{r})| \xi  \sup_{0 < r \leq \tr_0 } | r^{\frac{1}{2}}  \upvarphi_4(r,z) |   + r^{-\frac{1}{2}} \tr_0^2 \xi  \sup_{0 < r \leq \tr_0 } | r^{\frac{1}{2}}  \upvarphi_4(r,z) |
\end{align*}
Thus, 
\begin{align*}
   \sup_{0 < r \leq \tr_0 }   | r^{\frac{1}{2}}   \Gamma_2^{+}(r,z,\upvarphi_4 (r,z))  | & \lesssim  \tr_0^2 \xi    \sup_{0 < r \leq \tr_0 } | r^{\frac{1}{2}}  \upvarphi_4(r,z) |
\end{align*}

\end{proof} 
 Using the estimates \eqref{eq:Gamma1-phi1-large-xi} and \eqref{eq:Gamma2-phi2-large-xi}, we deduce that $\mathcal{T}_1^{+}$ is a contraction on $\mathcal{Y}_1.$ Similarly, using estimates \eqref{eq:Gamma3-phi3-large-xi} and \eqref{eq:Gamma2-phi4-large-xi}, we deduce that $\mathcal{T}_2^{+}$ is a contraction on $\mathcal{Y}_2.$ The estimates for the derivatives can be obtained using the similar argument with conditions \eqref{eq:S_1T_1condition},\eqref{eq:S_2T_2condition} and \eqref{eq:S_3T_3condition}. This concludes the proof of Proposition \ref{prop:contraction-large-xi}.
\end{proof}

\section{Solution to $i\mathcal{L}-z$ near $0$ in the resonance case}
\label{sec:Sol-near-0-resonance}
In this section, we start the construction of the distorted Fourier transform associated with the linearized operator $\mathcal{L},$ in the presence of resonance. The main goal is to construct the fundamental matrix solutions $F_1^{R}(\cdot,z)$ and $F_2^{R}(\cdot, z)$ to 
\begin{equation}
\label{eq:iLF=zF-resonance}
   i \mathcal{L} F^{R}(\cdot, z) = z F^{R}(\cdot, z),  
\end{equation}
for small and large $|z|.$ We denote by $F_1^{R}(\cdot,z)$ a solution branch that is the $L^2$  near $r=0.$  \\k

We define the Green's functions exactly as \eqref{Greens-function-F_j} using $F_1^{R,\pm}(r)$ and $F_2^{R,\pm}(r):$

\begin{align}
\label{Greens-function-F_j-resonance}
\begin{split}
   \mathcal{G}_1^{R,+}(r,s)&:= F_1^{R,+}(r) S_1^{R,+}(s) \mathbb{1}_{\{ 0\leq s \leq r\} } + F_2^{R,+}(r) T_1^{R,+}(s) \mathbb{1}_{  \{ 0\leq s \leq r \} }  \\
     \mathcal{G}_2^{R,+}(r,s)&:= F_1^{R,+}(r) S_2^{R,+}(s) \mathbb{1}_{\{ r\leq s \leq R_0 \} } + F_2^{R,+}(r) T_2^{R,+}(s) \mathbb{1}_{  \{ 0\leq s \leq r \} } \\
    \mathcal{G}_3^{R,+}(r,s)&:= F_1^{R,+}(r) S_3^{R,+}(s) \mathbb{1}_{\{ r\leq s \leq R_0 \} } + F_2^{R,+}(r) T_3^{R,+}(s) \mathbb{1}_{  \{ r\leq s \leq R_0 \} }        
\end{split}
\end{align}
where for $i=1,2,3$ we require the matrices $S_i^{R,+}(r)$ and $T_i^{R,+}(r),$ for $\mathcal{G}_i^{R,+}(r,s)$   to satisfy the following conditions 

\begin{align}
\label{eq:S_1T_1condition-resonance}
\begin{pmatrix}
F_1^{R,+}(r) &     F_2^{R,+}(r)\\
  \partial_r F_1^{R,+}(r) & \partial_r F_2^{R,+}(r) 
\end{pmatrix} 
\begin{pmatrix}
    S_1^{R,+}(r) \\ T_1^{R,+}(r)
\end{pmatrix}
&= \begin{pmatrix}
    0 \\ \sigma_2
\end{pmatrix} , \\ 
\label{eq:S_2T_2condition-resonance}
 \begin{pmatrix}
F_1^{R,+}(r) &     F_2^{R,+}(r)\\
  \partial_r F_1^{R,+}(r) & \partial_r F_2^{R,+}(r) 
\end{pmatrix} 
\begin{pmatrix}
   - S_2^{R,+}(r) \\ T_2^{R,+}(r)
\end{pmatrix}
&= \begin{pmatrix}
    0 \\ \sigma_2
\end{pmatrix}, \\ 
\label{eq:S_3T_3condition-resonance}
 \begin{pmatrix}
F_1^{R,+}(r) &     F_2^{R,+}(r)\\
  \partial_r F_1^{R,+}(r) & \partial_r F_2^{R,+}(r) 
\end{pmatrix} 
\begin{pmatrix}
   - S_3^{R,+}(r) \\- T_3^{R,+}(r)
\end{pmatrix}
&= \begin{pmatrix}
    0 \\ \sigma_2
\end{pmatrix}.
\end{align}

Denote by \begin{align*}
   F_1^{R}(r,z)= \begin{bmatrix}
       \upvarphi_1^{R}(r,z) &  \upvarphi_2^{R}(r,z)
   \end{bmatrix} , \qquad \text{and} \qquad  F_2^{R}(r,z)= \begin{bmatrix}
       \upvarphi_3^{R}(r,z) &  \upvarphi_4^{R}(r,z)
   \end{bmatrix}  
\end{align*} 
and  
\begin{align*}
 \Gamma_i^{R,+}(r,z,\upvarphi_j^{R} (r,z))&:= \int_0^\infty \mathcal{G}^{R,+}_i(r,s) \, z \upvarphi_j^{R} (s,z)  ds  \qquad \text{for } i=1,2, 3 \text{ and } j=1,2.
\end{align*}
where $\upvarphi_j^{R}(r,z) $ is the $j$-th column of the matrix $F^{R}(r,z).$ \\

Then a solution to \eqref{eq:iLF=zF-resonance} for $z$ near $\frac{\sqrt{17}}{8}$ and $-\frac{\sqrt{17}}{8}$ is given by the following two scenarios:\\

\textbf{Scenario I:} $z$ near $\frac{\sqrt{17}}{8}$: Let $z= \frac{\sqrt{17}}{8}+\xi,$ for small $\xi$  we have 
\begin{align*}
   F_1^{R}(r,z)&:=  C_1  F_1^{R,+} (r) + \mathcal{T}_1^{R,+}(r,z, F_1^{R}(r,z))  \\
   F_2^{R}(r,z)&:=  C_2  F_2^{R,+} (r) + \mathcal{T}_2^{R,+}(r,z, F_2^{R}(r,z))  
\end{align*}

where, $F_j^{R,+}(r)$ are solutions to $ i \mathcal{L}F_j^{R,+}(r)=\frac{\sqrt{17}}{8}F_j^{R,+}(r) $ and 
\begin{align}
\label{eq:def-T_1^R}
\begin{split}
 \mathcal{T}_1^{R,+}(r,z, F_1^{R}(r,z))  &:= \Gamma_1^{R,+}(r,z, \upvarphi_1^{R}(r,z))+ \Gamma_2^{R,+}(r,z, \upvarphi_2^{R}(r,z))  \\
  & =\int_0^{\infty} \mathcal{G}_1^{R,+}(r,s) \, \xi  \upvarphi_1^{R}(s,z) ds  + \int_0^{\infty} \mathcal{G}_2^{R,+}(r,s)  \, \xi  \upvarphi_2^{R}(s,z) ds  
\end{split}
\end{align}
\begin{align}
\label{eq:def-T_2^R}
\begin{split}
  \mathcal{T}_2^{R,+}(r,z, F_2^{R}(r,z))  &:= \Gamma_3^{R,+}(r,z,\upvarphi_3 (r,z))+ \Gamma_2^{R,+}(r,z,\upvarphi_4^{R} (r,z))  \\
  & =\int_0^{\infty} \mathcal{G}_3^{R,+}(r,s)  \, \xi  \upvarphi_3^{R}(s,z) ds  + \int_0^{\infty} \mathcal{G}_2^{R,+}(r,s)  \, \xi  \upvarphi_4^{R}(s,z) ds  
\end{split}
\end{align}
\textbf{Scenario II:} $z$ near $\frac{-\sqrt{17}}{8}$: Let $z= -\frac{\sqrt{17}}{8}+\xi,$ for small $\xi$ we define, 
\begin{align*}
   F_1^{R}(r,z)&:=  -\sigma_3 F_1^{R}(r,-z) \sigma_3  \\
   F_2^{R}(r,z)&:=  -\sigma_3 F_2^{R}(r,-z) \sigma_3 
\end{align*}
Therefore,  $\upvarphi_j^{R}(r,z)$ satisfy 
\begin{align} \label{eq:sys-varphi_j-resonance}
\begin{split}
    \upvarphi_1^{R}(r,z)&=-\sigma_3 \upvarphi_1^{R}(r,-z), \; \;
    \qquad \upvarphi_3^{R}(r,z)=-\sigma_3 \upvarphi_3^{R}(r,-z), \\
   \upvarphi_2^{R} (r,z)&=\sigma_3 \upvarphi_2^{R}(r,-z), \qquad \quad \;  \upvarphi_4^{R}(r)=\sigma_3 \varphi_4^{R}(r,-z). 
   \end{split}
\end{align}

\begin{lemma}
\label{D_0-resonance}
    Let $D_0^{\pm}=\mathcal{W}(F_1^{R,\pm}(\cdot),F_2^{R,\pm}(\cdot)).$ Then, we have  where 
\begin{align*}D_0^{\pm}=
\begin{pmatrix}
      2c_1^2 c_3^2 & 0 \\ 
      0  & 2c_2^1 c_4^1
   \end{pmatrix}
\qquad \text{and} \qquad 
   (D_0^{\pm})^{-1}  =(D_0^{\pm})^{-t}= \begin{pmatrix}
 \frac{1}{2c_1^2 c_3^2}      &  0 \\ 
0       & \frac{1}{2c_2^1 c_4^1}
   \end{pmatrix}
  : =  \begin{pmatrix}
d_1    &  0 \\ 
0      &  d_2
   \end{pmatrix}.
\end{align*}
Moreover, we have
\begin{align*}
\mathcal{G}_1^{R,+}(r,s):&= \begin{cases}
    i   F_1^{R,+}(r) (D_0^{+})^{-t} F_2^{R,+}(s)^t \sigma_1, \qquad \quad 0 < s \leq r, \\ 
  - i  F_2^{R,+}(r) (D_0^{+})^{-1} F_1^{R,+}(s) ^t \sigma_1 , \qquad 0 < s \leq r.
\end{cases} \\ \\
\mathcal{G}_2^{R,+}(r,s):&= \begin{cases}
  -  i   F_1^{R,+}(r) (D_0^{+})^{-t} F_2^{R,+}(s)^t \sigma_1, \qquad r \leq s \leq R_0, \\ 
  - i  F_2^{R,+}(r) (D_0^{+})^{-1} F_1^{R,+}(s) ^t \sigma_1 , \qquad 0 < s \leq r.
\end{cases} \\ \\
\mathcal{G}_3^{R,+}(r,s):&= \begin{cases}
   - i   F_1^{R,+}(r) (D_0^{+})^{-t} F_2^{R,+}(s)^t \sigma_1, \qquad r \leq s \leq R_0, \\ 
   i  F_2^{R,+}(r) (D_0^{+})^{-1} F_1^{R,+}(s) ^t \sigma_1 , \qquad \;
   \; r \leq s \leq R_0.
\end{cases}
\end{align*}
\end{lemma}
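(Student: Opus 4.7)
The proof follows the same blueprint as Lemma~\ref{D_0} and Lemma~\ref{lem:def_S-T-near 0}, adapted to the resonance asymptotics from Section~\ref{subsec:resonance}. My plan is to first observe that since both $F_1^{R,+}$ and $F_2^{R,+}$ are solutions to $i\mathcal{L}F=\frac{\sqrt{17}}{8}F$, the matrix Wronskian $\mathcal{W}(F_1^{R,+},F_2^{R,+})$ is independent of $r$. Therefore each entry of $D_0^{+}$ can be evaluated at whichever endpoint ($r\to 0$ or $r\to\infty$) is most convenient.

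For the three entries
\[
W(\varphi_1^{R,+},\varphi_3^{R,+}), \quad W(\varphi_1^{R,+},\varphi_4^{R,+}), \quad W(\varphi_2^{R,+},\varphi_4^{R,+}),
\]
the $r\to 0$ asymptotics of $\varphi_j^{R,+}$ are literally identical to those of $\varphi_j^{+}$ in the non-resonance case. Inserting them into $W(f,g)=f^t\sigma_3 g'-(f')^t\sigma_3 g$ and retaining the leading behaviors exactly reproduces the computation of Lemma~\ref{D_0}, yielding $2c_1^2 c_3^2$, $0$, and $\pm 2c_2^1 c_4^1$ respectively. The off-diagonal entry $W(\varphi_2^{R,+},\varphi_3^{R,+})$ is the only one where the resonance changes the situation, so I would instead evaluate it as $r\to\infty$: in that regime $\varphi_3^{R,+}\sim e^{-\frac{3}{\sqrt{2}}r}\cdot \mathrm{const}$ while $\varphi_2^{R,+}$ and its derivative remain bounded (they approach constants rather than growing like $r$ as in the non-resonance case). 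Hence every term in the Wronskian is $O(e^{-\frac{3}{\sqrt{2}}r})$ and the limit vanishes. This gives the diagonal matrix $D_0^{+}$, and since it is diagonal we immediately have $(D_0^{+})^{-1}=(D_0^{+})^{-t}=\operatorname{diag}(d_1,d_2)$. The case $D_0^{-}$ follows from the symmetry relations \eqref{eq:sys-varphi_j-resonance} applied to the $r\to 0$ asymptotics.

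Once $D_0^{+}$ is identified, the formulas for the Green's functions $\mathcal{G}_j^{R,+}$ are obtained exactly as in Lemma~\ref{lem:def_S-T-near 0}. Specifically, I would invert the matrix
\(
\left(\begin{smallmatrix} F_1^{R,+} & F_2^{R,+} \\ \partial_r F_1^{R,+} & \partial_r F_2^{R,+}\end{smallmatrix}\right)
\)
using Lemma~\ref{inver-2-matrix} (whose hypothesis $\mathcal{W}(F_j^{R,+},F_j^{R,+})=0$ holds because $F_j^{R,+}$ solves a second-order-style matrix system with the same $\sigma_3$-structure as in the non-resonance case), solve the three linear systems \eqref{eq:S_1T_1condition-resonance}--\eqref{eq:S_3T_3condition-resonance} for $(S_i^{R,+},T_i^{R,+})$, plug the resulting expressions into the definitions \eqref{Greens-function-F_j-resonance}, and finally use the identity $\sigma_3\sigma_2=-i\sigma_1$ to bring the formulas into the stated form with $\sigma_1$.

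The main potential obstacle I anticipate is the verification that $W(\varphi_2^{R,+},\varphi_3^{R,+})=0$: in the resonance regime one can no longer invoke the polynomial-times-exponential cancellation of the non-resonance case directly, but as noted above, the replacement of linear growth of $\varphi_2^+$ by boundedness of $\varphi_2^{R,+}$ only strengthens the decay of the product, so the argument goes through. Everything else is a direct, book-keeping adaptation of the non-resonance proofs, and no genuinely new estimates are required.
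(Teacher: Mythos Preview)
Your proposal is correct and follows essentially the same approach as the paper: both compute the three entries $\delta,\gamma,\alpha$ at $r\to 0$ where the resonance and non-resonance asymptotics of $\varphi_j^{R,+}$ coincide with those of $\varphi_j^{+}$, and evaluate the remaining entry $\beta=W(\varphi_2^{R,+},\varphi_3^{R,+})$ at $r\to\infty$ using the boundedness of $\varphi_2^{R,+}$ against the exponential decay of $\varphi_3^{R,+}$. The derivation of the Green's functions via Lemma~\ref{inver-2-matrix} and the conditions \eqref{eq:S_1T_1condition-resonance}--\eqref{eq:S_3T_3condition-resonance} is likewise identical to the argument in Lemma~\ref{lem:def_S-T-near 0}.
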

\begin{proof}
Let
$  D^{\pm}= \begin{pmatrix}
       \delta^{\pm} & \gamma^{\pm} \\
       \beta^{\pm} & \alpha^{\pm}
   \end{pmatrix} $. Since the Wronskians are independent of $r$, they can be evaluated using the asymptotic behavior of $\varphi_j^{R,\pm}(r)$. In particular, the coefficients $\delta^{\pm}$, $\gamma^{\pm}$, and $\alpha^{\pm}$ can be computed in exactly the same way as in Lemma~\ref{D_0} for the non-resonance case, based on the asymptotic behavior near $r = 0$, which is similar for both $\varphi_j^{\pm}(r)$ and $\varphi_j^{R,\pm}(r)$. Therefore, we will only include the computation of Wronskians involving the asymptotic behavior as $r \to \infty$.

\begin{align*}
    \beta^{\pm} &= W(\varphi_2^{\pm}, \varphi_3^{\pm}) = W\left( \begin{pmatrix}
           \tc_2^1   \\ 
          \tc_2^2  
         \end{pmatrix} (1+O(r^{-1})),  \begin{pmatrix}
          \tc_3^1 e^{-\frac{3}{\sqrt{2}}r}\\
           \tc_3^2 e^{-\frac{3}{\sqrt{2}}r}
    \end{pmatrix} (1 + O(r^{-1})) \right)=0. 
\end{align*}
 Moreover, the expression for the Green's function can be obtained in the same manner as in the proof of Lemma~\ref{lem:def_S-T-near 0}.
\end{proof}

\subsection{Small $|\xi|$}
Similarly to the resonance case, we show that the integral equations, defined in \eqref{eq:defF_1(r,z)-scenarioI} and \eqref{eq:defF_1(r,z)-scenarioI}, are contractions on the following space  $X_j^R$

\begin{defi}
\label{def:SpaceX_j-resonnance}
Let $R_0>0$, and $F=\begin{bmatrix}
 F^{(1)}&   F^{(2)}
\end{bmatrix} , $ where $F^{(i)}$ denote the columns of matrix $F.$ We define the following norms:
\begin{align*}
   \left\| F \right\|_{\mathcal{X}_1^R} &:= \| F^{(1)} \|_{X_1^R}  +  \| F^{(2)} \|_{X_2^R} \quad \text{and} \quad \left\| F \right\|_{\mathcal{X}_1^{\prime,R}} := \| F^{(1)} \|_{X_1^{\prime,R}}  +  \| F^{(2)} \|_{X_2^{\prime,R}} \\
     \left\| F \right\|_{\mathcal{X}_2^R} &:= \| F^{(1)} \|_{X_3^R}  +  \| F^{(2)} \|_{X_4^R} \quad \text{and} \quad  \left\| F \right\|_{\mathcal{X}_2^{\prime,R}} := \| F^{(1)} \|_{X_3^{\prime,R}}  +  \| F^{(2)} \|_{X_4^{\prime,R}} 
\end{align*} 
where
\begin{align*}
  \| F^{(1)} \|_{X_1^R} &:=  \sup_{0 < r < 1}    | r^{-\frac{3}{2}} F^{(1)} (r) |   + \sup_{1 \leq r\leq R_{0}} | e^{-\frac{3}{\sqrt{2}}r} F^{(1)}(r) | ,  \\
  \| F^{(2)} \|_{X_2^R} &:=  \sup_{0 < r < 1}    | r^{-\frac{3}{2}} F^{(2)} (r) |   + \sup_{1 \leq r\leq R_{0}} |  F^{(2)}(r) | , \\
  \| F^{(1)} \|_{X_3^R} &:=  \sup_{0 < r < 1}    | r^{\frac{1}{2}} F^{(1)} (r) |   + \sup_{1 \leq r\leq R_{0}} | e^{\frac{3}{\sqrt{2}}r} F^{(1)}(r) | , \\
  \| F^{(2)} \|_{X_4^R} &:=  \sup_{0 < r < 1}    | r^{\frac{1}{2}} F^{(2)} (r) |   + \sup_{1 \leq r\leq R_{0}} | r^{-1} F^{(2)}(r) |  .
\end{align*}
and 
\begin{align*}
  \| F^{(1)} \|_{X_1^{\prime,R}} &:=  \sup_{0 < r < 1}    | r^{-\frac{1}{2}} F^{(1)} (r) |   + \sup_{1 \leq r\leq R_{0}} | e^{-\frac{3}{\sqrt{2}}r} F^{(1)}(r) | ,  \\
  \| F^{(2)} \|_{X_2^{\prime,R}} &:=  \sup_{0 < r < 1}    | r^{-\frac{1}{2}} F^{(2)} (r) |   + \sup_{1 \leq r\leq R_{0}} | r^{2} F^{(2)}(r) | , \\
  \| F^{(1)} \|_{X_3^{\prime,R}} &:=  \sup_{0 < r < 1}    | r^{\frac{3}{2}} F^{(1)} (r) |   + \sup_{1 \leq r\leq R_{0}} | e^{\frac{3}{\sqrt{2}}r} F^{(1)}(r) | , \\
  \| F^{(2)} \|_{X_4^{\prime,R}} &:=  \sup_{0 < r < 1}    | r^{\frac{3}{2}} F^{(2)} (r) |   + \sup_{1 \leq r\leq R_{0}} | F^{(2)}(r) |  .
\end{align*}
\end{defi}

\begin{prop}
\label{prop:contraction-small-xi-resonance}
Let $0<|\xi| \leq \delta_0 < 1 ,$ and $R_0 :=\frac{\varepsilon_0}{\sqrt{|\xi|}},$ where $0<\delta_0 \ll \varepsilon_0 \ll1.$ Then $\mathcal{T}^{R,+}_j$ is a contraction on $\mathcal{X}_j^{R},$ for $j=1,2,$  i.e., $$ \|\mathcal{T}^{R,+}_j(F_j^R(\cdot,z)) \|_{\mathcal{X}_j^{R}} \lesssim \varepsilon_0 \left\| F_j^R(\cdot,z) \right\|_{\mathcal{X}_j^{R}} . $$
Moreover, we have $$ \| \partial_r \mathcal{T}^{R,+}_j(F_j^{R}(\cdot,z)) \|_{\mathcal{X}_j^{\prime,R}} \lesssim \varepsilon_0 \left\|  \partial_r F_j^{R}(\cdot,z) \right\|_{\mathcal{X}_j^{\prime,R}} . $$

\end{prop}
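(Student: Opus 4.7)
The plan is to mirror the Lyapunov--Perron argument of Proposition \ref{Prop:contraction-T_j(F_j)-xi-small}, adapted to the resonance asymptotics. I would write $\mathcal{T}_j^{R,+}$ as the sum of two $\Gamma_i^{R,+}$'s according to \eqref{eq:def-T_1^R} and \eqref{eq:def-T_2^R}, and establish a bound of the form $\|\mathcal{T}_j^{R,+}(F)\|_{\mathcal{X}_j^R} \lesssim \varepsilon_0 \|F\|_{\mathcal{X}_j^R}$ by estimating each $\Gamma_i^{R,+}$ separately in the two regions $r \in (0,1]$ and $r \in [1,R_0]$, using the Green's function formulas from Lemma \ref{D_0-resonance}.

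For the small-$r$ regime, I would exploit that the asymptotic expansions of $\varphi_j^{R,+}$ near $r=0$ from Section \ref{subsec:resonance} coincide term-by-term with those of $\varphi_j^{+}$ from Section \ref{subsec:non-resonance}. Substituting them into the Green's function representation of Lemma \ref{D_0-resonance} and proceeding exactly as in Claims \ref{Claim:Gamma_1-and-2-small-r-non-resonance} and \ref{Claim:Gamma_3-and-2-small-r-non-resonance}, the small-$r$ bounds carry over verbatim, up to relabeling the norms as $X_j^R$.

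The substantive change is at large $r$, where $\varphi_2^{R,+}(r)\sim 1$ (instead of $r$) and $\varphi_4^{R,+}(r)\sim r$ (instead of $1$), while $\varphi_1^{R,+}$, $\varphi_3^{R,+}$ retain the exponential behavior $e^{\pm\frac{3}{\sqrt{2}}r}$; this swap is precisely what is built into the weights $1$ and $r^{-1}$ in $X_2^R$ and $X_4^R$ of Definition \ref{def:SpaceX_j-resonnance}. I would then rerun the calculations of Claims \ref{Claim:Gamma_1-and-2-large-r-non-resonance} and \ref{Claim:Gamma_3-and-2-large-r-non-resonance}, splitting the inner integral into $\int_0^1$ (handled via the small-$r$ asymptotics) and $\int_1^r$ or $\int_r^{R_0}$ (handled via the large-$r$ asymptotics), and in each case checking that the resulting prefactor is at worst $R_0^2|\xi|\lesssim \varepsilon_0^2$ or $R_0|\xi|^{1/2}\lesssim \varepsilon_0$, which drives the contraction. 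The derivative bounds follow from the same decomposition: the conditions \eqref{eq:S_1T_1condition-resonance}--\eqref{eq:S_3T_3condition-resonance} cancel the boundary terms generated when differentiating the indicator functions in \eqref{Greens-function-F_j-resonance}, and the asymptotic weights of $\varphi_j^{R,+}$ are replaced by those of $\partial_r\varphi_j^{R,+}$ (one power of $r^{-1}$ heavier near $r=0$, unchanged in the exponential factors), producing the primed norms of Definition \ref{def:SpaceX_j-resonnance}.

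The main obstacle I anticipate is purely bookkeeping, driven by the fact that $\varphi_2^{R,+}$ is no longer decaying at infinity. I have to verify that in every pairing dictated by the off-diagonal structure of $(D_0^+)^{-1}$ from Lemma \ref{D_0-resonance}, the factor $\varphi_2^{R,+}(s)$ multiplies either $\varphi_1^{R,+}(r)$ or $\varphi_3^{R,+}(r)$ (which decay exponentially), and never $\varphi_4^{R,+}(r)\sim r$ in a way that would yield an $s$-integral growing faster than $R_0^2$. If this pairing works out---and the block-diagonal form of $(D_0^+)^{-1}$ in Lemma \ref{D_0-resonance} suggests it does---then the contractivity constant is $\varepsilon_0$ as in the non-resonance case, and the derivative bounds follow by the same arguments with the primed norms.
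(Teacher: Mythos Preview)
Your proposal is correct and follows the same approach as the paper: reduce to estimating each $\Gamma_i^{R,+}$ in the two regions $r\le 1$ and $1\le r\le R_0$, observe that the small-$r$ regime is identical to the non-resonance case (Claims \ref{Claim:Gamma_1-and-2-small-r-non-resonance} and \ref{Claim:Gamma_3-and-2-small-r-non-resonance} carry over verbatim), and redo the large-$r$ calculations with the swapped asymptotics $\varphi_2^{R,+}\sim 1$, $\varphi_4^{R,+}\sim r$, which the weights in Definition \ref{def:SpaceX_j-resonnance} are designed to absorb. Your pairing concern is resolved exactly as you suspect: the diagonal structure of $(D_0^+)^{-1}$ in Lemma \ref{D_0-resonance} forces $\varphi_2^{R,+}(r)$ to pair only with $\varphi_4^{R,+}(s)$, and the resulting $\int_1^{R_0} s\,ds\sim R_0^2$ contributes the harmless factor $R_0^2|\xi|\lesssim\varepsilon_0^2$.
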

\begin{proof}
We will only prove that $\mathcal{T}^{R,+}_j$ is a contraction on $\mathcal{X}_j^{R},$ and the derivative can be obtained using similar estimates and required conditions \eqref{eq:S_1T_1condition-resonance},\eqref{eq:S_2T_2condition-resonance} and \eqref{eq:S_3T_3condition-resonance}.  In view of definition $ \mathcal{T}_1^{R,+}$ in  \eqref{eq:def-T_1^R} and   $ \mathcal{T}_2^{R,+}$ in \eqref{eq:def-T_2^R}, we only need to prove the estimates for $\Gamma_j^{R,+}.$ This is achieved using the asymptotic behavior of $F_1^{R,+}(r)$ and $F_2^{R,+}(r)$, therefore we divide the analysis into two regions: the small-$r$ regime ($r < 1$) and the large-$r$ regime ($r \geq 1$).

\begin{claim}
\label{Claim:Gamma_1-2-3-small-r-resonance}
    For small $r<1,$ we have 
\begin{align*}
   \sup_{0 < r < 1}    | r^{-\frac{3}{2}} \Gamma_1^{R,+}(r,z, \upvarphi_1^{R}(r,z)) |
& \lesssim \xi \sup_{0 < r < 1}    | r^{-\frac{3}{2}}  \upvarphi_1^{R}(r,z)) | \\
   \sup_{0 < r < 1} | r^{-\frac{3}{2}}   \Gamma_2^{R,+}(r,z, \upvarphi_2^{R}(r,z)) |  &\lesssim \xi \sup_{0 < r < 1} | r^{-\frac{3}{2}}    \upvarphi_2^{R}(r,z) | + R_{0}^{2} \, \xi \sup_{1 \leq r \leq R_0 } |     \upvarphi_2^{R}(r,z) | \\
 \sup_{0 < r\leq  1 } | r^{\frac{1}{2}}   \Gamma_3^{R,+}(r,z,\upvarphi_3 ^{R}(r,z))  | & \lesssim   \xi  \sup_{0 < r < 1} | r^{\frac{1}{2}}  \upvarphi_3^{R}(r,z) | + \xi  R_0  \sup_{1 \leq r \leq R_0 } | e^{\frac{3}{\sqrt{2}}r}  \upvarphi_3^{R}(r,z) |   \\
   \sup_{0 < r < 1} | r^{\frac{1}{2}}   \Gamma_2^{R,+}(r,z, \upvarphi_4^{R}(r,z))  |  &\lesssim   \xi  \sup_{0 < r < 1} | r^{\frac{1}{2}}    \upvarphi_4^{R}(r,z) |+  R_0 \,  \xi    \sup_{1 \leq r \leq R_0 } |     r^{-1} \upvarphi_4^{R}(r,z) |.
\end{align*}
\end{claim}

\begin{proof}
Note that for small $r$ the behavior  of $\upvarphi_j^{R,+}$ is the same as $\upvarphi_j^{+}$ in the non-resonance case. Therefore, the proof is exactly the same as the proof of Claim \ref{Claim:Gamma_1-and-2-small-r-non-resonance} and \ref{Claim:Gamma_3-and-2-small-r-non-resonance}, and we omit the details. 
\end{proof}

\begin{claim}
\label{Claim:Gamma_1-and-2-large-r-resonance}
    For large $r>1,$ we have 
\begin{align*}
  \sup_{1 \leq r\leq R_{0}} | e^{-\frac{3}{\sqrt{2}}r}  \Gamma_1^{R,+}(r,z, \upvarphi_1^{R}(r,z)) | &\lesssim R_0 \xi \sup_{0< r \leq 1 } | r^{-\frac{3}{2}} \upvarphi_1^{R}(r,z) | + R_0 \xi \sup_{1 \leq r\leq R_{0}} |e^{-\frac{3}{\sqrt{2}}r}  \upvarphi_1^{R}(r,z)|   \\
\sup_{1 \leq r \leq R_0 }  |       \Gamma_2^{R,+}(r,z, \upvarphi_2^{R}(r,z)) | &\lesssim  \xi    \sup_{0 < r < 1} |r^{\frac{3}{2}}  \upvarphi_2^{R}(r,z)| +  R_0^2 \xi \sup_{1 \leq r \leq R_0 } | \upvarphi_2^{R}(r,z)|
\end{align*}
\end{claim}
\begin{proof}

We have 
    \begin{align*}
        \Gamma_1^{R,+}(r,z, \upvarphi_1^{R}(r,z))&=\int_0^{\infty} \mathcal{G}_1^{R,+}(r,s) \xi  \upvarphi_1^{R}(s,z) ds \\
&= i \int_0^r   \left(  d_1 \varphi_1^{R,+}(r)   (\varphi_3^{R,+}(s))^{t}  +  d_2 \varphi_2^{R,+}(r)   (\varphi_4^{R,+} (s)) ^{t} \right)  \sigma_1 \xi  \upvarphi_1^{R}(s,z) ds \\
&- i \int_0^r    \left(  d_1 \varphi_3^{R,+}(r)   (\varphi_1^{R,+}(s))^{t} +  d_2 \varphi_4^{R,+}(r)   (\varphi_2^{R,+}(s)) ^{t}  \right)  \sigma_1\xi  \upvarphi_1^{R}(s,z) ds \\
&= i \int_0^1   \left(  d_1 \varphi_1^{R,+}(r)   (\varphi_3^{R,+}(s))^{t}  +  d_2 \varphi_2^{R,+}(r)   (\varphi_4^{R,+} (s)) ^{t} \right)  \sigma_1 \xi  \upvarphi_1^{R}(s,z) ds \\
&+i \int_1^r   \left(  d_1 \varphi_1^{R,+}(r)   (\varphi_3^{R,+}(s))^{t}  +  d_2 \varphi_2^{R,+}(r)   (\varphi_4^{R,+} (s)) ^{t} \right)  \sigma_1 \xi  \upvarphi_1^{R}(s,z) ds \\
&- i \int_0^1    \left(  d_1 \varphi_3^{R,+}(r)   (\varphi_1^{R,+}(s))^{t} +  d_2 \varphi_4^{R,+}(r)   (\varphi_2^{R,+}(s)) ^{t}  \right)  \sigma_1\xi  \upvarphi_1^{R}(s,z) ds \\
&- i \int_1^r    \left(  d_1 \varphi_3^{R,+}(r)   (\varphi_1^{R,+}(s))^{t} +  d_2 \varphi_4^{R,+}(r)   (\varphi_2^{R,+}(s)) ^{t}  \right)  \sigma_1\xi  \upvarphi_1^{R}(s,z) ds \\
\end{align*}

Using the asymptotics of $\varphi_j^{R,+}(r)$ near $0,$ and for $r \geq 1$, we obtain 
 \begin{align*}
        \Gamma_1^{+}(r,z, \upvarphi_1^{R}(r,z))& \lesssim  e^{\frac{3}{\sqrt{2}}r} \left| \int_0^1  s^{-\frac{1}{2}}     \xi      \upvarphi_1^{R}(s,z) ds \right|  +   \left| \int_0^1  s^{-\frac{1}{2}}  \xi  \upvarphi_1^{R}(s,z) ds \right|   \\ 
 & + e^{-\frac{3}{\sqrt{2}}r} \left|  \int_0^1 s^{\frac{3}{2}}   \xi      \upvarphi_1^{R}(s,z) ds \right| + r \left|   \int_0^1  s^{\frac{3}{2}} \;  \xi  \upvarphi_1^{R}(s,z) ds \right|   \\
 &+e^{\frac{3}{\sqrt{2}}r} \left|  \int_1^r      e^{-\frac{3}{\sqrt{2}}s}  \xi     \upvarphi_1^{R}(s,z) ds \right| +  \left|  \int_1^r  s \, \xi  \upvarphi_1^{R}(s,z) ds \right|  \\ 
 & + e^{-\frac{3}{\sqrt{2}}r}  \left| \int_1^r  e^{\frac{3}{\sqrt{2}}s}  \xi      \upvarphi_1^{R}(s,z) ds  \right| + r \left|   \int_1^r    \xi  \upvarphi_1^{R}(s,z) ds \right| \\
 & \lesssim  e^{\frac{3}{\sqrt{2}}r} \xi \sup_{0< r \leq 1 } | r^{-\frac{3}{2}} \upvarphi_1^{R}(r,z) | +  \xi \sup_{0< r \leq 1 } | r^{-\frac{3}{2}} \upvarphi_1^{R}(r,z) | \\
& +e^{-\frac{3}{\sqrt{2}}r} \xi \sup_{0< r \leq 1 } | r^{-\frac{3}{2}} \upvarphi_1^{R}(r,z) | +  \xi r \sup_{0< r \leq 1 } | r^{-\frac{3}{2}} \upvarphi_1^{R}(r,z) | \\
& + e^{\frac{3}{\sqrt{2}}r} \xi \sup_{1 \leq r\leq R_{0}} |e^{-\frac{3}{\sqrt{2}}r}  \upvarphi_1^{R}(r,z)| + r e^{\frac{3}{\sqrt{2}}r} \xi \sup_{1 \leq r\leq R_{0}} |e^{-\frac{3}{\sqrt{2}}r}  \upvarphi_1^{R}(r,z)| 
\end{align*}

Therefore, 
\begin{align}
     \sup_{1 \leq r\leq R_{0}} | e^{-\frac{3}{\sqrt{2}}r}  \Gamma_1^{+}(r,z, \upvarphi_1^{R}(r,z)) | \lesssim R_0 \xi \sup_{0< r \leq 1 } | r^{-\frac{3}{2}} \upvarphi_1^{R}(r,z) | + R_0 \xi \sup_{1 \leq r\leq R_{0}} |e^{-\frac{3}{\sqrt{2}}r}  \upvarphi_1^{R}(r,z)| 
\end{align}

Next, we estimate $\Gamma_2^{R,+}(r,z, \upvarphi_2^{R}(r,z)).$ Similarly, we obtain 

\begin{align*}
\Gamma_2^{R,+}(r,z, \upvarphi_2^{R}(r,z))&= \int_0^{\infty} \mathcal{G}_2^{R,+}(r,s) 
\xi  \upvarphi_2^{R}(s,z) ds \\
&= - i \int_r^{R_0}   \left(  d_1 \varphi_1^{R,+}(r)   (\varphi_3^{R,+}(s))^{t}  +  d_2 \varphi_2^{R,+}(r)   (\varphi_4^{R,+} (s)) ^{t} \right)  \sigma_1 \xi  \upvarphi_2^{R}(s,z) ds \\
&- i \int_0^r    \left(  d_1 \varphi_3^{R,+}(r)   (\varphi_1^{R,+}(s))^{t} +  d_2 \varphi_4^{R,+}(r)   (\varphi_2^{R,+}(s)) ^{t}  \right)  \sigma_1\xi  \upvarphi_2^{R}(s,z) ds \\
&= - i \int_r^{R_0}   \left(  d_1 \varphi_1^{R,+}(r)   (\varphi_3^{R,+}(s))^{t}  +  d_2 \varphi_2^{R,+}(r)   (\varphi_4^{R,+} (s)) ^{t} \right)  \sigma_1 \xi  \upvarphi_2^{R}(s,z) ds \\
&- i \int_0^1    \left(  d_1 \varphi_3^{R,+}(r)   (\varphi_1^{R,+}(s))^{t} +  d_2 \varphi_4^{R,+}(r)   (\varphi_2^{R,+}(s)) ^{t}  \right)  \sigma_1\xi  \upvarphi_2^{R}(s,z) ds \\
&- i \int_1^r    \left(  d_1 \varphi_3^{R,+}(r)   (\varphi_1^{R,+}(s))^{t} +  d_2 \varphi_4^{R,+}(r)   (\varphi_2^{R,+}(s)) ^{t}  \right)  \sigma_1\xi  \upvarphi_2^{R}(s,z) ds \\
\end{align*}
Using the asymptotics of $\varphi_j^{R,+}(r)$ near $0,$ and for $r \geq 1$, we obtain 
\begin{align*}
  \Gamma_2^{R,+}(r,z, \upvarphi_2^{R}(r,z))& \lesssim    e^{-\frac{3}{\sqrt{2}}r}  
\left|  \int_0^1 s^{\frac{3}{2}}   \xi      \upvarphi_2^{R}(s,z) ds \right|  + r \left|  \int_0^1  s^{\frac{3}{2}} \;  \xi  \upvarphi_2^{R}(s,z) ds \right|  \\
 &+ e^{\frac{3}{\sqrt{2}}r} \left|   \int_r^{R_0}      e^{-\frac{3}{\sqrt{2}}s}  \xi      \upvarphi_2^{R}(s,z) ds  \right| +    \left|   \int_r^{R_0}  s \, \xi   \upvarphi_2^{R}(s,z) ds \right|  \\ 
 & +e^{-\frac{3}{\sqrt{2}}r}  \left| \int_1^r  e^{\frac{3}{\sqrt{2}}s}  \xi      \upvarphi_2^{R}(s,z) ds\right|  + r \left|   \int_1^r    \xi  \upvarphi_2^{R}(s,z) ds \right| \\
 & \lesssim  e^{-\frac{3}{\sqrt{2}}r}  \xi    \sup_{0 < r < 1} |r^{\frac{3}{2}}  \upvarphi_2^{R}(r,z)| + r \xi  \sup_{0 < r < 1} |r^{\frac{3}{2}}  \upvarphi_2^{R}(r,z)| \\ 
 & +   \xi \sup_{1 \leq r \leq R_0 } |  \upvarphi_2^{R}(r,z)|  +   R_0^2 \xi \sup_{1 \leq r \leq R_0 } |  \upvarphi_2^{R}(r,z)| \\
& +   \xi \sup_{1 \leq r \leq R_0 } |  \upvarphi_2^{R}(r,z)| + r^2 \xi \sup_{1 \leq r \leq R_0 } |  \upvarphi_2^{R}(r,z)|
\end{align*}

\begin{align}
\sup_{1 \leq r \leq R_0 }  |      \Gamma_2^{R,+}(r,z, \upvarphi_2^{R}(r,z)) | \lesssim  \xi    \sup_{0 < r < 1} |r^{\frac{3}{2}}  \upvarphi_2^{R}(r,z)| +  R_0^2 \xi \sup_{1 \leq r \leq R_0 } | \upvarphi_2^{R}(r,z)|
\end{align}
 \end{proof}

\begin{claim}
\label{Claim:Gamma_3-and-2-large-r-resonance}
    For large $r>1,$ we have  
    \begin{align*}
     \sup_{1 \leq r \leq R_0}  | e^{\frac{3}{\sqrt{2}}r}  \Gamma_3^{R,+}(r,z, \upvarphi_3^{R}(r,z)) | & \lesssim R_0   \xi  \sup_{1 \leq r \leq R_0} |e^{\frac{3}{\sqrt{2}}r}  \upvarphi_3^{R}(r,z)|   \\
  \sup_{1 \leq r \leq R_0 } |r^{-1}  \Gamma_2^{R,+}(r,z, \upvarphi_4^{R}(r,z))| & \lesssim 
   \xi  \sup_{0 < r < 1} |r^{\frac{1} {2}}  \upvarphi_4^{R}(r,z)| +  R_0^2 \xi \sup_{1 \leq r \leq R_0 } | r^{-1}  \upvarphi_4^{R}(r,z)|   
        \end{align*}
\end{claim}
\begin{proof}
We have 
\begin{align*}
    \Gamma_3^{R,+}(r,z, \upvarphi_3^{R}(r,z))&= \int_0^{\infty} \mathcal{G}_3^{R,+}(r,s) 
\, \xi  \upvarphi_3^{R}(s,z) ds \\ 
 &=  - i  \int_r^{R_0}  \left(  d_1 \varphi_1^{R,+}(r)   (\varphi_3^{R,+}(s))^{t}  +  d_2 \varphi_2^{R,+}(r)   (\varphi_4^{R,+} (s)) ^{t} \right)  \sigma_1 \xi  \upvarphi_3^{R}(s,z) ds\\
&+  i \int_r^{R_0}     \left(  d_1 \varphi_3^{R,+}(r)   (\varphi_1^{R,+}(s))^{t} +  d_2 \varphi_4^{R,+}(r)   (\varphi_2^{R,+}(s)) ^{t}  \right)  \sigma_1 \xi  \upvarphi_3^{R}(s,z) ds \\
& \lesssim e^{\frac{3}{\sqrt{2}}r}  \left|  \int_r^{R_0}      e^{-\frac{3}{\sqrt{2}}s}  \xi      \upvarphi_3^{R}(s,z) ds \right| +  \left| \int_r^{R_0} s\, \xi  \upvarphi_3^{R}(s,z) ds  \right| \\ 
 & + e^{-\frac{3}{\sqrt{2}}r}  \left| \int_r^{R_0}  e^{\frac{3}{\sqrt{2}}s}  \xi      \upvarphi_3^{R}(s,z) ds \right| + r \left|   \int_r^{R_0}   \xi  \upvarphi_3^{R}(s,z) ds \right| 
\end{align*}

Using the asymptotics of $\varphi_j^{R,+}(r)$ near $0,$ and for $r \geq 1$, we obtain
\begin{align*}
\Gamma_3^{R,+}(r,z, \upvarphi_3^{R}(r,z))   & \lesssim  e^{-\frac{3}{\sqrt{2}}r}  \xi  \sup_{1 \leq r \leq R_0} |e^{\frac{3}{\sqrt{2}}r}  \upvarphi_3^{R}(r,z)| + r e^{-\frac{3}{\sqrt{2}}r}  \xi  \sup_{1 \leq r \leq R_0} |e^{\frac{3}{\sqrt{2}}r}  \upvarphi_3^{R}(r,z)| \\
& +  R_0 e^{-\frac{3}{\sqrt{2}}r}  \xi  \sup_{1 \leq r \leq R_0} |e^{\frac{3}{\sqrt{2}}r}  \upvarphi_3^{R}(r,z)| + r e^{-\frac{3}{\sqrt{2}}r}  \xi  \sup_{1 \leq r \leq R_0} |e^{\frac{3}{\sqrt{2}}r}  \upvarphi_3^{R}(r,z)|
\end{align*}
Therefore, we have 

\begin{align*}
    \sup_{1 \leq r \leq R_0}  | e^{\frac{3}{\sqrt{2}}r}  \Gamma_3^{R,+}(r,z, \upvarphi_3^{R}(r,z)) | \lesssim R_0   \xi  \sup_{1 \leq r \leq R_0} |e^{\frac{3}{\sqrt{2}}r}  \upvarphi_3^{R}(r,z)| 
\end{align*}

Next, we estimate $\Gamma_2^{R,+}(r,z, \upvarphi_4^{R}(r,z)).$ Similarly, we have 

\begin{align*}
\Gamma_2^{R,+}(r,z, \upvarphi_4^{R}(r,z))&= \int_0^{\infty} \mathcal{G}_2(r,s) 
\xi  \upvarphi_4^{R}(s,z) ds \\
&= - i \int_r^{R_0}   \left(  d_1 \varphi_1^{R,+}(r)   (\varphi_3^{R,+}(s))^{t}  +  d_2 \varphi_2^{R,+}(r)   (\varphi_4^{R,+} (s)) ^{t} \right)  \sigma_1 \xi  \upvarphi_4^{R}(s,z) ds \\
&- i \int_0^r    \left(  d_1 \varphi_3^{R,+}(r)   (\varphi_1^{R,+}(s))^{t} +  d_2 \varphi_4^{R,+}(r)   (\varphi_2^{R,+}(s)) ^{t}  \right)  \sigma_1\xi  \upvarphi_4^{R}(s,z) ds \\
&= - i \int_r^{R_0}   \left(  d_1 \varphi_1^{R,+}(r)   (\varphi_3^{R,+}(s))^{t}  +  d_2 \varphi_2^{R,+}(r)   (\varphi_4^{R,+} (s)) ^{t} \right)  \sigma_1 \xi  \upvarphi_4^{R}(s,z) ds \\
&- i \int_0^1    \left(  d_1 \varphi_3^{R,+}(r)   (\varphi_1^{R,+}(s))^{t} +  d_2 \varphi_4^{R,+}(r)   (\varphi_2^{R,+}(s)) ^{t}  \right)  \sigma_1\xi  \upvarphi_4^{R}(s,z) ds \\
&- i \int_1^r    \left(  d_1 \varphi_3^{R,+}(r)   (\varphi_1^{R,+}(s))^{t} +  d_2 \varphi_4^{R,+}(r)   (\varphi_2^{R,+}(s)) ^{t}  \right)  \sigma_1\xi  \upvarphi_4^{R}(s,z) ds \\
\end{align*}

Using the asymptotics of $\varphi_j^{R,+}(r)$ near $0,$ and for $r \geq 1$, we obtain

\begin{align*}
 \Gamma_2^{R,+}(r,z, \upvarphi_4^{R}(r,z))& \lesssim  e^{-\frac{3}{\sqrt{2}}r}    \left|  \int_0^1 s^{\frac{3}{2}}   \xi     \upvarphi_4^{R}(s,z) ds  \right| + r \left|  \int_0^1  s^{\frac{3}{2}} \;  \xi  \upvarphi_4^{R}(s,z) ds \right|  \\
 &+e^{\frac{3}{\sqrt{2}}r}   \left|  \int_r^{R_0}      e^{-\frac{3}{\sqrt{2}}s}  \xi   \upvarphi_4^{R}(s,z) ds \right| +   \left|  \int_r^{R_0} s\, \xi   \upvarphi_4^{R}(s,z) ds \right|  \\ 
 & +e^{-\frac{3}{\sqrt{2}}r}  \left|  \int_1^r  e^{\frac{3}{\sqrt{2}}s}  \xi      \upvarphi_4^{R}(s,z) ds \right|+ r \left|  \int_1^r   \xi  \upvarphi_4^{R}(s,z) ds\right|  \\
 & \lesssim  e^{-\frac{3}{\sqrt{2}}r}   \xi  \sup_{0 < r < 1} |r^{\frac{1} {2}}  \upvarphi_4^{R}(r,z)| + r \xi  \sup_{0 < r < 1} |r^{\frac{1} {2}}  \upvarphi_4^{R}(r,z)| \\
 & + r \xi   \sup_{1 \leq r \leq R_0 } | r^{-1} \upvarphi_4^{R}(r,z)|   + R_0^3 \xi   \sup_{1 \leq r \leq R_0 } | r^{-1} \upvarphi_4^{R}(r,z)|  \\
 & + r \xi   \sup_{1 \leq r \leq R_0 } | r^{-1} \upvarphi_4^{R}(r,z)|   + r^2 \xi   \sup_{1 \leq r \leq R_0 } | r^{-1} \upvarphi_4^{R}(r,z)|
\end{align*}
Thus, 
\begin{align*}
    \sup_{1 \leq r \leq R_0 } |r^{-1}  \Gamma_2^{R,+}(r,z, \upvarphi_4^{R}(r,z))| \lesssim 
   \xi  \sup_{0 < r < 1} |r^{\frac{1} {2}}  \upvarphi_4^{R}(r,z)| +  R_0^2 \xi \sup_{1 \leq r \leq R_0 } | r^{-1}  \upvarphi_4^{R}(r,z)|    
\end{align*}
\end{proof}

Finally, using the estimates from Claim \ref{Claim:Gamma_1-2-3-small-r-resonance}, \ref{Claim:Gamma_1-and-2-large-r-resonance} and \ref{Claim:Gamma_3-and-2-large-r-resonance}, we deduce that both $\mathcal{T}_j^{R,+}$ are a contraction on $\mathcal{X}_j^{R},$ for $j=1,2.$ Similarly, we obtain the estimates for the derivatives of $\mathcal{T}_j^{R,+}$ using the conditions \eqref{eq:S_1T_1condition-resonance},\eqref{eq:S_2T_2condition-resonance} and \eqref{eq:S_3T_3condition}. This concludes the proof of Proposition \ref{prop:contraction-small-xi-resonance}.
\end{proof}

\subsection{Large $|\xi|$} Recall that by  large $|\xi|$, we refer to the regime in which the real part is large and the imaginary part is small.\\ 

As in the non-resonance case, we demonstrate that the integral equations \eqref{eq:def-T_1^R} and \eqref{eq:def-T_2^R} admit solutions via a fixed point argument, within the same function spaces $\mathcal{Y}_j$ introduced in Definition~\ref{def:Y_j-space} for the resonance case. Note that the construction of the solutions in the resonance case relies only on the asymptotic behavior of $\varphi_j^{R}$ near $0$, which is analogous to the behavior of $\varphi_j$ in the non-resonance case near the origin. \\

\begin{prop}
\label{prop:contraction-large-xi-resonance}
Let $ |\xi| > \Lambda_0>1 ,$ and $\tR_0 :=\frac{\tvarepsilon_0}{\sqrt{|\xi|}},$ where $0<\delta_0 \ll \tvarepsilon_0 \ll 1.$ Then $\mathcal{T}^{R,+}_j$ is a contraction on $\mathcal{Y}_j,$ for $j=1,2,$  i.e., $$ \|\mathcal{T}^{R,+}_j(F_j^R(\cdot,z)) \|_{\mathcal{Y}_j} \lesssim \varepsilon_0 \left\| F_j^R(\cdot,z) \right\|_{\mathcal{Y}_j} . $$
Moreover, we have $$ \| \partial_r \mathcal{T}^{R,+}_j(F_j(\cdot,z)) \|_{\mathcal{Y}_j^{\prime}} \lesssim \varepsilon_0 \left\|  \partial_r F_j^R(\cdot,z) \right\|_{\mathcal{Y}_j^{\prime}} . $$

\end{prop}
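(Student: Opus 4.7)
The plan is to observe that Proposition~\ref{prop:contraction-large-xi-resonance} reduces, essentially verbatim, to the non-resonance large-$|\xi|$ argument of Proposition~\ref{prop:contraction-large-xi}. The key point is that since $|\xi|>\Lambda_0\gg\tilde{\varepsilon}_0^{-1}$, the parameter $\tilde{R}_0 = \tilde{\varepsilon}_0/\sqrt{|\xi|}$ is much smaller than $1$, so the entire range of integration $[0,\tilde{R}_0]$ lies in the small-$r$ regime. Throughout this regime, the fundamental solutions $\varphi_j^{R,+}$ have the same power asymptotics as the non-resonance $\varphi_j^{+}$, namely $r^{3/2}$ for $\varphi_1^{R,+},\varphi_2^{R,+}$ and $r^{-1/2}$ for $\varphi_3^{R,+},\varphi_4^{R,+}$ (the branches $\varphi_2^{R,+}$ and $\varphi_4^{R,+}$ only diverge from their non-resonance counterparts at $r\geq 1$, which is outside the domain we care about here). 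In particular, by Lemma~\ref{D_0-resonance}, the Wronskian matrix $D_0^+$ and the resulting Green kernels $\mathcal{G}_j^{R,+}(r,s)$ agree in structure with $\mathcal{G}_j^{+}(r,s)$ on $(0,\tilde{R}_0)\times(0,\tilde{R}_0)$, up to at worst a sign.

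First, I would rewrite each $\mathcal{T}_j^{R,+}$ as in \eqref{eq:def-T_1^R}–\eqref{eq:def-T_2^R}, expressing $\Gamma_i^{R,+}(r,z,\upvarphi_k^R)$ as integrals of products of $\varphi_a^{R,+}(r)\varphi_b^{R,+}(s)^t\sigma_1\,\xi\,\upvarphi_k^R(s,z)$ against suitable indicator functions. Since $r,s\in(0,\tilde{R}_0)\subset(0,1)$, the asymptotic bounds $|\varphi_1^{R,+}|,|\varphi_2^{R,+}|\lesssim r^{3/2}$ and $|\varphi_3^{R,+}|,|\varphi_4^{R,+}|\lesssim r^{-1/2}$ hold uniformly in $s$, and the pointwise estimates in Claim~\ref{Gamma_j-beahvior-large-xi} carry over unchanged. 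This yields the four bounds
\begin{align*}
\sup_{0<r\leq \tilde{R}_0} |r^{-3/2}\Gamma_1^{R,+}(r,z,\upvarphi_1^R)| &\lesssim \tilde{R}_0^2\,|\xi|\,\sup_{0<r\leq\tilde{R}_0}|r^{-3/2}\upvarphi_1^R(r,z)|,\\
\sup_{0<r\leq \tilde{R}_0} |r^{-3/2}\Gamma_2^{R,+}(r,z,\upvarphi_2^R)| &\lesssim \tilde{R}_0^2\,|\xi|\,\sup_{0<r\leq\tilde{R}_0}|r^{-3/2}\upvarphi_2^R(r,z)|,
\end{align*}
and analogous bounds with $r^{1/2}$ weights for $\Gamma_3^{R,+}$ and the $\upvarphi_4^R$-component of $\Gamma_2^{R,+}$. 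Combining these as in Definition~\ref{def:Y_j-space} gives $\|\mathcal{T}_j^{R,+}(F)\|_{\mathcal{Y}_j}\lesssim \tilde{R}_0^2|\xi|\,\|F\|_{\mathcal{Y}_j}\lesssim \tilde{\varepsilon}_0^2\|F\|_{\mathcal{Y}_j}$, which is the contraction estimate.

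The derivative bounds follow by differentiating the integral representations and using the boundary conditions \eqref{eq:S_1T_1condition-resonance}, \eqref{eq:S_2T_2condition-resonance}, \eqref{eq:S_3T_3condition-resonance}. These conditions ensure that the $r$-derivative of the integrand at the moving endpoint $s=r$ contributes nothing spurious (the bracket terms cancel), so $\partial_r\mathcal{T}_j^{R,+}$ is again expressible as an integral of products $\partial_r\varphi_a^{R,+}(r)\varphi_b^{R,+}(s)^t\sigma_1\,\xi\,\upvarphi_k^R(s,z)$. Differentiation costs one power of $r^{-1}$ on each branch (since $\partial_r r^{3/2}\sim r^{1/2}$ and $\partial_r r^{-1/2}\sim r^{-3/2}$), which is exactly accounted for by the primed norms $\mathcal{Y}_j^\prime$ of Definition~\ref{def:Y_j-space}; repeating the same integral bounds yields $\|\partial_r\mathcal{T}_j^{R,+}(F_j^R)\|_{\mathcal{Y}_j^\prime}\lesssim\tilde{\varepsilon}_0\|\partial_r F_j^R\|_{\mathcal{Y}_j^\prime}$.

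Since the argument is a direct transcription of Proposition~\ref{prop:contraction-large-xi}, there is no real obstacle: the only subtlety is verifying that the relevant near-zero asymptotics of $\varphi_j^{R,+}$ (and hence of $\mathcal{G}_j^{R,+}$) indeed match those of the non-resonance case on $(0,\tilde{R}_0)$, which is immediate from Section~\ref{subsec:resonance} and Lemma~\ref{D_0-resonance}. The smallness of $\tilde{R}_0$ then automatically confines the analysis to this regime, and the contraction constant $\tilde{\varepsilon}_0^2$ is identical to that in the non-resonance proof.
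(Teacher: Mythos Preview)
Your proposal is correct and follows essentially the same approach as the paper: both observe that since $\tilde{R}_0\ll 1$, the analysis is confined to the small-$r$ regime where the asymptotics of $\varphi_j^{R,+}$ coincide with those of $\varphi_j^{+}$, and hence the estimates of Claim~\ref{Gamma_j-beahvior-large-xi} carry over verbatim to yield the four $\Gamma_j^{R,+}$ bounds and the contraction. The paper's proof is in fact even terser than yours, simply stating the claim and deferring to the non-resonance case, with the derivative estimates handled via the conditions \eqref{eq:S_1T_1condition-resonance}--\eqref{eq:S_3T_3condition-resonance} exactly as you indicate.
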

\begin{proof}

In view of the definitions of $\mathcal{T}_j^{R,+}$, we provide only the estimates for $\Gamma_j^{R,+}$, from which one can obtain the corresponding bounds for $\mathcal{T}_j^{R,+}.$

\begin{claim}
    \begin{align}
   \sup_{0 < r \leq \tR_0 } |   r^{-\frac{3}{2}} \Gamma_1^{R,+}(r,z, \upvarphi_1^{R}(r,z)) |  &\lesssim \tR_0^2 \xi \sup_{0 < r \leq \tR_0 } | r^{-\frac{3}{2}}    \upvarphi_1^{R}(r,z) | , \\
   \sup_{0 < r \leq \tR_0 } | r^{-\frac{3}{2}}   \Gamma_2^{R,+}(r,z, \upvarphi_2^{R}(r,z)) |  & \lesssim \tR_0^2 \xi \sup_{0 < r \leq \tR_0 } | r^{-\frac{3}{2}}    \upvarphi_2^{R}(r,z) | , \\
    \sup_{0 < r\leq  \tR_0 } | r^{\frac{1}{2}}   \Gamma_3^{R,+}(r,z,\upvarphi_3 ^{R}(r,z))  | & \lesssim  \tR_0^2 \xi  \sup_{0 < r \leq \tR_0 } | r^{\frac{1}{2}}  \upvarphi_3^{R}(r,z) | \\
    \sup_{0 < r \leq \tR_0 }   | r^{\frac{1}{2}}   \Gamma_2^{R,+}(r,z,\upvarphi_4 ^{R}(r,z))  | & \lesssim  \tR_0^2 \xi    \sup_{0 < r \leq \tR_0 } | r^{\frac{1}{2}}  \upvarphi_4^{R}(r,z) |
\end{align} 
\end{claim}
\begin{proof}
The proof follows exactly as in Claim~\ref{Gamma_j-beahvior-large-xi}, since the behavior of $\upvarphi_j^{R,+}$ near $r = 0$ is identical to that of $\upvarphi_j^{+}$ in the non-resonance case. We omit the details.

\end{proof}

Using the above claim, one can show that $\mathcal{T}^{+}_j$ is a contraction on $\mathcal{Y}_j$. The derivative estimates for $\mathcal{T}^{R,+}_j$ can be obtained using similar arguments, together with the conditions \eqref{eq:S_1T_1condition-resonance}, \eqref{eq:S_2T_2condition-resonance}, and \eqref{eq:S_3T_3condition-resonance}. This concludes the proof of Proposition~\ref{prop:contraction-large-xi-resonance}.

\end{proof}

\section{The $L^2$ solution near $\infty$ for small $|\xi|$}
\label{sec:nearinfty-small-xi}
In this section, we construct the Weyl-Titchmarsh matrix solutions to $
    i \mathcal{L} \Psi^{\pm}(r,z)= z \Psi^{\pm}(r,z)$  for $ z= \pm \frac{\sqrt{17}}{8}+\xi \in \Omega$ with  $|\xi|< \delta_0\ll 1.$  We denote $\Psi^{+}(r,z) \in L^2((1,\infty)),$ when $\im(z)>0,$ and $\Psi^{-}(r,z) \in L^2((1,\infty)),$ when $\im(z)<0.$ Note that, whereas in Sections~\ref{sec:Sol-near-0-non-resonance} and \ref{sec:Sol-near-0-resonance} the symbol $\pm$ was used to distinguish the solutions corresponding to $\pm \re(z)>0$ in the construction near $r=0$, here the notation $\pm$ is instead employed to distinguish between the cases $\pm \im(z)>0$. \\

We seek to construct solutions $\Psi^{\pm}(r,z)$ so that as $r \to \infty$ they have the same asymptotic behavior as the solutions $\Uppsi^{\pm}_{\infty}(r,z) \in L^2((1,\infty))$ (up to normalizations) to 
\begin{align*}
    i \mathcal{L_{\infty}} \Uppsi^{\pm}_{\infty}(r,z) = z \Uppsi^{\pm}_{\infty}(r,z)
\end{align*}
where 
\begin{align*}
    \mathcal{L_{\infty}}:= \begin{pmatrix}
        0 & L_1^{\infty}\\
        -L_2^{\infty} & 0 
    \end{pmatrix}, \qquad  L_1^{\infty}=- \frac{1}{2}\partial_r^2 + \frac{1}{8} , \quad L_2^{\infty}= ( - \frac{1}{2}\partial_r^2 + \frac{17}{8} )
\end{align*}
\subsection{Construction of $\Uppsi^{\pm}_{\infty}(r,z)$ at infinity}
Let $\Phi(r,z)=\begin{pmatrix}
    \phi(r,z) \\
    \psi(r,z)
\end{pmatrix}$ satisfying $i \mathcal{L_{\infty}}  \Phi(r,z)=z \Phi(r,z) ,$ that is,   \begin{align*}\begin{cases}
   i  \, L_1^{\infty} \psi(r,z) &= z \phi(r,z) ,\\
   -i\,  L_2^{\infty}\phi(r,z) &=  z \psi(r,z).    
\end{cases}
\end{align*}
Applying $L_1^{\infty}$ to $ -i L_2^{\infty}\phi(r,z) =  z \, \psi(r,z),$ 
and using the first equation, we obtain the fourth-order equation, 
\begin{align}
\label{eq:L1L2-infty-z-small}
    L_1^{\infty}L_2^{\infty}\phi(r,z) = z^2 \phi(r,z)
\end{align}
and the corresponding choice of $\psi$ is determined by 
\begin{align*}
   \psi(r,z) = \frac{-i }{ z}  L_2^{\infty} \phi(r,z) .
\end{align*}
Plugging in the ansatz $\phi(r,z)=e^{i k(z) r}$ into equation \eqref{eq:L1L2-infty-z-small}, or equivalently into 
\begin{align*}
    \left(  \frac{1}{4} \partial_r^4 - \frac{9}{8} \partial_r^2 +\frac{17}{64}   - z^2 \right)\phi(r,z) =0 ,
\end{align*}
leads to the equation 
\begin{align}
\label{eq:P(k,z)}
P(k,z):&=    \frac{1}{4} k^4 + \frac{9}{8} k^2 +\frac{17}{64}  - z^2 =0, \quad \text{with } \; k \equiv k(z)
\end{align}
We now determine four analytic roots in a suitable domain in the complex plane. Observe that
\begin{equation*}
   \partial_k P(k,z):= k^3+ \frac{9}{4}k= k(k^2+ \frac{9}{4}). 
\end{equation*}
Therefore, $P(k,z)=\partial_k  P(k,z)=0$ for 
\begin{align*}
    (z,k)\in \bigg \{ (\frac{\sqrt{17}}{8},0) ,(-\frac{\sqrt{17}}{8},0), ( i,\frac{3}{2}i), (i,-\frac{3}{2}i), (-i,\frac{3}{2}i), (-i,-\frac{3}{2}i)  \bigg\}
\end{align*}

It follows that by the implicit function theorem that for $z \notin (\frac{\sqrt{17}}{8},-\frac{\sqrt{17}}{8},\frac{3}{2}i, -\frac{3}{2}i )  $  the equation \eqref{eq:P(k,z)}, $P(k,z)=0,$ has four distinct roots, which we denote by $k_j(z),$ for $j=1,\cdots, 4.$  \\

For all  $x\in (-\frac{\sqrt{17}}{8},\frac{\sqrt{17}}{8})$ define
\begin{align*}
    f_1(x) &:= i\sqrt{  \frac{9}{4}- 2  \sqrt{1+x^2}    } \, ,  \\
    f_2(x) &:= i \sqrt{ \frac{9}{4}+ 2  \sqrt{1+x^2}     }\,, \\
     f_3(x) &:= -i  \sqrt{  \frac{9}{4}- 2 \sqrt{1+x^2}   }  \, , \\
      f_4(x) &:= -i \sqrt{ \frac{9}{4}+ 2  \sqrt{1+x^2}     } \,, \\
\end{align*}
where we use the convention that $\sqrt{x}>0,$ for $x>0.$ Then, we have $P(f_j(x),x)=0,$ for all $x \in (-\frac{\sqrt{17}}{8},\frac{\sqrt{17}}{8}),$ and $j=1,\cdots,4.$ \\

Define the simply-connected domain
$$ \Omega:=\C \backslash \left( [i,i \infty) \cup (-i\infty,-i] \cup (-\infty,-\frac{\sqrt{17}}{8}) \cup (\frac{\sqrt{17}}{8}, \infty) \right)  $$

For $1 \leq j \leq 4,$ we denote by $k_j(z)$ the analytic continuation of $f_j(x) $ to $\Omega, $  such that $k_j(x)=f_j(x)$ for $x \in (-\frac{\sqrt{17}}{8},\frac{\sqrt{17}}{8}).$ 
\begin{lemma}
\label{lem:k_j-behavior}
Let $z =x+iy\in \Omega,$ then 
    for $x \in (-\infty,-\frac{\sqrt{17}}{8}),$ we have 
\begin{align*}
  \lim_{y \to 0^{\pm}} k_1(x+iy)&= \mp \sqrt{2}\sqrt{\sqrt{x^2+1}-\frac{9}{8}} , \qquad  \lim_{y \to 0^{\pm}}  k_3(x+iy) = \pm \sqrt{2}\sqrt{\sqrt{x^2+1}-\frac{9}{8}}, \\
  \lim_{y \to 0^{\pm}} k_2(x+iy)&=  i \sqrt{2}{\sqrt{\sqrt{x^2+1} +\frac{9}{8}}}  , \qquad \; \quad  \lim_{y \to 0^{\pm}} k_4(x+iy)= - i \sqrt{2}{\sqrt{\sqrt{x^2+1} +\frac{9}{8}}}
\end{align*}
and for $x \in (\frac{\sqrt{17}}{8},\infty),$ we have 
\begin{align*}
  \lim_{y \to 0^{\pm}} k_1(x+iy)&= \pm \sqrt{2}\sqrt{\sqrt{x^2+1}-\frac{9}{8}} , \qquad  \lim_{y \to 0^{\pm}}  k_3(x+iy) = \mp \sqrt{2}\sqrt{\sqrt{x^2+1}-\frac{9}{8}}, \\
  \lim_{y \to 0^{\pm}} k_2(x+iy)&=  i \sqrt{2}{\sqrt{\sqrt{x^2+1} +\frac{9}{8}}}  , \qquad \; \quad  \lim_{y \to 0^{\pm}} k_4(x+iy)= - i\sqrt{2} {\sqrt{\sqrt{x^2+1} +\frac{9}{8}}}
\end{align*}
Moreover, we have
  \begin{align} \label{eq:Imk13Sign}
     \im(k_1(z)) > 0 \quad \text{and} \quad \im(k_2(z)) > 0 \quad \text{for all }\; z \in \Omega , \text{ with } \im (z)> 0  .  
  \end{align}  
Similarly, we have 
  \begin{align}  \label{eq:Imk32Sign}
      \im(k_1(z)) > 0  \quad \text{and} \quad \im(k_2(z)) > 0 \quad \text{for all }\; z \in \Omega , \text{ with } \im (z)< 0  .  
  \end{align}
\end{lemma}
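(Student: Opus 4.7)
The plan is to exploit the fact that $P(k,z)=0$ is quadratic in $k^2$, giving
\[
k^2=-\tfrac{9}{4}\pm 2\sqrt{1+z^2}.
\]
I would fix the holomorphic branch $w(z):=\sqrt{1+z^2}$ on $\C\setminus([i,i\infty)\cup(-i\infty,-i])$ normalized by $w(x)>0$ for $x\in\R$, and set $\omega_{\pm}(z):=-\tfrac{9}{4}\pm 2w(z)$. Direct computation shows that $\omega_+$ vanishes precisely at $z=\pm\tfrac{\sqrt{17}}{8}$ (the endpoints of the real cuts defining $\Omega$) and that $\omega_-$ is nonvanishing throughout the slit domain, since $w(z)=-\tfrac{9}{8}$ cannot occur for our branch. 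Because $\Omega\setminus\{\pm\tfrac{\sqrt{17}}{8}\}$ is simply connected and $\omega_{\pm}$ has no zeros there, single-valued holomorphic branches $\sqrt{\omega_{\pm}}$ are available; I define $k_1,k_2$ as the branches normalized by $k_1(0)=i/2$ and $k_2(0)=i\sqrt{17}/2$, which by analytic continuation extend $f_1,f_2$ from the real interval and match $k_1(\pm\tfrac{\sqrt{17}}{8})=0$ by continuity. Set $k_3:=-k_1$ and $k_4:=-k_2$.

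For the sign of $\im k_j$, I would argue by connectedness. Let $\Omega^{\pm}:=\Omega\cap\{\pm\im z>0\}$; each is open and connected (the upper or lower half plane minus one imaginary ray). A point $z\in\Omega^{\pm}$ satisfies $k_j(z)\in\R$ iff $\omega_{\pm}(z)\in[0,\infty)$, but $\omega_{\pm}$ is real on $\Omega^{\pm}$ only for $z$ on the imaginary segment $\{iy:0<\pm y<1\}$, where $\omega_+(iy)\le -\tfrac{1}{4}$ and $\omega_-(iy)<-\tfrac{9}{4}$ are strictly negative. Hence $k_j(\Omega^{\pm})\subset\C\setminus\R$. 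Following $k_j$ continuously along the imaginary axis from $z=0$ to $z_0=\pm i/2\in\Omega^{\pm}$ gives $k_j(z_0)\in i\R_{>0}$ by direct evaluation of the (real) expressions under the square root, so by connectedness $k_j(\Omega^{\pm})$ lies entirely in the upper half plane, proving $\im k_1,\im k_2>0$ on $\Omega^{\pm}$.

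For the boundary limits on the cuts $\{|x|>\tfrac{\sqrt{17}}{8}\}$, I would use the first-order expansion
\[
\omega_{\pm}(x+iy)=\omega_{\pm}(x)\pm\frac{2ixy}{\sqrt{1+x^2}}+O(y^2),
\]
which shows that $\omega_+(x+iy)$ approaches the positive value $\omega_+(x)>0$ from the upper (resp.\ lower) half plane when $xy>0$ (resp.\ $xy<0$), while $\omega_-(x+iy)$ approaches the negative value $\omega_-(x)<0$ from either side. The sign in the limit of $k_1=\sqrt{\omega_+}$ is then determined by tracking the continuous lift of $\arg\omega_+$ along a path in $\Omega$ from the base point $z=0$ (where $\arg\omega_+=\pi$) to $z=x\pm i 0$: a path whose image under $\omega_+$ stays above the real axis decreases the continuous argument to $0^+$, yielding $k_1\to+\sqrt{\omega_+(x)}$; a path whose image stays below lifts the argument to $2\pi^-$, yielding $k_1\to-\sqrt{\omega_+(x)}$. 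Resolving the four cases $\{x\gtrless\pm\tfrac{\sqrt{17}}{8},\,y\to 0^{\pm}\}$ produces the stated limits for $k_1$, and those for $k_3=-k_1$ follow by sign flip. For $k_2$, the image $\omega_-(z)$ stays in a neighborhood of the strictly negative real axis on both sides of the cut, so the continuous square root approaches $i\sqrt{|\omega_-(x)|}=i\sqrt{2}\sqrt{\sqrt{1+x^2}+\tfrac{9}{8}}$ from both sides, giving identical limits; the limits for $k_4=-k_2$ follow.

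The main obstacle is the continuous argument bookkeeping for $\omega_+$: I must verify case by case that the image curve in the $\omega_+$-plane executes the expected fractional rotation about the origin (either from $\pi$ down to $0^+$ through one half-plane, or from $\pi$ up to $2\pi^-$ through the other), since this is precisely what distinguishes the $+$ and $-$ sign in the limit. Once this winding analysis is carried out carefully in each of the four cases near $\pm\tfrac{\sqrt{17}}{8}$, the remaining ingredients are routine Taylor expansions and symmetry.
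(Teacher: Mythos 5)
Your proposal is correct, and for the heart of the lemma---the boundary limits on the cuts---it takes a genuinely different route from the paper. The paper factors the branch points: it writes $k_1(z)= i\sqrt{2}\,\frac{\sqrt{\sqrt{17}/8-z}\,\sqrt{\sqrt{17}/8+z}}{\sqrt{9/8+\sqrt{1+z^2}}}$, observes that the denominator extends analytically across the real cuts (exactly your computation that $\sqrt{1+z^2}$ never hits $(-\infty,-9/8]$), and then reads off the jump of each elementary factor $\sqrt{\sqrt{17}/8\mp z}$ across its own half-line by inspection; the product of the two jumps gives the sign. You instead keep $k_1=\sqrt{\omega_+}$ as a single branch and determine the limit from the side on which $\omega_+(x+iy)$ approaches the positive real axis. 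Both work; the paper's factorization localizes all branch-cut bookkeeping to functions of the form $\sqrt{c\mp z}$, at the cost of the algebraic identity $9/8-\sqrt{1+z^2}=\frac{(\sqrt{17}/8-z)(\sqrt{17}/8+z)}{9/8+\sqrt{1+z^2}}$, while your version needs one global fact to pin down the branch. The sign statements \eqref{eq:Imk13Sign}--\eqref{eq:Imk32Sign} are proved the same way in both (evaluate on the imaginary segment, then a no-real-value/connectedness argument; the paper phrases the nonvanishing of $\im k_j$ via $P(k,z_0)=0$ forcing $z_0^2>0$, you phrase it via $\omega_\pm(z)\notin[0,\infty)$---these are equivalent).

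The "obstacle" you flag at the end is not actually an obstacle, and you have already proved the fact that removes it. Your connectedness argument shows $\omega_+(z)\notin[0,\infty)$ for $z\in\Omega$ with $\im z\neq 0$, and on the remaining real segment $(-\tfrac{\sqrt{17}}{8},\tfrac{\sqrt{17}}{8})$ one has $\omega_+(x)<0$. Hence $\omega_+$ maps $\Omega\setminus\{\pm\tfrac{\sqrt{17}}{8}\}$ into $\mathbb{C}\setminus[0,\infty)$, so the continuous lift of $\arg\omega_+$ starting from $\pi$ at $z=0$ is trapped in $(0,2\pi)$ along \emph{every} path: no case-by-case winding analysis is needed, because no path can cross the positive real axis in the $\omega_+$-plane. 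The boundary value of $\arg\omega_+$ is therefore $0^+$ or $2\pi^-$ according as $\omega_+(x+iy)$ approaches $\omega_+(x)>0$ from the upper or lower half-plane, which your Taylor expansion reduces to the sign of $xy$; this yields all four cases at once. With that observation spelled out, your proof is complete.
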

\begin{proof}
Let $k_1(z):\Omega \longrightarrow \C $ be the analytic continuation of $f_1(x) $ to $\Omega, $ such that \begin{align*}
    k_1(x):= f_1(x)= i \sqrt{2}   \sqrt{  \frac{9}{8}-   \sqrt{1+x^2}     }= i \sqrt{2} \frac{\sqrt{ \left(\frac{\sqrt{17}}{8}-x \right)}  \sqrt{\left( \frac{\sqrt{17}}{8}+x \right)}   }{\sqrt{ \frac{9}{8}+   \sqrt{1+x^2}   }}  , \quad \forall x \in (-\frac{\sqrt{17}}{8},\frac{\sqrt{17}}{8}).
\end{align*}
Note that, there is no ambiguity in defining $k_1(x)$ for $x \in (-\frac{\sqrt{17}}{8},\frac{\sqrt{17}}{8}).$ We denote by $ g(z),h(z)$ and $m(z)$ the analytic continuation of $g(x),h(x)$ and $m(x)$ to $\Omega$ such that, 
\begin{align*}
   g(x):=  \sqrt{\frac{\sqrt{17}}{8}-x }  , \quad h(x):=
  \sqrt{ \frac{\sqrt{17}}{8}+x  }, \quad m(x):= \sqrt{ \frac{9}{8}+   \sqrt{1+x^2}   }, \quad  \forall x \in (-\frac{\sqrt{17}}{8},\frac{\sqrt{17}}{8}).
\end{align*}
For $x \in (-\infty,-\frac{\sqrt{17}}{8}),$ one can check that  
\begin{align*}
    \lim_{y \to 0^{\pm}} g(x+iy)= \sqrt{ \frac{\sqrt{17}}{8} -x } , \quad  \lim_{y \to 0^{\pm}} h(x+iy)= \pm i \sqrt{ \frac{-\sqrt{17}}{8} -x } 
\end{align*}
Similarly,  for $x \in (\frac{\sqrt{17}}{8},\infty),$

\begin{align*}
    \lim_{y \to 0^{\pm}} g(x+iy)= \mp i \sqrt{ x-\frac{\sqrt{17}}{8} }  , \quad  \lim_{y \to 0^{\pm}} h(x+iy)=  \sqrt{ \frac{\sqrt{17}}{8} +x } 
\end{align*}
Therefore, for $x \in (-\infty,-\frac{\sqrt{17}}{8}),$
\begin{align*}
  \lim_{y \to 0^{\pm}} g(x+iy)h(x+iy)=   \pm i \sqrt{ x^2-\frac{17}{64}  } 
\end{align*}
and for $x \in (\frac{\sqrt{17}}{8},\infty),$ we have 
\begin{align*}
  \lim_{y \to 0^{\pm}} g(x+iy)h(x+iy)=   \mp i \sqrt{ x^2-\frac{17}{64}  } 
\end{align*}

Next, observe that $m(z): \Tilde{\Omega} \to \C,$ such that $m(x)=\sqrt{ \frac{9}{8}+   \sqrt{1+x^2}   } $ for $ x \in (-\frac{\sqrt{17}}{8},\frac{\sqrt{17}}{8}) $ is well defined, where
\begin{align*}
   \Tilde{\Omega}:= \C \backslash \left([i,i \infty) \cup (-i\infty,-i]  \right) .
\end{align*} 
Indeed, if $\frac{9}{8}+ \sqrt{1+z^2} \notin (-\infty,0]$ then $m(z)$ is well defined on $\Tilde{\Omega}.$ If not, then $\sqrt{1+z^2} = -(\lambda+\frac{9}{8})<0,$ for some $\lambda>0.$ Thus, $\sqrt{1+z^2}$ is real: Let $z=x+iy $ then  $\sqrt{1+z^2}$ is real if and only if $xy=0$ and $1+x^2-y^2>0.$ If $y=0,$ then $\sqrt{1+z^2}=\sqrt{1+x^2}>0,$  and if $x=0$ then $\sqrt{1+z^2}=\sqrt{1-y^2}>0,$ which is a contradiction in both cases. Therefore $m(z)$ can be extended analytically to  $\Tilde{\Omega}.$ One can also define $\Tilde{m}(z): \Tilde{\Omega} \to \C,$ such that $\Tilde{m}(x)=\sqrt{ \frac{9}{8}+   \sqrt{1+x^2}   } $ for all $x \in \R,$ which is well defined. Since $m(z)$ and $\Tilde{m}(z)$ agree for $ x \in (-\frac{\sqrt{17}}{8},\frac{\sqrt{17}}{8}),$ they must agree on $\Tilde{\Omega}.$ Therefore, for $x \in (-\infty,-\frac{\sqrt{17}}{8}),$ we have 
\begin{align*}
  \lim_{y \to 0^{\pm}} k_1(x+iy)=  i \sqrt{2} \frac{ \pm i \sqrt{ x^2-\frac{17}{64}  }}{\sqrt{\sqrt{x^2+1} +\frac{9}{8}}} =\mp \sqrt{2}\sqrt{\sqrt{x^2+1}-\frac{9}{8}} ,
\end{align*}
and for $x \in (\frac{\sqrt{17}}{8},\infty),$ we have 
\begin{align*}
  \lim_{y \to 0^{\pm}} k_1(x+iy)=  i \sqrt{2} \frac{ \mp i \sqrt{ x^2-\frac{17}{64}  }}{\sqrt{\sqrt{x^2+1} +\frac{9}{8}}} =\pm \sqrt{2}\sqrt{\sqrt{x^2+1}-\frac{9}{8}}.
\end{align*}

Next, observe that $f_3(x)=-f_1(x),$ then similarly to $k_1(z),$ one can obtain
for $x \in (-\infty,-\frac{\sqrt{17}}{8})$  
\begin{align*}
  \lim_{y \to 0^{\pm}} k_3(x+iy) =\pm \sqrt{2}\sqrt{\sqrt{x^2+1}-\frac{9}{8}} ,
\end{align*}
and for $x \in (\frac{\sqrt{17}}{8},\infty),$ we have 
\begin{align*}
  \lim_{y \to 0^{\pm}} k_3(x+iy)= \mp \sqrt{2}\sqrt{\sqrt{x^2+1}-\frac{9}{8}}.
\end{align*}
Finally, notice that $f_2(x)=i \,  m(x),$ and $f_4(x)=-i \, m(x).$ Therefore,  for $x \in (-\infty,-\frac{\sqrt{17}}{8}) \cup (\frac{\sqrt{17}}{8},\infty),$  $k_2(z)$ and $k_4(z)$ satisfy,
\begin{align*}
  \lim_{y \to 0^{\pm}} k_2(x+iy)=  i {\sqrt{\sqrt{x^2+1} +\frac{9}{8}}} \\
   \lim_{y \to 0^{\pm}} k_4(x+iy)=  - i {\sqrt{\sqrt{x^2+1} +\frac{9}{8}}} 
\end{align*}
It remains to prove \eqref{eq:Imk32Sign} and \eqref{eq:Imk13Sign}. We begin with the case of positive imaginary part $\im(z)>0.$  Let $0<y<1,$ then we have
\begin{align*}
    k_1(iy)=i \sqrt{\frac{9}{4} - 2 \sqrt{1-y^2}}, \qquad k_2(iy)=i \sqrt{\frac{9}{4} + 2 \sqrt{1-y^2}}.
    \end{align*}
Therefore, we obtain 
\begin{align*}
   \im(k_1(iy))>0 \quad \text{and} \quad \im(k_2(iy))>0. 
\end{align*}
Assume that there exists $z_0 \in \Omega $ with $\im(z_0)>0$ such that say $\im(k_1(z_0)) =0.$ Then by \eqref{eq:P(k,z)}, we have
\begin{align*}
   z_0^2=\frac{1}{2}  k_1(z_0)^4 + \frac{9}{8} k_1(z_0)^2 +
   \frac{17}{64} >0
\end{align*}
which is impossible since $\im(z_0)>0.$ Therefore $\im(k_1(z))>0$ for all $z \in \Omega,$ with $\im(z)>0. $ Similarly, we get $\im(k_2(z))>0$ for all $z \in \Omega,$ with $\im(z)>0. $ The case of negative imaginary part $\im(z) < 0$ can be handled in the same way. This concludes the proof of Lemma \ref{lem:k_j-behavior}.    
\end{proof}

\begin{remark} \label{rem:behavior-k_j-lambda}
    Notice that $f_1(x)=f_1(-x)=-f_3(-x)=-f_3(x)$ and $f_2(x)=f_2(-x)=-f_4(x)=-f_4(x)$ for $x\in(-\frac{\sqrt{17}}{8},\frac{\sqrt{17}}{8})$. By analytic continuation, $k_1(z)=k_1(-z)=-k_3(-z)=-k_3(z)$ and $k_2(z)=k_2(-z)=-k_4(z)=-k_4(z)$ for all $z\in\Omega$. On the other hand, the lemma shows that $k^\pm_j(x):=\lim_{y\to 0^\pm}k_j(x+iy)$, $x\in\mathbb{R}\backslash (-\frac{\sqrt{17}}{8},\frac{\sqrt{17}}{8})$, $j=1,\dots,4$, satisfy $k^\pm_1(x)=-k_1^\pm(-x)=k_3^\pm(-x)=-k_3^\pm(x)$ and $k_2^\pm(x)=k_2^\pm(-x)=-k_4^\pm(x)=-k_4^\pm(-x)$. 

 Note that once the distorted Fourier transform is constructed, after passing to the real line in the spectral representation of the evolution $e^{t\calL}$ from Lemma~\ref{lem:ST1},  the resulting integral is supported over the region $\mathbb{R}\setminus(-\tfrac{\sqrt{17}}{8},\tfrac{\sqrt{17}}{8})$, which is excluded from $\Omega$. For this reason, the limits $k_j^\pm(x)$ will play a central role in the later parts of this work.
\end{remark}

Recall that, for $\phi(r,z)=e^{i k(z) r }$ to be square integrable near infinity we need $\im(z) >0.$ Then in view of view of Lemma \ref{lem:k_j-behavior}, we obtain the Weyl-Titchmarsh matrix solutions of the reference operator $\mathcal{L}_{\infty}:$  

\begin{align*}
     \Uppsi^{+}_{\infty}(r,z) :=\begin{pmatrix}
         \Uppsi^{+}_{(\infty,1)}(r,z) &  \Uppsi^{+}_{(\infty,2)}(r,z)
    \end{pmatrix} &= \begin{pmatrix}
      e^{i k_1(z) r } &     e^{i k_2(z) r } \\ 
       c_1 (z) e^{i k_1(z) r } &  c_2 (z) e^{i k_2(z) r } 
    \end{pmatrix} \qquad \im(z) > 0  ,
\end{align*}

and 
\begin{align*}
    \Uppsi^{-}_{\infty}(r,z) := \begin{pmatrix}
         \Uppsi^{-}_{(\infty,1)}(r,z) &  \Uppsi^{-}_{(\infty,2)}(r,z)
    \end{pmatrix} =\begin{pmatrix}
      e^{i k_1(z) r } &     e^{i k_2(z) r } \\ 
       c_1(z) e^{i k_1(z) r } & c_2(z)  e^{i k_2(z) r } 
    \end{pmatrix}\qquad \im(z) < 0 ,
\end{align*}
where,  \begin{align*}
   c_j(z)= c(k_j(z),z)&:=     - i     \frac{ ( \frac{1}{2} k_j(z)^2 + \frac{17}{8}) }{ z} \qquad \text{for} \; j=1,2..
\end{align*}
Note that, $c_j(-z)=-c_j(z).$ \\

We also record the two other solutions, 

\begin{align*}
   \UUppsi^{+}_{\infty}(r,z) := \begin{pmatrix}
         \Uppsi^{+}_{(\infty,3)}(r,z) &  \Uppsi^{+}_{(\infty,4)}(r,z)
    \end{pmatrix} =\begin{pmatrix}
      e^{- i k_1(z) r } &     e^{-i k_2(z) r } \\ 
      c_1(z) e^{-i k_1(z) r } & c_2(z) e^{-i k_2(z) r } 
    \end{pmatrix} \qquad \im(z) > 0 ,
\end{align*}

and 
\begin{align*}
     \UUppsi^{-}_{\infty}(r,z) :=\begin{pmatrix}
         \Uppsi^{-}_{(\infty,3)}(r,z) &  \Uppsi^{-}_{(\infty,4)}(r,z)
    \end{pmatrix} = \begin{pmatrix}
      e^{ -i k_1(z) r } &     e^{-i k_2(z) r } \\ 
       c_1(z)  e^{-i k_1(z) r } & c_2(z) e^{-i k_2(z) r } 
    \end{pmatrix}\qquad \im(z) < 0 .
\end{align*}

 \subsection{Construction of $\Psi^{+}(r,z)$ for small $|\xi|$ } 
 We start with the construction of the fundamental matrix solutions $\Psi^{+}(\cdot,z)\in L^2(1,\infty)$ and $\widetilde{\Psi}^{+}(\cdot, z) \notin L^2(1,\infty) $ to 
\begin{equation}
\label{eq:iLPsi=zPsi,Im(z)>0}
   i \mathcal{L} \Psi^{+}(\cdot, z) = z  \Psi^{+}(\cdot, z) ,  \; \text{ for } z \in \Omega \; \text{ and } \; \im(z)>0
\end{equation}
equivalently, 
\begin{equation*}
   i (\mathcal{L_{\infty}}-z ) \Psi^{+}(r, z) = V(r) \Psi^{+}(r, z)
\end{equation*}
where \begin{align*}
    V(r) = \begin{pmatrix}
        0 & \frac{3}{8 \sh(r)^2} + V_1(r) \\
        - ( \frac{3}{8 \sh(r)^2} + V_2(r)) & 0 
    \end{pmatrix}.
\end{align*}
\begin{remark}
Recall that the potentials $ V_1(r) $ and  $ V_2(r)$ decay exponentially. Consequently, for large $r $, we have  
\begin{equation}
\label{eq:decay-V}
 V_j(r) = O(e^{-2r}), \quad \text{for } j=1,2.   
\end{equation}
This specific choice of the exponential decay rate $ e^{-2r} $ is made to align with the decay behavior of the potential $ V_{GP}$, as seen in the asymptotic expansion of $ \rho(r) $ near infinity in \eqref{eq:rho_n-asymptotics-near-infty}.
\end{remark}

We define the Green's functions as 
\begin{align*}
   \mathcal{R}^{+}_1(r,s,z):&=  \Uppsi^{+}_{\infty}(r,z) S_1(s,z) \mathbb{1}_{\{ r_{\infty} \leq s \leq r \}} +  \UUppsi^{+}_{\infty}(r,z) T_1(s,z) \mathbb{1}_{\{ r \leq s \leq \infty \}}   \\
   \mathcal{R}^{+}_2(r,s,z):&=  \Uppsi^{+}_{\infty}(r,z) S_2(s,z) \mathbb{1}_{\{ r \leq s \leq \infty \}} +  \UUppsi^{+}_{\infty}(r,z) T_2(s,z)  \mathbb{1}_{\{ r \leq s \leq \infty \}} \\
   \mathcal{R}^{+}_3(r,s,z):&=  \Uppsi^{+}_{\infty}(r,z) S_3(s,z) \mathbb{1}_{\{ r_{\infty} \leq s \leq r \}} +  \UUppsi^{+}_{\infty}(r,z) T_3(s,z)  \mathbb{1}_{\{ r_{\infty} \leq s \leq r \}} \\
\end{align*}
where we require the matrices $S_1(r,z)$ and $T_1(r,z)$ for $\mathcal{R}^{+}_1$ to satisfy
\begin{align}
\label{eq:S1T1-condi-infinity-xi-small}
\begin{pmatrix}
      \Uppsi^{+}_{\infty}(r,z)    &  \UUppsi^{+}_{\infty}(r,z) \\
      \partial_r   \Uppsi^{+}_{\infty}(r,z) & \partial_r   \UUppsi^{+}_{\infty}(r,z)
\end{pmatrix}
\begin{pmatrix}
    S_1(r,z) \\
    -T_1(r,z)
\end{pmatrix}
= \begin{pmatrix}
    0 \\ \sigma_2
\end{pmatrix}
\end{align}
and we require the matrices $S_2(r,z)$ and $T_2(r,z)$ for $\mathcal{R}^{+}_2$ to satisfy
\begin{align}
\label{eq:S2T2-condi-infinity-xi-small}
\begin{pmatrix}
      \Uppsi^{+}_{\infty}(r,z)    &  \UUppsi^{+}_{\infty}(r,z) \\
      \partial_r   \Uppsi^{+}_{\infty}(r,z) & \partial_r   \UUppsi^{+}_{\infty}(r,z)
\end{pmatrix}
\begin{pmatrix}
  -  S_2(r,z) \\
    -T_2(r,z)
\end{pmatrix}
= \begin{pmatrix}
    0 \\ \sigma_2
\end{pmatrix}
\end{align}
and we require the matrices $S_3(r,z)$ and $T_3(r,z)$ for $\mathcal{R}^{+}_2$ to satisfy
\begin{align}
\label{eq:S3T3-condi-infinity-xi-small}
\begin{pmatrix}
      \Uppsi^{+}_{\infty}(r,z)    &  \UUppsi^{+}_{\infty}(r,z) \\
      \partial_r   \Uppsi^{+}_{\infty}(r,z) & \partial_r   \UUppsi^{+}_{\infty}(r,z)
\end{pmatrix}
\begin{pmatrix}
    S_3(r,z) \\
    T_3(r,z)
\end{pmatrix}
= \begin{pmatrix}
    0 \\ \sigma_2
\end{pmatrix}
\end{align}

Denote by \begin{align*}
\Psi^{\pm}(r,z)= \begin{bmatrix} \Psi^{\pm}_1 (r,z) &    \Psi^{\pm}_2(r,z)
    \end{bmatrix}
    \qquad \text{and} \qquad 
    \PPsi^{\pm}(r,z)= \begin{bmatrix}   \Psi^{\pm}_3 (r,z) & \Psi^{\pm}_4(r,z)  
    \end{bmatrix}
\end{align*} 
and  
\begin{align*}
  \Upsilon_i(r,z,\Psi_j (r,z)):= \int_0^\infty \mathcal{R}^{+}_i(r,s,z) V(s) \Psi_j(s,z)  ds  \qquad \text{for } i=1,2, 3 \text{ and } j=1,\cdots, 4, \\
\end{align*}
where $\Psi_j(r,z) $ is the $j$-th column of the matrix $\Psi(r,z).$ \\ 

Then the solutions to \begin{equation*}
   i \mathcal{L} \Psi^{+}(\cdot, z) = z  \Psi^{+}(\cdot, z) ,  \; \text{ for } z \in \Omega \; \text{ and } \; \im(z)>0
\end{equation*}
for both scenarios, $z$ near $\frac{\sqrt{17}}{8}$ (i.e., $z= \frac{\sqrt{17}}{8}+\xi$) and $z$ near $-\frac{\sqrt{17}}{8}$ (i.e., $z= -\frac{\sqrt{17}}{8}+\xi$) is given by the fixed point problems

\begin{align}
\label{eq:def-Psi+(r,z)-xi-small}
   \Psi^{+}(r,z)&:=   \Uppsi^{+}_{\infty} (r,z) + 
  T_1(r,z,\Psi^{+}(r,z)) ,
\end{align}
and 
\begin{align}
\label{eq:def-tilde-Psi+(r,z)-xi-small}
   \PPsi^{+}(r,z)&:=    \UUppsi^{+}_{\infty} (r,z) + 
  T_2(r,z,\PPsi^{+}(r,z)) ,
\end{align}
where, 
\begin{align}
\begin{split}
\label{eq:def-T_1-infty-xi-small}
  T_1(r,z,\Psi^{+}(r,z))&:= \Upsilon_1(r,z,\Psi_1^{+} (r,z))  + \Upsilon_2(r,z,\Psi_{2}^{+} (r,z)) \\
  & =\int_0^{\infty} \mathcal{R}^{+}_1(r,s,z) V(s) \Psi_{1}^{+}(s,z) ds  + \int_0^{\infty} \mathcal{R}^{+}_2(r,s,z) V(s) \Psi_{2}^{+}(s,z) ds ,     
\end{split}
\end{align}
\begin{align}
\begin{split}
\label{eq:def-T_2-infty-xi-small}
  T_2(r,z,\PPsi^{+}(r,z))&:= \Upsilon_1(r,z,\PPsi_{1}^{+} (r,z))  + \Upsilon_3(r,z,\PPsi_{2}^{+} (r,z)) \\
  & =\int_0^{\infty} \mathcal{R}^{+}_1(r,s,z) V(s) \PPsi_{1}^{+}(s,z) ds  + \int_0^{\infty} \mathcal{R}^{+}_3(r,s,z) V(s) \PPsi_{2}^{+}(s,z) ds  .
  \end{split}
\end{align}

In order to compute $S_i(r,z)$ and $T_i(r,z)$, we first need to invert the matrix defined above. To do this, we begin by computing the Wronskians between $\Uppsi^{+}_{\infty}(\cdot,z)$ and $\UUppsi^{-}_{\infty}(\cdot,z))$.

\begin{lemma}
\label{claim:D-Psi-infty-small-z}
Let $D^{\pm}(z):=W(\Uppsi^{\pm}_{\infty}(\cdot,z), \UUppsi^{\pm}_{\infty}(\cdot,z)).$ Then 
\begin{align*}
 D^{\pm}(z)=\begin{pmatrix}
 \delta(z) &0  \\ \\
0   &   \alpha(z)
   \end{pmatrix} \;  \text{ and } \; 
     (D^{\pm})^{-1}=(D^{\pm})^{-t} &= \begin{pmatrix}
\frac{1}{\delta(z)}  & 0 \\ \\
0   &   \frac{1}{\alpha(z)}
   \end{pmatrix} := \begin{pmatrix}
D_1(z) &0  \\ \\
0  &    D_2(z) 
   \end{pmatrix}
   \end{align*}   
where
\begin{align*}
   \delta(z):=  -2ik_1(z) ( 1 - c_1(z)^2), \qquad 
   \alpha(z):= - 2i k_2(z) (1 - c_2(z)^2)
\end{align*}
\begin{align*}
 c_1(z)&=     - i     \frac{ ( \frac{1}{2} k_1(z)^2 + \frac{17}{8}) }{ z} , \qquad 
  c_2(z)=    - i   \frac{ ( \frac{1}{2} k_2(z)^2 + \frac{17}{8}) }{ z}   .
\end{align*}
\end{lemma}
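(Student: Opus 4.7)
\medskip

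\noindent\textit{Proof plan.} The strategy is to compute $D^{\pm}(z)$ entrywise from the explicit formulas for $\Uppsi^{\pm}_{\infty}$ and $\UUppsi^{\pm}_{\infty}$. I will only discuss the ``$+$'' case; the ``$-$'' case is identical. First I observe that every column of $\Uppsi^{+}_{\infty}(\cdot,z)$ and $\UUppsi^{+}_{\infty}(\cdot,z)$ solves $i\calL_{\infty}U = zU$, since each is of the form $(e^{\pm i k_j r},\, c_j e^{\pm i k_j r})^t$ with $k_j$ a root of the dispersion polynomial $P(k,z)$ from \eqref{eq:P(k,z)} and $c_j = -i(k_j^2/2 + 17/8)/z$ chosen to satisfy the coupling. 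A short calculation using the second order equations for the two components (with symbol $\frac12 k^2+\frac18$ and $\frac12 k^2+\frac{17}{8}$) shows $\partial_r W(F,G) = F^t\sigma_3 G''-(F'')^t\sigma_3 G = 0$ when both columns satisfy $i\calL_{\infty}U=zU$. Hence the matrix Wronskian $D^{+}(z)$ is $r$-independent and can be evaluated at any convenient point.

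For the diagonal entries, the ansatz $F_j = (e^{ik_j r}, c_j e^{ik_j r})^t$ and $G_j = (e^{-i k_j r}, c_j e^{-ik_j r})^t$ yields
\begin{align*}
   W(F_j,G_j) \;=\; F_j^t\sigma_3 G_j' - (F_j')^t \sigma_3 G_j \;=\; -2ik_j\bigl(1 - c_j^2\bigr),
\end{align*}
since $e^{ik_j r} e^{-ik_j r}=1$. This gives the stated formulas $\delta(z)=-2ik_1(1-c_1^2)$ and $\alpha(z)=-2ik_2(1-c_2^2)$.

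The key point is the vanishing of the off-diagonal entries. A direct computation gives
\begin{align*}
   W(F_1,G_2) \;=\; -i(k_1+k_2)\bigl(1-c_1 c_2\bigr)\, e^{i(k_1-k_2)r},
\end{align*}
and the analogous expression for $W(F_2,G_1)$. Since this must be $r$-independent and $k_1\neq k_2$, it must vanish identically, which I will verify algebraically by proving $c_1 c_2=1$. Setting $a_j:=k_j^2/2+1/8$ and $b_j:=k_j^2/2+17/8$, the factorization of the symbol of $\mathfrak{L}_0^{\infty}\mathfrak{L}_2^{\infty}$ gives $a_j b_j = z^2$, so $c_j = -i b_j/z = -i z/a_j$ and $c_1 c_2 = -b_1 b_2/z^2 = -z^2/(a_1 a_2)$. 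Viewing $P(k,z)$ as a quadratic in $k^2$, Vieta's formulas applied to the equation $\frac14(k^2)^2+\frac{9}{8}(k^2)+\frac{17}{64}-z^2=0$ yield $k_1^2+k_2^2 = -\tfrac{9}{2}$ and $k_1^2 k_2^2 = \tfrac{17}{16}-4z^2$, and expanding $a_1 a_2$ gives $a_1 a_2 = \tfrac{1}{4}k_1^2 k_2^2 + \tfrac{1}{16}(k_1^2+k_2^2)+\tfrac{1}{64}= -z^2$, so $c_1 c_2 = 1$ as required. This shows $D^{+}(z)$ is diagonal with the claimed entries, and its inverse is immediate. The main (mild) obstacle is carrying out this algebraic identification $c_1 c_2 = 1$ cleanly; once that is in place, the rest is bookkeeping.
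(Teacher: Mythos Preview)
Your proof is correct and follows essentially the same route as the paper: both compute the four entries of $D^{+}(z)$ directly from the exponential columns, obtaining $-2ik_j(1-c_j^2)$ on the diagonal and $-i(k_1+k_2)(1-c_1c_2)e^{i(k_1-k_2)r}$ off the diagonal. The only difference is that the paper simply asserts the off-diagonal vanishing ``follows directly from the definitions of $k_1(z)$ and $k_2(z)$'', whereas you supply the explicit verification $c_1c_2=1$ via Vieta's formulas for the quadratic in $k^2$---this is a welcome addition, and your computation $a_1a_2=-z^2$ is correct.
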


\begin{proof}
We focus on computing $D^{+}(z)$; the expression for $D^{-}(z)$ then follows directly from the definition of $\Uppsi^{-}$ and $\widetilde{\Uppsi}^{-}$. Let 
\begin{align*}
   D^{+}(z)&= \begin{pmatrix}
       \delta(z) & \gamma(z) \\
       \beta(z) & \alpha(z)
   \end{pmatrix}
\end{align*}
where   
  \begin{align*}
    \delta(z) & =  W(\Uppsi^{+}_{(\infty,1)}(\cdot,z), \Uppsi^{+}_{(\infty,3)}(\cdot,z)) = W( \begin{pmatrix}e^{i k_1(z) r } \\    \frac{-i (  \frac{1}{2}k_1(z)^2 + \frac{17}{8}) }{ z} e^{i k_1(z) r }  \end{pmatrix}, \begin{pmatrix}
       e^{-i k_1(z) r } \\
        \frac{-i ( \frac{1}{2} k_1(z)^2 + \frac{17}{8}) }{ z} e^{-i k_1(z) r } 
    \end{pmatrix} )\\
    & = < \begin{pmatrix}e^{i k_1(z) r } \\    \frac{-i (  \frac{1}{2}k_1(z)^2 + \frac{17}{8}) }{ z} e^{i k_1(z) r }  \end{pmatrix}, \begin{pmatrix}
   - i k_1(z)    e^{-i k_1(z) r } \\
        \frac{k_1(z) ( \frac{1}{2} k_1(z)^2 + \frac{17}{8}) }{ z} e^{-i k_1(z) r } 
    \end{pmatrix} ) >\\
   & - < \begin{pmatrix} i k_1(z) e^{i k_1(z) r } \\    \frac{
    k_1(z) (  \frac{1}{2}k_1(z)^2 + \frac{17}{8}) }{ z} e^{i k_1(z) r }  \end{pmatrix}, \begin{pmatrix}
     e^{-i k_1(z) r } \\
        \frac{i ( \frac{1}{2} k_1(z)^2 + \frac{17}{8}) }{ z} e^{-i k_1(z) r } 
    \end{pmatrix}  > \\
 &   = - 2i k_1(z) \left( 1+   \frac{ ( \frac{1}{2} k_1(z)^2 + \frac{17}{8})^2 }{ z^2}\right)
\end{align*}

 \begin{align*}
    \gamma(z) & =  W(\Uppsi^{+}_{(\infty,1)}(\cdot,z), \Uppsi^{+}_{(\infty,4)}(\cdot,z)) = W( \begin{pmatrix}e^{i k_1(z) r } \\    \frac{-i (  \frac{1}{2}k_1(z)^2 + \frac{17}{8}) }{ z} e^{i k_1(z) r }  \end{pmatrix}, \begin{pmatrix}
       e^{-i k_2(z) r } \\
        \frac{-i ( \frac{1}{2} k_2(z)^2 + \frac{17}{8}) }{ z} e^{-i k_2(z) r } 
    \end{pmatrix} )\\
   &     = - i (k_2(z)+ k_1(z))   \left( 1+   \frac{ ( \frac{1}{2} k_1(z)^2 + \frac{17}{8}) }{ z} \frac{ ( \frac{1}{2} k_2(z)^2 + \frac{17}{8}) }{ z} \right) e^{i (k_1(z)-k_2(z)) r} =0  ,
\end{align*}
which follows directly from the definitions of $k_1(z)$ and $k_2(z)$.

 \begin{align*}
    \beta(z) & =  W(\Uppsi^{+}_{(\infty,2)}(\cdot,z), \Uppsi^{+}_{(\infty,3)}(\cdot,z)) = W(  \begin{pmatrix}
       e^{i k_2(z) r } \\
        \frac{-i ( \frac{1}{2} k_2(z)^2 + \frac{17}{8}) }{ z} e^{i k_2(z) r } 
    \end{pmatrix} ,
    \begin{pmatrix}
    e^{-i k_1(z) r } \\ 
    \frac{-i (  \frac{1}{2}k_1(z)^2 + \frac{17}{8}) }{ z} e^{-i k_1(z) r }  
    \end{pmatrix})\\
   &     = - i (k_2(z)+ k_1(z))   \left( 1+   \frac{ ( \frac{1}{2} k_1(z)^2 + \frac{17}{8}) }{ z} \frac{ ( \frac{1}{2} k_2(z)^2 + \frac{17}{8}) }{ z} \right) e^{i (k_2(z)-k_1(z)) r} =0 
\end{align*}
which follows by a straightforward computation from the definitions of $k_1(z)$ and $k_2(z)$.

\begin{align*}
    \alpha(z) & =  W(\Uppsi^{+}_{(\infty,2)}(\cdot,z), \Uppsi^{+}_{(\infty,4)}(\cdot,z)) = W( \begin{pmatrix}e^{i k_2(z) r } \\    \frac{-i (  \frac{1}{2}k_2(z)^2 + \frac{17}{8}) }{ z} e^{i k_2(z) r }  \end{pmatrix}, \begin{pmatrix}
       e^{-i k_2(z) r } \\
        \frac{-i ( \frac{1}{2} k_2(z)^2 + \frac{17}{8}) }{ z} e^{-i k_2(z) r } 
    \end{pmatrix} )\\
 &= - 2i k_2(z) \left( 1+   \frac{ ( \frac{1}{2} k_2(z)^2 + \frac{17}{8})^2 }{ z^2}\right).
\end{align*}

\begin{align*}
  (D^{+}(z))^{-1} &= \frac{1}{\delta(z) \alpha(z)} \begin{pmatrix}
\alpha(z)  & 0  \\ \\
0   &  \delta(z)
   \end{pmatrix} =\begin{pmatrix}
 \frac{1}{\delta(z)} & 0 \\ \\
0   &     \frac{1}{\alpha(z)}
   \end{pmatrix} := \begin{pmatrix}
D_1(z) &0  \\ \\
0  &    D_2(z) 
\end{pmatrix}
\end{align*}

\end{proof}

\begin{remark}
\label{K_j-C_j-D_j-behavior}
 In the following two scenarios we write the asymptotics for $D_1^\pm$ and $D_2^\pm$:
\begin{itemize}
    \item Let $z= \frac{\sqrt{17}}{8}+\xi,$ for small $\xi.$ We have $| k_1(z)| \simeq  \sqrt{|\xi| }  , $ and $k_2(z)=i \frac{3}{\sqrt{2}}+O(|\xi|) $  This implies, $ c_1(z)=-i \sqrt{17}+O(|\xi|)  $ and $c_2(z)=i\frac{1}{  \sqrt{17}}+O(|\xi|).$ 
    Therefore, $|D_1^{\pm}(z)|\simeq \frac{1}{\sqrt{|\xi|} } $ and $|D_2^{\pm}(z)| \simeq 1 $
      \item Let $z= -\frac{\sqrt{17}}{8}+\xi,$ for small $\xi.$ We have $| k_1(z)| \simeq  \sqrt{|\xi| }  , $ and $ k_2(z)=i \frac{3}{\sqrt{2}}+O(|\xi|) $  This implies, $ c_1(z)=-i \sqrt{17}+O(|\xi|)  $ and $c_2(z)=i\frac{1}{  \sqrt{17}}+O(|\xi|).$ 
    Therefore, $|D_1^{\pm}(z)|\simeq \frac{1}{\sqrt{|\xi|} }$ and $|D_2^{\pm}(z)| \simeq 1 $
\end{itemize}    
\end{remark}

 Next, we compute  $S_i(r,z)$ and $T_i(r,z)$, using Lemma \ref{inver-2-matrix} and Lemma \ref{claim:D-Psi-infty-small-z}. 
 \begin{claim}
 \label{claim:def-S_iT_i-Greens-R_j}
     We have, 
    \begin{align*}
 &   \begin{cases}
    S_1(r,z)&= -  (D^{+}(z))^{-t} \, \UUppsi^{+}_{\infty}(r,z)^t \sigma_3 \sigma_2 \\
    T_1(r,z)&= -  (D^{+}(z))^{-1} \,  \Uppsi^{+}_{\infty}(r,z)^t \sigma_3 \sigma_2
    \end{cases}
    \qquad \qquad 
\begin{cases}
        S_2(r,z)&=   (D^{+}(z))^{-t} \,  \UUppsi^{+}_{\infty}(r,z)^t \sigma_3 \sigma_2 \\
    T_2(r,z)&= -  (D^{+}(z))^{-1} \,  \Uppsi^{+}_{\infty}(r,z)^t \sigma_3 \sigma_2
\end{cases} \\
& \begin{cases}
    S_3(r,z)&= -  (D^{+}(z))^{-t} \,  \UUppsi^{+}_{\infty}(r,z)^t \sigma_3 \sigma_2 \\
    T_3(r,z)&=   (D^{+}(z))^{-1} \,  \Uppsi^{+}_{\infty}(r,z)^t \sigma_3 \sigma_2
    \end{cases}
    \end{align*}
    
and \begin{align*}
\mathcal{R}^{+}_1(r,s,z):&= \begin{cases}
    i    \Uppsi^{+}_{\infty}(r,z)  (D^{+}(z))^{-t} \,  \UUppsi^{+}_{\infty}(s,z)^t \sigma_1, \qquad r_{\infty} \leq s \leq r, \\ \\
   i   \UUppsi^{+}_{\infty}(r,z)  (D^{+}(z))^{-1} \, \Uppsi^{+}_{\infty}(s,z)^t \sigma_1 , \qquad r \leq s \leq \infty.
\end{cases} \\ \\
\mathcal{R}^{+}_2(r,s,z):&= \begin{cases}
  -  i    \Uppsi^{+}_{\infty}(r,z)  (D^{+}(z))^{-t} \,  \UUppsi^{+}_{\infty}(s,z)^t \sigma_1, \qquad r \leq s \leq \infty, \\ \\
   i   \UUppsi^{+}_{\infty}(r,z)  (D^{+}(z))^{-1} \, \Uppsi^{+}_{\infty}(s,z)^t \sigma_1,  \qquad r \leq s \leq \infty.
\end{cases}  \\ \\
\mathcal{R}^{+}_3(r,s,z):&= \begin{cases}
   i    \Uppsi^{+}_{\infty}(r,z)  (D^{+}(z))^{-t} \,  \UUppsi^{+}_{\infty}(s,z)^t \sigma_1, \qquad r_{\infty} \leq s \leq r,\\ \\
 -  i   \UUppsi^{+}_{\infty}(r,z)  (D^{+}(z))^{-1} \, \Uppsi^{+}_{\infty}(s,z)^t \sigma_1,  \qquad r_{\infty} \leq s \leq r.
\end{cases}
\end{align*}
    
\end{claim}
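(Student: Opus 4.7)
The approach is to invert the block matrix on the left of \eqref{eq:S1T1-condi-infinity-xi-small}--\eqref{eq:S3T3-condi-infinity-xi-small} by a direct application of Lemma \ref{inver-2-matrix} with $F = \Uppsi^{+}_{\infty}(\cdot,z)$ and $G = \UUppsi^{+}_{\infty}(\cdot,z)$. Applying this inverse to $(0,\sigma_2)^t$ and matching against the three distinct sign conventions will give explicit formulas for $S_i(r,z)$ and $T_i(r,z)$. Substituting these into the piecewise definitions of $\mathcal{R}^{+}_i(r,s,z)$ and invoking the Pauli-matrix identity $\sigma_3\sigma_2 = -i\sigma_1$ will then yield the stated Green's functions.

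The first task is to verify the hypotheses of Lemma \ref{inver-2-matrix}, namely $\mathcal{W}(\Uppsi^{+}_{\infty},\Uppsi^{+}_{\infty}) = 0$, $\mathcal{W}(\UUppsi^{+}_{\infty},\UUppsi^{+}_{\infty}) = 0$, and the invertibility of $D^{+}(z) = \mathcal{W}(\Uppsi^{+}_{\infty},\UUppsi^{+}_{\infty})$. The diagonal entries of the two self-Wronskians vanish by the trivial antisymmetry $W(\mathbf{f},\mathbf{f}) = 0$. For the off-diagonal entries, a direct computation produces the algebraic factor $1 + (\tfrac{1}{2} k_1(z)^2 + \tfrac{17}{8})(\tfrac{1}{2} k_2(z)^2 + \tfrac{17}{8})/z^2$, exactly as in the computation of $\gamma(z)$ and $\beta(z)$ in Lemma \ref{claim:D-Psi-infty-small-z}. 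Applying Vieta's formulas to the factorization $\tfrac{1}{4} k^4 + \tfrac{9}{8} k^2 + \tfrac{17}{64} - z^2 = 0$ yields the identity $(\tfrac{1}{2} k_1^2 + \tfrac{17}{8})(\tfrac{1}{2} k_2^2 + \tfrac{17}{8}) = -z^2$, so this factor vanishes, equivalently $c_1(z)c_2(z) = 1$. The invertibility of $D^{+}(z)$ is precisely the content of Lemma \ref{claim:D-Psi-infty-small-z}, which moreover shows $D^{+}(z)$ is diagonal and hence $(D^{+}(z))^{-t} = (D^{+}(z))^{-1}$.

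With the hypotheses verified, Lemma \ref{inver-2-matrix} together with an expansion of the block factors from right to left produces
\begin{align*}
\begin{pmatrix} \Uppsi^{+}_{\infty} & \UUppsi^{+}_{\infty} \\ \partial_r \Uppsi^{+}_{\infty} & \partial_r \UUppsi^{+}_{\infty} \end{pmatrix}^{-1} \begin{pmatrix} 0 \\ \sigma_2 \end{pmatrix} = \begin{pmatrix} -(D^{+}(z))^{-t}\,(\UUppsi^{+}_{\infty})^t\sigma_3\sigma_2 \\ (D^{+}(z))^{-1}\,(\Uppsi^{+}_{\infty})^t\sigma_3\sigma_2 \end{pmatrix}.
\end{align*}
For $i=1$ the left-hand side of \eqref{eq:S1T1-condi-infinity-xi-small} is $(S_1,-T_1)^t$, which produces $S_1 = -(D^{+}(z))^{-t}(\UUppsi^{+}_{\infty})^t\sigma_3\sigma_2$ and $T_1 = -(D^{+}(z))^{-1}(\Uppsi^{+}_{\infty})^t\sigma_3\sigma_2$. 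Matching $(-S_2,-T_2)^t$ against the same right-hand side yields the formulas for $S_2,T_2$, and matching $(S_3,T_3)^t$ yields those for $S_3,T_3$. Finally, substituting these expressions into the piecewise definitions of $\mathcal{R}^{+}_i(r,s,z)$ and simplifying via $\sigma_3\sigma_2 = -i\sigma_1$ produces the prefactor $\pm i$ in each branch and exactly reproduces the displayed formulas.

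There is no genuine obstacle here; the entire proof is algebraic bookkeeping. The only care required is in tracking the three different sign conventions across \eqref{eq:S1T1-condi-infinity-xi-small}--\eqref{eq:S3T3-condi-infinity-xi-small}, and the only non-trivial input beyond Lemma~\ref{inver-2-matrix} is the cancellation $c_1(z)c_2(z) = 1$ that forces the self-Wronskians to vanish.
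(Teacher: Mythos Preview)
Your proof is correct and follows essentially the same route as the paper's: apply Lemma~\ref{inver-2-matrix} to invert the block matrix, read off $S_i,T_i$ from the three sign conventions in \eqref{eq:S1T1-condi-infinity-xi-small}--\eqref{eq:S3T3-condi-infinity-xi-small}, and simplify via $\sigma_3\sigma_2=-i\sigma_1$. Your explicit verification that the self-Wronskians $\mathcal{W}(\Uppsi^{+}_{\infty},\Uppsi^{+}_{\infty})$ and $\mathcal{W}(\UUppsi^{+}_{\infty},\UUppsi^{+}_{\infty})$ vanish (via the identity $c_1(z)c_2(z)=1$) is a detail the paper leaves implicit, relying instead on the computations already done in Lemma~\ref{claim:D-Psi-infty-small-z}; it is a welcome clarification but not a departure in method.
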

\begin{proof}
Using \eqref{eq:S1T1-condi-infinity-xi-small} and Lemma \ref{inver-2-matrix}, we obtain

\begin{align*}
\begin{pmatrix}
    S_1(r,z) \\
    -T_1(r,z)
\end{pmatrix}
&= \begin{pmatrix}
      \Uppsi^{+}_{\infty}(r,z)    &  \UUppsi^{+}_{\infty}(r,z) \\
      \partial_r   \Uppsi^{+}_{\infty}(r,z) & \partial_r   \UUppsi^{+}_{\infty}(r,z)
\end{pmatrix}^{-1} \begin{pmatrix}
    0 \\ \sigma_2
\end{pmatrix} \\
& =  \begin{pmatrix}
        (D^{+}(z))^{-t} & 0 \\ 
        0 & (D^{+}(z))^{-1}
    \end{pmatrix} \begin{pmatrix}
        0 & -I \\
        I & 0 
    \end{pmatrix} \begin{pmatrix}
      \Uppsi^{+}_{\infty}(r,z)^t    & \partial_r   \Uppsi^{+}_{\infty}(r,z)^t  \\
     \UUppsi^{+}_{\infty}(r,z)^t & \partial_r   \UUppsi^{+}_{\infty}(r,z)^t
\end{pmatrix}      \begin{pmatrix}
       0 & \sigma_3 \\
       - \sigma_3 & 0 
   \end{pmatrix} 
   \begin{pmatrix}
    0 \\ \sigma_2
\end{pmatrix} \\
& = \begin{pmatrix}
    -  (D^{+}(z))^{-t} \, \UUppsi^{+}_{\infty}(r,z)^t  \sigma_3 \sigma _2 \\ \\
   (D^{+}(z))^{-1} \, \Uppsi^{+}_{\infty}(r,z)^t \sigma_3 \sigma_2 
\end{pmatrix}
\end{align*}
Similarly, using \eqref{eq:S2T2-condi-infinity-xi-small}, \eqref{eq:S3T3-condi-infinity-xi-small} and Lemma \ref{inver-2-matrix}, we obtain 
\begin{align*}
    \begin{cases}
        S_2(r,z)&=   (D^{+}(z))^{-t} \,  \UUppsi^{+}_{\infty}(r,z)^t \sigma_3 \sigma_2 \\
    T_2(r,z)&= -  (D^{+}(z))^{-1} \,  \Uppsi^{+}_{\infty}(r,z)^t \sigma_3 \sigma_2
\end{cases}  , \qquad \begin{cases}
    S_3(r,z)&= -  (D^{+}(z))^{-t} \,  \UUppsi^{+}_{\infty}(r,z)^t \sigma_3 \sigma_2 \\
    T_3(r,z)&=   (D^{+}(z))^{-1} \,  \Uppsi^{+}_{\infty}(r,z)^t \sigma_3 \sigma_2
    \end{cases}
\end{align*}

Using the fact that $\sigma_3 \sigma_2= -i \sigma_1,$ we obtain the desired formula for $\mathcal{R}^{+}_j(r,s),$ for $j=1,2,3.$
\end{proof}

Next, we prove the existence of the solutions to \eqref{eq:iLPsi=zPsi,Im(z)>0} as defined in \eqref{eq:def-Psi+(r,z)-xi-small} and \eqref{eq:def-tilde-Psi+(r,z)-xi-small}. This is achieved  by showing that the associated integral operators, defined in \eqref{eq:def-T_1-infty-xi-small} and \eqref{eq:def-T_2-infty-xi-small} define a contraction on the following suitable function spaces.

\begin{defi}
Let $F=\begin{bmatrix}
    F_1 & F_2
\end{bmatrix}, $ where $F_i$ are the columns of $F$ and let $r_\infty$ be a fixed number:
\begin{align*}
   \left\| F \right\|_{\mathcal{B}_1} &:= \sup_{r \geq r_{\infty}} 
   |e^{-ik_1(z)r}  F_1(r)| + \sup_{r \geq r_{\infty}} |e^{-ik_2(z)r }  F_{2}(r)| \\
   \left\| F \right\|_{\mathcal{B}_2} &:= \sup_{r \geq r_{\infty}} 
   |e^{ik_1(z) r }  F_{1}(r)| + \sup_{r \geq r_{\infty}}  |e^{ik_2(z)r}  F_{2}(r)| \\
   \left\| F \right\|_{\mathcal{B}^{\prime}_1} &:= \sup_{r \geq r_{\infty}} 
   \frac{1}{|k_1(z)|}|e^{-ik_1(z)r}  F_1(r)| + \sup_{r \geq r_{\infty}} \frac{1}{|k_2(z)|}|e^{-ik_2(z)r }  F_{2}(r)| \\
   \left\| F \right\|_{\mathcal{B}^{\prime}_2} &:= \sup_{r \geq r_{\infty}} 
   \frac{1}{|k_1(z)|}|e^{ik_1(z) r }  F_{1}(r)| + \sup_{r \geq r_{\infty}}  \frac{1}{|k_2(z)|}|e^{ik_2(z)r}  F_{2}(r)| 
\end{align*} 
\end{defi}

Note that throughout the rest of the paper, we fix small constants $\delta_0\ll \varepsilon_\infty\ll1$ such that $\frac{1}{\sqrt{\delta_0} } e^{\frac{-2\varepsilon_0}{\sqrt{\delta_0}}} \ll\varepsilon_{\infty}$.

\begin{prop}
\label{prop:contraction-T-j-small-xi-infty}
Let $0<|\xi| \leq \delta_0 < 1 ,$ and $r_{\infty} :=\frac{1}{2}\log(\frac{1}{\varepsilon_{\infty}\sqrt{|\xi|}})$. 

Then $T_j$ is a contraction on $\mathcal{B}_j,$ for $j=1,2,$  i.e., $$ \| T_j(\Psi(\cdot,z)) \|_{\mathcal{B}_j} \lesssim \varepsilon_{\infty} \left\| \Psi(\cdot,z) \right\|_{\mathcal{B}_j} . $$
Moreover,  with $\Psi^+(\cdot,z)$ and $\PPsi^+(\cdot,z)$ denoting the solutions to the fixed point problems \eqref{eq:def-Psi+(r,z)-xi-small} and \eqref{eq:def-tilde-Psi+(r,z)-xi-small}, we have $$ \|  T_1(\Psi^{+}(\cdot,z)) \|_{\mathcal{B}_1} +\|  T_2(\PPsi^{+}(\cdot,z)) \|_{\mathcal{B}_2}+\| \partial_r T_1(\Psi^{+}(\cdot,z)) \|_{\mathcal{B}_1^{\prime}} +\| \partial_r T_2(\PPsi^{+}(\cdot,z)) \|_{\mathcal{B}_2^{\prime}}\lesssim \varepsilon_{\infty}. $$
\end{prop}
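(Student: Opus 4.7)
The argument is a Banach fixed point analysis for the integral equations \eqref{eq:def-Psi+(r,z)-xi-small}--\eqref{eq:def-tilde-Psi+(r,z)-xi-small}, in the same spirit as the near-origin estimates in Propositions~\ref{Prop:contraction-T_j(F_j)-xi-small} and \ref{prop:contraction-small-xi-resonance}. Using the explicit form of the Green's kernels $\mathcal{R}_i^+$ given in Claim~\ref{claim:def-S_iT_i-Greens-R_j}, I would first expand each integrand in $T_j(\Psi)$ as a linear combination of matrix products of the form $\Uppsi_{\infty}^{+}(r,z)(D^{+})^{-t}\UUppsi_{\infty}^{+}(s,z)^t\sigma_1 V(s)\Psi(s)$ for $r_\infty\le s\le r$, and $\UUppsi_{\infty}^{+}(r,z)(D^{+})^{-1}\Uppsi_{\infty}^{+}(s,z)^t\sigma_1 V(s)\Psi(s)$ for $s\ge r$. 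After carrying out the matrix multiplications explicitly, the scalar entries reduce to sums of terms proportional to $D_l(z)\,c_l(z)^{a}\,e^{i k_l(z)(s-r)}$ with $l\in\{1,2\}$ and $a\in\{0,1,2\}$.

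The key ingredients for the estimates are then four in number: (i) by Lemma~\ref{lem:k_j-behavior}, $\im(k_j(z))>0$, so that $|e^{ik_j(z)(s-r)}|\le1$ on the relevant region $s\ge r$, and symmetrically on $s\le r$; (ii) by Assumption~\ref{asmp:1}, together with $\tfrac{3}{8\sinh^2(s)}\lesssim e^{-2s}$ for $s\ge r_\infty$, one has $|V(s)|\lesssim e^{-2s}$; (iii) from Remark~\ref{K_j-C_j-D_j-behavior}, $|c_l(z)|\lesssim 1$, $|D_2^{+}(z)|\lesssim 1$, and the worst contribution $|D_1^{+}(z)|\lesssim |\xi|^{-1/2}$; (iv) for $\Psi\in\mathcal{B}_j$ one has the pointwise bound $|\Psi_l(s)|\lesssim e^{-\im(k_l(z))s}\|\Psi\|_{\mathcal{B}_j}$ directly from the definition of the norm.

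Combining these inputs, the smallness governing the contraction emerges from the identity $\tfrac{1}{2}e^{-2r_\infty}=\tfrac{1}{2}\varepsilon_{\infty}\sqrt{|\xi|}$, coming from the choice of $r_\infty$. The worst weight in the kernel, $|D_1^{+}(z)|\lesssim |\xi|^{-1/2}$, is exactly compensated by $\int_{r_\infty}^{\infty} e^{-2s}\,ds \lesssim e^{-2r_\infty} \lesssim \varepsilon_{\infty}\sqrt{|\xi|}$, yielding the final factor $\varepsilon_{\infty}$. After pulling out the prescribed exponential $e^{i k_j(z)r}$ from $T_j(\Psi)$, the remainder is bounded uniformly for $r\ge r_\infty$ by a constant multiple of $\varepsilon_{\infty}\|\Psi\|_{\mathcal{B}_j}$, delivering the claimed contraction on $\mathcal{B}_j$. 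The derivative bound on $\partial_r T_j$ will be obtained by differentiating under the integral sign: the boundary contributions from the characteristic functions cancel by virtue of the defining relations \eqref{eq:S1T1-condi-infinity-xi-small}--\eqref{eq:S3T3-condi-infinity-xi-small}, while the $r$-derivatives hitting the exponentials produce factors of $|k_j(z)|$ which are absorbed by the prefactors $|k_j(z)|^{-1}$ built into the $\mathcal{B}_j^{\prime}$ norms.

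The main technical obstacle is the need to handle two very different exponential regimes simultaneously: $k_1(z)$ is close to $0$ with $|k_1(z)|\sim\sqrt{|\xi|}$, so that the exponentials $e^{-\im(k_1(z))r}$ barely decay on the scale $r\sim |\xi|^{-1/2}$ that is relevant for $r\ge r_\infty$, while $k_2(z)\approx i\tfrac{3}{\sqrt{2}}$ provides genuine decay. One must therefore verify that the worst-case coupling of the large coefficient $|D_1^{+}(z)|\sim|\xi|^{-1/2}$ with the barely-decaying $k_1$-exponentials is still controlled by the decay of $V$ through the specific choice $e^{-2r_\infty}=\varepsilon_{\infty}\sqrt{|\xi|}$, and that the various diagonal/off-diagonal cancellations in the matrix products are tracked carefully so that no spurious growth appears.
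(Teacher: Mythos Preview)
Your proposal is correct and follows essentially the same route as the paper: expand the kernels $\mathcal{R}_i^{+}$ via Claim~\ref{claim:def-S_iT_i-Greens-R_j} into the rank-one pieces $\mathcal{M}_l^{+}(z)e^{\pm ik_l(z)r}$ (the paper's notation for the $D_l^{+}c_l^{a}$ combinations), use $|V(s)|\lesssim e^{-2s}$ together with $\im k_j(z)>0$, and observe that the worst coefficient $|D_1^{+}(z)|\simeq|\xi|^{-1/2}$ is exactly absorbed by $\int_{r_\infty}^\infty e^{-2s}\,ds\simeq e^{-2r_\infty}=\varepsilon_\infty\sqrt{|\xi|}$. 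The paper organizes this as two claims (one for $\Upsilon_1,\Upsilon_2$ feeding into $T_1$, one for $\Upsilon_1,\Upsilon_3$ feeding into $T_2$), tracking each of the four $e^{\pm ik_l(z)r}$ contributions separately, but the mechanism you describe is identical; the cross-terms where a $k_2$-exponential appears in the first column are harmless precisely because $\im k_2\approx 3/\sqrt{2}$ provides additional decay, as you note in your final paragraph.
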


\begin{proof}
We provide the proof that $T_j$ is a contraction on $\mathcal{B}_j,$ and the corresponding estimate for the derivative follows similarly by applying the same argument with the necessary conditions \eqref{eq:S1T1-condi-infinity-xi-small},\eqref{eq:S2T2-condi-infinity-xi-small} and \eqref{eq:S3T3-condi-infinity-xi-small}.

Recall that 
\begin{align*}
  T_1(r,z,\Psi^{+}(r,z))&:= \Upsilon_1(r,z,\Psi_1^{+} (r,z))  + \Upsilon_2(r,z,\Psi_{2}^{+} (r,z)) \\
  & =\int_0^{\infty} \mathcal{R}^{+}_1(r,s,z) V(s) \Psi_{1}^{+}(s,z) ds  + \int_0^{\infty} \mathcal{R}^{+}_2(r,s,z) V(s) \Psi_{2}^{+}(s,z) ds ,     \\
  T_2(r,z,\PPsi^{+}(r,z))&:= \Upsilon_1(r,z,\PPsi_{1}^{+} (r,z))  + \Upsilon_3(r,z,\PPsi_{2}^{+} (r,z)) \\
  & =\int_0^{\infty} \mathcal{R}^{+}_1(r,s,z) V(s) \PPsi_{1}^{+}(s,z) ds  + \int_0^{\infty} \mathcal{R}^{+}_3(r,s,z) V(s) \PPsi_{2}^{+}(s,z) ds  .
\end{align*}

Therefore, we limit our attention to deriving  the estimates for $\Upsilon_j^{+}$, since the corresponding bounds for $T_j$  can be deduced directly from them. 

\begin{claim}
    \label{claim:upsilon_1-2-xi-small-infty}
\begin{align}
\label{eq:Upsilon_1-estimate-small-xi}
\sup_{r \geq r_{\infty}}  |e^{- ik_1(z) r} \Upsilon_1(r,z,\Psi^{+}_{1} (r,z)) |  \lesssim \frac{1}{\sqrt{\xi}}  e^{- \, 2 r_\infty} \sup_{r \geq r_{\infty}} | e^{ - i k_1(z) r} \Psi^{+}_{1}(r,z)  |    \\
\label{eq:Upsilon_2-estimate-small-xi}
\sup_{r \geq r_{\infty}}  |e^{- ik_2(z) r} \Upsilon_2(r,z,\Psi^{+}_2(r,z))  |  \lesssim \frac{1}{\sqrt{\xi}}  e^{- 2 r_\infty} \sup_{r \geq r_{\infty}} | e^{ - i k_2(z) r}  \Psi^{+}_{2}(r,z)  | 
\end{align}
\end{claim}
\begin{proof}

First, we estimate  $ \Upsilon_1(r,z,\Psi^{+}_{1} (r,z)).$ We have
\begin{align*}
    \Upsilon_1(r,z,\Psi^{+}_{1} (r,z))&:= \int_0^{\infty} \mathcal{R}^{+}_1(r,s,z) V(s) \Psi^{+}_{1}(s,z) ds   \\
    & =\int_{r_{\infty}}^{r} i    \Uppsi^{+}_{\infty}(r,z)  (D^{+}(z))^{-t} \,  \UUppsi^{+}_{\infty}(s,z)^t \sigma_1  V(s) \Psi^{+}_{1}(s,z) ds \\
    & + \int_r^{\infty}   i  \UUppsi^{+}_{\infty}(r,z)  (D^{+}(z))^{-1} \,  \Uppsi^{+}_{\infty}(s,z)^t \sigma_1  V(s) \Psi^{+}_{1}(s,z) ds  \\
& =  i \int_{r_{\infty}}^{r}   \bigg(  D_1^{+}(z) \Uppsi^{+}_{(\infty,1)}(r,z) \Uppsi^{+}_{(\infty,3)}(s,z)^{t} 
 + D_2^{+}(z) \Uppsi^{+}_{(\infty,2)}(r,z) \Uppsi^{+}_{(\infty,4)}(s,z)^{t}  \bigg) \\
 &  \times \sigma_1   V(s) \Psi^{+}_{1}(s,z) ds +  i \int_{r}^{\infty}  \bigg(  D_1^{+}(z) \Uppsi^{+}_{(\infty,3)}(r,z) \Uppsi^{+}_{(\infty,1)}(s,z)^{t} \\
 & + D_2^{+}(z) \Uppsi^{+}_{(\infty,4)}(r,z) \Uppsi^{+}_{(\infty,2)}(s,z)^{t}  \bigg)  \times \sigma_1   V(s) \Psi^{+}_{1}(s,z) ds \\
& =  i  \mathcal{M}^{+}_{1}(z) e^{i k_1(z) r } \int_{r_\infty}^r   e^{-i k_1(z) s }    V(s) \Psi^{+}_{1}(s,z) ds  \\
&+  i  \mathcal{M}^{+}_{2} (z) e^{i k_2(z) r} \int_{r_\infty}^{r} e^{-i k_2(z) s}  V(s) \Psi^{+}_{1}(s,z) ds   \\
&+     i  \mathcal{M}^{+}_{1}(z) e^{-i k_1(z) r }\int_{r}^{\infty}  e^{i k_1(z) s }    V(s) \Psi^{+}_{1}(s,z) ds  \\
&+  i  \mathcal{M}^{+}_{2}(z)  e^{-i k_2(z) r} \int_{r}^{\infty} e^{i k_2(z) s}  V(s) \Psi^{+}_{1}(s,z) ds
\end{align*}

Where \begin{align*}
 \mathcal{M}^{+}_{1}(z):& =   \begin{pmatrix} 
 D_1^{+}(z)  c_1(z)   & 
 D_1^{+}(z)        \\
     D_1^{+}(z)  (c_1(z))^2    &   D_1^{+}(z)    c_1(z) 
\end{pmatrix} 
\qquad \text{and} \qquad 
\mathcal{M}^{+}_{2}(z)&:= \begin{pmatrix}
      D_2^{+}(z) c_2(z)  &     D_2^{+}(z)  \\ 
    D_2^{+}(z) (c_2(z))^2     &      D_2^{+}(z)  c_2(z)   
\end{pmatrix}   
\end{align*}

By remark \ref{K_j-C_j-D_j-behavior}, one can see that in both scenarios for small $\xi$ we have  $|\mathcal{M}^{+}_{1}(z) |=  O(\frac{1}{\sqrt{|\xi|}})$ and $|\mathcal{M}^{+}_{2}(z)|=O(1).$ Recall that, $V(s)=O(e^{-2s}) .$ Thus, we have 

\begin{align}
\begin{split}
\label{upsilon_1-of-psi_1}
\Upsilon_1(r,z,\Psi^{+}_{1} (r,z))&=i  \mathcal{M}^{+}_{1}(z) e^{i k_1(z) r } \int_{r_\infty}^r   e^{-i k_1(z) s }    O(e^{-2s}) \Psi^{+}_{1}(s,z) ds \\
&+     i  \mathcal{M}^{+}_{1}(z) e^{-i k_1(z) r }\int_{r}^{\infty}  e^{i k_1(z) s }     O(e^{-2s}) \Psi^{+}_{1}(s,z) ds \\
 & +  i  \mathcal{M}^{+}_{2} (z) e^{i k_2(z) r} \int_{r_\infty}^{r} e^{-i k_2(z) s}   O(e^{-2s}) \Psi^{+}_{1}(s,z) ds  \\
 &+  i  \mathcal{M}^{+}_{2}(z)  e^{-i k_2(z) r} \int_{r}^{\infty} e^{i k_2(z) s}   O(e^{-2s}) \Psi^{+}_{1}(s,z) ds 
\end{split}
\end{align}
 
 \begin{align*}
 \Upsilon_1(r,z,\Psi^{+}_{1} (r,z))     & \lesssim  \mathcal{M}^{+}_{1}(z) e^{i k_1(z) r }  e^{-2 r_{\infty}} \sup_{r\geq r_{\infty}} | e^{ -  i k_1(z) r }  \Psi^{+}_{1}(r,z) |  \\ 
& +  \mathcal{M}^{+}_{1}(z) e^{i k_1(z) r }   e^{-2 r} \sup_{r\geq r_{\infty}} | e^{ -  i k_1(z) r }  \Psi^{+}_{1}(r,z) |  \\ 
& + \mathcal{M}^{+}_{2} (z) e^{i k_1(z) r }   e^{-2 r} \sup_{r\geq r_{\infty}} | e^{ -  i k_1(z) r }  \Psi^{+}_{1}(r,z) |  \\ 
& + \mathcal{M}^{+}_{2} (z) e^{i k_1(z) r }   e^{-2 r} \sup_{r\geq r_{\infty}} | e^{ -  i k_1(z) r }  \Psi^{+}_{1}(r,z) |   
\end{align*}

which yields
\begin{align*}
\sup_{r \geq r_{\infty}}  |e^{- ik_1(z) r} \Upsilon_1(r,z,\Psi^{+}_{1} (r,z)) |  \lesssim \frac{1}{\sqrt{|\xi|}}  e^{- \, 2 r_\infty} \sup_{r \geq r_{\infty}} | e^{ - i k_1(z) r} \Psi^{+}_{1}(r,z)  | 
\end{align*}

Next, we estimate $\Upsilon_2(r,z,\Psi^{+}_2(r,z)).$ Similarly, to the estimate for $\Upsilon_1(r,z,\Psi^{+}_1(r,z))$ we have 
\begin{align*}
 \Upsilon_2(r,z,\Psi^{+}_2(r,z)) &:= \int_0^{\infty} \mathcal{R}^{+}_2(r,s,z) V(s) \Psi_{2}^{+}(s,z) ds   \\ 
 & =\int_{r}^{\infty} - i    \Uppsi^{+}_{\infty}(r,z)  (D^{+}(z))^{-t} \,  \UUppsi^{+}_{\infty}(s,z)^t \sigma_1  V(s) \Psi_{2}^{+}(s,z) ds \\
    & + \int_r^{\infty}   i  \UUppsi^{+}_{\infty}(r,z)  (D^{+}(z))^{-1} \,  \Uppsi^{+}_{\infty}(s,z)^t \sigma_1  V(s) \Psi_2^{+}(s,z) ds  \\
&=   - i  \mathcal{M}^{+}_{1}(z) e^{i k_1(z) r } \int_{r}^{\infty}    e^{-i k_1(z) s }    V(s) \Psi^{+}_{2}(s,z) ds   \\
& - i  \mathcal{M}^{+}_{2} (z) e^{i k_2(z) r} \int_{r}^{\infty}  e^{-i k_2(z) s}  V(s) \Psi^{+}_{2}(s,z) ds   \\
&+     i  \mathcal{M}^{+}_{1}(z) e^{-i k_1(z) r } \int_{r}^{\infty}  e^{i k_1(z) s }    V(s) \Psi^{+}_{2}(s,z) ds \\
& +  i  \mathcal{M}^{+}_{2}(z)  e^{-i k_2(z) r} \int_{r}^{\infty} e^{i k_2(z) s}  V(s) \Psi^{+}_{2}(s,z) ds \\
&= - i   \mathcal{M}^{+}_{1}(z) e^{i k_1(z) r }\int_{r}^{\infty}    e^{-i k_1(z) s }    O(e^{-2s}) \Psi^{+}_{2}(s,z) ds \\
&+     i  \mathcal{M}^{+}_{1}(z) e^{-i k_1(z) r }\int_{r}^{\infty}  e^{i k_1(z) s }     O(e^{-2s}) \Psi^{+}_{2}(s,z) ds \\
 & - i   \mathcal{M}^{+}_{2} (z) e^{i k_2(z) r} \int_{r}^{\infty}  e^{-i k_2(z) s}   O(e^{-2s}) \Psi^{+}_{2}(s,z) ds  \\
 &+  i  \mathcal{M}^{+}_{2}(z)  e^{-i k_2(z) r} \int_{r}^{\infty} e^{i k_2(z) s}   O(e^{-2s}) \Psi^{+}_{2}(s,z) ds \\
\end{align*}
Thus, 
\begin{align}
\label{upsilon_2-of-psi_2}
\begin{split}
|\Upsilon_2(r,z,\Psi^{+}_2(r,z))    & \lesssim  \mathcal{M}^{+}_{1}(z) e^{i k_2(z) r }  e^{-2 r} \sup_{r\geq r_{\infty}} | e^{ -  i k_2(z) r }  \Psi^{+}_{2} (r,z) |  \\ 
& +  \mathcal{M}^{+}_{1}(z)  e^{i k_2(z) r }  e^{-2 r} \sup_{r\geq r_{\infty}} | e^{ -  i k_2(z) r }  \Psi^{+}_{2}(r,z) |  \\ 
& + \mathcal{M}^{+}_{2}(z)  e^{i k_2(z) r }   e^{-2 r} \sup_{r\geq r_{\infty}} | e^{ -  i k_2(z) r } \Psi^{+}_{2}(r,z) |  \\ 
& + \mathcal{M}^{+}_{2}(z)  e^{i k_2(z) r }   e^{-2 r} \sup_{r\geq r_{\infty}} | e^{ -  i k_2(z) r }  \Psi^{+}_{2}(r,z) |
\end{split}
\end{align}

which yields\begin{align*}
\sup_{r \geq r_{\infty}}  |e^{- ik_2(z) r} \Upsilon_2(r,z,\Psi^{+}_2(r,z))  |  \lesssim \frac{1}{\sqrt{|\xi|}}  e^{- 2 r_\infty} \sup_{r \geq r_{\infty}} | e^{ - i k_2(z) r}  \Psi^{+}_{2}(r,z)  |
\end{align*}
\end{proof}
\begin{claim}
\label{claim:upsilon_1-3-xi-small-infty}
 \begin{align}
 \sup_{r \geq r_{\infty}}  |e^{ ik_1(z) r} \Upsilon_1(r,z,\Psi^{+}_{3} (r,z)) | & \lesssim \frac{1}{\sqrt{|\xi|}}  e^{- \, 2 r_\infty} \sup_{r \geq r_{\infty}} | e^{  i k_1(z) r} \Psi^{+}_{3}(r,z)  |  \\
     \sup_{r \geq r_{\infty}}  |e^{ ik_2(z) r} \Upsilon_3(r,z,\Psi^{+}_{4}(r,z))|  &\lesssim \frac{1}{\sqrt{|\xi|}}  e^{- \, 2 r_\infty} \sup_{r \geq r_{\infty}} | e^{  i k_2(z) r} \Psi^{+}_{4}(r,z)  | 
 \end{align}
\end{claim}

\begin{proof}
Next we estimate $\Upsilon_1(r,z,\Psi^{+}_3 (r,z))$ the first column of $T_2(r,z,\PPsi^{+} (r,z)).$ Similarly to the estimate for $\Upsilon_1(r,z,\PPsi^{+}_1 (r,z))$, we have
\begin{align*}
\Upsilon_1(r,z,\Psi^{+}_{3} (r,z))&=\int_0^{\infty} \mathcal{R}^{+}_1(r,s,z) V(s) \Psi^{+}_{3}(s,z) ds   \\
    &=i  \mathcal{M}^{+}_{1}(z) e^{i k_1(z) r } \int_{r_\infty}^r   e^{-i k_1(z) s }    O(e^{-2s}) \Psi^{+}_{3}(s,z) ds \\
&+     i  \mathcal{M}^{+}_{1}(z) e^{-i k_1(z) r }\int_{r}^{\infty}  e^{i k_1(z) s }     O(e^{-2s}) \Psi^{+}_{3}(s,z) ds \\
 & +  i  \mathcal{M}^{+}_{2} (z) e^{i k_2(z) r} \int_{r_\infty}^{r} e^{-i k_2(z) s}   O(e^{-2s}) \Psi^{+}_{3}(s,z) ds  \\
 &+  i  \mathcal{M}^{+}_{2}(z)  e^{-i k_2(z) r} \int_{r}^{\infty} e^{i k_2(z) s}   O(e^{-2s}) \Psi^{+}_{3}(s,z) ds \\
 & \lesssim  \mathcal{M}^{+}_{1}(z) e^{i k_1(z) r }  e^{-2 r_{\infty} } e^{-2 i k_1(z) r_{\infty} } \sup_{r\geq r_{\infty}} | e^{   i k_1(z) r }  \Psi^{+}_{3}(r,z) |  \\ 
& +  \mathcal{M}^{+}_{1}(z) e^{ - i k_1(z) r }   e^{-2 r} \sup_{r\geq r_{\infty}} | e^{   i k_1(z) r }  \Psi^{+}_{3}(r,z) |  \\ 
& + \mathcal{M}^{+}_{2} (z) e^{-i k_1(z) r }   e^{-2 r} \sup_{r\geq r_{\infty}} | e^{   i k_1(z) r }  \Psi^{+}_{3}(r,z) |  \\ 
& + \mathcal{M}^{+}_{2} (z) e^{-i k_1(z) r }   e^{-2 r} \sup_{r\geq r_{\infty}} | e^{   i k_1(z) r }  \Psi^{+}_{3}(r,z) |
\end{align*}
which yields, 
\begin{align*}
e^{ i k_1(z) r} \Upsilon_1(r,z,\Psi^{+}_{3} (r,z)) &\lesssim    \frac{1}{\sqrt{|\xi|}} e^{2 i k_1(z) (r- r_{\infty}) } e^{-2 r_{\infty}}  \sup_{r\geq r_{\infty}} | e^{   i k_1(z) r }  \Psi^{+}_{3}(r,z) |  \\ 
& +  \frac{1}{\sqrt{|\xi|}}  e^{-2 r_{\infty}} \sup_{r\geq r_{\infty}} | e^{  i k_1(z) r }  \Psi^{+}_{3}(r,z) |  \\ 
& +      e^{-2 r_{\infty}} \sup_{r\geq r_{\infty}} | e^{   i k_1(z) r }  \Psi^{+}_{3}(r,z) |  \\ 
& +   e^{-2 r_{\infty}} \sup_{r\geq r_{\infty}} | e^{   i k_1(z) r }  \Psi^{+}_{3}(r,z) |
\end{align*}

Recall that $\im(k_1(z))>0,$ for $z \in \Omega,$ with $\im(z)>0.$ Then 

\begin{align*}
\sup_{r \geq r_{\infty}}  |e^{ ik_1(z) r} \Upsilon_1(r,z,\Psi^{+}_{3} (r,z)) |  \lesssim \frac{1}{\sqrt{|\xi|}}  e^{- \, 2 r_\infty} \sup_{r \geq r_{\infty}} | e^{  i k_1(z) r} \Psi^{+}_{3}(r,z)  | 
\end{align*}

Next, we estimate $\Upsilon_3(r,z,\Psi^{+}_{4} (r,z)),$ the second column of $T_2(r,z,\PPsi^{+} (r,z)).$ We have 
\begin{align*}
\Upsilon_3(r,z,\Psi^{+}_{4} (r,z))&:= \int_0^{\infty} \mathcal{R}^{+}_3(r,s,z) V(s) \Psi^{+}_{4}(s,z) ds   \\
    & =\int_{r_{\infty}}^{r} i    \Uppsi^{+}_{\infty}(r,z)  (D^{+}(z))^{-t} \,  \UUppsi^{+}_{\infty}(s,z)^t \sigma_1  V(s) \Psi^{+}_{4}(s,z) ds \\
    & - \int_{r_{\infty}}^{r}  i  \UUppsi^{+}_{\infty}(r,z)  (D^{+}(z))^{-1} \,  \Uppsi^{+}_{\infty}(s,z)^t \sigma_1  V(s) \Psi^{+}_{4}(s,z) ds  \\
    &= i  \mathcal{M}^{+}_{1}(z) e^{i k_1(z) r } \int_{r_\infty}^r   e^{-i k_1(z) s }    O(e^{-2s}) \Psi^{+}_{4}(s,z) ds \\
&-     i  \mathcal{M}^{+}_{1}(z) e^{-i k_1(z) r } \int_{r_{\infty}}^{r}   e^{i k_1(z) s }     O(e^{-2s}) \Psi^{+}_{4}(s,z) ds \\
 & +  i  \mathcal{M}^{+}_{2} (z) e^{i k_2(z) r}  \int_{r_\infty}^{r} e^{-i k_2(z) s}   O(e^{-2s}) \Psi^{+}_{4}(s,z) ds  \\
 &-  i  \mathcal{M}^{+}_{2}(z)  e^{-i k_2(z) r} \int_{r_{\infty}}^{r} e^{i k_2(z) s}   O(e^{-2s}) \Psi^{+}_{4}(s,z) ds \\
 & \lesssim  \mathcal{M}^{+}_{1}(z) e^{ - i k_2(z) r }  e^{-2 r }  \sup_{r\geq r_{\infty}} | e^{   i k_2(z) r }  \Psi^{+}_{4}(r,z) |  \\ 
& +  \mathcal{M}^{+}_{1}(z) e^{ - i k_2(z) r }   e^{-2 r} \sup_{r\geq r_{\infty}} | e^{   i k_2(z) r }  \Psi^{+}_{4}(r,z) |  \\ 
& + \mathcal{M}^{+}_{2} (z) e^{-i k_2(z) r }   e^{-2 r} \sup_{r\geq r_{\infty}} | e^{   i k_2(z) r }  \Psi^{+}_{4}(r,z) |  \\ 
& + \mathcal{M}^{+}_{2} (z) e^{-i k_2(z) r }   e^{-2 r} \sup_{r\geq r_{\infty}} | e^{   i k_2(z) r }  \Psi^{+}_{4}(r,z) |
\end{align*}

which yields, 

\begin{align*}
\sup_{r \geq r_{\infty}}  |e^{ ik_2(z) r} \Upsilon_3(r,z,\Psi^{+}_{4}(r,z))|  \lesssim \frac{1}{\sqrt{|\xi|}}  e^{- \, 2 r_\infty} \sup_{r \geq r_{\infty}} | e^{  i k_2(z) r} \Psi^{+}_{4}(r,z)  | 
\end{align*}

\end{proof}

 Using Claim \ref{claim:upsilon_1-2-xi-small-infty}, we deduce that 
 \begin{align*}
  \left\| T_1(\cdot, z, \Psi^{+}(\cdot,z))  \right\|_{\mathcal{B}_1} \lesssim \frac{1}{\sqrt{|\xi|}} e^{- 2 r_\infty}   \left\|\Psi^{+} (\cdot, z)  \right\|_{\mathcal{B}_1}  . 
\end{align*}
Let $\varepsilon_{\infty}:= \frac{1}{\sqrt{|\xi|}} e^{- 2 r_\infty}. $
 Therefore, $T_1$ is a contraction on $\mathcal{B}_1.$
 Similarly, by Claim  \ref{claim:upsilon_1-3-xi-small-infty}, we deduce that 
 \begin{align*}
  \left\| T_2(\cdot, z, \PPsi^{+}(\cdot,z))  \right\|_{\mathcal{B}_2} \lesssim \varepsilon_{\infty}  \left\|\PPsi^{+} (\cdot, z)  \right\|_{\mathcal{B}_2}  . 
\end{align*}
 and $T_2$ is a contraction on $\mathcal{B}_2.$ The estimates for the derivatives can be obtained using the similar argument with conditions \eqref{eq:S1T1-condi-infinity-xi-small},\eqref{eq:S2T2-condi-infinity-xi-small} and \eqref{eq:S3T3-condi-infinity-xi-small}.
 This concludes the proof of Proposition \ref{prop:contraction-T-j-small-xi-infty}.
\end{proof}

\subsection{Construction of $\Psi^{-}(\cdot,z)$ for small $|\xi|$} The construction of $\Psi^{-}(r,z) \in L^2((1,\infty))$ when $\im(z)<0,$ is very similar to one for $\Psi^{+}(r,z)$ and can be obtained directly from $\Psi^{+}(r,z)$ by symmetry.  Recall that, for $z \in \Omega$ with $\im(z)<0,$ we have 
\begin{align*}
    \Uppsi^{-}_{\infty}(r,z)  =\begin{pmatrix}
      e^{i k_1(z) r } &     e^{i k_2(z) r } \\ \\
       c_1(z) e^{i k_1(z) r } & c_2(z)  e^{i k_2(z) r } 
    \end{pmatrix} ,  \quad 
   \UUppsi^{-}_{\infty}(r,z) = \begin{pmatrix}
      e^{ -i k_1(z) r } &     e^{-i k_2(z) r } \\ \\
       c_1(z)  e^{-i k_1(z) r } & c_2(z) e^{-i k_2(z) r } 
    \end{pmatrix}  
\end{align*}

We seek to construct a fundamental matrix solutions $\Psi^{-}(\cdot,z)\in L^2(1,\infty)$ and $\widetilde{\Psi}^{-}(\cdot, z) \notin L^2(1,\infty) $ to 
\begin{equation}
\label{eq:iLPsi=zPsi,Im(z)<0}
   i \mathcal{L} \Psi^{-}(\cdot, z) = z  \Psi^{-}(\cdot, z) ,  \; \text{ for } z \in \Omega \; \text{ and } \; \im(z)<0
\end{equation}

By Lemma \ref{lem:k_j-behavior} we have $k_1(z)=k_1(-z),$ $k_2(z)=k_2(-z),$ for $\im(z)<0,$ and $c_1(-z)=-c_1(z)$. Thus, we define 
\begin{align}
\label{eq:def-upPsi-minus-small-xi}
    \Uppsi^{-}_{\infty}(r,z)=\sigma_3  \Uppsi^{+}_{\infty}(r,-z) \qquad \text{and} \qquad  \UUppsi^{-}_{\infty}(r,z)=\sigma_3  \UUppsi^{+}_{\infty}(r,-z)  .
\end{align}

Note that $ \Uppsi^{-}_{\infty}$ and $\UUppsi^{-}_{\infty}$ satisfy the equation  $i \mathcal{L}_{\infty}   \Uppsi  (r,z)  = z \Uppsi(r,z) .$  Therefore, we define two solutions to $i \mathcal{L} \Psi(r,z)  = z \Psi(r,z) $ for $z \in \Omega$ with $\im(z)<0$ by,
\begin{align}
\label{eq:def-Psi-minus-small-xi}
    \Psi^{-}(r,z):=\sigma_3  \Psi^{+}(r,-z) \qquad \text{and} \qquad \PPsi^{-}(r,z):=\sigma_3  \PPsi^{+}(r,-z)
\end{align}

Thus it follows from Proposition~\ref{prop:contraction-T-j-small-xi-infty} that $$ \|  T_1(\Psi^{-}(\cdot,z)) \|_{\mathcal{B}_1} +\|  T_2(\PPsi^{-}(\cdot,z)) \|_{\mathcal{B}_2}+\| \partial_r T_1(\Psi^{-}(\cdot,z)) \|_{\mathcal{B}_1^{\prime}} +\| \partial_r T_2(\PPsi^{-}(\cdot,z)) \|_{\mathcal{B}_2^{\prime}}\lesssim \varepsilon_{\infty}. $$

\section{The $L^2$ solution near $\infty$ for large $|\xi|$}
\label{sec:nearinfty-large-xi}
In this section, we establish the existence of the Weyl-Titchmarsh matrix solutions for large $|\xi|,$ (i.e., large real part and small imaginary part) to $$
    i \mathcal{L} \Psi^{\pm}(r,z)= z \Psi^{\pm}(r,z),$$
    for $ z= \pm \frac{\sqrt{17}}{8}+\xi \in \Omega$ with  $1<\Lambda_0<|\xi|$, and $\Lambda_0\gg1$, equivalently, 
    
\begin{equation}
\label{eq:iL_E= V_E}
   i (\mathcal{L}_{E}-z ) \Psi(r, z) = V_{E}(r) \Psi(r, z)
\end{equation}
where $\Psi^{+}(r,z) \in L^2((1,\infty)),$ when $\im(z)>0,$ and $\Psi^{-}(r,z) \in L^2((1,\infty)),$ when $\im(z)<0,$ 
\begin{align*}
   \mathcal{L}_{E}:= \begin{pmatrix}
        0 &  L_1^{E}\\
        -L_2^{E} & 0 
    \end{pmatrix}, \quad L_1^{E}:=\frac{1}{2} L_0^E + \frac{1}{8},
    \quad
    L_2^E:=  \frac{1}{2}  L_0^E + \frac{17}{8 }, \quad 
    L_0^E:= - \partial_r^2+ \frac{3}{4r^2} ,
\end{align*} 
and
\begin{align}
\label{eq:defV_E}
    V_{E}(r) := \begin{pmatrix}
        0 & \frac{3}{8} ( \frac{1}{r^2} - \frac{1}{ \sh(r)^2}) + V_1(r) \\
        - ( \frac{3}{8} ( \frac{1}{r^2} - \frac{1}{ \sh(r)^2}) + V_2(r)) & 0 
    \end{pmatrix}.
\end{align}
The key observation is that our analysis relies on the fact that, in the large energy regime, the Euclidean operator $\mathcal{L}_E$ provides a good approximation to the operator $\mathcal{L}$ on $\mathbb{H}^2$.\\

We seek to construct solutions $\Psi^{\pm}(r,z)$ so that it behaves asymptotically as $r \to \infty$ like the solutions $\Uppsi^{\pm}_{E}(r,z) \in L^2((1,\infty))$ to 
\begin{align*}
    i \mathcal{L}_{E} \Uppsi^{\pm}_{E}(r,z) = z \Uppsi^{\pm}_{E}(r,z).
\end{align*}

\subsection{Bessel functions and Stokes phenomenon}
The aim of this subsection is to recall key properties of the Jost solutions associated with the scalar Bessel operator  $L_0^E := -\partial_r^2 + \frac{3}{4r^2}.$  We consider the differential equation 
\begin{align}
    \label{eq:ode-bessel}
   - h^{\prime \prime}(z) + \frac{3}{4 z^2} h(z) = h(z)
\end{align}
whose fundamental system consists of the modified Hankel functions of the second kind: \\ $\{ \sqrt{z} H^{(1)}_{1}(z),  \sqrt{z} H^{(1)}_{2}(z) \}$ as a fundamental system. Our objective is to establish uniform asymptotic estimates for these functions in the closed upper half-plane.

\begin{lemma} 
\label{lem:h-properties} 
    The  equation 
   \begin{equation}
   \label{eq:ODE-Bessel-2}
  - h^{\prime \prime}(z) + \frac{3}{4 z^2} h(z) = h(z)
   \end{equation}
    admits a holomorphic  fundamental system $\{ h_+(z), h_-(z)\}$ in the upper half-plane $\im (z)>0$. The recessive branch $h_+(z)$ satisfies
    \begin{equation}
        \begin{aligned}
        \label{eq:H0rec}
    h_+ (z) &\sim e^{ iz} \text{\ as\ }|z|\to\infty\\
    \big| e^{-iz} h_+ (z)\big| &\lesssim 1, \quad \big| {z^2} (e^{- iz} h_+ (z))'\big| &\lesssim 1, \quad \big| {z^3} (e^{- iz} h_+ (z))''\big| \lesssim 1,
    \end{aligned}
    \end{equation}
    uniformly in $z\in\Omega_+:=\{ z\in\C \::\: \im z\ge0,\; |z|\ge1\}$.
   The dominant branch satisfies  
 \begin{equation}
        \begin{aligned}
        \label{eq:H0dom}
    h_- (z) &\sim e^{ -iz} \text{\ as\ }|z|\to\infty\\
    \big| e^{iz} h_- (z)\big| &\le C,\quad |e^{iz}h_{-}'(z)|\le C, \quad \big| {z^2} (e^{ iz} h_- (z))'\big| &\le C_\delta
    \end{aligned}
    \end{equation}
    uniformly in 
    \[ z\in\Omega_{+,\delta}:=\{ z \in \C \::\: \im (z)\ge0,\; |z|\ge1,\; \im( z) \geq -\delta \re (z) \}\]
    where $\delta\in(0,1)$ is arbitrary but fixed. Here $C$ is an absolute constant, while $C_\delta\to\infty$ as $\delta\to 0$. In fact, 
   \begin{equation}
       \label{eq:stokes}
       h_-(x) \sim e^{-ix} + 2i e^{ix}\text{\ \ as\ \ } x\to-\infty
   \end{equation}
     An analogous statement holds in the lower half-plane.  
    \end{lemma}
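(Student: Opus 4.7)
The plan is to reduce \eqref{eq:ODE-Bessel-2} to Bessel's equation of order $\nu=1$ via the substitution $h(z)=\sqrt{z}\,u(z)$ (with the principal branch of $\sqrt{\cdot}$), which transforms it into $z^2 u''+z u'+(z^2-1)u=0$. The natural holomorphic fundamental system in the upper half-plane is then
$$h_+(z) := \sqrt{\tfrac{\pi z}{2}}\, e^{i3\pi/4}\, H_1^{(1)}(z),\qquad h_-(z) := \sqrt{\tfrac{\pi z}{2}}\, e^{-i3\pi/4}\, H_1^{(2)}(z),$$
with the standard Hankel functions $H_1^{(1,2)}$. The normalizations $e^{\pm i3\pi/4}$ are chosen so that $h_\pm(z)\sim e^{\pm iz}$ as $z\to+\infty$ on the positive real axis, by comparison with the real-axis asymptotic $\sqrt{z}\,H_1^{(1,2)}(z)\sim\sqrt{2/\pi}\,e^{\pm i(z-3\pi/4)}$.

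Next, I would deduce the uniform bounds \eqref{eq:H0rec} and \eqref{eq:H0dom} directly from the classical asymptotic expansions of Hankel functions. Specifically, the expansion $H_1^{(1)}(z)\sim\sqrt{2/(\pi z)}\,e^{i(z-3\pi/4)}\sum_{k\ge 0} i^k a_k/z^k$ is uniform in $-\pi+\delta\le\arg z\le 2\pi-\delta$, and the analogous expansion for $H_1^{(2)}$ is uniform in $-2\pi+\delta\le\arg z\le\pi-\delta$. Since the $H^{(1)}$ expansion already covers the entire closed upper half-plane, this yields \eqref{eq:H0rec} on $\Omega_+$ with absolute constants. For $h_-$, the $H^{(2)}$ expansion only applies up to $\arg z\le\pi-\delta$, which is precisely the region $\Omega_{+,\delta}$; the blowup of $C_\delta$ as $\delta\to 0$ reflects the approaching Stokes line at $\arg z=\pi$. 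The derivative bounds I would extract either by termwise differentiation of the asymptotic expansion—the cancellation between the derivative of $e^{\pm iz}$ and the leading Hankel asymptotic producing the additional $|z|^{-2}$ factor—or equivalently by rewriting $h''$ via the ODE once $h$ and $h'$ are controlled, and then bootstrapping.

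The Stokes formula \eqref{eq:stokes} I would prove using the Hankel continuation identity, which at $\nu=1$ reads $H_1^{(2)}(z e^{i\pi})=-2H_1^{(2)}(z)-H_1^{(1)}(z)$ (this is the $\nu\to 1$ limit of the standard continuation formula, where the apparent $1/\sin(\nu\pi)$ singularity cancels). For $x<0$, approaching from the upper half-plane gives $\sqrt{x+i0}=i\sqrt{|x|}$ and $x+i0=|x|e^{i\pi}$, so
$$h_-(x)=c_-\cdot i\sqrt{|x|}\,\bigl(-2H_1^{(2)}(|x|)-H_1^{(1)}(|x|)\bigr),\qquad c_-=\sqrt{\tfrac{\pi}{2}}\,e^{-i3\pi/4},$$
and substituting the positive-real-axis asymptotics of $H_1^{(1,2)}(|x|)$ and simplifying the phases produces both the recessive $e^{-ix}$ contribution carried through by $H^{(2)}$ and a Stokes multiplier times the dominant $e^{ix}$, yielding \eqref{eq:stokes}. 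The analogous statement in the lower half-plane follows by the reflection $h(z)\leftrightarrow\overline{h(\bar z)}$, which interchanges $H^{(1)}\leftrightarrow H^{(2)}$ and swaps the roles of $h_\pm$. The main technical pitfall, and the place where the argument must be done most carefully, is the consistent bookkeeping of branch cuts for $\sqrt{z}$ and the Hankel functions: a misidentified Stokes multiplier or a stray $e^{\pm i3\pi/4}$ phase will alter the coefficient of the $e^{ix}$ term, but there are no analytic difficulties beyond classical special-function theory.
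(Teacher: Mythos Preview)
Your approach is correct and takes a genuinely different route from the paper's. You identify $h_\pm$ directly with (scaled) Hankel functions $H_1^{(1,2)}$ and then invoke the classical sector-uniform asymptotic expansions together with the Hankel continuation formula; the paper instead constructs $h_-$ from scratch via a Laplace-transform integral representation
\[
h_-(z) = C_0\, e^{-iz} z^{3/2} \int_0^\infty e^{-pz}\sqrt{p(p+2i)}\,dp,
\]
obtained by substituting $h_-=e^{-iz}z^{3/2}g$ and solving a first-order ODE for the Laplace symbol $F(p)$. It then analytically continues by rotating the $p$-contour and extracts the Stokes term \eqref{eq:stokes} by deforming the contour past the branch point at $p=-2i$ and picking up a Hankel-contour contribution, referring to \cite{LSS25} for the detailed estimates. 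The paper's method is self-contained and yields an explicit integral formula amenable to direct manipulation; your method is shorter and leverages well-established special-function theory, which is entirely adequate here since the ODE is exactly Bessel's equation of order one. The paper does remark after the proof that $h_\pm$ coincide with Hankel functions up to normalization, so the authors are aware of your shortcut but chose the integral-representation argument for consistency with \cite{LSS25}. Your caveat about phase bookkeeping is apt: the precise Stokes multiplier (and even the normalization constants in the Hankel identification) are sensitive to branch conventions, but none of the later applications in the paper depend on the exact constant in \eqref{eq:stokes}.
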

\begin{proof}
First, we consider a solution in the form $h_{-}(z):=e^{-i z } z^{\frac{3}{2}} g(z).$ Plugging this onto the equation \eqref{eq:ode-bessel}, we have 
\begin{align}
\label{eq:f}
 g^{\prime \prime}(z)+ (\frac{3}{z}-2i) g^{\prime} - \frac{3i}{z} g  =0  
\end{align}
To solve this ODE in the right half-plane $ \re( z) > 0$, we employ the Laplace transform \begin{align*}
    g(z)= \int_0^{\infty} F(p) e^{- p z} dp.
\end{align*}
Substituting this into \eqref{eq:f} and using integration by parts, one can see that $F$ satisfies 
\begin{align*}
  p(p+2i) F(p)  -3 \int_0^p F(q) q dq - 3i \int_0^p F(q) dq=0  
\end{align*}
Differentiating, we obtain a first-order ODE for $F$, whose solution is given by 
\begin{align*}
    F(p)=C_0 \sqrt{p(p+2i)},
\end{align*}
for some complex $C_0 \neq 0.$ Therefore, we obtain a dominant branch $h_{-}(z)$ in $\re(z)>0$ of the form 
\begin{align*}
    h_{-}(z)= C_0 e^{- i z} z^{\frac{3}{2}} \int_0^{\infty} e^{-pz}  \sqrt{p(p+2i)} dp. 
\end{align*}
The remainder of the proof is very similar to \cite[Lemma 4.2]{LSS25} and we will be brief.  Matching the asymptotics of the integral above with the desired $h_{-}(z) \sim e^{-iz}$ we get $C_0:=e^{-
i \frac{\pi}{4}} \frac{\sqrt{2}}{\sqrt{\pi}}.$
Using the Residue theorem, we can shift the $p$-contour to $e^{-i \theta} (0,\infty)$ for $0 \leq \theta < \frac{\pi}{2}.$ Note that the circular arcs vanishes. Therefore, we analytically extend $h_{-}(z)$ to  $\re(e^{-i \theta} z)>0.$ Moreover, using change of variables we obtain for $z \in \Omega_{+,\delta}$
\begin{align*}
h_{-}(z)=C_0 e^{-iz} \int_{e^{i\beta}(0,\infty)} e^{-t} \sqrt{t(2i+ \frac{t}{z})} dt,
\end{align*}
provided $0<\beta < \frac{\pi}{2}$ is sufficiently close to $\frac{\pi}{2}.$  In order to compute $\lim_{y \to 0^{+}} h_{-}(x+iy)$ for $x<-1,$ we rotate the $p$-contour further to the negative half-axis in the clockwise orientation, i.e., to $\gamma:=e^{-i (\pi-\delta)}(0,\infty),$ with $\delta>0$ small. Since in this process we cross the branch cut $[-2i,-i\infty),$ then we shift the contour around this branch cut. We denote by $\Gamma$ the Hankel contour going $-i \infty$ circling around $-2i$ counter-clockwise, and then returning to $-i\infty.$ See figure below for contour. Note that, in view of the exponentially decaying factor the contribution of the arcs contour vanishes as $R \to \infty.$
Then by Cauchy's theorem we have for all $z \in \Omega_{+},$ 
\begin{align*}
   h_{-}(z)=C_0 e^{-iz} z^{\frac{3}{2}} \int_{\gamma} e^{-pz}  f(p) dp - C_0 e^{-iz} z^{\frac{3}{2}} \int_{\Gamma} e^{-pz}  f(p) dp .
\end{align*}
The desired assertions now follow from the asymptotics of these integrals, which can be obtained as in \cite[Lemma 4.2]{LSS25}. The proof for $h_{+}$ is similar.
\end{proof}

Note that we can identity $h_{\pm}(z)$ with the modified Hankel functions 
\begin{align}
\label{eq:hpm-Hankel}
        h_+(z)= \sqrt{\frac{\pi}{2}}e^{i\frac{\pi}{4}}  \sqrt{z}\, H_1^{(1)}(z),\qquad h_-(z)= \sqrt{\frac{\pi}{2}}e^{-i\frac{\pi}{4}}  \sqrt{z}\, H_1^{(2)}(z) \quad \text{for all} \quad \im (z) > 0.
\end{align}

\subsection{Construction of solution at infinity}
We aim to construct solutions $\Uppsi^{\pm}_{E}(r,z)$ to  $ ( i \mathcal{L}_{E}-z ) \Uppsi^{\pm}_{E}(r,z) =0$ for $\pm \im(z) >0,$ with $\Uppsi^{\pm}_{E}(\cdot,z) \in L^2(1,\infty).$ Let $\Phi(r,z)=\begin{pmatrix}
    \phi(r,z) \\
    \psi(r,z)
\end{pmatrix}$ satisfying $i \mathcal{L}_{E}  \Phi(r,z)=z \Phi(r,z) ,$ that is,   \begin{align*}
\begin{cases}
   i  \, L_1^{E} \psi(r,z) = z \phi(r,z) , \\
   -i\,  L_2^{E} \phi(r,z)  =  z \psi(r,z).    
\end{cases}
\end{align*}
Thus $\phi(r,z)$ must satisfy the fourth-order equation, 
\begin{align}
\label{L1L2-infty-large-xi}
    L_1^{E}L_2^{E}\phi(r,z) = z^2 \phi(r,z)
\end{align}
and the corresponding choice of $\psi(r,z)$ is determined by 
\begin{align*}
   \psi(r,z) = \frac{-i }{ z}  L_2^{E} \phi(r,z) .
\end{align*}
Imposing the ansatz $\phi(r,z)=h_{+}(k(z)r)$ for the solution to the equation \eqref{L1L2-infty-large-xi}, leads to same the equation in \eqref{eq:P(k,z)}, i.e,  
\begin{align*}
P(k,z):&=    \frac{1}{4} k^4 + \frac{9}{8} k^2 +\frac{17}{64}  - z^2 =0, \quad \text{with } \; k \equiv k(z)  .
\end{align*}
Therefore, we obtain the existence of the four distinct roots $k_j(z)$ for $j=1,\cdots,4$ whose properties are described in Lemma \ref{lem:k_j-behavior}. Then, we obtain the Weyl-Titchmarsh matrix solutions of the operator $\mathcal{L}_E,$ for large $\xi$

\begin{align*}
     \Uppsi^{+}_{E}(r,z) :=\begin{pmatrix}
         \Uppsi^{+}_{(E,1)}(r,z) &  \Uppsi^{+}_{(E,2)}(r,z)
    \end{pmatrix} &= \begin{pmatrix}
      h_{+}(k_1(z)r) &      h_{+}(k_2(z)r) \\ \\
       c_1 (z)  h_{+}(k_1(z)r) &  c_2 (z)  h_{+}(k_2(z)r)
    \end{pmatrix} \qquad \im(z) > 0  ,
\end{align*}

and 
\begin{align*}
    \Uppsi^{-}_{E}(r,z) := \begin{pmatrix}
         \Uppsi^{-}_{(E,1)}(r,z) &  \Uppsi^{-}_{(E,2)}(r,z)
    \end{pmatrix} =\begin{pmatrix}
      h_{+}(k_1(z)r) &      h_{+}(k_2(z)r) \\ \\
       c_1(z)  h_{+}(k_1(z)r) & c_2(z)   h_{+}(k_2(z)r)
    \end{pmatrix}\qquad \im(z) < 0 ,
\end{align*}
where,  \begin{align*}
   c_j(z)= c(k_j(z),z)&:=     - i     \frac{ ( \frac{1}{2} k_j(z)^2 + \frac{17}{8}) }{ z} \qquad \text{for} \; j=1,2.
\end{align*}

Recall that, $c_j(-z)=-c_j(z).$ In addition, we provide expressions for the other two solutions 

\begin{align*}
   \UUppsi^{+}_{E}(r,z) := \begin{pmatrix}
         \Uppsi^{+}_{(E,3)}(r,z) &  \Uppsi^{+}_{(E,4)}(r,z)
    \end{pmatrix} =\begin{pmatrix}
      h_{-}(k_1(z)r) &     h_{-}(k_2(z)r) \\ \\
      c_1(z) h_{-}(k_1(z)r) & c_2(z) h_{-}(k_2(z)r) 
    \end{pmatrix}, \qquad \im(z) > 0 
\end{align*}

and 
\begin{align*}
     \UUppsi^{-}_{E}(r,z) :=\begin{pmatrix}
         \Uppsi^{-}_{(E,3)}(r,z) &  \Uppsi^{-}_{(E,4)}(r,z)
    \end{pmatrix} = \begin{pmatrix}
      h_{-}(k_1(z)r) &     h_{-}(k_2(z)r) \\ \\
       c_1(z)  h_{-}(k_1(z)r) & c_2(z) h_{-}(k_2(z)r)
    \end{pmatrix}\qquad \im(z) < 0 .
\end{align*}

\subsection{Construction of $\Psi^{+}(r,z)$ for large $|\xi|$.} We start with the construction of the fundamental matrix solutions $\Psi^{+}(\cdot,z)\in L^2(1,\infty)$ and $\widetilde{\Psi}^{+}(\cdot, z) \notin L^2(1,\infty) $ to the equation \eqref{eq:iL_E= V_E}, 
$$
   i (\mathcal{L}_{E}-z ) \Psi(r, z) = V_{E}(r) \Psi(r, z),  \; \text{ for } z \in \Omega \; \text{ and } \; \im(z)>0,
$$
where in view of the definition of $V_E(r)$ in \eqref{eq:defV_E} and the decay of $V_1(r)$ and $V_2(r),$ we have
\begin{align}
\label{eq:decay-V_E}
|V(r)| \lesssim \langle r \rangle^{-2}
\end{align}

We define the Green's functions as 
\begin{align*}
   \mathcal{R}^{+}_{E,1}(r,s,z):&=  \Uppsi^{+}_{E}(r,z) S_1(s,z) \mathbb{1}_{\{ \tr_{\infty} \leq s \leq r \}} +  \UUppsi^{+}_{E}(r,z) T_1(s,z) \mathbb{1}_{\{ r \leq s \leq \infty \}}   \\
  \mathcal{R}^{+}_{E,2}(r,s,z):&=  \Uppsi^{+}_{E}(r,z) S_2(s,z) \mathbb{1}_{\{ r \leq s \leq \infty \}} +  \UUppsi^{+}_{E}(r,z) T_2(s,z)  \mathbb{1}_{\{ r \leq s \leq \infty \}} \\
 \mathcal{R}^{+}_{E,3}(r,s,z):&=  \Uppsi^{+}_{E}(r,z) S_3(s,z) \mathbb{1}_{\{ \tr_{\infty} \leq s \leq r \}} +  \UUppsi^{+}_{E}(r,z) T_3(s,z)  \mathbb{1}_{\{ \tr_{\infty} \leq s \leq r \}} \\
\end{align*}
where we require the matrices $S_1(r,z)$ and $T_1(r,z)$ for $\mathcal{R}^{+}_{E,1}$ to satisfy
\begin{align}
\label{eq:S1T1-condi-infinity-xi-large}
\begin{pmatrix}
      \Uppsi^{+}_{E}(r,z)    &  \UUppsi^{+}_{E}(r,z) \\
      \partial_r   \Uppsi^{+}_{E}(r,z) & \partial_r   \UUppsi^{+}_{E}(r,z)
\end{pmatrix}
\begin{pmatrix}
    S_1(r,z) \\
    -T_1(r,z)
\end{pmatrix}
= \begin{pmatrix}
    0 \\ \sigma_2
\end{pmatrix}
\end{align}
and we require the matrices $S_2(r,z)$ and $T_2(r,z)$ for $\mathcal{R}^{+}_{E,2}$ to satisfy
\begin{align}
\label{eq:S2T2-condi-infinity-xi-large}
\begin{pmatrix}
      \Uppsi^{+}_{E}(r,z)    &  \UUppsi^{+}_{E}(r,z) \\
      \partial_r   \Uppsi^{+}_{E}(r,z) & \partial_r   \UUppsi^{+}_{E}(r,z)
\end{pmatrix}
\begin{pmatrix}
  -  S_2(r,z) \\
    -T_2(r,z)
\end{pmatrix}
= \begin{pmatrix}
    0 \\ \sigma_2
\end{pmatrix}
\end{align}

and we require the matrices $S_3(r,z)$ and $T_3(r,z)$ for $\mathcal{R}^{+}_{E,3}$ to satisfy
\begin{align}
\label{eq:S3T3-condi-infinity-xi-large}
\begin{pmatrix}
      \Uppsi^{+}_{E}(r,z)    &  \UUppsi^{+}_{E}(r,z) \\ 
      \partial_r   \Uppsi^{+}_{E}(r,z) & \partial_r   \UUppsi^{+}_{E}(r,z)
\end{pmatrix}
\begin{pmatrix}
    S_3(r,z) \\
    T_3(r,z)
\end{pmatrix}
= \begin{pmatrix}
    0 \\ \sigma_2
\end{pmatrix}
\end{align}

Denote by \begin{align*}
\Psi^{\pm}(r,z)= \begin{bmatrix} \Psi^{\pm}_1 (r,z) &    \Psi^{\pm}_2(r,z)
    \end{bmatrix}
    \qquad \text{and} \qquad 
    \widetilde{\Psi}^{\pm}(r,z)= \begin{bmatrix}   \Psi^{\pm}_3 (r,z) & \PPsi^{\pm}_4(r,z)  
    \end{bmatrix}
\end{align*} 
and  
\begin{align*}
  \Upsilon_{i,E}(r,z,\Psi_j (r,z)):= \int_0^\infty \mathcal{R}^{+}_{E,i}(r,s,z) V(s) \Psi_j(s,z)  ds  \qquad \text{for } i=1,2, 3 \text{ and } j=1,\cdots, 4. 
\end{align*}
where $\Psi_j(r,z) $ is the $j$-th column of the matrix $\Psi(r,z).$ \\ 

In both scenarios, for large $|\xi|$, namely $\re(z) > 0$ (i.e., $z = \frac{\sqrt{17}}{8} + \xi$) and $\re(z) < 0$ (i.e., $z = -\frac{\sqrt{17}}{8} + \xi$), the solution is given as a solution to the fixed-point problem:

\begin{align}
\label{eq:def-Psi+(r,z)-xi-large}
   \Psi^{+}(r,z)&:=   \Uppsi^{+}_{E} (r,z) + 
  T_{E,1}(r,z,\Psi^{+}(r,z))  
  \end{align}
and 
\begin{align}
\label{eq:def-tilde-Psi+(r,z)-xi-large}
   \PPsi^{+}(r,z)&:=    \UUppsi^{+}_{E} (r,z) + 
  T_{E,2}(r,z,\PPsi^{+}(r,z)) 
  \end{align}

\begin{align}
\begin{split}
\label{eq:def-T_1-infty-xi-large}
  T_{E,1}(r,z,\Psi^{+}(r,z))&:= \Upsilon_{E,1}(r,z,\Psi_1^{+} (r,z))  + \Upsilon_{E,2}(r,z,\Psi_{2}^{+} (r,z)) \\
  & =\int_0^{\infty} \mathcal{R}^{+}_{E,1}(r,s,z) V_E(s) \Psi_{1}^{+}(s,z) ds  + \int_0^{\infty} \mathcal{R}^{+}_{E,2}(r,s,z) V_E(s) \Psi_{2}^{+}(s,z) ds  
\end{split}
\end{align}
\begin{align}
\label{eq:def-T_2-infty-xi-large}
  T_{E,2}(r,z,\PPsi^{+}(r,z))&:= \Upsilon_{E,1}(r,z,\Psi_{3}^{+} (r,z))  + \Upsilon_{E,3}(r,z,\PPsi_{4}^{+} (r,z)) \\ \nonumber
  & =\int_0^{\infty} \mathcal{R}^{+}_{E,1}(r,s,z) V_E(s) \Psi_{3}^{+}(s,z) ds  + \int_0^{\infty} \mathcal{R}^{+}_{E,3}(r,s,z) V_E(s) \PPsi_{4}^{+}(s,z) ds .
\end{align}
To compute \( S_i(r,z) \) and \( T_i(r,z) \), we start by inverting the matrix introduced above. This requires computing the Wronskians between \( \Uppsi^{\pm}_{E}(\cdot, z) \) and \( \UUppsi^{\pm}_{E}(\cdot, z) \).
\begin{claim}
\label{claim:D-Psi-E} 
Let $D_{E}^{\pm}(z):=W(\Uppsi^{\pm}_{E}(\cdot,z), \UUppsi^{\pm}_{E}(\cdot,z)).$ Then 
\begin{align*}
 D_{E}^{\pm}(z)=\begin{pmatrix}
 \pm \delta(z) &0  \\ 
0   &   \alpha(z)
   \end{pmatrix} \qquad \text{and} \qquad 
   (D_{E}^{\pm})^{-1}=(D_{E}^{\pm})^{-t} = \begin{pmatrix}
\frac{\pm1}{\delta(z)}  & 0 \\
0   &   \frac{1}{\alpha(z)}
   \end{pmatrix} := \begin{pmatrix}
D_{E,1}(z) &0  \\ 
0  &    D_{E,2}(z) 
   \end{pmatrix}.
\end{align*}
where 
\begin{align*}
   \delta(z):&= -2ik_1(z) ( 1 - c_1(z)^2), \quad  c_1(z)=     - i     \frac{ ( \frac{1}{2} k_1(z)^2 + \frac{17}{8}) }{ z}  \\
   \alpha(z):&= - 2i k_2(z) (1 - c_2(z)^2), \quad  c_2(z)=    - i   \frac{ ( \frac{1}{2} k_2(z)^2 + \frac{17}{8}) }{ z}
\end{align*}
\end{claim}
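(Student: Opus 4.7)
The plan is to mimic the computation in Lemma~\ref{claim:D-Psi-infty-small-z}, replacing the exponentials $e^{\pm i k r}$ with the modified Hankel-type functions $h_{\pm}(kr)$ from Lemma~\ref{lem:h-properties}. First I would write out each of the four scalar entries of the matrix Wronskian $D_E^{\pm}(z)$ using the definition $W(F,G)=F^t \sigma_3 G' - (F')^t \sigma_3 G$, substituting the explicit columns of $\Uppsi^{\pm}_E$ and $\UUppsi^{\pm}_E$. A straightforward expansion shows that each entry factors as a constant of the form $1 - c_i(z)c_j(z)$ (or $1 - c_j(z)^2$ on the diagonal) multiplied by a scalar $r$-Wronskian built from $h_{\pm}(k_1 r)$ and $h_{\pm}(k_2 r)$.

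For the off-diagonal entries $W(\Uppsi^{\pm}_{(E,1)}, \Uppsi^{\pm}_{(E,4)})$ and $W(\Uppsi^{\pm}_{(E,2)}, \Uppsi^{\pm}_{(E,3)})$, the common factor that appears is $1 - c_1(z)c_2(z)$. Here the key algebraic observation is that $k_1(z)^2$ and $k_2(z)^2$ are the two roots in $K=k^2$ of the quadratic $\tfrac{1}{4}K^2+\tfrac{9}{8}K+\tfrac{17}{64}-z^2=0$ coming from \eqref{eq:P(k,z)}. Vieta's formulas give $k_1^2+k_2^2=-\tfrac{9}{2}$ and $k_1^2 k_2^2 = \tfrac{17}{16}-4z^2$, and a short computation then yields the identity
\[
\bigl(\tfrac{1}{2}k_1(z)^2+\tfrac{17}{8}\bigr)\bigl(\tfrac{1}{2}k_2(z)^2+\tfrac{17}{8}\bigr)=-z^2,
\]
so $c_1(z)c_2(z)=1$ and the off-diagonal entries vanish identically in $r$. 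This is precisely the algebraic cancellation already used in the small-$|\xi|$ case.

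For the diagonal entries, the same expansion produces $(1-c_j(z)^2)$ times the scalar Wronskian $h_+(w)h_-'(w)-h_+'(w)h_-(w)$ evaluated at $w=k_j(z)r$, together with a factor $k_j(z)$ from the chain rule. Since $h_+$ and $h_-$ both solve the scalar ODE \eqref{eq:ODE-Bessel-2}, this Wronskian is constant in $w$; evaluating it from the normalizations $h_+(w)\sim e^{iw}$ and $h_-(w)\sim e^{-iw}$ of Lemma~\ref{lem:h-properties} gives $h_+ h_-'-h_+' h_- \equiv -2i$. Putting the pieces together produces $-2ik_1(z)(1-c_1(z)^2)=\delta(z)$ for the $(1,1)$-entry and $-2ik_2(z)(1-c_2(z)^2)=\alpha(z)$ for the $(2,2)$-entry, the sign in $\pm\delta(z)$ reflecting the orientation of the recessive/dominant pair $(h_+,h_-)$ in the upper versus lower half-plane branch of Lemma~\ref{lem:h-properties}. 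The formula for $(D_E^{\pm})^{-1}=(D_E^{\pm})^{-t}$ is then immediate from the diagonal structure.

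The main (minor) obstacle is the careful bookkeeping of the Stokes-phenomenon sign in the $\pm\delta(z)$ entry, which must be tracked consistently between the $\im z>0$ and $\im z<0$ definitions of $\Uppsi^{\pm}_E,\UUppsi^{\pm}_E$; once this sign is settled, the rest of the argument is a direct translation of Lemma~\ref{claim:D-Psi-infty-small-z}.
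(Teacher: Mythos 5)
Your proof is correct and follows essentially the same route as the paper: entrywise expansion of the matrix Wronskian, the algebraic identity $\bigl(\tfrac12 k_1(z)^2+\tfrac{17}{8}\bigr)\bigl(\tfrac12 k_2(z)^2+\tfrac{17}{8}\bigr)=-z^2$ from Vieta's formulas (equivalently $c_1(z)c_2(z)=1$, so the common factor $1-c_1c_2$ annihilates the off-diagonal entries), and the constant scalar Wronskian $h_+h_-'-h_+'h_-\equiv -2i$ for the diagonal entries. Your sign bookkeeping is in fact the cleaner one: the paper's displayed computation states the vanishing factor as $1+c_1(z)c_2(z)$, which is a typo for $1-c_1(z)c_2(z)$, and your handling of the residual $\pm$ in $\pm\delta(z)$ is at the same (brief) level of detail as the paper's appeal to $k_3(z)=-k_1(z)$.
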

\begin{proof}

We only compute $D_{E}^{+}(z),$ and to obtain $D_{E}^{-}(z) $ one can use the fact that $k_3(z)=-k_1(z).$ 
Let 
\begin{align*}
   D_{E}^{+}(z)&= \begin{pmatrix}
       \delta(z) & \gamma(z) \\
       \beta(z) & \alpha(z)
   \end{pmatrix}.
\end{align*}
Recall that $h_{\pm}(k_j(z)) $ are two linearly independent solutions to the following ODE 
\begin{align*}
    -u^{\prime \prime} + \frac{3}{4 r^2} u = k_j(z)^2 u.
\end{align*}
Therefore, their Wronskians must be independent of $r,$ and can be evaluated using their asymptotic behavior near $0,$ or infinity 
We have \begin{align*}
    \delta(z) & =  W(\Uppsi^{+}_{(E,1)}(\cdot,z), \Uppsi^{+}_{(E,3)}(\cdot,z)) = W( \begin{pmatrix} h_{+}(k_1(z) r)  \\    c_1(z) h_{+}(k_1(z) r)  \end{pmatrix}, \begin{pmatrix}
      h_{-}(k_1(z) r) \\
        c_1(z) h_{-}(k_1(z) r) 
    \end{pmatrix} )\\
    & = k_1(z) h_{+}(k_1(z) r)h_{-}^{\prime}(k_1(z) r) -   k_1(z) h_{+}^{\prime}(k_1(z) r) h_{-}(k_1(z) r)  \\
    &- k_1(z) c_1(z)^2 h_{+}(k_1(z) r) h_{-}^{\prime}(k_1(z) r) + - k_1(z) c_1(z)^2 h_{+}^{\prime}(k_1(z) r)h_{-}(k_1(z) r)   \\
 &   = -2 i k_1(z) (1 + c_1(z)^2)  .
\end{align*}

 \begin{align*}
    \gamma(z) & =  W(\Uppsi^{+}_{(E,1)}(\cdot,z), \Uppsi^{+}_{(E,4)}(\cdot,z)) = W( \begin{pmatrix} h_{+}(k_1(z) r)  \\    c_1(z) h_{+}(k_1(z) r)  \end{pmatrix}, \begin{pmatrix}
      h_{-}(k_2(z) r) \\
        c_2(z) h_{-}(k_2(z) r) 
    \end{pmatrix} )\\
    & = k_2(z) h_{+}(k_1(z) r)h_{-}^{\prime}(k_2(z) r) -   k_1(z) h_{+}^{\prime}(k_1(z) r) h_{-}(k_2(z) r)  \\
    &- k_2(z) c_1(z)c_2(z) h_{+}(k_1(z) r) h_{-}^{\prime}(k_2(z) r) + - k_1(z) c_1(z) c_2(z)  h_{+}^{\prime}(k_1(z) r)h_{-}(k_2(z) r)   \\
    &= -i (k_2(z) + k_1(z)) (1 + c_1(z)c_2(z)) =0,
\end{align*}
where we use the fact $ (1 + c_1(z)c_2(z)) = 0$ by definitions of $k_1(z)$ and $k_2(z).$ 
Similarly, we have 
 \begin{align*}
    \beta(z) & =   W(\Uppsi^{+}_{(E,2)}(\cdot,z), \Uppsi^{+}_{(E,3)}(\cdot,z)) = W( \begin{pmatrix} h_{+}(k_2(z) r)  \\    c_2(z) h_{+}(k_2(z) r)  \end{pmatrix}, \begin{pmatrix}
      h_{-}(k_1(z) r) \\
        c_1(z) h_{-}(k_1(z) r) 
    \end{pmatrix} )\\
&=     -i (k_2(z) + k_1(z)) (1 + c_1(z)c_2(z)) =0
\end{align*}
\begin{align*}
    \alpha(z) & =   W(\Uppsi^{+}_{(E,2)}(\cdot,z), \Uppsi^{+}_{(E,4)}(\cdot,z)) = W( \begin{pmatrix} h_{+}(k_2(z) r)  \\    c_2(z) h_{+}(k_2(z) r)  \end{pmatrix}, \begin{pmatrix}
      h_{-}(k_2(z) r) \\
        c_2(z) h_{-}(k_2(z) r) 
    \end{pmatrix} )\\
    & = k_2(z) h_{+}(k_2(z) r)h_{-}^{\prime}(k_2(z) r) -   k_2(z) h_{+}^{\prime}(k_2(z) r) h_{-}(k_2(z) r)  \\
    &- k_2(z) c_2(z)^2 h_{+}(k_2(z) r) h_{-}^{\prime}(k_2(z) r) + - k_1(z) c_2(z)^2 h_{+}^{\prime}(k_2(z) r)h_{-}(k_2(z) r)   \\
 &   = -2 i k_2(z) (1 + c_2(z)^2)  
\end{align*}
Then 
\begin{align*}
  (D_{E}^{+}(z))^{-1} &= \frac{1}{\delta(z) \alpha(z)} \begin{pmatrix}
\alpha(z)  & 0  \\ 
0   &  \delta(z)
   \end{pmatrix} =\begin{pmatrix}
 \frac{1}{\delta(z)} & 0 \\ 
0   &     \frac{1}{\alpha(z)}
   \end{pmatrix} := \begin{pmatrix}
D_{E,1}(z) &0  \\ 
0  &    D_{E,2}(z) 
\end{pmatrix}
\end{align*}
\end{proof}

\begin{remark}
Note that the Wronskians $D_E^{\pm}(z)$, defined for large $\xi$ as the Wronskians between $\Uppsi^{\pm}_{E}(\cdot,z)$ and $\UUppsi^{\pm}_{E}(\cdot,z)$, coincide with $D^{\pm}(z)$, the corresponding Wronskians for small $\xi$ between $\Uppsi^{\pm}_{\infty}(\cdot,z)$ and $\UUppsi^{\pm}_{\infty}(\cdot,z)$. This equivalence follows from the fact that $\Uppsi^{\pm}_{E}$ and $\Uppsi^{\pm}_{\infty}$ share the same asymptotic behavior. Accordingly, we use the notation $D^{\pm}(z)$ throughout the paper to refer to both cases. 
\end{remark}

\begin{remark}
\label{K_j-C_j-D_j-behavior-large-xi}
Note that , in the following two scenarios: 
\begin{itemize}
  \item  Let $z= \frac{\sqrt{17}}{8}+\xi,$ $\xi=x+iy$ with $x$ large and $y$ small.  Then we have $k_1(z)^2 =  2|x|(1+O(|\xi|^{-1}))$ and $k_2(z)^2= -2|x|(1+O(|\xi|^{-1})) $  This implies, $ c_1(z)=-i +O(|\xi|^{-1})  $ and $c_2(z)=i+O(|\xi|^{-1}).$ 
    Therefore, $|D_1^{\pm}(z)|\simeq \frac{1}{\sqrt{|\xi|}} $ and $|D_2^{\pm}(z)| \simeq \frac{1}{\sqrt{|\xi|}}.$
 \item Let $z= -\frac{\sqrt{17}}{8}+\xi,$ $\xi=x+iy$ with $|x|$ large and $y$ small. Then we also have $|D_1^{\pm}(z)|\simeq \frac{1}{\sqrt{|\xi|}} $ and $|D_2^{\pm}(z)| \simeq \frac{1}{\sqrt{|\xi|}} $.
\end{itemize}    
\end{remark}
 Next, we compute  $S_i(r,z)$ and $T_i(r,z)$, using Lemma \ref{inver-2-matrix} and Claim \ref{claim:D-Psi-E}. 

\begin{claim}
     We have, 
    \begin{align*}
 &   \begin{cases}  
    S_1(r,z)&= -  (D^{+}(z))^{-t} \, \UUppsi^{+}_{E}(r,z)^t \sigma_3 \sigma_2 \\
    T_1(r,z)&= -  (D^{+}(z))^{-1} \,  \Uppsi^{+}_{E}(r,z)^t \sigma_3 \sigma_2
    \end{cases}
    \qquad \qquad 
\begin{cases}
        S_2(r,z)&=   (D^{+}(z))^{-t} \,  \UUppsi^{+}_{E}(r,z)^t \sigma_3 \sigma_2 \\
    T_2(r,z)&= -  (D^{+}(z))^{-1} \,  \Uppsi^{+}_{E}(r,z)^t \sigma_3 \sigma_2
\end{cases} \\
& \begin{cases}
    S_3(r,z)&= -  (D^{+}(z))^{-t} \,  \UUppsi^{+}_{E}(r,z)^t \sigma_3 \sigma_2 \\
    T_3(r,z)&=   (D^{+}(z))^{-1} \,  \Uppsi^{+}_{E}(r,z)^t \sigma_3 \sigma_2
    \end{cases}
    \end{align*}
    
and \begin{align*}
\mathcal{R}^{+}_{E,1}(r,s,z):&= \begin{cases}
    i    \Uppsi^{+}_{E}(r,z)  (D^{+}(z))^{-t} \,  \UUppsi^{+}_{E}(s,z)^t \sigma_1, \qquad \tr_{\infty} \leq s \leq r, \\ \\
   i   \UUppsi^{+}_{E}(r,z)  (D^{+}(z))^{-1} \, \Uppsi^{+}_{E}(s,z)^t \sigma_1 , \qquad r \leq s \leq \infty.
\end{cases} \\ \\
\mathcal{R}^{+}_{E,2}(r,s,z):&= \begin{cases}
  -  i    \Uppsi^{+}_{E}(r,z)  (D^{+}(z))^{-t} \,  \UUppsi^{+}_{E}(s,z)^t \sigma_1, \qquad r \leq s \leq \infty, \\ \\
   i   \UUppsi^{+}_{E}(r,z)  (D^{+}(z))^{-1} \, \Uppsi^{+}_{E}(s,z)^t \sigma_1,  \qquad r \leq s \leq \infty.
\end{cases}  \\ \\
\mathcal{R}^{+}_{E,3}(r,s,z):&= \begin{cases}
   i    \Uppsi^{+}_{E}(r,z)  (D^{+}(z))^{-t} \,  \UUppsi^{+}_{E}(s,z)^t \sigma_1, \qquad \tr_{\infty} \leq s \leq r,\\ \\
 -  i   \UUppsi^{+}_{E}(r,z)  (D^{+}(z))^{-1} \, \Uppsi^{+}_{E}(s,z)^t \sigma_1,  \qquad \tr_{\infty} \leq s \leq r.
\end{cases}
\end{align*}
    
\end{claim}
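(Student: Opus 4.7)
The plan is to mirror the proof strategy of Claim \ref{claim:def-S_iT_i-Greens-R_j} from the small $|\xi|$ section, replacing the exponential Weyl-Titchmarsh solutions $\Uppsi^{+}_{\infty}$, $\UUppsi^{+}_{\infty}$ by their Bessel-based large-$|\xi|$ analogues $\Uppsi^{+}_{E}$, $\UUppsi^{+}_{E}$. The key structural fact is that the matrix Wronskians satisfy $\mathcal{W}(\Uppsi^{+}_{E},\Uppsi^{+}_{E})=0$ and $\mathcal{W}(\UUppsi^{+}_{E},\UUppsi^{+}_{E})=0$ while $D^{+}(z):=\mathcal{W}(\Uppsi^{+}_{E},\UUppsi^{+}_{E})$ is invertible and diagonal (this is precisely Claim \ref{claim:D-Psi-E}, where the off-diagonal entries vanish because of the algebraic identity $1+c_1(z)c_2(z)=0$ forced by $P(k,z)=0$). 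These are exactly the hypotheses of the inversion formula Lemma \ref{inver-2-matrix}.

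First I would apply Lemma \ref{inver-2-matrix} to the block matrix
\[
\begin{pmatrix}
\Uppsi^{+}_{E}(r,z) & \UUppsi^{+}_{E}(r,z) \\
\partial_r\Uppsi^{+}_{E}(r,z) & \partial_r\UUppsi^{+}_{E}(r,z)
\end{pmatrix},
\]
obtaining an explicit expression for its inverse in terms of $(D^{+}(z))^{-t}$, $(D^{+}(z))^{-1}$, and the transposes $(\Uppsi^{+}_{E})^t$, $(\UUppsi^{+}_{E})^t$. Substituting the right-hand side $\begin{pmatrix} 0 \\ \sigma_2\end{pmatrix}$ from the defining condition \eqref{eq:S1T1-condi-infinity-xi-large} and reading off the first and second block rows yields the formulas
\[
S_1(r,z)=-(D^{+}(z))^{-t}\,\UUppsi^{+}_{E}(r,z)^t\sigma_3\sigma_2,\qquad T_1(r,z)=-(D^{+}(z))^{-1}\,\Uppsi^{+}_{E}(r,z)^t\sigma_3\sigma_2.
\]
Repeating the computation with the modified right-hand sides in \eqref{eq:S2T2-condi-infinity-xi-large} and \eqref{eq:S3T3-condi-infinity-xi-large}, which differ only by sign flips in the first and/or second block row, gives $S_2,T_2$ and $S_3,T_3$ immediately, with the sign changes propagating exactly as in Claim \ref{claim:def-S_iT_i-Greens-R_j}.

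Next I would plug these formulas into the definitions of $\mathcal{R}^{+}_{E,j}(r,s,z)$, $j=1,2,3$, and use the Pauli matrix identity $\sigma_3\sigma_2=-i\sigma_1$ to rewrite each kernel in the displayed form. For instance, on $\{\tr_\infty\le s\le r\}$,
\[
\mathcal{R}^{+}_{E,1}(r,s,z)=\Uppsi^{+}_{E}(r,z)S_1(s,z)=-\Uppsi^{+}_{E}(r,z)(D^{+}(z))^{-t}\,\UUppsi^{+}_{E}(s,z)^t\sigma_3\sigma_2,
\]
which, after $\sigma_3\sigma_2=-i\sigma_1$, becomes $i\,\Uppsi^{+}_{E}(r,z)(D^{+}(z))^{-t}\UUppsi^{+}_{E}(s,z)^t\sigma_1$; the region $\{r\le s\le\infty\}$ is handled similarly using $\UUppsi^{+}_{E}(r,z)T_1(s,z)$, and the kernels $\mathcal{R}^{+}_{E,2}$, $\mathcal{R}^{+}_{E,3}$ follow the same pattern with the appropriate sign changes.

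No genuine obstacle is expected: once the non-vanishing of the self-Wronskians and the invertibility of $D^{+}(z)$ are checked (both of which are already in Claim \ref{claim:D-Psi-E} and Remark \ref{K_j-C_j-D_j-behavior-large-xi}), the statement reduces to an application of Lemma \ref{inver-2-matrix} followed by routine algebra with Pauli matrices, exactly as in the small-$|\xi|$ proof. The only step requiring care is keeping track of the signs in \eqref{eq:S2T2-condi-infinity-xi-large} and \eqref{eq:S3T3-condi-infinity-xi-large}, which produce the asymmetric sign conventions in $\mathcal{R}^{+}_{E,2}$ and $\mathcal{R}^{+}_{E,3}$.
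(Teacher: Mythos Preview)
Your proposal is correct and follows exactly the approach the paper intends: the paper's own proof simply states that the argument is ``very similar to that of Claim \ref{claim:def-S_iT_i-Greens-R_j} for the small $\xi$ regime'' and omits the details, and you have faithfully reconstructed that argument---applying Lemma \ref{inver-2-matrix} with the Wronskian data from Claim \ref{claim:D-Psi-E}, solving the defining systems \eqref{eq:S1T1-condi-infinity-xi-large}--\eqref{eq:S3T3-condi-infinity-xi-large}, and using $\sigma_3\sigma_2=-i\sigma_1$. One trivial slip: in your final paragraph you wrote ``non-vanishing of the self-Wronskians'' where you meant ``vanishing'' (as you correctly stated earlier).
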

\begin{proof}
 Since the proof is very similar to that of Claim \ref{claim:def-S_iT_i-Greens-R_j} for the small $\xi$ regime, we omit the details. 
\end{proof}

\begin{defi}
Let $F=\begin{bmatrix}
    F_1 & F_2
\end{bmatrix}, $ where $F_i$ are the columns of $F$ and let $\tr_\infty$ be a fixed number:   
\begin{align*}
   \left\| F \right\|_{ \mathcal{\tilde{B}}_1} &:= \sup_{r \geq \tr_{\infty}} 
   |e^{-ik_1(z)r}  F_1(r)| + \sup_{r \geq \tr_{\infty}} |e^{-ik_2(z)r }  F_{2}(r)| \\
   \left\| F \right\|_{\mathcal{\tilde{B}}_2} &:= \sup_{r \geq \tr_{\infty}} 
   |e^{ik_1(z) r }  F_{1}(r)| + \sup_{r \geq \tr_{\infty}}  |e^{ik_2(z)r}  F_{2}(r)| \\
   \left\| F \right\|_{\mathcal{\tilde{B}}^{\prime}_1} &:= \sup_{r \geq \tr_{\infty}} 
   \frac{1}{|k_1(z)|}|e^{-ik_1(z)r}  F_1(r)| + \sup_{r \geq \tr_{\infty}} \frac{1}{|k_2(z)|}|e^{-ik_2(z)r }  F_{2}(r)| \\
   \left\| F \right\|_{\mathcal{\tilde{B}}^{\prime}_2} &:= \sup_{r \geq \tr_{\infty}} 
   \frac{1}{|k_1(z)|}|e^{ik_1(z) r }  F_{1}(r)| + \sup_{r \geq \tr_{\infty}}  \frac{1}{|k_2(z)|}|e^{ik_2(z)r}  F_{2}(r)| 
\end{align*} 
\end{defi}

\begin{prop}
\label{prop:contraction-T-j-large-xi-infty}
Let $1<\Lambda_0<|\xi|  ,$ and $ \tr_{\infty} :=\frac{ \tvarepsilon_{\infty} }{\sqrt{|\xi|}}$ where $ 0<\tvarepsilon_{0} <\tvarepsilon_{\infty}\ll1$ and $\Lambda_0\gg \tvarepsilon_{\infty}^{-1}$.

Then $T_{E,j}$ is a contraction on $\mathcal{\tilde{B}}_j$ for $j=1,2,$  i.e., $$ \| T_{E,j}(\Psi(\cdot,z)) \|_{\mathcal{\tilde{B}}_j} \lesssim \tvarepsilon_{\infty} \left\| \Psi(\cdot,z) \right\|_{\mathcal{\tilde{B}}_j} . $$
Moreover,  with $\Uppsi^+(\cdot,z)$ and ${\tilde{\Uppsi}}^+(\cdot,z)$ denoting the solutions to the fixed point problems \eqref{eq:def-Psi+(r,z)-xi-large} and \eqref{eq:def-tilde-Psi+(r,z)-xi-large}, we have $$ \|  T_{E,1}(\Uppsi^{+}(\cdot,z)) \|_{\mathcal{\tilde{B}}_1} +\|  T_{E,2}({\tilde{\Uppsi}}^{+}(\cdot,z)) \|_{\mathcal{\tilde{B}}_2}+\| \partial_r T_{E,1}(\Uppsi^{+}(\cdot,z)) \|_{\mathcal{\tilde{B}}_1^{\prime}} +\| \partial_r T_{E,2}({\tilde{\Uppsi}}^{+}(\cdot,z)) \|_{\mathcal{\tilde{B}}_2^{\prime}}\lesssim \varepsilon_{\infty}. $$
\end{prop}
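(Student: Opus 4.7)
The plan is to mimic the structure of Proposition~\ref{prop:contraction-T-j-small-xi-infty} in the small~$|\xi|$ case, with two essential changes: the reference plane waves $e^{\pm ik_j(z) r}$ are replaced by the modified Hankel functions $h_\pm(k_j(z) r)$ of Lemma~\ref{lem:h-properties}, and the exponential decay $|V(s)|\lesssim e^{-2s}$ of the potential is replaced by the Euclidean-type decay $|V_E(s)|\lesssim \langle s\rangle^{-2}$ (see \eqref{eq:decay-V_E}). Accordingly, the Green's functions $\mathcal{R}^+_{E,i}(r,s,z)$ now admit the matrix representation
\begin{align*}
\mathcal{R}^+_{E,1}(r,s,z)
&=i\,\Bigl[\mathcal{M}^+_{1}(z)\,h_+(k_1(z)r)h_-(k_1(z)s)+\mathcal{M}^+_{2}(z)\,h_+(k_2(z)r)h_-(k_2(z)s)\Bigr]\mathbb{1}_{\{\tr_\infty\le s\le r\}}\\
&\quad+i\,\Bigl[\mathcal{M}^+_{1}(z)\,h_-(k_1(z)r)h_+(k_1(z)s)+\mathcal{M}^+_{2}(z)\,h_-(k_2(z)r)h_+(k_2(z)s)\Bigr]\mathbb{1}_{\{r\le s\le\infty\}},
\end{align*}
with matrices $\mathcal{M}^+_{j}(z)$ of the same shape as in the small $|\xi|$ analysis, whose size is controlled by $|D^\pm_{E,j}(z)|\simeq |\xi|^{-1/2}$ (Remark~\ref{K_j-C_j-D_j-behavior-large-xi}).

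The main estimates I would derive are the analogues of Claims~\ref{claim:upsilon_1-2-xi-small-infty} and~\ref{claim:upsilon_1-3-xi-small-infty}. Using Lemma~\ref{lem:h-properties}, one has $|e^{-ik_j(z)r}h_+(k_j(z)r)|\lesssim 1$ and $|e^{ik_j(z)r}h_-(k_j(z)r)|\lesssim 1$ for $|k_j(z)r|\gtrsim 1$, and the corresponding small-argument bound $|h_\pm(w)|\lesssim |w|^{-1/2}$ for $|w|\lesssim 1$, which applies on the transition layer $\tr_\infty\le r\lesssim 1/|k_j(z)|$. Plugging these into $\Upsilon_{E,1}(r,z,\Psi^+_1(r,z))$, the relevant building block is
\begin{align*}
|k_1(z)|\,|D^\pm_{E,1}(z)|\int_{\tr_\infty}^{\infty}|V_E(s)|\,ds\;\lesssim\;1\cdot\int_{\tr_\infty}^\infty\langle s\rangle^{-2}\,ds\;\lesssim\;1,
\end{align*}
and the weighted version, incorporating the extra $|\xi|^{-1/2}$ from the Wronskian, yields a factor $\lesssim \tvarepsilon_\infty$ after invoking the hypothesis $|\xi|>\Lambda_0\gg\tvarepsilon_\infty^{-1}$. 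The same calculation, with either $h_+$ or $h_-$ interchanged, gives the estimates for $\Upsilon_{E,2}$, $\Upsilon_{E,3}$ acting on $\Psi^+_{2,3,4}$. Combining these across the two columns produces the contraction bounds
\begin{align*}
\|T_{E,1}(\Psi^+(\cdot,z))\|_{\tilde{\mathcal B}_1}\lesssim\tvarepsilon_\infty\|\Psi^+(\cdot,z)\|_{\tilde{\mathcal B}_1},\qquad
\|T_{E,2}(\tilde\Psi^+(\cdot,z))\|_{\tilde{\mathcal B}_2}\lesssim\tvarepsilon_\infty\|\tilde\Psi^+(\cdot,z)\|_{\tilde{\mathcal B}_2}.
\end{align*}
The derivative bounds in $\tilde{\mathcal B}_j'$ follow from differentiating the fixed-point identities \eqref{eq:def-Psi+(r,z)-xi-large}--\eqref{eq:def-tilde-Psi+(r,z)-xi-large}, using the conditions \eqref{eq:S1T1-condi-infinity-xi-large}--\eqref{eq:S3T3-condi-infinity-xi-large} to cancel the boundary terms, and the derivative bound $|(e^{\pm ik_j(z)r}h_\pm(k_j(z)r))'|\lesssim |k_j(z)|\cdot|k_j(z)r|^{-2}$ from \eqref{eq:H0rec}--\eqref{eq:H0dom}.

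The hardest part is the interplay between the Hankel asymptotics and the matching region $|k_j(z)r|\sim 1$. Unlike the small $|\xi|$ case, where $e^{ik_j(z)r}$ is uniformly controlled down to $r=r_\infty$, here the bound on $h_-$ deteriorates in a wedge of the complex plane (the Stokes phenomenon in Lemma~\ref{lem:h-properties}), and for $\im (z)$ very small the roots $k_j^\pm(z)$ lie on the opposite side of the branch cut. I would address this by splitting the $s$-integral at $s\sim 1/|k_j(z)|$ and using the large-argument asymptotics on $s\gtrsim 1/|k_j(z)|$ together with the $|w|^{-1/2}$ bound on the transition layer, where the contribution is $\int_{\tr_\infty}^{1/|k_j(z)|}|k_j(z)s|^{-1}\langle s\rangle^{-2}\,ds\lesssim |\log\tvarepsilon_\infty|/|k_j(z)|$, absorbed by the Wronskian factor. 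A separate but milder technical point is that $\im(k_1(z))$ may be very small in the regime $\re(z)\in(-\sqrt{17}/8,\sqrt{17}/8)$, so one relies on uniform (rather than exponential) bounds throughout; the resulting decay from the potential alone is sufficient, as $\int_{\tr_\infty}^\infty\langle s\rangle^{-2}ds$ is already $O(1)$.
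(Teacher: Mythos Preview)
Your proposal is correct and follows the same strategy as the paper: write the Green's function in terms of the matrices $\mathcal{M}^+_j(z)$ with $|\mathcal{M}^+_j(z)|\lesssim |\xi|^{-1/2}$ (Remark~\ref{K_j-C_j-D_j-behavior-large-xi}), replace the Hankel functions by their exponential asymptotics, and integrate the $\langle s\rangle^{-2}$ decay of $V_E$ to obtain a contraction factor $\lesssim_{\tvarepsilon_\infty}|\xi|^{-1/2}$, which is small by the hypothesis $|\xi|>\Lambda_0\gg\tvarepsilon_\infty^{-1}$. The paper simply absorbs your explicit transition-layer and Stokes analysis into the constant hidden in $\lesssim_{\tvarepsilon_\infty}$, using that $|k_j(z)r|\gtrsim\tvarepsilon_\infty$ for $r\ge\tr_\infty$; two small slips to correct are that the building block should carry just $|\mathcal{M}^+_j|\simeq|\xi|^{-1/2}$ (no extra $|k_1(z)|$ factor), and your parenthetical about $\re(z)\in(-\sqrt{17}/8,\sqrt{17}/8)$ is backwards---for large $|\xi|$ one is outside this interval, and $\im(k_1(z))$ is small because $\im(z)$ is small, not because of the location of $\re(z)$.
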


\begin{proof}
We provide the proof that $T_{E,j}$ is a contraction on $\mathcal{\tilde{B}}_j,$ and the corresponding estimate for the derivative follows similarly by applying the same argument with the necessary conditions \eqref{eq:S1T1-condi-infinity-xi-large},\eqref{eq:S2T2-condi-infinity-xi-large} and \eqref{eq:S3T3-condi-infinity-xi-large}.

\begin{claim}
\label{claim:contraction-Upsilon-large-xi}
Let $\tr_{\infty}:=\frac{\tvarepsilon_{\infty}}{\sqrt{|\xi|}},$ $|\xi|>\Lambda_0.$ Then we have 
\begin{align*}
\sup_{r \geq \tr_{\infty}}  |e^{- ik_1(z) r} \Upsilon_{E,1}(r,z,\Psi^{+}_{1} (r,z)) |  & \lesssim_{\tvarepsilon_{\infty}} \frac{1}{\sqrt{|\xi|}}   \sup_{r \geq \tr_{\infty}} | e^{ - i k_1(z) r} \Psi^{+}_{1}(r,z)  | \\
\sup_{r \geq \tr_{\infty}}  |e^{- ik_2(z) r} \Upsilon_{E,2}(r,z,\Psi^{+}_{2} (r,z)) |  & \lesssim_{\tvarepsilon_{\infty}} \frac{1}{\sqrt{|\xi|}}   \sup_{r \geq \tr_{\infty}} | e^{ - i k_2(z) r} \Psi^{+}_{2}(r,z)  |  \\
\sup_{r \geq \tr_{\infty}}  |e^{ik_1(z) r} \Upsilon_{E,1}(r,z,\Psi^{+}_{3} (r,z)) |  & \lesssim_{\tvarepsilon_{\infty}} \frac{1}{\sqrt{|\xi|}}   \sup_{r \geq \tr_{\infty}} | e^{  i k_1(z) r} \Psi^{+}_{3}(r,z)  | \\
\sup_{r \geq \tr_{\infty}}  |e^{ik_2(z) r} \Upsilon_{E,3}(r,z, \PPsi^{+}_{4} (r,z)) |  & \lesssim_{\tvarepsilon_{\infty}} \frac{1}{\sqrt{|\xi|}}   \sup_{r \geq \tr_{\infty}} | e^{  i k_2(z) r} \PPsi^{+}_{4}(r,z)  |
\end{align*}
\end{claim}
\begin{proof}
 Recall that, we have 
\begin{align} \label{eq:upsilon_1-of-psi_1-large-xi-h_+}
\begin{split}
    \Upsilon_{E,1}(r,z,\Psi^{+}_{1} (r,z))&:= \int_0^{\infty} \mathcal{R}^{+}_{E,1}(r,s,z) V_E(s) \Psi^{+}_{1}(s,z) ds   \\ 
    & =\int_{\tr_{\infty}}^{r} i    \Uppsi^{+}_{E}(r,z)  (D^{+}(z))^{-t} \,  \UUppsi^{+}_{E}(s,z)^t \sigma_1  V(s) \Psi^{+}_{1}(s,z) ds \\
    & + \int_r^{\infty}   i  \UUppsi^{+}_{E}(r,z)  (D^{+}(z))^{-1} \,  \Uppsi^{+}_{E}(s,z)^t \sigma_1  V_E(s) \Psi^{+}_{1}(s,z) ds  \\
& =  i  \mathcal{M}^{+}_{1}(z) h_{+}( k_1(z) r ) \int_{\tr_\infty}^r  h_{-}( k_1(z) s )    V_E(s) \Psi^{+}_{1}(s,z) ds   \\
&+  i  \mathcal{M}^{+}_{2} (z) h_{+}( k_2 (z) r )\int_{\tr_\infty}^{r}  h_{-}( k_2(z) s )  V_E(s) \Psi^{+}_{1}(s,z) ds   \\
&+     i  \mathcal{M}^{+}_{1}(z)  h_{-}( k_1(z) r ) \int_{r}^{\infty}   h_{+}( k_1(z) s )    V_E(s) \Psi^{+}_{1}(s,z) ds \\
 & +  i  \mathcal{M}^{+}_{2}(z)   h_{-}( k_2(z) r ) \int_{r}^{\infty}  h_{+}( k_2(z) s )  V_E(s) \Psi^{+}_{1}(s,z) ds,
 \end{split}
\end{align}
where \begin{align*}
 \mathcal{M}^{+}_{1}(z):& =   \begin{pmatrix} 
 D_1^{+}(z)  c_1(z)   & 
 D_1^{+}(z)        \\
     D_1^{+}(z)  (c_1(z))^2    &   D_1^{+}(z)    c_1(z) 
\end{pmatrix} 
\qquad \text{and} \qquad 
\mathcal{M}^{+}_{2}(z)&:= \begin{pmatrix}
      D_2^{+}(z) c_2(z)  &     D_2^{+}(z)  \\ 
    D_2^{+}(z) (c_2(z))^2     &      D_2^{+}(z)  c_2(z)   
\end{pmatrix}   
\end{align*}
 
By remark \ref{K_j-C_j-D_j-behavior-large-xi}, one can see that in both scenarios for large $|\xi|$ we have  $|\mathcal{M}^{+}_{1}(z)| =O( \frac{1}{\sqrt{|\xi|}})$ and $|\mathcal{M}^{+}_{2}(z)|=O( \frac{1}{\sqrt{|\xi|}}).$ Combining this with the decay estimate \eqref{eq:decay-V_E}, namely, $V_{E}(r) \lesssim \langle r \rangle^{-2} ,$ we have

\begin{align}
\label{upsilon_1-of-psi_1-large-xi}
\begin{split}
    \Upsilon_{E,1}(r,z,\Psi^{+}_{1} (r,z))& \lesssim_{\tvarepsilon_{\infty}} \frac{1}{\sqrt{|\xi|}} |e^{i k_1(z) r }|   \int_{\tr_\infty}^r   |e^{-i k_1(z) s }|   \langle s \rangle^{-2}  | \Psi^{+}_{1}(s,z) |ds   \\
&+  \frac{1}{\sqrt{|\xi|}} |e^{i k_2(z) r }|  \int_{\tr_\infty}^{r}  |e^{-i k_2(z) s }|    \langle s \rangle^{-2} |  \Psi^{+}_{1}(s,z)  | ds   \\
&+  \frac{1}{\sqrt{|\xi|}} |e^{-i k_1(z) r }| \int_{r}^{\infty}    |e^{i k_1(z) s }|    \langle s \rangle^{-2} | \Psi^{+}_{1}(s,z) |  ds \\
 & +   \frac{1}{\sqrt{|\xi|}} |e^{-i k_2(z) r }|  \int_{r}^{\infty}  |e^{i k_2(z) s }|  \langle s \rangle^{-2} | \Psi^{+}_{1}(s,z) | ds.  
 \end{split}
\end{align}
Therefore, we have
 \begin{align*}
| e^{ -  i k_1(z) r } \Upsilon_{E,1}(r,z,\Psi^{+}_{1} (r,z)) |    & \lesssim_{\tvarepsilon_{\infty}}  \frac{1}{\sqrt{|\xi|}}   \int_{\tr_\infty}^r \langle s \rangle^{-2} ds    \sup_{ \tr_{\infty} \geq r } | e^{ -  i k_1(z) r }  \Psi^{+}_{1}(r,z) |  \\ 
& + \frac{1}{\sqrt{|\xi|}}   \int_{\tr_\infty}^r  |e^{i (k_2(z)-k_1(z)) (r-s) } |  \langle s \rangle^{-2} ds \sup_{r\geq \tr_{\infty}} | e^{ -  i k_1(z) r }  \Psi^{+}_{1}(r,z) |  \\ 
& + \frac{1}{\sqrt{|\xi|}}    \int_{r}^{\infty} |e^{2ik_1(s-r)}|  \langle s \rangle^{-2} ds  \sup_{r\geq \tr_{\infty}} | e^{ -  i k_1(z) r }  \Psi^{+}_{1}(r,z) |  \\ 
& +  \frac{1}{\sqrt{|\xi|}}   \int_r^{\infty} |   e^{i (k_2(z)+k_1(z) ) (s-r) }|  \langle s \rangle^{-2} ds \sup_{r\geq \tr_{\infty}} | e^{ -  i k_1(z) r }  \Psi^{+}_{1}(r,z) |
\end{align*}

using the behavior of $k_1(z)$ and $k_2(z)$ for large $\xi,$ we obtain
\begin{align*}
\sup_{r \geq \tr_{\infty}}  |e^{- ik_1(z) r} \Upsilon_{E,1}(r,z,\Psi^{+}_{1} (r,z)) |  \lesssim_{\tvarepsilon_{\infty}} \frac{1}{\sqrt{|\xi|}}   \sup_{r \geq \tr_{\infty}} | e^{ - i k_1(z) r} \Psi^{+}_{1}(r,z)  | .
\end{align*}

Next, we estimate $\Upsilon_{E,2}(r,z,\Psi^{+}_{2} (r,z)).$ We have 
\begin{align*}
    \Upsilon_{E,2}(r,z,\Psi^{+}_{2} (r,z))&:= \int_0^{\infty} \mathcal{R}^{+}_{E,2}(r,s,z) V_E(s) \Psi^{+}_{2}(s,z) ds   \\ 
    & =\int_{r}^{\infty} i    \Uppsi^{+}_{E}(r,z)  (D^{+}(z))^{-t} \,  \UUppsi^{+}_{E}(s,z)^t \sigma_1  V(s) \Psi^{+}_{2}(s,z) ds \\
    & + \int_r^{\infty}   i  \UUppsi^{+}_{E}(r,z)  (D^{+}(z))^{-1} \,  \Uppsi^{+}_{E}(s,z)^t \sigma_1  V_E(s) \Psi^{+}_{2}(s,z) ds  \\
& =  i  \mathcal{M}^{+}_{1}(z) h_{+}( k_1(z) r ) \int_{r}^{\infty}  h_{-}( k_1(z) s )    V_E(s) \Psi^{+}_{2}(s,z) ds   \\
&+  i  \mathcal{M}^{+}_{2} (z) h_{+}( k_2 (z) r ) \int_{r}^{\infty}  h_{-}( k_2(z) s )  V_E(s) \Psi^{+}_{2}(s,z) ds   \\
&+     i  \mathcal{M}^{+}_{1}(z)  h_{-}( k_1(z) r ) \int_{r}^{\infty}   h_{+}( k_1(z) s )    V_E(s) \Psi^{+}_{2}(s,z) ds \\
 & +  i  \mathcal{M}^{+}_{2}(z)   h_{-}( k_2(z) r ) \int_{r}^{\infty}  h_{+}( k_2(z) s )  V_E(s) \Psi^{+}_{2}(s,z) ds,
\end{align*}
Similarly, we have 
\begin{align}
\label{upsilon_2-of-psi_2-large-xi}
\begin{split}
 \Upsilon_{E,2}(r,z,\Psi^{+}_{2} (r,z))     & \lesssim_{\tvarepsilon_{\infty}}  \frac{1}{\sqrt{|\xi|}}  |e^{i k_1(z) r} | \int_{r}^{\infty} |e^{i (k_2(z)-k_1(z)) s } |  \langle s \rangle^{-2} ds    \sup_{ r \geq \tr_{\infty}  } | e^{ -  i k_2(z) r }  \Psi^{+}_{2}(r,z) |  \\ 
& + \frac{1}{\sqrt{|\xi|}}  | e^{i k_2(z) r } | \int_{r}^{\infty}    \langle s \rangle^{-2} ds \sup_{r\geq \tr_{\infty}} | e^{ -  i k_2(z) r }  \Psi^{+}_{2}(r,z) |  \\ 
& + \frac{1}{\sqrt{|\xi|}}   | e^{-i k_1(z) r } |  \int_{r}^{\infty} |   e^{i (k_2(z)+k_1(z) ) s }|  \langle s \rangle^{-2} ds  \sup_{r\geq \tr_{\infty}} | e^{ -  i k_2(z) r }  \Psi^{+}_{2}(r,z) |  \\ 
& +  \frac{1}{\sqrt{|\xi|}}  |e^{-ik_2(z) r}|  \int_r^{\infty} |e^{2ik_2(z) s}|   \langle s \rangle^{-2} ds \sup_{r\geq \tr_{\infty}} | e^{ -  i k_2(z) r }  \Psi^{+}_{2}(r,z) |
 \end{split}
\end{align} 
Thus, we get
 \begin{align*}
| e^{ -  i k_2(z) r } \Upsilon_{E,2}(r,z,\Psi^{+}_{2} (r,z)) |    & \lesssim_{\tvarepsilon_{\infty}}  \frac{1}{\sqrt{|\xi|}}  \int_{r}^{\infty} |e^{i (k_1(z)-k_2(z)) (r-s) } |  \langle s \rangle^{-2} ds    \sup_{ r \geq \tr_{\infty}  } | e^{ -  i k_2(z) r }  \Psi^{+}_{2}(r,z) |  \\ 
& + \frac{1}{\sqrt{|\xi|}}   \int_{r}^{\infty}    \langle s \rangle^{-2} ds \sup_{r\geq \tr_{\infty}} | e^{ -  i k_2(z) r }  \Psi^{+}_{2}(r,z) |  \\ 
& + \frac{1}{\sqrt{|\xi|}}    \int_{r}^{\infty} |   e^{i (k_2(z)+k_1(z) ) (s-r) }|  \langle s \rangle^{-2} ds  \sup_{r\geq \tr_{\infty}} | e^{ -  i k_2(z) r }  \Psi^{+}_{2}(r,z) |  \\ 
& +  \frac{1}{\sqrt{|\xi|}}   \int_r^{\infty} |e^{2ik_2(z) (s-r)}|   \langle s \rangle^{-2} ds \sup_{r\geq \tr_{\infty}} | e^{ -  i k_2(z) r }  \Psi^{+}_{2}(r,z) |.
\end{align*} 
Using the behavior of $k_1(z)$ and $k_2(z)$ for large $\xi,$ we obtain
\begin{align*}
\sup_{r \geq \tr_{\infty}}  |e^{- ik_2(z) r} \Upsilon_{E,2}(r,z,\Psi^{+}_{2} (r,z)) |  \lesssim_{\tvarepsilon_{\infty}} \frac{1}{\sqrt{|\xi|}}   \sup_{r \geq \tr_{\infty}} | e^{ - i k_2(z) r} \Psi^{+}_{2}(r,z)  | .
\end{align*}
Similarly, to the estimate for $\Upsilon_{E,1}(r,z,\Psi^{+}_{1} (r,z))$ and $\Upsilon_{E,2}(r,z,\Psi^{+}_{2} (r,z)),$ we obtain 
\begin{align*}
\sup_{r \geq \tr_{\infty}}  |e^{ik_1(z) r} \Upsilon_{E,1}(r,z,\Psi^{+}_{3} (r,z)) |  & \lesssim_{\tvarepsilon_{\infty}} \frac{1}{\sqrt{|\xi|}}   \sup_{r \geq \tr_{\infty}} | e^{  i k_1(z) r} \Psi^{+}_{3}(r,z)  | \\
\sup_{r \geq \tr_{\infty}}  |e^{ik_2(z) r} \Upsilon_{E,2}(r,z,\PPsi^{+}_{4} (r,z)) |  & \lesssim_{\tvarepsilon_{\infty}} \frac{1}{\sqrt{|\xi|}}   \sup_{r \geq \tr_{\infty}} | e^{  i k_2(z) r} \PPsi^{+}_{4}(r,z)  | .
\end{align*}
\end{proof}

Using claim \ref{claim:contraction-Upsilon-large-xi}, we obtain that $T_{E,j}$ is a contraction on $\mathcal{B}_j$, for $j=1,2.$ The derivative estimates for $T_{E,j}$ can be obtained using similar arguments, together with the conditions \eqref{eq:S1T1-condi-infinity-xi-large}, \eqref{eq:S2T2-condi-infinity-xi-large}, and \eqref{eq:S3T3-condi-infinity-xi-large}. This concludes the proof of Proposition \ref{prop:contraction-T-j-large-xi-infty}.

\end{proof}
Throughout the rest of the paper (for the non-resonance case), we define $\Psi_4^{+}(r,z),$ as a linear combination of $\Psi_1^{+},\Psi_2^{+},\PPsi_4^{+},$ i.e., 
\begin{equation}
\label{def-Psi-4}
    \Psi^{+}_4(r,z)= \PPsi^{+}_4(r,z) +\beta \Psi^{+}_3(r,z) + \gamma \Psi^{+}_1(r,z),
\end{equation}
where $\alpha=-\frac{W(\Psi^{+}_1(\cdot,z),\PPsi^{+}_4(\cdot,z))}{W(\Psi^{+}_1(\cdot,z),\Psi^{+}_3(\cdot,z))}$ and $\beta=-\frac{W(\Psi^{+}_3(\cdot,z),\PPsi^{+}_4(\cdot,z))}{W(\Psi^{+}_3(\cdot,z),\Psi^{+}_1(\cdot,z))}.$ This renormalization is chosen such that $\Psi_4^+$ has vanishing Wronskians with $\Psi_1^+$ and $\Psi_3^+$. \\

Note that $  \Psi^{+}_4$ satisfies the same asymptotics as $\PPsi^{+}_4$ and $\{\Psi^{+}_1,\Psi^{+}_2,\Psi^{+}_3,\Psi^{+}_4\}$ are linearly independent solutions to $i \mathcal{L} \Psi^{+}(r,z)=z \Psi^{+}(r,z)$ for large $|z|. $ Therefore, we define
\begin{align*}
\Psi^{+}(r,z)= \begin{bmatrix} \Psi^{+}_1 (r,z) &    \Psi^{+}_2(r,z)
    \end{bmatrix}
    \qquad \text{and} \qquad 
    \PPsi^{+}(r,z)= \begin{bmatrix}   \Psi^{+}_3 (r,z) & \Psi^{+}_4(r,z)  
    \end{bmatrix}
\end{align*}

\subsection{Construction of $\Psi^{-}(\cdot,z)$ for large $|\xi|$} Similarly to the case for small $\xi$, the construction of $\Psi^{-}(r,z) \in L^2((1,\infty))$ when $\im(z)<0,$ is obtained directly from $\Psi^{+}(r,z)$ by symmetry.  Recall that, for $z \in \Omega$ with $\im(z)<0,$ we have 
\begin{align*}
    \Uppsi^{-}_{E}(r,z) =\begin{pmatrix}
      h_{+}(k_1(z)r) &      h_{+}(k_2(z)r) \\ \\
       c_1(z)  h_{+}(k_1(z)r) & c_2(z)   h_{+}(k_2(z)r)
    \end{pmatrix} , \quad    \UUppsi^{-}_{E}(r,z) = \begin{pmatrix}
      h_{-}(k_1(z)r) &     h_{-}(k_2(z)r) \\ \\
       c_1(z)  h_{-}(k_1(z)r) & c_2(z) h_{-}(k_2(z)r)
    \end{pmatrix}
\end{align*}

We seek to construct the fundamental matrix solutions $\Psi^{-}(\cdot,z)\in L^2(1,\infty)$ and $\widetilde{\Psi}^{-}(\cdot, z) \notin L^2(1,\infty) $ to $   i \mathcal{L} \Psi^{-}(\cdot, z) = z  \Psi^{-}(\cdot, z) .$  for $ z \in \Omega$ with $\im(z)<0$ and  $z=\pm \frac{\sqrt{17}}{8} +\xi,$ for large $\xi.$ \\

Since by Lemma \ref{lem:k_j-behavior}, we have $k_1(z)=k_1(-z),$ $k_2(z)=k_2(-z),$ for $\im(z)<0,$ and $c_1(-z)=-c_1(z)$. Thus, we define 
\begin{align}
\label{eq:def-upPsi-minus-large-xi}
    \Uppsi^{-}_{E}(r,z)=\sigma_3  \Uppsi^{+}_{E}(r,-z) \qquad \text{and} \qquad  \UUppsi^{-}_{E}(r,z)=\sigma_3  \UUppsi^{+}_{E}(r,-z)  .
\end{align}

Note that $ \Uppsi^{-}_{E}$ and $\UUppsi^{-}_{E}$ satisfy the equation  $i \mathcal{L}_{E}   \Uppsi  (r,z)  = z \Uppsi(r,z) .$  Therefore, we define $ \Psi^{-}(r,z)$ as 
\begin{align}
\label{eq:def-Psi-minus-large-xi}
    \Psi^{-}(r,z):=\sigma_3  \Psi^{+}(r,-z) \qquad \text{and} \qquad \PPsi^{-}(r,z):=\sigma_3  \PPsi^{+}(r,-z)
\end{align}

Thus it follows from Proposition~\ref{prop:contraction-T-j-large-xi-infty} that $$ \|  T_{E,1}(\Psi^{+}(\cdot,z)) \|_{\mathcal{ \tilde{B}}_1} +\|  T_{E,2}(\PPsi^{+}(\cdot,z)) \|_{\mathcal{ \tilde{B}}_2}+\| \partial_r T_{E,1}(\Psi^{+}(\cdot,z)) \|_{\mathcal{ \tilde{B}}_1^{\prime}} +\| \partial_r T_{E,2}(\PPsi^{+}(\cdot,z)) \|_{\mathcal{ \tilde{B}}_2^{\prime}}\lesssim \varepsilon_{\infty}. $$

\section{Green's kernel of $i\mathcal{L}-z$ for the non-resonance case}
\label{sec:GreenKernel-non-resonance}
In this section, we compute the distorted Fourier transform for $\mathcal{L}.$ In \S \ref{subsec:ConCoef}, we first compute the connection coefficients of the solutions constructed near $0$ and near $\infty.$ Then in \S \ref{subsec:GreenKernel} we study the Green's kernel of $(i \calL -z).$  Next, we justify that the limit as $b \to 0^{+}$ can be taken inside the integral representation of the evolution in \eqref{eq:ST1}. This is proved in Subsection \ref{subsec:JumpResol} and relies on the analysis of the resolvent kernel $(i \mathcal{L} - z)^{-1}$ for $\pm \im(z)>0.$

\subsection{Connecting the solutions from $0$ and $\infty$}
\label{subsec:ConCoef}
The purpose of this subsection is to link the local and asymptotics behaviors of solutions to $(i\mathcal{L} - z)\Psi = 0$ from $r = 0$ and at $r = \infty$ for small and large $|\xi|$. To achieve this, we evaluate or estimate the Wronskians between the solutions constructed from $r=0$ and those constructed at infinity. In addition, we evaluate the Wronskians between the columns of solutions $\varphi_j$ defined near the origin and the solutions $\Psi_j$ defined near infinity. 
These computations will later be used to express one set of solutions in terms of the other. \\

Recall that throughout this paper (for non-resonance case), we choose $\varepsilon_0$ and $\varepsilon_{\infty}$ such that $r_{\infty}< r_0 . $ For $0<|\xi|< \delta_0,$ we fix  $ r_\varepsilon\equiv r_{\varepsilon}(z) \in (r_{\infty},r_0)$ such that $(\frac{r_{\varepsilon}}{20}, 6 r_{\varepsilon})\subseteq (r_{\infty},r_0)$, and $r_\varepsilon\simeq \frac{\varepsilon}{\sqrt{|\xi|}}$ for some small constant $\varepsilon$.

Denote by: 
\begin{align*}
    \Lambda_{ij}(z)&:=W(\upvarphi_i(\cdot,z),  \upvarphi_j(\cdot,z)) ,\\
     \mu^{\pm}_{ij}(z)&:= W ( \Psi^{\pm}_i(\cdot, z), \Psi^{\pm}_j(\cdot, z) ), \\
      \omega_{ij}^{\pm}(z)&:= W( \Psi^{\pm}_{i}(\cdot,z), \upvarphi_j(\cdot,z)).
\end{align*} 

\begin{lemma}
\label{Lambda_j-behavior}
Let $z \in \Omega,$ then we have
 \begin{align*}
 \Lambda_{12}(z)=\Lambda_{14}(z)=0 , \quad \Lambda_{13}(z) =2c_1^2 c_3^2 , \quad 
 \Lambda_{24}(z) =2c_2^1 c_4^1, \quad  |\Lambda_{23}(z)|  \lesssim  e^{-\frac{3}{\sqrt{2}}r_{\varepsilon}}, \quad  |\Lambda_{34}(z)| \lesssim e^{-\frac{3}{\sqrt{2}}r_{\varepsilon}}. 
\end{align*}
\end{lemma}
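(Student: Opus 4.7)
The strategy is to exploit the fact that each scalar Wronskian $\Lambda_{ij}(z) = W(\upvarphi_i(\cdot,z), \upvarphi_j(\cdot,z))$ is independent of $r$: since $\upvarphi_i,\upvarphi_j$ both solve $i\mathcal{L}\mathbf{f} = z\mathbf{f}$, the pointwise bilinear form $\mathbf{f}^t\sigma_3\mathbf{g}' - (\mathbf{f}')^t\sigma_3\mathbf{g}$ is conserved in $r$ (the contributions from the $L_1$- and $L_2$-terms cancel exactly because $i\mathcal{L}$ couples the two scalar operators antisymmetrically). Consequently, I can evaluate each $\Lambda_{ij}(z)$ at whichever value of $r$ makes the asymptotic expansion of $\upvarphi_i,\upvarphi_j$ most transparent. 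For the four explicit Wronskians $\Lambda_{12}$, $\Lambda_{13}$, $\Lambda_{14}$, $\Lambda_{24}$ I would pass to the limit $r\to 0^+$, and for $\Lambda_{23}$, $\Lambda_{34}$ I would evaluate at $r = r_\varepsilon \in (r_\infty, r_0)$ where the exponential-decay behavior of $\upvarphi_3$ is available.

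For the computations at $r\to 0^+$, the key input is that by Proposition \ref{Prop:contraction-T_j(F_j)-xi-small}, the contraction correction $\mathcal{T}_j^+(F_j)$ and its derivative are of size $O(\varepsilon_0)$ in the spaces $\mathcal{X}_j$ and $\mathcal{X}_j'$, so the columns $\upvarphi_j(r,z)$ inherit the leading $r\to 0^+$ asymptotics of $\varphi_j^+$ stated in \eqref{eq:asumptotic-phi_j(r)-non-resonance} up to subleading corrections. Plugging these leading profiles into the Wronskian then gives: $\Lambda_{12}$ vanishes because the common $r^{3/2}$ scaling of both columns produces exact cancellations in both the $(f_1g_1' - f_1' g_1)$ and $(f_2 g_2'- f_2' g_2)$ pieces; $\Lambda_{14}$ vanishes because the leading components of $\upvarphi_1$ and $\upvarphi_4$ are supported in complementary slots (second vs. first) with the orthogonal slot being subleading ($O(r^{7/2})$ or $O(r^{3/2}\log r)$); $\Lambda_{13}$ matches the top-left entry of the matrix Wronskian $D_0^+$ computed in Lemma \ref{D_0} and equals $2c_1^2c_3^2$; $\Lambda_{24}$ is obtained in exactly the same way from the $r^{3/2}$ vs.\ $r^{-1/2}$ leading behaviors of $\upvarphi_2$ and $\upvarphi_4$.

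For the exponential bounds, I would evaluate $\Lambda_{23}(z)$ and $\Lambda_{34}(z)$ at $r = r_\varepsilon$, which by construction lies in the intermediate window $(r_\infty, r_0)$. There, the norm bounds of Proposition \ref{Prop:contraction-T_j(F_j)-xi-small} give
\[
|\upvarphi_3(r_\varepsilon)| + |\upvarphi_3'(r_\varepsilon)| \lesssim e^{-\frac{3}{\sqrt{2}}r_\varepsilon},\qquad |\upvarphi_2(r_\varepsilon)| \lesssim r_\varepsilon,\qquad |\upvarphi_2'(r_\varepsilon)|, |\upvarphi_4(r_\varepsilon)|, |\upvarphi_4'(r_\varepsilon)| \lesssim 1,
\]
directly from the definitions of the $X_j$- and $X_j'$-norms, where the weight $e^{\frac{3}{\sqrt{2}}r}$ in the $X_3$ norm is responsible for the decay of $\upvarphi_3$. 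Since every term in the Wronskian $W(\upvarphi_2,\upvarphi_3)$ and $W(\upvarphi_3, \upvarphi_4)$ contains a factor of $\upvarphi_3$ or $\upvarphi_3'$, we obtain $|\Lambda_{23}(z)|, |\Lambda_{34}(z)| \lesssim r_\varepsilon e^{-\frac{3}{\sqrt{2}}r_\varepsilon} \lesssim e^{-\frac{3}{\sqrt{2}}r_\varepsilon}$, where the polynomial factor is absorbed into the implicit constant (at the cost of an infinitesimally weaker exponent, which is harmless for subsequent applications).

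The main potential obstacle is verifying that evaluating at $r\to 0^+$ or at $r = r_\varepsilon$ is legitimate, i.e., that the fixed-point corrections do not spoil the leading terms. For the $r\to 0^+$ computation, this amounts to checking that the Volterra-type integrals defining $\mathcal{T}_j^+$ contribute strictly subleading powers at the origin, which is a direct consequence of the Green's function formulas in Lemma \ref{lem:def_S-T-near 0} together with the weighted bounds in $\mathcal{X}_j$; for the $r_\varepsilon$ computation, it is simply the sup-norm estimate extracted from membership in the space $\mathcal{X}_2$. Once these routine checks are in place, the six identities and estimates follow as outlined above.
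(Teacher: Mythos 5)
Your proposal is correct and follows essentially the same route as the paper: use $r$-independence of the Wronskian, evaluate $\Lambda_{12},\Lambda_{13},\Lambda_{14},\Lambda_{24}$ at $r\to 0^+$ exactly as in Lemma \ref{D_0}, and evaluate $\Lambda_{23},\Lambda_{34}$ in the intermediate window using the exponential decay of $\upvarphi_3$. One small fix: evaluating at $r=r_\varepsilon$ gives $r_\varepsilon e^{-\frac{3}{\sqrt{2}}r_\varepsilon}$, which is not $\lesssim e^{-\frac{3}{\sqrt{2}}r_\varepsilon}$ with a uniform constant since $r_\varepsilon\to\infty$ as $\xi\to 0$; the paper evaluates at $2r_\varepsilon$ (still inside the admissible window) so that the polynomial prefactor is genuinely absorbed by the extra factor $e^{-\frac{3}{\sqrt{2}}r_\varepsilon}$, rather than by weakening the exponent.
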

\begin{proof}
We evaluate the first four Wronskians at $r = 0$ and the remaining two at $r = 2r_{\varepsilon}$, using the asymptotic behavior of $\upvarphi_j(r,z)$ for both small and large values of $r$. The first four Wronskians follow as in Lemma~\ref{D_0}. For the last two, since $\upvarphi_3$ decays exponentially while $\upvarphi_1$ exhibits polynomial growth as $r \to \infty$, the required bounds can be established by evaluating the Wronskians at $r = 2r_{\varepsilon}$.
\end{proof}

\begin{lemma}
\label{lemma:omega_j-behavior}
Let $0<|\xi|<\delta_0,$ and for $z=\pm \frac{\sqrt{17}}{8}+ \xi \in \Omega$, with $\im(z)>0$ we have  
    \begin{align*}
  |\omega_{11}^{+}(z)|   &\lesssim  e^{\frac{3}{\sqrt{2}} \frac{r_{\varepsilon}}{5}}  , \quad |\omega_{21}^{+} (z)| \simeq 1 , \quad |\omega_{31}^{+}  (z)| \lesssim  e^{\frac{3}{\sqrt{2}} \frac{r_{\varepsilon}}{5}}, \quad |\omega_{41}^{+}  (z)| \lesssim   e^{\frac{6}{\sqrt{2}} \frac{r_{\varepsilon}}{10}} \\
   |\omega_{12}^{+}(z)|  & \simeq 1  , \quad |\omega_{22}^{+} (z)| \lesssim  e^{-\frac{3}{\sqrt{2}} r_{\varepsilon}} , \quad |\omega_{32}^{+}  (z)| \simeq 1, \quad |\omega_{42}^{+}  (z)| \lesssim  e^{\frac{3}{\sqrt{2}} \frac{r_{\varepsilon}}{5}} \\
   |\omega_{13}^{+}(z)| &  \lesssim e^{-\frac{3}{\sqrt{2}} r_{\varepsilon}}  , \quad 
   |\omega_{23}^{+} (z)| \lesssim e^{-\frac{6}{\sqrt{2}} r_{\varepsilon}} , \quad |\omega_{33}^{+}  (z)| \lesssim e^{-\frac{3}{\sqrt{2}} r_{\varepsilon}},
   \quad |\omega_{43}^{+}  (z)| \simeq 1 \\
   \omega_{14}^{+}(z) &=c \, k_1(z) + O(|\xi|^{\frac{3}{2}})  , \quad 
   |\omega_{24}^{+} (z)| \lesssim e^{-\frac{3}{\sqrt{2}} r_{\varepsilon}} , \quad \omega_{34}^{+}  (z) = c \,  k_1(z)   + O(|\xi|^{\frac{3}{2}}) ,
   \quad |\omega_{44}^{+}  (z)| \lesssim e^{\frac{3}{\sqrt{2}} \frac{r_{\varepsilon}}{5}}.
    \end{align*}
    where $c>0$ is constant. Moreover, $d^{+}(z):=\omega_{22}^{+}(z) \omega_{11}^{+}(z)-\omega_{21}^{+}(z) \omega_{12}^{+}(z) \neq 0 $ and $|d^{+}(z) | \simeq 1.$
\end{lemma}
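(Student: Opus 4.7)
The Wronskians $\omega_{ij}^+(z) = W(\Psi_i^+(\cdot,z), \upvarphi_j(\cdot,z))$ are independent of $r$ because both factors solve $(i\calL - z)F = 0$ on their common domain of definition. The plan is to evaluate each $\omega_{ij}^+$ at a conveniently chosen point $r_\star$ within $[\tfrac{r_\varepsilon}{20}, 6r_\varepsilon] \subseteq (r_\infty, r_0)$, where $r_\star$ may depend on the pair $(i,j)$, and then to substitute the leading-order asymptotics from both the small-$r$ and the large-$r$ constructions. On one side, Proposition~\ref{Prop:contraction-T_j(F_j)-xi-small} gives $\upvarphi_j(r_\star,z) = \varphi_j^+(r_\star) + (\text{corrections})$ with dominant behaviors $e^{\pm \frac{3}{\sqrt{2}} r_\star}$, $r_\star$, or a nonzero constant, as listed in~\eqref{eq:asumptotic-phi_j(r)-non-resonance}. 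On the other side, Proposition~\ref{prop:contraction-T-j-small-xi-infty} gives $\Psi_i^+(r_\star,z) = \Uppsi^+_\infty(r_\star,z)_i + O(\varepsilon_\infty)$, whose leading columns are $(1, c_j(z))^t e^{\pm i k_j(z) r_\star}$.

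By Remark~\ref{K_j-C_j-D_j-behavior}, $|k_1(z)| \simeq \sqrt{|\xi|}$ and $ik_2(z) \approx -\tfrac{3}{\sqrt{2}}$, so $e^{\pm ik_1 r_\star} = 1 + O(\varepsilon)$ and $e^{\pm ik_2 r_\star} \simeq e^{\mp \frac{3}{\sqrt{2}} r_\star}$ throughout the overlap region. Substituting these expansions into $W(\Psi_i^+, \upvarphi_j) = (\Psi_i^+)^t \sigma_3 \upvarphi_j' - (\Psi_i^+)'^t \sigma_3 \upvarphi_j$ produces the claimed magnitudes in a transparent way: a combination of one exponentially growing and one exponentially decaying factor yields an $r_\star$-independent leading term of size $\simeq 1$ (accounting for $\omega_{21}^+$, $\omega_{12}^+$, $\omega_{32}^+$, $\omega_{43}^+$); a combination of two decaying factors yields exponential smallness $e^{-\frac{3}{\sqrt{2}} r_\star}$ or $e^{-\frac{6}{\sqrt{2}} r_\star}$ (accounting for $\omega_{13}^+$, $\omega_{22}^+$, $\omega_{23}^+$, $\omega_{24}^+$); and a combination of two growing factors is kept moderate by choosing $r_\star$ toward the lower end of the overlap (e.g.\ $r_\star = r_\varepsilon/5$ or $r_\varepsilon/10$, producing the bounds $e^{\frac{3}{\sqrt{2}} r_\varepsilon/5}$ and $e^{\frac{6}{\sqrt{2}} r_\varepsilon/10}$ recorded in the statement).

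For the sharper identities $\omega_{14}^+(z) = c\, k_1(z) + O(|\xi|^{3/2})$ and $\omega_{34}^+(z) = c\, k_1(z) + O(|\xi|^{3/2})$, I would expand $\Psi_{1,3}^+ = (1, c_1(z))^t e^{\pm ik_1(z)r_\star}(1+O(\varepsilon_\infty))$ against $\upvarphi_4 = (\tc_4^1, \tc_4^2)^t(1+O(r_\star^{-1}))$; when the $r_\star$-dependence in $W$ cancels (as it must, the Wronskian being constant), the surviving leading contribution is precisely $\pm i k_1(z)$ times an explicit, nonvanishing combination of $c_1(\pm\tfrac{\sqrt{17}}{8})$, $\tc_4^1$, $\tc_4^2$ determined by the normalizations of Section~\ref{subsec:non-resonance}. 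A parallel argument handles $d^+(z) = \omega_{22}^+ \omega_{11}^+ - \omega_{21}^+ \omega_{12}^+$: since $\omega_{22}^+$ is exponentially small, the leading contribution to $d^+$ comes from $-\omega_{21}^+ \omega_{12}^+$, whose explicit leading constant is nonzero, yielding $|d^+| \simeq 1$. The principal technical obstacle throughout is ensuring that the construction errors --- of size $O(\varepsilon_\infty)$ for $\Psi_i^+$, and the more delicate errors from Proposition~\ref{Prop:contraction-T_j(F_j)-xi-small} for $\upvarphi_j$ that grow with $r_\star$ --- remain subdominant relative to the stated Wronskian sizes; this is arranged by the strict scale separation $\delta_0 \ll \varepsilon_\infty \ll \varepsilon_0 \ll 1$ and by the freedom to choose $r_\star$ anywhere in the wide band $[\tfrac{r_\varepsilon}{20}, 6r_\varepsilon]$.
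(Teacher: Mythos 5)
Your proposal is correct and follows essentially the same route as the paper: both exploit the $r$-independence of the Wronskians to evaluate each $\omega_{ij}^{+}$ at a pair-dependent point in the overlap region (e.g.\ $r_\varepsilon/5$, $r_\varepsilon/10$, $r_\varepsilon$, $2r_\varepsilon$), substitute the leading asymptotics $e^{\pm ik_j(z)r}$ from Proposition~\ref{prop:contraction-T-j-small-xi-infty} against the $e^{\pm\frac{3}{\sqrt2}r}$, $r$, and constant behaviors from~\eqref{eq:asumptotic-phi_j(r)-non-resonance}, and verify nonvanishing of the explicit leading constants (via $\tc_j^2=\pm\frac{i}{\sqrt{17}}\tc_j^1$, $\tc_l^2=\mp i\sqrt{17}\,\tc_l^1$ and $c_1,c_2\to \mp i\sqrt{17},\pm i/\sqrt{17}$) for the $\simeq 1$ and $c\,k_1(z)$ cases, with the construction errors controlled by the scale separation exactly as you describe.
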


\begin{proof}
Recall that for $j = 1,3$ and $l = 2,4$, the coefficients satisfy $ \tc_j^2 = \pm \frac{i}{\sqrt{17}} \, \tc_j^1,$ and $\tc_l^2 = \mp i \sqrt{17} \, \tc_l^1. $ In addition, for $j = 1,2$, we have
$$
c_j(z) = -i \, \frac{\frac{1}{2} k_j(z)^2 + \frac{17}{8}}{\xi + \frac{\sqrt{17}}{8}}.
$$

The asymptotic behavior of the functions $k_j(z)$ and $c_j(z)$ as $z \to \pm \frac{\sqrt{17}}{8},$ with $\im(z) >0$ is given by:
$$
k_1(z) = \pm \frac{\sqrt{2\sqrt{17}}}{3} \sqrt{z} + O(z), \quad
k_2(z) = i \frac{3}{\sqrt{2}} + O(z),
$$ $$
c_1(z) = -i \sqrt{17} + O(z), \quad
c_2(z) = i \frac{1}{\sqrt{17}} + O(z).
$$

We begin by estimating $\omega_{l1}^{+} := W\big( \Psi^{+}_{l}(r,z), \upvarphi_1(r,z) \big)$.  
To achieve this, we use the observation above, the definitions of $\Psi^{+}_{l}(r,z)$ from \eqref{eq:def-Psi+(r,z)-xi-small} and \eqref{eq:def-tilde-Psi+(r,z)-xi-small}, together with the definition of $\upvarphi_1(r,z)$ in \eqref{eq:defF_1(r,z)-scenarioI} and \eqref{eq:asumptotic-phi_j(r)-non-resonance}.  In addition, we use of the asymptotic estimates established in the proof of Propositions \ref{Prop:contraction-T_j(F_j)-xi-small} and \ref{prop:contraction-T-j-small-xi-infty}, particularly Claim \ref{claim:upsilon_1-2-xi-small-infty}, \ref{claim:upsilon_1-3-xi-small-infty}, and \ref{Claim:Gamma_1-and-2-large-r-non-resonance}. It follows that, 
\begin{align*}
      \omega_{11}^{+}(z) &:= W( \Psi^{+}_{1}(r,z), \upvarphi_1(r,z))\\
      &= W \bigg( \begin{pmatrix}
           e^{i k_1(z) r } \\
            c_1 (z) e^{i k_1(z) r } 
      \end{pmatrix} (1+O(\frac{1}{\sqrt{|\xi|}}e^{-2r_{\infty}}) ),
      \begin{pmatrix}
\tc_1^1 e^{\frac{3}{\sqrt{2}}r} \\
\tc_1^2 e^{\frac{3}{\sqrt{2}}r}
      \end{pmatrix} (1+O(r^{-1} )) \bigg) 
\end{align*} 
evaluating at $ r=\frac{r_{\varepsilon}}{5},$ by a naive bound we get $
  | \omega_{11}^{+}(z)| \lesssim e^{\frac{3}{\sqrt{2}} \frac{r_{\varepsilon}}{5}} . $ Next, we consider $l=2,$
\begin{align*}
  \omega_{21}^{+} (z)&:=W( \Psi^{+}_{2}(r,z), \upvarphi_1(r,z))\\
  &= W\bigg( \begin{pmatrix}
 e^{i k_2(z) r } \\
 c_2(z)  e^{ i k_2(z) r } 
   \end{pmatrix} (1+O(\frac{1}{\sqrt{|\xi|}}e^{-2r_{\infty}}) ),  \begin{pmatrix}
\tc_1^1 e^{\frac{3}{\sqrt{2}}r} \\
\tc_1^2 e^{\frac{3}{\sqrt{2}}r}
      \end{pmatrix} (1+O(r^{-1} ))  \bigg) \simeq 1
\end{align*}
Indeed, evaluating at $r=r_\varepsilon$,
\begin{align*}
W(\begin{pmatrix}
           e^{i k_2(z) r } \\
            c_2 (z) e^{i k_2(z) r } 
      \end{pmatrix} ,
      \begin{pmatrix}
\tc_1^1 e^{\frac{3}{\sqrt{2}}r} \\
\tc_1^2 e^{\frac{3}{\sqrt{2}}r}
      \end{pmatrix} ) & = e^{ik_2(z) r } e^{\frac{3}{\sqrt{2}}r} ( \frac{3}{\sqrt{2}} - ik_2(z)) ( \tc_1^1 - c_2(z) \tc_1^2)  \\
      & = e^{O(z) r} (\frac{6}{\sqrt{2}}+O(z)) ( \tc_1^1 - (\frac{i}{ \sqrt{17}}+O(z)) \frac{i}{\sqrt{17}} \tc_1^1 ) \\ 
      &\lesssim e^{O(z) r} (1+O(z))  \simeq 1
\end{align*}
By a similar argument as for $ \omega_{11}^{+}(z),$ we get $
 | \omega_{31}^{+}(z)|   \lesssim  e^{\frac{3}{\sqrt{2}} \frac{r_{\varepsilon}}{5}} .$ Next, for $l=4$ we have 
\begin{align*}
 \omega_{41}^{+}(z) &:= W( \Psi^{+}_{4}(r,z), \upvarphi_1(r,z))\\
 &= W\bigg(
   \begin{pmatrix}
 e^{- i k_2(z) r } \\
 c_2(z)  e^{- i k_2(z) r } 
   \end{pmatrix} (1+O(\frac{1}{\sqrt{|\xi|}}e^{-2r_{\infty}}) ),  \begin{pmatrix}
\tc_1^1 e^{\frac{3}{\sqrt{2}}r} \\
\tc_1^2 e^{\frac{3}{\sqrt{2}}r} 
      \end{pmatrix} (1+O(r^{-1} )) \bigg)
\end{align*}
therefore,  by evaluating at $r=\frac{r_{\varepsilon}}{10}  ,$ we obtain  $|\omega_{41}^{+}(z) | \lesssim e^{\frac{6}{\sqrt{2}} \frac{r_{\varepsilon}}{10}}.$ Indeed, 
\begin{align*}
 W(\begin{pmatrix}
 e^{- i k_2(z) r } \\
 c_2(z)  e^{- i k_2(z) r } 
   \end{pmatrix} ,  \begin{pmatrix}
\tc_1^1 e^{\frac{3}{\sqrt{2}}r} \\
\tc_1^2 e^{\frac{3}{\sqrt{2}}r} 
      \end{pmatrix} ) =  e^{- i k_2(z) r }   e^{\frac{3}{\sqrt{2}}r} ( \frac{3}{\sqrt{2}} + i k_2(z))( \tc_1^1 - c_2(z) \tc_1^2 ) .
\end{align*}

Next, we estimate $\omega_{l2}^{+} := W\big( \Psi^{+}_{l}(r,z), \upvarphi_2(r,z) \big)$, using a similar argument with the definition of $\upvarphi_2(r,z)$ in \eqref{eq:defF_1(r,z)-scenarioI}, \eqref{eq:asumptotic-phi_j(r)-non-resonance} and Claim \ref{Claim:Gamma_1-and-2-large-r-non-resonance}, we obtain 
\begin{align*}
     | \omega_{12}^{+}(z) | &:= |  W( \Psi^{+}_{1}(r,z), \upvarphi_2(r,z))| \\
      &= \left|W( \begin{pmatrix}
           e^{i k_1(z) r } \\
            c_1 (z) e^{i k_1(z) r } 
      \end{pmatrix} (1+O(\frac{1}{\sqrt{|\xi|}}e^{-2r_{\infty}}) ) ,
      \begin{pmatrix}
\tc_2^1 r \\
\tc_2^2 r
      \end{pmatrix} (1+O(r^{-2}) ) ) \right| \simeq 1 
\end{align*}

\begin{align*}
      \omega_{22}^{+}(z) &:= W( \Psi^{+}_{2}(r,z), \upvarphi_2(r,z)) \\
      &= W( \begin{pmatrix}
           e^{i k_2(z) r } \\
            c_2 (z) e^{i k_2(z) r } 
      \end{pmatrix}(1+O(\frac{1}{\sqrt{|\xi|}}e^{-2r_{\infty}}) )  ,
      \begin{pmatrix}
\tc_2^1 r \\
\tc_2^2 r
      \end{pmatrix}(1+O(r^{-2}) ) ) .
\end{align*} 
Thus, by evaluating at $r=2r_{\varepsilon},$ we obtain the bound $| \omega_{22}^{+}(z)| \lesssim  e^{-\frac{3}{\sqrt{2}}r_{\varepsilon}}  .$ Similarly to $\omega_{12}^{+}(z), $ we find $|  \omega_{32}^{+}(z)| \simeq 1.$ Using the exponential growth in the asymptotics of $\Psi^{+}_4$ and evaluating the Wronskian at $r=\frac{r_{\varepsilon}}{5},$ we obtain the rough estimate $|\omega_{42}^{+}(z)| \lesssim  e^{\frac{3}{\sqrt{2}} \frac{r_{\varepsilon}}{5}  } .$ \\

Next, we estimate $\omega_{l3}^{+}(z) := W\big( \Psi^{+}_{l}(r,z), \upvarphi_3(r,z) \big)$, using a similar argument with the definition of $\upvarphi_3(r,z)$ in \eqref{eq:defF_2(r,z)-scenarioI}, \eqref{eq:asumptotic-phi_j(r)-non-resonance} and Claim \ref{Claim:Gamma_3-and-2-large-r-non-resonance}. Using the exponential decay in the asymptotics of $\varphi^{+}_3$  and evaluating the Wronskian at $r=r_{\varepsilon},$ we obtain the rough estimate 
\begin{align*}
 | \omega_{13}^{+}(z)|    \lesssim  e^{-\frac{3}{\sqrt{2}}r_{\varepsilon}}  , \quad |\omega_{23}^{+}(z)| \lesssim  e^{-\frac{6}{\sqrt{2}}r_{\varepsilon}} , \quad |    \omega_{33}^{+}(z)|  \lesssim  e^{-\frac{3}{\sqrt{2}}r_{\varepsilon}}  
\end{align*}
Using a similar argument, and noting that $\Psi^{+}_{4}$ exhibits exponential growth at the same rate as $\varphi^{+}_3$, we get $ |  \omega_{43}^{+}(z)| \simeq 1 .  $

Using similar argument with the definition of $\upvarphi_4(r,z)$ in \eqref{eq:defF_2(r,z)-scenarioI}, \eqref{eq:asumptotic-phi_j(r)-non-resonance} and Claim \ref{Claim:Gamma_3-and-2-large-r-non-resonance}, we estimate $ \omega_{l4}^{+}:= W( \Psi^{+}_{l}(r,z), \upvarphi_4(r,z)).$  

\begin{align*}
      \omega_{14}^{+}(z) &:= W( \Psi^{+}_{1}(r,z),  \upvarphi_4(r,z))  \\
      & = W( \begin{pmatrix}
           e^{i k_1(z) r } \\
            c_1 (z) e^{i k_1(z) r } 
      \end{pmatrix} (1+O(\frac{1}{\sqrt{|\xi|}}e^{-2r_{\infty}}) ) ,
      \begin{pmatrix}
\tc_4^1  \\
\tc_4^2 
      \end{pmatrix}(1+O(r^{-1} ))  )  = c \, k_1(z)+ + O(|\xi|^{\frac{3}{2}})
\end{align*} 
Indeed,  
\begin{align*}
    W( \Psi^{+}_{1}(r,z), \upvarphi_4(r,z)) = W( \begin{pmatrix}
           e^{i k_1(z) r } \\
            c_1 (z) e^{i k_1(z) r } 
      \end{pmatrix} ,
      \begin{pmatrix}
\tc_4^1  \\
\tc_4^2 
      \end{pmatrix} ) &= e^{ik_1(z) r }  i k_1(z) ( - \tc_4^1 +  c_1(z) \tc_4^2) \\ &=e^{ik_1(z) r }  i k_1(z) ( - \tc_4^1 + (-i \sqrt{17} + O(z)) (-i) \sqrt{17}  \tc_4^1 ) \\
      & = c \, k_1(z) + + O(|\xi|^{\frac{3}{2}}).
\end{align*}
Using the exponential decay of $\Psi^{+}_{2}(r,z), $ and by evaluating the Wronskian at $r=r_{\varepsilon},$ we get $|\omega_{24}^{+}(z)|  \lesssim      e^{ - \frac{3}{\sqrt{2}} r_{\varepsilon} }.$ Similarly to the estimate for $\omega_{14}^{+}(z)$ we have  $|\omega_{34}^{+}(z)| =c \, k_1(z) + + O(|\xi|^{\frac{3}{2}}).$  Finally, due to the exponential growth in the asymptotics of  $\Psi^{+}_{2}(r,z), $ and evaluating the at$r=\frac{r_{\varepsilon}}{5}$ we obtain $|\omega_{44}^{+}(z)| \lesssim   e^{\frac{3}{\sqrt{2}} \frac{r_{\varepsilon}}{5}}.$ The last assertion of the lemma follows from the preceding estimate on $\omega_{ij}$, which concludes the proof of Lemma \ref{lemma:omega_j-behavior}.

\end{proof}

Next, we state the relation between the connection coefficients for $\im(z)>0$ and  $\im(z)<0,$  that is, the identities relating $ \omega_{ij}^{-}(z)= \omega_{ij}^{+}(z). $ The following identities hold for both large and small $|\xi|.$

\begin{lemma}
\label{lem:parity-of-omega-j}
   Let $z\in \Omega,$ then we have 
\begin{align*}
    \omega_{i1}^{-}(z)=- \omega^{+}_{i1}(-z), \quad  \omega_{i2}^{-}(z)= \omega^{+}_{i2}(-z) , \quad  
    \omega_{i3}^{-}(z)= -\omega^{+}_{i3}(-z), \quad
    \omega_{i4}^{-}(z)= \omega^{+}_{i4}(-z)
\end{align*}
    Moreover, \begin{align}
    \label{eq:parity-w21-w22}
        \omega^{+}_{21}(-z)= -\omega^{+}_{21}(z), \quad \omega^{+}_{22}(-z)=  \omega^{+}_{22}(z)
    \end{align}
\end{lemma}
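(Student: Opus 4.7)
The plan is as follows. For the first four identities, I would work directly from the defining symmetries of $\Psi^-$ and $\upvarphi_j$. From $\Psi^-(r,z) := \sigma_3 \Psi^{+}(r,-z)$ (see \eqref{eq:def-Psi-minus-small-xi} or \eqref{eq:def-Psi-minus-large-xi}) and the parities \eqref{eq:sym-varphi_j}, which give $\upvarphi_j(r,z) = \epsilon_j \sigma_3 \upvarphi_j(r,-z)$ with $\epsilon_1 = \epsilon_3 = -1$ and $\epsilon_2 = \epsilon_4 = +1$, I would combine with the pointwise identity $W(\sigma_3 \mathbf{f}, \sigma_3 \mathbf{g}) = W(\mathbf{f}, \mathbf{g})$ (immediate from $\sigma_3^2 = I$ and $W(\mathbf{f},\mathbf{g}) = f_1 g_1' - f_2 g_2'$) to obtain
\begin{equation*}
  \omega^-_{ij}(z) = W\bigl(\sigma_3 \Psi^+_i(\cdot,-z),\, \epsilon_j \sigma_3 \upvarphi_j(\cdot,-z)\bigr) = \epsilon_j\, \omega^+_{ij}(-z),
\end{equation*}
matching the four stated cases $j = 1,2,3,4$.

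For \eqref{eq:parity-w21-w22}, the main content is the self-symmetry $\Psi^+_j(r,-z) = \sigma_3 \Psi^+_j(r,z)$ for the Weyl--Titchmarsh solutions themselves. To prove this, I would first note that $k_j(-z) = k_j(z)$ (Remark \ref{rem:behavior-k_j-lambda}) and $c_j(-z) = -c_j(z)$, which immediately yield $\Uppsi^+_{\infty,j}(r,-z) = \sigma_3 \Uppsi^+_{\infty,j}(r,z)$. Since $D^+(z)$ depends only on $k_j$ and $c_j^2$, one also has $D^+(-z) = D^+(z)$. Using in addition the anti-commutation $\sigma_3 \sigma_1 = -\sigma_1 \sigma_3$, a direct calculation gives the kernel symmetry
\begin{equation*}
  \mathcal{R}^+_j(r,s,-z) = -\sigma_3\, \mathcal{R}^+_j(r,s,z)\, \sigma_3.
\end{equation*}
Because $V$ is off-diagonal we also have $\sigma_3 V = -V \sigma_3$. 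Substituting these three identities into the fixed-point equation \eqref{eq:def-Psi+(r,z)-xi-small} (resp.\ \eqref{eq:def-Psi+(r,z)-xi-large}) evaluated at $-z$ and multiplying through by $\sigma_3$, the two sign flips (one from the kernel, one from $\sigma_3 V$) cancel, and $G(r) := \sigma_3 \Psi^+_j(r,-z)$ is seen to satisfy the \emph{same} fixed point equation as $\Psi^+_j(r,z)$. The uniqueness of the Lyapunov--Perron contraction in Propositions \ref{prop:contraction-T-j-small-xi-infty} and \ref{prop:contraction-T-j-large-xi-infty} then forces $G(r) = \Psi^+_j(r,z)$, i.e.\ $\Psi^+_j(r,-z) = \sigma_3 \Psi^+_j(r,z)$.

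With this symmetry in hand, the parity \eqref{eq:parity-w21-w22} follows by the same Wronskian manipulation as in the first step, applied to $j = 2$ together with the $\upvarphi_1, \upvarphi_2$ parities: $\omega^+_{21}(-z) = W(\sigma_3 \Psi^+_2(\cdot,z), -\sigma_3 \upvarphi_1(\cdot,z)) = -\omega^+_{21}(z)$ and $\omega^+_{22}(-z) = W(\sigma_3 \Psi^+_2(\cdot,z), \sigma_3 \upvarphi_2(\cdot,z)) = \omega^+_{22}(z)$. The main obstacle is not analytic difficulty but bookkeeping: one has to chase the three minus signs coming from $\sigma_3\sigma_1 = -\sigma_1\sigma_3$, from $\sigma_3 V \sigma_3 = -V$, and from the parity $c_j(-z) = -c_j(z)$, and verify that they combine so that the transformed fixed-point equation is identical to, rather than a twisted version of, the original one.
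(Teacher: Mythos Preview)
Your proof is correct and follows the same approach as the paper. The paper's own proof simply cites the symmetries \eqref{eq:sym-varphi_j}, \eqref{eq:def-Psi-minus-small-xi}, \eqref{eq:def-Psi-minus-large-xi} for the first four identities and asserts $\Psi^{+}_{2}(r,-z)=\sigma_3\Psi^{+}_{2}(r,z)$ for \eqref{eq:parity-w21-w22} without further justification; you have supplied a clean uniqueness argument for that self-symmetry via the fixed point, which is exactly the natural way to verify it.
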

\begin{proof}
The relations between $\omega_{ij}^{-}(z)$ and $\omega_{ij}^{+}(z)$ follow immediately from the  identities \eqref{eq:sym-varphi_j}, \eqref{eq:def-Psi-minus-small-xi} and \eqref{eq:def-Psi-minus-large-xi}
\begin{align*}
 \Psi^{-}(r,z)&=\sigma_3  \Psi^{+}(r,-z) \qquad  \upvarphi_1(r,z)=-\sigma_3 \upvarphi_1(r,-z), \; \;
\qquad \upvarphi_3(r,z)=-\sigma_3 \upvarphi_3(r,-z), \\
\PPsi^{-}(r,z)& =\sigma_3  \PPsi^{+}(r,-z) \qquad \upvarphi_2 (r,z)=\sigma_3 \upvarphi_2(r,-z), \qquad  \qquad \upvarphi_4(r,z)=\sigma_3 \varphi_4(r,-z).  
\end{align*}
For the parity claim \eqref{eq:parity-w21-w22} of $\omega^{+}_{21}(z)$ and $\omega^{+}_{22}(z),$ follows the fact that $\Psi^{+}_{2}(r, -z)= \sigma_3 \Psi^{+}_{2}(r, z). $
\end{proof}
 Next, we evaluate the Wronskians between the solutions constructed at infinity for  small values of $|\xi|$. Note that
\begin{equation}
    \label{eq:Mu-symm}
    \mu^{+}_{ij}(-z) = W\left( \sigma_3 \Psi^{-}_i(\cdot, z), \sigma_3 \Psi^{-}_j(\cdot, z) \right) = \mu^{-}_{ij}(z).
\end{equation}
Therefore, it suffices to evaluate the Wronskians $\mu^{+}_{ij}$ for $\im(z) > 0$ and the corresponding values of $\mu^{-}_{ij}$ then follow by symmetry.

\begin{lemma}
\label{lem:mu-behavior-small-xi}
Let $z= \xi \pm \frac{\sqrt{17}}{8} \in \Omega$ with $\pm \im(z)>0$ and 
 $0<|\xi|<\delta_0.$ Then for some non-zero constant $c$ we have  
\begin{align*}
 \mu^{\pm}_{11}(z)&=\mu^{\pm}_{12}(z)=0,   \quad \mu_{13}^{\pm}(z)=c \, k_1(z) \pm O(|\xi|^{\frac{3}{2}}), \quad  |\mu_{14}^{\pm}(z)| \lesssim e^{\frac{3}{\sqrt{2}} \frac{r_{\epsilon}}{5}} \\
 \mu^{\pm}_{22}(z)&=\mu^{\pm}_{23}(z)=0, \quad \mu^{\pm}_{24}(z) \simeq 1, \quad |\mu^{\pm}_{34}(z) | \lesssim  e^{\frac{3}{\sqrt{2}} \frac{r_{\epsilon}}{5}}. 
\end{align*}
\end{lemma}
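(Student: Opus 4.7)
The plan is to use the fact that every $\mu^{\pm}_{ij}$ is independent of $r$ (since both columns of $\Psi^{\pm}$ and $\widetilde{\Psi}^{\pm}$ solve $(i\mathcal{L}-z)\cdot=0$), and to evaluate each at a carefully chosen point depending on the case. By the symmetry identity~\eqref{eq:Mu-symm}, which is immediate from \eqref{eq:def-Psi-minus-small-xi}, it suffices to treat $\mu^+_{ij}$. The identities $\mu^+_{11}=\mu^+_{22}=0$ are immediate from the antisymmetry of the vector Wronskian.

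For $\mu^+_{12}$ and $\mu^+_{23}$, I would evaluate the Wronskian in the limit $r\to\infty$. The leading-order contribution comes from $W(\Uppsi^+_{\infty,1},\Uppsi^+_{\infty,2})$ and $W(\Uppsi^+_{\infty,2},\widetilde{\Uppsi}^+_{\infty,3})$, both of which vanish identically because each carries the algebraic factor $(1-c_1(z)c_2(z))$. The identity $c_1(z)c_2(z)=1$ follows from Vieta's formulas applied to $u^2+\tfrac{9}{2}u+\tfrac{17}{16}-4z^2=0$ (whose roots are $k_1(z)^2$ and $k_2(z)^2$): a short computation yields $(\tfrac12 k_1^2+\tfrac{17}{8})(\tfrac12 k_2^2+\tfrac{17}{8})=-z^2$. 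The remaining cross-terms $W(\rho_i,\Uppsi^+_{\infty,j})+W(\Uppsi^+_{\infty,i},\rho_j)+W(\rho_i,\rho_j)$, where $\rho_i:=\Psi^+_i-\Uppsi^+_{\infty,i}$ is controlled pointwise by Proposition~\ref{prop:contraction-T-j-small-xi-infty}, decay to zero as $r\to\infty$ because the rapid decay of $\Psi^+_2$ (rate $\im k_2\simeq 3/\sqrt{2}$) dominates the slow decay/growth of $\Psi^+_1$ and $\Psi^+_3$ (rate $|\im k_1|\simeq\sqrt{|\xi|}$). Since $W\to 0$ in the limit and $W$ is constant in $r$, this forces $\mu^+_{12}=\mu^+_{23}=0$.

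For $\mu^+_{13}$ and $\mu^+_{24}$, again passing to $r\to\infty$, the leading order is given by $\delta(z)=-2ik_1(1-c_1^2)$ and $\alpha(z)=-2ik_2(1-c_2^2)$ from Lemma~\ref{claim:D-Psi-infty-small-z}. Expanding $c_1(z)=-i\sqrt{17}+O(\xi)$ and $c_2(z)=i/\sqrt{17}+O(\xi)$ as $z\to\pm\tfrac{\sqrt{17}}{8}$, together with $k_2(z)=\tfrac{3i}{\sqrt{2}}+O(\xi)$, gives $1-c_1^2=18+O(\xi)$ and $1-c_2^2=\tfrac{18}{17}+O(\xi)$, whence $\delta(z)=-36ik_1(z)+O(|\xi|^{3/2})$ (using $|k_1(z)|\simeq\sqrt{|\xi|}$) and $\alpha(z)=\tfrac{108}{17\sqrt{2}}+O(\xi)$. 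The corrections from $\rho_i$ contribute at worst $O(\varepsilon_\infty|k_1|)$ to $\mu^+_{13}$ and $O(\varepsilon_\infty)$ to $\mu^+_{24}$, which are absorbed into the nonzero constant $c$ (for $\varepsilon_\infty$ small) and into the $\simeq 1$ estimate, respectively.

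Finally, for $\mu^+_{14}$ and $\mu^+_{34}$ I would evaluate the constant Wronskian at the intermediate point $r=r_\varepsilon/5\in(r_\infty,r_0)$, where Proposition~\ref{prop:contraction-T-j-small-xi-infty} supplies uniform pointwise control. At this scale $|k_1|r_\varepsilon\lesssim\varepsilon$, so $|\Psi^+_1(r_\varepsilon/5)|,|\Psi^+_3(r_\varepsilon/5)|\lesssim 1$ (and derivatives are bounded by $|k_j|$ times $1$), while $|\Psi^+_4(r_\varepsilon/5)|=|\widetilde{\Psi}^+_4(r_\varepsilon/5)|\lesssim|e^{-ik_2 r_\varepsilon/5}|\lesssim e^{3r_\varepsilon/(5\sqrt{2})}$. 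The pointwise estimate
\[
|W(\Psi^+_1,\Psi^+_4)|\lesssim|\Psi^+_1||\partial_r\Psi^+_4|+|\partial_r\Psi^+_1||\Psi^+_4|\lesssim e^{3r_\varepsilon/(5\sqrt{2})}
\]
then yields the claim, and $\mu^+_{34}$ is handled identically. The main technical subtlety throughout is distinguishing corrections that vanish in the $r\to\infty$ limit (enough to conclude the exact identities $\mu^+_{12}=\mu^+_{23}=0$) from those that persist at leading order (contributing to the nonzero $\mu^\pm_{13},\mu^\pm_{24}$).
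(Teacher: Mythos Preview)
Your proposal is correct and takes essentially the same approach as the paper: the paper's proof is extremely terse (it simply says to use the large-$r$ asymptotics of $\Psi^+_j$ and repeat the pointwise Wronskian computations from Lemma~\ref{lemma:omega_j-behavior}), and you have filled in precisely those details. Your treatment of the vanishing Wronskians $\mu^+_{12},\mu^+_{23}$ via the limit $r\to\infty$ (using that $\Psi^+_2$ decays at rate $\simeq 3/\sqrt{2}$ while $\Psi^+_1,\Psi^+_3$ oscillate or grow at the much slower rate $\simeq\sqrt{|\xi|}$) is a clean way to obtain the exact zeros, and your pointwise evaluations at $r=r_\varepsilon/5$ for $\mu^+_{14},\mu^+_{34}$ match the paper's strategy exactly.
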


\begin{proof}
Note that, in view of the large $r$ asymptotics of $\Psi^{+}_j(r,z)$ for small $|\xi|$ and using similar computation to those in the proof of Lemma \ref{lemma:omega_j-behavior} for estimating $\omega^{+}_{ij}$ we obtain the desired results.
\end{proof}

Next,  we evaluate the Wronskians between the solutions constructed from $r=0$ and those constructed at infinity for large $|\xi|.$ Recall that throughout this paper (for non-resonance case), we choose $\tvarepsilon_0$ and $\tvarepsilon_{\infty}$ such that $\tr_{\infty}=\frac{\tvarepsilon_{\infty}}{\sqrt{|\xi|}}< \tr_0=\frac{\tvarepsilon_0}{\sqrt{|\xi|}}. $ For $1<\Lambda_0<|\xi|,$ we fix $\tr_\varepsilon \equiv \tr_{\varepsilon}(z) \in (\tr_{\infty},\tr_0)$  such that $(\frac{\tr_{\varepsilon}}{10}, 4 \tr_{\varepsilon})\subseteq (\tr_{\infty},\tr_0),$ and $\tr_{\varepsilon}\simeq \frac{\varepsilon}{\sqrt{|\xi|}}$ for some small constant $\varepsilon.$  We will evaluate the Wronskians for some $r \in (\frac{\tr_{\varepsilon}}{10}, 4 \tr_{\varepsilon}).$  \\

Note that, in view of our choices of $\tr_{\infty}$ and $\tr_0$, and for sufficiently large $|\xi|$, the evaluation of the Wronskians relies on the small $r$ behavior of $\Psi^{+}_l$ and $\varphi_j$.

\begin{lemma}
\label{lem:omega_j-behavior-large-xi}
Let $z=  \xi \pm \frac{\sqrt{17}}{8}  \in \Omega,$ with $\im(z)>0$ and $1<\Lambda_0 < |\xi|.$  Then we have 
    \begin{align*}
  |\omega_{11}^{+}(z)|  &= -2 c_{+} k_1(z)^{-\frac{ 1}{2}} c_1(z) c_1^2 + O(|\xi|^{-1}) \\
  |\omega_{21}^{+}(z)| & = -2 c_{+} k_2(z)^{- \frac{ 1}{2}} c_2(z) c_1^2 + O(|\xi|^{-1})  \\
 |\omega_{12}^{+}(z)| & = 2 c_{+} k_1(z)^{-\frac{1}{2}}(c_2^1 -c_1(z) c_2^2) + O(|\xi|^{-1}) \\ 
  |\omega_{22}^{+}(z)| & = 2 c_{+} k_2(z)^{-\frac{1}{2}}(c_2^1 -c_2(z) c_2^2) + O(|\xi|^{-1}) \\
   |\omega_{31}^{+}(z)| & =-2 c_{-} k_1(z)^{-\frac{ 1}{2}} c_1(z) c_1^2 + O(|\xi|^{-1})   \\
   |\omega_{32}^{+}(z)| & = 2 c_{-} k_1(z)^{-\frac{1}{2}}(c_2^1 -c_1(z) c_2^2) + O(|\xi|^{-1})  \\
    |\omega_{41}^{+}(z)| & = 2 c_{-} k_2(z)^{-\frac{1}{2}} c_2(z)c_1^2 + O(|\xi|^{-1}) \\
     |\omega_{42}^{+}(z)| & =2 c_{-} k_2(z)^{-\frac{1}{2}}(c_2^1 -c_2(z) c_2^2) + O(|\xi|^{-1}) 
    \end{align*} 
    where $c_{\pm}\neq 0$ constants, $c_j(z)$ are complex, $c_1^2\neq 0 $ and $c_2^2$ are real constants. Moreover, 
  \begin{align*}
     d^{+}(z):=\omega_{22}^{+}(z) \omega_{11}^{+}(z) -\omega_{12}^{+}(z)  \omega_{21}^{+}(z)= c \, k_1(z)^{-\frac{1}{2}}  k_2(z)^{-\frac{1}{2}} + O(|\xi|^{-1}) \neq 0. 
  \end{align*}
  where $c>0$ constant. 
\end{lemma}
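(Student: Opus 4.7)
The plan is to exploit the fact that both $\Psi_i^+(\cdot,z)$ and $\upvarphi_j(\cdot,z)$ solve the same system $(i\mathcal{L}-z)F=0$, so that the Wronskians $\omega_{ij}^+(z)=W(\Psi_i^+,\upvarphi_j)$ are independent of $r$ and may be evaluated at a convenient point in the overlap region $(\tr_\infty,\tr_0)$. Concretely, I would fix $r=\tr_\varepsilon\simeq \varepsilon/\sqrt{|\xi|}$, at which point $k_j(z)r$ and $r$ are both small (of size $O(\varepsilon)$), so that both the small-$r$ expansions of $\upvarphi_j$ from Proposition~\ref{prop:contraction-large-xi} and the small-argument Bessel expansions of $h_\pm(k_j(z)r)$ entering $\Psi_i^+$ are simultaneously valid.

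Step one is to record the leading behavior of $\Psi_i^+$ at $r=\tr_\varepsilon$. From \eqref{eq:hpm-Hankel} and the standard small-argument expansion of $H_1^{(1,2)}$, one has $h_\pm(w)=c_\pm\,w^{-1/2}(1+O(w^2\log w))$ as $w\to 0$, for explicit nonzero constants $c_\pm$. Combined with the fixed-point estimate of Proposition~\ref{prop:contraction-T-j-large-xi-infty}, which controls the correction $T_{E,j}(\Psi^+)$ of size $O(|\xi|^{-1/2})$ relative to the leading Hankel profile, this yields
\[
\Psi_1^+(r,z)= c_+ (k_1(z)r)^{-1/2}\begin{pmatrix}1\\ c_1(z)\end{pmatrix}(1+O((k_1(z)r)^2)+O(|\xi|^{-1/2})),
\]
uniformly on the overlap region, and analogously for $\Psi_2^+$, $\Psi_3^+$ (with $c_-$ in place of $c_+$ due to $h_-$), and $\Psi_4^+$. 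Step two is to insert the explicit small-$r$ expansions of $\upvarphi_j$ from \eqref{eq:asumptotic-phi_j(r)-non-resonance} and the fixed-point bound of Proposition~\ref{prop:contraction-large-xi}, which give $\upvarphi_1=(0,c_1^2 r^{3/2})^t+O(r^{7/2})$, etc., and to compute each Wronskian to leading order. For example,
\[
\omega_{11}^+(z)=W(\Psi_1^+,\upvarphi_1)\Big|_{r=\tr_\varepsilon} = -\tfrac{3}{2} c_+ k_1(z)^{-1/2} c_1(z) c_1^2 - \tfrac{1}{2} c_+ k_1(z)^{-1/2} c_1(z) c_1^2 + O(|\xi|^{-1}),
\]
which gives the claimed $-2c_+ k_1(z)^{-1/2} c_1(z) c_1^2 +O(|\xi|^{-1})$; the other seven entries are obtained by the same bilinear calculation using the appropriate column of $\Psi_i^+$ and of $\upvarphi_j$.

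Step three is the error analysis. Two sources of error must be combined: the $O((k_j(z)\tr_\varepsilon)^2)$ from the Hankel expansion and the $O(\tr_\varepsilon^2\log\tr_\varepsilon)$ from the small-$r$ expansions of $\upvarphi_j$. Since $|k_j(z)|\simeq |\xi|^{1/2}$ for $j=1,2$ in the large-$|\xi|$ regime (by Remark~\ref{K_j-C_j-D_j-behavior-large-xi}) and $\tr_\varepsilon\simeq \varepsilon/\sqrt{|\xi|}$, both contributions are of order $O(|\xi|^{-1})$, as are the contributions from the fixed-point corrections, which are explicitly bounded in the $\mathcal{\tilde B}_j$ and $\mathcal{Y}_j$ norms. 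Finally, the determinant formula for $d^+(z)$ follows by expanding the $2\times 2$ minor using the four leading formulas just derived and observing that the $k_1(z)^{-1/2}k_2(z)^{-1/2}$ prefactor appears in every term and the remaining matrix of coefficients in $c_j(z)$, $c_2^1$, $c_2^2$, $c_1^2$ has nonvanishing determinant (here one uses $c_1(z)\neq c_2(z)$, guaranteed by the distinctness of roots $k_1,k_2$ on $\Omega$, together with the fact that in both asymptotic regimes $c_1(z)+c_2(z)$ and $c_1(z)c_2(z)$ have limits computable from the defining relation $\frac{1}{2}k_j^2+\frac{17}{8}=-izc_j$).

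The main obstacle will be the uniform control of the error terms: one must track, in the overlap region $r\simeq \tr_\varepsilon$, the combined size of (i) the subleading Bessel terms, (ii) the subleading algebraic corrections to $\upvarphi_j$, and (iii) the Volterra-type corrections produced by the fixed-point problems at both ends. Proving that all three are effectively of order $|\xi|^{-1}$ requires revisiting the explicit weighted bounds from Proposition~\ref{prop:contraction-large-xi} and Proposition~\ref{prop:contraction-T-j-large-xi-infty}, differentiating the corresponding integral identities once (to capture the Wronskian), and using the bounds on the derivatives $\partial_r\mathcal{T}_j^+$ and $\partial_r T_{E,j}$ established there. Everything else is a direct bilinear computation.
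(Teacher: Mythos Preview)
Your approach is essentially the same as the paper's: evaluate the $r$-independent Wronskians at a point $\tr_\varepsilon\simeq \varepsilon/\sqrt{|\xi|}$ in the overlap region, using the small-$r$ (Bessel) asymptotics of $\Psi_l^+$ from \eqref{eq:hpm-Hankel} together with Proposition~\ref{prop:contraction-T-j-large-xi-infty}, and the small-$r$ expansions of $\upvarphi_j$ from \eqref{eq:asumptotic-phi_j(r)-non-resonance} together with Proposition~\ref{prop:contraction-large-xi}. The paper's proof simply refers back to the computations of Lemma~\ref{lemma:omega_j-behavior} and omits the details; your sample calculation of $\omega_{11}^+$ and the error bookkeeping are exactly the kind of detail that would fill that omission.
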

\begin{proof}
The proof closely follows the argument used in Lemma~\ref{lemma:omega_j-behavior}. For $\Psi^{+}_l$, we employ its definition in \eqref{eq:def-Psi+(r,z)-xi-large} and \eqref{eq:def-tilde-Psi+(r,z)-xi-large} for large $\xi$, together with the asymptotic behavior of $h_{\pm}(z)$ given in \eqref{eq:hpm-Hankel} for small $r$, and the estimates derived in the proof of Proposition~\ref{prop:contraction-T-j-large-xi-infty}, particularly Claim~\ref{claim:contraction-Upsilon-large-xi}. For $\varphi_j$, we use the definitions in \eqref{eq:defF_1(r,z)-scenarioI} and \eqref{eq:defF_2(r,z)-scenarioI}, along with the estimates from the proof of Proposition~\ref{prop:contraction-large-xi}, especially Claim~\ref{Gamma_j-beahvior-large-xi}.
We omit the details. 
\end{proof}

 Next, we evaluate the Wronskians between the solutions constructed at infinity for  large values of $|\xi|$ for $\im(z)>0.$ The corresponding values of $\mu^{-}_{ij}$ follow by symmetry see \eqref{eq:Mu-symm}.

\begin{lemma}
\label{lem:mu-behavior-large-xi}
Let $z=  \xi \pm \frac{\sqrt{17}}{8}  \in \Omega,$ with $\im(z)>0$ and $1<\Lambda_0 < |\xi|,$ and  Then for some non-zero constant $c$ we have
\begin{align*}
 \mu^{\pm}_{11}(z)&=\mu^{\pm}_{12}(z)=0,   \quad \mu_{13}^{\pm}(z)=c \, k_1(z) \pm O(|\xi|^{\frac{3}{2}}), \quad  \mu_{14}^{\pm}(z)=0,\\
 \mu^{\pm}_{22}(z)&=\mu^{\pm}_{23}(z)=0, \quad |\mu^{\pm}_{24}(z)| \simeq \sqrt{|\xi|} ,  \quad \mu^{\pm}_{34}(z) =0.
\end{align*}
\end{lemma}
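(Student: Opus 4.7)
The overall plan is to exploit two facts: (i) each Wronskian $\mu^{\pm}_{ij}(z)$ is independent of $r$ since both arguments solve $i\mathcal{L}F=zF$ with the \emph{same} spectral parameter, so we may evaluate it at any convenient $r$; and (ii) the $\pm$ cases are related by \eqref{eq:def-Psi-minus-large-xi}, i.e.\ $\Psi^-(r,z)=\sigma_3\Psi^+(r,-z)$ and $\widetilde{\Psi}^-(r,z)=\sigma_3\widetilde{\Psi}^+(r,-z)$, which together with $\sigma_3^t\sigma_3=I$ gives $\mu^-_{ij}(z)=\mu^+_{ij}(-z)$. Hence it suffices to treat $\im(z)>0$ and read off $\mu^-_{ij}(z)$ from the stated symmetric formulas.

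The first step is to record the algebraic identity that is the engine of the proof:
\begin{equation*}
c_1(z)\,c_2(z)=1,\qquad z\in\Omega,
\end{equation*}
which follows from the factorization $\tfrac14 k^4+\tfrac98 k^2+\tfrac{17}{64}=(\tfrac12 k^2+\tfrac18)(\tfrac12 k^2+\tfrac{17}{8})$, giving $(\tfrac12 k_j^2+\tfrac{17}{8})=z^2/(\tfrac12 k_j^2+\tfrac18)$, and from Vi\`ete's relations $(\tfrac12 k_1^2+\tfrac18)(\tfrac12 k_2^2+\tfrac18)=-z^2$ applied to the quadratic in $k^2$. Together with the large $|\xi|$ asymptotics $c_j(z)^2=-1+O(|\xi|^{-1})$ (which follow from $\tfrac12 k_1^2(z)+\tfrac18=z+O(1)$ and $\tfrac12 k_2^2(z)+\tfrac18=-z+O(1)$ noted in Remark \ref{K_j-C_j-D_j-behavior-large-xi}), this will give $1-c_1(z)c_2(z)=0$ identically and $1-c_j(z)^2=2+O(|\xi|^{-1})$.

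Next, using Proposition~\ref{prop:contraction-T-j-large-xi-infty}, for every fixed $r\gg\tilde r_\infty$ we have
\begin{equation*}
\Psi^{+}_1(r,z)=\Uppsi^{+}_{E,1}(r,z)+O_{\tilde\varepsilon_\infty}(|\xi|^{-1/2}),\qquad
\Psi^{+}_2(r,z)=\Uppsi^{+}_{E,2}(r,z)+O_{\tilde\varepsilon_\infty}(|\xi|^{-1/2}),
\end{equation*}
and analogously $\widetilde{\Psi}^{+}_j\approx\widetilde{\Uppsi}^{+}_{E,j}$ for $j=3,4$, with the same control on derivatives in the $\mathcal{\tilde B}'_j$ norms. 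Passing to the limit $r\to\infty$ using $h_+(w)\sim e^{iw}$, $h_-(w)\sim e^{-iw}$ and $h'_\pm(w)\sim\pm ih_\pm(w)$ (Lemma~\ref{lem:h-properties}), the scalar Wronskian identity $W[h_+(kr),h_-(kr)]=-2ik$ survives. Recalling that the matrix Wronskian decomposes as $W(\mathbf f,\mathbf g)=W[f_1,g_1]-W[f_2,g_2]$, and applied to solutions of the form $\mathbf f_\sigma=h_\sigma(k_ir)(1,c_i)^t$, $\sigma=\pm$, one finds
\begin{equation*}
W(h_\sigma(k_ir)(1,c_i)^t,h_{\sigma'}(k_jr)(1,c_j)^t)=(1-c_ic_j)\,W[h_\sigma(k_ir),h_{\sigma'}(k_jr)].
\end{equation*}
This single identity, combined with $c_1c_2=1$, immediately yields $\mu^\pm_{12}(z)=\mu^\pm_{23}(z)=0$. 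The antisymmetry of the matrix Wronskian gives $\mu^\pm_{11}=\mu^\pm_{22}=0$. For the diagonal-type entries,
\begin{equation*}
\mu^\pm_{13}(z)=(1-c_1(z)^2)\bigl(-2ik_1(z)\bigr)+\text{(error from Prop.~\ref{prop:contraction-T-j-large-xi-infty})}=c\,k_1(z)+O(|\xi|^{-1/2}),
\end{equation*}
which is the stated $c\,k_1(z)\pm O(|\xi|^{3/2})$ with a better remainder. Similarly, for $\widetilde{\Psi}^+_4\sim\widetilde{\Uppsi}^+_{E,4}=h_-(k_2r)(1,c_2)^t$,
\begin{equation*}
W(\Psi^+_2,\widetilde{\Psi}^+_4)(z)=(1-c_2(z)^2)\bigl(-2ik_2(z)\bigr)+O(|\xi|^{-1/2})\simeq \sqrt{|\xi|},
\end{equation*}
using $|k_2(z)|\simeq\sqrt{|\xi|}$.

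Finally, for the entries involving the renormalized $\Psi^+_4$ defined in \eqref{def-Psi-4}, the whole point of that renormalization is exactly to enforce $W(\Psi^+_1,\Psi^+_4)=0$ and $W(\Psi^+_3,\Psi^+_4)=0$; expanding and using $W(\Psi^+_j,\Psi^+_j)=0$ gives $\mu^\pm_{14}(z)=\mu^\pm_{34}(z)=0$ \emph{identically}. Combined with the previous expressions and the substitution $z\mapsto -z$ from \eqref{eq:def-Psi-minus-large-xi}, we recover all the claimed identities for both $\mu^+$ and $\mu^-$. The only mildly delicate point is propagating the Proposition~\ref{prop:contraction-T-j-large-xi-infty} error bounds through the bilinear Wronskian evaluation, but since the correction $\Psi^+_j-\Uppsi^+_{E,j}$ and its derivative satisfy the same exponential decay profile as the leading term with an extra factor of $\tilde\varepsilon_\infty|\xi|^{-1/2}$, the error enters only in the remainder $O(|\xi|^{-1/2})$ and does not affect the leading order or the vanishing identities.
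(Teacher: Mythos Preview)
Your approach is essentially the same as the paper's (the paper's proof is two sentences: it invokes the renormalization \eqref{def-Psi-4} for $\mu^{+}_{14}=\mu^{+}_{34}=0$ and says everything else follows as in Lemma~\ref{lem:omega_j-behavior-large-xi}). Your algebraic detail, especially isolating the identity $c_1(z)c_2(z)=1$ and the factorization $W(\mathbf f,\mathbf g)=(1-c_ic_j)W[\cdot,\cdot]$, is a nice way to organize the computation and matches what the paper uses implicitly in Claim~\ref{claim:D-Psi-E}.

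There is one point to tighten. For the \emph{exact} vanishing of $\mu^{+}_{12}$ and $\mu^{+}_{23}$, your perturbative sentence at the end (``the error enters only in the remainder $O(|\xi|^{-1/2})$ and does not affect \ldots\ the vanishing identities'') is not quite sufficient: a leading-order zero plus an $O(|\xi|^{-1/2})$ correction is not zero. The exact zeros come instead from the $r$-independence of the Wronskian together with decay at infinity of the \emph{full} solutions. For $\mu^{+}_{12}$, both $\Psi^{+}_1$ and $\Psi^{+}_2$ lie in $L^2(1,\infty)$ (indeed $\im k_1,\im k_2>0$ for $\im z>0$), so the constant Wronskian is zero. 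For $\mu^{+}_{23}$, the correction to $\Psi^{+}_2$ is $O(e^{ik_2 r})$ and the correction to $\Psi^{+}_3$ is $O(e^{-ik_1 r})$ by the $\tilde{\mathcal B}_j$-bounds; since $\im k_2\simeq\sqrt{|\xi|}\gg\im k_1$, every product appearing in the Wronskian decays as $r\to\infty$, forcing the constant to be zero. You do gesture at this (``passing to the limit $r\to\infty$''), but the argument should be stated for the full solutions, not only the free ones.
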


\begin{proof}
 Using \eqref{def-Psi-4}, the definition of $\Psi_4^{+}(r,z)$ for large $|\xi|,$  we obtain $\mu^{+}_{41}(z)=\mu^{+}_{43}(z)=0.$ All the other terms can be obtained similarly to the proof of Lemma \ref{lem:omega_j-behavior-large-xi}.  
\end{proof}

\subsection{Solving for the Green's Kernel} 
\label{subsec:GreenKernel}
The goal of this section is to compute the resolvent kernel $(i\mathcal{L}-z)^{-1},$ for $\pm \im(z)>0.$ Therefore, to solve 
\begin{equation}
\label{eq:iL-z-resol}
  (  i \mathcal{L} - z ) \Psi = \Phi, \quad \text{ for } z\in \Omega, \; \text{ with }\pm \im(z)>0,
\end{equation}
 we define the Green's functions as
\begin{align*}
    \mathcal{K}^{\pm}(r,s,z):= \Psi^{\pm}(r,z) S(s,z) \mathbb{1}_{\{ 0<s \leq r  \}} + 
    F_1(r,z) T(s,z) \mathbb{1}_{ \{r \leq s < \infty  \} }, 
\end{align*}
where we require the matrices $S(r,z)$ and $T(r,z)$ to satisfy 
\begin{align*}
    \begin{pmatrix}
        \Psi^{\pm}(r,z) & F_1(r,z) \\
        \partial_r \Psi^{\pm}(r,z) & \partial_r F_1(r,z) 
    \end{pmatrix}
    \begin{pmatrix}
        S(r,z) \\ - T(r,z)
    \end{pmatrix}
    = \begin{pmatrix}
        0 \\ \sigma_2  
    \end{pmatrix}.
\end{align*}
 Then a solution to \eqref{eq:iL-z-resol} for $z \in \Omega, $ with $\pm \im(z)>0,$ is given by 
 \begin{equation}
     \Psi(r) := \int_0^{\infty}   \mathcal{K}^{\pm}(r,s,z) \Phi(s) ds. 
 \end{equation}

The computation of $S(r,z)$ and $T(r,z)$ requires inverting the matrix defined above. We compute the Wronskians between $\Psi^{\pm}(\cdot,z)$ and $F_1(\cdot,z)$ and therefore we obtain the Green's kernel functions.

\begin{lemma}
\label{lem:kernel-K}
 Let $\calD^{\pm}(z):=W(\Psi^{\pm}(\cdot,z),F_1(\cdot,z)).$ Then we have 
 \begin{align*}
\calD^{\pm}(z)=   \begin{pmatrix}
       \omega_{11}^{\pm}(z) & \omega_{12}^{\pm}(z) \\
       \omega_{21}^{\pm}(z) & \omega_{22}^{\pm}(z)
   \end{pmatrix} , \qquad 
 ( \calD^{\pm})^{-1}(z)  : = \frac{1}{d^{\pm}(z) } \begin{pmatrix}
       \omega_{22}^{\pm}(z) & - \omega_{12}^{\pm}(z) \\  
       -\omega_{21}^{\pm}(z) & \omega_{11}^{\pm}(z)
   \end{pmatrix} , 
 \end{align*}
 where $d^{\pm}(z):=\omega_{22}^{\pm}(z) \omega_{11}^{\pm}(z) -\omega_{12}^{\pm}(z)  \omega_{21}^{\pm}(z) \neq 0 .$ \\
 
 Moreover, for $z \in \Omega$ with $\pm \im(z)>0,$ we have 
    \begin{align*}
        \begin{cases}
            S(r,z) = - (\calD^{\pm}(z))^{-t} F_1(r,z)^t \sigma_3 \sigma_2, \\ 
            T(r,z) = - (\calD^{\pm}(z))^{-1} \Psi^{\pm}(r,z)^t \sigma_3 \sigma_2.  
        \end{cases}
    \end{align*}
and
 \begin{align*}
     \mathcal{K}^{\pm}(r,s,z):= \begin{cases}
         i \Psi^{\pm}(r,z) (\calD^{\pm}(z))^{-t} F_1(s,z)^t \sigma_1, \qquad 0<s\leq r , \\ 
         i F_1(r,z) (\calD^{\pm}(z))^{-1} \Psi^{\pm}(s,z)^t \sigma_1, \qquad r \leq s < \infty,
     \end{cases}
 \end{align*}
\end{lemma}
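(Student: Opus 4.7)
The proof breaks into three natural pieces: identifying the matrix $\calD^{\pm}(z)$, verifying its invertibility, and then reading off $S$, $T$, and the kernel $\mathcal{K}^\pm$ from the general inversion identity in Lemma~\ref{inver-2-matrix}. The first piece is immediate from the conventions set up at the end of the introduction: since $\Psi^{\pm}=[\Psi_1^{\pm}\ \Psi_2^{\pm}]$ and $F_1=[\upvarphi_1\ \upvarphi_2]$, the matrix Wronskian $W(\Psi^{\pm},F_1)$ has $(i,j)$-entry $W(\Psi_i^{\pm},\upvarphi_j)=\omega_{ij}^{\pm}$ by definition. Thus $\calD^{\pm}(z)$ is exactly the $2\times 2$ matrix of $\omega_{ij}^\pm$'s, and the explicit inverse formula is the standard $2\times2$ cofactor expression with determinant $d^{\pm}(z)$.

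The central step is showing $d^{\pm}(z)\neq 0$ for $z\in\Omega$ with $\pm\im z>0$. I would combine two arguments. First, in the small-$|\xi|$ regime Lemma~\ref{lemma:omega_j-behavior} gives $|d^{+}(z)|\simeq 1$, and in the large-$|\xi|$ regime Lemma~\ref{lem:omega_j-behavior-large-xi} gives $|d^{+}(z)|\simeq |k_1(z)k_2(z)|^{-1/2}\neq 0$. For intermediate $z$ in the interior of $\Omega_+:=\{z\in\Omega:\im z>0\}$ one uses the spectral argument: if $d^{+}(z_0)=0$ then the columns of $\Psi^{+}(\cdot,z_0)$ and $F_1(\cdot,z_0)$ are linearly dependent modulo the four-dimensional solution space, so some nontrivial linear combination is simultaneously $L^2$ at $r=0$ (from $F_1$) and $L^2$ at $r=\infty$ (from $\Psi^{+}$, using $\im k_j(z_0)>0$). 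That would produce an $L^2$ eigenfunction of $i\calL$ at $z_0$, but by Assumption~\ref{asmp:2} the spectrum of $i\calL$ is real, contradicting $\im z_0>0$. The corresponding statement for $\im z<0$ is analogous. The main obstacle is really bookkeeping here: one needs to confirm the $L^2$ integrability near $r=0$ of the $\upvarphi_j$ components (guaranteed by the $r^{3/2}$ behavior established in Sections~\ref{sec:Sol-near-0-non-resonance}) and the $L^2$ integrability near $r=\infty$ of $\Psi^{\pm}$ (from Sections~\ref{sec:nearinfty-small-xi}--\ref{sec:nearinfty-large-xi} together with $\im k_{1,2}(z)>0$).

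With invertibility of $\calD^{\pm}(z)$ in hand, I would apply Lemma~\ref{inver-2-matrix} with $F=\Psi^{\pm}$ and $G=F_1$. The hypothesis $W(F,F)=W(G,G)=0$ holds because in each case the two columns satisfy the same spectral equation $i\calL\Psi=z\Psi$, so the matrix Wronskian is antisymmetric as a bilinear form in the column index and its diagonal entries vanish by the usual scalar identity $W(f,f)=0$. Writing out the formula from Lemma~\ref{inver-2-matrix} and picking off the right-hand column corresponding to $\binom{0}{\sigma_2}$ yields
\begin{align*}
  \begin{pmatrix} S(r,z)\\ -T(r,z) \end{pmatrix}
  = \begin{pmatrix}
    -(\calD^{\pm})^{-t}F_1(r,z)^t\sigma_3\sigma_2 \\
    (\calD^{\pm})^{-1}\Psi^{\pm}(r,z)^t\sigma_3\sigma_2
  \end{pmatrix},
\end{align*}
which gives the stated $S,T$ after flipping the sign on the second component.

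Finally, substituting these expressions into the definition of $\mathcal{K}^{\pm}(r,s,z)$ and using the identity $\sigma_3\sigma_2=-i\sigma_1$ (as in the proof of Lemma~\ref{lem:def_S-T-near 0}) collapses the two terms into the displayed piecewise formula. The verification that $\Psi(r)=\int_0^\infty \mathcal{K}^{\pm}(r,s,z)\Phi(s)\,ds$ actually solves $(i\calL-z)\Psi=\Phi$ is then a standard variation-of-parameters calculation using the defining conditions on $S$ and $T$; I would include it as a short sanity check at the end rather than as a separate argument.
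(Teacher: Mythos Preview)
Your proposal is correct and follows essentially the same route as the paper: invoking Lemmas~\ref{lemma:omega_j-behavior} and~\ref{lem:omega_j-behavior-large-xi} for $d^{\pm}\neq 0$ at small and large $|\xi|$, a spectral argument (which the paper defers to Lemma~\ref{lem:EV1}) for intermediate $z$, and then the inversion identity of Lemma~\ref{inver-2-matrix} together with $\sigma_3\sigma_2=-i\sigma_1$ exactly as in Lemma~\ref{lem:def_S-T-near 0}. One small point: your justification that $W(\Psi^{\pm},\Psi^{\pm})=0$ and $W(F_1,F_1)=0$ via antisymmetry only gives vanishing of the diagonal entries; the off-diagonal entries $\mu_{12}^{\pm}=W(\Psi_1^{\pm},\Psi_2^{\pm})$ and $\Lambda_{12}=W(\upvarphi_1,\upvarphi_2)$ vanish by evaluating the ($r$-independent) Wronskian at $r\to\infty$ and $r\to 0$ respectively, as recorded in Lemmas~\ref{lem:mu-behavior-small-xi} and~\ref{Lambda_j-behavior}.
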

\begin{proof}
Using Lemma \ref{lemma:omega_j-behavior} and \ref{lem:omega_j-behavior-large-xi}, we have $d^{\pm}(z)\neq 0,$ for small and large $z.$ For the middle frequencies, we will show later in Lemma \ref{lem:EV1} that $d^{\pm}(z)=0$ is equivalent to the existence of an eigenvalue of $i \mathcal{L}$ at $z$. However, by Assumptions \ref{asmp:2} and \ref{asmp:3}, the operator $i \mathcal{L}$ has a real spectrum and no threshold or embedded eigenvalues (see also Lemma \ref{lem:GP-thres-emb-EV}, \ref{lem:GP-spec} for $i\calL_{GP}$). Therefore, $\calD^{\pm}(z)$ is invertible. Similarly to the proof of Lemma \ref{lem:def_S-T-near 0}, we obtain the desired results and we omit the details. 
\end{proof}

Next, we seek to express the Green's functions only in terms of $F_j(\cdot,z),$ i.e., in terms of  $\upvarphi_j.$ Since $\{ \upvarphi_1,\upvarphi_2,\upvarphi_3,\upvarphi_4  \}$ is a fundamental system for $i \mathcal{L} \Psi(r,z)= z \Psi(r,z),$ then we can express $\Psi^{\pm}_{l}(r,z)$ in terms of $\upvarphi_j(r,z),$ i.e., there is exist  $a^{\pm}_{lj} (z)$ such that 
\begin{align}
\label{eq:Psi-interms-phi_j}
    \Psi^{\pm}_{l} (r,z) = \sum_{j=1}^4 a^{\pm}_{lj} (z) \upvarphi_j (r,z), \qquad l=1,\cdots,4.
\end{align}

Therefore, by Lemma \ref{lem:kernel-K}, we have 
\begin{align}
\label{eq:GKermel-phi_j-plus}
\begin{split}
\mathcal{K}^{+}(r,s,z) &=\frac{i}{d^{+}} \sum_{j=1}^4 (   \omega_{22}^{+} a_{1j}^{+} - \omega_{12}^{+}(z) a_{2j}^{+} ) \left(
\upvarphi_j(r,z) \upvarphi_1^t(s,z) \sigma_1 \mathbb{1}_{\{ 0<s \leq r  \}} + \upvarphi_1(r,z) \upvarphi_j^t(s,z) \sigma_1  \mathbb{1}_{\{ r \leq s < \infty  \}}
\right) \\
&+ \frac{i}{d^{+}} \sum_{j=1}^4 (     -\omega_{21}^{+} a_{1j}^{+} + \omega_{11}^{+} a_{2j}^{+} ) \left(
\upvarphi_j(r,z) \upvarphi_2^t(s,z) \sigma_1 \mathbb{1}_{\{ 0<s \leq r  \}} + \upvarphi_2(r,z) \upvarphi_j^t(s,z) \sigma_1  \mathbb{1}_{\{ r \leq s < \infty  \}}
\right) 
\end{split}
\end{align}
\begin{align}
\label{eq:GKermel-phi_j-minus}
\begin{split}
\mathcal{K}^{-}(r,s,z) &=\frac{i}{d^{-}} \sum_{j=1}^4 (   \omega_{22}^{-} a_{1j}^{-} - \omega_{12}^{-}(z) a_{2j}^{-} ) \left(
\upvarphi_j(r,z) \upvarphi_1^t(s,z) \sigma_1 \mathbb{1}_{\{ 0<s \leq r  \}} + \upvarphi_1(r,z) \upvarphi_j^t(s,z) \sigma_1  \mathbb{1}_{\{ r \leq s < \infty  \}}
\right) \\
&+ \frac{i}{d^{-}} \sum_{j=1}^4 (     -\omega_{21}^{-} a_{1j}^{-} + \omega_{11}^{-} a_{2j}^{-} ) \left(
\upvarphi_j(r,z) \upvarphi_2^t(s,z) \sigma_1 \mathbb{1}_{\{ 0<s \leq r  \}} + \upvarphi_2(r,z) \upvarphi_j^t(s,z) \sigma_1  \mathbb{1}_{\{ r \leq s < \infty  \}}
\right) 
\end{split}
\end{align}

Similarly, we will express the columns of $F_1$ in terms of $\Psi_j.$ Since $\{\Psi^{\pm}_1 (\cdot,z),\Psi^{\pm}_2 (\cdot,z) ,\Psi^{\pm}_3 (\cdot,z) ,\Psi^{\pm}_4  (\cdot,z)    \}$ form a fundamental system for $i \mathcal{L} F_1(r,z)=z F_1(r,z),$ then we can express $\upvarphi_l(r,z)$ in terms of   $\Psi^{\pm}_j(r,z),$ i.e., there exist $b^{\pm}_{lj}(z)$ such that 
\begin{align}
\label{exp-phi-terms-psi}
    \upvarphi_l(r,z)= \sum_{j=1}^{4} b^{\pm}_{lj} \Psi^{\pm}_j(r,z), \quad l=1,\cdots, 4.
\end{align}

\begin{lemma} \label{lem:EV1}
Let $z \in \Omega$ such that $z=\lambda + i y ,$ where $\lambda=\xi \pm \frac{\sqrt{17}}{8}$ and $\delta_0<|\xi|<R.$ Then 
    $d^{\pm}(\lambda)=0$ if and only if $i \calL$ has an eigenvalue at $z.$ 
\end{lemma}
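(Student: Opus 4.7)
The identity $d^{\pm}(z) = \det \calD^{\pm}(z)$ shows that $d^{\pm}$ measures linear dependence between the two-dimensional space of solutions of $(i\calL - z)F = 0$ that are $L^2$ near $\infty$ (spanned by the columns $\Psi^{\pm}_1, \Psi^{\pm}_2$) and the two-dimensional space that is $L^2$ near $0$ (spanned by the columns $\varphi_1, \varphi_2$ of $F_1$). An eigenfunction is precisely a nontrivial element simultaneously in both spaces, so the claimed equivalence is essentially a Wronskian dependence criterion.

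\textbf{Sufficiency ($d^{\pm}(\lambda) = 0 \Rightarrow$ eigenvalue).} Assuming $d^{\pm}(\lambda) = 0$, I would select $(c_1, c_2) \neq 0$ in $\ker \calD^{\pm}(\lambda)$ and set $\Xi := c_1 \Psi^{\pm}_1 + c_2 \Psi^{\pm}_2$. By bilinearity, $W(\Xi, \varphi_j) = 0$ for $j = 1, 2$, and $\Xi$ is a nonzero solution of $(i\calL - z)\Xi = 0$ that is $L^2$ near $\infty$ by construction. To show $\Xi \in L^2$ near $0$, I expand $\Xi = \sum_{i=1}^{4} b_i \varphi_i$ in the fundamental system at the origin. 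The antisymmetry $\Lambda_{ji} = -\Lambda_{ij}$ of the Wronskian, together with the values $\Lambda_{12} = \Lambda_{14} = 0$ and $\Lambda_{13}, \Lambda_{24} \neq 0$ from Lemma~\ref{Lambda_j-behavior}, reduces the Wronskian conditions to
\[
W(\Xi, \varphi_1) = -b_3 \Lambda_{13} = 0, \qquad W(\Xi, \varphi_2) = -b_3 \Lambda_{23} - b_4 \Lambda_{24} = 0,
\]
forcing $b_3 = 0$ and then $b_4 = 0$. Hence $\Xi = b_1 \varphi_1 + b_2 \varphi_2$, which is $L^2$ near $0$ by the asymptotics \eqref{eq:asumptotic-phi_j(r)-non-resonance}, making $\Xi$ a genuine eigenfunction for the eigenvalue $z$.

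\textbf{Necessity ($z$ eigenvalue $\Rightarrow d^{\pm}(\lambda) = 0$).} Conversely, if $\Xi$ is a nontrivial $L^2$ eigenfunction at $z$, the $L^2$-near-$0$ condition rules out the singular fundamental branches $\varphi_3, \varphi_4$ (which are $\sim r^{-1/2}$) and forces $\Xi = F_1 \vec{b}$ with $\vec{b} \neq 0$. The $L^2$-near-$\infty$ condition, combined with Lemma~\ref{lem:k_j-behavior} which ensures $\im k_1(z), \im k_2(z) > 0$ for $\pm \im z > 0$, forces $\Xi = \Psi^{\pm} \vec{c}$ for some $\vec{c}$. Taking the Wronskian $W(\Psi^{\pm}_i, \cdot)$ of each representation yields
\[
(\calD^{\pm}(z) \vec{b})_i = W(\Psi^{\pm}_i, \Xi) = c_1 \mu^{\pm}_{i1} + c_2 \mu^{\pm}_{i2}, \qquad i = 1, 2.
\]
The right-hand side vanishes because $\mu^{\pm}_{ii} = 0$ by antisymmetry and $\mu^{\pm}_{12} = -\mu^{\pm}_{21} = 0$. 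The latter is where the main technical difficulty lies, since Lemmas~\ref{lem:mu-behavior-small-xi} and~\ref{lem:mu-behavior-large-xi} verify it only for small and large $|\xi|$. I would close this gap by exploiting $r$-independence of the Wronskian and passing to $r \to \infty$, where $\Psi^{\pm}_j$ reduces to the reference solutions $\Uppsi^{\pm}_{\infty, j}$ (or $\Uppsi^{\pm}_{E, j}$). Vanishing of the reference Wronskian is the algebraic identity
\[
\bigl(\tfrac{1}{2} k_1(z)^2 + \tfrac{17}{8}\bigr)\bigl(\tfrac{1}{2} k_2(z)^2 + \tfrac{17}{8}\bigr) = -z^2,
\]
a Vieta consequence of $P(k, z) = 0$ which was already used to establish the off-diagonal vanishing in the proofs of Lemma~\ref{claim:D-Psi-infty-small-z} and Claim~\ref{claim:D-Psi-E}. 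With this block Wronskian identically zero, $\calD^{\pm}(z) \vec{b} = 0$ with $\vec{b} \neq 0$, giving $d^{\pm}(z) = 0$ as required.
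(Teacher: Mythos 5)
Your necessity direction (eigenvalue $\Rightarrow d^{\pm}=0$) is sound and in fact somewhat cleaner than the paper's: writing the eigenfunction as $F_1\vec b=\Psi^{\pm}\vec c$ and pairing with $W(\Psi^{\pm}_i,\cdot)$ gives $\calD^{\pm}\vec b=0$ directly, using only $\mu^{\pm}_{11}=\mu^{\pm}_{22}=\mu^{\pm}_{12}=0$; your plan to verify $\mu^{\pm}_{12}=0$ at intermediate energies from the $r$-independence of the Wronskian and the Vieta identity $1+c_1(z)c_2(z)=0$ is exactly the mechanism already used in Lemma~\ref{claim:D-Psi-infty-small-z} and is fine.

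The gap is in the sufficiency direction, and it is precisely the step the paper spends most of its proof on. Your sentence ``$\Xi$ is a nonzero solution \ldots that is $L^2$ near $\infty$ by construction'' is only true when $\im z>0$ strictly. The case that matters for the application (ruling out zeros of $d^{\pm}$ on the essential spectrum via absence of embedded eigenvalues, as invoked in Proposition~\ref{jump-resol}) is the boundary value $z=\lambda\pm i0$ with $\lambda$ real. There, by Lemma~\ref{lem:k_j-behavior}, $k_1(\lambda\pm i0)$ is \emph{real}, so $\Psi^{\pm}_1(r,\lambda\pm i0)\sim e^{ik_1(\lambda)r}$ is merely oscillatory and not square integrable; only $\Psi^{\pm}_2$ decays. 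Hence $\Xi=c_1\Psi^{\pm}_1+c_2\Psi^{\pm}_2$ with $(c_1,c_2)$ in the left kernel of $\calD^{\pm}(\lambda)$ is an $L^2$ eigenfunction only if you can additionally show $c_1=0$, and nothing in your argument does so: the Wronskian computation at $r=0$ forces $b_3=b_4=0$ but places no constraint on the coefficient of the oscillatory branch at infinity. The paper closes exactly this hole with a conserved-flux argument: since for real $\lambda$ the map $\Phi\mapsto\sigma_3\overline{\Phi}$ preserves the equation, $W(F_1,\sigma_3\overline{F_1})$ is independent of $r$; it vanishes at $r=0$ by the $r^{3/2}$ asymptotics of $\varphi_1,\varphi_2$, while evaluating it at $r\to\infty$ using $W(\Psi^{+},\sigma_3\overline{\Psi}^{+})=\mathrm{diag}(k(\lambda),0)$ yields $k(\lambda)\,|c_1|^2=0$, whence $c_1=0$. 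Without this (or an equivalent) step your sufficiency argument establishes only a bounded generalized eigenfunction, not an eigenvalue, and the ``if and only if'' fails at the energies where the lemma is actually used.
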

\begin{proof}
We will prove only the case $\operatorname{Im}(z) \geq 0$ and the case $\operatorname{Im}(z) \leq 0$ can be treated similarly. We first assume that, there exists an eigenvalue at $z.$ Then there exist $(\alpha_1,\alpha_2) \neq 0$ such that $ \alpha_1 \varphi_1 + \alpha_2 \varphi_2 = \beta_1 \Psi_1^{+} + \beta_2 \Psi_2^{+},$ for some $\beta_1, \beta_2.$ Notice that in view of \eqref{exp-phi-terms-psi}, we must have $\alpha_1 b_{13}^{+} + \alpha_2 b_{23}^{+}=0$ and $\alpha_1 b_{14}^{+} + \alpha_2 b_{24}^{+}=0,$ i.e, $\det \begin{pmatrix}
    b_{13}^{+} & b_{23}^{+} \\ b_{14} & b_{24}^{+}
\end{pmatrix} =0.$ Recall that $d^{+}= \omega_{11}^{+} \omega_{22}^{+} - \omega_{12}^{+} \omega_{21}^{+}.$ Using \eqref{exp-phi-terms-psi} and the fact that $\mu_{12}^{+}=\mu_{24}^{+}=\mu_{23}^{+}=\mu_{21}^{+}=\mu_{14}^{+}=0,$  we have $\omega_{11}^{+}=b_{13}^{+} \mu_{13}^{+},$ $\omega_{22}^{+}=b_{24}^{+} \mu_{24}^{+},$ $\omega_{12}^{+}=b_{23}^{+} \mu_{13}^{+}$ and $\omega_{21}^{+}=b_{14}^{+} \mu_{24}^{+}.$ Therefore, $d^{+}=\mu_{13}^{+} \mu_{24}^{+} \det \begin{pmatrix}
    b_{13}^{+} & b_{23}^{+} \\ b_{14}^{+} & b_{24}^{+}
\end{pmatrix}=0.$  Next, we assume that $d^{+}=0.$ For that, we express $F_1$ in terms of $\Psi^{+}$ and $\PPsi^{+},$ i.e., $F_1=\Psi^{+} A + \PPsi^{+} B,$ for some $A$ and $B.$ Since $W(\Psi^{+},F_1)=W(\Psi^{+},\PPsi^{+})B$ and $\mu_{14}=\mu_{23}=0,$ we have $d^{+}=\mu_{13} \mu_{24} \det(B)=0.$ This implies that $\det(B)=\det(\bar{B})=0.$ Thus there exists $v=\begin{pmatrix}
    \gamma \\ \delta
\end{pmatrix}\neq 0$ such that $\Bar{B}v=0$. Let $\bar{A} v =\begin{pmatrix}
    \alpha \\ \beta
\end{pmatrix} .$ Then, we have $\overline{F}_1 v = \overline{\Psi}^{+} \bar{A} v + \overline{\PPsi}^{+} \bar{B} v=\overline{\Psi}^{+} \bar{A} v ,$ which yields $\gamma \bar{\varphi}_1 + \delta \bar{\varphi}_2= \alpha  \overline{\Psi}^{+}_1  + \beta \overline{\Psi}^{+}_2. $ Recall that for $\im(z)>0,$ we have $\im(k_1(z))>0$ and $\im(k_2(z))>0.$ Then the asymptotics of $\varphi_1$ and $\varphi_2$ near the origin, together with those of $\Psi_1$ and $\Psi_2$ near infinity, show that the relation $\bar{\gamma} \varphi_1 + \bar{\delta} \varphi_2= \bar{\alpha}  \Psi^{+}_1  + \bar{\beta} \Psi^{+}_2$ 
guarantees the existence of eigenvalues for large $z,$ with $\im(z)>0.$ Next, we consider the case where $z=\lambda+i0.$  Notice that, in view of the relation $\gamma \bar{\varphi}_1 + \delta \bar{\varphi}_2= \alpha  \overline{\Psi}^{+}_1  + \beta \overline{\Psi}^{+}_2 $ and the behavior of $k_1(z)$ and $k_2(z)$ as $\im(z) \to 0$ (see Lemma \ref{lem:k_j-behavior}), if $\alpha=0$ then we have an eigenfunction. Next, we show that $\alpha=0.$ Note that for $\im (z)=0$, if $\Phi$ satisfies $i\mathcal{L}\Phi=z\Phi$, then $\sigma_3\overline \Phi$ is a solution of $i\mathcal{L}(\sigma_3\overline{\Phi})=z(\sigma_3\overline{\Phi})$. This shows that the Wronskians below are independent of $r$: 
\begin{align*}
    W(F_1, \sigma_3 \overline{F_1}) &=A^t W( \Psi^{+}, \sigma_3 \overline{\Psi}^{+} ) \bar{A} + A^t  W( \Psi^{+}, \sigma_3 \overline{\PPsi}^{+} ) \bar{B }\\
   & + B^t  W( \PPsi^{+}, \sigma_3 \overline{\Psi}^{+} )  \bar{A} + B^t W( \PPsi^{+}, \sigma_3 \overline{\PPsi}^{+} ) \bar{B}.
\end{align*}
Evaluating at $r=0$, the left-hand side is zero. Using the fact that $\bar{B}v=0,$ we have $     0=\langle W(F_1, \sigma_3 F_1) v, v \rangle= \langle W(\Psi^{+}, \sigma_3 \overline{\Psi}^{+}) \bar{A}v, \bar{A}v \rangle.$ In view of the asymptotics of $\Psi^{+},$ we have $W(\Psi^{+}, \sigma_3 \overline{\Psi}^{+})= \begin{pmatrix}
    k(\lambda) & 0 \\ 0& 0
\end{pmatrix}.$ Therefore, we have $0=\langle W(\Psi^{+}, \sigma_3 \overline{\Psi}^{+}) \begin{pmatrix}
    \alpha \\ \beta
\end{pmatrix}, \begin{pmatrix}
    \alpha \\ \beta
\end{pmatrix} \rangle= k(\lambda) |\alpha|^2,$ which yields $\alpha=0.$ This concludes the proof of Lemma~\ref{lem:EV1}.
\end{proof}

Next, in view of the explicit formula for the Green's kernel and the behavior of the coefficients $\omega_{ij}^{\pm}$ established in Lemmas~\ref{lemma:omega_j-behavior} and~\ref{lem:omega_j-behavior-large-xi}, all terms involved are sufficiently regular to justify taking the limit $b \to 0^+$ under the integral in Lemma~\ref{lem:ST1}.  We also observe below that the coefficients of the terms $\varphi_3(r,z)\varphi_2^t(s,z)$ and $\varphi_2(r,z) \varphi_3^t(s,z)$ appearing in equations~\eqref{eq:GKermel-phi_j-plus} and~\eqref{eq:GKermel-phi_j-minus} drop out. Moreover, as we will see later in Proposition~\ref{jump-resol}, the coefficients of $\varphi_l(r,z) \varphi_1(s,z)$ and $\varphi_1(r,z) \varphi_l(s,z)$ for $l = 3,4$, as well as those of $\varphi_4(r,z) \varphi_2(s,z)$ and $\varphi_2(r,z) \varphi_4(s,z)$, are independent of $z$, and their differences vanishes in the integral kernel of the resolvent jump.

\begin{coro}
\label{kernel-no-sigularity-non-resonance}
Let $z=\xi \pm \frac{\sqrt{17}}{8} \in \Omega,$ where $\pm \im(z)>0$  and  $0<|\xi|<\delta_0.$  Then in both scenarios, for some $\alpha_j^\pm$ and $\beta_j^\pm$, we have 
\begin{align}
\label{eq:Resol-kernel-small-xi-2}
\begin{split}
\mathcal{K}^{\pm}(r,s,z) &=\frac{i}{d^{\pm}} \sum_{j=1}^4 \alpha^{\pm}_j(z)\left(
\upvarphi_j(r,z) \upvarphi_1^t(s,z) \sigma_1 \mathbb{1}_{\{ 0<s \leq r  \}} + \upvarphi_1(r,z) \upvarphi_j^t(s,z) \sigma_1  \mathbb{1}_{\{ r \leq s < \infty  \}}
\right) \\
&+ \frac{i}{d^{\pm}} \sum_{\substack{j=1 \\ j \neq 3}}^4 \beta^{\pm}_j(z) \left(
\upvarphi_j(r,z) \upvarphi_2^t(s,z) \sigma_1 \mathbb{1}_{\{ 0<s \leq r  \}} + \upvarphi_2(r,z) \upvarphi_j^t(s,z) \sigma_1  \mathbb{1}_{\{ r \leq s < \infty  \}}
\right)  
\end{split}
\end{align}
where
\begin{align}
\begin{split}
\label{eq:coef-resol-small-xi}
    |\alpha^{\pm}_1(z)| &\lesssim  e^{-\frac{6}{\sqrt{2}} r_{\varepsilon}}, \quad  |\alpha^{\pm}_2(z)| \lesssim e^{-\frac{3}{\sqrt{2}} r_{\varepsilon}}, \quad  |\alpha^{\pm}_3(z)| \lesssim 1, \quad  |\alpha^{\pm}_4(z)| \lesssim 1,\\
     |\beta^{\pm}_1(z)| &\lesssim e^{-\frac{3}{\sqrt{2}} r_{\varepsilon}}, \quad |\beta^{\pm}_2(z)| \lesssim \sqrt{|\xi|}, \quad
     |\beta^{\pm}_4(z)| \lesssim 1.
\end{split}
\end{align}
\end{coro}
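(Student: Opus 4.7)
The plan is to rewrite the kernel formula \eqref{eq:GKermel-phi_j-plus}--\eqref{eq:GKermel-phi_j-minus} in a more suggestive form, then extract the coefficients $\alpha_j^\pm,\beta_j^\pm$ by a Wronskian argument. First I would observe that carrying out the matrix multiplication in $\Psi^\pm(\mathcal{D}^\pm)^{-t}F_1^t$ gives the compact expression
\[
\mathcal{K}^\pm(r,s,z)=\frac{i}{d^\pm}\bigl[\widetilde{\Psi}_1^\pm(r,z)\upvarphi_1^t(s,z)+\widetilde{\Psi}_2^\pm(r,z)\upvarphi_2^t(s,z)\bigr]\sigma_1, \quad 0<s\le r,
\]
with
\[
\widetilde{\Psi}_1^\pm := \omega_{22}^\pm\Psi_1^\pm-\omega_{12}^\pm\Psi_2^\pm,\qquad
\widetilde{\Psi}_2^\pm := -\omega_{21}^\pm\Psi_1^\pm+\omega_{11}^\pm\Psi_2^\pm,
\]
and a symmetric expression for $s\ge r$. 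Comparing with \eqref{eq:GKermel-phi_j-plus}--\eqref{eq:GKermel-phi_j-minus} identifies $\alpha_j^\pm,\beta_j^\pm$ as exactly the coefficients in the expansions $\widetilde{\Psi}_1^\pm=\sum_j\alpha_j^\pm\upvarphi_j$ and $\widetilde{\Psi}_2^\pm=\sum_j\beta_j^\pm\upvarphi_j$ against the fundamental system $\{\upvarphi_1,\upvarphi_2,\upvarphi_3,\upvarphi_4\}$.

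To determine these coefficients I would take Wronskians against each $\upvarphi_\ell$, which converts the problem into the two $4\times4$ linear systems
\[
\sum_k\alpha_k^\pm\Lambda_{k\ell}=W(\widetilde{\Psi}_1^\pm,\upvarphi_\ell),\qquad
\sum_k\beta_k^\pm\Lambda_{k\ell}=W(\widetilde{\Psi}_2^\pm,\upvarphi_\ell),\quad \ell=1,\dots,4.
\]
By Lemma \ref{Lambda_j-behavior} the matrix $(\Lambda_{k\ell})$ is essentially block-triangular: only $\Lambda_{13}$ and $\Lambda_{24}$ are $O(1)$, while $\Lambda_{23},\Lambda_{34}$ are $O(e^{-3r_\varepsilon/\sqrt{2}})$ and $\Lambda_{12}=\Lambda_{14}=0$. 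The right-hand sides for $\ell=1,2$ evaluate to $(d^\pm,0)$ in the $\alpha$-system and $(0,d^\pm)$ in the $\beta$-system directly from $d^\pm=\omega_{11}^\pm\omega_{22}^\pm-\omega_{12}^\pm\omega_{21}^\pm$. In particular, the $\ell=1$ equation for $\beta$ becomes $-\Lambda_{13}\beta_3^\pm=0$, which immediately forces $\beta_3^\pm=0$; this is the algebraic cancellation that underlies the restriction $j\ne3$ in \eqref{eq:Resol-kernel-small-xi-2}. Solving the remaining triangular system from the top down produces explicit closed-form expressions for $\alpha_1^\pm,\alpha_2^\pm,\alpha_3^\pm,\alpha_4^\pm,\beta_1^\pm,\beta_2^\pm,\beta_4^\pm$ in terms of the entries $\omega_{lj}^\pm$, the constants $\Lambda_{13},\Lambda_{24}$, and the small quantities $\Lambda_{23},\Lambda_{34}$.

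Substituting the asymptotics of the $\omega_{lj}^\pm$ from Lemma \ref{lemma:omega_j-behavior} together with $|d^\pm|\simeq 1$ into these formulas will yield the bounds \eqref{eq:coef-resol-small-xi}. For instance $\alpha_1^\pm$ will be a linear combination of products of the form $\omega_{22}^\pm\omega_{13}^\pm,\omega_{12}^\pm\omega_{23}^\pm$ and $\Lambda$-weighted corrections, each factor of which is $\lesssim e^{-3r_\varepsilon/\sqrt{2}}$, giving $|\alpha_1^\pm|\lesssim e^{-6r_\varepsilon/\sqrt{2}}$; similarly $\beta_2^\pm$ will contain the product $\omega_{21}^\pm\omega_{14}^\pm$ whose second factor is $O(|k_1(z)|)=O(\sqrt{|\xi|})$. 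The hard part will be the sharpness of these estimates: the naive bound $|\alpha_j^\pm|\le|\omega_{22}^\pm||a_{1j}^\pm|+|\omega_{12}^\pm||a_{2j}^\pm|$ derived directly from the expansion coefficients $a_{lj}^\pm$ is far too crude because the individual $a_{lj}^\pm$ are not small. It is only by recognizing $\alpha_j^\pm$ and $\beta_j^\pm$ as $2\times 2$ minors of the $\omega$-matrix, and exploiting the quasi-triangular structure of $(\Lambda_{k\ell})$, that the growing contributions cancel and the stated bounds are recovered.
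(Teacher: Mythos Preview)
Your approach is correct and essentially the same as the paper's: both extract the coefficients via Wronskian relations and exploit the near-triangular structure of $(\Lambda_{k\ell})$ from Lemma~\ref{Lambda_j-behavior}. The only difference is packaging: the paper first solves the system $\omega_{lm}^\pm=\sum_j a_{lj}^\pm\Lambda_{jm}$ for the individual $a_{lj}^\pm$, obtaining $|a_{l1}^\pm|\simeq|\omega_{l3}^\pm|$, $|a_{l2}^\pm|\simeq|\omega_{l4}^\pm|$, $|a_{l3}^\pm|\simeq|\omega_{l1}^\pm|$, $|a_{l4}^\pm|\simeq|\omega_{l2}^\pm|$, and then substitutes into $\alpha_j^\pm=\omega_{22}^\pm a_{1j}^\pm-\omega_{12}^\pm a_{2j}^\pm$ and $\beta_j^\pm=-\omega_{21}^\pm a_{1j}^\pm+\omega_{11}^\pm a_{2j}^\pm$; you instead combine first into $\widetilde\Psi_k^\pm$ and solve directly for $\alpha_j^\pm,\beta_j^\pm$. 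The two are algebraically equivalent.

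Your last paragraph, however, overstates the difficulty. The ``naive'' triangle inequality $|\alpha_j^\pm|\le|\omega_{22}^\pm||a_{1j}^\pm|+|\omega_{12}^\pm||a_{2j}^\pm|$ is exactly what the paper uses, and it \emph{does} yield \eqref{eq:coef-resol-small-xi}. Once the $a_{lj}^\pm$ are bounded via the triangular $\Lambda$-system (which you also invoke), the smallness of $|\omega_{22}^\pm|\lesssim e^{-3r_\varepsilon/\sqrt{2}}$ absorbs the growth of the few large $a_{lj}^\pm$; for instance $|\alpha_3^\pm|\le|\omega_{22}^\pm||a_{13}^\pm|+|\omega_{12}^\pm||a_{23}^\pm|\lesssim e^{-3r_\varepsilon/\sqrt{2}}e^{3r_\varepsilon/(5\sqrt{2})}+1\cdot 1\lesssim 1$. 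No minor-based cancellation is needed beyond the single exact identity $\beta_3^\pm=0$, which you and the paper obtain from the same Wronskian computation $-\omega_{21}^\pm a_{13}^\pm+\omega_{11}^\pm a_{23}^\pm=0$.
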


\begin{proof}
We first note that the coefficient of the $\upvarphi_3(r,z) \upvarphi_2^t(s,z) \sigma_1 \mathbb{1}_{\{ 0<s \leq r  \}} + \upvarphi_2(r,z) \upvarphi_3^t(s,z) \sigma_1  \mathbb{1}_{\{ r \leq s < \infty  \}}$ vanishes, i.e., 
    \begin{align*}
 - \omega_{21}^{\pm}(z) a^{\pm}_{13}+ \omega_{11}^{\pm}(z) a^{\pm}_{23}(z) =0     
    \end{align*}

Indeed, we have $a^{\pm}_{13}= \frac{-1}{  \Lambda_{13}} \omega_{11}^{\pm} $
and $a^{\pm}_{23}= \frac{-1}{  \Lambda_{13}} \omega_{21}^{\pm}.  $ Then 
\begin{align*}
     - \omega_{21}^{\pm}(z) a^{\pm}_{13}+ \omega_{11}^{\pm}(z) a^{\pm}_{23}(z) =  - \omega_{21}^{\pm}(z) \frac{-1}{  \Lambda_{13}} \omega_{11}^{\pm} +   \omega_{11}^{\pm}(z)  \frac{-1}{  \Lambda_{13}} \omega_{21}^{\pm} = 0 
\end{align*} 
By Lemma \ref{Lambda_j-behavior}, it follows that 
\begin{align*}
  \omega_{l1}^{\pm} &= -  \Lambda_{13} a^{\pm}_{l3}, \qquad \qquad  \qquad  \qquad \longrightarrow | \omega_{l1}^{\pm}| \simeq |a^{\pm}_{l3} |  \\
  \omega_{l2}^{\pm} &= -  \Lambda_{23} a^{\pm}_{l3}-  \Lambda_{24} a^{\pm}_{l4}   \; \qquad \qquad \longrightarrow | \omega_{l2}^{\pm}| \simeq |a^{\pm}_{l4} | \\
  \omega_{l3}^{\pm} &=\Lambda_{13} a^{\pm}_{l1} + \Lambda_{23} a^{\pm}_{l2} -\Lambda_{34} a^{\pm}_{l4}  \quad \longrightarrow | \omega_{l3}^{\pm}| \simeq |a^{\pm}_{l1} | \\
    \omega_{l4}^{\pm} &=\Lambda_{24} a^{\pm}_{l2} + \Lambda_{34} a^{\pm}_{l3}   \qquad  \qquad \quad \longrightarrow | \omega_{l4}^{\pm}| \simeq |a^{\pm}_{l2} |
\end{align*}
Then all other estimates follows from Lemma \ref{lemma:omega_j-behavior} and \ref{lem:parity-of-omega-j} together with the fact that for both scenarios $|k_1(z)| \simeq \sqrt{|\xi|}.$ 
\end{proof}

\subsection{Computing the jump of the resolvent}
\label{subsec:JumpResol}
We are now in a position to derive a representation of the flow $e^{t \mathcal{L}}$ in terms of the resolvent. Recall that from Lemma~\ref{lem:ST1} that we define $e^{t \mathcal{L}}$ as limit in the weak sense, that is, for any compactly supported smooth functions $\Phi, \Psi \in L^2_r$, we have
\begin{align*}
    \langle e^{t \mathcal{L}} \Phi, \Psi \rangle= \lim_{R \to \infty} \lim_{b \to 0^{+}}  \frac{1}{2\pi i} \int_{b-iR}^{b+iR} e^{it \lambda} \langle \left( e^{-bt}(i\mathcal{L}-(\lambda + i b))^{-1} - e^{bt} (i\mathcal{L}-(\lambda - i b))^{-1}  \right) , \Phi, \Psi \rangle d \lambda.
\end{align*}
For any compactly supported smooth functions $\Phi$ and $\Psi$ in $L^2_r$ define
\begin{align}
\label{eq:I_R-def}
    \mathcal{I}_R(\Phi,\Psi):=\lim_{b \to 0^{+}}  \frac{1}{2\pi i} \int_{-R+ib}^{R+ib} e^{it \lambda} \langle \left( e^{-bt}(i\mathcal{L}-(\lambda + i b))^{-1} - e^{bt}(i\mathcal{L}-(\lambda - i b))^{-1}  \right)  \Phi, \Psi \rangle d \lambda.
\end{align}
Note that $\lim_{R\to\infty}\mathcal{I}_R(\Phi,\Psi)$  is what we expect to be the main part of the contour integral representing $\langle e^{t \mathcal{L}} \Phi, \Psi \rangle$. Below we will also use the notation
$$ I_R =[-R,R] \backslash (-\frac{\sqrt{17}}{8},\frac{\sqrt{17}}{8}),\qquad I =\mathbb{R} \backslash (-\frac{\sqrt{17}}{8},\frac{\sqrt{17}}{8}).$$

\begin{prop}
\label{jump-resol}
    Let $I_R =[-R,R] \backslash (-\frac{\sqrt{17}}{8},\frac{\sqrt{17}}{8}) $ and $I=\R \backslash (-\frac{\sqrt{17}}{8},\frac{\sqrt{17}}{8}). $ For any compactly supported functions $\Phi, \Psi \in L^2_r(0,\infty), $ we have 
    \begin{align}
    \label{eq:Evol-tensor-form}
  \langle e^{t \mathcal{L}} \Phi, \Psi \rangle =    \lim_{R \to \infty}  \mathcal{I}_R(\Phi,\Psi)=   \frac{1}{2 \pi i } \int_{I} e^{it \lambda} \left< \int_0^{\infty} F_1(\cdot,\lambda) \calW(\lambda)F_1(s,\lambda)^{t} \sigma_1 \Phi(s)ds, \Psi(\cdot) \right>_{L^2_r} d \lambda ,
      \end{align}
   where \begin{align*}
       \calW(\lambda):=\kappa(\lambda) \calD^{-}(\lambda)^{-1} \underbar{e}_{11} \calD^{+}(\lambda)^{-t}, \quad \underbar{e}_{11}:=\begin{pmatrix}
      1 & 0 \\ 0 & 0
  \end{pmatrix}, \quad \calD^{\pm}(\lambda):=\calD^{\pm}(\lambda\pm i0),
   \end{align*}   
and $\kappa(\lambda)=W (\sigma_3 \Psi_1^{+}(\cdot,-\lambda), \Psi_1^{+}(\cdot,\lambda) )$ is an odd function in $\lambda.$ Moreover, for $\lambda=\xi\pm \frac{\sqrt{17}}{8} ,$ we have $|\kappa(\lambda)| \simeq \sqrt{|\xi|}$ for small and large $|\xi|.$ 
  \end{prop}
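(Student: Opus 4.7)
The plan is to start from Lemma~\ref{lem:ST1} and carry out three tasks: (i)~deform and truncate the contour using the limiting absorption principle, (ii)~pass the limit $b\to 0^+$ inside the $\lambda$-integral, and (iii)~compute the resolvent jump $\mathcal{K}^+(r,s,\lambda)-\mathcal{K}^-(r,s,\lambda)$ on the real line. For (i), the vertical segments $\{\re z=\pm R\}\cap\{|\im z|\le b\}$ contribute $O(R^{-1/2})$ in the pairing against compactly supported $\Phi,\Psi$ by Claim~\ref{clm:absorption-prinp}, while the horizontal piece crossing the gap $(-\tfrac{\sqrt{17}}{8},\tfrac{\sqrt{17}}{8})+i[-b,b]$ cancels by analyticity of the resolvent there, as guaranteed by Assumption~\ref{asmp:2}. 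Only the segments hugging $I$ survive.

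For (ii), the plan is to use the explicit kernel formula of Lemma~\ref{lem:kernel-K} together with the sharpened representation in Corollary~\ref{kernel-no-sigularity-non-resonance} for small $|\xi|$ and its large-$|\xi|$ analogue. The connection coefficients $\omega_{ij}^{\pm}(\lambda)$ and the determinant $d^{\pm}(\lambda)$ extend continuously to the spectrum by Lemmas~\ref{lemma:omega_j-behavior}--\ref{lem:mu-behavior-large-xi}; for middle frequencies $\delta_0<|\xi|<\Lambda_0$, the nonvanishing $d^{\pm}(\lambda)\neq 0$ follows from Lemma~\ref{lem:EV1} combined with Assumption~\ref{asmp:3}. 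A dominated-convergence argument then lets the $b\to 0^+$ limit pass inside the integral.

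The core algebraic step is (iii). In the region $s\le r$ one has
\[
\mathcal{K}^+(r,s,\lambda)-\mathcal{K}^-(r,s,\lambda)=i\Delta(r,\lambda)F_1(s,\lambda)^t\sigma_1,\qquad \Delta:=\Psi^+(\mathcal{D}^+)^{-t}-\Psi^-(\mathcal{D}^-)^{-t}.
\]
The identity $W(\Psi \cdot A,F_1)=A^t W(\Psi,F_1)$ for any constant matrix $A$, together with $W(\Psi^{\pm},F_1)=\mathcal{D}^{\pm}$, yields $W(\Delta,F_1)=\mathrm{Id}-\mathrm{Id}=0$. Combined with $W(F_1,F_1)=0$ this forces $\Delta(r,\lambda)=F_1(r,\lambda)M(\lambda)$ for some $2\times 2$ matrix $M(\lambda)$, so the jump takes the form $F_1(r,\lambda)\mathcal{W}(\lambda)F_1(s,\lambda)^t\sigma_1$ with $\mathcal{W}(\lambda):=iM(\lambda)$. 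A parallel computation in the region $r\le s$ yields the same expression (and gives a consistency check that $\mathcal{W}$ is symmetric, which is compatible with the claimed formula after verifying the relevant identity between $\mathcal{D}^+$ and $\mathcal{D}^-$ on the spectrum).

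To pin down $M$, the key structural observation is that on $I$ one has $\Psi_2^+(r,\lambda)=\Psi_2^-(r,\lambda)$. Indeed, the relation $\Psi_2^-(\cdot,\lambda)=\sigma_3\Psi_2^+(\cdot,-\lambda)$ combined with Remark~\ref{rem:behavior-k_j-lambda} ($k_2(-z)=k_2(z)$, $c_2(-z)=-c_2(z)$) and Lemma~\ref{lem:k_j-behavior} (continuity of $k_2$ across $I$ with $\im k_2>0$, i.e.\ $k_2^+(x)=k_2^-(x)$) shows that $\Psi_2^+(\cdot,\lambda+i0)$ and $\Psi_2^-(\cdot,\lambda-i0)$ match asymptotics at infinity and therefore coincide. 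Consequently only the first column $\Psi_1^\pm$ jumps across $I$, producing the rank-one factor $\underbar{e}_{11}$ in the final expression. Unraveling the Wronskian relations via $\Psi_1^-(\cdot,\lambda)=\sigma_3\Psi_1^+(\cdot,-\lambda)$, the scalar prefactor collapses to $\kappa(\lambda)=W(\Psi_1^-,\Psi_1^+)=W(\sigma_3\Psi_1^+(\cdot,-\lambda),\Psi_1^+(\cdot,\lambda))$, giving $\mathcal{W}(\lambda)=\kappa(\lambda)\mathcal{D}^-(\lambda)^{-1}\underbar{e}_{11}\mathcal{D}^+(\lambda)^{-t}$. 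Oddness of $\kappa$ follows from the identity $W(\sigma_3 f,g)=W(f,\sigma_3 g)$ (immediate from $\sigma_3^2=\mathrm{Id}$) combined with antisymmetry of the vector Wronskian, and $|\kappa(\lambda)|\simeq\sqrt{|\xi|}$ near threshold follows from $|k_1(\lambda)|\simeq\sqrt{|\xi|}$ (Lemma~\ref{lem:k_j-behavior}) and $c_1(\lambda)\to\mp i\sqrt{17}$ (Remark~\ref{K_j-C_j-D_j-behavior}) evaluated by computing the Wronskian at large $r$. The main obstacle is the algebraic bookkeeping of this last identification, where one must carefully track $\pm$ asymptotics near both thresholds to confirm that the fine cancellations of Corollary~\ref{kernel-no-sigularity-non-resonance} propagate correctly through the Wronskian computation.
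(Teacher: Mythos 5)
Your proposal is correct and follows essentially the same route as the paper: pass the $b\to 0^+$ limit inside via the regularity of the kernel coefficients and the non-vanishing of $d^{\pm}$ (Lemma~\ref{lem:EV1} plus absence of embedded/gap eigenvalues), reduce the jump to the tensorial form $F_1\mathcal{W}F_1^t\sigma_1$, and extract $\mathcal{W}$ by taking Wronskians against $\Psi^+$ together with the rank-one structure of $W(\Psi^-,\Psi^+)$. The only cosmetic differences are that you deduce $\Delta=F_1M$ from $W(\Delta,F_1)=0$ (equivalent, via invertibility of $W(F_2,F_1)$, to the paper's expansion of $\Psi^{\pm}$ in the $F_1,F_2$ basis and cancellation of the mixed terms) and that you obtain the $\underline{e}_{11}$ factor from the identity $\Psi_2^+=\Psi_2^-$ on $I$ rather than by computing the entries of $W(\Psi^-,\Psi^+)$ directly.
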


\begin{proof} 
We first justify that the limit as $b \to 0^{+}$ can be taken inside the $\lambda$-integral in \eqref{eq:I_R-def} for $\mathcal{I}_R(\Phi,\Psi).$ 
For this, we will use the resolvent kernel expression given in Corollary \ref{kernel-no-sigularity-non-resonance}. First, notice that for $z=\xi\pm \frac{\sqrt{17}}{8} \in \Omega$ with $0<|\xi|<\delta_0,$ we have $F_1(\cdot,z)$ and $F_2(\cdot,z)$ are continuous function as $z$ crosses the real line for $r\leq r_0=\frac{\varepsilon_0}{\sqrt{|\xi|}}.$ The same holds for all $z$ as a consequence of the theory of existence, uniqueness and continuous dependence on parameters for ordinary differential equations. It follows from the bounds \eqref{eq:coef-resol-small-xi} from Corollary \ref{kernel-no-sigularity-non-resonance} that the integrand or resolvent kernels in \eqref{eq:I_R-def} are non-singular for $0<|\xi|<\delta_0.$ Therefore, in view of Green's kernel expression in \eqref{eq:Resol-kernel-small-xi-2}, 
we may pass the limit inside the integral in \eqref{eq:I_R-def} provided $d^{\pm} \neq 0$. Since, by Assumption \ref{asmp:3} $ i \calL$ has no embedded eigenvalues  (see also Lemma \ref{lem:GP-thres-emb-EV} for $i\calL_{GP}),$  by Lemma \ref{lem:EV1} we have $d^{\pm}\neq0 .$ Moreover, $ i \calL$ has no gap eigenvalues in $(-\frac{\sqrt{17}}{8},\frac{\sqrt{17}}{8}).$ Therefore the limit as $b \to 0^{+}$ can be taken inside the integral and we obtain \begin{align*}
    \mathcal{I}_{R}(\Phi,\Psi)= \frac{1}{2\pi i} \int_{I_R} e^{it \lambda} \langle \left( (i\mathcal{L}-(\lambda + i 0))^{-1} - (i\mathcal{L}-(\lambda - i 0))^{-1}  \right)  \Phi, \Psi \rangle d \lambda.
\end{align*}
The rest of the proof is to use this expression to extract the distorted Fourier transform representation of the evolution in tensorial form as \eqref{eq:Evol-tensor-form}. We closely follow the argument in \cite[Proposition 5.9]{LSS25}. We denote the integral kernel of the jump of the resolvent across the essential spectrum by 
\begin{align*}
    \mathcal{Q}(r,s,\lambda)&:= \left( i \mathcal{L}- (\lambda+i0) \right)^{-1 } (r,s) -\left( i \mathcal{L}- (\lambda - i 0) \right)^{-1 } (r,s)\\
    &=\mathcal{K}^{+}(r,s,\lambda+i0) - \mathcal{K}^{-}(r,s,\lambda-i0)
\end{align*}

By Lemma \ref{lem:kernel-K}, we have 
\begin{align}
\label{eq:Q-rep2}
\begin{split}
    \mathcal{Q}(r,s,\lambda) 
    &=  i \left( \Psi^{+}(r,\lambda) (\calD^{+}(\lambda))^{-t} F_1(s,\lambda)^t - \Psi^{-}(r,\lambda) (\calD^{-}(\lambda))^{-t} F_1(s,\lambda)^t \right) \sigma_1 \mathbb{1}_{\{ 0<s\leq r \}} \\
&+  i \left(    F_1(r,\lambda) (\calD^{+}(\lambda))^{-1} \Psi^{+}(s,\lambda)^t -   F_1(r,\lambda) (\calD^{-}(\lambda))^{-1} \Psi^{-}(s,\lambda)^t \right) \sigma_1 \mathbb{1}_{\{ r \leq s\leq \infty \}}
\end{split}
\end{align}
where $\Psi^{\pm}(r,\lambda):=\Psi^{\pm}(r,\lambda \pm i 0)$ and $D^{\pm}(\lambda):=D^{\pm}(\lambda \pm i 0).$ Note that, from above the Green's kernel $\mathcal{K}$ satisfies 
\begin{equation}
\label{eq:symQ}
       \mathcal{Q}(r,s,\lambda)^t = \sigma_1    \mathcal{Q}(r,s,\lambda) \sigma_1
\end{equation}

Next, we want to express the kernel $\mathcal{K}$ in terms of $F_j(r,\lambda)$ only. Thus, one can write $\Psi^{\pm}$ in terms of $F_j,$ i.e., there exist two matrices $N_{\pm}(z)$ and $M_{\pm}(z)$ such that 
\begin{equation}
\label{eq:Psi-F1F2}
\Psi^{\pm}(r,z) :=   F_2(r,z) N^\pm(z) + F_1(r,z) M^\pm(z),
\end{equation}
where the entries of $N_\pm$ can be determined in terms of the coefficients $a_{lj}$ from \eqref{eq:Psi-interms-phi_j}. Using the behavior of $F_1(r,z)$ as $r \to 0,$ we have 
$\mathcal{W}(F_1(\cdot,z)M^\pm(z), F_1(\cdot,z))=0. $ Therefore, 
\begin{align*}
    \calD^{\pm}(z)=W( \Psi^{\pm}(\cdot,z),F_1(\cdot,z))= N^\pm(z)^t W( F_2(\cdot,z),F_1(\cdot,z)).
\end{align*}
Let $\Uplambda(z):=W(F_2(\cdot,z),F_1(\cdot,z))).$ Note that the entries of this matrix are precisely the coefficients $\Lambda_{ij}$ given by Lemma \ref{Lambda_j-behavior} and therefore $\mathrm{det} (\mathrm{\Uplambda(z)}) \neq 0.$ Since $\calD^{\pm}(z)$ and $\Uplambda(z)$ are invertible then $N^\pm(z)^t$  is invertible with 
\begin{equation*}
    \calD^{\pm}(z)^{-t}=N^\pm(z)^{-1} \Uplambda(z)^{-t}
\end{equation*}
Inserting \eqref{eq:Psi-F1F2} into \eqref{eq:Q-rep2}, we find that the terms with one factor $F_1$ and one factor $F_2$ cancel out. Therefore, $ \mathcal{Q}(r,s,\lambda)$ must be of the form 
\begin{align*}
   \mathcal{Q}(r,s,\lambda)= i F_1(r,\lambda) \mathcal{M}^{+}(s,\lambda) \sigma_1 \mathbb{1}_{\{ 0<s\leq r \}} + i F_1(r,\lambda)  \mathcal{M}^{-}(s,\lambda) \sigma_1 \mathbb{1}_{\{ 0<r\leq s \}}. 
\end{align*}

By continuity, we must $F_1(r,\lambda)( \mathcal{M}^{+}(r,\lambda)-\mathcal{M}^{-}(s,\lambda))=0.$ However, $\det(F_1) \neq 0$ because ifnot we can find a $c=c(r,\lambda)$ such that $c\varphi_{11}=\varphi_{21}$ and $c\varphi_{12}=\varphi_{22},$ which impossible in view of their asymptotics, see Proposition \ref{Prop:contraction-T_j(F_j)-xi-small}. Therefore, we must have $\mathcal{M}^{+}(r,\lambda)=\mathcal{M}^{-}(s,\lambda),$ whence 
\begin{align*}
 \mathcal{Q}(r,s,\lambda)= iF_1(r,\lambda) \mathcal{M}(s,\lambda) \sigma_1.
\end{align*}
Using \eqref{eq:symQ}, we have 
\begin{align*}
    i \sigma_1 \mathcal{M}(s,\lambda)^t F_1(r,\lambda)^t= i \sigma_1 F_1(s,\lambda) \mathcal{M}(r,\lambda) \sigma_1^2 , 
\end{align*}
which yields
\begin{align*}
 F_1(s,\lambda)^{-1} \mathcal{M}(s,\lambda)^t= \mathcal{M}(r,\lambda)F_1(r,\lambda)^{-t}=:\calW(\lambda)    
\end{align*}
Therefore, we have 
\begin{align*}
    \mathcal{Q}(r,s,\lambda)= i F_1(r,\lambda) \calW(\lambda) F_1(s,\lambda)^t \sigma_1 ,
\end{align*}
and by \eqref{eq:symQ}, we have \begin{equation} \label{eq:W-sym}
  \calW(\lambda)^t = \calW(\lambda) .
\end{equation} 
Next, we compute $\calW(\lambda).$ Firs notice that in view of the asymptotics of the columns $\Psi^{+},$ we have 
\begin{align} \label{eq:zeroWrons}
    W( \Psi^{+}(\cdot,\lambda) (\calD^{+}(\lambda))^{-t},  \Psi^{+}(\cdot, \lambda) ) = (\calD^{+}(\lambda))^{-1}  W( \Psi^{+}(\cdot,\lambda) ,  \Psi^{+}(\cdot, \lambda) )=0.
\end{align}
From \eqref{eq:Q-rep2} we have for $r \geq s,$ 
\begin{align*}
   \mathcal{Q}(r,s,\lambda)= i \left(\Psi^{+}(r,\lambda) (\calD^{+}(\lambda))^{-t} - \Psi^{-}(r,\lambda) (\calD^{-}(\lambda))^{-t}  \right)F_1(s,\lambda) \sigma_1.
\end{align*}
Hence 
\begin{align*}
    F_1(r,\lambda) \calW(\lambda)=\Psi^{+}(r,\lambda)  (\calD^{+}(\lambda))^{-t} - \Psi^{-}(r,\lambda) (\calD^{-}(\lambda))^{-t}
\end{align*}

On the one hand, using \eqref{eq:zeroWrons}, we obtain
\begin{align*}
  W(  F_1(\cdot,\lambda) \calW(\lambda), \Psi^{+}(\cdot,\lambda)) &= - W( \Psi^{-}(\cdot,\lambda) (\calD^{-}(\lambda))^{-t},  \Psi^{+}(\cdot,\lambda) ) \\
  &= - (\calD^{-}(\lambda))^{-t} W( \Psi^{-}(\cdot,\lambda),  \Psi^{+}(\cdot,\lambda)),
\end{align*}
on the other hand, by the definition of $\calD^{+}(\lambda)$ together with \eqref{eq:W-sym} we have 
\begin{align*}
      W(  F_1(\cdot,\lambda) \calW(\lambda), \Psi^{+}(\cdot,\lambda))= -\calW(\lambda)^t \calD^{+}(\lambda)^t = -\calW(\lambda) \calD^{+}(\lambda)^t.
\end{align*}
This implies that 
\begin{align*}
   \calW(\lambda)= \calD^{-}(\lambda)^{-1} W( \Psi^{-}(\cdot,\lambda),  \Psi^{+}(\cdot,\lambda))   \calD^{+}(\lambda)^{-t}.
\end{align*}

Finally, in view of the behavior of $k_j^{\pm}(\lambda):=k_j(\lambda \pm i0)$ from Lemma \ref{lem:k_j-behavior}, we have
\begin{align*}
    W( \Psi^{-}(\cdot,\lambda),  \Psi^{+}(\cdot,\lambda)) = \begin{pmatrix} \kappa(\lambda) & 0 \\ 0 & 0 
    \end{pmatrix},
\end{align*}
where \begin{align*}
   \kappa(\lambda)=W (\Psi_1^{-}(\cdot,\lambda), \Psi_1^{+}(\cdot,\lambda) )=W (\sigma_3 \Psi_1^{+}(\cdot,-\lambda), \Psi_1^{+}(\cdot,\lambda) ).
\end{align*}
It is clear that $\kappa(-\lambda)=-\kappa(\lambda).$  Recall that, by Lemma \ref{lem:k_j-behavior} and Remark \ref{rem:behavior-k_j-lambda}, we have  $k_1^{\pm}(\lambda)=-k_1^{\pm}(-\lambda),$ Therefore, using the asymptotic of $\Psi^{+}(r,\lambda)$ for small and large $|\xi|,$
we have $| \kappa(\lambda)| \simeq \sqrt{|\xi|} .$
\end{proof}

Next, we write a more concrete expression of the tensorial structure of $ F_1(\cdot,\lambda) \calW(\lambda)F_1(s,\lambda)^{t} \sigma_1.$

\begin{lemma}
\label{Resol-interms-phi}
Let $z=\lambda\pm i 0 $ such that  $\lambda=\xi\pm \frac{\sqrt{17}}{8}.$ Then we have, 
    \begin{align}
    \label{eq:Tesor-struct}
      F_1(\cdot,\lambda) \calW(\lambda)F_1(s,\lambda)^{t} = \frac{\kappa(\lambda)}{d^{+}(\lambda) d^{-}(\lambda)} \Theta(r,\lambda) \Theta(s,\lambda)^{t} ,
    \end{align}
where $d^{\pm}(\lambda):=\det(\calD^{\pm}(\lambda \pm i 0)),\; \omega_{ij}^\pm(\lambda)=\omega_{ij}^\pm(\lambda\pm i0)$ and $\Theta(r,\lambda) :=\begin{pmatrix}
    \Theta_1(r,\lambda) \\ 
    \Theta_2(r,\lambda)
\end{pmatrix}$
satisfies 
\begin{align}
\label{def-theta}
    \Theta(r,\lambda) := \omega_{22}^{+}(\lambda) \upvarphi_1(r,\lambda) - w_{21}^{+}(\lambda) \upvarphi_2(r,\lambda).
\end{align}
Moreover, $ \frac{\kappa(\lambda)}{d^{+}(\lambda) d^{-}(\lambda)} $ is an odd function,
$\Theta_1(r,\lambda) $ is an odd  function in $\lambda$ and $\Theta_2(r,\lambda) $ is an even function in $\lambda.$  
\end{lemma}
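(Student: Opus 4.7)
The plan is to prove the identity \eqref{eq:Tesor-struct} by direct computation, exploiting the fact that $\underbar{e}_{11}$ has rank one, and then deduce the parity statements from the symmetries already recorded in \eqref{eq:sym-varphi_j} and \eqref{eq:parity-w21-w22}.

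First, I would insert the formula $\calW(\lambda)=\kappa(\lambda)\calD^{-}(\lambda)^{-1}\underbar{e}_{11}\calD^{+}(\lambda)^{-t}$ and expand using Lemma~\ref{lem:kernel-K}, which gives $(\calD^{\pm})^{-1}=\frac{1}{d^{\pm}}\bigl(\begin{smallmatrix}\omega_{22}^{\pm}&-\omega_{12}^{\pm}\\-\omega_{21}^{\pm}&\omega_{11}^{\pm}\end{smallmatrix}\bigr)$. Writing $F_1=[\upvarphi_1,\upvarphi_2]$, a direct two-step matrix multiplication yields
\begin{equation*}
\calD^{+}(\lambda)^{-t}F_1(s,\lambda)^t=\tfrac{1}{d^{+}}\begin{pmatrix}\omega_{22}^{+}\upvarphi_1(s,\lambda)^t-\omega_{21}^{+}\upvarphi_2(s,\lambda)^t\\[2pt]-\omega_{12}^{+}\upvarphi_1(s,\lambda)^t+\omega_{11}^{+}\upvarphi_2(s,\lambda)^t\end{pmatrix}.
\end{equation*}
Because $\underbar{e}_{11}$ zeroes out the second row, only the first component survives, and then multiplication on the left by $\calD^{-}(\lambda)^{-1}$ and by $F_1(r,\lambda)$ produces the rank-one tensor
\begin{equation*}
F_1(r,\lambda)\calW(\lambda)F_1(s,\lambda)^t=\frac{\kappa(\lambda)}{d^{+}(\lambda)d^{-}(\lambda)}\,\Theta^{-}(r,\lambda)\,\Theta^{+}(s,\lambda)^{t},
\end{equation*}
where $\Theta^{\pm}(r,\lambda):=\omega_{22}^{\pm}(\lambda)\upvarphi_1(r,\lambda)-\omega_{21}^{\pm}(\lambda)\upvarphi_2(r,\lambda)$.

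The next step is to reconcile $\Theta^{+}$ and $\Theta^{-}$. Combining Lemma~\ref{lem:parity-of-omega-j}, which states $\omega_{2j}^{-}(\lambda)=(-1)^{j}\omega_{2j}^{+}(-\lambda)$, with the refined parity \eqref{eq:parity-w21-w22}, namely $\omega_{21}^{+}(-\lambda)=-\omega_{21}^{+}(\lambda)$ and $\omega_{22}^{+}(-\lambda)=\omega_{22}^{+}(\lambda)$, we get $\omega_{21}^{-}(\lambda)=\omega_{21}^{+}(\lambda)$ and $\omega_{22}^{-}(\lambda)=\omega_{22}^{+}(\lambda)$. Hence $\Theta^{-}(r,\lambda)=\Theta^{+}(r,\lambda)=\Theta(r,\lambda)$, establishing \eqref{eq:Tesor-struct}.

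Finally, I would establish the parity assertions. For the scalar prefactor, the same two identities yield
\begin{equation*}
d^{-}(\lambda)=\omega_{22}^{+}(-\lambda)\bigl(-\omega_{11}^{+}(-\lambda)\bigr)-\omega_{12}^{+}(-\lambda)\bigl(-\omega_{21}^{+}(-\lambda)\bigr)=-d^{+}(-\lambda),
\end{equation*}
so $d^{+}(\lambda)d^{-}(\lambda)=-d^{+}(\lambda)d^{+}(-\lambda)$ is even in $\lambda$; since $\kappa$ is odd (Proposition~\ref{jump-resol}), $\kappa/(d^{+}d^{-})$ is odd. For $\Theta$, the relations \eqref{eq:sym-varphi_j} imply that the first component of $\upvarphi_1$ is odd in $\lambda$ while the second is even, and the reverse holds for $\upvarphi_2$. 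Coupled with the even/odd parity of $\omega_{22}^{+}/\omega_{21}^{+}$, a componentwise check shows $\Theta_1(r,\cdot)$ is odd and $\Theta_2(r,\cdot)$ is even. I expect the only subtle point to be verifying the crucial improved parity $\omega_{21}^{-}(\lambda)=\omega_{21}^{+}(\lambda)$ (and likewise for $\omega_{22}$), but this is immediate from combining the two parts of Lemma~\ref{lem:parity-of-omega-j}; everything else is bookkeeping.
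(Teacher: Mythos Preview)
Your proof is correct and follows essentially the same approach as the paper. The only cosmetic difference is ordering: the paper first uses the parity relations to simplify $\calW(\lambda)$ into a $2\times 2$ matrix written purely in terms of $\omega_{21}^{+},\omega_{22}^{+}$ and then multiplies by $F_1$ on both sides, whereas you first multiply out to obtain the rank-one form $\Theta^{-}(r)\Theta^{+}(s)^{t}$ and then invoke the same parity to identify $\Theta^{-}=\Theta^{+}$; the content is identical.
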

\begin{proof}
Recall that $\calW(\lambda)=\kappa(\lambda) \calD^{-}(\lambda)^{-1} \underbar{e}_{11} \calD^{+}(\lambda)^{-t},$ and $
 ( \calD^{\pm})^{-1}(z)  : = \frac{1}{d^{\pm}(z) } \begin{pmatrix}
       \omega_{22}^{\pm}(z) & - \omega_{12}^{\pm}(z)   \\
       -\omega_{21}^{\pm}(z) & \omega_{11}^{\pm}(z)
   \end{pmatrix} .$
Using the relation between $\omega_{ij}^{+}$ and $\omega_{ij}^{-}$ from Lemma \ref{lem:parity-of-omega-j}, we obtain 
\begin{align*}
    \calW(\lambda)=\frac{\kappa(\lambda)}{d^{+}(\lambda) d^{-}(\lambda)} \begin{pmatrix}
        (\omega_{22}^{+}(\lambda))^2 & -\omega_{21}^{+}(\lambda) \omega_{22}^{+}(\lambda) \\
   - \omega_{22}^{+}(\lambda)  \omega_{21}^{+}(\lambda)  &  (\omega_{21}^{+}(\lambda))^2
    \end{pmatrix}.
\end{align*}
Next, writing  $F_1=\varphi_1 \begin{bmatrix}
    1 & 0
\end{bmatrix} + \varphi_2 \begin{bmatrix}
   0 & 1
\end{bmatrix}  $ and $F_1^t=\begin{bmatrix}
    1 \\  0
\end{bmatrix}\varphi_1^t +\begin{bmatrix}
   0 \\ 1
\end{bmatrix}   \varphi_2^t ,$ and using the representation of $ \calW(\lambda)$ above, we obtain \eqref{eq:Tesor-struct} by direct computation. Next, we justify the parity of this tensorial structure. By Lemma \ref{lem:parity-of-omega-j}, we have  $  d^{-}(\lambda)=-d^{+}(-\lambda).$  Therefore, $d^{-}(\lambda) d_{+}(\lambda) =   -d_{+}(-\lambda)d_{+}(\lambda), $ i.e., even function in $\lambda.$ From Proposition \ref{jump-resol} we have $\kappa(\lambda)$ is an odd function. Thus $ \frac{\kappa(\lambda)}{d^{+}(\lambda) d^{-}(\lambda)} $ is an odd function. Denote by $\varphi_j= \begin{pmatrix}
    \varphi_{j1} \\ \varphi_{j1}
\end{pmatrix}   .$ Using the identities \eqref{eq:sym-varphi_j}, we have 
$ \upvarphi_{11}(r,\lambda)=- \upvarphi_{11}(r,-\lambda), $  $\upvarphi_{12}(r,\lambda)= \upvarphi_{12}(r,-\lambda)$ and $ \upvarphi_{21}(r,\lambda)= \upvarphi_{21}(r,-\lambda), $  $\upvarphi_{22}(r,\lambda)= - \upvarphi_{22}(r,-\lambda). $ Therefore, $\Theta_1(r,-\lambda)=-\Theta_1(r,\lambda)$ and $\Theta_2(r,-\lambda)=\Theta_2(r,\lambda).$
\end{proof}

In the remainder of this section, we write the expression $\Theta(r,\lambda) $ as well as the tensorial structure of $ F_1(\cdot,\lambda) \calW(\lambda)F_1(s,\lambda)^{t} \sigma_1,$  in terms $\Psi_j.$ It turns out that $\Psi_4$ does not contributions to the final expression. Recall that from \eqref{exp-phi-terms-psi} we have 
\begin{align}
    \upvarphi_l(r,z)= \sum_{j=1}^{4} b^{\pm}_{lj} \Psi^{\pm}_j(r,z), \quad l=1,\cdots, 4.
\end{align}

In order, to write $\Theta(r,\lambda) $ in terms of  $\Psi_j(r,z)$ we only need $l=1,2.$\\

\begin{lemma}
\label{behav-omega-b} 
Let $z=\lambda\pm i 0 $ such that  $\lambda=\xi\pm \frac{\sqrt{17}}{8}.$  We have 
\begin{align}
\label{drop-out-omega-b}
     \omega^{+}_{22} b^{\pm}_{14}  =  \omega_{12}^{\pm}  b^{\pm}_{24}=\frac{ \omega_{12}^{\pm}  \omega_{22}^{\pm}}{\mu^{\pm}_{42}}
\end{align}
and for small $|\xi|,$ we have 
  \begin{align*}
 |\omega^{+}_{22} b^{\pm}_{11}| & \lesssim  \frac{1}{\sqrt{|\xi|}} e^{-\frac{3}{\sqrt{2}} \frac{4}{5}r_{\varepsilon  }  } , \quad  |\omega^{+}_{22} b^{\pm}_{12}| \lesssim \frac{1}{\sqrt{|\xi|}}  e^{-\frac{3}{\sqrt{2}} \frac{4}{5}r_{\varepsilon  }  }, \\
 |\omega^{+}_{22} b^{\pm}_{13}| & \lesssim  \frac{1}{\sqrt{|\xi|}}
 e^{-\frac{3}{\sqrt{2}} \frac{4}{5}r_{\varepsilon  }  },  \quad   | \omega_{12}^{\pm} b^{\pm}_{21}|   \lesssim  \frac{1}{\sqrt{|\xi|}}  , \\
 |  \omega_{12}^{\pm} b^{\pm}_{22} | &\lesssim  \frac{1}{\sqrt{|\xi|}}     e^{\frac{3}{\sqrt{2}} \frac{r_{\varepsilon}}{5}  } , \quad  |  \omega_{12}^{\pm} b^{\pm}_{23} | \lesssim  \frac{1}{\sqrt{|\xi|}}
  \end{align*}
and for large $|\xi|,$ we have 
\begin{align*}
 |\omega^{+}_{22} b^{\pm}_{11}| & \lesssim  \frac{1}{|\xi| }  , \quad  |\omega^{+}_{22} b^{\pm}_{12}| \lesssim \frac{1}{|\xi| } , \quad |\omega^{+}_{22} b^{\pm}_{13}| \lesssim \frac{1}{|\xi| }
,  \\
   | \omega_{12}^{\pm} b^{\pm}_{21}|   &\lesssim \frac{1}{|\xi| }   , \quad  |  \omega_{12}^{\pm} b^{\pm}_{22} | \lesssim \frac{1}{|\xi| }  , \quad  |  \omega_{12}^{\pm} b^{\pm}_{23} | \lesssim \frac{1}{|\xi| } . 
  \end{align*}
\end{lemma}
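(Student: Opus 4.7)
The plan is to invert the linear system that expresses the $\varphi_l$ in terms of the basis $\{\Psi^\pm_j\}$, then read off the bounds by combining the pointwise estimates on $\omega^\pm_{kl}$ and $\mu^\pm_{ij}$ already established. Applying the Wronskian $W(\Psi^\pm_k,\cdot)$ to \eqref{exp-phi-terms-psi} produces, for each $l\in\{1,2\}$ and $k\in\{1,\dots,4\}$, the linear equation
\begin{equation*}
\omega^\pm_{kl}(z) \;=\; \sum_{j=1}^{4} b^\pm_{lj}(z)\,\mu^\pm_{kj}(z).
\end{equation*}
The key input is that Lemmas \ref{lem:mu-behavior-small-xi} and \ref{lem:mu-behavior-large-xi} give $\mu^\pm_{11}=\mu^\pm_{12}=\mu^\pm_{22}=\mu^\pm_{23}=0$ (and by skew-symmetry $\mu^\pm_{21}=0$), so the system is triangular. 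One then solves in cascade:
\begin{equation*}
b^\pm_{l4} \;=\; \frac{\omega^\pm_{2l}}{\mu^\pm_{24}}, \qquad
b^\pm_{l3} \;=\; \frac{\omega^\pm_{1l}-b^\pm_{l4}\mu^\pm_{14}}{\mu^\pm_{13}}, \qquad
b^\pm_{l1} \;=\; \frac{b^\pm_{l4}\mu^\pm_{34}-\omega^\pm_{3l}}{\mu^\pm_{13}},
\end{equation*}
and then $b^\pm_{l2}$ from the last equation.

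For the identity \eqref{drop-out-omega-b}, the first relation gives $b^\pm_{14}=\omega^\pm_{21}/\mu^\pm_{24}$ and $b^\pm_{24}=\omega^\pm_{22}/\mu^\pm_{24}$. In the $+$ case the identity is the tautology $\omega^+_{22}\omega^+_{21}=\omega^+_{21}\omega^+_{22}$. For the $-$ case, Lemma \ref{lem:parity-of-omega-j} together with \eqref{eq:parity-w21-w22} yields $\omega^-_{21}(\lambda)=-\omega^+_{21}(-\lambda)=\omega^+_{21}(\lambda)$ and $\omega^-_{22}(\lambda)=\omega^+_{22}(-\lambda)=\omega^+_{22}(\lambda)$, so the numerators of $\omega^+_{22}b^-_{14}$ and $\omega^+_{21}b^-_{24}$ again agree. (The explicit ratio with $\mu^\pm_{24}$ in the denominator is precisely the claimed common value in \eqref{drop-out-omega-b}, up to sign/index conventions.)

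The estimates in the lemma are then obtained by inserting the Wronskian bounds of Lemmas \ref{lemma:omega_j-behavior}--\ref{lem:mu-behavior-small-xi} (small $|\xi|$) and Lemmas \ref{lem:omega_j-behavior-large-xi}--\ref{lem:mu-behavior-large-xi} (large $|\xi|$) into the explicit formulas above and multiplying by $\omega^+_{22}$ or $\omega^\pm_{12}$. In the small-$|\xi|$ regime the decay factor $|\omega^+_{22}(z)|\lesssim e^{-\tfrac{3}{\sqrt{2}}r_\varepsilon}$ has to dominate the worst-case growth $e^{\tfrac{3}{5\sqrt{2}}r_\varepsilon}$ of the other Wronskians evaluated at $r=r_\varepsilon/5$, producing the net decay $e^{-\tfrac{12}{5\sqrt{2}}r_\varepsilon}$; dividing by $|\mu^\pm_{13}|\simeq\sqrt{|\xi|}$ yields the $1/\sqrt{|\xi|}$ prefactor. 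In the large-$|\xi|$ regime, every $\omega^\pm_{kl}$ is $O(1/\sqrt{|\xi|})$ while $|\mu^\pm_{13}|\simeq|\mu^\pm_{24}|\simeq\sqrt{|\xi|}$, and one power-counting pass gives the $1/|\xi|$ bounds.

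The main technical obstacle is the bound for $b^\pm_{l2}$, where the cascade formula combines three terms with competing exponentials that, estimated separately through $b^\pm_{l1}$ and $b^\pm_{l3}$, would produce a weaker decay rate of only $e^{-\tfrac{9}{5\sqrt{2}}r_\varepsilon}$. The remedy is to estimate the product $\omega^+_{22}b^\pm_{l2}$ as a whole: rewriting
\begin{equation*}
\omega^+_{22}\, b^\pm_{l2}\,\mu^\pm_{24} \;=\; -\omega^+_{22}\omega^\pm_{4l} - (\omega^+_{22}b^\pm_{l1})\mu^\pm_{14} - (\omega^+_{22}b^\pm_{l3})\mu^\pm_{34},
\end{equation*}
one feeds in the sharp estimates already obtained for $\omega^+_{22}b^\pm_{l1}$ and $\omega^+_{22}b^\pm_{l3}$ before multiplying by $\mu^\pm_{14}$ or $\mu^\pm_{34}$, and exploits the cancellations already encoded in the bounds of Lemma \ref{lemma:omega_j-behavior} to preserve the decay rate $e^{-\tfrac{12}{5\sqrt{2}}r_\varepsilon}$.
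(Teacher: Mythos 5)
Your cascade inversion of the system $\omega^\pm_{kl}=\sum_j b^\pm_{lj}\mu^\pm_{kj}$ using the vanishing Wronskians $\mu^\pm_{12}=\mu^\pm_{22}=\mu^\pm_{23}=0$, followed by substitution of the bounds from Lemmas \ref{lemma:omega_j-behavior}, \ref{lem:omega_j-behavior-large-xi}, \ref{lem:mu-behavior-small-xi} and \ref{lem:mu-behavior-large-xi}, is exactly the paper's argument, and your treatment of \eqref{drop-out-omega-b} (a tautology once $b^\pm_{l4}=\omega^\pm_{2l}/\mu^\pm_{24}$ is known, combined with the parity relations of Lemma \ref{lem:parity-of-omega-j} for the $-$ case) is correct and in fact more explicit than the paper's.

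Two points need attention. First, for large $|\xi|$ you assert $\omega^\pm_{kl}=O(|\xi|^{-1/2})$; Lemma \ref{lem:omega_j-behavior-large-xi} gives only $O(|\xi|^{-1/4})$ (the leading term is $k_j(z)^{-1/2}$). The power count still closes — e.g. $|\omega^+_{22}b^\pm_{l1}|\lesssim |\xi|^{-1/4}\cdot|\xi|^{-1/4}/|\xi|^{1/2}=|\xi|^{-1}$ — but you should quote the correct input. Second, your final paragraph manufactures an obstacle for $b^\pm_{l2}$ and proposes an unsubstantiated remedy: rewriting $\omega^+_{22}b^\pm_{l2}\mu^\pm_{24}$ and "exploiting cancellations" does not improve anything, since the term $(\omega^+_{22}b^\pm_{l1})\mu^\pm_{14}$ still contributes $\tfrac{1}{\sqrt{|\xi|}}e^{-\tfrac{3}{\sqrt{2}}\tfrac{3}{5}r_\varepsilon}$ and no cancellation between the three terms is identified. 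The paper itself performs the plain term-by-term estimate, which yields $|b^\pm_{12}|\lesssim\tfrac{1}{\sqrt{|\xi|}}e^{\tfrac{3}{\sqrt{2}}\tfrac{2r_\varepsilon}{5}}$ and hence $|\omega^+_{22}b^\pm_{12}|\lesssim\tfrac{1}{\sqrt{|\xi|}}e^{-\tfrac{3}{\sqrt{2}}\tfrac{3}{5}r_\varepsilon}$; the exponent $\tfrac{4}{5}$ in the lemma statement for this particular entry appears to be a typo (the parallel resonance statement, Lemma \ref{behav-omega-b-resonance}, records $\tfrac{3}{5}$), and the weaker rate suffices for every downstream use, since in $\theta^+_2$ the dominant contribution is $\omega^+_{21}b^\pm_{22}\sim\tfrac{1}{\sqrt{|\xi|}}e^{\tfrac{3}{\sqrt{2}}\tfrac{r_\varepsilon}{5}}$ anyway. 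You should either exhibit a concrete cancellation or simply record the $\tfrac{3}{5}$ rate.
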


\begin{proof}
   
By Lemma \ref{lem:mu-behavior-small-xi}, it follows that 
\begin{align*}
  \omega_{1l}^{\pm} &=  b^{\pm}_{l3}  \mu^{\pm}_{13} +  b^{\pm}_{l4} \mu^{\pm}_{14}   , \quad 
  \omega_{2l}^{\pm} =   b^{\pm}_{l4} \mu^{\pm}_{24} ,  \quad 
  \omega_{3l}^{\pm} = b^{\pm}_{l1} \mu^{\pm}_{31} + b^{\pm}_{l4} \mu^{\pm}_{34} ,  \\
    \omega_{4l}^{\pm} &= b^{\pm}_{l1} \mu^{\pm}_{41} + b^{\pm}_{l2} \mu^{\pm}_{42} + b^{\pm}_{l3} \mu^{\pm}_{43}
\end{align*}
Therefore, 
\begin{align*}
   b^{\pm}_{l4}   &=  \frac{ \omega_{2l}^{\pm}}{\mu^{\pm}_{24}}  ,  \qquad 
  b^{\pm}_{l3}   = \frac{1}{ \mu^{\pm}_{13}} \left(   \omega_{1l}^{\pm} -  \frac{ \omega_{2l}^{\pm}}{\mu^{\pm}_{24}}  \mu^{\pm}_{14}   \right) , \quad 
   b^{\pm}_{l1}   =  \frac{1}{ \mu^{\pm}_{31}} \left(   \omega_{3l}^{\pm} -   \frac{ \omega_{2l}^{\pm}}{\mu^{\pm}_{24}}  \mu^{\pm}_{34}  \right) ,  \\
 b^{\pm}_{l2}  &= \frac{1}{ \mu^{\pm}_{42}}    \left(  \omega_{4l}^{\pm} -  b^{\pm}_{l1} \mu^{\pm}_{41}  -b^{\pm}_{l3} \mu^{\pm}_{43} \right). 
\end{align*}    
For small $|\xi|, $ by Lemma \ref{lemma:omega_j-behavior}, \ref{lem:parity-of-omega-j} and \ref{lem:mu-behavior-small-xi} we have 
\begin{align*}
  b^{\pm}_{14}&= \frac{ \omega_{21}^{\pm}}{\mu^{\pm}_{24}} , \quad |
  b^{\pm}_{11}| = | \frac{ 1}{\mu^{\pm}_{31}} \left(  \omega_{31}^{\pm} -  \frac{ \omega_{21}^{\pm} \mu^{\pm}_{34}}{\mu^{\pm}_{24}}
 \right) | \lesssim \frac{1}{c \, k_1(z) + O(|\xi|^{\frac{3}{2}}) }   e^{\frac{3}{\sqrt{2}} \frac{r_{\varepsilon}}{5} } , \\
 |  b^{\pm}_{13} | &=  | \frac{1}{ \mu^{\pm}_{13}} \left(   \omega_{11}^{\pm} -  \frac{ \omega_{21}^{\pm}}{\mu^{\pm}_{24}}  \mu^{\pm}_{14}   \right) | \lesssim  \frac{1}{c \, k_1(z) + O(|\xi|^{\frac{3}{2}}) } e^{\frac{3}{\sqrt{2}} \frac{r_{\varepsilon}}{5} }    \\
 |  b^{\pm}_{12} | & = | \frac{1}{ \mu^{\pm}_{42}}    \left(  \omega_{41}^{\pm} -  b^{\pm}_{11} \mu^{\pm}_{41}  -b^{\pm}_{13} \mu^{\pm}_{43} \right)  | \lesssim \frac{1}{c \, k_1(z) + O(|\xi|^{\frac{3}{2}}) }   e^{\frac{3}{\sqrt{2}} \frac{2 r_{\varepsilon}}{5} }    
\end{align*}

and 
\begin{align*}
  b^{\pm}_{24}&= \frac{ \omega_{22}^{\pm}}{\mu^{\pm}_{24}} , \quad 
| b^{\pm}_{21 } | = | \frac{ 1}{\mu^{\pm}_{31}} \left(  \omega_{32}^{\pm} -  \frac{ \omega_{22}^{\pm} \mu^{\pm}_{34}}{\mu^{\pm}_{24}}   \right)  | \lesssim \frac{1}{c \, k_1(z) + O(|\xi|^{\frac{3}{2}}) } \\
 |  b^{\pm}_{23}| &=  | \frac{1}{ \mu^{\pm}_{13}} \left(   \omega_{12}^{\pm} -  \frac{ \omega_{22}^{\pm}}{\mu^{\pm}_{24}}  \mu^{\pm}_{14}   \right) |     \lesssim \frac{1}{c \, k_1(z) + O(|\xi|^{\frac{3}{2}}) }  , \\
|  b^{\pm}_{22} | &= | \frac{1}{ \mu^{\pm}_{42}}    \left(  \omega_{42}^{\pm} -  b^{\pm}_{21} \mu^{\pm}_{41}  -b^{\pm}_{23} \mu^{\pm}_{43} \right)  |  \lesssim    \frac{1}{c \, k_1(z) + O(|\xi|^{\frac{3}{2}}) }      e^{\frac{3}{\sqrt{2}} \frac{r_{\varepsilon}}{5}  }  
\end{align*} 

Similarly, for large $|\xi|, $ using Lemma \ref{lem:omega_j-behavior-large-xi}, \ref{lem:parity-of-omega-j}, \ref{lem:mu-behavior-large-xi} we have 
\begin{align*}
  b^{\pm}_{14}&= \frac{ \omega_{21}^{\pm}}{\mu^{\pm}_{24}}, \quad 
 |  b^{\pm}_{11} | =  | \frac{ 1}{\mu^{\pm}_{31}}  \omega_{31}^{\pm} | \lesssim \frac{1}{|\xi|^{\frac{3}{4}}}, \qquad 
   | b^{\pm}_{13} | =   | \frac{1}{ \mu^{\pm}_{13}}    \omega_{11}^{\pm}  | \lesssim \frac{1}{|\xi|^{\frac{3}{4}}}     , \quad 
 |  b^{\pm}_{12} | = | \frac{1}{ \mu^{\pm}_{42}}     \omega_{41}^{\pm}  |  \lesssim \frac{1}{|\xi|^{\frac{3}{4}}}    
\end{align*}
and 
\begin{align*}
  b^{\pm}_{24}&= \frac{ \omega_{22}^{\pm}}{\mu^{\pm}_{24}}, \quad 
 | b^{\pm}_{21} | = | \frac{ 1}{\mu^{\pm}_{31}}   \omega_{32}^{\pm}  | \lesssim  \frac{1}{|\xi|^{\frac{3}{4}}}  , \quad  
 |  b^{\pm}_{23} | =  | \frac{1}{ \mu^{\pm}_{13}}  \omega_{12}^{\pm}  | \lesssim   \frac{1}{|\xi|^{\frac{3}{4}}}  , \quad  
 | b^{\pm}_{22} | = | \frac{1}{ \mu^{\pm}_{42}}      \omega_{42}^{\pm}   | \lesssim  \frac{1}{|\xi|^{\frac{3}{4}}} .
\end{align*}

\end{proof}

\begin{lemma} Let $z=\lambda\pm i 0 $ such that  $\lambda=\xi\pm \frac{\sqrt{17}}{8}.$ Then we have
    \begin{align*}
        \Theta(r,\lambda) = \sum_{j=1}^{3} \theta^{+}_j(\lambda) \Psi^{+}_j(r,\lambda), \quad \text{ where }   \theta^{+}_j(\lambda) :=\omega_{22}^{+}(\lambda) b_{1j}^{+} - \omega_{21}^{+}(\lambda)  b_{2j}^{+}, 
    \end{align*}
where for small $|\xi|$, we have
\begin{align*}
    \theta^{+}_1(\lambda) = O(\frac{1}{\sqrt{|\xi|}})  \quad      \theta^{+}_2(\lambda) =O(\frac{1}{\sqrt{|\xi|}}     e^{\frac{3}{\sqrt{2}} \frac{r_{\varepsilon}}{5}  })     , \quad 
     \theta^{+}_3(\lambda) =  O(\frac{1}{\sqrt{|\xi|}})
\end{align*}
and for large $|\xi|$, we have
\begin{align*}
    \theta^{+}_1(\lambda) = O(\frac{1}{|\xi|})  \quad      \theta^{+}_2(\lambda) =O(\frac{1}{|\xi|}    )     , \quad 
     \theta^{+}_3(\lambda) = O(\frac{1}{|\xi|})
\end{align*}

\end{lemma}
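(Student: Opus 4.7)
My plan is to start from the definition $\Theta(r,\lambda)=\omega_{22}^{+}(\lambda)\upvarphi_1(r,\lambda)-\omega_{21}^{+}(\lambda)\upvarphi_2(r,\lambda)$ and substitute the expansion \eqref{exp-phi-terms-psi}, $\upvarphi_l(r,\lambda)=\sum_{j=1}^{4} b^{+}_{lj}\Psi^{+}_j(r,\lambda)$. This immediately gives $\Theta(r,\lambda)=\sum_{j=1}^{4}\theta_j^{+}(\lambda)\Psi^{+}_{j}(r,\lambda)$ with $\theta_j^{+}=\omega_{22}^{+} b_{1j}^{+}-\omega_{21}^{+} b_{2j}^{+}$, so the task reduces to showing $\theta_4^{+}\equiv 0$ and to extracting the announced size estimates for $\theta_1^{+},\theta_2^{+},\theta_3^{+}$.

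For the vanishing of $\theta_4^{+}$, I will use the identities derived inside the proof of Lemma~\ref{behav-omega-b}. Since $\mu_{14}^{+}=\mu_{12}^{+}=0$ and $\mu_{24}^{+}\neq 0$, the relation $\omega_{2l}^{+}=b_{l4}^{+}\mu_{24}^{+}$ yields $b_{14}^{+}=\omega_{21}^{+}/\mu_{24}^{+}$ and $b_{24}^{+}=\omega_{22}^{+}/\mu_{24}^{+}$. Consequently,
\begin{equation*}
\theta_4^{+}=\omega_{22}^{+} b_{14}^{+}-\omega_{21}^{+} b_{24}^{+}=\frac{\omega_{22}^{+}\omega_{21}^{+}-\omega_{21}^{+}\omega_{22}^{+}}{\mu_{24}^{+}}=0,
\end{equation*}
which is precisely the content of \eqref{drop-out-omega-b} (modulo an index relabeling) and reduces the sum to $j=1,2,3$. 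This is the cleanest structural point of the proof and is the analogue of the cancellation already observed in Corollary~\ref{kernel-no-sigularity-non-resonance}.

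For the size estimates I will simply multiply the bounds from Lemmas~\ref{lemma:omega_j-behavior}, \ref{lem:omega_j-behavior-large-xi} for $\omega_{21}^{+},\omega_{22}^{+}$ with those from Lemma~\ref{behav-omega-b} for $\omega_{22}^{+}b_{1j}^{+}$ and for $\omega_{21}^{+}b_{2j}^{+}$ (the latter obtained in an identical manner from the explicit expressions for $b_{2j}^{+}$ given in the proof of Lemma~\ref{behav-omega-b}, using $|k_1(z)|\simeq\sqrt{|\xi|}$). In the small-$|\xi|$ regime, $|\omega_{22}^{+}|\lesssim e^{-\frac{3}{\sqrt 2}r_\varepsilon}$ and $|\omega_{21}^{+}|\simeq 1$ dominate the powers of $e^{\frac{3}{\sqrt 2}r_\varepsilon/5}$ coming from the $b_{2j}^{+}$ estimates, leading to $|\theta_1^{+}|,|\theta_3^{+}|\lesssim |\xi|^{-1/2}$ and $|\theta_2^{+}|\lesssim |\xi|^{-1/2}e^{\frac{3}{\sqrt 2}r_\varepsilon/5}$. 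In the large-$|\xi|$ regime, the relevant magnitudes $|\omega_{22}^{+}|,|\omega_{21}^{+}|\simeq |\xi|^{-1/4}$ combine with $|b_{lj}^{+}|\lesssim |\xi|^{-3/4}$ (read from Lemma~\ref{behav-omega-b}) to produce the uniform estimate $|\theta_j^{+}|\lesssim |\xi|^{-1}$, $j=1,2,3$.

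The only mild technical point is keeping track of the exponential factors in the small-$|\xi|$ estimates: the weakest bound is the one on $\theta_2^{+}$, whose growth $e^{\frac{3}{\sqrt 2}r_\varepsilon/5}$ is controlled by $\varepsilon^{1/5}$ and is eventually absorbed by the faster decay of $\Psi_2^{+}(r,\lambda)\sim e^{ik_2(\lambda)r}$ (with $\mathrm{Im}\,k_2(\lambda)=\tfrac{3}{\sqrt{2}}+O(|\xi|)$) in later $L^2$-estimates. Apart from this bookkeeping, the proof is a direct combination of earlier lemmas and no further work is needed.
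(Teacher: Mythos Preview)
Your proposal is correct and follows essentially the same route as the paper: substitute \eqref{exp-phi-terms-psi} into \eqref{def-theta}, use \eqref{drop-out-omega-b} to kill $\theta_4^{+}$, and read off the size of each remaining $\theta_j^{+}$ from Lemma~\ref{behav-omega-b} together with $|\omega_{21}^{+}|\simeq|\omega_{12}^{+}|$. The only cosmetic difference is that the paper first rewrites each $\theta_j^{+}$ in closed form (e.g.\ $\theta_1^{+}=\tfrac{1}{\mu_{31}^{+}}(\omega_{22}^{+}\omega_{31}^{+}-\omega_{21}^{+}\omega_{32}^{+})$ and $\theta_3^{+}=d^{+}/\mu_{13}^{+}$), exploiting a cancellation between the two summands, before estimating; your term-by-term bound gives the same conclusion for this lemma, though the paper's explicit expressions are reused in the proof of Lemma~\ref{Resol-interms-psi}.
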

\begin{proof}
    By definition of $\Theta(r,\lambda)$ in \eqref{def-theta} with \eqref{exp-phi-terms-psi} and \eqref{drop-out-omega-b}, we obtain 
    \begin{align*}
        \Theta(r,\lambda) = \sum_{j=1}^{3} \theta^{+}_j(\lambda) \Psi^{+}_j(r,\lambda), \quad \text{ where }   \theta^{+}_j(\lambda) :=\omega_{22}^{+}(\lambda) b_{1j}^{+} - \omega_{21}^{+}(\lambda)  b_{2j}^{+}. 
    \end{align*}
Therefore, 
\begin{align*}
     \theta^{+}_1 :&=\omega_{22}^{+} b_{11}^{+} - \omega_{21}^{+}  b_{21}^{+} \\
    &=\omega_{22}^{+}  \frac{1}{ \mu^{+}_{31}} \left(   \omega_{31}^{+} -   \frac{ \omega_{21}^{+} }{\mu^{+}_{24}}  \mu^{+}_{34}  \right) - \omega_{21}^{+} \frac{1}{ \mu^{+}_{31}} \left(   \omega_{32}^{+} -   \frac{ \omega_{22}^{+}}{\mu^{+}_{24}}  \mu^{+}_{34}  \right)     \\
    &= \frac{1}{ \mu^{+}_{31}} \left( \omega_{22}^{+}    \omega_{31}^{+} -   \omega_{21}^{+}   \omega_{32}^{+}  \right) \\ 
     \theta^{+}_2 :&=\omega_{22}^{+} b_{12}^{+} - \omega_{21}^{+}  b_{22}^{+} \\
     &=\omega_{22}^{+}   \frac{1}{ \mu^{+}_{42}}    \left(  \omega_{41}^{+} -  b^{+}_{11} \mu^{+}_{41}  -b^{+}_{13} \mu^{+}_{43} \right) - \omega_{21}^{+}  \frac{1}{ \mu^{+}_{42}}    \left(  \omega_{42}^{+} -  b^{+}_{21} \mu^{+}_{41}  -b^{+}_{23} \mu^{+}_{43}   \right) \\
     &= \frac{1}{ \mu^{+}_{42}} ( \omega_{22}^{+}     \omega_{41}^{+}- \omega_{21}^{+}  \omega_{42}^{+}  )  - \omega_{22}^{+}   \frac{1}{ \mu^{+}_{42}}  \left(    b^{+}_{11} \mu^{+}_{41}  + b^{+}_{13} \mu^{+}_{43} \right) - \omega_{21}^{+}  \frac{1}{ \mu^{+}_{42}}    \left(    b^{+}_{21} \mu^{+}_{41} +b^{+}_{23} \mu^{+}_{43}   \right) 
\end{align*}
Note that, for large $\lambda, $ $ \theta^{+}_2=\frac{1}{ \mu^{+}_{42}} ( \omega_{22}^{+}     \omega_{41}^{+}- \omega_{21}^{+}  \omega_{42}^{+}  ).$
\begin{align*}
    \theta^{+}_3 :&=  \omega_{22}^{+} b_{13}^{+} - \omega_{21}^{+}  b_{23}^{+}  \\
    & = \omega_{22}^{+} \frac{1}{ \mu^{+}_{13}} \left(   \omega_{11}^{+} -  \frac{ \omega_{21}^{+}}{\mu^{+}_{24}}  \mu^{+}_{14}   \right) - \omega_{21}^{+}  \frac{1}{ \mu^{+}_{13}} \left(   \omega_{12}^{+} -  \frac{ \omega_{22}^{+}}{\mu^{+}_{24}}  \mu^{+}_{14}   \right) \\
    & = \frac{1}{ \mu^{+}_{13}}  \left(  \omega_{22}^{+}  \omega_{11}^{+} -\omega_{21}^{+}  \omega_{12}^{+} \right) 
\end{align*}
and by Lemma \ref{behav-omega-b}, we obtain the desired estimates for $\theta_j.$ 
\end{proof}

\begin{lemma}
\label{Resol-interms-psi}
Let $z=\lambda\pm i 0 $ such that  $\lambda=\xi\pm \frac{\sqrt{17}}{8}.$ Then we have
    \begin{align}
    \label{theta_in_terms-Psi} 
    \begin{split}
     \Theta(r,\lambda)& =\gamma_1(\lambda) \left( \omega^{+}_{11}(\lambda) \sigma_3  \Psi_1^{+}(r,-\lambda)  +   \omega^{+}_{11}(-\lambda)  \Psi^{+}_1(r,\lambda) \right)
  \\
 & + \gamma_2(\lambda)   (\sigma_3) \Psi_1^{+}(r,-\lambda)  + \gamma_3(\lambda)  \Psi^{+}_1(r,\lambda) + \gamma_4(\lambda)  \Psi^{+}_2(r,\lambda)
\end{split}
\end{align}
where for small $|\xi|, $ we have 
\begin{align*}
 \gamma_1(\lambda)   = O(\frac{1}{\sqrt{|\xi|} }e^{-\frac{3}{\sqrt{2}} r_{\varepsilon}} ), \quad \gamma_2(\lambda) = O(\frac{1}{\sqrt{|\xi|}}), \quad \gamma_3(\lambda)=  O(\frac{1}{\sqrt{|\xi|}} ) , \quad |\gamma_4(\lambda)| \lesssim e^{\frac{3}{\sqrt{2}} \frac{2 r_{\epsilon}}{5}}.
\end{align*}
and for large $|\xi|, $ we have 
\begin{align*}
    \gamma_1(\lambda)   = O(\frac{1}{|\xi|^{\frac{3}{4}}} ), \quad \gamma_2(\lambda) = O(\frac{1}{|\xi|}), \quad \gamma_3(\xi)=  O(\frac{1}{|\xi|}) ,  \quad  \gamma_4(\lambda) = O(\frac{1}{|\xi|}).
\end{align*}
\end{lemma}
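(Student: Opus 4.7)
My plan is to rewrite the $\Psi_3^+(r,\lambda)$ component in the decomposition $\Theta = \theta_1^+\Psi_1^+ + \theta_2^+\Psi_2^+ + \theta_3^+\Psi_3^+$ (from the preceding lemma) in terms of the ``minus'' solution $\Psi_1^-(r,\lambda):=\sigma_3\Psi_1^+(r,-\lambda)$, so that $\Theta$ only involves $\Psi_1^\pm$ and $\Psi_2^+$. The first and crucial step is to verify that $\Psi_3^+\in\operatorname{span}(\Psi_1^+,\Psi_2^+,\Psi_1^-)$. To this end, I expand $\Psi_1^- = \sum_{j=1}^4 A_j(\lambda)\Psi_j^+$ and compute the $A_j$ by taking Wronskians against each $\Psi_k^+$, using the tables of $\mu_{ij}^+$ from Lemmas \ref{lem:mu-behavior-small-xi} and \ref{lem:mu-behavior-large-xi}. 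The key observation is that $A_4 = W(\Psi_1^-,\Psi_2^+)/\mu_{42}^+ = 0$: computing the leading asymptotic at $r\to\infty$ of $W(\sigma_3\Psi_1^+(\cdot,-\lambda),\Psi_2^+(\cdot,\lambda))$ gives a factor $(1-c_1(\lambda)c_2(\lambda))$, which vanishes identically since $c_1 c_2 = 1$ on the zero set of $P(k,z)$ (a direct Vieta's-formula check using $k_1^2+k_2^2=-9/2$ and $k_1^2 k_2^2=17/16-4z^2$). Since the Wronskian is $r$-independent, it vanishes identically, forcing $A_4 = 0$ and $A_3=\kappa(\lambda)/\mu_{31}^+$.

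With $A_4=0$, we may invert to get $\Psi_3^+ = A_3^{-1}(\Psi_1^- - A_1\Psi_1^+ - A_2\Psi_2^+)$, and substitution yields
\begin{equation*}
\Theta = \frac{\theta_3^+}{A_3}\Psi_1^- + \Bigl(\theta_1^+ - \frac{\theta_3^+ A_1}{A_3}\Bigr)\Psi_1^+ + \Bigl(\theta_2^+ - \frac{\theta_3^+ A_2}{A_3}\Bigr)\Psi_2^+.
\end{equation*}
Using the identity $\theta_3^+ = d^+/\mu_{13}^+$ from the previous lemma together with $\mu_{31}^+=-\mu_{13}^+$, the coefficient of $\Psi_1^-$ simplifies to $-d^+(\lambda)/\kappa(\lambda)$. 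The remaining task is to split this expression into the claimed form $\gamma_1(\omega_{11}^+(\lambda)\Psi_1^-+\omega_{11}^+(-\lambda)\Psi_1^+) + \gamma_2\Psi_1^-+\gamma_3\Psi_1^++\gamma_4\Psi_2^+$. The ``symmetrization'' term has the key property that its Wronskian against $\varphi_1$ vanishes (by Lemma \ref{lem:parity-of-omega-j}, $\omega_{11}^+(\lambda)\omega_{11}^-(\lambda)+\omega_{11}^+(-\lambda)\omega_{11}^+(\lambda)=0$), which matches the fact that $W(\Theta,\varphi_1)=0$ (since $\Lambda_{12}=0$ by Lemma \ref{Lambda_j-behavior}). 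Imposing in addition $W(\Theta,\varphi_2)=0$ and matching the $\Psi_2^+$ coefficient fixes $\gamma_1,\ldots,\gamma_4$ essentially uniquely, and the particular choice is dictated by the parity constraints on $\Theta_1,\Theta_2$ from Lemma \ref{Resol-interms-phi}.

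The estimates for $\gamma_j$ are then read off by combining Lemmas \ref{lemma:omega_j-behavior} and \ref{lem:omega_j-behavior-large-xi} for $\omega_{ij}^+$, Lemmas \ref{lem:mu-behavior-small-xi} and \ref{lem:mu-behavior-large-xi} for $\mu_{ij}^+$, the preceding lemma for $\theta_j^+$, and Proposition \ref{jump-resol} which gives $|\kappa(\lambda)|\simeq\sqrt{|\xi|}$. For large $|\xi|$ the computation is clean: $A_j$ are directly determined from a triangular Wronskian system since $\mu_{41}^+=\mu_{43}^+=0$, producing the uniform $O(|\xi|^{-1})$ bounds. For small $|\xi|$, the exponentially small factor $e^{-3r_\varepsilon/\sqrt{2}}$ in $\gamma_1$ reflects the size of the symmetrization correction (it is balanced by the exponential factor $e^{3r_\varepsilon/(5\sqrt{2})}$ in $\omega_{11}^+$), so that $\gamma_1\omega_{11}^+(\lambda)$ and $\gamma_2$ are of the same order $|\xi|^{-1/2}$.

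The main obstacle is the vanishing-Wronskian observation $W(\Psi_1^-,\Psi_2^+)=0$, which is what makes the whole reduction possible. Once this identity (relying on the algebraic constraint $c_1c_2 = 1$ hidden in the characteristic polynomial $P(k,z)$) is established, the rest is a careful but essentially algebraic bookkeeping, with the small-$|\xi|$ regime requiring particular attention to the near-threshold size of $\mu_{31}^+\sim k_1(z)\sim\sqrt{|\xi|}$ so that no spurious factor of $|\xi|^{-1}$ appears in the $\gamma_j$ bounds beyond the stated ones.
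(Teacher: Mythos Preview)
Your key observation is right and is the same one that underlies the paper's proof: the vanishing $W(\sigma_3\Psi_1^+(\cdot,-\lambda),\Psi_2^+(\cdot,\lambda))=0$ follows from the algebraic identity $c_1(\lambda)c_2(\lambda)=1$ hidden in the quartic $P(k,z)$. Expanding $\Psi_1^-$ in the $\Psi_j^+$ basis, as you do, or expanding $\Psi_3^+$ in $\{\Psi_1^+,-\Psi_1^-,\Psi_2^+,\Psi_4^+\}$, as the paper does, are equivalent starting points, and your coefficient $\theta_3^+/A_3=-d^+/\kappa$ for $\Psi_1^-$ matches the paper's $\gamma_1\omega_{11}^+(\lambda)+\gamma_2$.

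The gap is in your passage from the three-term expression $\Theta=c_-\Psi_1^-+c_+\Psi_1^++c_2\Psi_2^+$ to the lemma's four-term form. Your proposed constraints (Wronskians against $\varphi_1,\varphi_2$; parity of $\Theta_1,\Theta_2$) do not determine $\gamma_1$: they are automatically satisfied by any correct rewriting of $\Theta$, so there is genuinely one free parameter. If you simply take $\gamma_1=0$ then $\gamma_3=\theta_1^+-\theta_3^+A_1/A_3$ and $\gamma_4=\theta_2^+-\theta_3^+A_2/A_3$, and you now owe bounds on $A_1=W(\Psi_1^-,\Psi_3^+)/\mu_{13}^+$ and $A_2$. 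Since $\Psi_1^-$ and $\Psi_3^+$ share the same leading asymptotic $e^{-ik_1 r}\begin{pmatrix}1\\c_1\end{pmatrix}$, this Wronskian lives entirely at subleading order and extracting its size is the delicate step you have not carried out.

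The paper avoids this entirely by a specific algebraic reorganization. Write
\[
\theta_1^+\Psi_1^++\theta_3^+\Psi_3^+=\tfrac{\omega_{22}^+}{\mu_{13}^+}\mathcal{P}_1-\tfrac{\omega_{21}^+}{\mu_{13}^+}\mathcal{P}_2,\qquad \mathcal{P}_j:=\omega_{1j}^+\Psi_3^+-\omega_{3j}^+\Psi_1^+,
\]
then substitute $\Psi_3^+=\alpha_1\Psi_1^+-\alpha_2\Psi_1^-+\alpha_3\Psi_2^+$ together with the Wronskian relations $\omega_{31}^+=\alpha_1\omega_{11}^+(\lambda)+\alpha_2\omega_{11}^+(-\lambda)+\alpha_3\omega_{21}^+$ and $\omega_{32}^+=\alpha_1\omega_{12}^+(\lambda)-\alpha_2\omega_{12}^+(-\lambda)+\alpha_3\omega_{22}^+$. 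The point is that the $\alpha_1$-contributions \emph{cancel identically} inside each $\mathcal{P}_j$, so the delicate quantity $A_1$ (equivalently $\alpha_1$) never appears. What remains gives explicit formulas for the $\gamma_j$ involving only $\alpha_2/\mu_{13}^+=1/\kappa(\lambda)$ and $\alpha_3=\mu_{34}^+/\mu_{24}^+$, and the stated bounds then follow directly from the $\omega_{ij}^+$, $\mu_{ij}^+$ tables.
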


\begin{proof}

Since $\mu^{+}_{13}=-\mu^{+}_{31}, $ we have 
\begin{align*}
     \Theta(r,\lambda)& = - \frac{1}{ \mu^{+}_{13}(\lambda)} \left( \omega_{22}^{+}(\lambda)    \omega_{31}^{+}(\lambda) -   \omega_{21}^{+}(\lambda)   \omega_{32}^{+} (\lambda) \right)  \Psi^{+}_1(r,\lambda)\\
     &+ \frac{1}{ \mu^{+}_{13}(\lambda)} \left(  \omega_{22}^{+}(\lambda)  \omega_{11}^{+} (\lambda)-\omega_{21}^{+}(\lambda)  \omega_{12}^{+}(\lambda) \right) \Psi^{+}_3(r,\lambda)  + \theta^{+}_2(\lambda) \Psi^{+}_2(r,\lambda) \\
     &= \frac{1}{ \mu^{+}_{13}(\lambda)}  \omega_{22}^{+}(\lambda)  \left(   \omega_{11}^{+} (\lambda)  \Psi^{+}_3(r,\lambda)  -  \omega_{31}^{+}(\lambda) 
     \Psi^{+}_1(r,\lambda) \right) \\
    & - \frac{1}{ \mu^{+}_{13}(\lambda)}   \omega_{21}^{+}(\lambda) \left(  \omega_{12}^{+}(\lambda)\Psi^{+}_3(r,\lambda) -  \omega_{32}^{+} (\lambda)\Psi^{+}_1(r,\lambda)
      \right)+ \theta^{+}_2(\lambda) \Psi^{+}_2(r,\lambda) 
\end{align*}
Denote by:
\begin{align*}
  \mathcal{P}_1(r,\lambda):&=\omega_{11}^{+} (\lambda)  \Psi^{+}_3(r,\lambda)  -  \omega_{31}^{+}(\lambda) \Psi^{+}_1(r,\lambda)    ,\\
   \mathcal{P}_2(r,\lambda):&= \omega_{12}^{+}(\lambda)\Psi^{+}_3(r,\lambda) -  \omega_{32}^{+} (\lambda)\Psi^{+}_1(r,\lambda).
\end{align*}

Thus, 

\begin{align}
\label{theta-interms-P_1-2}
\begin{split}
     \Theta(r,\lambda)& = \frac{1}{ \mu^{+}_{13}(\lambda)}  \omega_{22}^{+}(\lambda) \mathcal{P}_1(r,\lambda)     - \frac{1}{ \mu^{+}_{13}(\lambda)}   \omega_{21}^{+}(\lambda) \mathcal{P}_2(r,\lambda) + \theta^{+}_2(\lambda) \Psi^{+}_2(r,\lambda)      
\end{split}
\end{align}

Since $\{ \Psi_1^{+}(\cdot, \lambda), - \sigma_3 \Psi_1^{+}(\cdot, -\lambda),  \Psi_2^{+}(\cdot, \lambda),  \Psi_4^{+}(\cdot, \lambda)    \}$ is a fundamental system for the equation $i \mathcal{L} \Psi=z\Psi,$ then we can write $ \Psi_3^{+}(r,\lambda)$ as
\begin{align*}
    \Psi_3^{+}(r,\lambda)= \alpha_1(\lambda) \Psi_1^{+}(r,\lambda) + \alpha_2(\lambda) (-\sigma_3) \Psi_1^{+}(r,-\lambda) , + \alpha_3(\lambda) \Psi_2^{+}(r,\lambda) + \alpha_4(\lambda) \Psi_4^{+}(r,\lambda).
\end{align*}

Taking the Wronskians on the left and right-hand side with $\Psi_2(r,\lambda),$ and using the asymptotics of $\Psi_j$ one can see that  $\alpha_4(\lambda) W(\Psi^{+}_4(r,\lambda),\Psi^{+}_2(r,\lambda)=0.$ Thus,   $\alpha_4(\lambda)=0,$ and 
\begin{align}
    \Psi_3^{+}(r,\lambda)= \alpha_1(\lambda) \Psi_1^{+}(r,\lambda) + \alpha_2(\lambda) (-\sigma_3) \Psi_1^{+}(r,-\lambda) + \alpha_3(\lambda) \Psi_2^{+}(r,\lambda) .
\end{align}

Taking the Wronskians on the left and right-hand side with $\sigma_3 \Psi_1(r,-\lambda),$ and using the asymptotics of $\Psi_j$ one can see that  $\alpha_1(\lambda) W(\Psi^{+}_1(r,\lambda),\sigma_3 \Psi^{+}_1(r,-\lambda)=0.$ Therefore, $\alpha_1(\lambda)=0$ and now  we have
\begin{align}
    \label{Psi_3-interms-psi_j}
    \Psi_3^{+}(r,\lambda)=  \alpha_2(\lambda) (-\sigma_3) \Psi_1^{+}(r,-\lambda) + \alpha_3(\lambda) \Psi_2^{+}(r,\lambda) .
\end{align}

Using the fact $\upvarphi_1(r,\lambda):=-\sigma_3 \upvarphi_1(r,-\lambda) ,$ we have
$W(-\sigma_3 \Psi_1^{+}(\cdot, -\lambda),\upvarphi_1(\cdot, \lambda))=\omega_{11}(-\lambda).  $ Now, taking the  Wronskians on the left and right-hand side with $\upvarphi_1(r,\lambda)$ leads to 
\begin{align*}
    \omega^{+}_{31}(\lambda)=  
  \alpha_2(\lambda)  \omega^{+}_{11}(-\lambda)+ \alpha_3(\lambda) \omega^{+}_{21}(\lambda).
\end{align*}
Therefore, 
\begin{align*}
     \mathcal{P}_1(r,\lambda)&=\omega_{11}^{+} (\lambda)  \left(  \alpha_2(\lambda) (-\sigma_3) \Psi_1^{+}(r,-\lambda) + \alpha_3(\lambda) \Psi_2^{+}(r,\lambda) \right) \\
     &-  \left( 
  \alpha_2(\lambda)  \omega^{+}_{11}(-\lambda)+ \alpha_3(\lambda) \omega^{+}_{21}(\lambda) \right) \Psi^{+}_1(r,\lambda)   \\
  &=  \alpha_2(\lambda)  \left( \omega_{11}^{+} (\lambda)   (-\sigma_3) \Psi_1^{+}(r,-\lambda) -  \omega^{+}_{11}(-\lambda)  \Psi^{+}_1(r,\lambda) \right) \\
  &+ \alpha_3(\lambda) \left( \omega_{11}^{+} (\lambda)  \Psi_2^{+}(r,\lambda)-  \omega_{21}^{+} (\lambda)  \Psi_1^{+}(r,\lambda)   \right)
\end{align*}

\begin{align}
\label{P_1}
    \begin{split}
 \mathcal{P}_1(r,\lambda)&=   \alpha_2(\lambda)  \left( \omega_{11}^{+} (\lambda)   (-\sigma_3) \Psi_1^{+}(r,-\lambda) -  \omega^{+}_{11}(-\lambda)  \Psi^{+}_1(r,\lambda) \right) \\
  &+ \alpha_3(\lambda) \left( \omega_{11}^{+} (\lambda)  \Psi_2^{+}(r,\lambda)-  \omega_{21}^{+} (\lambda)  \Psi_1^{+}(r,\lambda)   \right)       
    \end{split}
\end{align}

Using the fact $\upvarphi_2(r,\lambda):=\sigma_3 \upvarphi_2(r,-\lambda) ,$ we have
$W(-\sigma_3 \Psi_1^{+}(\cdot, -\lambda),\upvarphi_2(\cdot, \lambda))=-\omega_{12}(-\lambda).  $ Now, taking the  Wronskians on the left and right-hand side of \eqref{Psi_3-interms-psi_j} with $\upvarphi_2(r,\lambda)$ leads to 
\begin{align*}
    \omega^{+}_{32}(\lambda)=  -  \alpha_2(\lambda)  \omega^{+}_{12}(-\lambda)+ \alpha_3(\lambda) \omega^{+}_{22}(\lambda).
\end{align*}
Therefore,
\begin{align*}
     \mathcal{P}_2(r,\lambda):&= \omega_{12}^{+}(\lambda)\Psi^{+}_3(r,\lambda) -  \omega_{32}^{+} (\lambda) \Psi^{+}_1(r,\lambda) \\
     &=  \omega_{12}^{+}(\lambda) \left(  \alpha_2(\lambda) (-\sigma_3) \Psi_1^{+}(r,-\lambda) + \alpha_3(\lambda) \Psi_2^{+}(r,\lambda)  \right) \\
&     - \left(  -
  \alpha_2(\lambda)  \omega^{+}_{12}(-\lambda)+ \alpha_3(\lambda) \omega^{+}_{22}(\lambda) \right)  \Psi^{+}_1(r,\lambda)\\
  &= \alpha_2(\lambda) \left( \omega_{12}^{+}(\lambda)   (-\sigma_3) \Psi_1^{+}(r,-\lambda) +   \omega^{+}_{12}(-\lambda) \Psi^{+}_1(r,\lambda)    \right)  \\
 & + \alpha_3(\lambda) \left(    \omega_{12}^{+}(\lambda)   \Psi_2^{+}(r,\lambda) -   \omega^{+}_{22}(\lambda) \Psi^{+}_1(r,\lambda) \right) 
\end{align*}

\begin{align}
\label{P_2}
\begin{split}
  \mathcal{P}_2(r,\lambda)& =   \alpha_2(\lambda) \left( \omega_{12}^{+}(\lambda)   (-\sigma_3) \Psi_1^{+}(r,-\lambda) +   \omega^{+}_{12}(-\lambda) \Psi^{+}_1(r,\lambda)    \right)  \\
 & + \alpha_3(\lambda) \left(    \omega_{12}^{+}(\lambda)   \Psi_2^{+}(r,\lambda) -   \omega^{+}_{22}(\lambda) \Psi^{+}_1(r,\lambda) \right) 
\end{split}
\end{align}

Substitute \eqref{P_1} and \eqref{P_2} in the equation \eqref{theta-interms-P_1-2}, we obtain 
\begin{align*}
     \Theta(r,\lambda)& =\gamma_1(\lambda) \left( \omega^{+}_{11}(\lambda) \sigma_3  \Psi_1^{+}(r,-\lambda)  +   \omega^{+}_{11}(-\lambda)  \Psi^{+}_1(r,\lambda) \right)
  \\
 & + \gamma_2(\lambda)   (\sigma_3) \Psi_1^{+}(r,-\lambda)  + \gamma_3(\lambda)  \Psi^{+}_1(r,\lambda) + \gamma_4(\lambda)  \Psi^{+}_2(r,\lambda),
\end{align*}
where
\begin{align*}
    \gamma_1(\lambda)  &:= -\frac{1}{ \mu^{+}_{13}(\lambda)}  \omega_{22}^{+}(\lambda) \alpha_2(\lambda) \\
     \gamma_2(\lambda) &:=   \frac{1}{ \mu^{+}_{13}(\lambda)}   \omega_{21}^{+}(\lambda)  \alpha_2(\lambda) \omega_{12}^{+}(\lambda)  \\
     \gamma_3(\lambda) &:= - \frac{1}{ \mu^{+}_{13}(\lambda)}   \omega_{21}^{+}(\lambda)  \alpha_2(\lambda) \omega^{+}_{12}(-\lambda)\\
     \gamma_4(\lambda) &:= \theta^{+}_2(\lambda)+ \frac{1}{ \mu^{+}_{13}(\lambda)}  \omega_{22}^{+}(\lambda)  \alpha_3(\lambda)  \omega_{11}^{+} (\lambda)   - \frac{1}{ \mu^{+}_{13}(\lambda)}   \omega_{21}^{+}(\lambda)  \alpha_3(\lambda)   \omega_{12}^{+}(\lambda)  .
\end{align*}

Taking the Wronskians on the left and right-hand sides of \eqref{Psi_3-interms-psi_j} with $\Psi^{+}_1(\cdot,\lambda),$ we obtain 
\begin{align*}
    \mu^{+}_{13}(\lambda)= \alpha_2(\lambda) W( \sigma_3 \Psi_1^{+}(\cdot,-\lambda) , \Psi^{+}_1(\cdot,\lambda) )= \alpha_2(\lambda) c k_1^{+}(\lambda) ( 1- c_1(\lambda)^2).
\end{align*}
Thus, we have 
\begin{align*}
\frac{\alpha_2(\lambda)}{\mu^{+}_{13}(\lambda)} = \frac{1}{ c k_1^{+}(\lambda) ( 1- c_1(\lambda)^2)} 
\end{align*}
Therefore, by Lemma \ref{lemma:omega_j-behavior}  for small $|\xi|$ we have 
\begin{align*}
  \gamma_1(\lambda)   = O(\frac{1}{\sqrt{|\xi|} }e^{-\frac{3}{\sqrt{2}} r_{\varepsilon}} ), \quad \gamma_2(\lambda) = O(\frac{1}{\sqrt{|\xi|}}), \quad \gamma_3(\lambda)=  O(\frac{1}{\sqrt{|\xi|}} ) 
\end{align*}
and by Lemma \ref{lem:omega_j-behavior-large-xi} for large $|\xi|$ we have 
\begin{align*}
  \gamma_1(\lambda)   = O\left(\frac{1}{|\xi|^{\frac{3}{4}}} \right), \quad \gamma_2(\lambda) = O(\frac{1}{|\xi|}), \quad \gamma_3(\lambda)=  O(\frac{1}{|\xi|}) .
\end{align*}
Similarly, taking the Wronskians on the left and right-hand sides of \eqref{Psi_3-interms-psi_j} with $\Psi^{+}_4(\cdot,\lambda),$ we obtain 
$\mu^{+}_{34}(\lambda)= \alpha_3(\lambda) W(  \Psi_2^{+}(\cdot,\lambda) , \Psi^{+}_4(\cdot,\lambda) )= \alpha_3(\lambda) c k_2(\lambda) ( 1- c_2(\lambda)^2).$ Thus
\begin{align*}
 \frac{ \alpha_3(\lambda) }{ \mu^{+}_{13}(\lambda) } = \frac{ \mu^{+}_{34}(\lambda)}{c k_2^{+}(\lambda) ( 1- c_2(\lambda)^2)}
\end{align*}
Recall that for small $|\xi|$ we have $\theta^{+}_2(\lambda) =O(\frac{1}{\sqrt{|\xi|}}     e^{\frac{3}{\sqrt{2}} \frac{r_{\varepsilon}}{5}  })  ,  |\omega_{11}^{+}|   \lesssim  e^{\frac{3}{\sqrt{2}} \frac{r_{\varepsilon}}{5}} $ and $|\mu^{\pm}_{34} | \lesssim  e^{\frac{3}{\sqrt{2}} \frac{r_{\epsilon}}{5}},$ then $  |\gamma_4(\lambda)| \lesssim e^{\frac{3}{\sqrt{2}} \frac{2 r_{\epsilon}}{5}}.$
For large $|\xi|,$ we have $\theta^{+}_2(\lambda)=O(\frac{1}{|\xi|}  ),$ $\omega_{11}(\lambda)=c \, (k_1^{+}(\lambda))^{-\frac{1}{2}}+O(|\lambda|^{-1})$ and $\mu^{\pm}_{34}(z) =0,$ then $
   |\gamma_4(\lambda)| \lesssim  \frac{1}{|\xi|}.$

\end{proof}

\section{Green's kernel of $i\mathcal{L}-z$ for the resonance case} 
\label{sec:GreenKernel-resonance}
In this section, we compute the distorted Fourier transform of $i\mathcal{L}$ in the resonance case. The argument is similar to the non-resonance case in Section~\ref{sec:GreenKernel-non-resonance}. Hence, we only provide the necessary details and proofs that differ from the non-resonance case. We refer the reader to section \ref{sec:GreenKernel-non-resonance} for more details, specially for justifying that the limit as $b \to 0^{+}$ can be taken inside the integral representation of the evolution in \eqref{eq:ST1}.

\subsection{Connecting the solutions from $0$ and $\infty$}
The purpose of this subsection is to connect the local behavior of solutions to $(i\mathcal{L} - z)\Psi = 0$ near $r = 0$ with their asymptotic behavior as $r \to \infty$, for both small and large $|\xi|$. Our approach is based on analyzing the Wronskians that connect the solutions obtained near $r=0$ with those constructed at infinity. In particular, we examine the Wronskians between the solutions $\varphi_j^R$ defined at the origin and the solutions $\Psi_j^R$ defined at infinity. This will allow us to represent one family of solutions in terms of the other. \\

Recall that throughout this paper (for the resonance case), we choose $\varepsilon_0$ and $\varepsilon_{\infty}$ such that $r_{\infty}< R_0 . $ For $0<|\xi|< \delta_0,$ we fix  $ r_\varepsilon\equiv r_{\varepsilon}(z) \in (r_{\infty},R_0)$ such that $(\frac{r_{\varepsilon}}{20}, 6 r_{\varepsilon})\subseteq (r_{\infty},R_0)$, and $r_\varepsilon\simeq \frac{\varepsilon}{\sqrt{|\xi|}}$ for some small constant $\varepsilon$.  We will evaluate the Wronskians for some $r \in (\frac{r_{\varepsilon}}{10}, 4 r_{\varepsilon}).$

Denote by: 
\begin{align*}
    \Lambda_{ij}^{R}(z)&:=W(\upvarphi_i^{R}(\cdot,z),  \upvarphi_j^{R}(\cdot,z)) \\
    \omega_{ij}^{R,\pm}(z)&:= W( \Psi^{R,\pm}_{i}(\cdot,z), \upvarphi_j^{R}(\cdot,z))
\end{align*} 

\begin{lemma}
\label{Lambda_j-behavior-resonance-case}
Let $z \in \Omega,$ then we have
 \begin{align*}
 \Lambda_{12}^{R}(z)=\Lambda_{14}^{R}(z)=0 , \quad \Lambda_{13}^{R}(z) =2c_1^2 c_3^2 , \quad 
 \Lambda_{24}^{R}(z) =2c_2^1 c_4^1, \quad  |\Lambda_{23}^{R}(z)|  \lesssim  e^{-\frac{3}{\sqrt{2}}r_{\varepsilon}}, \quad  |\Lambda_{34}^{R}(z)| \lesssim r_{\varepsilon} e^{-\frac{3}{\sqrt{2}} r_{\varepsilon}}. 
\end{align*}
\end{lemma}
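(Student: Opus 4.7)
The plan is to exploit the $r$-independence of Wronskians of solutions to $i\mathcal{L}F = zF$ and evaluate each $\Lambda_{ij}^R$ at whichever endpoint ($r\to 0$ or $r$ large) makes the computation most transparent.

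For $\Lambda_{12}^R$, $\Lambda_{13}^R$, $\Lambda_{14}^R$, and $\Lambda_{24}^R$, the near-origin asymptotics of $\varphi_j^R$ listed in Section~\ref{subsec:resonance} coincide exactly with those of $\varphi_j$ in the non-resonance case. In particular, the leading-order expansions are the same pairs $\{r^{3/2}, r^{3/2}\}$ and $\{r^{-1/2}, r^{-1/2}\}$ that appear in Lemma~\ref{D_0}. I would therefore pass to $r\to 0^+$ and run the identical computation used there, obtaining $\Lambda_{12}^R=\Lambda_{14}^R=0$ by direct cancellation in the leading order, and $\Lambda_{13}^R=2c_1^2c_3^2$, $\Lambda_{24}^R=2c_2^1 c_4^1$ from the cross products of the leading coefficients. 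The $o(1)$ error terms from Proposition~\ref{prop:contraction-small-xi-resonance} vanish in this limit.

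For $\Lambda_{23}^R$ and $\Lambda_{34}^R$ the large-$r$ behavior of $\varphi_2^R$ and $\varphi_4^R$ differs from the non-resonance case: $\varphi_2^R$ is bounded (it approaches a constant vector) while $\varphi_4^R$ grows linearly. Here evaluating at $r\to 0^+$ would not yield the claimed decay, so I would instead evaluate at some $r\in[\tfrac{r_\varepsilon}{10}, 4r_\varepsilon]$, say $r=2r_\varepsilon$, and use the asymptotics
\begin{align*}
\varphi_2^R(r) &= \begin{pmatrix} \tilde{c}_2^1\\ \tilde{c}_2^2\end{pmatrix}(1+O(r^{-1})),\quad \varphi_4^R(r) = \begin{pmatrix}\tilde{c}_4^1 r\\ \tilde{c}_4^2 r\end{pmatrix}(1+O(r^{-1})),\\
\varphi_3^R(r) &= \begin{pmatrix}\tilde{c}_3^1\\ \tilde{c}_3^2\end{pmatrix}e^{-\tfrac{3}{\sqrt{2}}r}(1+O(r^{-1})),
\end{align*}
together with the corresponding derivative estimates supplied by the $\mathcal{X}_j^{\prime,R}$-bounds in Proposition~\ref{prop:contraction-small-xi-resonance}. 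Since every term in the Wronskian inner product $\langle \varphi_i^R, \sigma_3 \partial_r \varphi_j^R\rangle$ carries at least one factor $\varphi_3^R$ or $\partial_r\varphi_3^R$, the exponential decay $e^{-\tfrac{3}{\sqrt{2}}r}$ is preserved throughout, while $\varphi_4^R$ contributes an extra factor of $r$ in the $\Lambda_{34}^R$ computation. Evaluating at $r=2r_\varepsilon$ therefore yields $|\Lambda_{23}^R(z)|\lesssim e^{-\tfrac{3}{\sqrt{2}}r_\varepsilon}$ and $|\Lambda_{34}^R(z)|\lesssim r_\varepsilon e^{-\tfrac{3}{\sqrt{2}}r_\varepsilon}$, which are precisely the claimed bounds.

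The only point requiring modest care is verifying that the $O(r^{-1})$ correction terms from the fixed-point construction do not degrade the decay—this is routine since the error terms from Proposition~\ref{prop:contraction-small-xi-resonance} are controlled uniformly on $[r_\infty, R_0]$ and $r_\varepsilon$ lies in that window, so all implicit constants are bounded independently of $z$. There is no genuine obstacle here; the statement is essentially the resonance analogue of Lemma~\ref{Lambda_j-behavior}, with the single substantive modification being the extra factor of $r_\varepsilon$ in $\Lambda_{34}^R$ coming from the linear growth of $\varphi_4^R$ at infinity.
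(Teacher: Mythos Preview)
Your proposal is correct and matches the paper's approach exactly: evaluate the first four Wronskians at $r=0$ using the near-origin asymptotics (which agree with the non-resonance case, so Lemma~\ref{D_0-resonance} applies verbatim), and evaluate $\Lambda_{23}^R$, $\Lambda_{34}^R$ at $r=2r_\varepsilon$ using the large-$r$ asymptotics, with the extra $r_\varepsilon$ factor in $\Lambda_{34}^R$ coming precisely from the linear growth of $\varphi_4^R$.
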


\begin{proof}
We determine the values of the first four Wronskians at $r = 0$, while the last two are computed at $r = 2 r_{\varepsilon}$. This choice is guided by the asymptotic properties of $\upvarphi_j(r,z)$ in the regimes of small and large $r$. 
The first four cases are obtained directly from Lemma \ref{D_0-resonance}. 
For the remaining two, note that $\upvarphi_3$ decays exponentially, $\upvarphi_2$ stays bounded, and $\upvarphi_4^{R}$ grows polynomially as $r \to \infty$. These behaviors justify evaluating the Wronskians at $r = 2 r_{\varepsilon}$ to obtain the necessary estimates.
\end{proof}

\begin{lemma}
\label{omega_j-behavior-resonance}
Let $z=\xi \pm \frac{\sqrt{17}}{8}  \in \Omega$ with $\im(z)>0 $ and $0<|\xi|<\delta_0.$ Then, we have   
    \begin{align*}
  |\omega_{11}^{R,+}(z)|   &\lesssim  e^{\frac{3}{\sqrt{2}} \frac{r_{\varepsilon}}{5}}  , \quad |\omega_{21}^{R,+} (z)| \simeq 1 , \quad |\omega_{31}^{R,+}  (z)| \lesssim  e^{\frac{3}{\sqrt{2}} \frac{r_{\varepsilon}}{5}}, \quad |\omega_{41}^{R,+}  (z)| \lesssim   e^{\frac{6}{\sqrt{2}} \frac{r_{\varepsilon}}{10}} \\
   \omega_{12}^{R,+}(z)  &= c \, k_1(z) + O(|\xi|^{\frac{3}{2}})   , \quad |\omega_{22}^{R,+} (z)| \lesssim   e^{-\frac{3}{\sqrt{2}} r_{\varepsilon}} , \quad \omega_{32}^{R,+}  (z) =c \, k_1(z) + O(|\xi|^{\frac{3}{2}}), \quad |\omega_{42}^{R,+}  (z)| \lesssim  e^{\frac{3}{\sqrt{2}} \frac{r_{\varepsilon}}{5}} \\
   |\omega_{13}^{R,+}(z)| &  \lesssim e^{-\frac{3}{\sqrt{2}} r_{\varepsilon}}  , \quad 
   |\omega_{23}^{R,+} (z)| \lesssim e^{-\frac{6}{\sqrt{2}} r_{\varepsilon}} , \quad |\omega_{33}^{R,+}  (z)| \lesssim e^{-\frac{3}{\sqrt{2}} r_{\varepsilon}},
   \quad |\omega_{43}^{R,+}  (z)| \simeq 1 \\
  | \omega_{14}^{R,+}(z) | & \simeq 1   , \quad 
   |\omega_{24}^{R,+} (z)| \lesssim   e^{-\frac{3}{\sqrt{2}} r_{\varepsilon}} , \quad |\omega_{34}^{R,+}  (z)| \simeq 1 ,
   \quad |\omega_{44}^{R,+}  (z)| \lesssim  e^{\frac{3}{\sqrt{2}} \frac{r_{\varepsilon}}{5}}.
    \end{align*}
    where $c>0$ is constant. Moreover, $d^{R,+}(z):=\omega_{22}^{R,+}(z) \omega_{11}^{R,+}(z)-\omega_{21}^{R,+}(z) \omega_{12}^{R,+}(z) \neq 0 $ and $|d^{R,+}(z) | \simeq \sqrt{|\xi|}.$
\end{lemma}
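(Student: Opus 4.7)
The proof will closely mirror that of Lemma~\ref{lemma:omega_j-behavior} in the non-resonance case, since the structural setup is identical: each $\omega_{ij}^{R,+}$ is a Wronskian of a solution $\Psi_i^{R,+}$ constructed near infinity against a solution $\varphi_j^R$ constructed near $r=0$, and Wronskians are $r$-independent so can be evaluated at any convenient point in the overlap region $(\frac{r_\varepsilon}{10}, 4 r_\varepsilon)\subset (r_\infty, R_0)$. The inputs we need are: the asymptotic expansions $k_1(z)=\pm\frac{\sqrt{2\sqrt{17}}}{3}\sqrt{z}+O(z)$, $k_2(z)=i\frac{3}{\sqrt{2}}+O(z)$, $c_1(z)=-i\sqrt{17}+O(z)$, $c_2(z)=i/\sqrt{17}+O(z)$ as $z\to\pm\frac{\sqrt{17}}{8}$; the identities $\tc_j^2=\pm\frac{i}{\sqrt{17}}\tc_j^1$ and $\tc_l^2=\mp i\sqrt{17}\tc_l^1$ for the connection constants; the contraction estimates from Proposition~\ref{prop:contraction-small-xi-resonance} for $\varphi_j^R$ and from Proposition~\ref{prop:contraction-T-j-small-xi-infty} for $\Psi_l^{R,+}$; and the large-$r$ asymptotics of $\varphi_j^R$ from Section~\ref{subsec:resonance}.

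I plan to organize the proof column-by-column. For $\omega_{l1}^{R,+}$ and $\omega_{l3}^{R,+}$, the large-$r$ asymptotics of $\varphi_1^R$ and $\varphi_3^R$ are identical to those of $\varphi_1,\varphi_3$ in the non-resonance case, so the same computations and evaluation points as in Lemma~\ref{lemma:omega_j-behavior} yield the stated bounds verbatim; this part is purely bookkeeping and I would simply reference the earlier proof. The genuinely new columns are $l=2$ and $l=4$, where the large-$r$ asymptotics swap: $\varphi_2^R\sim(\tc_2^1,\tc_2^2)^t$ is bounded and $\varphi_4^R\sim(\tc_4^1,\tc_4^2)^t r$ grows linearly. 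Consequently, $\omega_{l2}^{R,+}$ in the resonance case plays the role of $\omega_{l4}$ in the non-resonance case, and vice versa.

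The key leading-order calculation will be that of $\omega_{12}^{R,+}$. Evaluating at $r=r_\varepsilon\gg 1$ with the dominant terms $\Psi_1^{R,+}\sim(1,c_1(z))^t e^{ik_1(z)r}$ and $\varphi_2^R\sim(\tc_2^1,\tc_2^2)^t$, we get
\begin{align*}
W(\Psi_1^{R,+},\varphi_2^R) &= e^{ik_1(z)r}\bigl[ik_1(z)(-\tc_2^1+c_1(z)\tc_2^2)\bigr]\bigl(1+O(r^{-1})+O(\varepsilon_\infty)\bigr).
\end{align*}
Inserting $c_1(z)=-i\sqrt{17}+O(z)$ and $\tc_2^2=i\sqrt{17}\tc_2^1\cdot\mathrm{sgn}$ (or the opposite sign, from the connection relations), the parenthesis equals a nonzero constant times $\tc_2^1$ up to $O(z)$; combined with the $ik_1(z)$ factor this yields $c\,k_1(z)+O(|\xi|^{3/2})$ as claimed. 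Exactly the same argument, with $\Psi_3^{R,+}$ in place of $\Psi_1^{R,+}$ (again dominated by $e^{ik_1(z)r}$ for the contributing piece), gives $\omega_{32}^{R,+}=c\,k_1(z)+O(|\xi|^{3/2})$. The bound $|\omega_{22}^{R,+}|\lesssim e^{-\frac{3}{\sqrt{2}}r_\varepsilon}$ comes from the exponential decay of $\Psi_2^{R,+}\sim e^{ik_2(z)r}$ paired with the boundedness of $\varphi_2^R$, evaluated at $r=2r_\varepsilon$; the rough bound on $|\omega_{42}^{R,+}|$ is obtained by evaluating at $r=\frac{r_\varepsilon}{5}$ to absorb the exponential growth of $\Psi_4^{R,+}\sim e^{-ik_2(z)r}$.

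For the fourth column, since $\varphi_4^R$ now grows like $r$, the dominant contribution to $W(\Psi_1^{R,+},\varphi_4^R)$ and $W(\Psi_3^{R,+},\varphi_4^R)$ no longer carries the $k_1(z)$ smallness that was present in the non-resonance case: a direct evaluation at, say, $r=r_\varepsilon$ with $\Psi_1^{R,+}\sim(1,c_1(z))^t e^{ik_1(z)r}$ and $\varphi_4^R\sim(\tc_4^1,\tc_4^2)^t r$ gives a leading order of size $1$ (the $\tc_4^1 - c_1(z)\tc_4^2$ combination does \emph{not} vanish identically in $z$ because the connection constants for $\varphi_4^R$ differ from those for $\varphi_4$), producing $|\omega_{14}^{R,+}|\simeq 1$ and similarly $|\omega_{34}^{R,+}|\simeq 1$; the remaining entries $|\omega_{24}^{R,+}|$ and $|\omega_{44}^{R,+}|$ follow from the decay/growth of $\Psi_2^{R,+}$ and $\Psi_4^{R,+}$ evaluated at $r=r_\varepsilon$ and $r=\frac{r_\varepsilon}{5}$ respectively. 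Finally, assembling the leading orders gives $d^{R,+}(z)=\omega_{22}^{R,+}\omega_{11}^{R,+}-\omega_{21}^{R,+}\omega_{12}^{R,+}$, where the first product is exponentially small and the second is the product of a $\simeq 1$ term with a term of order $k_1(z)$, yielding $|d^{R,+}(z)|\simeq |k_1(z)|\simeq\sqrt{|\xi|}$, in particular nonzero.

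The main technical obstacle will be verifying that the nonvanishing constants $\tc_4^1 - c_1(z)\tc_4^2$ and $-\tc_2^1+c_1(z)\tc_2^2$ really are nonzero at $z=\pm\frac{\sqrt{17}}{8}$, i.e., that the connection constants $\tc_j^k$ established through the Lyapunov–Perron fixed point in Section~\ref{sec:Sol-near-0-resonance} do not degenerate in the resonance case in a way that kills the leading order. This comes down to checking the explicit relations imposed on $\tc_j^1,\tc_j^2$ by the fourth-order equation at infinity together with the compatibility condition at $r=0$, and is the only place where the resonance-versus-non-resonance distinction really enters the final Wronskian estimates beyond a mechanical swap of roles between $\varphi_2^R$ and $\varphi_4^R$.
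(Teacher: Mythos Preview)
Your proposal is correct and takes essentially the same approach as the paper: reduce $\omega_{l1}^{R,+}$ and $\omega_{l3}^{R,+}$ to the non-resonance computations by identical asymptotics, recognize that $\varphi_2^R$ and $\varphi_4^R$ swap large-$r$ roles, and carry out the explicit Wronskian evaluations at suitable points in $(r_\infty,R_0)$. One minor slip: $\Psi_3^{R,+}$ is dominated by $e^{-ik_1(z)r}$, not $e^{ik_1(z)r}$, but this is immaterial for the order-of-magnitude estimate since $|e^{\pm ik_1 r}|\simeq 1$ in the regime considered. Your final worry about nonvanishing of $\tc_4^1-c_1(z)\tc_4^2$ and $-\tc_2^1+c_1(z)\tc_2^2$ is resolved directly by the relations $\tc_l^2=\mp i\sqrt{17}\,\tc_l^1$ together with $c_1(z)=-i\sqrt{17}+O(z)$, which force these combinations to be nonzero multiples of $\tc_l^1$; the paper does not spell this out either.
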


\begin{proof}
Recall that, the coefficients satisfy $\tc_j^2= \pm \frac{i}{\sqrt{17}} \tc_j^1$ for $j=1,3$ and $\tc_l^2= \mp i \sqrt{17} \, \tc_l^1$ for $l=2,4$ and  $c_j(z)=     - i     \frac{ ( \frac{1}{2} k_j(z)^2 + \frac{17}{8}) }{ (\xi + \frac{\sqrt{17}}{8}) }$  for  $j=1,2.$ Moreover, the asymptotic of the functions $k_j(z)$ and $c_j(z)$ as $z \to \pm \frac{\sqrt{17}}{8}$ are given by $k_1(z)=\pm \frac{\sqrt{2\sqrt{17}}}{3} \sqrt{z} + O(z),$ $k_2(z)= i \frac{3}{\sqrt{2}} + O(z) ,$ $c_1(z)=-i \sqrt{17} +O(z)$ and $c_2(z)= i \frac{1}{ \sqrt{17}}+O(z).$  Observe that $\upvarphi_1^{R}(r,z)$ and $\upvarphi_3^{R}(r,z)$ have the same asymptotic behavior as $\upvarphi_1(r,z)$ and $\upvarphi_3(r,z)$, respectively. Consequently, the estimates for $\omega_{l1}^{R,+}$ and $\omega_{l3}^{R,+}$ can be derived in the same way as in the proof of Lemma~\ref{lemma:omega_j-behavior} for the non-resonance case. Therefore, it remains to compute $\omega_{l2}^{R,+}$ and $\omega_{l4}^{R,+}$. Using the asymptotic of $\Psi^{R,+}_{l}(r,z)$ and $\upvarphi_2^{R}(r,z)$ for large $r,$ we obtain
\begin{align*}
      \omega_{12}^{R,+}(z) &:= W( \Psi^{R,+}_{1}(r,z), \upvarphi_2^{R}(r,z)) \\
      &= W( \begin{pmatrix}
           e^{i k_1(z) r } \\
            c_1 (z) e^{i k_1(z) r } 
      \end{pmatrix}  (1+O(\frac{1}{\sqrt{|\xi|}}e^{-2r_{\infty}}) ),
      \begin{pmatrix}
\tc_2^1  \\
\tc_2^2 
      \end{pmatrix}(1+O(r^{-1} ))  ) = ck_1(z) + O(|\xi|^{\frac{3}{2}}). 
\end{align*} 
Similarly, we get $ \omega_{32}^{R,+}(z) = c k_1(z) + O(|\xi|^{\frac{3}{2}}).$ Using the exponential decay of $ \Psi^{R,+}_{2}(r,z),$ we get 

\begin{align*}
      \omega_{22}^{R,+}(z) &:= W( \Psi^{R,+}_{2}(r,z), \upvarphi_2^{R}(r,z)) \\
      &= W( \begin{pmatrix}
           e^{i k_2(z) r } \\
            c_2 (z) e^{i k_2(z) r } 
      \end{pmatrix}  (1+O(\frac{1}{\sqrt{|\xi|}}e^{-2r_{\infty}}) ) ,
      \begin{pmatrix}
\tc_2^1  \\
\tc_2^2 
      \end{pmatrix}(1+O(r^{-1} ))  ) ,
      \end{align*} 
      which yields, $   | \omega_{22}^{R,+}(z)|    \lesssim  e^{-\frac{3}{\sqrt{2}}r_{\varepsilon}}  .$ Analogously, we get $|  \omega_{42}^{R,+}(z)| \lesssim e^{\frac{3}{\sqrt{2}} \frac{r_{\varepsilon}}{5}  }. $ Next, we estimate $ \omega_{l4}^{R,+}:= W( \Psi^{R,+}_{l}(r,z), \upvarphi_4^{R}(r,z)).$

\begin{align*}
      \omega_{14}^{R,+}(z) &:= W( \Psi^{R,+}_{1}(r,z), \upvarphi_4^{R}(r,z)) \\
      &= W( \begin{pmatrix}
           e^{i k_1(z) r } \\
            c_1 (z) e^{i k_1(z) r } 
      \end{pmatrix}  (1+O(\frac{1}{\sqrt{|\xi|}}e^{-2r_{\infty}}) ) ,
      \begin{pmatrix}
\tc_4^1 r \\
\tc_4^2 r 
      \end{pmatrix} (1+O(r^{-2} )))  \simeq 1
\end{align*}
By the same argument, we have $|  \omega_{34}^{R,+}(z)|\simeq 1.  $  Using again the asymptotics behavior of $ \Psi^{R,+}_{2}(r,z),$ we get  
\begin{align*}
      \omega_{24}^{R,+}(z) &:= W( \Psi^{R,+}_{2}(r,z), \upvarphi_4^{R}(r,z)) \\
      &= W( \begin{pmatrix}
           e^{i k_2(z) r } \\
            c_2 (z) e^{i k_2(z) r } 
      \end{pmatrix} (1+O(\frac{1}{\sqrt{|\xi|}}e^{-2r_{\infty}}) ) ,
      \begin{pmatrix}
\tc_4^1  r\\
\tc_4^2 r
      \end{pmatrix}(1+O(r^{-2} )) ) 
\end{align*} 
which yields, $ |  \omega_{24}^{R,+}(z)| \lesssim  r_{\varepsilon}    e^{ - \frac{3}{\sqrt{2}} r_{\varepsilon} } .$ Similarly, we have $|  \omega_{44}^{R,+}(z)|  \lesssim   e^{\frac{3}{\sqrt{2}} \frac{r_{\varepsilon}}{5}}.$
\end{proof}

Similarly to the non-resonance case, $\omega_{ij}^{R,-}(z)$ satisfy the same parity and
the following identities hold for $z \in \Omega$

\begin{lemma}
\label{lem:parity-of-omega-j-resonance}
   Let $z\in \Omega,$ then we have 
\begin{align*}
    \omega_{i1}^{R,-}(z)=- \omega^{R,+}_{i1}(-z), \quad  \omega_{i2}^{R,-}(z)= \omega^{+}_{i2}(-z) , \quad  
    \omega_{i3}^{R,-}(z)= -\omega^{R,+}_{i3}(-z), \quad
    \omega_{i4}^{R,-}(z)= \omega^{R,+}_{i4}(-z)
\end{align*}
    Moreover, \begin{align}
    \label{eq:parity-w21-w22-resonance}
        \omega^{R,+}_{21}(-z)= -\omega^{R,+}_{21}(z), \quad \omega^{R,+}_{22}(-z)=  \omega^{R,+}_{22}(z).
    \end{align}
\end{lemma}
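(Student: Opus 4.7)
The proof will mirror the argument used for Lemma \ref{lem:parity-of-omega-j} in the non-resonance case, since the solutions $\Psi^{R,\pm}$ at infinity coincide with the solutions $\Psi^{\pm}$ constructed in Sections~\ref{sec:nearinfty-small-xi} and~\ref{sec:nearinfty-large-xi} (the resonance/non-resonance distinction only affects the solutions near $r=0$). The plan is to combine two ingredients: the symmetry of the matrix-valued solutions under $z \mapsto -z$, and the elementary identity
\begin{equation*}
W(\sigma_3 F, \sigma_3 G) \;=\; f_1 g_1' - (-f_2)(-g_2') \;=\; W(F,G)
\end{equation*}
valid for any $\R^2$-valued $F=(f_1,f_2)^t$, $G=(g_1,g_2)^t$.

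First I will record the three symmetry relations needed: the definitions \eqref{eq:def-Psi-minus-small-xi}, \eqref{eq:def-upPsi-minus-small-xi}, \eqref{eq:def-Psi-minus-large-xi}, \eqref{eq:def-upPsi-minus-large-xi} give $\Psi^{R,-}(r,z)=\sigma_3 \Psi^{R,+}(r,-z)$ and $\PPsi^{R,-}(r,z)=\sigma_3\PPsi^{R,+}(r,-z)$, while the identities \eqref{eq:sys-varphi_j-resonance} give $\upvarphi_1^R(r,z)=-\sigma_3\upvarphi_1^R(r,-z)$, $\upvarphi_3^R(r,z)=-\sigma_3\upvarphi_3^R(r,-z)$, $\upvarphi_2^R(r,z)=\sigma_3\upvarphi_2^R(r,-z)$ and $\upvarphi_4^R(r,z)=\sigma_3\upvarphi_4^R(r,-z)$. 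Inserting these into the definition $\omega_{ij}^{R,-}(z)=W(\Psi^{R,-}_i(\cdot,z),\upvarphi_j^R(\cdot,z))$ and applying the Wronskian identity above, for instance
\begin{equation*}
\omega_{i1}^{R,-}(z)\;=\;W\bigl(\sigma_3\Psi_i^{R,+}(\cdot,-z),\,-\sigma_3\upvarphi_1^R(\cdot,-z)\bigr)\;=\;-W\bigl(\Psi_i^{R,+}(\cdot,-z),\upvarphi_1^R(\cdot,-z)\bigr)\;=\;-\omega_{i1}^{R,+}(-z),
\end{equation*}
immediately yields the four claimed relations; the signs $\mp$ in the final identities track precisely the sign in front of $\sigma_3$ in the symmetry relation for $\upvarphi_j^R$.

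For the parity claims \eqref{eq:parity-w21-w22-resonance}, the key point is the additional symmetry $\Psi_2^{R,+}(r,-z)=\sigma_3\Psi_2^{R,+}(r,z)$ of the second Weyl--Titchmarsh column at infinity. This can be read off from its asymptotic profile $\bigl(e^{ik_2(z)r},\,c_2(z)e^{ik_2(z)r}\bigr)^t$ together with $k_2(-z)=k_2(z)$ (Remark \ref{rem:behavior-k_j-lambda}) and $c_2(-z)=-c_2(z)$, and is preserved by the fixed-point iteration of Propositions \ref{prop:contraction-T-j-small-xi-infty} and \ref{prop:contraction-T-j-large-xi-infty} since the reference equation $i\calL_\infty \Phi=z\Phi$ (resp.\ $i\calL_E\Phi=z\Phi$) intertwines $z\mapsto-z$ with $\Phi\mapsto\sigma_3\Phi$. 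Plugging this into $\omega^{R,+}_{2j}(-z)=W(\Psi_2^{R,+}(\cdot,-z),\upvarphi_j^R(\cdot,-z))$ and again using $W(\sigma_3 F,\sigma_3 G)=W(F,G)$ together with the parity of $\upvarphi_1^R$ (odd sign) and $\upvarphi_2^R$ (even sign) under conjugation by $\sigma_3$ gives the desired $\omega_{21}^{R,+}(-z)=-\omega_{21}^{R,+}(z)$ and $\omega_{22}^{R,+}(-z)=\omega_{22}^{R,+}(z)$.

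The main (and only minor) obstacle is confirming that the symmetry $\Psi_2^{R,+}(r,-z)=\sigma_3\Psi_2^{R,+}(r,z)$ survives the Volterra iteration. This can be verified by noting that the Green's kernels $\mathcal{R}^{+}_j(r,s,z)$ of the reference operator built in Section~\ref{sec:nearinfty-small-xi} and Section \ref{sec:nearinfty-large-xi} satisfy $\mathcal{R}^{+}_j(r,s,-z)=\sigma_3 \mathcal{R}^{+}_j(r,s,z)\sigma_3$, so the fixed-point mapping commutes with the involution $\Phi(\cdot,z)\mapsto\sigma_3\Phi(\cdot,-z)$ and uniqueness in the Banach space $\mathcal{B}_j$ (resp.\ $\tilde{\mathcal{B}}_j$) forces the invariance. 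Beyond this routine check, every other step is a direct substitution.
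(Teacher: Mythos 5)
Your proposal is correct and follows essentially the same route as the paper, which simply defers to the non-resonance Lemma~\ref{lem:parity-of-omega-j}: the four relations come from substituting $\Psi^{R,-}(r,z)=\sigma_3\Psi^{R,+}(r,-z)$ and the $\sigma_3$-parities \eqref{eq:sys-varphi_j-resonance} of $\upvarphi_j^R$ into the Wronskian and using $W(\sigma_3 F,\sigma_3 G)=W(F,G)$, while \eqref{eq:parity-w21-w22-resonance} rests on the additional symmetry $\Psi_2^{+}(r,-z)=\sigma_3\Psi_2^{+}(r,z)$ of the second Weyl--Titchmarsh column. Your extra verification that this last symmetry is preserved by the Volterra iteration is a reasonable elaboration of a point the paper asserts without detail.
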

\begin{proof}
The proof is similar to that of Lemma \ref{lem:parity-of-omega-j} and follow immediately from the identities \eqref{eq:sys-varphi_j-resonance}, \eqref{eq:def-Psi-minus-small-xi} and \eqref{eq:def-Psi-minus-large-xi}. We omit the details.
\end{proof}

Next,  we evaluate the Wronskians $\omega_{ij}$ for large $|\xi|.$ Recall that throughout this paper (for the resonance case), we choose $\tvarepsilon_0$ and $\tvarepsilon_{\infty}$ such that $\tr_{\infty}=\frac{\tvarepsilon_{\infty}}{\sqrt{|\xi|}}< \tR_0=\frac{\tvarepsilon_0}{\sqrt{|\xi|}}. $ For $1<\Lambda_0<|\xi|,$ we fix $\tr_\varepsilon \equiv \tr_{\varepsilon}(z) \in (\tr_{\infty},\tR_0)$  such that $(\frac{\tr_{\varepsilon}}{10}, 4 \tr_{\varepsilon})\subseteq (\tr_{\infty},\tR_0),$ and $\tr_{\varepsilon}\simeq \frac{\varepsilon}{\sqrt{|\xi|}}$ for some small constant $\varepsilon.$  We will evaluate the Wronskians for some $r \in (\frac{\tr_{\varepsilon}}{10}, 4 \tr_{\varepsilon}).$  \\

Note that, in view of our choices of $\tr_{\infty}$ and $\tR_0$, and for sufficiently large $|\xi|$, the evaluation of the Wronskians relies on the small $r$ behavior of $\Psi^{+}_l$ and $\varphi_j$.

\begin{lemma}
\label{omega_j-behavior-large-xi-resonance} 
Let $z= \xi \pm \frac{\sqrt{17}}{8}   \in \Omega,$ with $\im(z)>0$ and $1<\Lambda_0 < |\xi|.$ Then we have   
    \begin{align*}
  |\omega_{11}^{R,+}(z)|  &= -2 c_{+} k_1(z)^{-\frac{ 1}{2}} c_1(z) c_1^2 + O(|\xi|^{-1}) \\
  |\omega_{21}^{R,+}(z)| & = -2 c_{+} k_2(z)^{- \frac{ 1}{2}} c_2(z) c_1^2 + O(|\xi|^{-1})  \\
 |\omega_{12}^{R,+}(z)| & = 2 c_{+} k_1(z)^{-\frac{1}{2}}(c_2^1 -c_1(z) c_2^2) + O(|\xi|^{-1}) \\ 
  |\omega_{22}^{R,+}(z)| & = 2 c_{+} k_2(z)^{-\frac{1}{2}}(c_2^1 -c_2(z) c_2^2) + O(|\xi|^{-1}) \\
   |\omega_{31}^{R,+}(z)| & =-2 c_{-} k_1(z)^{-\frac{ 1}{2}} c_1(z) c_1^2 + O(|\xi|^{-1})   \\
   |\omega_{32}^{R,+}(z)| & = 2 c_{-} k_1(z)^{-\frac{1}{2}}(c_2^1 -c_1(z) c_2^2) + O(|\xi|^{-1})  \\
    |\omega_{41}^{R,+}(z)| & = 2 c_{-} k_2(z)^{-\frac{1}{2}} c_2(z) c_1^2 + O(|\xi|^{-1}) \\
     |\omega_{42}^{R,+}(z)| & =2 c_{-} k_2(z)^{-\frac{1}{2}}(c_2^1 -c_2(z) c_2^2) + O(|\xi|^{-1}) 
    \end{align*} 
    where $c_{\pm}\neq 0$ constants, $c_j(z)$ are complex, $c_1^2\neq 0 $ and $c_2^2$ are real constants. Moreover, 
  \begin{align*}
     d^{R,\pm}(z):=\omega_{22}^{R,+}(z) \omega_{11}^{R,+}(z) -\omega_{12}^{R,+}(z)  \omega_{21}^{R,+}(z)= c k_1(z)^{-\frac{1}{2}}  k_2(z)^{-\frac{1}{2}} + O(|\xi|^{-1}) \neq 0. 
  \end{align*}
  where $c>0$ constant.   
\end{lemma}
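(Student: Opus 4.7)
The plan is to prove this lemma by following the same template as Lemma \ref{lem:omega_j-behavior-large-xi} in the non-resonance case, with only cosmetic modifications. The key observation justifying this is that in the large $|\xi|$ regime, the evaluation of all Wronskians reduces to a computation in the small-$r$ regime, where the resonance and non-resonance cases are indistinguishable.

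First I would exploit that $W(\Psi_i^{R,+}, \varphi_j^R)$ is $r$-independent and evaluate each Wronskian at some intermediate radius $r_* \in (\tfrac{\tr_\varepsilon}{10}, 4\tr_\varepsilon) \subseteq (\tr_\infty, \tR_0)$, where both families of solutions are simultaneously valid by construction. For $|\xi| > \Lambda_0$ large, $r_* \simeq |\xi|^{-1/2}$ is small, so both $\Psi_i^{R,+}(r_*, z)$ and $\varphi_j^R(r_*, z)$ can be expanded via their small-$r$ asymptotics. The crucial point is that the small-$r$ behavior of $\varphi_j^R$ given in Section~\ref{subsec:resonance} is identical to the small-$r$ behavior of $\varphi_j$ in the non-resonance case (the distinction only affects large-$r$ asymptotics); this is already recorded implicitly in Lemma~\ref{D_0-resonance}, where $D_0^\pm$ has the same form as in Lemma~\ref{D_0}. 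Similarly, the construction of $\Psi_i^{R,+}$ at infinity via the Hankel representation $h_\pm(k_j(z) r)$ from Section~\ref{sec:nearinfty-large-xi} makes no reference to the resonance case.

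Next I would input the precise expansions. For $\varphi_j^R$ near $r_*$, the contraction estimates of Proposition~\ref{prop:contraction-large-xi-resonance} in the norm $\mathcal{Y}_j$ give $\varphi_j^R(r,z) = \varphi_{j,0}(r)(1 + O(\tr_0^2|\xi|)) = \varphi_{j,0}(r)(1 + O(\tvarepsilon_0^2))$ where $\varphi_{j,0}$ carries the leading power-law coefficients $c_j^k$. For $\Psi_i^{R,+}$, Proposition~\ref{prop:contraction-T-j-large-xi-infty} applied in the $\mathcal{\tilde{B}}_j$ norm yields the leading term $\Uppsi_{E,i}^+$ plus a perturbation of size $O(|\xi|^{-1/2})$, and the small-argument asymptotics of the Hankel functions (via \eqref{eq:hpm-Hankel} and standard Bessel expansions) give $h_\pm(w) = c_\pm w^{-1/2}(1 + O(w^2))$ as $w \to 0$. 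Substituting into $W(\Psi_i^{R,+}, \varphi_j^R) = \langle \Psi_i^{R,+}, \sigma_3 (\varphi_j^R)'\rangle - \langle (\Psi_i^{R,+})', \sigma_3 \varphi_j^R \rangle$ and collecting the dominant contributions produces, for each index pair $(i,j)$, precisely the stated leading term with the explicit $k_j(z)$-dependence, plus a remainder bounded by $O(|\xi|^{-1})$ coming from the $O(w^2)$ Bessel tails and the fixed-point perturbative contributions.

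Finally, for $d^{R,+}(z) = \omega_{11}^{R,+} \omega_{22}^{R,+} - \omega_{12}^{R,+} \omega_{21}^{R,+}$, I would simply multiply the leading terms from the eight computations above. A cancellation occurs in the cross terms involving $c_1(z)c_2(z)(c_1^2)(c_2^2)$ and $(c_1^2)(c_2^1)$, leaving a net contribution proportional to $k_1(z)^{-1/2} k_2(z)^{-1/2}$; non-vanishing follows because $c_1(z) \neq c_2(z)$ for $|\xi|$ large and $c_1^2 \neq 0$ by our normalization. The main obstacle, as in the non-resonance case, is a bookkeeping one rather than a conceptual one: one must track carefully that the combined perturbative errors from the fixed-point iteration near $r=0$ (Proposition~\ref{prop:contraction-large-xi-resonance}) and near $r=\infty$ (Proposition~\ref{prop:contraction-T-j-large-xi-infty}) genuinely yield a sharp $O(|\xi|^{-1})$ remainder (not merely $O(|\xi|^{-1/2})$), which requires evaluating at $r_* \simeq |\xi|^{-1/2}$ where both fixed-point errors contribute at the optimal rate.
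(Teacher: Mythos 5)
Your proposal is correct and follows exactly the route the paper takes: its proof of this lemma is a one-line deferral to the non-resonance Lemma~\ref{lem:omega_j-behavior-large-xi}, whose proof in turn evaluates the $r$-independent Wronskians at a point $r_*\simeq|\xi|^{-1/2}$ in the overlap region using the small-argument Hankel asymptotics for $\Psi_i^{R,+}$, the small-$r$ asymptotics of $\varphi_j^{R}$ (which coincide with the non-resonance ones), and the fixed-point error bounds from Propositions~\ref{prop:contraction-T-j-large-xi-infty} and~\ref{prop:contraction-large-xi-resonance}. You have in fact supplied more detail than the paper does, including flagging the only genuinely delicate point (whether the combined perturbative errors give the stated $O(|\xi|^{-1})$ remainder), which the paper passes over with ``we omit the details.''
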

\begin{proof}
The proof is very similar to the one for Lemma \ref{lem:omega_j-behavior-large-xi} for the non-resonance case using the behavior $\upvarphi_j^{R}(r,z)$ for small $r.$
\end{proof}

\subsection{Solving for the Green's Kernel} \label{subsec:GreenKernel-resonance}
Let $z\in \Omega,$ with $\pm \im(z)>0,$ then we define the Green's kernel for the resonance case for $i \mathcal{L}-z$ as
\begin{align*}
    \mathcal{K}^{R,\pm}(r,s,z):= \Psi^{R,\pm}(r,z) S(s,z) \mathbb{1}_{\{ 0<s \leq r  \}} + 
    F_1^{R}(r,z) T(s,z) \mathbb{1}_{ \{r \leq s < \infty  \} }, 
\end{align*}
where we require the matrices $S(r,z)$ and $T(r,z)$ to satisfy 
\begin{align*}
    \begin{pmatrix}
        \Psi^{R,\pm}(r,z) & F_1^{R}(r,z) \\
        \partial_r \Psi^{R,\pm}(r,z) & \partial_r F_1^{R}(r,z) 
    \end{pmatrix}
    \begin{pmatrix}
        S(r,z) \\ - T(r,z)
    \end{pmatrix}
    = \begin{pmatrix}
        0 \\ \sigma_2 . 
    \end{pmatrix}
\end{align*}
Then a solution to $(  i \mathcal{L} - z ) \Psi = \Phi$ for $z \in \Omega, $ with $\pm \im(z)>0,$ is given by 
 \begin{equation}
     \Psi(r) := \int_0^{\infty}   \mathcal{K}^{R,\pm}(r,s,z) \Phi(s) ds. 
 \end{equation}

\begin{lemma}
\label{kernel-K-resonance}
 Let $\calD^{R,\pm}(z):=W(\Psi^{R,\pm}(\cdot,z),F_1^{R}(\cdot,z)).$ Then we have 
 \begin{align*}
\calD^{R,\pm}(z)=   \begin{pmatrix}
       \omega_{11}^{R,\pm}(z) & \omega_{12}^{R,\pm}(z) \\
       \omega_{21}^{R,\pm}(z) & \omega_{22}^{R,\pm}(z)
   \end{pmatrix} , \qquad 
 ( \calD^{R,\pm})^{-1}(z)  : = \frac{1}{d^{R,\pm}(z) } \begin{pmatrix}
       \omega_{22}^{R,\pm}(z) & - \omega_{12}^{R,\pm}(z) \\  
       -\omega_{21}^{R,\pm}(z) & \omega_{11}^{R,\pm}(z)
   \end{pmatrix} , 
 \end{align*}
 where $d^{R,\pm}(z):=\omega_{22}^{R,\pm}(z) \omega_{11}^{R,\pm}(z) -\omega_{12}^{R,\pm}(z)  \omega_{21}^{R,\pm}(z)\neq 0 .$ \\

Moreover, for $z \in \Omega$ with $\pm \im(z)>0,$ we have  
\begin{align*}
        \begin{cases}
            S(r,z) = - (\calD^{R,\pm}(z))^{-t} (F_1^{R}(r,z))^t \sigma_3 \sigma_2, \\ 
            T(r,z) = - (\calD^{R,\pm}(z))^{-1} \Psi^{R,\pm}(r,z)^t \sigma_3 \sigma_2.  
        \end{cases}
    \end{align*}
and
 \begin{align*}
     \mathcal{K}^{R,\pm}(r,s,z):= \begin{cases}
         i \Psi^{R,\pm}(r,z) (\calD^{R,\pm}(z))^{-t} (F_1^R(s,z))^t \sigma_1, \qquad 0<s\leq r , \\ 
         i F_1^{R}(r,z) (\calD^{R,\pm}(z))^{-1} \Psi^{R,\pm}(s,z)^t \sigma_1, \qquad r \leq s < \infty,
     \end{cases}
 \end{align*} 
\end{lemma}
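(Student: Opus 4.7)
The plan is to follow the same strategy as in the non-resonance analogue, Lemma \ref{lem:kernel-K}, with the resonance-case objects replacing their non-resonance counterparts throughout. The first step is purely computational: expanding the definition $\calD^{R,\pm}(z) = W(\Psi^{R,\pm}(\cdot,z), F_1^R(\cdot,z))$ entry-by-entry yields the stated matrix, since its $(i,j)$-entry is by definition $\omega_{ij}^{R,\pm}$, and the inverse is then the standard $2\times 2$ formula with determinant $d^{R,\pm}$.

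The substantive content is the non-vanishing of $d^{R,\pm}(z)$, which I would handle by splitting into the three frequency regimes used elsewhere in the paper. For $0<|\xi|\le \delta_0$, I would invoke Lemma \ref{omega_j-behavior-resonance} directly: inserting the asymptotics $|\omega_{11}^{R,+}|, |\omega_{21}^{R,+}|\simeq 1$, $\omega_{12}^{R,+}\sim ck_1(z)$, and $|\omega_{22}^{R,+}|\lesssim e^{-\frac{3}{\sqrt{2}}r_\varepsilon}$ into the determinant shows that the leading contribution comes from $-\omega_{21}^{R,+}\omega_{12}^{R,+}\simeq \sqrt{|\xi|}$, hence $|d^{R,+}(z)|\simeq \sqrt{|\xi|}\neq 0$. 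For $|\xi|>\Lambda_0$, Lemma \ref{omega_j-behavior-large-xi-resonance} directly gives $d^{R,\pm}(z) = c\,k_1(z)^{-1/2}k_2(z)^{-1/2} + O(|\xi|^{-1})\neq 0$.

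The intermediate-frequency case is where the main obstacle lies, and it requires proving the resonance-case analogue of Lemma \ref{lem:EV1}: namely, that $d^{R,\pm}(\lambda)=0$ holds if and only if $i\mathcal{L}$ admits an eigenvalue at $\lambda$. The argument would go as follows. Expand $F_1^R = \Psi^{R,\pm}A + \PPsi^{R,\pm}B$ for some $2\times 2$ matrices $A,B$. Because the resonance-case Weyl-Titchmarsh Wronskians $\mu^{R,\pm}_{ij}$ exhibit the same block structure as in the non-resonance case (only the $(1,3)$ and $(2,4)$ pairings survive), one obtains $d^{R,\pm} = \mu^{R,\pm}_{13}\mu^{R,\pm}_{24}\det(B)$. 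Vanishing of $d^{R,\pm}$ then forces $\det(B)=0$, producing a vector $v$ with $\bar Bv=0$, and the identity $\bar F_1^R v = \bar\Psi^{R,\pm}\bar A v$ identifies an $L^2$ eigenfunction once one rules out the spurious case $\bar Av\neq 0$. To close off this case on the real axis, I would use the Wronskian identity $W(F_1^R,\sigma_3\overline{F_1^R})=0$ at $r=0$ combined with $W(\Psi^{R,+},\sigma_3\overline{\Psi}^{R,+})=\mathrm{diag}(\kappa(\lambda),0)$ with $\kappa(\lambda)\neq 0$, exactly as in Lemma \ref{lem:EV1}. Assumptions \ref{asmp:2}–\ref{asmp:3} then rule out any embedded eigenvalues, giving $d^{R,\pm}(z)\neq 0$ throughout.

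Once invertibility is established, the formulas for $S(r,z)$ and $T(r,z)$ follow by applying Lemma \ref{inver-2-matrix} to invert the $4\times 4$ block matrix
$\begin{pmatrix} \Psi^{R,\pm} & F_1^R \\ \partial_r\Psi^{R,\pm} & \partial_r F_1^R\end{pmatrix}$,
with $D=\calD^{R,\pm}$ playing the role required by that lemma (the hypotheses $\mathcal{W}(\Psi^{R,\pm},\Psi^{R,\pm})=\mathcal{W}(F_1^R,F_1^R)=0$ are automatic). This produces
$S = -(\calD^{R,\pm})^{-t}(F_1^R)^t\sigma_3\sigma_2$ and $T = -(\calD^{R,\pm})^{-1}(\Psi^{R,\pm})^t\sigma_3\sigma_2$, exactly as in the proof of Lemma \ref{lem:def_S-T-near 0}. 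The final expression for $\mathcal{K}^{R,\pm}$ then follows by substituting back and using $\sigma_3\sigma_2 = -i\sigma_1$. No new estimates are needed in this last step; it is a verbatim transcription of the non-resonance computation with the superscript $R$ inserted.
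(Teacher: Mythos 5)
Your proposal is correct and follows essentially the same route as the paper's proof, which itself simply cites the small- and large-$\xi$ asymptotics of $\omega_{ij}^{R,\pm}$ for the corresponding regimes and invokes Lemma~\ref{lem:EV1} together with Assumptions~\ref{asmp:2}--\ref{asmp:3} for intermediate frequencies, then defers the $S$, $T$, $\mathcal{K}^{R,\pm}$ computations to the non-resonance proof of Lemma~\ref{lem:kernel-K}. The one place you add content is the intermediate-frequency regime: the paper terse\-ly asserts that $d^{\pm}\neq 0$ (from Lemma~\ref{lem:EV1}, stated for $d^\pm$) implies $\calD^{R,\pm}$ is invertible, leaving implicit the passage from $d^\pm$ to $d^{R,\pm}$, whereas you re-derive a resonance-case analogue of Lemma~\ref{lem:EV1} from scratch. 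Both are valid: the paper's shorter route relies on the fact that $F_1^R$ and $F_1$ span the same two-dimensional space of $L^2(0,1)$ solutions (and $\Psi^{R,\pm}=\Psi^{\pm}$), so $d^{R,\pm}$ and $d^\pm$ differ by a nonzero determinant factor, while your re-derivation makes the eigenvalue characterization self-contained in the resonance setting at the cost of repeating the $\det(B)=0$ and boundary-Wronskian argument.
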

\begin{proof}
  Note that by Lemma \ref{omega_j-behavior-resonance}, \ref{lem:parity-of-omega-j-resonance} and \ref{omega_j-behavior-large-xi-resonance}, we have $d^{R,\pm}(z) \neq 0 ,$ for large and small $\xi.$ Using Lemma \ref{lem:EV1} and the fact that $i\calL$ has a real spectrum and no threshold or embedded eigenvalues, we obtain $d^{\pm}\neq 0.$ Therefore, $\calD^{R,\pm}(z)$ is invertible. The remainder of the proof is analogous to that of Lemma~\ref{lem:kernel-K} in the non-resonant case, and we omit the details. 
\end{proof}
Next, we express $\Psi^{\pm}_{l}(r,z)$ in terms of $\upvarphi_j^{R}(r,z),$ i.e., there is exist  $a^{R,\pm}_{lj} (z)$ such that 
\begin{align*}
    \Psi^{\pm}_{l} (r,z) = \sum_{j=1}^4 a^{R,\pm}_{lj} (z) \upvarphi_j^R (r,z), \qquad l=1,\cdots,4.
\end{align*}
Therefore, by Lemma \ref{kernel-K-resonance}, we have 
\begin{align*}
\mathcal{K}^{R,\pm}(r,s,z) &=\frac{i}{d^{R,\pm}} \sum_{j=1}^4 (   \omega_{22}^{R,\pm} a_{1j}^{R,\pm} - \omega_{12}^{R,R,\pm}(z) a_{2j}^{R,\pm} ) \bigg(
\upvarphi_j^{R}(r,z) \upvarphi_1^R(s,z)^t \sigma_1 \mathbb{1}_{\{ 0<s \leq r  \}}  \\
& + \upvarphi_1^R(r,z) \upvarphi_j^R(s,z)^t \sigma_1  \mathbb{1}_{\{ r \leq s < \infty  \}}
\bigg) + \frac{i}{d^{R,\pm}} \sum_{j=1}^4 (     -\omega_{21}^{R,\pm}(z) a_{1j}^{R,\pm} + \omega_{11}^{R,\pm}(z) a_{2j}^{R,\pm} ) \\
&\times \bigg(
\upvarphi_j^{R}(r,z) \upvarphi_2^R(s,z)^t \sigma_1 \mathbb{1}_{\{ 0<s \leq r  \}} 
+ \upvarphi_2^R(r,z) \upvarphi_j^R(s,z)^t \sigma_1  \mathbb{1}_{\{ r \leq s < \infty  \}}
\bigg) 
\end{align*}

\begin{coro}
\label{kernel-no-sigularity-resonance}
Let $z=\xi \pm \frac{\sqrt{17}}{8} \in \Omega,$ where $\pm \im(z)>0$  and  $0<|\xi|<\delta_0.$ Then in both scenarios, we have 
\begin{align*}
\mathcal{K}^{R,\pm}(r,s,z) &=\frac{i}{d^{R,\pm}} \sum_{j=1}^4 \alpha^{R,\pm}_j(z)\left(
\upvarphi_j^R(r,z) \upvarphi_1^R(s,z)^t \sigma_1 \mathbb{1}_{\{ 0<s \leq r  \}} + \upvarphi_1^R(r,z) \upvarphi_j^R(s,z)^t \sigma_1  \mathbb{1}_{\{ r \leq s < \infty  \}}
\right) \\
&+ \frac{i}{d^{R,\pm}} \sum_{\substack{j=1 \\ j \neq 3}}^4 \beta^{R,\pm}_j(z) \left(
\upvarphi_j^R(r,z) \upvarphi_2^R(s,z)^t \sigma_1 \mathbb{1}_{\{ 0<s \leq r  \}} + \upvarphi_2^R(r,z) \upvarphi_j^R(s,z)^t \sigma_1  \mathbb{1}_{\{ r \leq s < \infty  \}}
\right) 
\end{align*}
where 
\begin{align*}
    |\alpha^{R,\pm}_1(z)| &\lesssim e^{-\frac{6}{\sqrt{2}} r_{\varepsilon}}, \quad  |\alpha^{R,\pm}_2(z)| \lesssim e^{-\frac{3}{\sqrt{2}} r_{\varepsilon}}, \quad  |\alpha^{R,\pm}_3(z)| \lesssim \sqrt{|\xi|} + e^{-\frac{3}{\sqrt{2}} \frac{4}{5} r_{\varepsilon}} , \quad  |\alpha^{\pm}_4(z)| \lesssim 1,\\
     |\beta^{R,\pm}_1(z)| &\lesssim e^{-\frac{3}{\sqrt{2}} r_{\varepsilon}}, \quad |\beta^{R,\pm}_2(z)| \lesssim 1, \quad
     |\beta^{R,\pm}_4(z)| \lesssim 1.
\end{align*}
\end{coro}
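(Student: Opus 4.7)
The plan is to follow the template of Corollary~\ref{kernel-no-sigularity-non-resonance} line by line, substituting the resonance-case Wronskian data from Lemmas~\ref{Lambda_j-behavior-resonance-case}, \ref{omega_j-behavior-resonance} and \ref{lem:parity-of-omega-j-resonance}. First I would verify the algebraic cancellation that eliminates $j=3$ from the $\upvarphi_2^R$-sum. Since $\Lambda_{12}^R=\Lambda_{14}^R=0$ by Lemma~\ref{Lambda_j-behavior-resonance-case}, the linear relation $\omega_{l1}^{R,\pm}=-\Lambda_{13}^R\,a_{l3}^{R,\pm}$ gives $a_{l3}^{R,\pm}=-\omega_{l1}^{R,\pm}/\Lambda_{13}^R$, so the coefficient of $\upvarphi_3^R(r,z)\upvarphi_2^R(s,z)^t\sigma_1$ in the expanded Green's kernel becomes
\[
-\omega_{21}^{R,\pm}\,a_{13}^{R,\pm}+\omega_{11}^{R,\pm}\,a_{23}^{R,\pm}=\frac{\omega_{21}^{R,\pm}\omega_{11}^{R,\pm}-\omega_{11}^{R,\pm}\omega_{21}^{R,\pm}}{\Lambda_{13}^R}=0,
\]
which yields the displayed form of $\mathcal{K}^{R,\pm}$ with $j\neq 3$ in the $\upvarphi_2^R$-piece.

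Next I would invert the four linear relations
\[
\omega_{l1}^{R,\pm}=-\Lambda_{13}^R a_{l3}^{R,\pm},\quad \omega_{l2}^{R,\pm}=-\Lambda_{23}^R a_{l3}^{R,\pm}-\Lambda_{24}^R a_{l4}^{R,\pm},
\]
\[
\omega_{l3}^{R,\pm}=\Lambda_{13}^R a_{l1}^{R,\pm}+\Lambda_{23}^R a_{l2}^{R,\pm}-\Lambda_{34}^R a_{l4}^{R,\pm},\quad \omega_{l4}^{R,\pm}=\Lambda_{24}^R a_{l2}^{R,\pm}+\Lambda_{34}^R a_{l3}^{R,\pm},
\]
using that $\Lambda_{13}^R,\Lambda_{24}^R$ are $O(1)$ nonzero constants while $\Lambda_{23}^R,\Lambda_{34}^R$ carry exponential smallness in $r_\varepsilon$. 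This gives the dominant-order equivalences $|a_{l1}^{R,\pm}|\simeq|\omega_{l3}^{R,\pm}|$, $|a_{l2}^{R,\pm}|\simeq|\omega_{l4}^{R,\pm}|$, $|a_{l3}^{R,\pm}|\simeq|\omega_{l1}^{R,\pm}|$, $|a_{l4}^{R,\pm}|\simeq|\omega_{l2}^{R,\pm}|$, with correction terms that are already exponentially small in $r_\varepsilon$.

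Finally I would substitute the bounds from Lemma~\ref{omega_j-behavior-resonance} into
\[
\alpha_j^{R,\pm}=\omega_{22}^{R,\pm}a_{1j}^{R,\pm}-\omega_{12}^{R,\pm}a_{2j}^{R,\pm},\qquad \beta_j^{R,\pm}=-\omega_{21}^{R,\pm}a_{1j}^{R,\pm}+\omega_{11}^{R,\pm}a_{2j}^{R,\pm},
\]
handling $\omega^{R,-}$ through the parity identities of Lemma~\ref{lem:parity-of-omega-j-resonance}. Each bound then reduces to direct multiplication; for example $|\alpha_1^{R,\pm}|\lesssim e^{-\frac{3}{\sqrt{2}}r_\varepsilon}\cdot e^{-\frac{3}{\sqrt{2}}r_\varepsilon}+\sqrt{|\xi|}\cdot e^{-\frac{6}{\sqrt{2}}r_\varepsilon}\lesssim e^{-\frac{6}{\sqrt{2}}r_\varepsilon}$, and similarly for the others. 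The conceptually distinguished case is $\alpha_3^{R,\pm}$: Step~2 produces the exact identity $\alpha_3^{R,\pm}=-d^{R,\pm}/\Lambda_{13}^R$, so the final assertion of Lemma~\ref{omega_j-behavior-resonance} that $|d^{R,\pm}|\simeq\sqrt{|\xi|}$ yields the $\sqrt{|\xi|}$ factor in the bound on $|\alpha_3^{R,\pm}|$, while the exponential remainder $e^{-\frac{3}{\sqrt{2}}\cdot\frac{4}{5}r_\varepsilon}$ absorbs the $\Lambda_{23}^R,\Lambda_{34}^R$ corrections from Step~2. No genuine obstacle appears; the main bookkeeping task is to track which factors are algebraic in $|\xi|$ and which are exponentially small in $r_\varepsilon$, and to verify at each step that the lower-order corrections are dominated by the exponential prefactors.
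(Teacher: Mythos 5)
Your proposal is correct and follows essentially the same route as the paper: the same $j=3$ cancellation in the $\upvarphi_2^R$-sum via $a^{R,\pm}_{l3}=-\omega_{l1}^{R,\pm}/\Lambda_{13}^R$, the same inversion of the $\omega$--$a$ relations using Lemma~\ref{Lambda_j-behavior-resonance-case}, and then direct substitution of the bounds from Lemmas~\ref{omega_j-behavior-resonance} and~\ref{lem:parity-of-omega-j-resonance}. The only cosmetic slip is your explanation of the term $e^{-\frac{3}{\sqrt{2}}\frac{4}{5}r_\varepsilon}$ in the bound on $\alpha_3^{R,\pm}$: since $\omega_{l1}^{R,\pm}=-\Lambda_{13}^R a_{l3}^{R,\pm}$ is exact, the identity $\alpha_3^{R,\pm}=-d^{R,\pm}/\Lambda_{13}^R$ has no $\Lambda_{23}^R,\Lambda_{34}^R$ corrections to absorb, and that exponential term is merely slack (it bounds $|\omega_{22}^{R,\pm}\omega_{11}^{R,\pm}|$ before invoking $|d^{R,\pm}|\simeq\sqrt{|\xi|}$), so your sharper bound $\lesssim\sqrt{|\xi|}$ still implies the stated one.
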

\begin{proof}
Similarly to the proof of Lemma \ref{kernel-no-sigularity-non-resonance} the coefficient of the $\upvarphi_3^R(r,z) \upvarphi_2^R(s,z)^t \sigma_1 \mathbb{1}_{\{ 0<s \leq r  \}} + \upvarphi_2^R(r,z) \upvarphi_3^R(s,z)^t \sigma_1  \mathbb{1}_{\{ r \leq s < \infty  \}}$ vanishes, i.e., 
$- \omega_{21}^{R,\pm}(z) a^{R,\pm}_{13}+ \omega_{11}^{R,\pm}(z) a^{R,\pm}_{23}(z) =0 .$ By Lemma \ref{Lambda_j-behavior-resonance-case}, it follows that 
\begin{align*}
  \omega_{l1}^{R,\pm} &= -  \Lambda_{13}^R a^{R,\pm}_{l3}, \qquad \qquad  \qquad  \qquad \longrightarrow | \omega_{l1}^{R,\pm}| \simeq |a^{R,\pm}_{l3} |  \\
  \omega_{l2}^{R,\pm} &= -  \Lambda_{23}^R a^{R,\pm}_{l3}-  \Lambda_{24}^R a^{R,\pm}_{l4}   \; \qquad \qquad \longrightarrow | \omega_{l2}^{R,\pm}| \simeq |a^{R,\pm}_{l4} | \\
  \omega_{l3}^{R,\pm} &=\Lambda_{13}^R a^{R,\pm}_{l1} + \Lambda_{23}^R a^{R,\pm}_{l2} -\Lambda_{34}^R a^{R,\pm}_{l4}  \quad \longrightarrow | \omega_{l3}^{R,\pm}| \simeq |a^{R,\pm}_{l1} | \\
    \omega_{l4}^{R,\pm} &=\Lambda_{24}^R a^{R,\pm}_{l2} + \Lambda_{34}^R a^{R,\pm}_{l3}   \qquad  \qquad \quad \longrightarrow | \omega_{l4}^{R,\pm}| \simeq |a^{R,\pm}_{l2} |
\end{align*}

Then all other estimates follows from Lemma \ref{omega_j-behavior-resonance}, \ref{lem:parity-of-omega-j-resonance} and the fact that for both scenarios $|k_1(z)| \simeq \sqrt{|\xi|}.$ 
\end{proof}

\subsection{Computing the jump of the resolvent}

\begin{prop}
\label{jump-resol-resonance}
    Let $I =\R \backslash (-\frac{\sqrt{17}}{8},\frac{\sqrt{17}}{8}) .$ Then for any compactly supported functions $\Phi, \Psi \in L^2_r(0,\infty), $ we have 
    \begin{align*}
      <  e^{t \mathcal{L}}  \Phi, \Psi>= \frac{1}{2 \pi i } \int_{I} e^{it \lambda} \left< \int_0^{\infty} F_1^R(\cdot,\lambda) \calW^R(\lambda)F_1^R(s,\lambda)^{t} \sigma_1 \Phi(s)ds, \Psi(\cdot) \right>_{L^2_r} d \lambda ,
      \end{align*}
   where \begin{align*}
       \calW^R(\lambda):=\kappa^R(\lambda) \calD^{R,-}(\lambda)^{-1} \underbar{e}_{11} \calD^{R,+}(\lambda)^{-t}, \quad \underbar{e}_{11}:=\begin{pmatrix}
      1 & 0 \\ 0 & 0
  \end{pmatrix}, \quad \calD^{R,\pm}(\lambda):=D^{R,\pm}(\lambda\pm i0),
   \end{align*}   
  and $\kappa^R(\lambda)=W(\sigma_3 \Psi^{+}_1(\cdot,-\lambda),\Psi^{+}_1(\cdot,\lambda))$ is an odd function in $\lambda.$ Moreover, for $\lambda=\xi \pm \frac{\sqrt{17}}{8}$ we have $|\kappa^R(\lambda)|\simeq \sqrt{|\xi|}$ for small and large $|\xi|.$ 
\end{prop}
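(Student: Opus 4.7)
The plan is to follow the same strategy as in the non-resonance case (Proposition~\ref{jump-resol}), adapted to the resonance solutions. Starting from the Stone-type formula of Lemma~\ref{lem:ST1}, I would first justify exchanging the limit $b\to 0^+$ with the $\lambda$-integral in the definition of $\mathcal{I}_R(\Phi,\Psi)$. For $0<|\xi|<\delta_0$, continuity of $F_1^R(\cdot,z)$ and $F_2^R(\cdot,z)$ across the real line on the compact range $r\leq R_0=\varepsilon_0/\sqrt{|\xi|}$ follows from the construction in Section~\ref{sec:Sol-near-0-resonance}, and by standard ODE theory it extends to all $r$. The coefficient bounds from Corollary~\ref{kernel-no-sigularity-resonance} show that all terms appearing in the resolvent kernel are non-singular as $z$ approaches the spectrum. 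The denominator $d^{R,\pm}$ is non-vanishing: by Lemmas~\ref{omega_j-behavior-resonance} and \ref{omega_j-behavior-large-xi-resonance} this is clear in the small and large $|\xi|$ regimes, while for intermediate frequencies Lemma~\ref{lem:EV1} (whose proof works verbatim in the resonance case) combined with Assumptions~\ref{asmp:2}--\ref{asmp:3} rules out any zero. Hence the limit passes inside and one obtains
\begin{align*}
\mathcal{I}_R(\Phi,\Psi)=\frac{1}{2\pi i}\int_{I_R}e^{it\lambda}\langle\bigl((i\mathcal{L}-(\lambda+i0))^{-1}-(i\mathcal{L}-(\lambda-i0))^{-1}\bigr)\Phi,\Psi\rangle\,d\lambda.
\end{align*}

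Next, I would extract the tensorial structure of the jump kernel
\begin{align*}
\mathcal{Q}^R(r,s,\lambda):=\mathcal{K}^{R,+}(r,s,\lambda+i0)-\mathcal{K}^{R,-}(r,s,\lambda-i0),
\end{align*}
using the representation from Lemma~\ref{kernel-K-resonance}. Writing $\Psi^{R,\pm}(r,z)=F_2^R(r,z)N^{R,\pm}(z)+F_1^R(r,z)M^{R,\pm}(z)$ and using that $\mathcal{W}(F_1^R,F_1^R)=0$, one sees that $\calD^{R,\pm}(z)^{-t}=N^{R,\pm}(z)^{-1}\Uplambda^R(z)^{-t}$ with $\Uplambda^R:=\mathcal{W}(F_2^R,F_1^R)$ invertible thanks to Lemma~\ref{Lambda_j-behavior-resonance-case}. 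Inserting this decomposition into the formula for $\mathcal{Q}^R$, the cross terms containing one factor of $F_1^R$ and one of $F_2^R$ cancel, so $\mathcal{Q}^R$ must be of the form $iF_1^R(r,\lambda)\mathcal{M}^{R,\pm}(s,\lambda)\sigma_1$ on each side of the diagonal. Continuity across $r=s$ and non-degeneracy of $F_1^R$ (which again follows from its asymptotic behavior constructed in Section~\ref{sec:Sol-near-0-resonance}) force $\mathcal{M}^{R,+}=\mathcal{M}^{R,-}$, yielding $\mathcal{Q}^R=iF_1^R(r,\lambda)\,\mathcal{M}^R(s,\lambda)\sigma_1$.

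Applying the symmetry $\mathcal{Q}^R(r,s,\lambda)^t=\sigma_1\mathcal{Q}^R(s,r,\lambda)\sigma_1$ (a direct consequence of the structure of the Green's function in Lemma~\ref{kernel-K-resonance}), one obtains $F_1^R(s,\lambda)^{-1}\mathcal{M}^R(s,\lambda)^t=\mathcal{M}^R(r,\lambda)F_1^R(r,\lambda)^{-t}=:\calW^R(\lambda)$, an $r,s$-independent symmetric matrix. To compute $\calW^R$ explicitly, I take the Wronskian of the identity $F_1^R\calW^R=\Psi^{R,+}(\calD^{R,+})^{-t}-\Psi^{R,-}(\calD^{R,-})^{-t}$ against $\Psi^{R,+}$. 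The left-hand side equals $-\calW^R\calD^{R,+}{}^{t}$, while the first term on the right vanishes because $\mathcal{W}(\Psi^{R,+},\Psi^{R,+})=0$. This gives
\begin{align*}
\calW^R(\lambda)=\calD^{R,-}(\lambda)^{-1}\,\mathcal{W}(\Psi^{R,-}(\cdot,\lambda),\Psi^{R,+}(\cdot,\lambda))\,\calD^{R,+}(\lambda)^{-t}.
\end{align*}
Finally, using the large-$r$ asymptotics of $\Psi^{R,\pm}$ (which at leading order coincide with those of $\Psi^{\pm}$ from Sections~\ref{sec:nearinfty-small-xi}--\ref{sec:nearinfty-large-xi}) together with the signs of $\im k_j^{\pm}(\lambda)$ from Lemma~\ref{lem:k_j-behavior} and Remark~\ref{rem:behavior-k_j-lambda}, only the $(1,1)$ entry of the middle Wronskian survives, producing $\underline{e}_{11}$ times $\kappa^R(\lambda)=\mathcal{W}(\sigma_3\Psi_1^+(\cdot,-\lambda),\Psi_1^+(\cdot,\lambda))$. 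The oddness of $\kappa^R$ in $\lambda$ follows from the parity $k_1^{\pm}(-\lambda)=-k_1^{\pm}(\lambda)$, and the size $|\kappa^R(\lambda)|\simeq\sqrt{|\xi|}$ for small and large $|\xi|$ follows from the behavior of $k_1^\pm$ at the thresholds and for large energies.

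The main technical obstacle, as in the non-resonance case, is verifying that $d^{R,\pm}$ does not vanish and that the coefficients in Corollary~\ref{kernel-no-sigularity-resonance} are regular enough to pass $b\to 0^+$ inside the integral for intermediate frequencies; since the resonance merely changes the large-$r$ behavior of $\upvarphi_2^R$ and $\upvarphi_4^R$ without affecting the structural identities used in Lemma~\ref{lem:EV1} or the argument that $F_1^R$ has non-vanishing determinant, this step goes through unchanged. The remaining computations are algebraic and follow the non-resonance template verbatim after substituting the resonance-case Wronskian tables.
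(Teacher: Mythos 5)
Your proposal is correct and follows exactly the route the paper takes: the paper's own proof of this proposition simply states that one repeats the argument of Proposition~\ref{jump-resol} verbatim with the resonance-case objects, which is precisely what you have spelled out (limit exchange via Corollary~\ref{kernel-no-sigularity-resonance} and non-vanishing of $d^{R,\pm}$, cancellation of the mixed $F_1^R$--$F_2^R$ terms, the symmetry of the jump kernel forcing the tensorial form, and the Wronskian computation of $\calW^R$ and $\kappa^R$). No gaps; your version is in fact more detailed than the paper's.
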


\begin{proof}
One can use the same argument as in the proof Proposition \ref{jump-resol} to obtain the desired results. 
\end{proof}

Next, we write more concrete expression of the tensorial structure of $ F_1^R(\cdot,\lambda) W^R(\lambda)F_1^R(s,\lambda)^{t} \sigma_1.$

\begin{lemma}
\label{Resol-interms-phi-resonance}
Let $z=\lambda \pm i 0 $ such that $\lambda=\xi \pm \frac{\sqrt{17}}{8}.$ Then we have 
    \begin{align*}
      F_1^R(\cdot,\lambda) \calW^R(\lambda)F_1^R(s,\lambda)^{t} = \frac{\kappa^R(\lambda)}{d^{R,+}(\lambda) d^{R,-}(\lambda)} \Theta^R(r,\lambda) \Theta^R(s,\lambda)^{t} ,
    \end{align*}
where $d^{R,\pm}(\lambda):=\det(\calD^{R,\pm}(\lambda \pm i 0)),\;  \omega_{ij}^{R,\pm}(\lambda)=\omega_{ij}^{R,\pm}(\lambda\pm i0)$ and $\Theta^R(r,\lambda) :=\begin{pmatrix}
    \Theta_1^R(r,\lambda) \\ 
    \Theta_2^R(r,\lambda)
\end{pmatrix}$
satisfies 
\begin{align}
\label{def-theta-resonance}
    \Theta^R(r,\lambda) := \omega_{22}^{R,+}(\lambda) \upvarphi_1^R(r,\lambda) - w_{21}^{R,+}(\lambda) \upvarphi_2^R(r,\lambda).
\end{align}
Moreover, $ \frac{\kappa^R(\lambda)}{d^{R,+}(\lambda) d^{R,-}(\lambda)} $ is an odd function,
$\Theta_1^R(r,\lambda) $ is an odd function in $\lambda$ and $\Theta_2^R(r,\lambda) $ is an even function in $\lambda$  
\end{lemma}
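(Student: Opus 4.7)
The plan is to mirror the argument of Lemma~\ref{Resol-interms-phi}, replacing each non-resonance object by its resonance counterpart. First I would substitute the explicit formula for $(\calD^{R,\pm})^{-1}$ from Lemma~\ref{kernel-K-resonance} into the definition
\[
\calW^R(\lambda)=\kappa^R(\lambda)\,\calD^{R,-}(\lambda)^{-1}\underbar{e}_{11}\,\calD^{R,+}(\lambda)^{-t},
\]
and then invoke the parity identities of Lemma~\ref{lem:parity-of-omega-j-resonance}, in particular \eqref{eq:parity-w21-w22-resonance}, to rewrite the entries coming from $\calD^{R,-}$ in terms of those of $\calD^{R,+}$. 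A direct computation then gives
\[
\calW^R(\lambda)=\frac{\kappa^R(\lambda)}{d^{R,+}(\lambda)\,d^{R,-}(\lambda)}\,
\begin{pmatrix}(\omega_{22}^{R,+}(\lambda))^2 & -\omega_{21}^{R,+}(\lambda)\,\omega_{22}^{R,+}(\lambda)\\[2pt]
-\omega_{21}^{R,+}(\lambda)\,\omega_{22}^{R,+}(\lambda) & (\omega_{21}^{R,+}(\lambda))^2\end{pmatrix},
\]
which factors as the rank-one matrix $v(\lambda)\,v(\lambda)^t$ with $v(\lambda)=\bigl(\omega_{22}^{R,+}(\lambda),\,-\omega_{21}^{R,+}(\lambda)\bigr)^t$.

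Next, writing $F_1^R(r,\lambda)=\upvarphi_1^R(r,\lambda)\bigl[1\ 0\bigr]+\upvarphi_2^R(r,\lambda)\bigl[0\ 1\bigr]$ and the analogous row decomposition for $F_1^R(s,\lambda)^t$, the product $F_1^R(r,\lambda)\calW^R(\lambda)F_1^R(s,\lambda)^t$ collapses to
\[
\frac{\kappa^R(\lambda)}{d^{R,+}(\lambda)\,d^{R,-}(\lambda)}\,\Theta^R(r,\lambda)\,\Theta^R(s,\lambda)^t,
\]
with $\Theta^R$ precisely as defined in \eqref{def-theta-resonance}. This is the claimed tensorial structure.

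The remaining task is to track parities. From Lemma~\ref{lem:parity-of-omega-j-resonance} one has $d^{R,-}(\lambda)=-d^{R,+}(-\lambda)$, so $d^{R,+}(\lambda)\,d^{R,-}(\lambda)$ is even in $\lambda$; combined with the oddness of $\kappa^R$ established in Proposition~\ref{jump-resol-resonance}, the scalar prefactor is odd. For the components of $\Theta^R$, I would write $\upvarphi_j^R=(\upvarphi_{j1}^R,\upvarphi_{j2}^R)^t$ and read off from the identities \eqref{eq:sys-varphi_j-resonance} that $\upvarphi_{11}^R$ and $\upvarphi_{22}^R$ are odd in $\lambda$ while $\upvarphi_{12}^R$ and $\upvarphi_{21}^R$ are even; combining these with \eqref{eq:parity-w21-w22-resonance}, which asserts that $\omega_{21}^{R,+}$ is odd and $\omega_{22}^{R,+}$ is even, the definition \eqref{def-theta-resonance} immediately yields that $\Theta_1^R(r,\cdot)$ is odd and $\Theta_2^R(r,\cdot)$ is even.

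The argument is essentially a transcription of Lemma~\ref{Resol-interms-phi}, and I do not expect any genuine obstacle: the only thing to be careful about is the systematic bookkeeping of which resonance quantities inherit which parity, since Lemma~\ref{lem:parity-of-omega-j-resonance} and \eqref{eq:sys-varphi_j-resonance} carry exactly the same sign structure as their non-resonance counterparts, so the final parity assertions come out identically.
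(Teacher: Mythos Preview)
Your proposal is correct and follows exactly the approach the paper intends: the paper's own proof of this lemma simply reads ``The proof follows in the same way as for Lemma~\ref{Resol-interms-phi} and is therefore omitted,'' and your write-up is precisely that transcription, invoking the correct resonance analogues (Lemma~\ref{kernel-K-resonance}, Lemma~\ref{lem:parity-of-omega-j-resonance}, Proposition~\ref{jump-resol-resonance}, and \eqref{eq:sys-varphi_j-resonance}) at each step.
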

\begin{proof}
The proof follows in the same way as for Lemma \ref{Resol-interms-phi} and is therefore omitted.
\end{proof}

Next we express $\upvarphi_l^R(r,z)$ in terms of   $\Psi^{\pm}_j(r,z),$ i.e., there exist $b^{R,\pm}_{lj}(z)$ such that 
\begin{align}
\label{exp-phi-terms-psi-resonance}
    \upvarphi_l^R(r,z)= \sum_{j=1}^{4} b^{R,\pm}_{lj} \Psi^{
    \pm}_j(r,z), \quad l=1,\cdots, 4.
\end{align}

In order to write $\Theta^R(r,\lambda) $ in terms of  $\Psi_j(r,z)$ we only need $l=1,2.$\\

\begin{lemma}
\label{behav-omega-b-resonance}
Let $z=\lambda \pm i0$ such that $\lambda=\xi\pm \frac{\sqrt{17}}{8}.$ Then we have 
\begin{align}
\label{drop-out-omega-b-resonance}
     \omega^{R,+}_{22} b^{R,\pm}_{14}  =  \omega_{12}^{R,\pm}  b^{R,\pm}_{24}=\frac{ \omega_{12}^{R,\pm}  \omega_{22}^{R,\pm}}{\mu^{\pm}_{42}}
\end{align}
and for small $|\xi|,$ we have 
  \begin{align*}
 |\omega^{R,+}_{22} b^{R,\pm}_{11}| & \lesssim  \frac{1}{\sqrt{|\xi|}} e^{-\frac{3}{\sqrt{2}} \frac{4}{5}r_{\varepsilon  }  } , \quad  |\omega^{R,+}_{22} b^{R,\pm}_{12}| \lesssim \frac{1}{\sqrt{|\xi|}}  e^{-\frac{3}{\sqrt{2}} \frac{3}{5}r_{\varepsilon  }  }, \quad 
 |\omega^{R,+}_{22} b^{R,\pm}_{13}|   \lesssim \frac{1}{\sqrt{|\xi|}} 
 e^{-\frac{3}{\sqrt{2}} \frac{4}{5}r_{\varepsilon  }  },  \\
   | \omega_{12}^{R,\pm} b^{R,\pm}_{21}|   & \lesssim \sqrt{|\xi|} +   e^{-\frac{3}{\sqrt{2} } \frac{4}{5} r_{\varepsilon}}  , \quad
   |  \omega_{12}^{R,\pm} b^{R,\pm}_{22} |  \lesssim \sqrt{|\xi|}   e^{\frac{3}{\sqrt{2}} \frac{r_{\varepsilon}}{5}  } , \quad  |  \omega_{12}^{R,\pm} b^{R,\pm}_{23} | \lesssim \sqrt{|\xi|}  +   e^{-\frac{3}{\sqrt{2} } \frac{4}{5} r_{\varepsilon}}  ,
  \end{align*}
and for large $|\xi|,$ we have 
\begin{align*}
 |\omega^{R,+}_{22} b^{R,\pm}_{11}| & \lesssim  \frac{1}{|\xi| }  , \quad  |\omega^{R,+}_{22} b^{R,\pm}_{12}| \lesssim \frac{1}{|\xi| } , \quad |\omega^{R,+}_{22} b^{R,\pm}_{13}| \lesssim \frac{1}{|\xi| }
,  \\
   | \omega_{12}^{R,\pm} b^{R,\pm}_{21}|   &\lesssim \frac{1}{|\xi| }   , \quad  |  \omega_{12}^{R,\pm} b^{R,\pm}_{22} | \lesssim \frac{1}{|\xi| }  , \quad  |  \omega_{12}^{R,\pm} b^{R,\pm}_{23} | \lesssim \frac{1}{|\xi| } . 
  \end{align*}
\end{lemma}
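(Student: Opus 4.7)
The plan is to mirror the argument of Lemma~\ref{behav-omega-b} in the non-resonance case, adapting the quantitative estimates to the sizes of the $\omega^{R,\pm}_{ij}$ provided by Lemmas~\ref{omega_j-behavior-resonance} and~\ref{omega_j-behavior-large-xi-resonance}. Start from the expansion \eqref{exp-phi-terms-psi-resonance} and take Wronskians of both sides against each $\Psi^{\pm}_i$. Using Lemma~\ref{lem:mu-behavior-small-xi} (respectively Lemma~\ref{lem:mu-behavior-large-xi} in the large $|\xi|$ regime), which tells us that $\mu^{\pm}_{11}=\mu^{\pm}_{12}=\mu^{\pm}_{22}=\mu^{\pm}_{23}=\mu^{\pm}_{14}=0$, the $4\times 4$ linear system for the unknowns $b^{R,\pm}_{l1},b^{R,\pm}_{l2},b^{R,\pm}_{l3},b^{R,\pm}_{l4}$ becomes triangular and can be solved explicitly:
\begin{align*}
b^{R,\pm}_{l4} &= \tfrac{\omega_{2l}^{R,\pm}}{\mu^{\pm}_{24}},\qquad
b^{R,\pm}_{l3} = \tfrac{1}{\mu^{\pm}_{13}}\Big(\omega_{1l}^{R,\pm}-b^{R,\pm}_{l4}\mu^{\pm}_{14}\Big),\\
b^{R,\pm}_{l1} &= \tfrac{1}{\mu^{\pm}_{31}}\Big(\omega_{3l}^{R,\pm}-b^{R,\pm}_{l4}\mu^{\pm}_{34}\Big),\qquad
b^{R,\pm}_{l2} = \tfrac{1}{\mu^{\pm}_{42}}\Big(\omega_{4l}^{R,\pm}-b^{R,\pm}_{l1}\mu^{\pm}_{41}-b^{R,\pm}_{l3}\mu^{\pm}_{43}\Big).
\end{align*}

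With these explicit formulas in hand, the identity \eqref{drop-out-omega-b-resonance} follows purely algebraically: from $b^{R,\pm}_{l4}=\omega^{R,\pm}_{2l}/\mu^{\pm}_{24}$ applied at $l=1$ and $l=2$, we read off $\omega^{R,+}_{22}b^{R,\pm}_{14}$ and $\omega_{12}^{R,\pm}b^{R,\pm}_{24}$, and both are expressible in terms of the same block of $\omega^{R,\pm}_{ij}$ and $\mu^{\pm}_{42}=-\mu^{\pm}_{24}$, yielding the common value $\omega_{12}^{R,\pm}\omega_{22}^{R,\pm}/\mu^{\pm}_{42}$. This is the same mechanism as in the non-resonance case; only the entries change.

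To obtain the quantitative estimates, substitute the sizes of $\omega^{R,\pm}_{ij}$ from Lemma~\ref{omega_j-behavior-resonance} together with $\mu^{\pm}_{13}\simeq k_1(z)$ and $\mu^{\pm}_{24}\simeq 1$ (small $|\xi|$), respectively Lemma~\ref{omega_j-behavior-large-xi-resonance} with $|\mu^{\pm}_{24}|\simeq \sqrt{|\xi|}$ (large $|\xi|$). The key point where the resonance case differs from Lemma~\ref{behav-omega-b} is the size of $\omega^{R,+}_{12}$ and $\omega^{R,+}_{32}$: in the resonance case Lemma~\ref{omega_j-behavior-resonance} gives $\omega^{R,+}_{12}=ck_1(z)+O(|\xi|^{3/2})\simeq \sqrt{|\xi|}$ (because $\upvarphi_2^R$ is bounded at infinity, matching the $\Psi_1^{+}$ branch rather than the linearly growing branch), whereas $\omega^{+}_{12}\simeq 1$ in the non-resonance case. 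Propagating this difference through the expressions for $b^{R,\pm}_{l1},b^{R,\pm}_{l2},b^{R,\pm}_{l3}$ explains the asymmetry between the two blocks of estimates and produces the announced bounds, noting that $|\omega_{12}^{R,\pm} b^{R,\pm}_{2j}|$ acquires the additional $\sqrt{|\xi|}$ prefactor relative to the non-resonance counterpart.

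The bulk of the work is bookkeeping: one must systematically plug in the asymptotics of the $\omega^{R,\pm}_{ij}$, $\mu^{\pm}_{ij}$, and $k_j(z)$, at the cutoff point $r\in(\tfrac{r_\varepsilon}{10},4r_\varepsilon)$, and collect the dominant terms. No genuinely new obstacle arises; the main pitfall to watch is the tracking of exponential prefactors like $e^{-\tfrac{3}{\sqrt 2}\tfrac{4}{5}r_\varepsilon}$ that come from Wronskians evaluated in the intermediate region, since the two mixed terms in $b^{R,\pm}_{l2}$ can cancel against each other and one must keep a priori bounds on each. The analogous estimates for large $|\xi|$ are simpler because all relevant quantities are of size $O(|\xi|^{-3/4})$ or $O(|\xi|^{-1/2})$, and no exponential layers appear.
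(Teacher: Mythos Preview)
Your approach is essentially the same as the paper's: both proofs take Wronskians of \eqref{exp-phi-terms-psi-resonance} against the $\Psi^{\pm}_i$ to obtain the same triangular system for the $b^{R,\pm}_{lj}$, solve it in the same order $b_{l4}\to b_{l3}\to b_{l1}\to b_{l2}$, and then feed in the sizes of the $\omega^{R,\pm}_{ij}$ and $\mu^{\pm}_{ij}$ from Lemmas~\ref{omega_j-behavior-resonance}, \ref{lem:parity-of-omega-j-resonance}, \ref{omega_j-behavior-large-xi-resonance}, \ref{lem:mu-behavior-small-xi}, \ref{lem:mu-behavior-large-xi}. One small slip: you list $\mu^{\pm}_{14}=0$ among the vanishing Wronskians, but for small $|\xi|$ Lemma~\ref{lem:mu-behavior-small-xi} only gives $|\mu^{\pm}_{14}|\lesssim e^{\frac{3}{\sqrt 2}\frac{r_\varepsilon}{5}}$ (it vanishes only in the large $|\xi|$ case); since you correctly retain the $b^{R,\pm}_{l4}\mu^{\pm}_{14}$ term in your formula for $b^{R,\pm}_{l3}$, this does not affect the argument.
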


\begin{proof} 
\ref{behav-omega-b}
By Lemma \ref{lem:mu-behavior-small-xi}, it follows that 
\begin{align*}
  \omega_{1l}^{R,\pm} &=  b^{R,\pm}_{l3}  \mu^{\pm}_{13} +  b^{R,\pm}_{l4} \mu^{\pm}_{14}   , \quad 
  \omega_{2l}^{R,\pm} =   b^{R,\pm}_{l4} \mu^{\pm}_{24} ,  \quad 
  \omega_{3l}^{R,\pm} = b^{R,\pm}_{l1} \mu^{\pm}_{31} + b^{R,\pm}_{l4} \mu^{\pm}_{34}   \\
    \omega_{4l}^{R,\pm} &= b^{R,\pm}_{l1} \mu^{\pm}_{41} + b^{R,\pm}_{l2} \mu^{\pm}_{42} + b^{R,\pm}_{l3} \mu^{\pm}_{43}.
\end{align*}

Therefore, 

\begin{align*}
  b^{R,\pm}_{l4}   &=  \frac{ \omega_{2l}^{R,\pm}}{\mu^{\pm}_{24}}  , \quad 
  b^{R,\pm}_{l3}   =  \frac{1}{ \mu^{\pm}_{13}} \left(   \omega_{1l}^{R,\pm} -  \frac{ \omega_{2l}^{R,\pm}}{\mu^{\pm}_{24}}  \mu^{\pm}_{14}   \right) , \quad 
   b^{R,\pm}_{l1}   = \frac{1}{ \mu^{\pm}_{31}} \left(   \omega_{3l}^{R,\pm} -   \frac{ \omega_{2l}^{R,\pm}}{\mu^{\pm}_{24}}  \mu^{\pm}_{34}  \right),  \\
 b^{R,\pm}_{l2} & = \frac{1}{ \mu^{\pm}_{42}}    \left(  \omega_{4l}^{R,\pm} -  b^{R,\pm}_{l1} \mu^{\pm}_{41}  -b^{R,\pm}_{l3} \mu^{\pm}_{43} \right) .
\end{align*}    

For small $|\xi|, $ by Lemma \ref{omega_j-behavior-resonance} \ref{lem:parity-of-omega-j-resonance} and \ref{lem:mu-behavior-small-xi}, we have 
\begin{align*}
  b^{R,\pm}_{14}&= \frac{ \omega_{21}^{R,\pm}}{\mu^{\pm}_{24}}  , \qquad
|   b^{R,\pm}_{11}|=| \frac{ 1}{\mu^{\pm}_{31}} \left(  \omega_{31}^{R,\pm} -  \frac{ \omega_{21}^{R,\pm} \mu^{\pm}_{34}}{\mu^{\pm}_{24}}
 \right)|  \lesssim \frac{1}{c \, k_1(z) + O(|\xi|^{\frac{3}{2}}) }   e^{\frac{3}{\sqrt{2}} \frac{r_{\varepsilon}}{5} }  \\
   |b^{R,\pm}_{13}|&=  |\frac{1}{ \mu^{\pm}_{13}} \left(   \omega_{11}^{R,\pm} -  \frac{ \omega_{21}^{R,\pm}}{\mu^{\pm}_{24}}  \mu^{\pm}_{14}   \right)  | \lesssim \frac{1}{c \, k_1(z) + O(|\xi|^{\frac{3}{2}}) } e^{\frac{3}{\sqrt{2}} \frac{r_{\varepsilon}}{5} }    \\
 |  b^{R,\pm}_{12} | &= | \frac{1}{ \mu^{\pm}_{42}}    \left(  \omega_{41}^{R,\pm} -  b^{R,\pm}_{11} \mu^{\pm}_{41}  -b^{R,\pm}_{13} \mu^{\pm}_{43} \right)  |  \lesssim \frac{1}{c \, k_1(z) + O(|\xi|^{\frac{3}{2}}) }   e^{\frac{3}{\sqrt{2}} \frac{2 r_{\varepsilon}}{5} }    
\end{align*}

and 
\begin{align*} 
  b^{R,\pm}_{24}&= \frac{ \omega_{22}^{R,\pm}}{\mu^{\pm}_{24}}  , \qquad
|b^{R,\pm}_{21}| = |\frac{ 1}{\mu^{\pm}_{31}} \left(  \omega_{32}^{R,\pm} -  \frac{ \omega_{22}^{R,\pm} \mu^{\pm}_{34}}{\mu^{\pm}_{24}}   \right) |  \lesssim 1 + \frac{1}{c \, k_1(z) + O(|\xi|^{\frac{3}{2}}) } e^{-\frac{3}{\sqrt{2}} \frac{4 }{5}r_{\varepsilon}}\\
  | b^{R,\pm}_{23} | &=   | \frac{1}{ \mu^{\pm}_{13}} \left(   \omega_{12}^{R,\pm} -  \frac{ \omega_{22}^{R,\pm}}{\mu^{\pm}_{24}}  \mu^{\pm}_{14}   \right)  |\lesssim 1 + \frac{1}{c \, k_1(z) + O(|\xi|^{\frac{3}{2}}) } e^{-\frac{3}{\sqrt{2}} \frac{4 }{5}r_{\varepsilon}} \\
  |  b^{R,\pm}_{22}  | &= | \frac{1}{ \mu^{\pm}_{42}}    \left(  \omega_{42}^{R,\pm} -  b^{R,\pm}_{21} \mu^{\pm}_{41}  -b^{R,\pm}_{23} \mu^{\pm}_{43} \right)  | \lesssim    \frac{1}{c \, k_1(z) + O(|\xi|^{\frac{3}{2}}) } e^{-\frac{3}{\sqrt{2}} \frac{3}{5} r_{\varepsilon} }    +   e^{\frac{3}{\sqrt{2}} \frac{r_{\varepsilon}}{5}  }  
\end{align*} 

Similarly, for large $|\xi|, $ using Lemma \ref{omega_j-behavior-large-xi-resonance}, \ref{lem:parity-of-omega-j-resonance} and \ref{lem:mu-behavior-small-xi}, we have
\begin{align*}
  b^{R,\pm}_{14}&= \frac{ \omega_{21}^{R,\pm}}{\mu^{\pm}_{24}}, \quad 
 | b^{R,\pm}_{11}|= |\frac{ 1}{\mu^{\pm}_{31}}  \omega_{31}^{R,\pm} | \lesssim \frac{1}{|\xi|^{\frac{3}{4}}}, \qquad 
  |  b^{R,\pm}_{13} | =  | \frac{1}{ \mu^{\pm}_{13}}    \omega_{11}^{R,\pm} |  \lesssim \frac{1}{|\xi|^{\frac{3}{4}}}     , \quad 
 |  b^{R,\pm}_{12} | = | \frac{1}{ \mu^{\pm}_{42}}     \omega_{41}^{R,\pm}   | \lesssim \frac{1}{|\xi|^{\frac{3}{4}}}    
\end{align*}
and 
\begin{align*}
  b^{R,\pm}_{24}&= \frac{ \omega_{22}^{R,\pm}}{\mu^{\pm}_{24}}, \quad 
| b^{R,\pm}_{21} |  = | \frac{ 1}{\mu^{\pm}_{31}}   \omega_{32}^{R,\pm} | \lesssim  \frac{1}{|\xi|^{\frac{3}{4}}}  , \quad  
|  b^{R,\pm}_{23}| =   | \frac{1}{ \mu^{\pm}_{13}}  \omega_{12}^{R,\pm}  | \lesssim   \frac{1}{|\xi|^{\frac{3}{4}}}  , \quad  
|   b^{R,\pm}_{22} | = | \frac{1}{ \mu^{\pm}_{42}}      \omega_{42}^{R,\pm}  |  \lesssim  \frac{1}{|\xi|^{\frac{3}{4}}} .
\end{align*}

\end{proof}

\begin{lemma}
\label{theta_j-behavior-resonance}
Let $z=\lambda \pm i0,$ such that $\lambda=\xi \pm \frac{\sqrt{17}}{8}.$ Then we have
    \begin{align*}
        \Theta^{R}(r,\lambda) = \sum_{j=1}^{3} \theta^{R,+}_j(\lambda) \Psi^{+}_j(r,\lambda), \quad \text{ where }   \theta^{R,+}_j(\lambda) :=\omega_{22}^{R,+}(\lambda) b_{1j}^{R,+} - \omega_{21}^{R,+}(\lambda)  b_{2j}^{R,+}, 
    \end{align*}
where for small $|\xi|$, we have
\begin{align*}
    \theta^{R,+}_1(\lambda) = O(\frac{1}{\sqrt{|\xi|}})  \quad    
    | \theta^{R,+}_2(\lambda) | \lesssim    e^{\frac{3}{\sqrt{2}} \frac{3}{5} r_{\varepsilon}  }     , \quad 
     | \theta^{R,+}_3(\lambda) | \lesssim 1
\end{align*}
and for large $|\xi|$, we have
\begin{align*}
    \theta^{R,+}_1(\lambda) = O(\frac{1}{|\xi|}) , \quad      \theta^{R,+}_2(\lambda) =O(\frac{1}{|\xi|}    )     , \quad 
     \theta^{R,+}_3(\lambda) = O(\frac{1}{|\xi|})
\end{align*}

\end{lemma}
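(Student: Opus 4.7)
The plan is to substitute the expansion \eqref{exp-phi-terms-psi-resonance} of each column $\upvarphi_l^R(r,\lambda)$, $l=1,2$, into the definition \eqref{def-theta-resonance} of $\Theta^R(r,\lambda)$. This produces
$$\Theta^R(r,\lambda) = \sum_{j=1}^{4} \theta_j^{R,+}(\lambda)\, \Psi_j^{+}(r,\lambda), \qquad \theta_j^{R,+}(\lambda) := \omega_{22}^{R,+}(\lambda)\, b_{1j}^{R,+}(\lambda) - \omega_{21}^{R,+}(\lambda)\, b_{2j}^{R,+}(\lambda).$$
The first structural step is to show that $\theta_4^{R,+}\equiv 0$, which reduces the sum to $j=1,2,3$ as claimed. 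From the proof of Lemma~\ref{behav-omega-b-resonance} one has the closed-form identity $b_{l4}^{R,+} = \omega_{2l}^{R,+}/\mu_{24}^{+}$ for $l=1,2$, hence
$$\theta_4^{R,+} = \tfrac{1}{\mu_{24}^{+}}\bigl(\omega_{22}^{R,+}\omega_{21}^{R,+} - \omega_{21}^{R,+}\omega_{22}^{R,+}\bigr) = 0.$$

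Next I would substitute the formulas for $b_{lj}^{R,+}$, $j=1,2,3$, derived in the proof of Lemma~\ref{behav-omega-b-resonance}. The terms involving $\mu_{14}^{+}$ and $\mu_{34}^{+}$ cancel pairwise between $\omega_{22}^{R,+} b_{1j}^{R,+}$ and $\omega_{21}^{R,+} b_{2j}^{R,+}$, yielding the compact representations
\begin{align*}
\theta_1^{R,+} &= \frac{\omega_{22}^{R,+}\omega_{31}^{R,+} - \omega_{21}^{R,+}\omega_{32}^{R,+}}{\mu_{31}^{+}}, \qquad \theta_3^{R,+} = \frac{\omega_{22}^{R,+}\omega_{11}^{R,+} - \omega_{21}^{R,+}\omega_{12}^{R,+}}{\mu_{13}^{+}},\\
\theta_2^{R,+} &= \frac{\omega_{22}^{R,+}\omega_{41}^{R,+} - \omega_{21}^{R,+}\omega_{42}^{R,+}}{\mu_{42}^{+}} - \frac{\mu_{41}^{+}}{\mu_{42}^{+}}\theta_1^{R,+} - \frac{\mu_{43}^{+}}{\mu_{42}^{+}}\theta_3^{R,+}.
\end{align*}

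Finally, the size estimates are obtained by inserting the bounds of Lemmas~\ref{omega_j-behavior-resonance} and~\ref{lem:mu-behavior-small-xi} for small $|\xi|$, or Lemmas~\ref{omega_j-behavior-large-xi-resonance} and~\ref{lem:mu-behavior-large-xi} for large $|\xi|$, together with the elementary Wronskian identities $\mu_{42}^{+}=-\mu_{24}^{+}$, $\mu_{41}^{+}=-\mu_{14}^{+}$, $\mu_{43}^{+}=-\mu_{34}^{+}$. In the small-$|\xi|$ regime the dominant contribution to $\theta_1^{R,+}$ is $\omega_{21}^{R,+}\omega_{32}^{R,+}\sim\sqrt{|\xi|}$ divided by $\mu_{31}^{+}\simeq\sqrt{|\xi|}$, giving $O(1/\sqrt{|\xi|})$; similarly $\theta_3^{R,+}$ is bounded because both $\omega_{21}^{R,+}\omega_{12}^{R,+}$ and $\mu_{13}^{+}$ scale like $\sqrt{|\xi|}$; and the bound for $\theta_2^{R,+}$ absorbs the exponential factor $e^{\frac{3}{5\sqrt{2}} r_{\varepsilon}}$ arising from $\omega_{41}^{R,+},\omega_{42}^{R,+},\mu_{41}^{+}$, and $\mu_{43}^{+}$. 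For large $|\xi|$, all three coefficients are $O(1/|\xi|)$ since each $\omega_{ij}^{R,+}$ is of size $|\xi|^{-1/4}$ and the relevant $\mu_{ij}^{+}$ of size $\sqrt{|\xi|}$; moreover $\mu_{14}^{+}=\mu_{34}^{+}=0$ in this regime, so the correction terms in $\theta_2^{R,+}$ drop out. I do not anticipate a genuine obstacle: the work is essentially systematic bookkeeping of the Wronskian bounds collected in Sections~\ref{sec:GreenKernel-non-resonance}--\ref{sec:GreenKernel-resonance}, and mirrors the corresponding non-resonance argument preceding Lemma~\ref{Resol-interms-psi}.
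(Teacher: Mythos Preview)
Your proposal is correct and follows the same route as the paper: substitute \eqref{exp-phi-terms-psi-resonance} into \eqref{def-theta-resonance}, use the identity $b_{l4}^{R,+}=\omega_{2l}^{R,+}/\mu_{24}^{+}$ from Lemma~\ref{behav-omega-b-resonance} to see $\theta_4^{R,+}=0$, and then read off the size of $\theta_j^{R,+}$ from the $\omega_{ij}^{R,+}$ and $\mu_{ij}^{+}$ bounds. Your closed form for $\theta_2^{R,+}$ in terms of $\theta_1^{R,+},\theta_3^{R,+}$ is a slight repackaging of the paper's expression but is equivalent; one small slip---$\omega_{21}^{R,+}\omega_{32}^{R,+}/\mu_{31}^{+}\simeq\sqrt{|\xi|}/\sqrt{|\xi|}$ is $O(1)$, not $O(1/\sqrt{|\xi|})$---is harmless since the stated bound is only an upper bound.
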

\begin{proof}
    By definition of $\Theta^{R}(r,\lambda)$ in \eqref{def-theta-resonance} with \eqref{exp-phi-terms-psi-resonance} and \eqref{drop-out-omega-b-resonance}, we have
    \begin{align*}
        \Theta^{R}(r,\lambda) = \sum_{j=1}^{3} \theta^{R,+}_j(\lambda) \Psi^{R,+}_j(r,\lambda), \quad \text{ where }   \theta^{R,+}_j(\lambda) :=\omega_{22}^{R,+}(\lambda) b_{1j}^{R,+} - \omega_{21}^{+}(\lambda)  b_{2j}^{R,+}.
    \end{align*}
Therefore, we have
\begin{align*}
     \theta^{R,+}_1 &=    \frac{1}{ \mu^{+}_{31}} \left( \omega_{22}^{R,+}    \omega_{31}^{R,+} -   \omega_{21}^{R,+}   \omega_{32}^{R,+}  \right) , \\
     \theta^{R,+}_2 &=   \frac{1}{ \mu^{+}_{42}} ( \omega_{22}^{R,+}     \omega_{41}^{R,+}- \omega_{21}^{R,+}  \omega_{42}^{R,+}  )  - \omega_{22}^{R,+}   \frac{1}{ \mu^{+}_{42}}  \left(    b^{R,+}_{11} \mu^{+}_{41}  + b^{R,+}_{13} \mu^{+}_{43} \right) - \omega_{21}^{R,+}  \frac{1}{ \mu^{+}_{42}}    \left(    b^{R,+}_{21} \mu^{+}_{41} +b^{R,+}_{23} \mu^{+}_{43}   \right) 
\end{align*}
Note that, for large $|\xi|, $ we have $ \theta^{R,+}_2=\frac{1}{ \mu^{+}_{42}} ( \omega_{22}^{R,+}     \omega_{41}^{R,+}- \omega_{21}^{R,+}  \omega_{42}^{R,+}  ).$ Moreover, 
\begin{align*}
    \theta^{R,+}_3 &=    \frac{1}{ \mu^{+}_{13}}  \left(  \omega_{22}^{R,+}  \omega_{11}^{R,+} -\omega_{21}^{R,+}  \omega_{12}^{R,+} \right) .
\end{align*}
Using Lemma \ref{behav-omega-b-resonance}, we obtain the desired estimates for $\theta_j^{R,+}.$ 
\end{proof}

\begin{lemma}
\label{Theta-interms-psi-resonance}
Let $z=\lambda \pm i0,$ such that $\lambda=\xi \pm \frac{\sqrt{17}}{8}.$ Then we have
    \begin{align}
    \label{theta_in_terms-Psi-resonance}
    \begin{split}
     \Theta^{R}(r,\lambda)& =\gamma_1^R(\lambda) \left( \omega^{R,+}_{11}(\lambda) \sigma_3  \Psi_1^{R,+}(r,-\lambda)  +   \omega^{R,+}_{11}(-\lambda)  \Psi^{R,+}_1(r,\lambda) \right)
  \\
 & + \gamma_2^R(\lambda)   (\sigma_3) \Psi_1^{R,+}(r,-\lambda)  + \gamma_3^R(\lambda)  \Psi^{R,+}_1(r,\lambda) + \gamma_4(\lambda)  \Psi^{R,+}_2(r,\lambda)
\end{split}
  \end{align}
where for small $|\xi|, $ we have 
\begin{align*}
 \gamma_1^R(\lambda)   = O(\frac{1}{\sqrt{|\xi|} }e^{-\frac{3}{\sqrt{2}} r_{\varepsilon}} ), \quad |\gamma_2^R(\lambda)| \lesssim 1 , \quad |\gamma_3^R(\lambda)|\lesssim 1 , \quad |\gamma_4^R(\lambda)| \lesssim e^{\frac{3}{\sqrt{2}} \frac{2 r_{\epsilon}}{5}}.
\end{align*}
and for large $|\xi|, $ we have 
\begin{align*}
    \gamma_1^R(\lambda)   = O(\frac{1}{|\xi|^{\frac{3}{4}}} ), \quad \gamma_2^R(\lambda) = O(\frac{1}{|\xi|}), \quad \gamma_3^R(\lambda)=  O(\frac{1}{|\xi|}) ,  \quad  \gamma_4^R(\xi) = O(\frac{1}{|\xi|}) .
\end{align*}
\end{lemma}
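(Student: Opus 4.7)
The argument closely mirrors the proof of Lemma~\ref{Resol-interms-psi} in the non-resonance case. Starting from the decomposition $\Theta^R = \sum_{j=1}^{3}\theta_j^{R,+}\Psi_j^{R,+}$ of Lemma~\ref{theta_j-behavior-resonance}, together with the explicit formulas for $\theta_1^{R,+}$ and $\theta_3^{R,+}$ derived there, I would first reorganize
\begin{align*}
\Theta^R(r,\lambda) = \frac{\omega_{22}^{R,+}(\lambda)}{\mu_{13}^{+}(\lambda)}\,\mathcal{P}_1^R(r,\lambda) - \frac{\omega_{21}^{R,+}(\lambda)}{\mu_{13}^{+}(\lambda)}\,\mathcal{P}_2^R(r,\lambda) + \theta_2^{R,+}(\lambda)\Psi_2^{R,+}(r,\lambda),
\end{align*}
where $\mathcal{P}_j^R(r,\lambda) := \omega_{1j}^{R,+}(\lambda)\,\Psi_3^{R,+}(r,\lambda) - \omega_{3j}^{R,+}(\lambda)\,\Psi_1^{R,+}(r,\lambda)$ for $j=1,2$, in exact parallel with the non-resonance argument.

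The next step is to expand $\Psi_3^{R,+}(\cdot,\lambda)$ in the alternative fundamental system $\{\Psi_1^{R,+}(\cdot,\lambda),\,-\sigma_3\Psi_1^{R,+}(\cdot,-\lambda),\,\Psi_2^{R,+}(\cdot,\lambda),\,\Psi_4^{R,+}(\cdot,\lambda)\}$ of the spectral problem $i\mathcal{L}\Psi=\lambda\Psi$, writing
\begin{align*}
\Psi_3^{R,+}(r,\lambda) = \alpha_1(\lambda)\Psi_1^{R,+} + \alpha_2(\lambda)(-\sigma_3)\Psi_1^{R,+}(\cdot,-\lambda) + \alpha_3(\lambda)\Psi_2^{R,+} + \alpha_4(\lambda)\Psi_4^{R,+}.
\end{align*}
Since the large-$r$ asymptotics of $\Psi_j^{R,\pm}$ coincide with those of $\Psi_j^{\pm}$ (only the near-zero behavior of $\varphi_j^R$ differs from that of $\varphi_j$), the Wronskian-pairing arguments from the non-resonance proof apply verbatim: pairing with $\Psi_2^{R,+}$ and using $\mu_{32}^{\pm}=\mu_{12}^{\pm}=0$ together with the vanishing of $W(-\sigma_3\Psi_1^{R,+}(\cdot,-\lambda),\Psi_2^{R,+})$ forces $\alpha_4=0$; pairing with $\sigma_3\Psi_1^{R,+}(\cdot,-\lambda)$ forces $\alpha_1=0$; and pairing with $\Psi_1^{R,+}$ and $\Psi_4^{R,+}$ determines
\begin{align*}
\alpha_2(\lambda) = \frac{\mu_{13}^{+}(\lambda)}{c\,k_1(\lambda)\bigl(1-c_1(\lambda)^2\bigr)}, \qquad \alpha_3(\lambda) = \frac{\mu_{34}^{+}(\lambda)}{c\,k_2(\lambda)\bigl(1-c_2(\lambda)^2\bigr)}.
\end{align*}

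Substituting the resulting two-term expression for $\Psi_3^{R,+}$ into $\mathcal{P}_1^R$ and $\mathcal{P}_2^R$ and collecting coefficients then delivers the representation \eqref{theta_in_terms-Psi-resonance} with
\begin{align*}
\gamma_1^R &= -\frac{\omega_{22}^{R,+}(\lambda)\,\alpha_2(\lambda)}{\mu_{13}^{+}(\lambda)},\quad \gamma_2^R = \frac{\omega_{21}^{R,+}(\lambda)\,\alpha_2(\lambda)\,\omega_{12}^{R,+}(\lambda)}{\mu_{13}^{+}(\lambda)},\quad \gamma_3^R = -\frac{\omega_{21}^{R,+}(\lambda)\,\alpha_2(\lambda)\,\omega_{12}^{R,+}(-\lambda)}{\mu_{13}^{+}(\lambda)},\\
\gamma_4^R &= \theta_2^{R,+}(\lambda) + \frac{\alpha_3(\lambda)\bigl(\omega_{22}^{R,+}\omega_{11}^{R,+} - \omega_{21}^{R,+}\omega_{12}^{R,+}\bigr)(\lambda)}{\mu_{13}^{+}(\lambda)}.
\end{align*}
The stated bounds on $\gamma_j^R$ then follow by inserting the resonance-case Wronskian estimates from Lemmas~\ref{omega_j-behavior-resonance} and \ref{omega_j-behavior-large-xi-resonance}, the $\mu$-estimates from Lemmas~\ref{lem:mu-behavior-small-xi} and \ref{lem:mu-behavior-large-xi}, and the bound on $\theta_2^{R,+}$ from Lemma~\ref{theta_j-behavior-resonance}.

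The main technical point, and the principal difference from the non-resonance proof, is the replacement $\omega_{12}^{+}\simeq 1 \ \rightsquigarrow\ \omega_{12}^{R,+}(\lambda) = c\,k_1(\lambda)+O(|\xi|^{3/2})$ for small $|\xi|$, which is a direct consequence of the resonance asymptotic $\varphi_2^{R}\sim\text{const}$ at $r=\infty$ (in place of $\varphi_2\sim r$). The extra factor $k_1(\lambda)=O(\sqrt{|\xi|})$ cancels one power of $\sqrt{|\xi|}$ coming from the denominator $\mu_{13}^{+}\simeq k_1(\lambda)$, and is precisely what reduces the small-$|\xi|$ estimates for $\gamma_2^R$ and $\gamma_3^R$ from $O(|\xi|^{-1/2})$ (non-resonance) to $O(1)$ here; analogous bookkeeping in the large-$|\xi|$ regime reproduces the claimed $O(|\xi|^{-1})$ bounds. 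Keeping track of these cancellations, together with the analogous cancellation in $\gamma_4^R$ needed to beat the naive bound inherited from $\theta_2^{R,+}$ down to the claimed exponential factor, is the principal bookkeeping task in the proof.
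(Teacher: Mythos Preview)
Your proposal is correct and follows the paper's approach essentially verbatim: the paper's proof simply states that ``following computations analogous to the proof of Lemma~\ref{Resol-interms-psi}'' yields exactly the formulas you wrote for $\gamma_j^R$ and $\alpha_2^R/\mu_{13}^+$, $\alpha_3^R/\mu_{13}^+$, and then invokes Lemmas~\ref{omega_j-behavior-resonance}, \ref{omega_j-behavior-large-xi-resonance}, \ref{lem:mu-behavior-small-xi}, and \ref{theta_j-behavior-resonance} for the bounds. Your identification of the key mechanism---that $\omega_{12}^{R,+}=c\,k_1(\lambda)+O(|\xi|^{3/2})$ supplies the extra $\sqrt{|\xi|}$ that improves $\gamma_2^R,\gamma_3^R$ from $O(|\xi|^{-1/2})$ to $O(1)$---is correct and is implicit but not spelled out in the paper; your remark about a needed cancellation in $\gamma_4^R$ reflects an apparent mismatch between the stated bound $e^{\frac{3}{\sqrt{2}}\frac{2r_\varepsilon}{5}}$ and the input $|\theta_2^{R,+}|\lesssim e^{\frac{3}{\sqrt{2}}\frac{3r_\varepsilon}{5}}$ that the paper does not explicitly resolve either.
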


\begin{proof}
Following computations analogous to one in the proof of Lemma \ref{Resol-interms-psi} leads to 
\begin{align*} 
    \gamma_1^{R}(\lambda)  &:= -\frac{1}{ \mu^{+}_{13}(\lambda)}  \omega_{22}^{R,+}(\lambda) \alpha_2^{R}(\lambda) \\
     \gamma_2^{R}(\lambda) &:=   \frac{1}{ \mu^{+}_{13}(\lambda)}   \omega_{21}^{R,+}(\lambda)  \alpha_2^{R}(\lambda) \omega_{12}^{R,+}(\lambda)  \\
     \gamma_3^{R}(\lambda) &:= - \frac{1}{ \mu^{+}_{13}(\lambda)}   \omega_{21}^{R,+}(\lambda)  \alpha_2^{R}(\lambda) \omega^{R,+}_{12}(-\lambda)\\
     \gamma_4^{R}(\lambda) &:= \theta^{R,+}_2(\lambda)+ \frac{1}{ \mu^{R,+}_{13}(\lambda)}  \omega_{22}^{R,+}(\lambda)  \alpha_3^{R}(\lambda)  \omega_{11}^{R,+} (\lambda)   - \frac{1}{ \mu^{+}_{13}(\lambda)}   \omega_{21}^{R,+}(\lambda)  \alpha_3^{R}(\lambda)   \omega_{12}^{R,+}(\lambda)  
\end{align*}
where
\begin{align*}
\frac{\alpha_2^{R}(\lambda)}{\mu^{+}_{13}(\lambda)} &= \frac{1}{ c k_1^{+}(\lambda) ( 1- c_1(\lambda)^2)} , \qquad 
\frac{ \alpha_3^{R}(\lambda) }{ \mu^{+}_{13}(\lambda) } = \frac{\mu^{+}_{34}(\lambda)}{\mu^{+}_{13}(\lambda) }\frac{1}{c k_2^{+}(\lambda) ( 1- c_2(\lambda)^2)} 
\end{align*}
Therefore, using Lemma \ref{lem:mu-behavior-small-xi} with Lemma \ref{omega_j-behavior-resonance}  for small $|\xi|$ and Lemma \ref{omega_j-behavior-large-xi-resonance} for large $|\xi|,$ we obtain the desired estimate for $\gamma_1^R,\gamma_2^R,\gamma_3^R.$ For $\gamma_4^R$, we use the same argument together with Lemma \ref{theta_j-behavior-resonance}, which provides the estimate for $\theta_2^R$.
\end{proof}

\section{Proof of the $L^2$ bounds for $e^{t\calL}$} \label{sec:L2bounds}
In this proof, we establish the $L^2$-boundedness of the operator $\mathcal{L}$ in both the resonance and non-resonance cases. Recall that, throughout the paper in the non-resonance case, for $\lambda= \xi \pm \frac{\sqrt{17}}{8} $ with $0 < |\xi| < \delta_0$, we choose $\varepsilon_0$ and $\varepsilon_{\infty}$ such that $r_{\infty} < r_0.$ Moreover, we fix $r_{\varepsilon} \in (r_{\infty},r_0)$ such that $r_{\varepsilon}\simeq \frac{\varepsilon}{\sqrt{|\xi|}},$ for some $\varepsilon>0.$ For $1 < \Lambda_0 < |\xi|$, we choose $\tvarepsilon_0$ and $\tvarepsilon_{\infty}$ so that 
$\widetilde{r}_{\infty} = \frac{\widetilde{\varepsilon}_{\infty}}{\sqrt{|\xi|}} < \widetilde{r}_0 = \frac{\widetilde{\varepsilon}_0}{\sqrt{|\xi|}}$ and we fix $\tr_\varepsilon \in (\tr_\infty, \tr_0)$ such that $\tr_\varepsilon \simeq \frac{\varepsilon}{\sqrt{|\xi|}},$ for some $\varepsilon>0.$ Note that, for simplicity we use the same symbol $\varepsilon$ in the small and large $\xi$-regimes, although they represent two different constants. In the resonance case, we similarly fix $R_0$ and $\widetilde{R}_0$ for small and large values of $\xi$. \\

In order to obtain $L^2$-estimates in small and large $\xi$-regimes, 
we define a smooth, even cut-off function $\chi_0(x)$ such that $\chi_0(x)=1$ for $|x| \leq \frac{9\tvarepsilon}{10}$ and $\chi_0(x)=0$ for $|x| \geq \tvarepsilon ,$ with $ \varepsilon_{\infty} <6 \tvarepsilon < \varepsilon,$ for $0<|\xi|<\delta_0$ and $ \tvarepsilon_{\infty} <6 \tvarepsilon < \varepsilon ,$ for $|\xi|>\Lambda_0>1,$ where $\varepsilon$ is as defined above. We denote by $\chi_1(x):= 1 - \chi_0(x).$ \\

Note that throughout this section we work with $k_j(\lambda) = k_j^{+}(\lambda) = \lim_{y \to 0^{+}} k_j(\lambda + i y)$ for $j=1,2$. In addition, we have $k_1(-\lambda) = -k_1(\lambda)$ and $k_2(-\lambda) = k_2(\lambda)$. For the precise limiting behavior of $k_j$, see Lemma~\ref{lem:k_j-behavior} and Remark~\ref{rem:behavior-k_j-lambda}. \\

\subsection{$L^2$-estimate of the distorted Fourier basis}
We start by establishing the $L^2$-estimate for the distorted Fourier basis. In particular, we decompose $\Psi_1^{+}(r,\lambda)$ into a leading order term and a remainder term in $L^2_r$, and we also provide the $L^2$-estimate for $\Psi_2^{+}(r,\lambda)$ for both large and small values of $\lambda$.

\begin{lemma} 
\label{L^2-bounds-of-Psi}
Let $\lambda= \xi + \frac{\sqrt{17}}{8}$, such that $0<\xi <\delta_0,$  then we have 
\begin{align*}
\left\|  \Psi_2^{+}(r,\lambda) \chi_1( \sqrt{\xi} \cdot)   \right\|_{L^2_r(0,\infty)}   \lesssim e^{- \frac{3}{\sqrt{2}} \frac{9 \tvarepsilon}{ 10 \sqrt{|\xi|}}}    
\end{align*}

and the following decomposition for  $\Psi_1(r,\lambda)$
 \begin{align*}
    \Psi_1^{+}(r,\lambda)&=    \Uppsi^{+}_{(\infty,1)} (r,\lambda) (1 + O( \frac{1}{\sqrt{|\xi|}}   e^{-2 r_{\infty}} )) +  \Upupsilon_1(r,\lambda,\Psi^{+}_{1} (r,\lambda))
\end{align*}
where,
\begin{align}
\left\|  \Upupsilon_1(\cdot,\lambda,\Psi^{+}_{1} (\cdot,\lambda))  \chi_1( \sqrt{\xi} \cdot) \right\|_{L^2_r(0,\infty)}  \lesssim \frac{1}{\sqrt{|\xi|}}   e^{- 2 \frac{9\tvarepsilon}{10\sqrt{|\xi|}}} .
\end{align}

\end{lemma}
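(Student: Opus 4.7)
The plan is to combine the fixed-point representations \eqref{eq:def-Psi+(r,z)-xi-small} and \eqref{eq:def-tilde-Psi+(r,z)-xi-small} with the pointwise decay estimates produced in the proof of Claim~\ref{claim:upsilon_1-2-xi-small-infty}. A key preliminary observation is that the support of $\chi_1(\sqrt{\xi}\,\cdot)$ lies in $[\tfrac{9\tvarepsilon}{10\sqrt{|\xi|}},\infty)$, and since $r_\infty\simeq \tfrac{1}{2}\log(\tfrac{1}{\varepsilon_\infty\sqrt{|\xi|}})$ grows only logarithmically in $1/\sqrt{|\xi|}$, we have $\tfrac{9\tvarepsilon}{10\sqrt{|\xi|}}\gg r_\infty$ for $0<\xi<\delta_0$. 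Hence the asymptotic pointwise bounds from Section~\ref{sec:nearinfty-small-xi} apply on the full support of the cutoff, where each $\Psi_j^+(r,\lambda)$ is close to its Weyl--Titchmarsh asymptotic.

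For $\Psi_2^+$, the fixed-point identity and Proposition~\ref{prop:contraction-T-j-small-xi-infty} yield $|\Psi_2^+(r,\lambda)|\lesssim e^{-\im k_2(\lambda)\,r}$ with $\im k_2(\lambda)\to \tfrac{3}{\sqrt{2}}$ as $\xi\to 0^+$ by Lemma~\ref{lem:k_j-behavior}. Thus
\begin{align*}
\|\Psi_2^+(\cdot,\lambda)\chi_1(\sqrt{\xi}\,\cdot)\|_{L^2_r}^2 \lesssim \int_{\frac{9\tvarepsilon}{10\sqrt{|\xi|}}}^\infty e^{-\frac{6}{\sqrt{2}}r}\,dr \lesssim e^{-\frac{6}{\sqrt{2}}\cdot\frac{9\tvarepsilon}{10\sqrt{|\xi|}}},
\end{align*}
and taking square roots yields the first estimate.

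For $\Psi_1^+$, the main step is the rank-one factorization
\begin{align*}
\mathcal{M}_1^+(\lambda)\,e^{ik_1(\lambda)r} \;=\; D_1^+(\lambda)\begin{pmatrix}1\\ c_1(\lambda)\end{pmatrix}(c_1(\lambda),1)\,e^{ik_1(\lambda)r} \;=\; D_1^+(\lambda)\,\Uppsi^{+}_{(\infty,1)}(r,\lambda)\,(c_1(\lambda),1),
\end{align*}
which shows that the first summand in \eqref{upsilon_1-of-psi_1} (the $e^{ik_1(\lambda) r}$ term) is a scalar multiple of $\Uppsi^{+}_{(\infty,1)}(r,\lambda)$, with scalar coefficient $iD_1^+(\lambda)\int_{r_\infty}^r e^{-ik_1(\lambda)s}(c_1(\lambda),1)V(s)\Psi_1^+(s,\lambda)\,ds$, of size $O\!\big(\tfrac{1}{\sqrt{|\xi|}}e^{-2r_\infty}\big)$ by the estimates in the proof of Claim~\ref{claim:upsilon_1-2-xi-small-infty}. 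Absorbing this scalar into the prefactor of $\Uppsi^{+}_{(\infty,1)}$ produces the stated decomposition, with $\Upupsilon_1$ defined as the sum of the three remaining summands in \eqref{upsilon_1-of-psi_1} (the $e^{-ik_1 r}$, $e^{ik_2 r}$, and $e^{-ik_2 r}$ contributions). Each of these admits a pointwise bound of the form $\lesssim \tfrac{1}{\sqrt{|\xi|}}e^{-2r}$ on the support of $\chi_1(\sqrt{\xi}\,\cdot)$, which follows from $|V(s)|\lesssim e^{-2s}$, the sizes $|\mathcal{M}_1^+|\simeq \tfrac{1}{\sqrt{|\xi|}}$ and $|\mathcal{M}_2^+|\simeq 1$ from Remark~\ref{K_j-C_j-D_j-behavior}, and the limits $\im k_1\to 0^+$, $\im k_2\to\tfrac{3}{\sqrt{2}}$. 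Integrating then gives
\begin{align*}
\|\Upupsilon_1(\cdot,\lambda,\Psi_1^+(\cdot,\lambda))\chi_1(\sqrt{\xi}\,\cdot)\|_{L^2_r}^2 \lesssim \frac{1}{|\xi|}\int_{\frac{9\tvarepsilon}{10\sqrt{|\xi|}}}^\infty e^{-4r}\,dr \lesssim \frac{1}{|\xi|}\,e^{-\frac{36\tvarepsilon}{10\sqrt{|\xi|}}},
\end{align*}
and the square root is the claimed bound.

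The main technical point is the rank-one identification, which allows the leading (non-$L^2$-decaying) $e^{ik_1 r}$ oscillation to be absorbed into a scalar correction of $\Uppsi^{+}_{(\infty,1)}$; the three remaining summands then enjoy genuine exponential integrability driven by the potential decay $|V(s)|\lesssim e^{-2s}$, and the $L^2$ weight provided by the cutoff simply converts this pointwise decay at the lower endpoint $r=\tfrac{9\tvarepsilon}{10\sqrt{|\xi|}}$ into the stated bound.
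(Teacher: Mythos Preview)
Your proof is correct and follows essentially the same approach as the paper: isolate the $e^{ik_1 r}$ summand of $\Upsilon_1$ and absorb it into the prefactor of $\Uppsi^+_{(\infty,1)}$, leaving the three remaining summands as $\Upupsilon_1$ with pointwise bound $\lesssim \tfrac{1}{\sqrt{|\xi|}}e^{-2r}$. Your explicit rank-one factorization $\mathcal{M}_1^+ = D_1^+\begin{pmatrix}1\\c_1\end{pmatrix}(c_1,1)$ is in fact a clarification of a step the paper leaves implicit---the paper simply bounds the first summand by $e^{ik_1 r}\cdot\tfrac{1}{\sqrt{|\xi|}}e^{-2r_\infty}$ and then writes the decomposition, without spelling out why the absorbed term is genuinely a scalar multiple of the vector $\Uppsi^+_{(\infty,1)}$ (as required by the $(1+O(\cdot))$ notation in the statement).
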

\begin{proof}
First, we prove the estimate for $\Psi_1.$ Recall that, \begin{align*}
    \Psi_1^{+}(r,\lambda)&=  \Uppsi^{+}_{(\infty,1)} (r,\lambda) +\Upsilon_1(r,\lambda,\Psi_1^{+} (r,\lambda)) \\
    &=   \Uppsi^{+}_{(\infty,1)} (r,\lambda)  + \int_0^{\infty} \mathcal{R}^{+}_1(r,s,\lambda) V(s) \Psi_{1}^{+}(s,\lambda) ds 
\end{align*}
where, $
    \Uppsi^{+}_{(\infty,1)} (r,\lambda) = \begin{pmatrix}
      e^{i k_1(\lambda)r} \\ c_1(\lambda)  e^{i k_1(\lambda)r}
  \end{pmatrix}  $ and by \eqref{upsilon_1-of-psi_1}, we have 
\begin{align*}
   \Upsilon_1(r,\lambda,\Psi^{+}_{1} (r,\lambda))&=i  \mathcal{M}^{+}_{1}(\lambda) e^{i k_1(\lambda) r } \int_{r_\infty}^r   e^{-i k_1(\lambda) s }    O(e^{-2s}) \Psi^{+}_{1}(s,\lambda) ds \\
&+     i  \mathcal{M}^{+}_{1}(\lambda) e^{-i k_1(\lambda) r }\int_{r}^{\infty}  e^{i k_1(\lambda) s }     O(e^{-2s}) \Psi^{+}_{1}(s,\lambda) ds \\
 & +  i  \mathcal{M}^{+}_{2} (\lambda) e^{i k_2(\lambda) r} \int_{r_\infty}^{r} e^{-i k_2(\lambda) s}   O(e^{-2s}) \Psi^{+}_{1}(s,\lambda) ds  \\
 &+  i  \mathcal{M}^{+}_{2}(\lambda)  e^{-i k_2(\lambda) r} \int_{r}^{\infty} e^{i k_2(\lambda) s}   O(e^{-2s}) \Psi^{+}_{1}(s,\lambda) ds 
\end{align*}
Recall that  $|\mathcal{M}^{+}_{1}(\lambda) |=  O(\frac{1}{\sqrt{|\xi|}})$ and $|\mathcal{M}^{+}_{2}(\lambda)|=O(1).$ Then the first integral satisfies 
\begin{align*}
 e^{i k_1(\lambda) r } \left| i  \mathcal{M}^{+}_{1}(\lambda)  \int_{r_\infty}^r   e^{-i k_1(\lambda) s }    O(e^{-2s}) \Psi^{+}_{1}(s,\lambda) ds  \right| &\lesssim  e^{i k_1(\lambda) r } \frac{1}{\sqrt{|\xi|}}  e^{-2 r_{\infty}} \sup_{r\geq r_{\infty}} | e^{ -  i k_1(\lambda) r }  \Psi^{+}_{1}(r,\lambda) |  \\
  & \lesssim e^{i k_1(\lambda) r }
  \frac{1}{\sqrt{|\xi|}}   e^{-2 r_{\infty}}
\end{align*}
Denote by $\Upupsilon_1(r,\lambda,\Psi^{+}_{1} (r,\lambda)) $ the remaining part of $\Upsilon_1(r,\lambda,\Psi^{+}_{1} (r,\lambda))$, i.e., 
\begin{align*}
  \Upupsilon_1(r,\lambda,\Psi^{+}_{1} (r,\lambda)) &:=  i  \mathcal{M}^{+}_{1}(\lambda) e^{-i k_1(\lambda) r }\int_{r}^{\infty}  e^{i k_1(\lambda) s }     O(e^{-2s}) \Psi^{+}_{1}(s,\lambda) ds \\
 & +  i  \mathcal{M}^{+}_{2} (\lambda) e^{i k_2(\lambda) r} \int_{r_\infty}^{r} e^{-i k_2(\lambda) s}   O(e^{-2s}) \Psi^{+}_{1}(s,\lambda) ds  \\
 &+  i  \mathcal{M}^{+}_{2}(\lambda)  e^{-i k_2(\lambda) r} \int_{r}^{\infty} e^{i k_2(\lambda) s}   O(e^{-2s}) \Psi^{+}_{1}(s,\lambda) ds 
\end{align*}
Thus, we have 
 \begin{align*}
    \Psi_1^{+}(r,\lambda)&=    \Uppsi^{+}_{(\infty,1)} (r,\lambda) (1 + O( \frac{1}{\sqrt{|\xi|}}   e^{-2 r_{\infty}} )) +  \Upupsilon_1(r,\lambda,\Psi^{+}_{1} (r,\lambda))
\end{align*}
where, 
\begin{align*}
 | \Upupsilon_1(r,\lambda,\Psi^{+}_{1} (r,\lambda))   |& \lesssim    \frac{1}{\sqrt{|\xi|}}    e^{-2 r} \sup_{r\geq r_{\infty}} | e^{ -  i k_1(\lambda) r }  \Psi^{+}_{1}(r,\lambda) |   +   \frac{1}{\sqrt{|\xi|}}   e^{-2 r} \sup_{r\geq r_{\infty}} | e^{ -  i k_1(\lambda) r }  \Psi^{+}_{1}(r,\lambda) |\\
 &+   \frac{1}{\sqrt{|\xi|}}   e^{-2 r} \sup_{r\geq r_{\infty}} | e^{ -  i k_1(\lambda) r }  \Psi^{+}_{1}(r,\lambda) | \\
 & \lesssim  \frac{1}{\sqrt{|\xi|}}     e^{-2 r}   ,
\end{align*}
which yields, 
\begin{align*}
\left\|  \Upupsilon_1(\cdot,\lambda,\Psi^{+}_{1} (\cdot,\lambda))  \chi_1( \sqrt{\xi} \cdot) \right\|_{L^2_r(0,\infty)} \lesssim \left\|  \Upupsilon_1(\cdot,\lambda,\Psi^{+}_{1} (\cdot,\lambda))  \right\|_{L^2_r(\frac{9\tvarepsilon}{10 \sqrt{|\xi|}},\infty)} \lesssim \frac{1}{\sqrt{|\xi|}}   e^{- 2 \frac{9\tvarepsilon}{10\sqrt{|\xi|}}}
\end{align*}

Next, we estimate $\Psi_2^{+}(\cdot,\lambda).$ Recall that 
 \begin{align*}
 \Psi_2^{+}(r,\lambda)&=  \Uppsi^{+}_{(\infty,2)} (r,\lambda) +\Upsilon_2(r,\lambda,\Psi_2^{+} (r,\lambda)) \\
    &=   \Uppsi^{+}_{(\infty,2)} (r,\lambda)  + \int_0^{\infty} \mathcal{R}^{+}_1(r,s,\lambda) V(s) \Psi_{2}^{+}(s,\lambda) ds 
\end{align*}
where $
    \Uppsi^{+}_{(\infty,2)} (r,\lambda) = \begin{pmatrix}
      e^{i k_2(\lambda)r} \\ c_2(\lambda)  e^{i k_2(\lambda)r}
  \end{pmatrix}  .$ Using Remark \ref{rem:behavior-k_j-lambda} and \eqref{upsilon_2-of-psi_2}, we have 
\begin{align*}
| \Upsilon_2(r,\lambda,\Psi^{+}_2(r,\lambda))  |  
&\lesssim  \frac{1}{\sqrt{|\xi|}} |e^{i k_2(\lambda) r }|   e^{-2 r}, 
\end{align*}
Then 
\begin{align*}
\left\|  \Psi_2(r,\lambda) \chi_1( \sqrt{\xi} \cdot)   \right\|_{L^2_r(0,\infty)} \lesssim \left\|  \Psi_2(r,\lambda)    \right\|_{L^2_r(\frac{9\tvarepsilon}{10\sqrt{|\xi|}},\infty)}   \lesssim         e^{- \frac{3}{\sqrt{2}} \frac{9\tvarepsilon}{10 \sqrt{|\xi|}}}  .
\end{align*} 
\end{proof}
\begin{lemma} 
\label{L^2-bounds-of-Psi-large-lambda}
Let $\lambda=\xi + \frac{\sqrt{17}}{8}$ such that $ \xi> \Lambda_0>1,$ then we have 
\begin{align*}
\left\|  \Psi_2^{+}(r,\lambda) \chi_1( \sqrt{\xi} \cdot)   \right\|_{L^2_r(0,\infty)}   \lesssim_{\tvarepsilon}  \frac{1}{\sqrt{|\xi|}}      
\end{align*}
and the following decomposition for  $\Psi_1(r,\lambda)$
 \begin{align*}
    \Psi_1^{+}(r,\lambda)&=    \Uppsi^{+}_{(E,1)} (r,\lambda) (1+O(\frac{1}{\sqrt{|\xi|}}) )+  \Upupsilon_1(r,\lambda,\Psi^{+}_{1} (r,\lambda)),
\end{align*}
where,
\begin{align*}
\left\|  \Upupsilon_1(\cdot,\lambda,\Psi^{+}_{1} (\cdot,\lambda))  \chi_1( \sqrt{\xi} \cdot) \right\|_{L^2_r(0,\infty)}  \lesssim_{\tvarepsilon} \frac{1}{\sqrt{|\xi|} }.
\end{align*}

\end{lemma}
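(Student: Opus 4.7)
The plan is to mirror the proof of Lemma 9.1, now using the large-$\xi$ construction from Section 6. The essential inputs are: $k_1(\lambda)\sim \sqrt{2|\xi|}$ (real) and $k_2(\lambda)\sim i\sqrt{2|\xi|}$ for $\lambda=\xi+\tfrac{\sqrt{17}}{8}$ with $\xi>\Lambda_0$; the Hankel function asymptotics of Lemma 6.1, giving $|h_+(z)|\lesssim |e^{iz}|$ for $|z|\ge 1$ together with $|h_+(z)|\lesssim |z|^{-1/2}$ near $z=0$; the bounds $|\mathcal{M}_j^+(\lambda)|=O(|\xi|^{-1/2})$ from Remark 6.3; the decay $|V_E(r)|\lesssim \langle r\rangle^{-2}$; and the contraction bound $|e^{-ik_j(\lambda)r}\Psi_1^+(r,\lambda)|\lesssim 1$ from Proposition 6.4.

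For $\Psi_2^+$ I would decompose $\Psi_2^+=\Uppsi_{(E,2)}^+ + \Upsilon_{E,2}(\cdot,\lambda,\Psi_2^+)$. The leading term $\Uppsi_{(E,2)}^+(r,\lambda)=h_+(k_2(\lambda)r)(1,c_2(\lambda))^t$ decays like $e^{-\sqrt{2|\xi|}r}$ for $|k_2(\lambda)r|\ge 1$, with the matching small-argument bound $|h_+(z)|\lesssim |z|^{-1/2}$; the change of variables $u=\sqrt{2|\xi|}\,r$ in the resulting $L^2$ integral over $r\ge \tfrac{9\tvarepsilon}{10\sqrt{|\xi|}}$ produces the required bound up to an $\tvarepsilon$-dependent constant. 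For the correction $\Upsilon_{E,2}$, the pointwise bound \eqref{upsilon_2-of-psi_2-large-xi} combined with $\int_r^\infty \langle s\rangle^{-2}\,ds\lesssim \langle r\rangle^{-1}$ is integrable in $L^2$ on the truncated interval.

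For $\Psi_1^+$, the key observation is that $\mathcal{M}_1^+(\lambda)$ has rank one with range spanned by $(1,c_1(\lambda))^t$; concretely $\mathcal{M}_1^+(\lambda)w=D_1^+(\lambda)\bigl(c_1(\lambda)w_1+w_2\bigr)(1,c_1(\lambda))^t$ for any vector $w$. Applied to the first summand on the right-hand side of \eqref{eq:upsilon_1-of-psi_1-large-xi-h_+}, this gives $\Uppsi_{(E,1)}^+(r,\lambda)\cdot \beta(r,\lambda)$, where
\begin{align*}
\beta(r,\lambda) = iD_1^+(\lambda)\int_{\tr_\infty}^r h_-(k_1(\lambda)s)\,V_E(s)\bigl(c_1(\lambda)\Psi^+_{11}(s,\lambda)+\Psi^+_{12}(s,\lambda)\bigr)\,ds.
\end{align*}
Since $k_1$ is real, $|h_-(k_1(\lambda) s)|\lesssim 1$ for $|k_1 s|\ge 1$; combined with $|D_1^+(\lambda)|\lesssim |\xi|^{-1/2}$, the contraction bound on $\Psi_1^+$, and $\int_{\tr_\infty}^r\langle s\rangle^{-2}\,ds\lesssim 1$ uniformly in $r$, this yields $|\beta(r,\lambda)|\lesssim |\xi|^{-1/2}$ uniformly, producing the multiplicative factor $1+O(|\xi|^{-1/2})$ in front of $\Uppsi_{(E,1)}^+$. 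The remaining three terms in \eqref{eq:upsilon_1-of-psi_1-large-xi-h_+} constitute $\Upupsilon_1$, whose pointwise bounds follow by direct application of Lemma 6.1 and whose $L^2$ norm on $r\ge \tfrac{9\tvarepsilon}{10\sqrt{|\xi|}}$ is obtained by integration.

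The main obstacle is the term of $\Upupsilon_1$ containing $h_-(k_2(\lambda)r)\sim e^{\sqrt{2|\xi|}r}$, whose apparent exponential growth must be absorbed by the inner integral $\int_r^\infty h_+(k_2(\lambda)s)V_E(s)\Psi_1^+(s,\lambda)\,ds$ with integrand behaving like $e^{-\sqrt{2|\xi|}s}\langle s\rangle^{-2}$. The elementary bound $\int_r^\infty e^{-\sqrt{2|\xi|}(s-r)}\langle s\rangle^{-2}\,ds\lesssim |\xi|^{-1/2}\langle r\rangle^{-2}$ converts the product $h_-(k_2 r)\cdot(\text{inner integral})$ into a quantity of size $|\xi|^{-1/2}\langle r\rangle^{-2}$, and accounting for the prefactor $|\mathcal{M}_2^+(\lambda)|\lesssim |\xi|^{-1/2}$ produces a pointwise estimate that is $L^2$-integrable on $[\tfrac{9\tvarepsilon}{10\sqrt{|\xi|}},\infty)$ with the desired $|\xi|^{-1/2}$ decay, completing the proof.
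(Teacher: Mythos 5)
Your proof takes essentially the same route as the paper: decompose $\Psi_j^{+}=\Uppsi^{+}_{(E,j)}+\Upsilon_{E,j}$, control the leading Hankel-function term directly using Lemma~\ref{lem:h-properties} and $\Im(k_2)\sim\sqrt{2|\xi|}$, and control the Volterra correction by the contraction estimates from Proposition~\ref{prop:contraction-T-j-large-xi-infty}. One place where you are in fact more precise than the paper: the observation that $\mathcal{M}_1^{+}(\lambda)$ is rank one with range spanned by $(1,c_1(\lambda))^t$, so that the first term of \eqref{eq:upsilon_1-of-psi_1-large-xi-h_+} factors as $\Uppsi^{+}_{(E,1)}(r,\lambda)\beta(r,\lambda)$, is exactly what is needed to justify writing the decomposition as $\Uppsi^{+}_{(E,1)}(r,\lambda)(1+O(|\xi|^{-1/2}))+\Upupsilon_1$; the paper simply asserts this after estimating the term without making the rank-one structure explicit. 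Two small points: (i) your identification of the term with $h_-(k_2(\lambda)r)$ as the ``main obstacle'' of $\Upupsilon_1$ is a red herring — the exponential growth there is always cancelled by the $e^{ik_2(\lambda)s}$ decay in the inner integral and that contribution is in fact $O(|\xi|^{-1}\langle r\rangle^{-2})$; the term that actually saturates the paper's pointwise bound $\frac{1}{\sqrt{|\xi|}}\langle r\rangle^{-1}$ is the one with $h_-(k_1(\lambda)r)$, since $k_1(\lambda)$ is real so this factor is only bounded, not decaying, and the inner integral yields the $\langle r\rangle^{-1}$ tail. (ii) For $\Upsilon_{E,2}$ your pointwise bound should carry the $|e^{ik_2(\lambda)r}|\sim e^{-\sqrt{2|\xi|}r}$ prefactor visible in \eqref{upsilon_2-of-psi_2-large-xi}; using only $\langle r\rangle^{-1}$ is wasteful, though still sufficient for $L^2$ on the truncated interval. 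Neither point affects the conclusion.
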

\begin{proof}
First, we prove the estimate for $\Psi_1.$ Recall that, \begin{align*}
    \Psi_1^{+}(r,\lambda)&=  \Uppsi^{+}_{(E,1)} (r,\lambda) +\Upsilon_{E,1}(r,\lambda,\Psi_1^{+} (r,\lambda)) \\
    &=   \Uppsi^{+}_{(E,1)} (r,\lambda)  + \int_0^{\infty} \mathcal{R}^{+}_{E,1}(r,s,\lambda) V_E(s) \Psi_{1}^{+}(s,\lambda) ds ,
\end{align*}
where, $V_{E}(r) =O( \langle r \rangle^{-2} )$ and 
$\Uppsi^{+}_{(E,1)} (r,\lambda) = \begin{pmatrix}
      h_{+}(k_1(\lambda)r) \\ c_1(\lambda)  h_{+}(k_1(\lambda)r)
  \end{pmatrix}  $ and by \eqref{eq:upsilon_1-of-psi_1-large-xi-h_+}, we have 
  \begin{align*}
\Upsilon_{E,1}(r,\lambda,\Psi_1^{+} (r,\lambda))    & =  i  \mathcal{M}^{+}_{1}(\lambda) h_{+}( k_1(\lambda) r ) \int_{\tr_\infty}^r  h_{-}( k_1(\lambda) s )    V_E(s) \Psi^{+}_{1}(s,\lambda) ds   \\
&+  i  \mathcal{M}^{+}_{2} (\lambda) h_{+}( k_2 (\lambda) r )\int_{\tr_\infty}^{r}  h_{-}( k_2(\lambda) s )  V_E(s) \Psi^{+}_{1}(s,\lambda) ds   \\
&+     i  \mathcal{M}^{+}_{1}(\lambda)  h_{-}( k_1(\lambda) r ) \int_{r}^{\infty}   h_{+}( k_1(\lambda) s )    V_E(s) \Psi^{+}_{1}(s,\lambda) ds \\
 & +  i  \mathcal{M}^{+}_{2}(\lambda)   h_{-}( k_2(\lambda) r ) \int_{r}^{\infty}  h_{+}( k_2(\lambda) s )  V_E(s) \Psi^{+}_{1}(s,\lambda) ds,  
  \end{align*}
Note that the first integral satisfies 
\begin{align*}
 \left| i  \mathcal{M}^{+}_{1}(\lambda)\int_{\tr_\infty}^r  h_{-}( k_1(\lambda) s )    V_E(s) \Psi^{+}_{1}(s,\lambda) ds  \right|   & \lesssim_{\tvarepsilon} 
\frac{1}{\sqrt{|\xi|}} \langle \tr_{\infty} \rangle^{-2} \sup_{\tr_{\infty} \geq r } |e^{-i k_1(\lambda) r} \Psi_1^{+}(r,\lambda)|\\
    & \lesssim_{\tvarepsilon}  \frac{1}{\sqrt{|\xi|}}
\end{align*}
and denote by $\Upupsilon_{E,1}(r,\lambda,\Psi^{+}(r,\lambda)$ the remaining part of $\Upsilon_{E,1}(r,\lambda,\Psi^{+}_{1} (r,\lambda)).$ Therefore, by \eqref{upsilon_1-of-psi_1-large-xi}, we have
\begin{align*}
  \Upupsilon_{E,1}(r,\lambda,\Psi^{+}(r,\lambda) & \lesssim_{\tvarepsilon} 
  \frac{1}{\sqrt{|\xi|}} |e^{i k_2(\lambda) r }|  \int_{\tr_\infty}^{r}  |e^{-i k_2(\lambda) s }|    \langle s \rangle^{-2} |  \Psi^{+}_{1}(s,\lambda)  | ds   \\
&+  \frac{1}{\sqrt{|\xi|}} |e^{-i k_1(\lambda) r }| \int_{r}^{\infty}    |e^{i k_1(\lambda) s }|    \langle s \rangle^{-2} | \Psi^{+}_{1}(s,\lambda) |  ds \\
 & +   \frac{1}{\sqrt{|\xi|}} |e^{-i k_2(\lambda) r }|  \int_{r}^{\infty}  |e^{i k_2(\lambda) s }|  \langle s \rangle^{-2} | \Psi^{+}_{1}(s,\lambda) | ds.  
\end{align*}
which yields, $
  |  \Upupsilon_{E,1}(r,\lambda,\Psi^{+}_{1} (r,\lambda)) |  \lesssim_{\tvarepsilon} \frac{1}{\sqrt{|\xi|}} \langle r \rangle^{-1} .$ Hence, we have
\begin{align*}
\left\|   \Upupsilon_{E,1}(\cdot,\lambda,\Psi^{+}_{1} (\cdot,\lambda))  \chi_1( \sqrt{\xi} \cdot) \right\|_{L^2_r(0,\infty)} \lesssim \left\|  \Upupsilon_{E,1}(\cdot,\lambda,\Psi^{+}_{1} (\cdot,\lambda))  \right\|_{L^2_r(\frac{9\tvarepsilon}{10 \sqrt{\xi}},\infty)} \lesssim_{\tvarepsilon} \frac{1}{\sqrt{|\xi|}}  .
\end{align*}
Next, we estimate $\Psi_2^{+}(\cdot,\lambda).$ Recall that 
 \begin{align*}
 \Psi_2^{+}(r,\lambda)&=  \Uppsi^{+}_{(E,2)} (r,\lambda) +\Upsilon_{E,2}(r,\lambda,\Psi_2^{+} (r,\lambda)) \\
    &=   \Uppsi^{+}_{(E,2)} (r,\lambda)  + \int_0^{\infty} \mathcal{R}^{+}_{E,2}(r,s,\lambda) V(s) \Psi_{2}^{+}(s,\lambda) ds 
\end{align*}
where, $
    \Uppsi^{+}_{(E,2)} (r,\lambda) = \begin{pmatrix}
      h_{+}(k_2(\lambda)r) \\ c_2(\lambda)  h_{+}(k_2(\lambda)r)
  \end{pmatrix}  $. Therefore by \eqref{upsilon_2-of-psi_2-large-xi}, we have 
\begin{align*}
 \Upsilon_{E,2}(r,\lambda,\Psi^{+}_{2} (r,\lambda))     & \lesssim_{\tvarepsilon}  \frac{1}{\sqrt{|\xi|}}  |e^{i k_1(\lambda) r} | \int_{r}^{\infty} |e^{i (k_2(\lambda)-k_1(\lambda)) s } |  \langle s \rangle^{-2} ds    \sup_{ r \geq r_{\infty}  } | e^{ -  i k_2(\lambda) r }  \Psi^{+}_{2}(r,\lambda) |  \\ 
& + \frac{1}{\sqrt{|\xi|}}  | e^{i k_2(\lambda) r } | \int_{r}^{\infty}    \langle s \rangle^{-2} ds \sup_{r\geq r_{\infty}} | e^{ -  i k_2(\lambda) r }  \Psi^{+}_{2}(r,\lambda) |  \\ 
& + \frac{1}{\sqrt{|\xi|}}   | e^{-i k_1(\lambda) r } |  \int_{r}^{\infty} |   e^{i (k_2(\lambda)+k_1(\lambda) ) s }|  \langle s \rangle^{-2} ds  \sup_{r\geq r_{\infty}} | e^{ -  i k_2(\lambda) r }  \Psi^{+}_{2}(r,\lambda) |  \\ 
& +  \frac{1}{\sqrt{|\xi|}}  |e^{-ik_2(\lambda) r}|  \int_r^{\infty} |e^{2ik_2(\lambda) s}|   \langle s \rangle^{-2} ds \sup_{r\geq r_{\infty}} | e^{ -  i k_2(\lambda) r }  \Psi^{+}_{2}(r,\lambda) |.
\end{align*} 
Hence, $
  |  \Upsilon_{E,2}(r,\lambda,\Psi^{+}_{2} (r,\lambda)) | \lesssim_{\tvarepsilon} \frac{1}{\sqrt{|\xi|}} e^{-\sqrt{2}\sqrt{|\xi|} r}  .$ Then 
\begin{align*}
\left\|  \Psi_2^{+}(r,\lambda) \chi_1( \sqrt{\xi} \cdot)   \right\|_{L^2_r(0,\infty)} \lesssim \left\|  \Psi_2(r,\lambda)    \right\|_{L^2_r(\frac{9\tvarepsilon}{10\sqrt{|\xi|}},\infty)}   \lesssim_{\tvarepsilon}  \frac{1}{\sqrt{|
\xi|}}      .
\end{align*} 
\end{proof}

\begin{lemma}
\label{L^2-L^2-boud-Psi-1}
    Let $\lambda=\xi + \frac{\sqrt{17}}{8}$ such that $\xi \in (0,\delta_0) ,$ and  $\Phi \in L^2_r $ then we have 
\begin{align}
\label{eq:L2boundsPsi1-samll-lambda}
\left( \int_0^{\delta_0} \frac{1}{\sqrt{|\xi|}} \left| \int_0^{\infty} \Uppsi^{+}_{(\infty,1)} (r,\pm \lambda) \chi_1( \sqrt{\xi} r ) \Phi(r) dr \right|^2 d \xi \right)^{\frac{1}{2}} \lesssim \left\| \Phi \right\|_{L^2_r}.
\end{align}
It follows that, 
\begin{align} \label{eq:L2boundsPsi1-full-samll-lambda}
  \left( \int_0^{\delta_0}  \frac{1}{\sqrt{|\xi|}}  \left| \int_0^{\infty} \Psi_1^{+}(r, \pm\lambda) \chi_1( \sqrt{\xi} r ) \Phi(r) dr \right|^2 d \xi \right)^{\frac{1}{2}} \lesssim \left\| \Phi \right\|_{L^2_r}  
\end{align}
\end{lemma}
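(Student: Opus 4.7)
The plan is to split the argument into two stages. First, I would establish \eqref{eq:L2boundsPsi1-samll-lambda} for the explicit asymptotic profile $\Uppsi^{+}_{(\infty,1)}$, and then deduce \eqref{eq:L2boundsPsi1-full-samll-lambda} by using the decomposition
\[
\Psi_1^{+}(r,\lambda)=\Uppsi^{+}_{(\infty,1)}(r,\lambda)\bigl(1+O(\tfrac{1}{\sqrt{\xi}}e^{-2r_\infty})\bigr)+\Upupsilon_1(r,\lambda,\Psi_1^{+}(r,\lambda))
\]
provided by Lemma~\ref{L^2-bounds-of-Psi}.

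For the bound on the asymptotic profile, observe that $\Uppsi^{+}_{(\infty,1)}(r,\pm\lambda)=(e^{\pm ik_1(\lambda)r},\pm c_1(\lambda)e^{\pm ik_1(\lambda)r})^{t}$ with $|c_1(\lambda)|\lesssim 1$ for $|\xi|\le\delta_0$, so \eqref{eq:L2boundsPsi1-samll-lambda} reduces to a scalar inequality with phase $e^{\pm ik_1(\lambda)r}$ and weight $\chi_1(\sqrt{\xi}\,r)$. By Lemma~\ref{lem:k_j-behavior} and Remark~\ref{rem:behavior-k_j-lambda}, $\eta=k_1^+(\lambda)=c_*\sqrt{\xi}(1+O(\xi))$ is a smooth diffeomorphism of $(0,\delta_0)$ onto an interval $(0,\eta_0)$ with $d\xi/\sqrt{\xi}\sim c\,d\eta$. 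The claim therefore becomes the $L^2$-boundedness of
\[
T\Phi(\eta)=\int_0^\infty e^{i\eta r}\chi_1(\sqrt{\xi(\eta)}\,r)\Phi(r)\,dr
\]
from $L^2_r(0,\infty)$ into $L^2_\eta(0,\eta_0)$. I would then split $\chi_1=1-\chi_0$: extending $\Phi$ by zero to $\mathbb{R}$, the ``$1$'' piece equals the restriction of $\widehat{\Phi}_e$ to $(0,\eta_0)$ and is controlled by $\|\Phi\|_{L^2}$ via Plancherel. For the $\chi_0$-piece, I would use the smoothness and compact support of $\chi_0$ together with a $TT^*$ argument, analysing the kernel
\[
K(\eta,\eta')=\int_0^\infty e^{i(\eta-\eta')r}\chi_1(\sqrt{\xi(\eta)}\,r)\chi_1(\sqrt{\xi(\eta')}\,r)\,dr
\]
by expanding $\chi_1=1-\chi_0$ and isolating the distributional singularity of $\int_0^\infty e^{i(\eta-\eta')r}dr=\pi\delta(\eta-\eta')+\mathrm{p.v.}\,i/(\eta-\eta')$, which corresponds to the identity and a Hilbert-transform-type operator on $(0,\eta_0)$, both of which are $L^2$-bounded. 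The remaining smooth off-diagonal pieces are handled by Schur's test.

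To pass from \eqref{eq:L2boundsPsi1-samll-lambda} to \eqref{eq:L2boundsPsi1-full-samll-lambda}, the main-term contribution is handled directly since $1+O(\tfrac{1}{\sqrt{\xi}}e^{-2r_\infty})=1+O(\varepsilon_\infty)$ is uniformly bounded. For the remainder, Cauchy--Schwarz in $r$ combined with the bound $\|\Upupsilon_1\chi_1(\sqrt{\xi}\,\cdot)\|_{L^2_r}^{2}\lesssim\xi^{-1}e^{-\frac{18\tilde{\varepsilon}}{5\sqrt{\xi}}}$ from Lemma~\ref{L^2-bounds-of-Psi} yields a contribution bounded by $\|\Phi\|_{L^2}^{2}\int_0^{\delta_0}\xi^{-3/2}e^{-\frac{18\tilde{\varepsilon}}{5\sqrt{\xi}}}d\xi$, which is finite after the substitution $t=\xi^{-1/2}$. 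The main obstacle will be the $\chi_0$-piece in the proof of \eqref{eq:L2boundsPsi1-samll-lambda}: the naive Jacobian estimate arising from the change of variables $\zeta=\eta-\tau\sqrt{\xi(\eta)}$ after Fourier expansion of $\chi_0$ degenerates at $\tau=c_*$, so one must either exploit the refined expansion $\eta-c_*\sqrt{\xi(\eta)}=O(\eta^2)$ to regularise the degeneracy, or (as in the $TT^*$ approach) carefully track the cancellation between the distributional singularity of $\int_0^\infty e^{ikr}dr$ and the $|\eta-\eta'|^{-1}$ contribution from the half-line Fourier transform of $\chi_0(\sqrt{\xi(\eta)}\,\cdot)$ arising from the boundary value $\chi_0(0)=1$.
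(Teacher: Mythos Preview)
Your reduction of \eqref{eq:L2boundsPsi1-full-samll-lambda} to \eqref{eq:L2boundsPsi1-samll-lambda} via the decomposition in Lemma~\ref{L^2-bounds-of-Psi} is exactly what the paper does and is correct.

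For \eqref{eq:L2boundsPsi1-samll-lambda} itself, the paper proceeds quite differently. After the change of variable $\tau=\sqrt{\xi}$ (equivalent to your $\eta=k_1^+$), it does \emph{not} split $\chi_1=1-\chi_0$. Instead it performs a dyadic decomposition $\chi_1(\tau r)=\sum_{j\ge 1}\eta(2^{-j}\tau r)$ with $\eta=\chi_0(\cdot/2)-\chi_0$, and applies the Cotlar--Stein lemma to $T=\sum_j T_j$. Because each $\eta(2^{-j}\tau r)$ is supported in an annulus $\tau r\simeq 2^j$ bounded away from $r=0$, two integrations by parts give clean kernel bounds $|\mathcal H_{jk}(\tau,\mu)|\lesssim \frac{2^j/\tau}{(1+\frac{2^j}{\tau}|\tau-\mu|)^2}$ (using $k_1(\lambda(\tau^2))-k_1(\lambda(\mu^2))\simeq\tau-\mu$), and Schur's test yields $\|T_jT_k^*\|\lesssim 2^{-|j-k|}$. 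No boundary terms or distributional singularities arise.

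Your route has a gap precisely at the step you flag as the ``main obstacle.'' The claim that ``the remaining smooth off-diagonal pieces are handled by Schur's test'' is not justified: after removing the $1\cdot 1$ part, the cross-term kernels such as $\int_0^\infty e^{i(\eta-\eta')r}\chi_0(\sqrt{\xi(\eta)}\,r)\,dr$ have only $1/|\eta-\eta'|$ decay at best (the half-line Fourier transform of $\chi_0$ is not Schwartz because $\chi_0(0)=1$), and combined with the support bound $\lesssim 1/\max(\eta,\eta')$ one still gets $\int|K(\eta,\eta')|\,d\eta'\sim\log(\eta_0/\eta)$, so unweighted Schur fails. You correctly identify the needed cancellation among the boundary terms, but you do not actually carry it out; tracking it would essentially reconstruct the fact that $\chi_1(0)=0$ kills the boundary contribution after a single integration by parts on the \emph{full} kernel, and one then needs further decay to make Schur converge. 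The paper's dyadic approach sidesteps all of this: by never integrating through $r=0$ in any piece, every integration by parts is boundary-free, and the almost-orthogonality is immediate.
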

\begin{proof}
   We only prove the case $\Uppsi^{+}_{(\infty,1)} (r,+\lambda),$ and the other case can be obtained using a similar argument. Using change of variable $\tau=\sqrt{\xi},$ we have 
   \begin{align*}
      \int_0^{\delta_0} \frac{1}{\sqrt{|\xi|}} \left| \int_0^{\infty} \Uppsi^{+}_{(\infty,1)} (r, \lambda) \chi_1( \sqrt{\xi} r ) \Phi(r) dr \right|^2 d \xi  = \int_0^{\sqrt{\delta_0}}  \left| \int_0^{\infty} \Uppsi^{+}_{(\infty,1)} (r, \lambda(\tau^2)) \chi_1( \tau r ) \Phi(r) dr \right|^2 d\tau.
   \end{align*}
Define the operator 
   \begin{align*}
       (T v ) (\tau):=  \int_0^{\infty}   e^{i k_1(\lambda(\tau^2)) r}  \chi_1( \tau r ) v(r) dr , \quad 0< \tau < \sqrt{\delta_0}.
   \end{align*}
For the estimate \eqref{eq:L2boundsPsi1-samll-lambda} stated in the lemma we need to show that the operator $T$ is bounded from $L^2_r([0,\infty))$ into $L^2_\tau([0,\sqrt{\delta_0}))$. To this end, we will apply the Cotlar-Stein lemma. Let $\eta(x)=\chi_0(\frac{x}{2})-\chi_0(x),$ we decompose the operator $T=\sum_{j=1}^{\infty} T_j$ dyadically,
\begin{align*}
   (T_j v) (\tau):= \int_0^{\infty}  e^{i k_1(\lambda(\tau^2)) r}  \chi_1( \tau r ) \eta( 2^{-j} \tau r ) v(r) dr,
\end{align*}
and 
\begin{align*}
   (T_k^{\ast} u) (r):= \int_0^{\sqrt{\delta_0}}  e^{-i k_1(\lambda(\mu^2)) r}  \chi_1( \mu r ) \eta( 2^{-j} \mu r ) u(\mu) d\mu.
\end{align*}
Next, we prove  the off-diagonal decay of the operator norms $\|T_j T_k^\ast\|_{L^2_\tau \to L^2_\tau}$ and $\|T_j^\ast T_k\|_{L^2_r \to L^2_r}$, for $1 \leq j,k \leq \infty$. The integral kernels of these operators are given by
 \begin{align*}
            \bigl( T_j T_k^\ast u \bigr)(\tau) &= \int_0^{\sqrt{\delta_0}} \mathcal{H}_{jk}(\tau, \mu) u(\mu) d \mu, \\ 
            \bigl( T_j^\ast T_k v \bigr)(r) &= \int_0^\infty \mathcal{G}_{jk}(r,s) v(s) d s,
\end{align*}
    with 
 \begin{align*}
            \mathcal{H}_{jk}(\tau,\mu) &:= \int_0^\infty e^{ik_1(\lambda(\tau^2)) -k_1(\lambda(\mu^2))r}  
           \eta (2^{-j} \tau r )\eta (2^{-k} \mu r ) dr, \\
           \mathcal{G}_{jk}(r,s) &:= \int_0^{\sqrt{\delta_0}} e^{-ik_1(\lambda(\tau^2))(r-s)}  \eta (2^{-j} \tau r ) \eta (2^{-k} \tau s ) d \tau.
\end{align*}
Next, we derive operator norm estimates for $T_j T_k^{\ast}$ with $1 \leq j,k < \infty.$ The support of the cutoff functions implies that the kernel $\mathcal{H}_{jk}(\tau,\mu)$ vanishes unless
\begin{align*}
            2^{-j} \tau r \simeq 1 \simeq 2^{-k} \mu r \quad \Rightarrow \quad \frac{\tau}{2^j} \simeq \frac{\mu}{2^k}.   
\end{align*}
The following kernel estimates follow from two integrations by parts, or from a straightforward estimate of the integral, 
\begin{align*}
            \bigl|\mathcal{H}_{j,k}(\lambda,\mu)\bigr| \lesssim \frac{\frac{2^j}{\tau}}{\bigl( 1 + \frac{2^j}{\tau}\bigl| k_1(\lambda^2)-k_1(\mu^2) \bigr|\bigr)^2} \simeq \frac{\frac{2^k}{\mu}}{\bigl( 1 + \frac{2^k}{\mu}\bigl| k_1(\lambda^2)-k_1(\mu^2) \bigr|\bigr)^2}.   
\end{align*}
Using the fact that $k_1(\lambda(\tau^2))-k_1(\lambda(\mu^2) )\simeq \tau - \mu$ for $0 \leq \tau,\mu \leq \sqrt{\delta_0}$, together with Schur's test we obtain $\|T_j T_k^\ast\|_{L^2_\tau \to L^2_\tau} \lesssim 2^{-|j-k|}$. Similarly, one obtain the off-diagonal decay estimates $\|T_j^\ast T_k\|_{L^2_r \to L^2_r} \lesssim 2^{-|j-k|}$. Applying the Cotlar-Stein lemma yields the $L^2$ bound \eqref{eq:L2boundsPsi1-samll-lambda}, which, together with the decomposition of $\Psi^{+}_1$ in Lemma~\ref{L^2-bounds-of-Psi}, implies \eqref{eq:L2boundsPsi1-full-samll-lambda}.
\end{proof}

\begin{lemma}
\label{L^2-L^2-boud-Psi-1-large-lambda}
    Let $\lambda=\xi + \frac{\sqrt{17}}{8}$ such that $\xi \in (\Lambda_0,\infty) ,$ and  $\Phi \in L^2_r $ then we have 
\begin{align}
\left( \int_{\Lambda_0}^{\infty} \frac{1}{\sqrt{|\xi|}} \left| \int_0^{\infty} \Uppsi^{+}_{(E,1)} (r,\pm \lambda) \chi_1( \sqrt{\xi} r ) \Phi(r) dr \right|^2 d \xi \right)^{\frac{1}{2}} \lesssim \left\| \Phi \right\|_{L^2_r}
\end{align}
It follows that, 
\begin{align}
  \left( \int_{\Lambda_0}^{\infty}  \frac{1}{\sqrt{|\xi|}}  \left| \int_0^{\infty} \Psi_1^{+}(r, \pm\lambda) \chi_1( \sqrt{\xi} r ) \Phi(r) dr \right|^2 d \xi \right)^{\frac{1}{2}} \lesssim \left\| \Phi \right\|_{L^2_r}  
\end{align}
\end{lemma}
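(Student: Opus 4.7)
The proof follows the same strategy as Lemma \ref{L^2-L^2-boud-Psi-1}, adapted to the large $\xi$ regime where the Weyl--Titchmarsh solution involves the Bessel function $h_+$ rather than a pure exponential. The plan is to reduce to an $L^2$--operator bound via the change of variables $\tau=\sqrt{\xi}$, then apply the Cotlar--Stein lemma to a dyadic decomposition of the operator, using the Jost asymptotics from Lemma \ref{lem:h-properties} to control the oscillatory integrals.

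First, setting $\tau=\sqrt{\xi}$ and $\lambda(\tau^2)=\tau^2+\frac{\sqrt{17}}{8}$, the estimate to be proved reduces to the $L^2\to L^2$ boundedness of the operator
\begin{align*}
    (Tv)(\tau):=\int_0^\infty h_+\bigl(k_1(\lambda(\tau^2))r\bigr)\chi_1(\tau r)v(r)\,dr,\qquad \tau>\sqrt{\Lambda_0},
\end{align*}
from $L^2_r([0,\infty))$ to $L^2_\tau((\sqrt{\Lambda_0},\infty))$. The component with the coefficient $c_1(\lambda)=O(1)$ is handled identically. With $\eta(x)=\chi_0(x/2)-\chi_0(x)$ I decompose $T=\sum_{j\ge 1}T_j$ where
\begin{align*}
    (T_j v)(\tau):=\int_0^\infty h_+\bigl(k_1(\lambda(\tau^2))r\bigr)\chi_1(\tau r)\eta(2^{-j}\tau r)v(r)\,dr.
\end{align*}
The cutoffs localize to the region $r\simeq 2^j/\tau$, and since $\tau>\sqrt{\Lambda_0}\gg 1$ one has $k_1(\lambda(\tau^2))r\simeq 2^j\gtrsim 1$, so the asymptotics \eqref{eq:H0rec} of $h_+$ apply on the support.

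The key step is to establish the almost-orthogonality bounds $\|T_jT_k^*\|\lesssim 2^{-|j-k|}$ and $\|T_j^*T_k\|\lesssim 2^{-|j-k|}$. Writing $h_+(z)=e^{iz}g(z)$ with $|g|\lesssim 1$ and $|z^2 g'(z)|\lesssim 1$ from \eqref{eq:H0rec}, the kernel of $T_jT_k^*$ equals
\begin{align*}
    \mathcal{H}_{jk}(\tau,\mu)=\int_0^\infty e^{i(k_1(\lambda(\tau^2))-k_1(\lambda(\mu^2)))r}g\bigl(k_1(\lambda(\tau^2))r\bigr)\overline{g\bigl(k_1(\lambda(\mu^2))r\bigr)}\chi_1(\tau r)\chi_1(\mu r)\eta(2^{-j}\tau r)\eta(2^{-k}\mu r)\,dr,
\end{align*}
supported where $\tau/2^j\simeq\mu/2^k$. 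Two integrations by parts in $r$, combined with the amplitude bounds for $g$ and its derivative, give
\begin{align*}
    |\mathcal{H}_{jk}(\tau,\mu)|\lesssim \frac{2^j/\tau}{\bigl(1+(2^j/\tau)|k_1(\lambda(\tau^2))-k_1(\lambda(\mu^2))|\bigr)^2}.
\end{align*}
Since $k_1(\lambda(\tau^2))=\sqrt{2}\,\tau(1+O(\tau^{-2}))$ for large $\tau$, one has $k_1(\lambda(\tau^2))-k_1(\lambda(\mu^2))\simeq \tau-\mu$ uniformly for $\tau,\mu>\sqrt{\Lambda_0}$. The Schur test together with the support constraint $\tau/2^j\simeq\mu/2^k$ then yields $\|T_jT_k^*\|_{L^2_\tau\to L^2_\tau}\lesssim 2^{-|j-k|}$, and the analogous argument (integrating by parts in $\tau$ after writing the $\tau$-phase explicitly and noting $\partial_\tau k_1(\lambda(\tau^2))\simeq 1$) yields $\|T_j^*T_k\|_{L^2_r\to L^2_r}\lesssim 2^{-|j-k|}$.

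The main obstacle compared with Lemma \ref{L^2-L^2-boud-Psi-1} is precisely this: one must verify that the Jost function amplitude $g(z)$ behaves well enough under differentiation for the integration by parts to produce the same decay as in the pure-exponential case, and that the unbounded $\tau$-range does not destroy the Schur estimate. Both points are resolved by the uniform bounds in $\Omega_+$ supplied by Lemma \ref{lem:h-properties} and by the fact that $\partial_\tau k_1(\lambda(\tau^2))$ and $k_1(\lambda(\tau^2))-k_1(\lambda(\mu^2))$ have clean behavior in $\tau$ for $\tau\gtrsim 1$. The Cotlar--Stein lemma now gives $\|T\|_{L^2_r\to L^2_\tau}\lesssim 1$, which is the claimed bound for $\Uppsi^+_{(E,1)}$. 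Finally, combining this with the decomposition
\begin{align*}
    \Psi_1^+(r,\lambda)=\Uppsi^+_{(E,1)}(r,\lambda)\bigl(1+O(|\xi|^{-1/2})\bigr)+\Upupsilon_1(r,\lambda,\Psi_1^+(r,\lambda))
\end{align*}
from Lemma \ref{L^2-bounds-of-Psi-large-lambda} and the $L^2_r$-bound $\|\Upupsilon_1(\cdot,\lambda,\Psi_1^+(\cdot,\lambda))\chi_1(\sqrt{\xi}\,\cdot)\|_{L^2_r}\lesssim_{\tilde\varepsilon} |\xi|^{-1/2}$ established there (applied via Cauchy--Schwarz in $r$ and Minkowski against the weight $|\xi|^{-1/4}$), produces the stated bound for $\Psi_1^+$. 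The case of $-\lambda$ is identical using Remark \ref{rem:behavior-k_j-lambda}.
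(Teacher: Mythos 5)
Your proposal is correct and follows exactly the route the paper intends: the paper omits this proof with the remark that it ``closely follows'' Lemma~\ref{L^2-L^2-boud-Psi-1}, and you have supplied precisely that Cotlar--Stein argument adapted to the Jost amplitude $h_+=e^{iz}g(z)$ via the bounds of Lemma~\ref{lem:h-properties}, together with the correct reduction of the $\Psi_1^+$ bound to the $\Uppsi^+_{(E,1)}$ bound through the decomposition of Lemma~\ref{L^2-bounds-of-Psi-large-lambda}. The only imprecision is the claim that $k_1(\lambda(\tau^2))r\simeq 2^j\gtrsim 1$ on the support of the $j$-th dyadic piece: the localization is actually $\tau r\simeq 2^j\tvarepsilon$, so for the first $O(\log(1/\tvarepsilon))$ values of $j$ the argument of $h_+$ may lie in $[\tvarepsilon,1]$ rather than $[1,\infty)$, but these finitely many pieces are handled by the boundedness of $h_+$ away from the origin with $\tvarepsilon$-dependent constants, consistent with the $\lesssim_{\tvarepsilon}$ convention used throughout this section.
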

\begin{proof}
The proof closely follows that of Lemma~\ref{L^2-L^2-boud-Psi-1} and is therefore omitted.
\end{proof}
\subsection{$L^2$-bound of the operator $\mathcal{L}$}

Recall that by Proposition \ref{jump-resol} and Lemma \ref{Resol-interms-phi}, for any compactly supported functions  $\Phi,\Psi \in L^2_r(0,\infty)$  such that $\Phi=\begin{pmatrix}
    \phi_1 \\
    \phi_2
\end{pmatrix}$ and $\Psi=\begin{pmatrix}
    \psi_1 \\
    \psi_2
\end{pmatrix} , $
we have 
    \begin{align*}
      <  e^{t \mathcal{L}} \Phi, \Psi> &= \frac{1}{2 \pi i } \int_{I} e^{it \lambda}\frac{\kappa(\lambda)}{d^{+}(\lambda) d^{-}(\lambda)}  \left< \int_0^{\infty}  \Theta(\cdot,\lambda) \Theta(s,\lambda)^{t} \sigma_1 \Phi(s)ds, \Psi(\cdot) \right>_{L^2_r} d \lambda , \\
      &= \frac{1}{2 \pi i } \int_{I} e^{it \lambda}\frac{\kappa(\lambda)}{d^{+}(\lambda) d^{-}(\lambda)}  \left<  \Theta(\cdot,\lambda)  ,\sigma_1\Phi(\cdot) \right>_{L^2_r} \left< \Theta(\cdot,\lambda) , \Psi(\cdot) \right>_{L^2_r} d \lambda
\end{align*}
where $ \frac{\kappa(\lambda)}{d^{+}(\lambda) d^{-}(\lambda)}$  is an odd function and $\Theta_1(r,\lambda)$ is an odd function in $\lambda$ and $\Theta_2(r,\lambda)$ is an even function. Therefore, we obtain 
\begin{align*}
     <  e^{t \mathcal{L}} \Phi, \Psi> =  \frac{1}{  \pi i  } \int_{\frac{\sqrt{17}}{8}}^{\infty}  \cos(t \lambda) \frac{\kappa(\lambda)}{d^{+}(\lambda) d^{-}(\lambda)} \bigg(& <\Theta_1(\cdot, \lambda), \phi_2(\cdot) ><\Theta_2(\cdot, \lambda), \psi_2(\cdot) >  \\
     &+ <\Theta_2(\cdot, \lambda), \phi_1(\cdot) > <\Theta_1(\cdot, \lambda), \psi_1(\cdot) >\bigg)d \lambda  \\
    +  \frac{1}{  \pi   } \int_{\frac{\sqrt{17}}{8}}^{\infty}  \sin(t \lambda) \frac{\kappa(\lambda)}{d^{+}(\lambda) d^{-}(\lambda)} \bigg(&  <\Theta_1(\cdot, \lambda), \phi_2(\cdot) > <\Theta_1(\cdot, \lambda), \psi_1(\cdot) > \\
  & + <\Theta_2(\cdot, \lambda), \phi_1(\cdot) ><\Theta_2(\cdot, \lambda), \psi_2(\cdot) > \bigg) d \lambda 
\end{align*}

We write $\lambda = \xi + \tfrac{\sqrt{17}}{8}$ with $\xi \in (0,\infty)$, and regard $\lambda$ as a function of $\xi$ and vice versa. Therefore, we divide the above integrals into integration region $r \lesssim \frac{\varepsilon}{\sqrt{\xi}}$ and $r \gtrsim  \frac{\varepsilon}{\sqrt{\xi}}.$ For small $r,$ and depending on the value of $\lambda,$ we use the representation in Lemma \ref{Resol-interms-phi} with \eqref{def-theta}. For large values of $r,$ we use the representation \eqref{theta_in_terms-Psi} from Lemma \ref{Resol-interms-psi}. 

We denote by:
\begin{align*}
    \mathcal{I}^1_j(\lambda)&:= <\Theta_1(\cdot, \lambda) \chi_j( \sqrt{\xi} \cdot), \phi_2(\cdot) >, \quad 
\mathcal{J}^1_k(\lambda):=  <\Theta_2(\cdot, \lambda) \chi_k( \sqrt{\xi} \cdot), \psi_2(\cdot) >, \\
 \mathcal{I}^2_j(\lambda)&:=<\Theta_2(\cdot, \lambda) \chi_j( \sqrt{\xi} \cdot) , \phi_1(\cdot) > ,\quad 
\mathcal{J}^2_k(\lambda):=  <\Theta_1(\cdot, \lambda)\chi_k( \sqrt{\xi} \cdot), \psi_1(\cdot) >.
\end{align*}

Then we have 
\begin{align*}
     <  e^{t \mathcal{L}} \Phi, \Psi> &=  \sum_{0 \leq j,k \leq 1} \frac{1}{  \pi i  } \int_{0}^{\infty}  \cos(t \lambda) \frac{\kappa(\lambda)}{d^{+}(\lambda) d^{-}(\lambda)}  \big( \mathcal{I}^1_j(\lambda)   \mathcal{J}^1_k(\lambda)  +  \mathcal{I}^2_j(\lambda) \mathcal{J}^2_k(\lambda) \big)d \xi   \\
  &  +  \sum_{0 \leq j,k \leq 1}  \frac{1}{  \pi   } \int_{0}^{\infty}  \sin(t \lambda) \frac{\kappa(\lambda)}{d^{+}(\lambda) d^{-}(\lambda)}  \big( \mathcal{I}^1_j(\lambda)  \mathcal{J}^2_k(\lambda) 
   + \mathcal{I}^2_j(\lambda)    \mathcal{J}^1_k(\lambda)  \big) d\xi
\end{align*}
Next we estimate the contribution of $\mathcal{I}_j^l(\lambda)\mathcal{J}_k^l(\lambda),$ for the non-resonance and resonance case, in the Stone-type formula above, for $0 \leq j , k \leq 1$ and $l=1,2.$ 
\subsubsection{Proof of the non-resonance case.} We divide the proof into two steps for small and large $\xi.$ \\

\textbf{Step 1: Small $\xi \in (0,\delta_0).$}\\
\textbf{Case 1: } contribution of $\mathcal{I}_0(\lambda)\mathcal{J}_0(\lambda).$
\begin{claim} We have 
\label{case1-contribution-I_0J_0}
\begin{align*}
   & \int_{0}^{\delta_0}   \left|  \frac{\kappa(\lambda)}{d^{+}(\lambda) d^{-}(\lambda)} \mathcal{I}_0^1(\lambda) \mathcal{J}_0^k(\lambda)  \right| d \xi \lesssim   \left\| \phi_2 \right\|_{L^2_r}  \left\| \psi_2 \right\|_{L^2_r}, \quad \text{ for } \;  k=1,2. \\
   & \int_{0}^{ \delta_0}    \left|   \frac{\kappa(\lambda)}{d^{+}(\lambda) d^{-}(\lambda)} \mathcal{I}_0^2(\lambda) \mathcal{J}_0^k(\lambda) \right| d \xi \lesssim   \left\| \phi_2 \right\|_{L^2_r}  \left\| \psi_2 \right\|_{L^2_r} , \quad \text{ for } \;  k=1,2.
\end{align*}

\end{claim}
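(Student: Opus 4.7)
The starting point is the representation $\Theta(\cdot,\lambda) = \omega_{22}^+(\lambda)\upvarphi_1(\cdot,\lambda) - \omega_{21}^+(\lambda)\upvarphi_2(\cdot,\lambda)$ from Lemma~\ref{Resol-interms-phi}. I would first dispose of the $\omega_{22}^+\upvarphi_1$ contribution, which is exponentially small: Lemma~\ref{lemma:omega_j-behavior} gives $|\omega_{22}^+(\lambda)|\lesssim e^{-\frac{3}{\sqrt2}r_\varepsilon}$, while Proposition~\ref{Prop:contraction-T_j(F_j)-xi-small} combined with the support condition $\mathrm{supp}\,\chi_0(\sqrt{\xi}\,\cdot)\subseteq[0,\tilde\varepsilon/\sqrt{\xi}]$ and the choice $\tilde\varepsilon<\varepsilon$ yield $\|\upvarphi_1\chi_0(\sqrt{\xi}\,\cdot)\|_{L^2_r}\lesssim e^{\frac{3}{\sqrt2}\tilde\varepsilon/\sqrt{\xi}}$. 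The product is bounded by $e^{-c/\sqrt{\xi}}$, so this piece, multiplied by $|\kappa/d^+d^-|\simeq\sqrt{\xi}$, integrates harmlessly over $(0,\delta_0)$.

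The main task is therefore to control the contribution of $\omega_{21}^+\upvarphi_2\chi_0(\sqrt{\xi}\,\cdot)$, with $|\omega_{21}^+|\simeq 1$. Note that a pointwise Cauchy--Schwarz in $r$ fails here: the linear growth $|\upvarphi_2(r)|\lesssim r$ produced by Proposition~\ref{Prop:contraction-T_j(F_j)-xi-small} gives only $\|\upvarphi_2\chi_0(\sqrt{\xi}\,\cdot)\|_{L^2_r}\lesssim\xi^{-3/4}$, and the resulting integrand behaves like $\xi^{-1}$ near $\xi=0$. The strategy is instead to pair $\mathcal{I}_0^1$ and $\mathcal{J}_0^k$ via Cauchy--Schwarz in $\xi$, reducing the claim to the Plancherel-type estimate
\[
\int_0^{\delta_0}\sqrt{\xi}\,|\langle \Theta_j(\cdot,\lambda)\,\chi_0(\sqrt{\xi}\,\cdot),\phi\rangle|^2\,d\xi \;\lesssim\; \|\phi\|_{L^2_r}^2, \qquad j=1,2.
\]

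To prove this, I would decompose $\chi_0(\sqrt{\xi}r) = \chi^{\mathrm{in}}(r)+\chi^{\mathrm{out}}(r)$, with $\chi^{\mathrm{in}}$ supported in $r\leq r_\infty\sim\log(1/\sqrt{\xi})$ and $\chi^{\mathrm{out}}$ in $r\in[r_\infty,\tilde\varepsilon/\sqrt{\xi}]$. On the inner piece, $\|\upvarphi_2\chi^{\mathrm{in}}\|_{L^2_r}\lesssim r_\infty^{3/2}\lesssim (\log(1/\sqrt{\xi}))^{3/2}$, which is only logarithmic in $\xi$, so the resulting integrand $\sqrt{\xi}(\log(1/\sqrt{\xi}))^3$ is integrable. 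On the outer piece, I would substitute the alternative representation of $\Theta$ from Lemma~\ref{Resol-interms-psi}, writing $\Theta\chi^{\mathrm{out}}$ as a combination of $\Psi_1^+(\cdot,\pm\lambda)\chi^{\mathrm{out}}$ and $\Psi_2^+(\cdot,\lambda)\chi^{\mathrm{out}}$ with coefficients $\gamma_1,\dots,\gamma_4$; the Cotlar--Stein argument in the proof of Lemma~\ref{L^2-L^2-boud-Psi-1} adapts to the cutoff $\chi^{\mathrm{out}}$ and provides the $L^2_\xi$-bound of the needed weighted form for the $\Psi_j^+$ pairings, which combined with $|\gamma_2|,|\gamma_3|\lesssim\xi^{-1/2}$ gives the claim.

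The delicate step, and the main obstacle, is the $\gamma_4\Psi_2^+$ piece. Its individual pointwise bound is weak --- $|\gamma_4|\lesssim e^{\frac{6}{5\sqrt 2}r_\varepsilon}$ can grow exponentially in $1/\sqrt{\xi}$ --- yet $\Psi_2^+$ decays exponentially on the outer region (rate $3/\sqrt 2$ inherited from $\mathrm{Im}\,k_2$). Both of these sizes are individually too crude; the bound must come from the cancellation that reconciles the two representations of $\Theta$, since the $\upvarphi_2$-side gives the much better pointwise bound $|\omega_{21}^+\upvarphi_2|\lesssim r$. Making this cancellation quantitative in the Plancherel estimate --- equivalently, refining $\gamma_4$ in the range $r\leq\tilde\varepsilon/\sqrt{\xi}$ so that it is effectively absorbed by the decay of $\Psi_2^+$ --- is where the technical weight of the argument lies.
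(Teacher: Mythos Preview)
Your reduction to the Plancherel-type estimate
\[
\int_0^{\delta_0}\sqrt{\xi}\,\bigl|\langle \upvarphi_{2j}(\cdot,\lambda)\,\chi_0(\sqrt{\xi}\,\cdot),\phi\rangle\bigr|^2\,d\xi \;\lesssim\; \|\phi\|_{L^2_r}^2
\]
via Cauchy--Schwarz in $\xi$ is exactly the paper's route, and your handling of the $\omega_{22}^+\upvarphi_1$ piece is fine. The gap is in how you propose to prove this estimate: the inner/outer splitting and the switch to the $\Psi_j^+$ representation on the outer region are unnecessary and, as you yourself note, introduce an artificial obstacle with the $\gamma_4\Psi_2^+$ term that cannot be resolved by the crude bounds available.

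The paper instead proves the Plancherel estimate \emph{directly} on the $\upvarphi_2$ representation by a $TT^\ast$/Schur test, with no change of basis. After the substitution $\zeta=\sqrt{\xi}$, the operator is $Tv(\zeta)=\int_0^\infty \zeta\,\upvarphi_{2j}(r,\lambda(\zeta^2))\chi_0(\zeta r)v(r)\,dr$, and the kernel of $TT^\ast$ is
\[
K(\zeta,\sigma)=\int_0^\infty \zeta\sigma\,\upvarphi_{2j}(r,\lambda(\zeta^2))\upvarphi_{2j}(r,\lambda(\sigma^2))\,\chi_0(\zeta r)\chi_0(\sigma r)\,dr.
\]
Using only the pointwise bounds $|\upvarphi_2(r,\lambda)|\lesssim r^{3/2}$ for $r\leq 1$ and $|\upvarphi_2(r,\lambda)|\lesssim r$ for $1\leq r\leq \tilde\varepsilon/\sqrt{\xi}$, one checks $\int_0^{\sqrt{\delta_0}}|K(\zeta,\sigma)|\,d\sigma\lesssim 1$: the crucial point is that the support condition $\chi_0(\sigma r)$ forces $\sigma\lesssim r^{-1}$, so $\int_0^{c/r}\sigma\,d\sigma\sim r^{-2}$ exactly compensates the $r^2$ growth from $|\upvarphi_{2j}|^2$, leaving $\int_1^{c/\zeta}\zeta\,dr\lesssim 1$. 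Schur's test then closes the argument. The linear growth that you flagged as the difficulty is thus precisely the right size for this kernel bound; there is no need to invoke the $\Psi_j^+$ basis or any cancellation between representations.
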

\begin{proof}
We restrict our attention to proving the estimate for $\mathcal{I}_0^1(\lambda)\,\mathcal{J}_0^1(\lambda)$, since the remaining cases follow by analogous arguments.  We denote by  $\upvarphi_1(r,\lambda)=\begin{pmatrix}
    \upvarphi_{11}(r,\lambda) \\
    \upvarphi_{12}(r,\lambda)
\end{pmatrix}$
and $\upvarphi_2(r,\lambda)=\begin{pmatrix}
    \upvarphi_{21}(r,\lambda) \\
    \upvarphi_{22}(r,\lambda)
\end{pmatrix}. $ Recall that by \eqref{def-theta}, we have $
        \Theta(r,\lambda) := \omega_{22}^{+}(\lambda) \upvarphi_1(r,\lambda) - \omega_{21}^{+}(\lambda) \upvarphi_2(r,\lambda).$ Hence, we have
    \begin{align*}
    & \int_{0}^{\delta_0}    \frac{\kappa(\lambda)}{d^{+}(\lambda) d^{-}(\lambda)} \mathcal{I}_0^1(\lambda) \mathcal{J}_0^1(\lambda) d\xi\\
&= \int_{0}^{\delta_0}   \frac{\kappa(\lambda)}{d^{+}(\lambda) d^{-}(\lambda)}  \bigg( < (\omega_{22}^{+} (\lambda)\upvarphi_{11}(\cdot,\lambda) -w_{21}^{+}(\lambda) \upvarphi_{21}(\cdot,\lambda) )  \chi_0( \sqrt{\xi} \cdot), \phi_2(\cdot) >  \bigg) \\
& \qquad \qquad \qquad \qquad  \quad  \times   \bigg( <( \omega_{22}^{+}(\lambda) \upvarphi_{12}(\cdot,\lambda)  - w_{21}^{+}(\lambda) \upvarphi_{22}(\cdot,\lambda)) \chi_0( \sqrt{\xi} \cdot), \psi_2(\cdot) > \bigg)  d\xi \\
 &= \int_{0}^{\delta_0}   \frac{\kappa(\lambda)}{d^{+}(\lambda) d^{-}(\lambda)} < \omega_{22}^{+} (\lambda)\upvarphi_{11}(\cdot,\lambda)   \chi_0( \sqrt{\xi} \cdot), \phi_2(\cdot) > < \omega_{22}^{+}(\lambda) \upvarphi_{12}(\cdot,\lambda)   \chi_0( \sqrt{\xi} \cdot), \psi_2(\cdot) >  d\xi  \\
& + \int_{0}^{\delta_0}   \frac{\kappa(\lambda)}{d^{+}(\lambda) d^{-}(\lambda)} < \omega_{22}^{+} (\lambda)\upvarphi_{11}(\cdot,\lambda)   \chi_0( \sqrt{\xi} \cdot), \phi_2(\cdot) > <   w_{21}^{+}(\lambda) \upvarphi_{22}(\cdot,\lambda)  \chi_0( \sqrt{\xi} \cdot), \psi_2(\cdot) >   d\xi \\
& + \int_{0}^{\delta_0}   \frac{\kappa(\lambda)}{d^{+}(\lambda) d^{-}(\lambda)} <   w_{21}^{+}(\lambda) \upvarphi_{21}(\cdot,\lambda)  \chi_0( \sqrt{\xi} \cdot), \phi_2(\cdot) >  < \omega_{22}^{+}(\lambda) \upvarphi_{12}(\cdot,\lambda)   \chi_0( \sqrt{\xi} \cdot), \psi_2(\cdot) >  d\xi \\
&+  \int_{0}^{\delta_0}   \frac{\kappa(\lambda)}{d^{+}(\lambda) d^{-}(\lambda)}  <   w_{21}^{+}(\lambda) \upvarphi_{21}(\cdot,\lambda)  \chi_0( \sqrt{\xi} \cdot), \phi_2(\cdot) >  <   w_{21}^{+}(\lambda) \upvarphi_{22}(\cdot,\lambda)  \chi_0( \sqrt{\xi} \cdot), \psi_2(\cdot) >   d\xi \\
&:=\mathrm{I}+ \mathrm{II}+ \mathrm{III} + \mathrm{IV}.
\end{align*}
By Lemma \ref{lemma:omega_j-behavior}, \ref{lem:parity-of-omega-j} and the fact that $|\kappa(\lambda)|\simeq \sqrt{|\xi|}$  we have 
\begin{align*}
 \left| \frac{\kappa(\lambda) (\omega_{22}^{+}(\lambda))^2 }{d^{+}(\lambda) d^{-}(\lambda)}  \right|  \lesssim \sqrt{ |\xi|}  e^{-\frac{6}{\sqrt{2}} r_{\varepsilon}} , \quad
 \left| \frac{\kappa(\lambda)\omega_{22}^{+}(\lambda) \omega_{21}^{+}(\lambda) }{d^{+}(\lambda) d^{-}(\lambda)}  \right|   \lesssim \sqrt{|\xi|}e^{-\frac{3}{\sqrt{2}} r_{\varepsilon}}, \quad  \left| \frac{\kappa(\lambda)( \omega_{21}^{+}(\lambda))^2 }{d^{+}(\lambda) d^{-}(\lambda)}  \right|   \lesssim \sqrt{|\xi|}
\end{align*} 
Recall that $\upvarphi_1(r,\lambda)$ and $\upvarphi_1(r,\lambda)$ satisfies, 
\begin{align*}
    | \upvarphi_1(r,\lambda) | &\lesssim r^{\frac{3}{2}}, \qquad  \quad   | \upvarphi_2(r,\lambda) | \lesssim r^{\frac{3}{2}} \quad \text{ for small } r , \\ 
   | \upvarphi_1(r,\lambda) | &\lesssim e^{\frac{3}{\sqrt{2}}r}, \qquad   | \upvarphi_2(r,\lambda) | \lesssim r  \quad \text{ for large  } r \leq r_{\varepsilon} . 
\end{align*}

We first estimate the most delicate term which is the last one $\mathrm{IV}. $ We have 
\begin{align*}
| \mathrm{IV}|   &\lesssim  \int_{0}^{\delta_0}  \sqrt{\xi}  <  \upvarphi_{21}(\cdot,\lambda)  \chi_0( \sqrt{\xi} \cdot),  \phi_2(\cdot) >  <   \upvarphi_{22}(\cdot,\lambda)  \chi_0( \sqrt{\xi} \cdot), \psi_2(\cdot) > d\xi\\
&\lesssim  \int_{0}^{\delta_0}  \sqrt{\xi}  \left\|\upvarphi_{21}(\cdot,\lambda)  \chi_0( \sqrt{\xi} \cdot) \right\|_{L^2_r} 
\left\|\upvarphi_{22}(\cdot,\lambda)  \chi_0( \sqrt{\xi} \cdot) \right\|_{L^2_r}  d\xi \left\| \phi_2  \right\|_{L^2_r} 
\left\| \psi_2 \right\|_{L^2_r} \\
& \lesssim  \left\| \xi^{\frac{1}{4}}   <  \upvarphi_{21}(\cdot,\lambda)  \chi_0( \sqrt{\xi} \cdot),  \phi_2(\cdot) >    \right\|_{L^2_\xi(0,\delta_0)} 
\left\| \xi^{\frac{1}{4}}    <   \upvarphi_{22}(\cdot,\lambda)  \chi_0( \sqrt{\xi} \cdot), \psi_2(\cdot) >   \right\|_{L^2_\xi(0,\delta_0)}  
\end{align*}

Next, we estimate the first term separately $\left\| \xi^{\frac{1}{4}}   <  \upvarphi_{21}(\cdot,\lambda)  \chi_0( \sqrt{\xi} \cdot),  \phi_2(\cdot) >    \right\|_{L^2_\xi(0,\delta_0)} .$ Using a change of variable $\zeta=\sqrt{\xi}$, we obtain 
\begin{align*}
    \int_0^{\delta_0}  \sqrt{\xi} \left( \int_0^{\infty} \upvarphi_{21}(r,\lambda)  \chi_0( \sqrt{\xi} r)  \phi_2(r) dr \right)^2 d\xi = 2 \int_0^{\sqrt{\delta_0}}  \zeta^2 \left( \int_0^{\infty} \upvarphi_{21}(r,\lambda(\zeta^2))  \chi_0( \zeta r)  \phi_2(r) dr \right)^2 d\zeta 
\end{align*}
We denote by 
\begin{align*}
T v(\zeta)  :=  \int_0^{\infty}  \zeta  \upvarphi_{21}(r,\lambda(\zeta^2))  \chi_0( \zeta r) v(r) dr   , \quad 
T^{\ast} w (r):= \int_0^{\sqrt{\delta_0}}  \sigma   \upvarphi_{21}(r,\lambda(\sigma^2))   \chi_0( \sigma r)  w(\sigma) d\sigma.
\end{align*}
Therefore
\begin{align*}
TT^{\ast} w( \zeta  )&= \int_0^{\infty}  \zeta   \upvarphi_{21}(r,\lambda(\zeta^2))  \chi_0( \zeta r)  \int_0^{\sqrt{\delta_0}}  \sigma   \upvarphi_{21}(r,\lambda(\sigma^2))   \chi_0( \sigma r)  w(\sigma) d\sigma  dr     \\
&= \int_0^{\sqrt{\delta_0}} \int_0^{\infty}  \zeta \sigma  \upvarphi_{21}(r,\lambda(\zeta^2)) \upvarphi_{21}(r,\lambda(\sigma^2))    \chi_0( \zeta r)  \chi_0( \sigma r)  dr w(\sigma) d\sigma \\
&= \int_0^{\sqrt{\delta_0}}  K(\zeta,\sigma) w(\sigma) d\sigma .
\end{align*}
where, 
 \begin{align*}
     K(\zeta,\sigma)= \int_0^{\infty}  \zeta \sigma  \upvarphi_{21}(r,\lambda(\zeta^2)) \upvarphi_{21}(r,\lambda(\sigma^2))    \chi_0( \zeta r)  \chi_0( \sigma r)  dr
 \end{align*} 
Moreover, we have 
\begin{align*}
    \int_0^{\sqrt{\delta_0}} |K(\zeta,\sigma) | d\sigma &\lesssim  \int_0^{\sqrt{\delta_0}} 
    \int_0^1  \zeta \sigma  r^{3} \chi_0( \zeta r)  \chi_0( \sigma r)    dr d\sigma  + \int_0^{\sqrt{\delta_0}}    \int_1^\infty    \zeta \sigma  r^2 \chi_0( \zeta r)  \chi_0( \sigma r)      dr d\sigma \\
    &\lesssim 1+ \int_1^{\frac{1}{\zeta}}  \zeta r^2 \int_0^{\frac{1}{r}} \sigma d\sigma dr
    \lesssim 1.
\end{align*}
By symmetry, one can obtain  $\int_0^{\sqrt{\delta_0}} |K(\zeta,\sigma) | d\zeta \lesssim 1.$ Therefore, $\left\|K(\zeta,\sigma)\right\|_{L^1(d\sigma)}$  and $\left\|K(\zeta,\sigma)\right\|_{L^1(d\zeta)}$ are bounded uniformly, then by Schur's test we obtain $\left\| TT^{\ast} \phi_2 \right\|_{L^2_r} \lesssim \left\|\phi_2 \right\|_{L^2_r} . $ Thus, by duality we obtain 
\begin{align}
\label{L^2-boundsPhi21chi0}
  \left\| \xi^{\frac{1}{4}}   <  \upvarphi_{21}(\cdot,\lambda)  \chi_0( \sqrt{\xi} \cdot),  \phi_2(\cdot) >    \right\|_{L^2_\xi(0,\delta_0)} \lesssim   \left\|\phi_2 \right\|_{L^2_r}
\end{align}
Similarly, one can check that 
\begin{align*}
    \left\| \xi^{\frac{1}{4}}    <   \upvarphi_{22}(\cdot,\lambda)  \chi_0( \sqrt{\xi} \cdot), \psi_2(\cdot) >   \right\|_{L^2_\xi(0,\delta_0)}  \lesssim \left\|\psi_2 \right\|_{L^2_r}  
\end{align*}
which yields, 
\begin{align*}
    | \mathrm{IV}|   & \lesssim \left\|\phi_2 \right\|_{L^2_r}   \left\|\psi_2 \right\|_{L^2_r}  .
\end{align*}
Next, we estimate $\mathrm{I},$ and the other two terms $\mathrm{II} $ and $\mathrm{III}$ can be estimated analogously.  Using the Cauchy-Schwarz inequality and the asymptotics of $\upvarphi_1(r,\lambda)$ and $\upvarphi_1(r,\lambda)$ we obtain, 

\begin{align*}
&  \left| \int_{0}^{\delta_0}    \frac{\kappa(\lambda) (\omega_{22}^{+}(\lambda))^2}{d^{+}(\lambda) d^{-}(\lambda)}    <  \upvarphi_{11}(\cdot,\lambda)   \chi_0( \sqrt{\xi} \cdot), \phi_2(\cdot) > <  \upvarphi_{12}(\cdot,\lambda)   \chi_0( \sqrt{\xi} \cdot), \psi_2(\cdot) > d\xi
\right| \\& \lesssim \int_{0}^{\delta_0}  \sqrt{\xi} e^{-\frac{3}{\sqrt{2}} r_{\varepsilon}} \left\| \upvarphi_{11}(\cdot,\lambda) \chi_0( \sqrt{\xi} \cdot)  \right\|_{L^2_r} \left\| \phi_2 \right\|_{L^2_r} \left\| \upvarphi_{12}(\cdot,\lambda) \chi_0( \sqrt{\xi} \cdot)  \right\|_{L^2_r} \left\| \psi_2 \right\|_{L^2_r} d\xi\\
& \lesssim \int_{0}^{\delta_0}  \sqrt{\xi}  e^{-\frac{3}{\sqrt{2}} r_{\varepsilon}} \int_0^{\frac{\tvarepsilon}{ \sqrt{\xi}}} e^{2 \frac{3}{\sqrt{2}}r} dr d\xi \left\| \phi_2 \right\|_{L^2_r} \left\| \psi_2 \right\|_{L^2_r}
\\
& \lesssim \left\| \phi_2 \right\|_{L^2_r}  \left\| \psi_2 \right\|_{L^2_r}
\end{align*}
where we have used the asymptotics of $\upvarphi_1$ and the fact that $r_\varepsilon=\frac{\varepsilon}{\sqrt{\xi}},$ with $6 \tvarepsilon < \varepsilon.$  
Similarly, one can estimate all the other terms for $\mathcal{I}_0^2(\lambda) \mathcal{J}_0^2(\lambda).$ 
\end{proof}

\textbf{Case 2: } Contribution of $\mathcal{I}_1(\lambda)\mathcal{J}_1(\lambda).$
 \begin{claim} We have
 \label{Claim-step1-case2}
\begin{align*}
  & \int_{0}^{\delta_0}   \left|  \frac{\kappa(\lambda)}{d^{+}(\lambda) d^{-}(\lambda)} \mathcal{I}_1^1(\lambda) \mathcal{J}_1^k(\lambda) \right|  d\xi \lesssim   \left\| \phi_2 \right\|_{L^2_r}  \left\| \psi_2 \right\|_{L^2_r} , \quad \text{ for } \;  k=1,2. \\
   & \int_{0}^{ \delta_0}  \left|   \frac{\kappa(\lambda)}{d^{+}(\lambda) d^{-}(\lambda)} \mathcal{I}_1^2(\lambda) \mathcal{J}_1^k(\lambda) \right| d\xi \lesssim   \left\| \phi_2 \right\|_{L^2_r}  \left\| \psi_2 \right\|_{L^2_r}, \quad \text{ for } \;  k=1,2.
\end{align*}

\end{claim}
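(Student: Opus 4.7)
The plan is to follow the structure of the proof of Claim \ref{case1-contribution-I_0J_0} (small-$r$ case), but with $\chi_1$ localizing to $r\gtrsim \tilde\varepsilon/\sqrt{|\xi|}$, which is well within the domain where the Weyl--Titchmarsh solutions $\Psi_j^+$ are defined. Accordingly I would substitute the far-field representation from Lemma \ref{Resol-interms-psi}, writing
\[
\Theta(r,\lambda) = \mathcal{C}_-(\lambda)\,\sigma_3\Psi_1^+(r,-\lambda) + \mathcal{C}_+(\lambda)\,\Psi_1^+(r,\lambda) + \gamma_4(\lambda)\,\Psi_2^+(r,\lambda),
\]
where $\mathcal{C}_-(\lambda):=\gamma_1(\lambda)\omega_{11}^+(\lambda)+\gamma_2(\lambda)$ and $\mathcal{C}_+(\lambda):=\gamma_1(\lambda)\omega_{11}^+(-\lambda)+\gamma_3(\lambda)$. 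By Lemmas \ref{lemma:omega_j-behavior} and \ref{Resol-interms-psi} one has $|\mathcal{C}_\pm(\lambda)|\lesssim 1/\sqrt{|\xi|}$ and $|\gamma_4(\lambda)|\lesssim e^{\frac{3}{\sqrt{2}}\frac{2r_\varepsilon}{5}}$. Expanding $\mathcal{I}_1^\ell \mathcal{J}_1^k$ componentwise then yields nine contributions, which I partition into three groups: (A) both factors stem from $\Psi_1^+(\cdot,\pm\lambda)$; (B) the pure $\gamma_4^2$ term where both factors stem from $\Psi_2^+$; (C) the mixed $\mathcal{C}_\pm\gamma_4$ cross terms.

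Group (A) is handled just as in Case 1, but now at infinity. The combined weight is bounded by $|\kappa(\lambda)/(d^+d^-)|\,|\mathcal{C}_\pm|^2\lesssim \sqrt{|\xi|}\cdot(1/|\xi|)=1/\sqrt{|\xi|}$. Distributing this weight symmetrically and applying Cauchy--Schwarz in $\xi$ reduces matters to
\[
\left(\int_0^{\delta_0}\tfrac{1}{\sqrt{|\xi|}}\,\bigl|\langle\Psi_1^+(\cdot,\pm\lambda)\chi_1(\sqrt{\xi}\cdot),f\rangle\bigr|^2\,d\xi\right)^{1/2}\lesssim \|f\|_{L^2_r}
\]
for each component $f\in\{\phi_1,\phi_2,\psi_1,\psi_2\}$, which is Lemma \ref{L^2-L^2-boud-Psi-1} applied componentwise (the vector $L^2$-bound immediately passes to each coordinate). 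Pairing the two factors then gives the desired $\|f\|_{L^2_r}\|g\|_{L^2_r}$ bound for every A-type term.

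For groups (B) and (C), I would apply Cauchy--Schwarz in $r$ to each $\Psi_2^+$-factor together with the $L^2$-smallness from Lemma \ref{L^2-bounds-of-Psi},
\[
|\langle \Psi_2^+(\cdot,\lambda)\chi_1(\sqrt{\xi}\cdot),f\rangle|\leq \|\Psi_2^+(\cdot,\lambda)\chi_1(\sqrt{\xi}\cdot)\|_{L^2_r}\|f\|_{L^2_r}\lesssim e^{-\frac{3}{\sqrt{2}}\frac{9\tilde\varepsilon}{10\sqrt{|\xi|}}}\|f\|_{L^2_r}.
\]
For (C), the remaining $\Psi_1^+$-factor is treated exactly as in (A) via Lemma \ref{L^2-L^2-boud-Psi-1}; for (B) both $\Psi_2^+$-factors are handled directly and the $\xi$-integration is carried out at the end. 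Per $\gamma_4$ appearance this generates a net exponential weight
\[
\sqrt{|\xi|}\cdot\exp\!\Big(\tfrac{3}{\sqrt{2}\sqrt{|\xi|}}\big(\tfrac{2\varepsilon}{5}-\tfrac{9\tilde\varepsilon}{10}\big)\Big),
\]
which needs to be $\xi$-integrable on $(0,\delta_0)$.

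The main obstacle is precisely this exponential balancing in groups (B) and (C): the growth $e^{\frac{3}{\sqrt{2}}\frac{2r_\varepsilon}{5}}$ of $\gamma_4$ with $r_\varepsilon\simeq\varepsilon/\sqrt{|\xi|}$ must be dominated by the decay $e^{-\frac{3}{\sqrt{2}}\frac{9\tilde\varepsilon}{10\sqrt{|\xi|}}}$ of $\Psi_2^+\chi_1$. This fixes the effective quantitative content of the relative-size conditions between $\varepsilon$ and $\tilde\varepsilon$ within the hierarchy $\varepsilon_\infty<6\tilde\varepsilon<\varepsilon$; in particular $\tilde\varepsilon$ has to be chosen sufficiently large relative to $\varepsilon$ so that the exponent is nonpositive (or the mild $\sqrt{|\xi|}$ prefactor absorbs it after integration). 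Once this calibration of the small parameters is performed, groups (A), (B), (C) recombine to give the claimed bound by $\|\phi_l\|_{L^2_r}\|\psi_m\|_{L^2_r}$, completing the proof.
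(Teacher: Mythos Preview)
Your approach is the same as the paper's: expand $\Theta$ via Lemma~\ref{Resol-interms-psi}, handle the $\Psi_1^+(\cdot,\pm\lambda)$ pairings by Cauchy--Schwarz in $\xi$ combined with Lemma~\ref{L^2-L^2-boud-Psi-1}, and treat the $\gamma_4\Psi_2^+$ contributions through the $L^2_r$-smallness of $\Psi_2^+\chi_1$ from Lemma~\ref{L^2-bounds-of-Psi}. Your packaging into $\mathcal{C}_\pm$ is a cosmetic regrouping of the paper's decomposition into $\mathrm{I},\mathrm{II},\mathrm{III},\mathrm{IV}$; group~(A) is exactly the paper's term~$\mathrm{IV}$, and the argument you give for it is identical to the paper's.

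Where you are more explicit than the paper is in the exponential balancing for groups~(B) and~(C), which the paper dismisses with ``using the exponential decay of $\Psi_2^+$ \dots\ one can obtain the desired estimate and we omit the details.'' However, your proposed resolution---taking $\tilde\varepsilon$ large enough relative to $\varepsilon$---is incompatible with the paper's stated hierarchy $6\tilde\varepsilon<\varepsilon$: under that constraint
\[
\tfrac{2\varepsilon}{5}-\tfrac{9\tilde\varepsilon}{10}\;>\;\tfrac{12\tilde\varepsilon}{5}-\tfrac{9\tilde\varepsilon}{10}\;=\;\tfrac{3\tilde\varepsilon}{2}\;>\;0,
\]
so your displayed weight $\sqrt{|\xi|}\exp\bigl(\tfrac{3}{\sqrt{2}\sqrt{|\xi|}}(\tfrac{2\varepsilon}{5}-\tfrac{9\tilde\varepsilon}{10})\bigr)$ diverges as $\xi\to 0^+$ and is not integrable on $(0,\delta_0)$. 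As written, this step does not close. The paper's proof glosses over exactly the same point with the same stated constants, so you are not doing worse than the paper here---but your honest accounting shows that the crude bounds $|\gamma_4|\lesssim e^{\frac{3}{\sqrt{2}}\frac{2r_\varepsilon}{5}}$ (inherited from evaluating Wronskians like $\mu_{14}^+$, $\mu_{34}^+$, $\omega_{4j}^+$ at $r\sim r_\varepsilon/5$) are not sharp enough to make the balancing work as stated; a genuine fix would require tightening those Wronskian estimates rather than recalibrating $\tilde\varepsilon$.
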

\begin{proof}
It suffices to establish the estimate for $\mathcal{I}_0^1(\lambda)\,\mathcal{J}_0^1(\lambda)$, as the proof of the other cases can be obtained in the same way.
Recall that, by Lemma \ref{Resol-interms-psi}, we have 
    \begin{align*}
     \Theta(r,\lambda)& =\gamma_1(\lambda) \left( \omega^{+}_{11}(\lambda) \sigma_3  \Psi_1^{+}(r,-\lambda)  +   \omega^{+}_{11}(-\lambda)  \Psi^{+}_1(r,\lambda) \right)
  \\
 & + \gamma_2(\lambda)   \sigma_3 \Psi_1^{+}(r,-\lambda)  + \gamma_3(\lambda)  \Psi^{+}_1(r,\lambda) + \gamma_4(\lambda)  \Psi^{+}_2(r,\lambda),
\end{align*}
where for small $|\xi|,$ \begin{align}
\label{gamma_j-behavior}
 \gamma_1(\lambda)   = O(\frac{1}{\sqrt{|\xi|} }e^{-\frac{3}{\sqrt{2}} r_{\varepsilon}} ), \quad \gamma_2(\lambda) = O(\frac{1}{\sqrt{|\xi|}}), \quad \gamma_3(\lambda)=  O(\frac{1}{\sqrt{|\xi|}} ) , \quad |\gamma_4(\lambda)| \lesssim e^{\frac{3}{\sqrt{2}} \frac{2 r_{\epsilon}}{5}}.
\end{align}
Hence, we have
\begin{align*}
       & \int_{0}^{\delta_0}    \frac{\kappa(\lambda)}{d^{+}(\lambda) d^{-}(\lambda)} \mathcal{I}_1^1(\lambda) \mathcal{J}_1^1(\lambda) d\xi \\
  & =  \int_{0}^{\delta_0}    \frac{\kappa(\lambda)}{d^{+}(\lambda) d^{-}(\lambda)} <\Theta_1(\cdot, \lambda) \chi_1( \sqrt{\xi} \cdot), \phi_2(\cdot) >  <\Theta_2(\cdot, \lambda) \chi_1( \sqrt{\xi} \cdot), \psi_2(\cdot) > d\xi   \\
&  =\int_{0}^{\delta_0}    \frac{\kappa(\lambda)}{d^{+}(\lambda) d^{-} (\lambda)}   < \gamma_1(\lambda) \left( \omega^{+}_{11}(\lambda)   \Psi_{11}^{+}(r,-\lambda)  +   \omega^{+}_{11}(-\lambda)  \Psi^{+}_{11}(r,\lambda) \right)  \chi_1( \sqrt{\xi} \cdot), \phi_2(\cdot) >  \\
& \qquad \qquad \qquad \qquad  \quad  \cdot < \gamma_1(\lambda) \left( -\omega^{+}_{11}(\lambda)   \Psi_{12}^{+}(r,-\lambda)  +   \omega^{+}_{11}(-\lambda)  \Psi^{+}_{12}(r,\lambda) \right)  \chi_1( \sqrt{\xi} \cdot), \psi_2(\cdot) > d\xi  \\
&+ \int_{0}^{\delta_0}    \frac{\kappa(\lambda)}{d^{+}(\lambda) d^{-} (\lambda)}  < \gamma_1(\lambda) \left( \omega^{+}_{11}(\lambda)  \Psi_{11}^{+}(r,-\lambda)  +   \omega^{+}_{11}(-\lambda)  \Psi^{+}_{11}(r,\lambda) \right)  \chi_1( \sqrt{\xi} \cdot), \phi_2(\cdot) >  \\
& \qquad  \qquad \qquad  \qquad \quad  \cdot < \left( - \gamma_2(\lambda)   \Psi_{12}^{+}(r,-\lambda)  + \gamma_3(\lambda)  \Psi^{+}_{12}(r,\lambda) + \gamma_4(\lambda)  \Psi^{+}_{22}(r,\lambda) \right) \chi_1( \sqrt{\xi} \cdot) ,  \psi_2(\cdot) > d\xi \\
&+ \int_{0}^{\delta_0}    \frac{\kappa(\lambda)}{d^{+}(\lambda) d^{-} (\lambda)}    < \left(  \gamma_2(\lambda)   \Psi_{11}^{+}(r,-\lambda)  + \gamma_3(\lambda)  \Psi^{+}_{11}(r,\lambda) + \gamma_4(\lambda)  \Psi^{+}_{21}(r,\lambda) \right) \chi_1( \sqrt{\xi} \cdot) ,  \phi_2(\cdot) >  \\
&  \qquad \qquad \qquad \qquad  \quad  \cdot  < \gamma_1(\lambda) \left( - \omega^{+}_{11}(\lambda)   \Psi_{12}^{+}(r,-\lambda)  +   \omega^{+}_{11}(-\lambda)  \Psi^{+}_{12}(r,\lambda) \right)  \chi_1( \sqrt{\xi} \cdot), \psi_2(\cdot) > d\xi   \\
& + \int_{0}^{\delta_0}   \frac{\kappa(\lambda)}{d^{+}(\lambda) d^{-} (\lambda)}  < \left(  \gamma_2(\lambda)    \Psi_{11}^{+}(r,-\lambda)  + \gamma_3(\lambda)  \Psi^{+}_{11}(r,\lambda) + \gamma_4(\lambda)  \Psi^{+}_{21}(r,\lambda) \right) \chi_1( \sqrt{\xi} \cdot) ,  \phi_2(\cdot) >  \\
& \qquad \qquad \qquad \qquad  \quad \cdot < \left( - \gamma_2(\lambda)  \Psi_{12}^{+}(r,-\lambda)  + \gamma_3(\lambda)  \Psi^{+}_{12}(r,\lambda) + \gamma_4(\lambda)  \Psi^{+}_{22}(r,\lambda) \right) \chi_1( \sqrt{\xi} \cdot) ,  \psi_2(\cdot) >  d\xi \\
&:= \mathrm{I}+ \mathrm{II}+ \mathrm{III} + \mathrm{IV}.
\end{align*}
By Lemma \ref{lemma:omega_j-behavior}, Proposition \ref{jump-resol} and  \eqref{gamma_j-behavior}, we have 
\begin{align*}
     \left| \frac{\kappa(\lambda) \gamma_1(\lambda)^2 }{d^{+}(\lambda) d^{-}(\lambda)}  \right|  & \lesssim \frac{1}{\sqrt{| \xi|}}e^{-\frac{6}{\sqrt{2}} r_{\varepsilon}}  , \quad  \left| \frac{\kappa(\lambda) \gamma_2(\lambda)^2 }{d^{+}(\lambda) d^{-}(\lambda)}  \right|  \lesssim \frac{1}{\sqrt{|\xi|}} ,  \quad  \left| \frac{\kappa(\lambda) \gamma_3(\lambda)^2 }{d^{+}(\lambda) d^{-}(\lambda)}  \right|  \lesssim \frac{1}{\sqrt{|\xi|}}, \quad \\  \left| \frac{\kappa(\lambda) \gamma_4(\lambda)^2 }{d^{+}(\lambda) d^{-}(\lambda)}  \right|  &\lesssim \sqrt{|\xi|}  e^{\frac{3}{\sqrt{2}} \frac{4 r_{\epsilon}}{5}}  , \quad  
         \left| \frac{\kappa(\lambda) \gamma_1(\lambda) \gamma_2(\lambda) }{d^{+}(\lambda) d^{-}(\lambda)}  \right|  \lesssim \frac{1}{\sqrt{|\xi|}}e^{-\frac{3}{\sqrt{2}} r_{\varepsilon}}  , \quad   \left| \frac{\kappa(\lambda) \gamma_1(\lambda) \gamma_3(\lambda) }{d^{+}(\lambda) d^{-}(\lambda)}  \right|  \lesssim \frac{1}{\sqrt{|\xi|}}e^{-\frac{3}{\sqrt{2}} r_{\varepsilon}}  , 
         \quad  \\
         \left| \frac{\kappa(\lambda) \gamma_1(\lambda) \gamma_4(\lambda) }{d^{+}(\lambda) d^{-}(\lambda)}  \right|  &\lesssim e^{-\frac{3}{\sqrt{2}} \frac{3}{5} r_{\varepsilon}} , \quad 
      \left| \frac{\kappa(\lambda) \gamma_2(\lambda) \gamma_3(\lambda) }{d^{+}(\lambda) d^{-}(\lambda)}  \right|  \lesssim \frac{1}{\sqrt{|\xi|}}, \quad   \left| \frac{\kappa(\lambda) \gamma_2(\lambda) \gamma_4(\lambda) }{d^{+}(\lambda) d^{-}(\lambda)}  \right|  \lesssim  e^{\frac{3}{\sqrt{2}} \frac{2r_{\varepsilon}}{5}}  , \quad  \\
      \left| \frac{\kappa(\lambda) \gamma_3(\lambda) \gamma_4(\lambda) }{d^{+}(\lambda) d^{-}(\lambda)}  \right|  & \lesssim  e^{\frac{3}{\sqrt{2}} \frac{2r_{\varepsilon}}{5}} .
\end{align*}

Notice that the main terms are the one that has $\left| \frac{\kappa(\lambda) \gamma_i(\lambda) \gamma_j(\lambda) }{d^{+}(\lambda) d^{-}(\lambda)}  \right| \lesssim \frac{1}{\sqrt{|\xi|}}, $ and $\left| \frac{\kappa(\lambda) \gamma_i(\lambda) \gamma_4(\lambda) }{d^{+}(\lambda) d^{-}(\lambda)}  \right| \lesssim e^{\frac{3}{\sqrt{2}} \frac{4r_{\varepsilon}}{5}},   $ for some $i,j \in \{1,2,3,4 \}.$ The second term involves $\gamma_4,$ and therefore it includes an exponential decay factor given by $\Psi^{+}_{2}(r,\lambda).$  We will mainly focus on estimating the first term with no decaying factor. However, we will first estimate $ \mathrm{I}$ and we omit the proof for all similar terms that include a decay factor.  \\

First, we estimate for first term $\mathrm{I}.$ By Lemma \ref{L^2-bounds-of-Psi}, we have the following decomposition for  $\Psi_1(r,\lambda)$
 \begin{align}
 \label{decom-of-Psi_1}
    \Psi_1(r,\lambda)&=    \Uppsi^{+}_{(\infty,1)} (r,\lambda) (1 + O( \frac{1}{\sqrt{|\xi|}}   e^{-2 r_{\infty}} )) +  \Upupsilon_1(r,\lambda,\Psi^{+}_{1} (r,\lambda))
\end{align}
where,
\begin{align}
\left\|  \Upupsilon_1(\cdot,\lambda,\Psi^{+}_{1} (\cdot,\lambda))  \chi_1( \sqrt{\xi} \cdot) \right\|_{L^2_r(0,\infty)}  \lesssim \frac{1}{\sqrt{|\xi|}}   e^{- 2\frac{9\tvarepsilon}{10\sqrt{\xi}}} 
\end{align}
Thus, we only focus on the contribution of $    \Uppsi^{+}_{(\infty,1)} (r,\lambda) = \begin{pmatrix}
      e^{i k_1(\lambda)r} \\ c_1(\lambda)  e^{i k_1(\lambda)r}
  \end{pmatrix}  .$
By Lemma \ref{lemma:omega_j-behavior}, we have $|\omega_{11}^{+}(\pm \lambda)|  \lesssim  e^{\frac{3}{\sqrt{2}} \frac{r_{\varepsilon}}{5}}.$ Hence, by Cauchy-Schwarz inequality and by Lemma \ref{L^2-L^2-boud-Psi-1}, we obtain 
\begin{align*}
&\bigg|  \int_{0}^{\delta_0}   \frac{1}{\sqrt{|\xi|}}e^{-\frac{6}{\sqrt{2}} r_{\varepsilon}}   <  \left( \omega^{+}_{11}(\lambda)    e^{-i k_1(\lambda)r}  +   \omega^{+}_{11}(-\lambda)  e^{i k_1(\lambda)r} \right)  \chi_1( \sqrt{\xi} \cdot), \phi_2(\cdot) >  \\
& \qquad \qquad \qquad   \cdot  <  \left(- \omega^{+}_{11}(\lambda)   c_1(-\lambda)  e^{-i k_1(\lambda)r}  +   \omega^{+}_{11}(-\lambda)  c_1(\lambda)  e^{i k_1(\lambda)r}  \right)  \chi_1( \sqrt{\xi} \cdot), \psi_2(\cdot) > \bigg| d\xi \\
& \qquad \qquad \qquad  \lesssim  \left\| \phi_2 \right\|_{L^2_r}  \left\| \psi_2 \right\|_{L^2_r} 
\end{align*}
The estimate for the $ \Upupsilon_1$ can be obtained similarly using Cauchy-Schwarz inequality and Lemma \ref{L^2-bounds-of-Psi}. Therefore, we obtain the desired estimate for $ \mathrm{I}. $ Note that, $ \mathrm{II}$ and $ \mathrm{III}$ mainly contain an exponential decaying factor similar 
$ \mathrm{I}, $ and using the the exponential decay of $\Psi_2^{+}$ for one that contains $\gamma_1 \gamma_4,$ one can obtain the desired estimate and we omit the details.  \\

Next, we estimate $ \mathrm{IV}$ and will focus on the first term and all the other one can be estimate similarly. Using Cauchy-Schwarz inequality and the fact that $\left| \frac{\kappa(\lambda) \gamma_2(\lambda)^2 }{d^{+}(\lambda) d^{-}(\lambda)}  \right|  \lesssim \frac{1}{\sqrt{|\xi|}} $ together with Lemma \ref{L^2-L^2-boud-Psi-1}, we have
\begin{align*}
& \left|  \int_{0}^{\delta_0}   \frac{\kappa(\lambda)\gamma_2(\lambda)^2}{d^{+}(\lambda) d^{-} (\lambda)}  <  \Psi_{11}^{+}(r,-\lambda)  \chi_1( \sqrt{\xi} \cdot) ,  \phi_2(\cdot) >   <  \Psi_{12}^{+}(r,-\lambda)  \chi_1( \sqrt{\xi} \cdot) ,  \psi_2(\cdot) >  d\lambda \right| \\
& \lesssim   \int_{0}^{\delta_0}    \frac{1}{\sqrt{|\xi|}}  \left| <  \Psi_{11}^{+}(r,-\lambda)  \chi_1( \sqrt{\xi} \cdot) ,  \phi_2(\cdot) >   <  \Psi_{12}^{+}(r,-\lambda)  \chi_1( \sqrt{\xi} \cdot) ,  \psi_2(\cdot) >  \right| d\xi \\ 
& \lesssim  \left( \int_0^{\delta_0}  \frac{1}{\sqrt{|\xi|}}  \left| \int_0^{\infty} \Psi_{11}^{+}(r, -\lambda) \chi_1( \sqrt{\xi} r ) \phi_2(r) dr \right|^2 d \xi \right)^{\frac{1}{2}} \\
& \times \left( \int_0^{\delta_0}  \frac{1}{\sqrt{|\xi|}}  \left| \int_0^{\infty} \Psi_{12}^{+}(r, -\lambda) \chi_1( \sqrt{\xi} r ) \psi_2(r) dr \right|^2 d \xi \right)^{\frac{1}{2}} \\
& \lesssim \left\| \phi_2 \right\|_{L^2_r}  \left\| \psi_2 \right\|_{L^2_r} 
\end{align*}
 Using similar argument, one can estimate the other terms in  $ \mathrm{IV}, $ and this concludes the estimates for 
$\mathcal{I}_1^1(\lambda) \mathcal{J}_1^1(\lambda)$ and similarly for $\mathcal{I}_1^2(\lambda) \mathcal{J}_1^2(\lambda).$
\end{proof}

\textbf{Case 3: } Contribution of $\mathcal{I}_0(\lambda)\mathcal{J}_1(\lambda).$
 \begin{claim}  \label{claim:case3-nonresonance-small-lambda}
We have
\begin{align*}
  & \int_{0}^{\delta_0} \left|   \frac{\kappa(\lambda)}{d^{+}(\lambda) d^{-}(\lambda)} \mathcal{I}_0^1(\lambda) \mathcal{J}_1^k(\lambda)  \right| 
 d\xi \lesssim   \left\| \phi_2 \right\|_{L^2_r}  \left\| \psi_2 \right\|_{L^2_r} , \quad \text{ for } \;  k=1,2.    \\
   & \int_{0}^{ \delta_0}  \left|   \frac{\kappa(\lambda)}{d^{+}(\lambda) d^{-}(\lambda)} \mathcal{I}_0^2(\lambda) \mathcal{J}_1^k(\lambda) \right| 
 d\xi \lesssim   \left\| \phi_2 \right\|_{L^2_r}  \left\| \psi_2 \right\|_{L^2_r}, \quad \text{ for } \;  k=1,2. 
\end{align*} 
\end{claim}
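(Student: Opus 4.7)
The plan is to combine the two representations of $\Theta$ already used in Claims~\ref{case1-contribution-I_0J_0} and \ref{Claim-step1-case2}: on the support of $\chi_0$ (where $r\lesssim |\xi|^{-1/2}$) I would expand $\Theta$ using \eqref{def-theta}, namely $\Theta=\omega_{22}^{+}\upvarphi_1-\omega_{21}^{+}\upvarphi_2$, while on the support of $\chi_1$ I would expand $\Theta$ using the representation \eqref{theta_in_terms-Psi} from Lemma~\ref{Resol-interms-psi} in terms of $\Psi_1^{+}(r,\pm\lambda)$ and $\Psi_2^{+}(r,\lambda)$. The product $\mathcal I_0^{\ell}(\lambda)\,\mathcal J_1^{k}(\lambda)$ then splits into a finite sum of bilinear terms, each of which is a product of a coefficient built from $\kappa(\lambda)$, $d^\pm(\lambda)$, some $\omega_{ij}^+(\lambda)$ and some $\gamma_j(\lambda)$, together with one inner product against $\phi_{\ell'}\chi_0$ and one against $\psi_{k'}\chi_1$. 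The scaling $r_\varepsilon\simeq |\xi|^{-1/2}$ with $6\tvarepsilon<\varepsilon$ ensures that a $|\xi|^{1/4}$ weight coming from the $\chi_0$-side matches a $|\xi|^{-1/4}$ weight from the $\chi_1$-side after Cauchy--Schwarz in $\xi$.

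For the $\chi_0$-factor the most delicate contribution is $\omega_{21}^{+}\upvarphi_2\chi_0$, for which I would reuse the weighted bound
\[
\Bigl\|\,\xi^{1/4}\,\langle \upvarphi_{2j}(\cdot,\lambda)\chi_0(\sqrt{\xi}\cdot),\phi\rangle\Bigr\|_{L^2_\xi(0,\delta_0)}\lesssim \|\phi\|_{L^2_r}
\]
established in \eqref{L^2-boundsPhi21chi0}. The contribution of $\omega_{22}^{+}\upvarphi_1\chi_0$ carries the coefficient bound $|\kappa(\omega_{22}^{+})^2/(d^+d^-)|\lesssim \sqrt{|\xi|}\,e^{-6r_\varepsilon/\sqrt 2}$, which combined with the growth $|\upvarphi_{1j}(r)|\lesssim e^{3r/\sqrt 2}$ on $r\le r_\varepsilon$ gives a pointwise-in-$\xi$ estimate by $\|\phi\|_{L^2_r}$, exactly as in the treatment of the term $\mathrm I$ in Claim~\ref{case1-contribution-I_0J_0}.

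For the $\chi_1$-factor, the leading contributions come from the $\gamma_2\,\sigma_3\Psi_1^{+}(\cdot,-\lambda)$ and $\gamma_3\,\Psi_1^{+}(\cdot,\lambda)$ pieces, whose coefficients satisfy $|\kappa\,\gamma_i\gamma_j/(d^+d^-)|\lesssim |\xi|^{-1/2}$; these are controlled by the weighted estimate
\[
\Bigl(\int_0^{\delta_0}\frac{1}{\sqrt{|\xi|}}\bigl|\langle\Psi_1^{+}(\cdot,\pm\lambda)\chi_1(\sqrt{\xi}\cdot),\psi\rangle\bigr|^2\,d\xi\Bigr)^{1/2}\lesssim \|\psi\|_{L^2_r}
\]
from Lemma~\ref{L^2-L^2-boud-Psi-1}. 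The remaining pieces carry harmless exponential damping in their coefficients: either the $e^{-3r_\varepsilon/\sqrt 2}$ hidden in $\gamma_1$, or the $L^2_r$-smallness $\|\Psi_2^{+}\chi_1\|_{L^2_r}\lesssim e^{-\frac{3}{\sqrt 2}\cdot\frac{9\tvarepsilon}{10\sqrt{|\xi|}}}$ from Lemma~\ref{L^2-bounds-of-Psi}. In each such subcase a direct Cauchy--Schwarz in $\xi$ combined with these decay factors closes the estimate.

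The main obstacle is the bookkeeping: one must verify for every cross term that the weight $|\xi|^{\alpha}$ appearing on the $\chi_0$-side pairs with exactly the compensating weight on the $\chi_1$-side so that the resulting $L^2_\xi$-integral on $(0,\delta_0)$ converges uniformly. This is the role of the choice $6\tvarepsilon<\varepsilon$ built into the definition of the cutoffs, which guarantees that the exponential prefactors $e^{\pm \alpha r_\varepsilon}$ arising in the coefficients never overwhelm the $L^2$ bounds on the basis functions. The estimates for $\mathcal I_0^2\mathcal J_1^k$ follow identically, with $\Theta_2$ replacing $\Theta_1$ and the parities in Lemma~\ref{Resol-interms-phi} swapped accordingly.
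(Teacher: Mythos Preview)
Your proposal is correct and follows essentially the same route as the paper: expand $\Theta$ via \eqref{def-theta} on the $\chi_0$-side and via \eqref{theta_in_terms-Psi} on the $\chi_1$-side, then pair the $\xi^{1/4}$-weighted bound \eqref{L^2-boundsPhi21chi0} against the $\xi^{-1/4}$-weighted bound from Lemma~\ref{L^2-L^2-boud-Psi-1} after Cauchy--Schwarz in $\xi$. One small slip: the coefficient in the cross term is $\kappa\,\omega_{2j}^{+}\gamma_k/(d^+d^-)$ (one $\omega$-factor from the $\chi_0$-side, one $\gamma$-factor from the $\chi_1$-side), not $\kappa\,\gamma_i\gamma_j/(d^+d^-)$, and the paper's bound for the main such term is $\lesssim 1$ rather than $|\xi|^{-1/2}$; this is harmless since $1=\xi^{1/4}\cdot\xi^{-1/4}$ is exactly the splitting you describe.
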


\begin{proof} By Lemma \ref{Resol-interms-phi} and \ref{Resol-interms-psi}, we have 
    \begin{align*}
    & \int_{0}^{\delta_0}    \frac{\kappa(\lambda)}{d^{+}(\lambda) d^{-}(\lambda)} \mathcal{I}_0^1(\lambda) \mathcal{J}_1^1(\lambda) d\xi \\
   & =  \int_{0}^{\delta_0}    \frac{\kappa(\lambda)}{d^{+}(\lambda) d^{-}(\lambda)} <\Theta_1(\cdot, \lambda) \chi_0( \sqrt{\xi} \cdot), \phi_2(\cdot) >  <\Theta_2(\cdot, \lambda) \chi_1( \sqrt{\xi} \cdot), \psi_2(\cdot) >d\xi  \\
&= \int_{0}^{\delta_0}   \frac{\kappa(\lambda)}{d^{+}(\lambda) d^{-}(\lambda)}  \bigg( < (\omega_{22}^{+} (\lambda)\upvarphi_{11}(\cdot,\lambda) - \omega_{21}^{+}(\lambda) \upvarphi_{21}(\cdot,\lambda) )  \chi_0( \sqrt{\xi} \cdot), \phi_2(\cdot) >  \bigg) \\
& \qquad \qquad \qquad  \times \bigg( < \gamma_1(\lambda) \left(- \omega^{+}_{11}(\lambda)   \Psi_{12}^{+}(r,-\lambda)  +   \omega^{+}_{11}(-\lambda)  \Psi^{+}_{12}(r,\lambda) \right)  \chi_1( \sqrt{\xi} \cdot), \psi_2(\cdot) > \\
   & \qquad \qquad \qquad \quad     + < \left(  - \gamma_2(\lambda)    \Psi_{12}^{+}(r,-\lambda)  + \gamma_3(\lambda)  \Psi^{+}_{12}(r,\lambda) + \gamma_4(\lambda)  \Psi^{+}_{22}(r,\lambda) \right) \chi_1( \sqrt{\xi} \cdot) ,  \psi_2(\cdot) > 
   \bigg) d\xi
\end{align*}
By Lemma \ref{lemma:omega_j-behavior},\ref{lem:parity-of-omega-j}, \ref{Resol-interms-psi} and the fact that $|\kappa(\lambda)|\simeq \sqrt{|\xi|}$ (see Proposition \ref{jump-resol}), we have
\begin{align*}
     \left| \frac{\kappa(\lambda)\omega_{22}^{+}(\lambda)  \omega_{11}^{+}(\lambda)  \gamma_1(\lambda) }{d^{+}(\lambda) d^{-}(\lambda)}  \right|  & \lesssim e^{-\frac{3}{\sqrt{2}} \frac{9}{5} r_{\varepsilon}}  , \quad   \left| \frac{\kappa(\lambda)\omega_{21}^{+}(\lambda)  \omega_{11}^{+}(\lambda)  \gamma_1(\lambda) }{d^{+}(\lambda) d^{-}(\lambda)}  \right|   \lesssim e^{-\frac{3}{\sqrt{2}} \frac{4}{5} r_{\varepsilon}} , \\
       \left| \frac{\kappa(\lambda)\omega_{22}^{+}(\lambda)    \gamma_k(\lambda) }{d^{+}(\lambda) d^{-}(\lambda)}  \right|  & \lesssim e^{-\frac{3}{\sqrt{2}}  r_{\varepsilon}}  , \qquad 
        \left| \frac{\kappa(\lambda)\omega_{21}^{+}(\lambda)  \gamma_k(\lambda) }{d^{+}(\lambda) d^{-}(\lambda)}  \right|   \lesssim 1 , \quad \text{ for } k=2,3, \\    
         \left| \frac{\kappa(\lambda)\omega_{22}^{+}(\lambda)    \gamma_4(\lambda) }{d^{+}(\lambda) d^{-}(\lambda)}  \right|  & \lesssim \sqrt{|\xi|}  e^{-\frac{3}{\sqrt{2}} \frac{3}{5}  r_{\varepsilon}}  , \quad  \left| \frac{\kappa(\lambda)\omega_{21}^{+}(\lambda)    \gamma_4(\lambda) }{d^{+}(\lambda) d^{-}(\lambda)}  \right|  \lesssim \sqrt{| \xi|} e^{\frac{3}{\sqrt{2}} \frac{2 r_{\epsilon}}{5}}.
\end{align*}
Notice that the main terms are the one that has $\left| \frac{\kappa(\lambda) \omega_{21}^{+}(\lambda)  \gamma_k(\lambda)}{d^{+}(\lambda) d^{-}(\lambda)}  \right| \lesssim 1, $ for $k=2,3$ and $\left| \frac{\kappa(\lambda)\omega_{21}^{+}(\lambda)    \gamma_4(\lambda) }{d^{+}(\lambda) d^{-}(\lambda)}  \right|  \lesssim \sqrt{| \xi |} e^{\frac{3}{\sqrt{2}} \frac{2 r_{\epsilon}}{5}}   .$ The second term involves $\gamma_4,$ and therefore it includes an exponential decay factor given by $\Psi^{+}_{2}(r,\lambda).$  We will mainly focus on estimating the first term with no decaying factor. We only focus on the first term for $k=2,$ the remaining cases follow in the same way.
  \begin{align*}
&  \int_{0}^{\delta_0}  \left| \xi^{\frac{1}{4}} < \upvarphi_{21}(\cdot,\lambda)   \chi_0( \sqrt{\xi} \cdot), \phi_2(\cdot) >   \frac{1}{\xi^{\frac{1}{4}} } <   \Psi_{12}^{+}(\cdot,-\lambda) \chi_1( \sqrt{\xi} \cdot) ,  \psi_2(\cdot) >  \right|  d\xi  
\\
&\lesssim  \left(\int_{0}^{\delta_0}  \sqrt{\xi} \left| \int_0^{\infty}  \upvarphi_{21}(r,\lambda)\chi_0( \sqrt{\xi} r)  \phi_2(r)dr  \right|^2 d \xi \right)^{\frac{1}{2}}
\left(\int_{0}^{\delta_0} \frac{1}{\sqrt{\xi}} \left|  \Psi_{12}^{+}(r,-\lambda) \chi_1( \sqrt{\xi} r)   \psi_2(r)  dr \right|^2 d \xi \right)^{\frac{1}{2}}
\end{align*}
By applying duality together with Schur's test or using \eqref{L^2-boundsPhi21chi0} from Case 1, we obtain  \begin{align*}
   \left(\int_{0}^{\delta_0}  \sqrt{\xi} \left| \int_0^{\infty}  \upvarphi_{21}(r,\lambda)\chi_0( \sqrt{\xi} r)  \phi_2(r)dr  \right|^2 d \xi \right)^{\frac{1}{2}} \lesssim \left\|\phi_2\right\|_{L^2_r}
\end{align*}
Moreover, by Lemma \ref{L^2-L^2-boud-Psi-1} we have
\begin{align*}
    \left(\int_{0}^{\delta_0} \frac{1}{\sqrt{\xi}} \left|  \Psi_{12}^{+}(r,-\lambda) \chi_1( \sqrt{\xi} r)   \psi_2(r)  dr \right|^2 d \xi \right)^{\frac{1}{2}} \lesssim \left\|\psi_2 \right\|_{L^2_r}. 
\end{align*}

This concludes the proof of Claim \ref{claim:case3-nonresonance-small-lambda}.

\end{proof}

\textbf{Step 2: Large $\xi \in (\Lambda_0,\infty).$}\\
\textbf{Case 1:} Contribution of $\mathcal{I}_0(\lambda)\mathcal{J}_0(\lambda).$
\begin{claim} We have
\begin{align*}
  & \int_{\Lambda_0}^{\infty}  \left|   \frac{\kappa(\lambda)}{d^{+}(\lambda) d^{-}(\lambda)} \mathcal{I}_0^1(\lambda) \mathcal{J}_0^k(\lambda) \right|  d\xi \lesssim   \left\| \phi_2 \right\|_{L^2_r}  \left\| \psi_2 \right\|_{L^2_r}, \quad \text{ for } \;  k=1,2. \\
   & \int_{\Lambda_0}^{\infty} \left|   \frac{\kappa(\lambda)}{d^{+}(\lambda) d^{-}(\lambda)} \mathcal{I}_0^2(\lambda) \mathcal{J}_0^k(\lambda) \right| d\xi \lesssim   \left\| \phi_2 \right\|_{L^2_r}  \left\| \psi_2 \right\|_{L^2_r}, \quad \text{ for } \;  k=1,2. 
\end{align*}
\end{claim}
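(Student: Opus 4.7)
The approach mirrors the small-$\xi$ analysis in Claim \ref{case1-contribution-I_0J_0}, but with the asymptotics of Lemma \ref{lem:omega_j-behavior-large-xi} replacing those of Lemma \ref{lemma:omega_j-behavior}, and with the large-$|\xi|$ bounds on the columns of $F_1(\cdot,\lambda)$ from Proposition \ref{prop:contraction-large-xi} in place of those of Proposition \ref{Prop:contraction-T_j(F_j)-xi-small}. I would treat $\mathcal{I}_0^1(\lambda)\mathcal{J}_0^1(\lambda)$ in detail; the other three subcases ($\mathcal{I}_0^1\mathcal{J}_0^2$, $\mathcal{I}_0^2\mathcal{J}_0^1$, $\mathcal{I}_0^2\mathcal{J}_0^2$) follow by identical arguments with the appropriate substitutions among the components of $\Phi$ and $\Psi$.

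First, I would substitute $\Theta=\omega_{22}^{+}(\lambda)\upvarphi_1-\omega_{21}^{+}(\lambda)\upvarphi_2$ from \eqref{def-theta} and expand the product into four summands of the form
\[
c_{ab}(\lambda)\,\langle\upvarphi_{a1}(\cdot,\lambda)\chi_0(\sqrt{\xi}\,\cdot),\phi_2\rangle\,\langle\upvarphi_{b2}(\cdot,\lambda)\chi_0(\sqrt{\xi}\,\cdot),\psi_2\rangle,\qquad a,b\in\{1,2\},
\]
where each coefficient $c_{ab}(\lambda)$ is a product of $\kappa(\lambda)/(d^{+}(\lambda)d^{-}(\lambda))$ with one of $(\omega_{22}^{+})^2$, $\omega_{22}^{+}\omega_{21}^{+}$, or $(\omega_{21}^{+})^2$. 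From Lemma \ref{lem:omega_j-behavior-large-xi} and Proposition \ref{jump-resol} one has, for $\xi>\Lambda_0$,
\[
|\kappa(\lambda)|\simeq\sqrt{|\xi|},\qquad |d^{\pm}(\lambda)|\simeq|\xi|^{-1/2},\qquad |\omega_{21}^{+}(\lambda)|\simeq|\omega_{22}^{+}(\lambda)|\simeq|\xi|^{-1/4},
\]
so that $|c_{ab}(\lambda)|\lesssim\sqrt{|\xi|}$ uniformly in $a,b$. In contrast with the small-$\xi$ regime, none of the four terms carries an exponential prefactor, so all four must be treated on an equal footing.

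The main obstacle is to reduce each of these summands, after Cauchy--Schwarz in $\xi$ and the change of variables $\zeta=\sqrt{\xi}$, to an $L^2$-boundedness statement for the operator
\[
(Tv)(\zeta):=\zeta\int_0^\infty\upvarphi_{ij}(r,\lambda(\zeta^2))\,\chi_0(\zeta r)\,v(r)\,dr
\]
from $L^2_r(0,\infty)$ to $L^2_\zeta(\sqrt{\Lambda_0},\infty)$. I would establish $\|T\|\lesssim 1$ by Schur's test on its $TT^\ast$-kernel
\[
K(\zeta,\sigma)=\zeta\sigma\int_0^\infty\upvarphi_{ij}(r,\lambda(\zeta^2))\,\upvarphi_{ij}(r,\lambda(\sigma^2))\,\chi_0(\zeta r)\,\chi_0(\sigma r)\,dr,
\]
imitating the derivation that follows \eqref{L^2-boundsPhi21chi0}. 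On the support of $\chi_0(\zeta r)\chi_0(\sigma r)$ one has $r\lesssim 1/\max(\zeta,\sigma)$, which lies well inside $(0,\tr_0)$ by our choice $6\tvarepsilon<\varepsilon$, so Proposition \ref{prop:contraction-large-xi} yields $|\upvarphi_{ij}(r,\lambda)|\lesssim r^{3/2}$ uniformly in $\xi>\Lambda_0$. Consequently $|K(\zeta,\sigma)|\lesssim \min(\zeta,\sigma)/\max(\zeta,\sigma)^3$, and a direct calculation gives $\int_{\sqrt{\Lambda_0}}^\infty|K(\zeta,\sigma)|\,d\sigma\lesssim 1/\zeta$, with the symmetric bound in the other variable; Schur's test then produces $\|TT^\ast\|\lesssim 1/\sqrt{\Lambda_0}\lesssim 1$, hence $\|T\|\lesssim 1$.

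Assembling the four contributions yields the desired bound on $\int_{\Lambda_0}^\infty|\kappa(\lambda)/(d^{+}(\lambda)d^{-}(\lambda))\,\mathcal{I}_0^1(\lambda)\mathcal{J}_0^1(\lambda)|\,d\xi$. The subcases $\mathcal{I}_0^1\mathcal{J}_0^2$, $\mathcal{I}_0^2\mathcal{J}_0^1$, and $\mathcal{I}_0^2\mathcal{J}_0^2$ reduce to the very same $TT^\ast$ estimate with $\phi_2,\psi_2$ replaced by the appropriate components among $\phi_1,\phi_2,\psi_1,\psi_2$ and with $\upvarphi_{i1}$, $\upvarphi_{j2}$ replaced by the corresponding components of $\upvarphi_1,\upvarphi_2$; no new ingredients are needed, and the claimed inequalities follow.
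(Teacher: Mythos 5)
Your overall strategy is the same as the paper's: expand $\Theta=\omega_{22}^{+}\upvarphi_1-\omega_{21}^{+}\upvarphi_2$ into four summands, bound the scalar coefficients using the large-$|\xi|$ Wronskian asymptotics, and reduce each summand via Cauchy--Schwarz and the substitution $\zeta=\sqrt{\xi}$ to a $TT^{\ast}$/Schur-test bound exploiting $|\upvarphi_{ij}(r,\lambda)|\lesssim r^{3/2}$ on the support of $\chi_0(\sqrt{\xi}\,\cdot)$. However, there is a concrete arithmetic error in your coefficient bound that breaks the argument as written. From your own stated asymptotics, $|\kappa(\lambda)|\simeq|\xi|^{1/2}$, $|d^{+}(\lambda)d^{-}(\lambda)|\simeq|\xi|^{-1}$, and $|\omega_{2a}^{+}(\lambda)\,\omega_{2b}^{+}(\lambda)|\simeq|\xi|^{-1/2}$, one gets
\[
|c_{ab}(\lambda)|\;\simeq\;|\xi|^{1/2}\cdot|\xi|\cdot|\xi|^{-1/2}\;=\;|\xi|,
\]
not $\sqrt{|\xi|}$. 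Consequently, after Cauchy--Schwarz you must control $\|\xi^{1/2}\langle\upvarphi_{ij}\chi_0(\sqrt{\xi}\,\cdot),\cdot\rangle\|_{L^2_\xi}$, which under $\zeta=\sqrt{\xi}$ forces the weight $\zeta^{3/2}$ (not $\zeta$) in the operator $T$. Your version of $T$, with weight $\zeta$, proves a bound for $\int\sqrt{\xi}\,|\mathcal{I}||\mathcal{J}|\,d\xi$, leaving an uncontrolled factor $\sqrt{\xi}$ that is unbounded on $(\Lambda_0,\infty)$.

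The good news is that the correct weight still closes: with $(Tv)(\zeta)=\zeta^{3/2}\int_0^\infty\upvarphi_{ij}(r,\lambda(\zeta^2))\chi_0(\zeta r)v(r)\,dr$, the $TT^{\ast}$ kernel satisfies $|K(\zeta,\sigma)|\lesssim\zeta^{3/2}\sigma^{3/2}\int_0^{1/\max(\zeta,\sigma)}r^3\,dr\simeq\zeta^{3/2}\sigma^{3/2}\max(\zeta,\sigma)^{-4}$, whose row and column integrals over $(\sqrt{\Lambda_0},\infty)$ are $O(1)$; Schur's test then gives $\|T\|\lesssim1$. This is exactly the computation in the paper's proof. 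So the repair is purely quantitative, but as submitted the proof does not establish the claimed inequality.
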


\begin{proof}
Similarly to step 1, we focus on proving the estimate for $\mathcal{I}_0^1(\lambda)\mathcal{J}_0^1(\lambda)$, while the bounds for the other terms follow by analogous arguments. We have 
    \begin{align*}
    & \int_{\Lambda_0}^{\infty}    \frac{\kappa(\lambda)}{d^{+}(\lambda) d^{-}(\lambda)} \mathcal{I}_0^1(\lambda) \mathcal{J}_0^1(\lambda) d\xi  \\
 &= \int_{\Lambda_0}^{\infty}   \frac{\kappa(\lambda)}{d^{+}(\lambda) d^{-}(\lambda)} < \omega_{22}^{+} (\lambda)\upvarphi_{11}(\cdot,\lambda)   \chi_0( \sqrt{\xi} \cdot), \phi_2(\cdot) > < \omega_{22}^{+}(\lambda) \upvarphi_{12}(\cdot,\lambda)   \chi_0( \sqrt{\xi} \cdot), \psi_2(\cdot) > d\xi \\
& + \int_{\Lambda_0}^{\infty}   \frac{\kappa(\lambda)}{d^{+}(\lambda) d^{-}(\lambda)} < \omega_{22}^{+} (\lambda)\upvarphi_{11}(\cdot,\lambda)   \chi_0( \sqrt{\xi} \cdot), \phi_2(\cdot) > <   w_{21}^{+}(\lambda) \upvarphi_{22}(\cdot,\lambda)  \chi_0( \sqrt{\xi} \cdot), \psi_2(\cdot) >  d\xi \\
& + \int_{\Lambda_0}^{\infty}   \frac{\kappa(\lambda)}{d^{+}(\lambda) d^{-}(\lambda)} <   w_{21}^{+}(\lambda) \upvarphi_{21}(\cdot,\lambda)  \chi_0( \sqrt{\xi} \cdot), \phi_2(\cdot) >  < \omega_{22}^{+}(\lambda) \upvarphi_{12}(\cdot,\lambda)   \chi_0( \sqrt{\xi} \cdot), \psi_2(\cdot) > d\xi \\
&+  \int_{\Lambda_0}^{\infty}   \frac{\kappa(\lambda)}{d^{+}(\lambda) d^{-}(\lambda)}  <   w_{21}^{+}(\lambda) \upvarphi_{21}(\cdot,\lambda)  \chi_0( \sqrt{\xi} \cdot), \phi_2(\cdot) >  <   w_{21}^{+}(\lambda) \upvarphi_{22}(\cdot,\lambda)  \chi_0( \sqrt{\xi} \cdot), \psi_2(\cdot) > d\xi \\
&:=\mathrm{I}+ \mathrm{II}+ \mathrm{III} + \mathrm{IV}
\end{align*}
By Lemma \ref{lem:omega_j-behavior-large-xi}, \ref{lem:parity-of-omega-j}, and the fact that $|\kappa(\lambda)|\simeq \sqrt{|\xi|},$  we have
\begin{align*}
 \left| \frac{\kappa(\lambda) (\omega_{22}^{+}(\lambda))^2(\lambda) }{d^{+}(\lambda) d^{-}(\lambda)}  \right|  \lesssim |\xi| , \quad
 \left| \frac{\kappa(\lambda)\omega_{22}^{+}(\lambda) \omega_{21}^{+}(\lambda) }{d^{+}(\lambda) d^{-}(\lambda)}  \right|   \lesssim |\xi| , \quad  \left| \frac{\kappa(\lambda)( \omega_{21}^{+}(\lambda))^2 }{d^{+}(\lambda) d^{-}(\lambda)}  \right|   \lesssim|\xi|.
\end{align*} 
Recall that $\upvarphi_1(r,\lambda)$ and $\upvarphi_2(r,\lambda)$ satisfies, 
\begin{align*}
    | \upvarphi_1(r,\lambda) | &\lesssim r^{\frac{3}{2}}, \qquad  \quad   | \upvarphi_2(r,\lambda) | \lesssim r^{\frac{3}{2}} \quad \text{ for small } r , \\ 
   | \upvarphi_1(r,\lambda) | &\lesssim e^{\frac{3}{\sqrt{2}}r}, \qquad   | \upvarphi_2(r,\lambda) | \lesssim r  \quad \text{ for large  } r \leq r_{\varepsilon} . 
\end{align*}

We will only estimate the first term  $\mathrm{I}, $ and all other terms can estimated similarly.  We have 
\begin{align*}
| \mathrm{I}|   &\lesssim  \int_{\Lambda_0}^{\infty} \left|  |\xi|  <  \upvarphi_{11}(\cdot,\lambda)  \chi_0( \sqrt{\xi} \cdot),  \phi_2(\cdot) >  <   \upvarphi_{12}(\cdot,\lambda)  \chi_0( \sqrt{\xi} \cdot), \psi_2(\cdot) > \right| d\xi \\
& \lesssim  \left\| \xi^{\frac{1}{2}}   <  \upvarphi_{11}(\cdot,\lambda)  \chi_0( \sqrt{\xi} \cdot),  \phi_2(\cdot) >    \right\|_{L^2_\xi(0,\delta_0)} 
\left\| \xi^{\frac{1}{2}}    <   \upvarphi_{12}(\cdot,\lambda)  \chi_0( \sqrt{\xi} \cdot), \psi_2(\cdot) >   \right\|_{L^2_\xi(0,\delta_0)}  
\end{align*}
Next, we estimate the first term separately $\left\| \xi^{\frac{1}{2}}   <  \upvarphi_{11}(\cdot,\lambda)  \chi_0( \sqrt{\xi} \cdot),  \phi_2(\cdot) >    \right\|_{L^2_\xi(0,\delta_0)} .$ Using a change of variable $\zeta=\sqrt{\xi}$, we obtain 
\begin{align*}
    \int_{\Lambda_0}^{\infty} \xi \left| \int_0^{\infty} \upvarphi_{11}(r,\lambda)  \chi_0( \sqrt{\xi} r)  \phi_2(r) dr \right|^2 d\xi = 2 \int_{\sqrt{\Lambda_0}}^{\infty} \zeta^3 \left| \int_0^{\infty} \upvarphi_{11}(r,\lambda(\zeta^2))  \chi_0( \zeta r)  \phi_2(r) dr \right|^2 d\zeta 
\end{align*}
We denote by 
\begin{align*}
T v(\zeta)  :=  \int_0^{\infty}  \zeta^{\frac{3}{2}}  \upvarphi_{11}(r,\lambda(\zeta^2))  \chi_0( \zeta r) v(r) dr   , \quad 
T^{\ast} w (r):=\int_{\sqrt{\Lambda_0}}^{\infty}  \sigma^{\frac{3}{2}}   \upvarphi_{11}(r,\lambda(\sigma^2))   \chi_0( \sigma r)  w(\sigma) d\sigma.
\end{align*}
Thus, we have
\begin{align*}
TT^{\ast} w( \zeta  )&= \int_0^{\infty}  \zeta^{\frac{3}{2}}   \upvarphi_{11}(r,\lambda(\zeta^2))  \chi_0( \zeta r)  \int_{\sqrt{\Lambda_0}}^{\infty}  \sigma^{\frac{3}{2}}   \upvarphi_{11}(r,\lambda(\sigma^2))   \chi_0( \sigma r)  w(\sigma) d\sigma  dr     \\
&= \int_{\sqrt{\Lambda_0}}^{\infty} \int_0^{\infty}  \zeta^{\frac{3}{2}} \sigma^{\frac{3}{2}}  \upvarphi_{11}(r,\lambda(\zeta^2)) \upvarphi_{11}(r,\lambda(\sigma^2))    \chi_0( \zeta r)  \chi_0( \sigma r)  dr w(\sigma) d\sigma \\
&= \int_{\sqrt{\Lambda_0}}^{\infty}  K(\zeta,\sigma) w(\sigma) d\sigma
\end{align*}
where, 
 \begin{align*}
     K(\zeta,\sigma)= \int_0^{\infty}  \zeta^{\frac{3}{2}} \sigma^{\frac{3}{2}}  \upvarphi_{11}(r,\lambda(\zeta^2)) \upvarphi_{11}(r,\lambda(\sigma^2))    \chi_0( \zeta r)  \chi_0( \sigma r)  dr.
 \end{align*} 
Moreover, we have 
\begin{align*}
   \int_{\sqrt{\Lambda_0}}^{\infty} |K(\zeta,\sigma) | d\sigma &\lesssim  \int_{\sqrt{\Lambda_0}}^{\infty}  
    \int_0^{\min(\frac{1}{\zeta},\frac{1}{\sigma})}  \zeta^{\frac{3}{2}} \sigma^{\frac{3}{2}}  r^{3} \chi_0( \zeta r)  \chi_0( \sigma r)    dr d\sigma  \\
    &\lesssim  \int_{\sqrt{\Lambda_0}}^{\zeta} \zeta^{\frac{3}{2}} \sigma^{\frac{3}{2}} \int_0^{\frac{1}{\zeta}} r^{3} dr d\sigma+ \int_{\zeta}^{\infty} \zeta^{\frac{3}{2}} \sigma^{\frac{3}{2}} \int_0^{\frac{1}{\sigma}} r^{3} dr d\sigma \lesssim 1 .
\end{align*}
By symmetry, one can obtain  $\int_{\sqrt{\Lambda_0}}^{\infty} |K(\zeta,\sigma) | d\zeta \lesssim 1.$ Therefore, $\left\|K(\zeta,\sigma)\right\|_{L^1(d\sigma)}$  and $\left\|K(\zeta,\sigma)\right\|_{L^1(d\zeta)}$ are bounded uniformly, then by Schur's test we obtain $\left\| TT^{\ast} \phi_2 \right\|_{L^2_r} \lesssim \left\|\phi_2 \right\|_{L^2_r} . $ Thus, by duality we obtain 
\begin{align}
\label{L^2-boundsPhi21chi0-large-lambda}
  \left\| \xi^{\frac{1}{2}}   <  \upvarphi_{11}(\cdot,\lambda)  \chi_0( \sqrt{\xi} \cdot),  \phi_2(\cdot) >    \right\|_{L^2_\xi(\Lambda_0,\infty)} \lesssim   \left\|\phi_2 \right\|_{L^2_r}
\end{align}
Similarly, one can check that 
\begin{align*}
    \left\| \xi^{\frac{1}{2}}    <   \upvarphi_{12}(\cdot,\lambda)  \chi_0( \sqrt{\xi} \cdot), \psi_2(\cdot) >   \right\|_{L^2_\xi(\Lambda_0,\infty)}  \lesssim \left\|\psi_2 \right\|_{L^2_r}  .
\end{align*}
which yields, 
\begin{align*}
    | \mathrm{I}|   & \lesssim \left\|\phi_2 \right\|_{L^2_r}   \left\|\psi_2 \right\|_{L^2_r}  
\end{align*}
The other three terms $\mathrm{II},\mathrm{III}$ and $\mathrm{IV}$ can be estimated analogously. Similarly, one can estimate all the other terms for $\mathcal{I}_0^2(\lambda) \mathcal{J}_0^2(\lambda).$ 
\end{proof}
\textbf{Case 2: } Contribution of $\mathcal{I}_1(\lambda)\mathcal{J}_1(\lambda).$
 \begin{claim} We have
\begin{align*}
  & \int_{\Lambda_0}^{\infty}   \left|  \frac{\kappa(\lambda)}{d^{+}(\lambda) d^{-}(\lambda)} \mathcal{I}_1^1(\lambda) \mathcal{J}_1^k(\lambda) \right| d\xi \lesssim   \left\| \phi_2 \right\|_{L^2_r}  \left\| \psi_2 \right\|_{L^2_r}, \quad \text{ for } \;  k=1,2.  \\
   & \int_{\Lambda_0}^{\infty}     \left|  \frac{\kappa(\lambda)}{d^{+}(\lambda) d^{-}(\lambda)} \mathcal{I}_1^2(\lambda) \mathcal{J}_1^k(\lambda) \right| d\xi  \lesssim   \left\| \phi_2 \right\|_{L^2_r}  \left\| \psi_2 \right\|_{L^2_r}, \quad \text{ for } \;  k=1,2. 
\end{align*}

\end{claim}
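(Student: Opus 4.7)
The strategy will mirror Claim~\ref{Claim-step1-case2} in Step~1, but it turns out to be substantially simpler in the large-$\xi$ regime because, unlike in the small-$\xi$ case, no exponentially growing factors of the form $e^{\frac{3}{\sqrt{2}} \frac{r_\varepsilon}{5}}$ appear in the coefficients $\gamma_j$ or in $\omega_{11}^+$. I will treat only $\mathcal{I}_1^1(\lambda)\mathcal{J}_1^1(\lambda)$, since the remaining three products are analogous. First, using the representation
\begin{align*}
\Theta(r,\lambda) &= \gamma_1(\lambda)\bigl(\omega_{11}^+(\lambda)\sigma_3 \Psi_1^+(r,-\lambda) + \omega_{11}^+(-\lambda)\Psi_1^+(r,\lambda)\bigr) \\
&\quad + \gamma_2(\lambda)\sigma_3 \Psi_1^+(r,-\lambda) + \gamma_3(\lambda)\Psi_1^+(r,\lambda) + \gamma_4(\lambda)\Psi_2^+(r,\lambda)
\end{align*}
from Lemma~\ref{Resol-interms-psi}, I will expand $\mathcal{I}_1^1(\lambda)\mathcal{J}_1^1(\lambda)$ as a finite sum of products of the form $c_{ij}(\lambda)\langle X_i(\cdot,\pm\lambda)\chi_1(\sqrt{\xi}\cdot), \phi_2\rangle \langle X_j(\cdot,\pm\lambda)\chi_1(\sqrt{\xi}\cdot), \psi_2\rangle$ with $X_i,X_j \in \{\Psi_1^+, \Psi_2^+\}$.

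Next, using the large-$\xi$ asymptotics from Lemma~\ref{lem:omega_j-behavior-large-xi}, Lemma~\ref{Resol-interms-psi}, and Proposition~\ref{jump-resol}, namely $|\kappa(\lambda)| \simeq \sqrt{|\xi|}$, $|d^{\pm}(\lambda)| \simeq |\xi|^{-1/2}$, $|\omega_{11}^+(\pm\lambda)| \lesssim |\xi|^{-1/4}$, $|\gamma_1(\lambda)| \lesssim |\xi|^{-3/4}$, and $|\gamma_j(\lambda)| \lesssim |\xi|^{-1}$ for $j=2,3,4$, I will verify that every resulting coefficient is uniformly bounded by $|\xi|^{-1/2}$. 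Indeed, the three characteristic cases give
\[
\left|\tfrac{\kappa(\lambda)(\omega_{11}^+)^2 \gamma_1^2}{d^+ d^-}\right| \lesssim |\xi|^{-1/2}, \qquad \left|\tfrac{\kappa(\lambda) \omega_{11}^+ \gamma_1 \gamma_k}{d^+ d^-}\right| \lesssim |\xi|^{-1/2}, \qquad \left|\tfrac{\kappa(\lambda) \gamma_k \gamma_l}{d^+ d^-}\right| \lesssim |\xi|^{-1/2},
\]
for $k,l\in\{2,3,4\}$, so the weight $|\xi|^{-1/2}$ is the uniform ceiling over all sixteen products.

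Finally, splitting the weight as $|\xi|^{-1/2} = |\xi|^{-1/4}\cdot |\xi|^{-1/4}$ and applying the Cauchy--Schwarz inequality in $\xi$ reduces each summand to showing
\[
\int_{\Lambda_0}^\infty |\xi|^{-1/2}\,|\langle X_i(\cdot,\pm\lambda)\chi_1(\sqrt{\xi}\cdot), u\rangle|^2\, d\xi \lesssim \|u\|_{L^2_r}^2
\]
for $u\in\{\phi_2,\psi_2\}$. When $X_i=\Psi_1^+$ this is precisely the content of Lemma~\ref{L^2-L^2-boud-Psi-1-large-lambda}. When $X_i=\Psi_2^+$, a spatial Cauchy--Schwarz together with the pointwise-in-$\xi$ bound $\|\Psi_2^+(\cdot,\lambda)\chi_1(\sqrt{\xi}\cdot)\|_{L^2_r}\lesssim |\xi|^{-1/2}$ from Lemma~\ref{L^2-bounds-of-Psi-large-lambda} gives $|\langle \Psi_2^+\chi_1, u\rangle|^2 \lesssim |\xi|^{-1}\|u\|_{L^2}^2$, and the $\xi$-integral $\int_{\Lambda_0}^\infty |\xi|^{-3/2}\,d\xi$ converges. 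The main obstacle is purely combinatorial: bookkeeping of the sixteen cross terms and verifying that none exhibits worse decay than $|\xi|^{-1/2}$. Since every coefficient saturates at this threshold and no borderline case appears, the estimate closes without further refinement. The estimates for $\mathcal{I}_1^1\mathcal{J}_1^2$, $\mathcal{I}_1^2\mathcal{J}_1^1$, and $\mathcal{I}_1^2\mathcal{J}_1^2$ follow by the identical scheme, because $\Theta_1$ and $\Theta_2$ obey the same pointwise bounds in this regime.
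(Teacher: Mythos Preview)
Your proposal is correct and follows essentially the same approach as the paper: expand $\Theta$ via Lemma~\ref{Resol-interms-psi}, use the large-$\xi$ asymptotics from Lemma~\ref{lem:omega_j-behavior-large-xi} and Proposition~\ref{jump-resol} to show every coefficient is bounded by $|\xi|^{-1/2}$ (absorbing the extra $|\omega_{11}^+|$ factors into the $\gamma_1$-terms), then close with Cauchy--Schwarz in $\xi$ together with Lemmas~\ref{L^2-L^2-boud-Psi-1-large-lambda} and~\ref{L^2-bounds-of-Psi-large-lambda}. The paper organizes the same computation by first listing all the $\frac{\kappa\gamma_i\gamma_j}{d^+d^-}$ bounds and then singling out term~$\mathrm{I}$ as representative, but the logic is identical to yours.
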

\begin{proof}
We only focus on the estimate for $\mathcal{I}_1^1(\lambda)\,\mathcal{J}_1^1(\lambda)$, since the proofs of the other cases proceed in the same way. By Lemma \ref{Resol-interms-psi}, we have
\begin{align*}
       & \int_{0}^{\delta_0}    \frac{\kappa(\lambda)}{d^{+}(\lambda) d^{-}(\lambda)} \mathcal{I}_1^1(\lambda) \mathcal{J}_1^1(\lambda) d\xi \\
&  = \int_{\Lambda_0}^{\infty}   \frac{\kappa(\lambda)}{d^{+}(\lambda) d^{-} (\lambda)}   < \gamma_1(\lambda) \left( \omega^{+}_{11}(\lambda)   \Psi_{11}^{+}(r,-\lambda)  +   \omega^{+}_{11}(-\lambda)  \Psi^{+}_{11}(r,\lambda) \right)  \chi_1( \sqrt{\xi} \cdot), \phi_2(\cdot) >  \\
& \qquad \qquad \qquad \qquad  \quad  \cdot < \gamma_1(\lambda) \left( -\omega^{+}_{11}(\lambda)   \Psi_{12}^{+}(r,-\lambda)  +   \omega^{+}_{11}(-\lambda)  \Psi^{+}_{12}(r,\lambda) \right)  \chi_1( \sqrt{\xi} \cdot), \psi_2(\cdot) > d\xi  \\
&+ \int_{\Lambda_0}^{\infty}     \frac{\kappa(\lambda)}{d^{+}(\lambda) d^{-} (\lambda)}  < \gamma_1(\lambda) \left( \omega^{+}_{11}(\lambda)  \Psi_{11}^{+}(r,-\lambda)  +   \omega^{+}_{11}(-\lambda)  \Psi^{+}_{11}(r,\lambda) \right)  \chi_1( \sqrt{\xi} \cdot), \phi_2(\cdot) >  \\
& \qquad  \qquad \qquad  \qquad \quad  \cdot < \left( - \gamma_2(\lambda)   \Psi_{12}^{+}(r,-\lambda)  + \gamma_3(\lambda)  \Psi^{+}_{12}(r,\lambda) + \gamma_4(\lambda)  \Psi^{+}_{22}(r,\lambda) \right) \chi_1( \sqrt{\xi} \cdot) ,  \psi_2(\cdot) > d\xi \\
&+ \int_{\Lambda_0}^{\infty}     \frac{\kappa(\lambda)}{d^{+}(\lambda) d^{-} (\lambda)}    < \left(  \gamma_2(\lambda)   \Psi_{11}^{+}(r,-\lambda)  + \gamma_3(\lambda)  \Psi^{+}_{11}(r,\lambda) + \gamma_4(\lambda)  \Psi^{+}_{21}(r,\lambda) \right) \chi_1( \sqrt{\xi} \cdot) ,  \phi_2(\cdot) >  \\
&  \qquad \qquad \qquad \qquad  \quad  \cdot  < \gamma_1(\lambda) \left( - \omega^{+}_{11}(\lambda)   \Psi_{12}^{+}(r,-\lambda)  +   \omega^{+}_{11}(-\lambda)  \Psi^{+}_{12}(r,\lambda) \right)  \chi_1( \sqrt{\xi} \cdot), \psi_2(\cdot) > d\xi   \\
& + \int_{\Lambda_0}^{\infty}   \frac{\kappa(\lambda)}{d^{+}(\lambda) d^{-} (\lambda)}  < \left(  \gamma_2(\lambda)    \Psi_{11}^{+}(r,-\lambda)  + \gamma_3(\lambda)  \Psi^{+}_{11}(r,\lambda) + \gamma_4(\lambda)  \Psi^{+}_{21}(r,\lambda) \right) \chi_1( \sqrt{\xi} \cdot) ,  \phi_2(\cdot) >  \\
& \qquad \qquad \qquad \qquad  \quad \cdot < \left( - \gamma_2(\lambda)  \Psi_{12}^{+}(r,-\lambda)  + \gamma_3(\lambda)  \Psi^{+}_{12}(r,\lambda) + \gamma_4(\lambda)  \Psi^{+}_{22}(r,\lambda) \right) \chi_1( \sqrt{\xi} \cdot) ,  \psi_2(\cdot) >  d\xi \\
&:= \mathrm{I}+ \mathrm{II}+ \mathrm{III} + \mathrm{IV}.
\end{align*}
By Lemma \ref{lem:omega_j-behavior-large-xi}, \ref{lem:parity-of-omega-j}, \ref{Resol-interms-psi} and the fact that $|\kappa(\lambda)|\simeq \sqrt{|\xi|},$ we have 
\begin{align*}
     \left| \frac{\kappa(\lambda) \gamma_1(\lambda)^2 }{d^{+}(\lambda) d^{-}(\lambda)}  \right|  \lesssim 1 , \quad  \left| \frac{\kappa(\lambda) \gamma_2(\lambda)^2 }{d^{+}(\lambda) d^{-}(\lambda)}  \right|  \lesssim \frac{1}{\sqrt{|\xi|}} ,  \quad  \left| \frac{\kappa(\lambda) \gamma_3(\lambda)^2 }{d^{+}(\lambda) d^{-}(\lambda)}  \right|  \lesssim \frac{1}{\sqrt{|\xi|}}, \quad  \left| \frac{\kappa(\lambda) \gamma_4(\lambda)^2 }{d^{+}(\lambda) d^{-}(\lambda)}  \right|  \lesssim \frac{1}{\sqrt{|\xi|}} 
\end{align*}
\begin{align*}
         \left| \frac{\kappa(\lambda) \gamma_1(\lambda) \gamma_2(\lambda) }{d^{+}(\lambda) d^{-}(\lambda)}  \right|  \lesssim \frac{1}{|\xi|^{\frac{1}{4}}}  , \quad   \left| \frac{\kappa(\lambda) \gamma_1(\lambda) \gamma_3(\lambda) }{d^{+}(\lambda) d^{-}(\lambda)}  \right|  \lesssim \frac{1}{|\xi|^{\frac{1}{4}}}  , 
         \quad   \left| \frac{\kappa(\lambda) \gamma_1(\lambda) \gamma_4(\lambda) }{d^{+}(\lambda) d^{-}(\lambda)}  \right|  \lesssim \frac{1}{|\xi|^{\frac{1}{4}}}  
\end{align*}
\begin{align*}
      \left| \frac{\kappa(\lambda) \gamma_2(\lambda) \gamma_3(\lambda) }{d^{+}(\lambda) d^{-}(\lambda)}  \right|  \lesssim \frac{1}{\sqrt{|\xi|}} , \quad   \left| \frac{\kappa(\lambda) \gamma_2(\lambda) \gamma_4(\lambda) }{d^{+}(\lambda) d^{-}(\lambda)}  \right|  \lesssim  \frac{1}{\sqrt{|\xi|}}   , \quad  \left| \frac{\kappa(\lambda) \gamma_3(\lambda) \gamma_4(\lambda) }{d^{+}(\lambda) d^{-}(\lambda)}  \right|  \lesssim  \frac{1}{\sqrt{|\xi|}} 
\end{align*}
Notice that the main terms are the one that has $\left| \frac{\kappa(\lambda)  \gamma_1(\lambda)^2 }{d^{+}(\lambda) d^{-}(\lambda)}  \right| \lesssim 1 $ and $\left| \frac{\kappa(\lambda) \gamma_1(\lambda) \gamma_j(\lambda) }{d^{+}(\lambda) d^{-}(\lambda)}  \right| \lesssim \frac{1}{|\xi|^{\frac{1}{4}}},   $ for some $j=2,3,4.$ Notice that the first term carries an additional factor of $|\omega_{11}^{+}(\lambda)|^2$, while the second is multiplied by $|\omega_{11}^{+}(\lambda)|$.  By Lemma \ref{lem:omega_j-behavior-large-xi}, we have $|\omega_{11}^{+}(\pm \lambda)| \lesssim |\lambda|^{-1/4}$. Hence, all coefficients are bounded by $|\xi|^{-1/2}$.  Therefore, it suffices to estimate the first term $\mathrm{I}$ involving $\Psi^+_1,$ as the remaining terms follow by a similar argument. \\

By Lemma \ref{L^2-bounds-of-Psi-large-lambda}, we have the following decomposition for  $\Psi_1(r,\lambda)$
   \begin{align*}
    \Psi_1(r,\lambda)&=    \Uppsi^{+}_{(E,1)} (r,\lambda) (1+O(\frac{1}{\sqrt{|\xi|}}) )+  \Upupsilon_1(r,\lambda,\Psi^{+}_{1} (r,\lambda)),
\end{align*}
where,
\begin{align}
\label{eq:Upupsilon-1-Psi-1-plus}
\left\|  \Upupsilon_1(\cdot,\lambda,\Psi^{+}_{1} (\cdot,\lambda))  \chi_1( \sqrt{\xi} \cdot) \right\|_{L^2_r(0,\infty)}  \lesssim_{\tvarepsilon} \frac{1}{\sqrt{|\xi|} }.
\end{align}
We first focus on the contribution of $    \Uppsi^{+}_{(E,1)} (r,\lambda) = \begin{pmatrix}
     h_{+}( k_1(\lambda)r) \\ c_1(\lambda) h_{+}( k_1(\lambda)r)
  \end{pmatrix}  .$ Using Cauchy-Schwarz inequality in $\xi$ and the fact that $|\omega_{11}^{+}(\pm \lambda)|  \lesssim |\xi|^{-\frac{1}{4}}$ together with Lemma \ref{L^2-L^2-boud-Psi-1-large-lambda}, we obtain 
\begin{align*}
  |I| \lesssim  &  \int_{\Lambda_0}^{\infty}   \bigg|  <  \left( \omega^{+}_{11}(\lambda)   h_{-}(k_1(\lambda)r)  +   \omega^{+}_{11}(-\lambda)  h_{+}(k_1(\lambda)r)  \right)  \chi_1( \sqrt{\xi} \cdot), \phi_2(\cdot) >  \\
& \qquad  <  \left( \omega^{+}_{11}(\lambda)   c_1(-\lambda)    h_{-}(k_1(\lambda)r) +   \omega^{+}_{11}(-\lambda)  c_1(\lambda)    h_{+}(k_1(\lambda)r)  \right)  \chi_1( \sqrt{\xi} \cdot), \phi_2(\cdot) > \bigg| d\xi \\
& \lesssim  \left\| \phi_2 \right\|_{L^2_r}  \left\| \phi_2 \right\|_{L^2_r} 
\end{align*}
The estimate for $\Upupsilon_1$ can be obtained similarly, using the Cauchy–Schwarz inequality and either \eqref{eq:Upupsilon-1-Psi-1-plus} or the second assertion of Lemma \ref{L^2-L^2-boud-Psi-1-large-lambda}.
\end{proof}
\textbf{Case 3:} Contribution of $\mathcal{I}_0(\lambda)\mathcal{J}_1(\lambda).$
 \begin{claim} 
 \label{claim:case3-nonresonance-large-lambda}
We have
\begin{align*}
  & \int_{\Lambda_0}^{\infty}    \left| \frac{\kappa(\lambda)}{d^{+}(\lambda) d^{-}(\lambda)} \mathcal{I}_0^1(\lambda) \mathcal{J}_1^k(\lambda) \right| d\xi \lesssim   \left\| \phi_2 \right\|_{L^2_r}  \left\| \psi_2 \right\|_{L^2_r} , \quad \text{ for } \;  k=1,2.    \\
   & \int_{\Lambda_0}^{\infty}  \left|   \frac{\kappa(\lambda)}{d^{+}(\lambda) d^{-}(\lambda)} \mathcal{I}_0^2(\lambda) \mathcal{J}_1^k(\lambda) \right| d\xi  \lesssim   \left\| \phi_2 \right\|_{L^2_r}  \left\| \psi_2 \right\|_{L^2_r}, \quad \text{ for } \;  k=1,2. 
\end{align*} 
\end{claim}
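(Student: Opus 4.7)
The plan is to proceed in close parallel with Claim \ref{claim:case3-nonresonance-small-lambda} (Case~3 of Step~1) and Case~1 of Step~2, adapting the decomposition to the large-$\xi$ regime. Because the argument is entirely analogous for $\mathcal{I}_0^l\mathcal{J}_1^k$ with $l,k\in\{1,2\}$, I will only spell out the estimate for $\mathcal{I}_0^1(\lambda)\mathcal{J}_1^1(\lambda)$; the remaining three cases follow by identical reasoning after interchanging the roles of $\varphi_{j1}/\varphi_{j2}$ and $\Psi_{j1}^+/\Psi_{j2}^+$. First, I would expand $\mathcal{I}_0^1(\lambda)$ using $\Theta(r,\lambda)=\omega_{22}^+(\lambda)\varphi_1(r,\lambda)-\omega_{21}^+(\lambda)\varphi_2(r,\lambda)$ from \eqref{def-theta}, and expand $\mathcal{J}_1^1(\lambda)$ using the representation \eqref{theta_in_terms-Psi} of $\Theta$ in terms of $\Psi_1^\pm(r,\pm\lambda)$ and $\Psi_2^+(r,\lambda)$ with coefficients $\gamma_j(\lambda)$ from Lemma \ref{Resol-interms-psi}. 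Using the large-$\xi$ asymptotics from Lemma \ref{lem:omega_j-behavior-large-xi}, Lemma \ref{lem:parity-of-omega-j}, Proposition \ref{jump-resol}, and the $\gamma_j$ bounds $|\gamma_1|=O(|\xi|^{-3/4})$, $|\gamma_2|,|\gamma_3|,|\gamma_4|=O(|\xi|^{-1})$, together with $|\kappa(\lambda)|\simeq|\xi|^{1/2}$ and $|d^\pm(\lambda)|\simeq|\xi|^{-1/2}$, every cross-coefficient of the form $\frac{\kappa\,\omega_{2j}^+\,\omega_{11}^+\,\gamma_1}{d^+d^-}$ and $\frac{\kappa\,\omega_{2j}^+\,\gamma_k}{d^+d^-}$ (with $j=1,2$ and $k=2,3,4$) is bounded by $|\xi|^{1/4}$.

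Next, for each term not involving $\gamma_4$, I would apply the Cauchy–Schwarz inequality in $\xi$ after the splitting $|\xi|^{1/4}=|\xi|^{1/2}\cdot|\xi|^{-1/4}$, sending the weight $|\xi|^{1/2}$ to the $\varphi$-factor and the weight $|\xi|^{-1/4}$ to the $\Psi_1^+$-factor. This yields, schematically,
\begin{align*}
\int_{\Lambda_0}^\infty |\xi|^{1/4}\,\big|\langle\varphi_{j1}\chi_0(\sqrt{\xi}\cdot),\phi_2\rangle\big|\,\big|\langle\Psi_{12}^+(\cdot,\pm\lambda)\chi_1(\sqrt{\xi}\cdot),\psi_2\rangle\big|\,d\xi
\\ \lesssim\Big(\int_{\Lambda_0}^\infty|\xi|\,\big|\langle\varphi_{j1}\chi_0,\phi_2\rangle\big|^2d\xi\Big)^{1/2}\Big(\int_{\Lambda_0}^\infty|\xi|^{-1/2}\big|\langle\Psi_{12}^+(\cdot,\pm\lambda)\chi_1,\psi_2\rangle\big|^2d\xi\Big)^{1/2}.
\end{align*}
The first factor is controlled by $\|\phi_2\|_{L^2_r}$ via the Schur-type bound \eqref{L^2-boundsPhi21chi0-large-lambda} (and its variants for $\varphi_{j2},\varphi_{21},\varphi_{22}$, established by the same Schur-test argument used in Case~1 of Step~2). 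The second factor is controlled by $\|\psi_2\|_{L^2_r}$ via Lemma \ref{L^2-L^2-boud-Psi-1-large-lambda}, which provides exactly the weight $|\xi|^{-1/2}$ needed.

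For the terms carrying $\gamma_4$, the pairing is against $\Psi_2^+$ rather than $\Psi_1^+$, and there is no analogue of Lemma \ref{L^2-L^2-boud-Psi-1-large-lambda} for $\Psi_2^+$. I would instead invoke Lemma \ref{L^2-bounds-of-Psi-large-lambda}, which gives $\|\Psi_2^+(\cdot,\lambda)\chi_1(\sqrt{\xi}\cdot)\|_{L^2_r}\lesssim_{\tilde\varepsilon}|\xi|^{-1/2}$, so by Cauchy–Schwarz in $r$,
\begin{align*}
\big|\langle\Psi_{22}^+(\cdot,\lambda)\chi_1(\sqrt{\xi}\cdot),\psi_2\rangle\big|\lesssim_{\tilde\varepsilon}|\xi|^{-1/2}\|\psi_2\|_{L^2_r}.
\end{align*}
The remaining integral then reduces to
\begin{align*}
\int_{\Lambda_0}^\infty |\xi|^{1/4}\,\big|\langle\varphi_{j1}\chi_0(\sqrt{\xi}\cdot),\phi_2\rangle\big|\,|\xi|^{-1/2}\,d\xi\,\|\psi_2\|_{L^2_r},
\end{align*}
which by Cauchy–Schwarz in $\xi$ with weights $|\xi|^{1/2}$ and $|\xi|^{-3/4}$ is controlled by $\big(\int_{\Lambda_0}^\infty|\xi|\,|\langle\varphi_{j1}\chi_0,\phi_2\rangle|^2d\xi\big)^{1/2}\big(\int_{\Lambda_0}^\infty|\xi|^{-3/2}d\xi\big)^{1/2}\|\psi_2\|_{L^2_r}$; the first factor is again $\lesssim\|\phi_2\|_{L^2_r}$ by \eqref{L^2-boundsPhi21chi0-large-lambda}, and the second is finite since $\Lambda_0>1$. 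Collecting the four types of terms yields the desired bound.

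The main obstacle in this argument is the absence of a direct $L^2$-decoupling lemma for $\Psi_2^+$ in the large-$\xi$ regime, which forces the separate treatment of the $\gamma_4$ contributions via the pointwise-in-$\xi$ bound from Lemma \ref{L^2-bounds-of-Psi-large-lambda}; the integrability at infinity is then secured precisely by the extra $|\xi|^{-1/2}$ coming from that lemma, which is why the cutoff of the integration region at $\xi=\Lambda_0>1$ is needed. All other steps are direct adaptations of the arguments of Cases~1 and 3 of Step~1 together with Case~1 of Step~2, so no new ideas are required beyond the bookkeeping of the correct powers of $|\xi|$.
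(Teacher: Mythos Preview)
Your proposal is correct and follows essentially the same approach as the paper: expand $\mathcal{I}_0^1$ via \eqref{def-theta} and $\mathcal{J}_1^1$ via \eqref{theta_in_terms-Psi}, observe that every cross-coefficient is $O(|\xi|^{1/4})$, split $|\xi|^{1/4}=|\xi|^{1/2}\cdot|\xi|^{-1/4}$ and apply Cauchy--Schwarz in $\xi$, then invoke \eqref{L^2-boundsPhi21chi0-large-lambda} and Lemma~\ref{L^2-L^2-boud-Psi-1-large-lambda}. Your treatment of the $\gamma_4$ terms via the pointwise bound from Lemma~\ref{L^2-bounds-of-Psi-large-lambda} is in fact more explicit than the paper, which simply asserts that ``all other terms, with or without decay, can be handled in a similar manner.''
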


\begin{proof}
By Lemma \ref{Resol-interms-phi} and \ref{Resol-interms-psi}, we have 
    \begin{align*}
    & \int_{\Lambda_0}^{\infty}   \frac{\kappa(\lambda)}{d^{+}(\lambda) d^{-}(\lambda)} \mathcal{I}_0^1(\lambda) \mathcal{J}_1^1(\lambda) d\xi  \\
   & =  \int_{\Lambda_0}^{\infty}    \frac{\kappa(\lambda)}{d^{+}(\lambda) d^{-}(\lambda)} <\Theta_1(\cdot, \lambda) \chi_0( \sqrt{\xi} \cdot), \phi_2(\cdot) >  <\Theta_2(\cdot, \lambda) \chi_1( \sqrt{\xi} \cdot), \psi_2(\cdot) >d\xi   \\
&= \int_{\Lambda_0}^{\infty}   \frac{\kappa(\lambda)}{d^{+}(\lambda) d^{-}(\lambda)}  \bigg( < (\omega_{22}^{+} (\lambda)\upvarphi_{11}(\cdot,\lambda) - \omega_{21}^{+}(\lambda) \upvarphi_{21}(\cdot,\lambda) )  \chi_0( \sqrt{\xi} \cdot), \phi_2(\cdot) >  \bigg) \\
& \qquad \qquad \qquad  \times \bigg( < \gamma_1(\lambda) \left(- \omega^{+}_{11}(\lambda) \Psi_{12}^{+}(r,-\lambda)  +   \omega^{+}_{11}(-\lambda)  \Psi^{+}_{12}(r,\lambda) \right)  \chi_1( \sqrt{\xi} \cdot), \psi_2(\cdot) > \\
   & \qquad \qquad \qquad  \;   + < \left( - \gamma_2(\lambda)   \Psi_{12}^{+}(r,-\lambda)  + \gamma_3(\lambda)  \Psi^{+}_{12}(r,\lambda) + \gamma_4(\lambda)  \Psi^{+}_{22}(r,\lambda) \right) \chi_1( \sqrt{\xi} \cdot) ,  \psi_2(\cdot) > 
   \bigg) d\xi
\end{align*}

From Lemma \ref{lem:omega_j-behavior-large-xi}, \ref{Resol-interms-psi},  and the fact that $|\kappa(\lambda)|\simeq \sqrt{|\xi|} $ for large $|\xi|,$ we have
\begin{align*}
     \left| \frac{\kappa(\lambda)\omega_{22}^{+}(\lambda)  \omega_{11}^{+}(\lambda)  \gamma_1(\lambda) }{d^{+}(\lambda) d^{-}(\lambda)}  \right|  & \lesssim \lambda^{\frac{1}{4}} , \quad   \left| \frac{\kappa(\lambda)\omega_{21}^{+}(\lambda)  \omega_{11}^{+}(\lambda)  \gamma_1(\lambda) }{d^{+}(\lambda) d^{-}(\lambda)}  \right|   \lesssim \lambda^{\frac{1}{4}} , \\
       \left| \frac{\kappa(\lambda)\omega_{22}^{+}(\lambda)    \gamma_k(\lambda) }{d^{+}(\lambda) d^{-}(\lambda)}  \right|  & \lesssim \lambda^{\frac{1}{4}} , \qquad \quad 
        \left| \frac{\kappa(\lambda)\omega_{21}^{+}(\lambda)  \gamma_k(\lambda) }{d^{+}(\lambda) d^{-}(\lambda)}  \right|   \lesssim \lambda^{\frac{1}{4}} , \quad \text{ for } k=2,3,4.
\end{align*}

Notice that all coefficients exhibit the same bounds. Therefore, we will focus on estimating the first term without a decay factor, as all other terms, with or without decay, can be handled in a similar manner.

  \begin{align*}
&  \int_{\Lambda_0}^{\infty}\left|  \xi^{\frac{1}{2}} < \upvarphi_{11}(\cdot,\lambda)   \chi_0( \sqrt{\xi} \cdot), \phi_2(\cdot) >   \frac{1}{\xi^{\frac{1}{4}} } <   \Psi_{12}^{+}(\cdot,-\lambda) \chi_1( \sqrt{\xi} \cdot) ,  \psi_2(\cdot) > \right|  d\xi 
\\
&\lesssim  \left( \int_{\Lambda_0}^{\infty} \xi \left| \int_0^{\infty}  \upvarphi_{11}(r,\lambda)\chi_0( \sqrt{\xi} r)  \phi_2(r)dr  \right|^2 d \xi \right)^{\frac{1}{2}}
\left( \int_{\Lambda_0}^{\infty} \frac{1}{\sqrt{\xi}} \left|  \Psi_{12}^{+}(r,-\lambda) \chi_1( \sqrt{\xi} r)   \psi_2(r)  dr \right|^2 d \xi \right)^{\frac{1}{2}}
\end{align*}

By applying duality together with Schur's test or using \eqref{L^2-boundsPhi21chi0-large-lambda} from Case 1, we obtain  \begin{align*}
   \left( \int_{\Lambda_0}^{\infty}  \sqrt{\xi} \left| \int_0^{\infty}  \upvarphi_{11}(r,\lambda)\chi_0( \sqrt{\xi} r)  \phi_2(r)dr  \right|^2 d \xi \right)^{\frac{1}{2}} \lesssim \left\|\phi_2\right\|_{L^2_r}
\end{align*}
Moreover, by Lemma \ref{L^2-L^2-boud-Psi-1-large-lambda} we have
\begin{align*}
    \left( \int_{\Lambda_0}^{\infty} \frac{1}{\sqrt{\xi}} \left|  \Psi_{12}^{+}(r,-\lambda) \chi_1( \sqrt{\xi} r)   \psi_2(r)  dr \right|^2 d \xi \right)^{\frac{1}{2}} \lesssim \left\|\psi_2 \right\|_{L^2_r}. 
\end{align*}

This concludes the proof of Claim \ref{claim:case3-nonresonance-large-lambda}.

\end{proof}
\subsubsection{Proof of the resonance case} As in the non-resonant case, the proof is split into two parts, we treat separately the regimes of small and large $\xi$.
 \\
 
\textbf{Step 1: Small $\xi \in (0,\delta_0).$}\\
\textbf{Case 1:} Contribution of $\mathcal{I}_0(\lambda)\mathcal{J}_0(\lambda).$
\begin{claim}
We have
\begin{align*}
  & \int_{0}^{\delta_0}  \left|   \frac{\kappa^{R}(\lambda)}{d^{R,+}(\lambda) d^{R,-}(\lambda)} \mathcal{I}_0^1(\lambda) \mathcal{J}_0^1(\lambda) \right| d\xi \lesssim   \left\| \phi_2 \right\|_{L^2_r}  \left\| \psi_2 \right\|_{L^2_r}\\
   & \int_{0}^{ \delta_0}    \left|  \frac{\kappa^{R}(\lambda)}{d^{R,+}(\lambda) d^{R,-}(\lambda)} \mathcal{I}_0^2(\lambda) \mathcal{J}_0^2(\lambda) \right| d\xi \lesssim   \left\| \phi_2 \right\|_{L^2_r}  \left\| \psi_2 \right\|_{L^2_r}
\end{align*}

\end{claim}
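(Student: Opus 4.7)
\emph{Proof outline.} The argument follows the blueprint of Claim~\ref{case1-contribution-I_0J_0} (the non-resonance Case~1), with two substantive modifications reflecting (i) the different sizes of the Wronskians and (ii) the qualitatively different behavior of $\varphi_2^R$ at large $r$. First, decompose $\Theta^R=\omega_{22}^{R,+}\varphi_1^R-\omega_{21}^{R,+}\varphi_2^R$ per \eqref{def-theta-resonance}, and split $\mathcal{I}_0^1\mathcal{J}_0^1$ into four terms $\mathrm{I}$--$\mathrm{IV}$ exactly as in Claim~\ref{case1-contribution-I_0J_0}. Using Lemmas~\ref{omega_j-behavior-resonance} and \ref{lem:parity-of-omega-j-resonance}, $|\kappa^R(\lambda)|\simeq|\xi|^{1/2}$ and $|d^{R,\pm}(\lambda)|\simeq|\xi|^{1/2}$, we obtain the coefficient bounds
\[
\Bigl|\tfrac{\kappa^R(\omega_{22}^{R,+})^2}{d^{R,+}d^{R,-}}\Bigr|\lesssim |\xi|^{-1/2}e^{-6r_\varepsilon/\sqrt{2}},\ \ \Bigl|\tfrac{\kappa^R\omega_{22}^{R,+}\omega_{21}^{R,+}}{d^{R,+}d^{R,-}}\Bigr|\lesssim |\xi|^{-1/2}e^{-3r_\varepsilon/\sqrt{2}},\ \ \Bigl|\tfrac{\kappa^R(\omega_{21}^{R,+})^2}{d^{R,+}d^{R,-}}\Bigr|\lesssim|\xi|^{-1/2}.
\]

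For terms $\mathrm{I}$, $\mathrm{II}$, $\mathrm{III}$ (those containing at least one factor of $\omega_{22}^{R,+}$, and hence at least one $\varphi_1^R$), the exponential factors $e^{-k r_\varepsilon/\sqrt{2}}$ dominate the growth $|\varphi_{1j}^R|\lesssim e^{3r/\sqrt{2}}$ on $[1,r_\varepsilon]$ with $r_\varepsilon\simeq \varepsilon/\sqrt{\xi}$, so a direct Cauchy--Schwarz in $r$ (as in $\mathrm{I}$--$\mathrm{III}$ of Claim~\ref{case1-contribution-I_0J_0}) produces the desired bound.

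The delicate piece is $\mathrm{IV}$, which involves only $\varphi_2^R$. Apply Cauchy--Schwarz in $\xi$ with symmetric weights $\xi^{-1/4}$:
\[
|\mathrm{IV}|\lesssim \bigl\|\xi^{-1/4}\langle\varphi_{21}^R(\cdot,\lambda)\chi_0(\sqrt{\xi}\cdot),\phi_2\rangle\bigr\|_{L^2_\xi}\bigl\|\xi^{-1/4}\langle\varphi_{22}^R(\cdot,\lambda)\chi_0(\sqrt{\xi}\cdot),\psi_2\rangle\bigr\|_{L^2_\xi}.
\]
The remaining task is therefore to prove that, for $j=1,2$,
\[
\bigl\|\xi^{-1/4}\langle\varphi_{2j}^R(\cdot,\lambda)\chi_0(\sqrt{\xi}\cdot),\phi\rangle\bigr\|_{L^2_\xi(0,\delta_0)}\lesssim \|\phi\|_{L^2_r}.
\]
After the change of variables $\zeta=\sqrt{\xi}$ this reduces to the $L^2_r\to L^2_\zeta$ boundedness of
$Tv(\zeta):=\int_0^\infty\varphi_{2j}^R(r,\lambda(\zeta^2))\chi_0(\zeta r)v(r)\,dr$.
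Split $\varphi_{2j}^R=\varphi_{2j}^R\mathbb{1}_{r<1}+\varphi_{2j}^R\mathbb{1}_{r\geq 1}$. The contribution from $r<1$ is easy: by Proposition~\ref{prop:contraction-small-xi-resonance} we have $|\varphi_{2j}^R(r,\lambda)|\lesssim r^{3/2}$ uniformly in $\lambda$, so $\|\varphi_{2j}^R\mathbb{1}_{r<1}\|_{L^2_r}\lesssim 1$ and $\int_0^{\delta_0}\xi^{-1/2}\,d\xi<\infty$.

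For $r\geq 1$ we use the distinctive resonance feature: Proposition~\ref{prop:contraction-small-xi-resonance} guarantees that $\varphi_{2j}^R(r,\lambda)=\tilde{c}_{2j}+R(r,\lambda)$ with $\tilde{c}_{2j}$ constant in $(r,\lambda)$ and $|R(r,\lambda)|\lesssim r^{-1}$ uniformly in $\lambda\in(-\sqrt{17}/8+\delta_0,\sqrt{17}/8+\delta_0)$ and $r\in[1,R_0]$. For the constant piece, direct computation with Fubini gives
\[
\int_0^{\delta_0}\xi^{-1/2}\Bigl|\int_1^{\tilde\varepsilon/\sqrt\xi}\phi(r)\,dr\Bigr|^2 d\xi = 2\tilde\varepsilon\int_1^\infty\!\!\int_1^\infty \phi(r)\phi(s)\frac{dr\,ds}{\max(r,s)},
\]
and the kernel $\mathbf{1}_{r,s\geq1}/\max(r,s)$ defines a bounded operator on $L^2((0,\infty))$ via the standard Hardy inequalities for $\phi\mapsto r^{-1}\int_1^r\phi(s)\,ds$ and its adjoint $\phi\mapsto \int_r^\infty\phi(s)/s\,ds$. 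The remainder $R(r,\lambda)\mathbb{1}_{r\geq 1}$ belongs to $L^2_r$ uniformly in $\xi$ and is integrable against $\xi^{-1/2}d\xi$ after accounting for the $r^{-1}$ decay, so a standard Schur test handles that piece.

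The principal obstacle is the large-$r$ regime of $\mathrm{IV}$: in contrast to the non-resonance case (where $|\varphi_2(r,\lambda)|\lesssim r$ combined with the $\chi_0$-cutoff at scale $1/\sqrt{\xi}$ gives $|K(\zeta,\sigma)|\lesssim \max(\zeta,\sigma)/\min(\zeta,\sigma)^2$ and hence the needed Schur bound), the resonance solution $\varphi_2^R$ remains bounded as $r\to\infty$, so the analogous kernel only satisfies $|K(\zeta,\sigma)|\lesssim\min(\zeta,\sigma)^{-1}$ and the direct Schur test would lose a logarithm. The Hardy-type identity above extracts the precise scaling between the $\xi^{-1/2}$ weight and the cutoff scale $r\sim 1/\sqrt{\xi}$ and exactly recovers the $L^2$ bound. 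The proof for $\mathcal{I}_0^2\mathcal{J}_0^2$ is entirely parallel.
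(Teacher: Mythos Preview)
Your overall structure matches the paper: the decomposition into four terms $\mathrm{I}$--$\mathrm{IV}$, the coefficient bounds
\[
\Bigl|\tfrac{\kappa^R(\omega_{21}^{R,+})^2}{d^{R,+}d^{R,-}}\Bigr|\lesssim|\xi|^{-1/2},
\]
the identification of $\mathrm{IV}$ as the delicate term, and the Cauchy--Schwarz step in $\xi$ with symmetric weights $\xi^{-1/4}$ are all exactly what the paper does. The difference is in how you prove the resulting $L^2_r\to L^2_\zeta$ bound for $Tv(\zeta)=\int_0^\infty\varphi_{2j}^R(r,\lambda(\zeta^2))\chi_0(\zeta r)v(r)\,dr$. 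The paper does not split $\varphi_2^R$ into constant plus remainder; instead it performs a dyadic decomposition of the cutoff $\chi_0=\sum_{j\ge0}\eta_j$ with $\operatorname{supp}\eta_j\subset[2^{-(j+1)},2^{-j}]$, writes $T=\sum_jT_j$, and establishes $\|T_j^*T_k\|_{L^2\to L^2}\lesssim 2^{-(j+k)/2}$ via Schur's test on the kernel $K(r,s)$ (using only $|\varphi_{2j}^R|\lesssim 1$). Summing over $j,k$ then gives the bound. Your Hardy-inequality route is more direct and elementary for this specific term; the paper's dyadic approach is more uniform with the Cotlar--Stein machinery used elsewhere (Lemma~\ref{L^2-L^2-boud-Psi-1}).

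One small gap: your claim that $\varphi_{2j}^R(r,\lambda)=\tilde c_{2j}+R(r,\lambda)$ with $|R|\lesssim r^{-1}$ uniformly in $\lambda$ is not what Proposition~\ref{prop:contraction-small-xi-resonance} delivers. The zero-energy solution $\varphi_2^{R,+}$ does have this form, but the fixed-point correction $\upvarphi_2^R-\varphi_2^{R,+}$ is only controlled in the $X_2^R$ norm, i.e.\ $\sup_{1\le r\le R_0}|\upvarphi_2^R-\varphi_2^{R,+}|\lesssim\varepsilon_0$, with no $r^{-1}$ decay. This does not damage your argument, however: since all you ultimately need is $|\varphi_{2j}^R(r,\lambda)|\lesssim 1$ on $[1,R_0]$, you can skip the decomposition and apply your Hardy computation directly to $\int_0^{\delta_0}\xi^{-1/2}\bigl(\int_0^{\tilde\varepsilon/\sqrt\xi}|\phi(r)|\,dr\bigr)^2d\xi$.
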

\begin{proof}
As in the proof of Claim \ref{case1-contribution-I_0J_0}, it suffices to establish the estimate for $\mathcal{I}_0^1(\lambda)\mathcal{J}_0^1(\lambda)$ and the corresponding bound for $\mathcal{I}_0^2(\lambda)\mathcal{J}_0^2(\lambda)$ follows in the same way. We denote $\upvarphi_1^R(r,\lambda)=\begin{pmatrix}
    \upvarphi_{11}(r,\lambda) \\
    \upvarphi_{12}(r,\lambda)
\end{pmatrix}$
and $\upvarphi_2^R(r,\lambda)=\begin{pmatrix}
    \upvarphi_{21}(r,\lambda) \\
    \upvarphi_{22}(r,\lambda)
\end{pmatrix}.$ By Lemma \ref{Resol-interms-phi-resonance}, we have $
        \Theta^R(r,\lambda) := \omega_{22}^{R,+}(\lambda) \upvarphi_1^R(r,\lambda) - w_{21}^{R,+}(\lambda) \upvarphi_2^R(r,\lambda). $ Thus, we have 
    \begin{align*}
    & \int_{0}^{\delta_0}    \frac{\kappa^{R}(\lambda)}{d^{R,+}(\lambda) d^{R,-}(\lambda)} \mathcal{I}_0^1(\lambda) \mathcal{J}_0^1(\lambda) d\lambda \\
 &= \int_{0}^{\delta_0}   \frac{\kappa^{R}(\lambda)}{d^{R,+}(\lambda) d^{R,-}(\lambda)} < \omega_{22}^{R,+} (\lambda)\upvarphi_{11}(\cdot,\lambda)   \chi_0( \sqrt{\xi} \cdot), \phi_2(\cdot) > < \omega_{22}^{R,+}(\lambda) \upvarphi_{12}(\cdot,\lambda)   \chi_0( \sqrt{\xi} \cdot), \psi_2(\cdot) > d\lambda \\
& + \int_{0}^{\delta_0}   \frac{\kappa^{R}(\lambda)}{d^{R,+}(\lambda) d^{R,-}(\lambda)} < \omega_{22}^{R,+} (\lambda)\upvarphi_{11}(\cdot,\lambda)   \chi_0( \sqrt{\xi} \cdot), \phi_2(\cdot) > <   \omega_{21}^{R,+}(\lambda) \upvarphi_{22}(\cdot,\lambda)  \chi_0( \sqrt{\xi} \cdot), \psi_2(\cdot) >  d\lambda \\
& + \int_{0}^{\delta_0}   \frac{\kappa^{R}(\lambda)}{d^{R,+}(\lambda) d^{R,-}(\lambda)} <   \omega_{21}^{R,+}(\lambda) \upvarphi_{21}(\cdot,\lambda)  \chi_0( \sqrt{\xi} \cdot), \phi_2(\cdot) >  < \omega_{22}^{R,+}(\lambda) \upvarphi_{12}(\cdot,\lambda)   \chi_0( \sqrt{\xi} \cdot), \psi_2(\cdot) > d\lambda \\
&+  \int_{0}^{\delta_0}   \frac{\kappa^{R}(\lambda)}{d^{R,+}(\lambda) d^{R,-}(\lambda)}  <   \omega_{21}^{R,+}(\lambda) \upvarphi_{21}(\cdot,\lambda)  \chi_0( \sqrt{\xi} \cdot), \phi_2(\cdot) >  <   \omega_{21}^{R,+}(\lambda) \upvarphi_{22}(\cdot,\lambda)  \chi_0( \sqrt{\xi} \cdot), \psi_2(\cdot) > d\lambda\\
&:=\mathrm{I}+ \mathrm{II}+ \mathrm{III} + \mathrm{IV}
\end{align*}
By Lemma \ref{omega_j-behavior-resonance}, \ref{lem:parity-of-omega-j-resonance}, Proposition \ref{jump-resol-resonance}, we have 
\begin{align*}
 &\left| \frac{\kappa^R(\lambda) (\omega_{22}^{R,+}(\lambda))^2(\xi) }{d^{R,+}(\lambda) d^{R,-}(\lambda)}  \right|  \lesssim \frac{1}{\sqrt{|\xi|}}  e^{-\frac{6}{\sqrt{2}} r_{\varepsilon}} , \quad
 \left| \frac{\kappa^R(\lambda)\omega_{22}^{R,+}(\lambda) \omega_{21}^{R,+}(\lambda) }{d^{R,+}(\lambda) d^{R, -}(\lambda)}  \right|   \lesssim \frac{1}{\sqrt{|\xi|}}e^{-\frac{3}{\sqrt{2}} r_{\varepsilon}},\\
& \quad  \left| \frac{\kappa^R(\lambda)( \omega_{21}^{R,+}(\lambda))^2 }{d^{R,+}(\lambda) d^{R,-}(\lambda)}  \right|   \lesssim \frac{1}{\sqrt{|\xi|}}.
\end{align*} 
Recall that $\upvarphi_1^R(r,\lambda)$ and $\upvarphi_1^R(r,\lambda)$ satisfies, 
\begin{align*}
    | \upvarphi_1^R(r,\lambda) | &\lesssim r^{\frac{3}{2}}, \qquad  \quad   | \upvarphi_2^R(r,\lambda) | \lesssim r^{\frac{3}{2}} \quad \text{ for small } r , \\ 
   | \upvarphi_1^R(r,\lambda) | &\lesssim e^{\frac{3}{\sqrt{2}}r}, \qquad   | \upvarphi_2^R(r,\lambda) | \lesssim 1  \quad \text{ for large  } r \leq r_{\varepsilon} . 
\end{align*}

We will only estimate the most delicate term $\mathrm{IV}.$ We have 
\begin{align*}
| \mathrm{IV}|   &\lesssim  \int_{0}^{\delta_0} \frac{1}{\sqrt{\xi}}  <  \upvarphi_{21}(\cdot,\lambda)  \chi_0( \sqrt{\xi} \cdot),  \phi_2(\cdot) >  <   \upvarphi_{22}(\cdot,\lambda)  \chi_0( \sqrt{\xi} \cdot), \psi_2(\cdot) > d\xi \\
& \lesssim  \left\| \frac{1}{\xi^{\frac{1}{4}}}   <  \upvarphi_{21}(\cdot,\lambda)  \chi_0( \sqrt{\xi} \cdot),  \phi_2(\cdot) >    \right\|_{L^2_\xi(0,\delta_0)} 
\left\| \frac{1}{\xi^{\frac{1}{4}}}    <   \upvarphi_{22}(\cdot,\lambda)  \chi_0( \sqrt{\xi} \cdot), \psi_2(\cdot) >   \right\|_{L^2_\xi(0,\delta_0)}  
\end{align*}

Next, we estimate the first term separately $\left\| \frac{1}{\xi^{\frac{1}{4}}}   <  \upvarphi_{21}(\cdot,\lambda)  \chi_0( \sqrt{\xi} \cdot),  \phi_2(\cdot) >    \right\|_{L^2_\xi(0,\delta_0)} .$ Using a change of variable $\zeta=\sqrt{\xi}$, we obtain 
\begin{align*}
    \int_0^{\delta_0}  \frac{1}{\sqrt{\xi}} \left( \int_0^{\infty} \upvarphi_{21}(r,\lambda)  \chi_0( \sqrt{\xi} r)  \phi_2(r) dr \right)^2 d\xi = 2 \int_0^{\sqrt{\delta_0}}   \left( \int_0^{\infty} \upvarphi_{21}(r,\lambda(\zeta^2))  \chi_0( \zeta r)  \phi_2(r) dr \right)^2 d\zeta .
\end{align*}
We define the operator  
\begin{align*}
T v (\zeta)  :=  \int_0^{\infty}   \upvarphi_{21}(r,\lambda(\zeta^2))  \chi_0( \zeta r) v(r) dr   . 
\end{align*}
Setting $\chi_0(x)= \displaystyle \sum_{j=0}^{\infty} \eta_j(x),$ where $supp(\eta_j) \subseteq [2^{-(j+1)},2^{-j}],$ we decompose the operator $T=\sum_{j=0}^{\infty} T_j$ where 
\begin{align*}
    T_j v (\zeta)  :&=  \int_0^{\infty}   \upvarphi_{21}(r,\lambda(\zeta^2))  \eta_j( \zeta r) v(r) dr, \quad 0\leq \zeta \leq \sqrt{\delta_0}, \\
   T^{\ast}_j w (r):&= \int_0^{\sqrt{\delta_0}}    \upvarphi_{21}(r,\lambda(\sigma^2))   \eta_j( \sigma r)  w(\sigma) d\sigma.
\end{align*}
Next, we estimate the operator norms $ ||  T^{\ast}_j  T_k ||_{L^2_{\lambda} \to L^2_{\lambda}}.$ Indeed, we have 
\begin{align*}
   T^{\ast}_j  T_k v(r) &= \int_0^{\sqrt{\delta_0}}\int_0^{\infty}   \upvarphi_{21}(r,\lambda(\sigma^2))  \eta_j( \sigma r)      \upvarphi_{21}(s,\lambda(\zeta^2))  \eta_k( \sigma s) v(s) ds d\sigma \\ 
   &= \int_0^{\infty}  K(r,s) v(s)ds, 
\end{align*}
where $K(r,s):= \displaystyle \int_0^{\sqrt{\delta_0}} \upvarphi_{21}(r,\lambda(\sigma^2))        \upvarphi_{21}(s,\lambda(\zeta^2)) \eta_j( \sigma r)   \eta_k( \sigma s) d\sigma.$
Using the asymptotic behavior $\upvarphi_{21}$ for small and large $r,$ we obtain  
\begin{align*}
   \int_0^{\infty} |K(r,s)| ds & \lesssim    \int_0^{\sqrt{\delta_0}}    \eta_j( \sigma r) \int_0^{\infty}   \eta_k( \sigma s) ds  d\sigma\\
&\lesssim  \int_{\frac{2^{-(j+1)}}{r}}^{\frac{2^{-j}}{r}}  \int_{\frac{2^{-(k+1)}}{\sigma}}^{\frac{2^{-k}}{\sigma}}  ds d\sigma \\
& \lesssim  2^{-k} \int_{\frac{2^{-(j+1)}}{r}}^{\frac{2^{-j}}{r}}  \frac{1}{\sigma}  d\sigma \lesssim 2^{-k} .
\end{align*}
By symmetry, we obtain $  \int_0^{\infty} |K(r,s)| dr \lesssim 2^{-j}.$ Therefore the norms $\left\| K(r,\cdot) \right|_{L^2(ds)}$ and $\left\| K(\cdot,s) \right|_{L^2(dr)}$ are uniformly bounded by $2^{-k}$ and $2^{-j},$ respectively. Then by by Shur's test, we obtain 
\begin{equation*}
  \left\|  T^{\ast}_j  T_k \right\|_{L^2_{\lambda} \to L^2_{\lambda}} \lesssim 2^{-\frac{k}{2}} 2^{-\frac{j}{2}}.
\end{equation*}
Then by duality we obtain, 
\begin{align*}
    <T \phi_2 ,T\phi_2>= \sum_{j,k=0}^{\infty} <\phi_2,  T^{\ast}_j  T_k  \phi_2>   \lesssim \sum_{j,k=0}^{\infty} 2^{-\frac{k}{2}} 2^{-\frac{j}{2}} \left\| \phi_2 \right\|_{L^2}^2 \lesssim  \left\| \phi_2 \right\|_{L^2}^2 
\end{align*}
Therefore, we get 
\begin{align}
\label{eq:L^2-boundsPhi21chi0-samll-lambda-resonance}
   \left\| \frac{1}{\xi^{\frac{1}{4}}}   <  \upvarphi_{21}(\cdot,\lambda)  \chi_0( \sqrt{\xi} \cdot),  \phi_2(\cdot) >    \right\|_{L^2_\xi(0,\delta_0)}   \lesssim \left\| \phi_2 \right\|_{L^2}
\end{align}
Similarly, we obtain 
\begin{align*}
   \left\| \frac{1}{\xi^{\frac{1}{4}}}   <  \upvarphi_{22}(\cdot,\lambda)  \chi_0( \sqrt{\xi} \cdot),  \phi_2(\cdot) >    \right\|_{L^2_\xi(0,\delta_0)}   \lesssim \left\| \phi_2 \right\|_{L^2}.
\end{align*}
This concludes the estimates for $\mathrm{IV}.$ All the other terms come with an additional decaying factor and can be estimated similarly, we omit the details.
\end{proof}
\textbf{Case 2: } Contribution of $\mathcal{I}_1(\lambda)\mathcal{J}_1(\lambda).$
 \begin{claim}
 \label{Claim-case_2-resonance} 
 We have
\begin{align*}
  & \int_{0}^{\delta_0} \left|   \frac{\kappa^{R}(\lambda)}{d^{R,+}(\lambda) d^{R,-}(\lambda)} \mathcal{I}_1^1(\lambda) \mathcal{J}_1^k(\lambda) \right| d\xi \lesssim   \left\| \phi_2 \right\|_{L^2_r}  \left\| \psi_2 \right\|_{L^2_r}, \quad \text{ for } \;  k=1,2. \\
   & \int_{0}^{ \delta_0}    \left| \frac{\kappa^{R}(\lambda)}{d^{R,+}(\lambda) d^{R,-}(\lambda)} \mathcal{I}_1^2(\lambda) \mathcal{J}_1^k(\lambda) \right| d\xi \lesssim   \left\| \phi_2 \right\|_{L^2_r}  \left\| \psi_2 \right\|_{L^2_r} , \quad \text{ for } \;  k=1,2. 
\end{align*}

\end{claim}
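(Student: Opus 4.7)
The plan is to mirror the strategy of Claim~\ref{Claim-step1-case2} in the non-resonance case. As there, it suffices to estimate $\mathcal{I}_1^1(\lambda)\mathcal{J}_1^1(\lambda)$; the other three products follow by symmetric arguments. Using the representation of $\Theta^R$ given in Lemma~\ref{Theta-interms-psi-resonance}, I will expand the inner products appearing in $\mathcal{I}_1^1(\lambda)$ and $\mathcal{J}_1^1(\lambda)$ so that $\mathcal{I}_1^1(\lambda)\mathcal{J}_1^1(\lambda)$ is written as a finite sum over pairs $(i,j)\in\{1,2,3,4\}^2$ of terms of the form
\begin{equation*}
\tfrac{\kappa^R(\lambda)}{d^{R,+}(\lambda)d^{R,-}(\lambda)}\,\gamma_i^R(\lambda)\gamma_j^R(\lambda)\,\mathcal{A}_i(\lambda;\phi_2)\,\mathcal{B}_j(\lambda;\psi_2),
\end{equation*}
where $\mathcal{A}_i,\mathcal{B}_j$ are inner products of $\phi_2,\psi_2$ against $\chi_1(\sqrt{\xi}\cdot)$ times the components of either $\sigma_3\Psi_1^{+}(\cdot,-\lambda)$, $\Psi_1^{+}(\cdot,\lambda)$, or $\Psi_2^{+}(\cdot,\lambda)$. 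For the pairs $(1,\,\cdot)$ and $(\cdot,1)$ the additional factor $\omega_{11}^{R,+}(\pm\lambda)$ from the first line of~\eqref{theta_in_terms-Psi-resonance} must be carried along.

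Next, I would tabulate the size of each prefactor using $|\kappa^R(\lambda)|\simeq\sqrt{|\xi|}$ and $|d^{R,\pm}(\lambda)|\simeq\sqrt{|\xi|}$ (so $|\kappa^R/(d^{R,+}d^{R,-})|\simeq|\xi|^{-1/2}$) together with Lemma~\ref{Theta-interms-psi-resonance} and Lemma~\ref{omega_j-behavior-resonance}. The outcome is that the genuinely dangerous, non-decaying piece comes from the $(2,3)$-, $(3,2)$-, $(2,2)$- and $(3,3)$-pairings, whose coefficients are all $O(|\xi|^{-1/2})$; all other pairings carry either an exponential decay factor $e^{-c r_\varepsilon}$ with $c>0$ (from $\gamma_1^R$ or $\omega_{11}^{R,+}$) or, when a $\gamma_4^R$ appears, a potential growth $e^{\frac{3}{\sqrt{2}}\frac{2r_\varepsilon}{5}}$ or $e^{\frac{3}{\sqrt{2}}\frac{4r_\varepsilon}{5}}$ that is to be absorbed by the exponential decay $|\Psi_2^{+}(r,\lambda)|\lesssim e^{-\frac{3}{\sqrt{2}}r}$ for $r\gtrsim r_\infty$ in the $\mathcal{B}_j$ or $\mathcal{A}_i$ factor.

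For the main (non-decaying) pairings I would use the Cauchy--Schwarz inequality in $\xi$ together with Lemma~\ref{L^2-L^2-boud-Psi-1}, which controls $\|\,|\xi|^{-1/4}\langle\Psi_1^{+}(\cdot,\pm\lambda)\chi_1(\sqrt{\xi}\cdot),f\rangle\,\|_{L^2_\xi(0,\delta_0)}$ by $\|f\|_{L^2_r}$, so that each of the four main contributions is bounded by $\|\phi_2\|_{L^2_r}\|\psi_2\|_{L^2_r}$. For the $\gamma_4^R$ terms I would apply the $L^2_r$ estimate for $\Psi_2^{+}(r,\lambda)\chi_1(\sqrt{\xi}\cdot)$ from Lemma~\ref{L^2-bounds-of-Psi}, which gives $O(e^{-\frac{3}{\sqrt{2}}\frac{9\tilde{\varepsilon}}{10\sqrt{|\xi|}}})$; since $6\tilde{\varepsilon}<\varepsilon$ and $r_\varepsilon=\varepsilon/\sqrt{|\xi|}$, this decay beats any $e^{c r_\varepsilon}$ growth with $c\le\frac{3}{\sqrt{2}}\frac{4}{5}$, and a direct Cauchy--Schwarz in $\xi$ then yields the desired bound. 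For the $\gamma_1^R$-type pairings the exponential decay $e^{-c r_\varepsilon}$ is integrable against any polynomial in $|\xi|^{-1}$ over $\xi\in(0,\delta_0)$, and a Cauchy--Schwarz argument based on the decomposition $\Psi_1^{+}=\Uppsi^{+}_{(\infty,1)}(1+O(|\xi|^{-1/2}e^{-2r_\infty}))+\Upupsilon_1$ from Lemma~\ref{L^2-bounds-of-Psi} again suffices.

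The main obstacle I anticipate is the bookkeeping for the $\gamma_4^R$ pairings: although the pointwise coefficient bound now allows a growing factor up to $e^{\frac{3}{\sqrt{2}}\frac{4r_\varepsilon}{5}}$, the exponent $r_\varepsilon\simeq\varepsilon/\sqrt{|\xi|}\to\infty$ as $\xi\to 0$, so it is crucial that the parameters are chosen with $6\tilde{\varepsilon}<\varepsilon$ (which the paper fixes from the outset) in order that the $L^2_r$ estimate from Lemma~\ref{L^2-bounds-of-Psi} produces a truly summable factor in $\xi$. All other pairings are comparatively routine, reducing, as in the non-resonance case, to a combination of Cauchy--Schwarz with the $L^2$-type bounds of Lemmas~\ref{L^2-bounds-of-Psi} and~\ref{L^2-L^2-boud-Psi-1}. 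Finally, the estimate for $\mathcal{I}_1^2(\lambda)\mathcal{J}_1^k(\lambda)$ follows by the same scheme after interchanging the roles of the components $\Theta_1^R$ and $\Theta_2^R$, noting that their parity in $\lambda$ is irrelevant for absolute-value estimates.
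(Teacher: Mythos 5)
Your proposal follows essentially the same route as the paper's proof: reduce to $\mathcal{I}_1^1\mathcal{J}_1^1$, expand via the representation of $\Theta^R$ from Lemma~\ref{Theta-interms-psi-resonance}, tabulate the prefactor sizes using $|\kappa^R|\simeq\sqrt{|\xi|}$ and $|d^{R,\pm}|\simeq\sqrt{|\xi|}$, and close the non-decaying pairings with Cauchy--Schwarz together with Lemmas~\ref{L^2-bounds-of-Psi} and~\ref{L^2-L^2-boud-Psi-1}, which is exactly how the paper proceeds by reduction to the argument of Claim~\ref{Claim-step1-case2}. One caveat on the $\gamma_4^R$ pairings: your justification reads the parameter inequality backwards --- since $6\tilde{\varepsilon}<\varepsilon$, the decay $e^{-\frac{3}{\sqrt{2}}\frac{9\tilde{\varepsilon}}{10\sqrt{|\xi|}}}$ furnished by Lemma~\ref{L^2-bounds-of-Psi} is \emph{weaker}, not stronger, than the growth $e^{\frac{3}{\sqrt{2}}\frac{4}{5}r_\varepsilon}$ with $r_\varepsilon\simeq\varepsilon/\sqrt{|\xi|}$ (one would need $9\tilde{\varepsilon}>4\varepsilon$ for the exponentials to cancel), so the sentence as you wrote it does not follow; the paper is equally terse at this step, so this is not a divergence from its argument, but if you want an airtight write-up you should either sharpen the bound on $\gamma_4^R$ or track the full exponential weight of $\Psi_2^{+}$ inside the inner product rather than only over the support of $\chi_1$.
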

\begin{proof}
We only need to establish the estimate for $\mathcal{I}_1^1(\lambda)\,\mathcal{J}_1^1(\lambda),$ the other cases can be treated analogously. Recall that from Lemma \ref{Theta-interms-psi-resonance}, we have 
    \begin{align*}
     \Theta^R(r,\lambda)& =\gamma_1^{R}(\lambda) \left( \omega^{R,+}_{11}(\lambda) \sigma_3  \Psi_1^{R,+}(r,-\lambda)  +   \omega^{R,+}_{11}(-\lambda)  \Psi^{R,+}_1(r,\lambda) \right)
  \\
 & + \gamma_2^{R}(\lambda)   (\sigma_3) \Psi_1^{R,+}(r,-\lambda)  + \gamma_3^{R}(\lambda)  \Psi^{R,+}_1(r,\lambda) + \gamma_4^{R}(\lambda)  \Psi^{R,+}_2(r,\lambda).
\end{align*}
By a direct computation, similar to that in the proof of Claim \ref{Claim-step1-case2}, we obtain the following decomposition
\begin{align*}
   \int_{0}^{\delta_0}    \frac{\kappa^{R}(\lambda)}{d^{R,+}(\lambda) d^{R,-}(\lambda)} \mathcal{I}_1^1(\lambda) \mathcal{J}_1^1(\lambda) d\xi=:= \mathrm{I}^{R}+ \mathrm{II}^{R}+ \mathrm{III}^{R} + \mathrm{IV}^{R}.
\end{align*}
Note that by Lemma \ref{omega_j-behavior-resonance} and \ref{Theta-interms-psi-resonance} together with the fact that $|\kappa^{R}(\lambda)|\simeq \sqrt{|\xi|} $ we have 
\begin{align*}
     \left| \frac{\kappa^{R}(\lambda) \gamma_1^R(\lambda)^2 }{d^{R,+}(\lambda) d^{R,-}(\lambda)}  \right|  &\lesssim \frac{1}{|\xi|^{\frac{3}{2}}}e^{-\frac{6}{\sqrt{2}} r_{\varepsilon}}  , \quad  \left| \frac{\kappa^{R}(\lambda) \gamma_2^R(\lambda)^2 }{d^{R,+}(\lambda) d^{R,-}(\lambda)}  \right|  \lesssim \frac{1}{\sqrt{|\xi|}} ,  \quad  \left| \frac{\kappa^{R}(\lambda) \gamma_3^R(\lambda)^2 }{d^{R,+}(\lambda) d^{R,-}(\lambda)}  \right|  \lesssim \frac{1}{\sqrt{|\xi|}}, \\
     \left| \frac{\kappa^{R}(\lambda) \gamma_4^R(\lambda)^2 }{d^{R,+}(\lambda) d^{R,-}(\lambda)}  \right|  &\lesssim \frac{1}{\sqrt{|\xi|}}  e^{\frac{3}{\sqrt{2}} \frac{4 r_{\epsilon}}{5}}  
         \left| \frac{\kappa^{R}(\lambda) \gamma_1^R(\lambda) \gamma_2^R(\lambda) }{d^{R,+}(\lambda) d^{R,-}(\lambda)}  \right|  \lesssim  \frac{1}{|\xi|}  e^{-\frac{3}{\sqrt{2}} r_{\varepsilon}}  , \quad   \left| \frac{\kappa^R(\lambda) \gamma_1^R(\lambda) \gamma_3^R(\lambda) }{d^{R,+}(\lambda) d^{R,-}(\lambda)}  \right|  \lesssim \frac{1}{|\xi|} e^{-\frac{3}{\sqrt{2}} r_{\varepsilon}}  , \\
          \left| \frac{\kappa^R(\lambda) \gamma_1^R(\lambda) \gamma_4^R(\lambda) }{d^{R,+}(\lambda) d^{R,-}(\lambda)}  \right|  &\lesssim \frac{1}{|\xi|} e^{-\frac{3}{\sqrt{2}} \frac{3}{5} r_{\varepsilon}} 
      \left| \frac{\kappa^R(\lambda) \gamma_2^R(\lambda) \gamma_3^R(\lambda) }{d^{R,+}(\lambda) d^{R,-}(\lambda)}  \right|  \lesssim \frac{1}{\sqrt{|\xi|}}, \quad   \left| \frac{\kappa^R(\lambda) \gamma_2^R(\lambda) \gamma_4^R(\lambda) }{d^{R,+}(\lambda) d^{R,-}(\lambda)}  \right|  \lesssim \frac{1}{\sqrt{|\xi|}}  e^{\frac{3}{\sqrt{2}} \frac{2r_{\varepsilon}}{5}}  , \\   \left| \frac{\kappa^R(\lambda) \gamma_3^R(\lambda) \gamma_4^R(\lambda) }{d^{R,+}(\lambda) d^{R,-}(\lambda)}  \right|  & \lesssim  \frac{1}{\sqrt{|\xi|}} e^{\frac{3}{\sqrt{2}} \frac{2r_{\varepsilon}}{5}} 
\end{align*}

Notice that the main terms are the one that has $\left| \frac{\kappa^{R}(\lambda) \gamma_i^{R}(\lambda) \gamma_j^{R}(\lambda) }{d^{R,+}(\lambda) d^{R,-}(\lambda)}  \right| \lesssim \frac{1}{\sqrt{|\xi|}}, $ and $\left| \frac{\kappa^{R}(\lambda) \gamma_i^{R}(\lambda) \gamma_4^{R}(\lambda) }{d^{R,+}(\lambda) d^{R,-}(\lambda)}  \right| \lesssim \frac{1}{\sqrt{|\xi|}} e^{\frac{3}{\sqrt{2}} \frac{4r_{\varepsilon}}{5}},   $ for some $i,j \in \{1,2,3,4 \}.$ Theses terms can be estimates exactly as in Claim \ref{Claim-step1-case2} using Lemma \ref{omega_j-behavior-resonance} together with Lemma \ref{L^2-bounds-of-Psi}and \ref{L^2-L^2-boud-Psi-1}.
\end{proof}
\textbf{Case 3:} Contribution of $\mathcal{I}_0(\lambda)\mathcal{J}_1(\lambda).$
 \begin{claim}
 \label{claim:case3-nonresonance-small-lambda-resonance}
 We have
\begin{align*}
  & \int_{0}^{\delta_0}   \bigg| \frac{\kappa^{R}(\lambda)}{d^{R,+}(\lambda) d^{R,-}(\lambda)} \mathcal{I}_0^1(\lambda) \mathcal{J}_1^k(\lambda)  \bigg| d\xi \lesssim   \left\| \phi_2 \right\|_{L^2_r}  \left\| \psi_2 \right\|_{L^2_r} , \quad \text{ for } \;  k=1,2.    \\
   & \int_{0}^{ \delta_0}   \bigg| \frac{\kappa^R(\lambda)}{d^{R,+}(\lambda) d^{R,-}(\lambda)} \mathcal{I}_0^2(\lambda) \mathcal{J}_1^k(\lambda) \bigg| d\xi  \lesssim   \left\| \phi_2 \right\|_{L^2_r}  \left\| \psi_2 \right\|_{L^2_r}, \quad \text{ for } \;  k=1,2. 
\end{align*} 
\end{claim}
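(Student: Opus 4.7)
The plan is to mirror the proof of Claim~\ref{claim:case3-nonresonance-small-lambda} from the non-resonance case, substituting the resonance-case coefficient estimates. By the symmetries in the roles of $\phi_l$ and $\psi_l$, it suffices to treat $\mathcal{I}_0^1(\lambda)\mathcal{J}_1^1(\lambda)$. I would expand $\mathcal{I}_0^1$ using the representation $\Theta^R=\omega_{22}^{R,+}\varphi_1^R-\omega_{21}^{R,+}\varphi_2^R$ from Lemma~\ref{Resol-interms-phi-resonance}, and expand $\mathcal{J}_1^1$ using the representation of $\Theta^R$ in terms of $\Psi_j^{R,+}$ from Lemma~\ref{Theta-interms-psi-resonance}. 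Multiplying out gives a finite sum of products of the form
\[
\tfrac{\kappa^R(\lambda)\,\omega_{2j}^{R,+}(\lambda)\,\gamma_m^R(\lambda)}{d^{R,+}(\lambda)d^{R,-}(\lambda)}\,\big\langle\varphi_{2j}^R(\cdot,\lambda)\chi_0(\sqrt{\xi}\cdot),\phi_2\big\rangle\,\big\langle\Psi^{R,+}_{1\ell}(\cdot,\pm\lambda)\chi_1(\sqrt{\xi}\cdot),\psi_2\big\rangle,
\]
plus analogues in which $\varphi_1^R$ replaces $\varphi_2^R$ or in which $\Psi_2^{R,+}$ replaces $\Psi_1^{R,+}$.

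Next I would estimate the $\xi$-size of each coefficient using Lemma~\ref{omega_j-behavior-resonance}, Lemma~\ref{Theta-interms-psi-resonance}, Proposition~\ref{jump-resol-resonance} (giving $|\kappa^R|\simeq\sqrt{|\xi|}$), and the estimate $|d^{R,\pm}|\simeq\sqrt{|\xi|}$. The coefficients without exponential decay in $r_\varepsilon=\varepsilon/\sqrt{|\xi|}$ are exactly
\[
\Bigl|\tfrac{\kappa^R\,\omega_{21}^{R,+}\,\gamma_m^R}{d^{R,+}d^{R,-}}\Bigr|\lesssim\tfrac{1}{\sqrt{|\xi|}}\ (m=2,3),\qquad \Bigl|\tfrac{\kappa^R\,\omega_{21}^{R,+}\,\gamma_4^R}{d^{R,+}d^{R,-}}\Bigr|\lesssim\sqrt{|\xi|}\,e^{\frac{3}{\sqrt2}\frac{2r_\varepsilon}{5}},
\]
paired with $\langle\varphi_{21}^R\chi_0,\phi_2\rangle$ times $\langle\Psi_{12}^{R,+}(\cdot,\pm\lambda)\chi_1,\psi_2\rangle$, respectively $\langle\Psi_{22}^{R,+}(\cdot,\lambda)\chi_1,\psi_2\rangle$; the same list arises with $\varphi_{22}^R$ in place of $\varphi_{21}^R$. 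All other coefficient combinations pick up at least a factor $e^{-cr_\varepsilon}$ from either $\omega_{22}^{R,+}$ or $\omega_{11}^{R,+}\gamma_1^R$.

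For the leading ($m=2,3$) terms I would split the weight as $\tfrac{1}{\sqrt{|\xi|}}=\xi^{-1/4}\cdot\xi^{-1/4}$, apply Cauchy--Schwarz in $\xi$, and bound each factor separately: the $\varphi^R$-factor is controlled by the Schur-test estimate \eqref{eq:L^2-boundsPhi21chi0-samll-lambda-resonance} (and its identical counterpart for $\varphi_{22}^R$), yielding $\|\xi^{-1/4}\langle\varphi_{2j}^R\chi_0(\sqrt{\xi}\cdot),\phi_2\rangle\|_{L^2_\xi(0,\delta_0)}\lesssim\|\phi_2\|_{L^2_r}$; the $\Psi_1^{R,+}$-factor is controlled by Lemma~\ref{L^2-L^2-boud-Psi-1}, whose proof depends only on the asymptotics of $\Psi_1^{+}(r,\pm\lambda)$ at infinity and therefore applies verbatim to $\Psi_1^{R,+}$. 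For the $m=4$ term, the exponential prefactor $e^{\frac{6r_\varepsilon}{5\sqrt2}}$ is absorbed by the exponential decay of $\Psi_2^{R,+}$ on $\mathrm{supp}\,\chi_1(\sqrt{\xi}\cdot)\subseteq\{r\gtrsim\varepsilon_\infty/\sqrt{|\xi|}\}$, and the estimate closes exactly as in Claim~\ref{Claim-case_2-resonance}.

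The subordinate terms, all carrying an $e^{-cr_\varepsilon}$ factor, are estimated directly: $\varphi_j^R$ and $\Psi_j^{R,+}(\cdot,\pm\lambda)$ grow at most like $e^{\frac{3r}{\sqrt2}}$ on $\{r\le r_\varepsilon\}$, so pairing the exponential prefactor with these growth rates yields a factor uniformly bounded in $\xi$, after which a naive Cauchy--Schwarz applied to $\chi_0\phi_2,\chi_1\psi_2\in L^2_r$ suffices. The main obstacle is a careful bookkeeping of all coefficient pairings to separate the two regimes, and in particular to recognize that the resonance degeneration $|d^{R,\pm}|\simeq\sqrt{|\xi|}$ forces the use of the weighted bound \eqref{eq:L^2-boundsPhi21chi0-samll-lambda-resonance} with weight $\xi^{-1/4}$ (rather than the non-resonance weight $\xi^{1/4}$ from \eqref{L^2-boundsPhi21chi0}); once the correct weight is identified the rest of the argument is parallel to the non-resonance case.
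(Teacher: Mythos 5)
Your proposal is correct and follows essentially the same route as the paper: expand $\mathcal{I}_0^1$ via $\Theta^R=\omega_{22}^{R,+}\varphi_1^R-\omega_{21}^{R,+}\varphi_2^R$ and $\mathcal{J}_1^1$ via the $\Psi_j^{+}$-representation, isolate the non-decaying pairings $\kappa^R\omega_{21}^{R,+}\gamma_{2,3}^R/(d^{R,+}d^{R,-})\lesssim|\xi|^{-1/2}$, split the weight as $\xi^{-1/4}\cdot\xi^{-1/4}$, and close with the Schur-test bound \eqref{eq:L^2-boundsPhi21chi0-samll-lambda-resonance} together with Lemma~\ref{L^2-L^2-boud-Psi-1}, handling the $\gamma_4^R$-term by the exponential decay of $\Psi_2^{+}$ and the remaining terms by their exponential prefactors — exactly the paper's argument, including the key observation that the resonance degeneration $|d^{R,\pm}|\simeq\sqrt{|\xi|}$ forces the $\xi^{-1/4}$ weight. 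The only discrepancy is your bound $\sqrt{|\xi|}\,e^{\frac{3}{\sqrt2}\frac{2r_\varepsilon}{5}}$ for the $\gamma_4^R$-coefficient, where the resonance-case computation (via Lemmas~\ref{omega_j-behavior-resonance} and~\ref{Theta-interms-psi-resonance}) gives $\frac{1}{\sqrt{|\xi|}}e^{\frac{3}{\sqrt2}\frac{2r_\varepsilon}{5}}$; this does not affect the argument since that term is controlled by the exponential decay of $\Psi_2^{+}$ in either case.
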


\begin{proof}
It is enough to prove the estimate for $\mathcal{I}_0^1(\lambda)\,\mathcal{J}_1^1(\lambda)$, since the remaining cases follow by the same reasoning. By Lemma \ref{Resol-interms-phi-resonance} and \ref{Theta-interms-psi-resonance}, we have
 \begin{align*}
    & \int_{0}^{\delta_0}    \frac{\kappa^R(\lambda)}{d^{R,+}(\lambda) d^{R,-}(\lambda)} \mathcal{I}_0^1(\lambda) \mathcal{J}_1^1(\lambda)  d\xi \\
   & =  \int_{0}^{\delta_0}    \frac{\kappa^R(\lambda)}{d^{R,+}(\lambda) d^{R,-}(\lambda)}  <\Theta_1^R(\cdot, \lambda) \chi_0( \sqrt{\xi} \cdot), \phi_2(\cdot) >  <\Theta_2^R(\cdot, \lambda) \chi_1( \sqrt{\xi} \cdot), \psi_2(\cdot) >  d\xi  \\
&= \int_{0}^{\delta_0}   \frac{\kappa^R(\lambda)}{d^{R,+}(\lambda) d^{R,-}(\lambda)}   \bigg( < (\omega_{22}^{R,+} (\lambda)\upvarphi_{11}(\cdot,\lambda) - \omega_{21}^{R,+}(\lambda) \upvarphi_{21}(\cdot,\lambda) )  \chi_0( \sqrt{\xi} \cdot), \phi_2(\cdot) >  \bigg) \\
& \qquad \qquad \qquad   \times \bigg( < \gamma_1^{R}(\lambda) \left(- \omega^{R,+}_{11}(\lambda)   \Psi_{12}^{+}(r,-\lambda)  +   \omega^{R,+}_{11}(-\lambda)  \Psi^{+}_{12}(r,\lambda) \right)  \chi_1( \sqrt{\xi} \cdot), \psi_2(\cdot) > \\
   & \qquad \qquad \qquad \quad    + < \left( - \gamma_2^{R}(\lambda)  \Psi_{12}^{+}(r,-\lambda)  + \gamma_3^{R}(\lambda)  \Psi^{+}_{12}(r,\lambda) + \gamma_4^{R}(\lambda)  \Psi^{+}_{22}(r,\lambda) \right) \chi_1( \sqrt{\xi} \cdot) ,  \psi_2(\cdot) > 
   \bigg) d\xi 
\end{align*}
Using  Lemma \ref{omega_j-behavior-resonance}, \ref{lem:parity-of-omega-j-resonance} and  \ref{Theta-interms-psi-resonance} with the fact that $\kappa^{R}(\lambda)$
Note that, using  Lemma \ref{omega_j-behavior-resonance}, \ref{lem:parity-of-omega-j-resonance} and  \ref{Theta-interms-psi-resonance} with the fact that $|\kappa^{R}(\lambda)| \simeq \sqrt{|\xi|}$
\begin{align*}
     \left| \frac{\kappa^{R}(\lambda)\omega_{22}^{R,+}(\lambda)  \omega_{11}^{R,+}(\lambda)  \gamma_1^{R}(\lambda) }{d^{R,+}(\lambda) d^{R,-}(\lambda)}  \right|  & \lesssim
     \frac{1}{|\xi |} e^{-\frac{3}{\sqrt{2}} \frac{9}{5} r_{\varepsilon}}  , \quad   \left| \frac{\kappa^R(\lambda)\omega_{21}^{R,+}(\lambda)  \omega_{11}^{R,+}(\lambda)  \gamma_1^{R}(\lambda) }{d^{R,+}(\lambda) d^{R,-}(\lambda)}  \right|   \lesssim    \frac{1}{|\xi|}
     e^{-\frac{3}{\sqrt{2}} \frac{4}{5} r_{\varepsilon}} , \\
       \left| \frac{\kappa^{R}(\lambda)\omega_{22}^{R,+}(\lambda)    \gamma_k^{R}(\lambda) }{d^{R,+}(\lambda) d^{R,-}(\lambda)}  \right|  & \lesssim   \frac{1}{\sqrt{|\xi|}} e^{-\frac{3}{\sqrt{2}}  r_{\varepsilon}}  , \qquad 
        \left| \frac{\kappa^{R}(\lambda)\omega_{21}^{R,+}(\lambda)  \gamma_k(\lambda) }{d^{R,+}(\lambda) d^{R,-}(\lambda)}  \right|   \lesssim \frac{1}{\sqrt{ |\xi| }} , \quad \text{ for } k=2,3 \\    
         \left| \frac{\kappa^{R}(\lambda)\omega_{22}^{R,+}(\lambda)    \gamma_4^{R}(\lambda) }{d^{R,+}(\lambda) d^{R,-}(\lambda)}  \right|  & \lesssim \frac{1}{\sqrt{ |\xi| }}  e^{-\frac{3}{\sqrt{2}} \frac{3}{5}  r_{\varepsilon}}  , \quad  \left| \frac{\kappa^{R}(\lambda)\omega_{21}^{R,+}(\lambda)    \gamma_4^{R}(\lambda) }{d^{R,+}(\lambda) d^{R,-}(\lambda)}  \right|  \lesssim \frac{1}{\sqrt{ |\xi| }} e^{\frac{3}{\sqrt{2}} \frac{2 r_{\epsilon}}{5}}.
\end{align*}

Notice that the main terms are the one that has $\left| \frac{\kappa^R(\lambda) \omega_{21}^{R,+}(\lambda)  \gamma_k^R(\lambda)}{d^{R,+}(\lambda) d^{R,-}(\lambda)}  \right| \lesssim \frac{1}{\sqrt{ |\xi| }}, $ for $k=2,3$ and $\left| \frac{\kappa^R(\lambda) \omega_{21}^{R,+}(\lambda)    \gamma_4^R(\lambda) }{d^{R,+}(\lambda) d^{R,-}(\lambda)}  \right|  \lesssim \frac{1}{\sqrt{ |\xi| }} e^{\frac{3}{\sqrt{2}} \frac{2 r_{\epsilon}}{5}}   .$ The second term involves $\gamma_4^R,$ and therefore it includes an exponential decay factor given by $\Psi^{+}_{2}(r,\lambda).$  We will mainly focus on estimating the first term with no decaying factor. We only focus on the first term for $k=2,$ the remaining cases follow in the same way.
  \begin{align*}
&   \int_{0}^{\delta_0} \left|  \frac{1}{\xi^{\frac{1}{4}} }  < \upvarphi_{21}(\cdot,\lambda)   \chi_0( \sqrt{\xi} \cdot), \phi_2(\cdot) >   \frac{1}{\xi^{\frac{1}{4}} } <   \Psi_{12}^{+}(\cdot,-\lambda) \chi_1( \sqrt{\xi} \cdot) ,  \psi_2(\cdot) >\right|   d\xi  
\\
&\lesssim  \left(\int_{0}^{\delta_0}  \frac{1}{\sqrt{\xi}} \left| \int_0^{\infty}  \upvarphi_{21}(r,\lambda)\chi_0( \sqrt{\xi} r)  \phi_2(r)dr  \right|^2 d \xi \right)^{\frac{1}{2}}
\left(\int_{0}^{\delta_0} \frac{1}{\sqrt{\xi}} \left|  \Psi_{12}^{+}(r,-\lambda) \chi_1( \sqrt{\xi} r)   \psi_2(r)  dr \right|^2 d \xi \right)^{\frac{1}{2}}
\end{align*}

By applying duality together with Schur's test or using \eqref{eq:L^2-boundsPhi21chi0-samll-lambda-resonance} from Case 1, we obtain  \begin{align*}
   \left(\int_{0}^{\delta_0}  \frac{1}{\sqrt{\xi}} \left| \int_0^{\infty}  \upvarphi_{21}(r,\lambda)\chi_0( \sqrt{\xi} r)  \phi_2(r)dr  \right|^2 d \xi \right)^{\frac{1}{2}} \lesssim \left\|\phi_2\right\|_{L^2_r}
\end{align*}
Moreover, by Lemma \ref{L^2-L^2-boud-Psi-1} we have
\begin{align*}
    \left(\int_{0}^{\delta_0} \frac{1}{\sqrt{\xi}} \left|  \Psi_{12}^{+}(r,-\lambda) \chi_1( \sqrt{\xi} r)   \psi_2(r)  dr \right|^2 d\xi \right)^{\frac{1}{2}} \lesssim \left\|\psi_2 \right\|_{L^2_r}. 
\end{align*}

This concludes the proof of Claim \ref{claim:case3-nonresonance-small-lambda-resonance}.

\end{proof}
\textbf{Step 2: Large $\xi \in (\Lambda_0,\infty).$} This case is analogous to Step 2 for non-resonance case and we omit the details.

\appendix
\section{Proof of Theorem 1} \label{sec:appOde}
In this section, we present the proof of Theorem \ref{th:ode}. The argument is essentially the same as in the Euclidean case, and we follow the approach in \cite{ChenElliott94}. We prove the existence and uniqueness of the solution to the equation \eqref{eq:ode2} and \eqref{eq:boundary_condition-ode2}, that is, 
\begin{align}
\label{eq:ode2-app}
&\partial_r^2 u  + \coth(r) \partial_r u  - \frac{n^2}{\sinh^2( r)}u + u - u^3=0, \\
\label{eq:boundary_condition-ode2-app}
&\lim_{r \to 0} u(r)=0, \quad \lim_{r \to \infty} u(r) =1 , \quad u(r) \geq 0, \quad \forall r >0.
\end{align}

Notice that a solution to equation \eqref{eq:ode2-app},\eqref{eq:boundary_condition-ode2-app} defined on $(0,R)$ can be written near $0$ as power series $u(r)=\sum_{k=1}^{\infty} a_k r^k.$ Plugging this into the equation for small $r$, i.e, 
$$
(\partial_r^2 u + (\frac{1}{r} + O(r) )\partial_r u  - \frac{n^2}{r^2+O(r^4) } ) u  + u - u^3 =0
$$
One can see that $a_k=0,$ for all $k<n,$ and we have $u(r)=a r^{n} + O(r^{(n+2)}),$ where $a=a_n.$ Therefore,  any bounded solution of \eqref{eq:ode2-app} near $r=0$ is a solution to the following problem 
\begin{align}
\label{eq:ode-v}
\frac{1}{2} \partial_r^2 v(\alpha,r) + \frac{1}{2} \coth(r)& \partial_r v(\alpha,r)   - \frac{n^2}{2 \sinh^2( r)} v(\alpha,r) + v(\alpha,r) -  v^3(\alpha,r)=0, \quad r>0 \\
\label{eq:bc-v}
& v(\alpha,r) \sim \alpha r^n \quad \text{as } r \to 0.
\end{align}

To prove Theorem~\ref{th:ode}, we use a shooting argument with respect to the parameter $\alpha$. As by product, we have the following results: 
\begin{theorem}
\label{thm:v}
    Let $v(\alpha,r)$ be a solution of \eqref{eq:ode-v}-\eqref{eq:bc-v} in the maximal existence interval $(0,R_{\alpha})$ and let $a$ be constant of Theorem \ref{th:ode}. Then, we have 
    \begin{itemize}
        \item if $\alpha \in (0,a),$ then $R_{\alpha}=\infty$ and $|u|<1$ and oscillating. 
        \item if $\alpha=a$ then $R_{\alpha}=\infty$ and $ \displaystyle \lim_{r \to \infty} u(r) = 1. $
        \item  if $\alpha \in (a,\infty),$ then $R_{\alpha}< \infty$ and 
        $ \displaystyle \lim_{r \to R_{\alpha}} v(\alpha,r) = \infty. $
    \end{itemize}
    \end{theorem}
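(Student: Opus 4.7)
The plan is to carry out a standard shooting argument in the parameter $\alpha>0$, modeled on the Euclidean treatment in \cite{ChenElliott94}, and adapted to the hyperbolic geometry via the energy identity
\begin{align*}
\frac{d}{dr}\Bigl[\tfrac{1}{2}(\partial_r v)^2 - \tfrac{n^2 v^2}{2\sinh^2 r} + v^2 - \tfrac{v^4}{2}\Bigr]
= -\coth r\,(\partial_r v)^2 + \tfrac{n^2 v^2 \cosh r}{\sinh^3 r}.
\end{align*}
The first step is local existence and uniqueness of $v(\alpha,\cdot)$ on a maximal interval $(0,R_\alpha)$ together with continuous dependence on $\alpha$; this follows by a Picard/Volterra fixed point argument exactly as sketched in Section~\ref{subsec:non-resonance} for the linearized problem, setting up the equation as an integral equation in the space weighted by $r^{-n}$ near the origin.

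Define three disjoint subsets of $(0,\infty)$:
\begin{align*}
A &:= \{\alpha>0 \::\: R_\alpha=\infty,\ |v(\alpha,r)|<1\ \text{for all }r>0,\ v\text{ has at least one zero in }(0,\infty)\},\\
B &:= \{\alpha>0 \::\: R_\alpha<\infty\},\\
C &:= \{\alpha>0 \::\: R_\alpha=\infty,\ 0<v(\alpha,r)<1\ \text{and}\ v\text{ has no zeros in }(0,\infty)\}.
\end{align*}
The next step is to show that $A$ and $B$ are both open and nonempty, and $(0,\infty) = A\cup B\cup C$. Openness of $B$ is immediate from continuous dependence (the blow-up time varies continuously in $\alpha$ near a blow-up solution). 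Openness of $A$ will follow from continuous dependence on compact intervals, together with the observation that once $v$ has gone negative on some $(r_0,r_1)$ with $v'(r_1)>0$ and $|v|<1$ strictly, these conditions are preserved under small perturbations, and a monotonicity argument on the energy above then forces continued oscillation with $|v|<1$ for all later times.

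The heart of the proof is to populate $A$ and $B$. For small $\alpha>0$, the solution starts near zero; the energy $E(r)$ is initially very small and, using $\coth r\ge 1$ together with the exponential decay of $\frac{\cosh r}{\sinh^3 r}$ for large $r$, the bulk energy $\tfrac12(\partial_r v)^2 + v^2 - \tfrac{v^4}{2}$ can be shown to stay below the barrier value $\tfrac{1}{2}$ corresponding to $v=1$; this prevents $v$ from ever crossing $1$ and forces oscillation once it crosses zero. Hence $\alpha\in A$ for $\alpha$ small. For large $\alpha$, by contrast, $v(\alpha,r) > 1$ on an initial interval (by continuity in $\alpha$ and the leading behavior $\alpha r^n$), and on $\{v>1\}$ the nonlinearity $v-v^3$ is negative, so $v$ is super-linearly convex weighted by $\coth r$; a comparison argument with a Euclidean blow-up ODE (valid for bounded $r$, where $\coth r$ and $\sinh r$ are comparable to their Euclidean counterparts up to constants) then forces finite-time blow-up, placing $\alpha\in B$.

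Finally, since $A$ and $B$ are disjoint, open, nonempty, and $(0,\infty)$ is connected, there must be $\alpha\in (0,\infty)\setminus(A\cup B) = C$. Setting $a := \sup\{\alpha : (0,\alpha)\subset A\}$, one shows $a\in C$ and moreover that $v(a,r)\to 1$ as $r\to\infty$: indeed, $v(a,\cdot)$ is bounded in $(0,1)$ by definition of $C$, while the energy identity, together with the exponential decay of the inhomogeneous term $\frac{n^2 v^2 \cosh r}{\sinh^3 r}$ and the dissipative sign of $-\coth r\,(v')^2$, forces $E(r)$ to converge and $(v')^2 \to 0$; the only stationary value in $(0,1]$ compatible with $\alpha=a$ (as opposed to $\alpha<a$) is $v=1$. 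Uniqueness of $a$ follows from standard comparison between two such solutions using the $\sinh^{1/2}$-conjugated form from Lemma~\ref{L1=0}.

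I expect the main obstacle to be the large-$\alpha$ blow-up argument: controlling the $\coth r$ term uniformly for $r$ bounded away from zero while the solution may blow up at very small $r$ (where the Euclidean comparison is not quite direct due to the $\frac{n^2}{\sinh^2 r}$ term which behaves like $\frac{n^2}{r^2}$ but with higher-order corrections). The standard trick is to first show $v(\alpha,\cdot)$ stays above $1$ on a small interval $(r_1,r_2)$ uniformly for $\alpha$ large, and then on that interval the cubic term dominates and one obtains blow-up by explicit comparison with the ODE $w''+w'/r = w^3$ or a Riccati-type inequality for $w:= r(\ln v)'$.
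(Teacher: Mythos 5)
Your overall shooting strategy is the same as the paper's (which follows \cite{ChenElliott94}), but there are two genuine gaps. First, your trichotomy $A\cup B\cup C=(0,\infty)$ is not exhaustive as you have defined the sets: a solution with $R_\alpha=\infty$ that exceeds $1$ somewhere (or oscillates with amplitude $\geq 1$) lies in none of $A$, $B$, $C$. To close this you would need to prove that once $v$ crosses $1$ it is monotone increasing thereafter and blows up in finite time — precisely the content of Lemma~\ref{lem:blowupsv} plus a blow-up argument. The paper sidesteps the issue by partitioning according to whether $v_r$ ever vanishes ($\mathcal{A}$), versus $v_r>0$ throughout with $\max v>1$ ($\mathcal{B}$) or $v\le 1$ ($\mathcal{C}$); since $v_r>0$ near $r=0$, that partition is exhaustive by inspection. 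Your small-$\alpha$ step is also under-specified: the source term $n^2v^2\cosh r/\sinh^3 r$ in your energy identity is positive and large near $r=0$, so the energy is not a priori below the barrier $1/2$; the paper instead rescales $w=v/\alpha$ and passes to the linear spherical-function limit as $\alpha\to0$, whose solutions oscillate.

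Second, and more seriously, your construction only yields $a=\sup\{\alpha:(0,\alpha)\subset A\}$ with $a\in C$; it does not show that $(a,\infty)\subset B$ and $(0,a)\subset A$ (i.e.\ connectedness of $A$ and $B$), nor that $C$ is a single point — all of which Theorem~\ref{thm:v} asserts. Your appeal to ``standard comparison between two such solutions using the $\sinh^{1/2}$-conjugated form'' will not close this: uniqueness here is not a Sturm-type comparison. The paper's proof hinges on the monotonicity lemma $\partial_\alpha v(\alpha,r)>v(\alpha,r)/\alpha$ (Lemma~\ref{lem:uniqKey}, proved by a maximum-principle argument for the $\alpha$-derivative of the equation), which is used three times: to get $\frac{d}{d\alpha}y_1(\alpha)<0$ and hence connectedness of $\mathcal{B}$, to get $\frac{d}{d\alpha}y_0(\alpha)\ge 0$ and hence connectedness of $\mathcal{A}$, and finally to prove $a=A$ by integrating $v(A,r)\ge v(a,r)+\int_a^A\alpha^{-1}v(\alpha,r)\,d\alpha$ and letting $r\to\infty$, which yields $1\ge 1+\ln(A/a)$. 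Without this lemma or a substitute, your argument establishes existence of one connecting orbit but not the full trichotomy in $\alpha$.
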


First, we prove that the system \eqref{eq:ode2-app}-\eqref{eq:boundary_condition-ode2-app} admits at least one solution.  Our strategy is to employ a shooting argument. To this end, for each $\alpha > 0$ we consider the auxiliary problem \eqref{eq:ode-v}-\eqref{eq:bc-v}.

\begin{lemma}
    For any $\alpha \geq 0 $ there exists a unique solution near $r=0$ for the problem \eqref{eq:ode-v}-\eqref{eq:bc-v} such that 
    \begin{equation*}
       v(\alpha,r)=\alpha r^n -  \frac{\alpha }{2n+2} r^{n+2}+ O(r^{n+4}), \quad \text{as } \; r \to 0. 
    \end{equation*}
    
    The unique extension of this solution to the maximal interval $[0,R_{\alpha})$ satisfy either $R_{\alpha}=\infty$ or $ \displaystyle \lim_{r \to R_{\alpha}} v(\alpha,r)=\infty.$ 
\end{lemma}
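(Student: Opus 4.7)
The plan is to construct the local solution near $r=0$ by a Volterra-type Picard iteration built from fundamental solutions of the principal linear part, to read off the asymptotic expansion from the leading order of the iteration, and then to invoke standard ODE continuation theory to obtain the blow-up alternative on the maximal interval.

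For the local existence and uniqueness, I would first analyze the linear operator $\mathcal{L}_0 := \tfrac{1}{2}\partial_r^2+\tfrac{1}{2}\coth(r)\partial_r-\tfrac{n^2}{2\sinh^2(r)}$. Since $\coth(r)=1/r+O(r)$ and $1/\sinh^2(r)=1/r^2+O(1)$ as $r\to 0^+$, this is a regular singular operator at $r=0$ with indicial exponents $\pm n$. A standard Frobenius construction yields two linearly independent solutions $\phi_\pm$ of $\mathcal{L}_0 u=0$ with $\phi_+(r)=r^n(1+O(r^2))$ and $\phi_-(r)=r^{-n}(1+O(r^2))$ (a logarithmic correction appears in $\phi_-$ when $n=1$), and the Sturm--Liouville Wronskian $W_0:=\sinh(r)\,W(\phi_+,\phi_-)$ is a non-zero constant. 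Variation of parameters rewrites \eqref{eq:ode-v}-\eqref{eq:bc-v} as the fixed-point equation
\begin{align*}
v(r) &= \alpha\,\phi_+(r) + \mathcal{T}[v](r),\\
\mathcal{T}[v](r) &:= \frac{2}{W_0}\int_0^r \bigl(\phi_+(r)\phi_-(s)-\phi_-(r)\phi_+(s)\bigr)\sinh(s)\bigl(v(s)-v(s)^3\bigr)\,ds.
\end{align*}
I would then apply Banach's contraction principle on the space $X_{\alpha,r_0} := \{v\in C([0,r_0]) : \sup_{0<r\le r_0} r^{-n}|v(r)|\le 2|\alpha|\}$ equipped with the weighted sup-norm $\|v\|_X := \sup_{0<r\le r_0} r^{-n}|v(r)|$. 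Using the asymptotics of $\phi_\pm$, one checks directly that $\mathcal{T}[v](r)=O(r^{n+2})$ and that $\mathcal{T}$ is a strict contraction on $X_{\alpha,r_0}$ for $r_0$ small enough (depending on $\alpha$). The asymptotic expansion then follows by substituting the ansatz $v=\alpha r^n + c\,r^{n+2}+O(r^{n+4})$ into \eqref{eq:ode-v}, using the Taylor series $\coth(r)=1/r+r/3+O(r^3)$ and $1/\sinh^2(r)=1/r^2-1/3+O(r^2)$, and matching coefficients of $r^n$ to solve for $c$ as claimed.

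For the extension, the ODE \eqref{eq:ode-v} is regular for $r>0$ with locally Lipschitz nonlinearity, so the Picard--Lindel\"of theorem provides a unique maximal extension to some interval $[0,R_\alpha)$, and the standard continuation alternative asserts that if $R_\alpha<\infty$ then $(v(r),v'(r))$ must leave every compact subset of $\R^2$ as $r\to R_\alpha^-$. To sharpen this to $\lim_{r\to R_\alpha} v(\alpha,r)=\infty$, I would multiply \eqref{eq:ode-v} by $2\sinh(r)\,v'$ and integrate: the resulting energy identity shows that a uniform $L^\infty$ bound on $v$ on $[0,R_\alpha)$ would force a uniform bound on $v'$, contradicting the continuation alternative. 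Hence $|v|$ must be unbounded near $R_\alpha$; a final monotonicity argument exploiting the dominance of $-v^3$ for $|v|$ large rules out oscillatory behavior and forces $v(\alpha,r)\to+\infty$.

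The main obstacle is the careful setup of the Volterra fixed point at the regular singular point: because $\phi_-\sim r^{-n}$ is unbounded near zero, the weighted norm must be chosen so that the integral kernel gains two powers of $r$ (mapping input of size $r^n$ to output of size $r^{n+2}$) while remaining a contraction uniformly in small $r_0$. A secondary difficulty is upgrading ``phase-space unboundedness'' at $R_\alpha$ to the sharper assertion $\lim v(\alpha,r)=\infty$, which is handled by the energy identity and monotonicity argument just described.
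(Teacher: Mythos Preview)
Your proposal is correct and essentially fleshes out what the paper only gestures at: the paper's entire proof is the sentence ``The result follows directly from a power series expansion combined with standard theory for ordinary differential equations. For brevity, we omit the details.'' Your Volterra fixed-point construction built on the Frobenius solutions at the regular singular point, followed by coefficient matching for the expansion and the continuation alternative (upgraded via the energy/Gronwall bound) for the blow-up dichotomy, is precisely the standard machinery being invoked.
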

\begin{proof} 
    The result follows directly from a power series expansion combined with standard theory for ordinary differential equations.  For brevity, we omit the details.
\end{proof}
\begin{remark}
Note that the dominant term in the asymptotic expansion of $v$ or $u$ as $r \to 0$ is the same as in the Euclidean setting. The difference appears in the coefficient of $r^{n+2}$, which is slightly altered due to working in the hyperbolic space with curvature $-\frac12$. 
More generally, in a hyperbolic space of curvature $\kappa$, we have  
\begin{equation*}
    v(\alpha,r) = \alpha r^n + \frac{\alpha}{\kappa} \frac{r^{n+2}}{4n+4} + O(r^{n+4}), 
    \quad \text{as } r \to 0.
\end{equation*}
\end{remark}

Next, we introduce the following three disjoint sets:
\begin{align*}
   \mathcal{A}&:=\{ \alpha >0 \, | \;  \exists y \in (0,R_{\alpha}) \text{ such that } v_r(\alpha,y)=0\} \\ 
\mathcal{B}&:=\{ \alpha >0  \, | \;  \forall r \in (0,R_{\alpha}) \text{ and } \max_{r \in (0,R_{\alpha})  } v(\alpha,r) >1    \} \\
\mathcal{C} &:=  \{  \alpha >0 \, | \;  v_r(\alpha,r) >0, \; v(\alpha,r) \leq 1\; \forall r > 0 \}.
\end{align*}

Notice that the three sets are disjoints and that 
\begin{equation}
\label{eq:ABC}
    \mathcal{A} \cup \mathcal{B} \cup \mathcal{C} = (0,\infty) .
\end{equation}
\begin{lemma}
\label{lem:C-sol}
    If $\alpha \in \mathcal{C},$ then $v(\alpha,r)$ is a solution to \eqref{eq:ode2-app}, \eqref{eq:boundary_condition-ode2-app}.
\end{lemma}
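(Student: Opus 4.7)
The plan is to verify, for $\alpha \in \mathcal{C}$, that $v(\alpha, r)$ meets each condition in \eqref{eq:ode2-app}--\eqref{eq:boundary_condition-ode2-app}. Three of the four required properties follow immediately from the definition of $\mathcal{C}$ and the local analysis already established: the ODE is satisfied on the maximal interval $(0, R_\alpha)$ by construction; the a priori bound $v \leq 1$ together with $v_r > 0$ prevents blow-up, so $R_\alpha = \infty$; the expansion $v(\alpha, r) \sim \alpha r^n$ with $n \geq 1$ gives $\lim_{r \to 0} v = 0$, and combined with $v_r > 0$ it also yields $v \geq 0$ throughout.

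The only nontrivial claim is $\lim_{r \to \infty} v(\alpha, r) = 1$. Monotonicity and the upper bound produce a limit $L \in (0, 1]$, with $L > 0$ since $v(r) \geq v(1) > 0$ for $r \geq 1$. I plan to rule out $L < 1$ by contradiction using the self-adjoint form of the ODE, obtained by multiplying \eqref{eq:ode-v} by $2\sinh(r)$:
\begin{equation*}
    (\sinh(r)\, v_r)' \;=\; \frac{n^2 v}{\sinh(r)} \,+\, 2\sinh(r)\bigl(v^3 - v\bigr).
\end{equation*}
If $L < 1$, then $v^3 - v \to L^3 - L < 0$ while $\frac{n^2 v}{\sinh(r)}$ decays exponentially, so there exist $R_0 > 0$ and $c > 0$ such that $(\sinh(r)\, v_r)' \leq -c\,\sinh(r)$ for all $r \geq R_0$. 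Integrating from $R_0$ to $r$ and dividing by $\sinh(r)$, using $\cosh(r)/\sinh(r) \to 1$, gives $\limsup_{r \to \infty} v_r(r) \leq -c < 0$, directly contradicting $v_r > 0$. Hence $L = 1$.

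The main — and only — subtle point is ensuring that the exponentially small but positive source $\frac{n^2 v}{\sinh(r)}$ does not spoil the negative sign of the right-hand side; this is immediate for sufficiently large $r$ since $\sinh(r)|v^3 - v|$ grows exponentially while the perturbation decays exponentially. Everything else in the verification is routine.
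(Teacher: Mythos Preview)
Your proposal is correct and follows essentially the same approach as the paper: both rewrite the ODE in the self-adjoint form $(\sinh(r)\,v_r)'=\frac{n^2 v}{\sinh(r)}+2\sinh(r)(v^3-v)$, assume the limit $L<1$, and integrate to show $v_r$ is eventually negative, contradicting $v_r>0$. The only cosmetic difference is that the paper writes out the asymptotic of $v_r$ explicitly as $\frac{\cosh(r)-1}{\sinh(r)}(L^3-L)+o(1)$, whereas you bound the right-hand side by $-c\sinh(r)$ before integrating; the underlying idea is identical.
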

\begin{proof}
Since for $\alpha \in \mathcal{C}$ we have that $v(\alpha,r)$ is strictly increasing and bounded by $1$, it follows that the limit exists 
\[
\lim_{r \to \infty} v(\alpha,r) =: l \in (0,1]
\]
 It remains to prove that $l = 1$.  Assume by contradiction, that $l < 1$. Then, from \eqref{eq:ode-v}, we obtain $\frac{1}{2}(\sinh(r) v_r)_r := \sinh(r) ( l^3 -l +  +\frac{n^2}{2\sinh^2(r)} v +(v^3-l^3) - (v-l)  ) ,$ which implies that 
 $\frac{1}{2 } v_r =  \frac{\cosh(r)-1}{\sinh(r)}(l^3 -l )+ o(1).  $ Since $v_r > 0$ for all $r > 0$, this leads to a contradiction. Therefore, $l=1$ and $v(\alpha,r)$ is a solution to \eqref{eq:ode2-app}, \eqref{eq:boundary_condition-ode2-app}.
\end{proof}

Note that, by Lemma~\ref{lem:C-sol}, in order to prove Theorem~\ref{thm:v} it suffices to show that $\mathcal{C}$ is non-empty.  
From \eqref{eq:ABC}, it is sufficient to prove that the sets $\mathcal{A}$ and $\mathcal{B}$ are both open and non-empty. First, we study the set $\mathcal{A}.$

\begin{lemma}
    The set $\mathcal{A}$ is not empty and there exists a positive constant $m$ such that $(0,m) \subset \mathcal{A}.$   
\end{lemma}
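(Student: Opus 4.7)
The plan is to use a rescaling argument combined with continuous dependence on parameters. Set $w_\alpha(r) := \alpha^{-1} v(\alpha,r)$, so that $w_\alpha$ solves
\begin{equation*}
w_{rr} + \coth(r)\, w_r - \frac{n^2}{\sinh^2(r)}\, w + w - \alpha^2 w^3 = 0, \qquad w_\alpha(r) = r^n + O(r^{n+2}) \text{ as } r \to 0,
\end{equation*}
with leading asymptotics at the origin that are \emph{independent} of $\alpha$. Standard continuous dependence then yields $w_\alpha \to w_0$ (together with derivatives) as $\alpha \to 0^+$, uniformly on compact subsets of $(0,\infty)$, where $w_0$ is the unique solution of the associated linearized equation, defined globally on $(0,\infty)$ by linearity.

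The key step is to show that $w_0$ admits a critical point $r_0 \in (0,\infty)$ at which $w_0(r_0) > 0$ and $w_0''(r_0) < 0$. Writing the linearized equation in self-adjoint form,
\begin{equation*}
\bigl(\sinh(r)\, w_0'(r)\bigr)' = -\sinh(r)\Bigl(1 - \tfrac{n^2}{\sinh^2(r)}\Bigr)\, w_0(r),
\end{equation*}
one notes that $w_0 > 0$ and $w_0' > 0$ near $r = 0$. Let $r^\ast$ denote the unique positive root of $\sinh(r^\ast) = n$. Suppose for contradiction $w_0' > 0$ throughout $(0,\infty)$; then $w_0$ is strictly increasing and so bounded below by a positive constant on $[r^\ast + 1, \infty)$. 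Integrating the self-adjoint identity from $r^\ast + 1$ to $R$ and using the exponential growth of $\sinh$, the right-hand side tends to $-\infty$ as $R\to\infty$, forcing $\sinh(R)\, w_0'(R) \to -\infty$, a contradiction. Hence $w_0'$ vanishes somewhere; let $r_0$ be the smallest such point. From the self-adjoint form, $(\sinh\, w_0')'(r) > 0$ for $r < r^\ast$, so necessarily $r_0 > r^\ast$. Monotonicity on $(0, r_0)$ gives $w_0(r_0) > 0$, and the equation evaluated at $r_0$ yields
\begin{equation*}
w_0''(r_0) = w_0(r_0)\Bigl(\tfrac{n^2}{\sinh^2(r_0)} - 1\Bigr) < 0,
\end{equation*}
so $r_0$ is a nondegenerate local maximum of $w_0$.

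With this nondegenerate critical point in hand, the implicit function theorem applied to the map $(\alpha, r) \mapsto \partial_r w_\alpha(r)$ in a neighborhood of $(0, r_0)$ yields $m > 0$ and a continuous function $\alpha \mapsto r_\alpha$ on $[0,m)$ with $r_\alpha \to r_0$ and $\partial_r w_\alpha(r_\alpha) = 0$. Consequently, for each $\alpha \in (0,m)$ the solution $v(\alpha,\cdot) = \alpha\, w_\alpha(\cdot)$ has a critical point at $r_\alpha$ within its interval of existence, which proves $(0,m) \subset \mathcal{A}$.

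The main obstacle is the ruling-out of the monotone scenario for $w_0$: one must exploit the exponential growth of the self-adjoint coefficient $\sinh(r)\bigl(1 - n^2/\sinh^2(r)\bigr)$ at infinity to ensure divergence of the integral on the right-hand side of the self-adjoint identity, even under the weakest positivity assumption on $w_0$. This is where the hyperbolic geometry plays a decisive role compared to the Euclidean case.
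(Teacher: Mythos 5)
Your proof is correct and follows essentially the same route as the paper: rescale $v$ by $\alpha$, pass to the linearized ($\alpha=0$) equation, show the limiting solution is non-monotone, and transfer the critical point back to small $\alpha$. The only difference is one of rigor rather than strategy: where the paper justifies the oscillation of $w(0,\cdot)$ by appealing to spherical functions and the large-$r$ constant-coefficient approximation, you give a self-contained Sturm-type integration argument from the self-adjoint form, and you make the transfer to small $\alpha$ precise via the implicit function theorem at the nondegenerate critical point, thereby filling in steps the paper leaves informal.
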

\begin{proof}
    For any $\alpha >0,$ define $w(\alpha,r)=\frac{v(\alpha,r)}{\alpha}, $ for $r \in (0,R_{\alpha}).$ Then $w$ satisfies 
    \begin{align*}
    \frac{1}{2} \partial_r^2 w + \frac{1}{2} \coth(r) \partial_r w - \frac{n^2}{2 \sinh(r)} w + w = \alpha^2 w^3 , \quad r \in (0,R_{\alpha}),     \\
    w(\alpha,r) = r^n - \frac{1}{2n+2} r^{n+2} +O(r^{n+4}), \quad \text{as } \; r \to \infty.
    \end{align*}
It follows that as $\alpha \to 0 ,$ $w(\alpha,r) \to w(0,r),$ where $w(0,r)$ is the solution of 
\begin{align}
\label{eq:w0}
     \frac{1}{2} \partial_r^2 w + \frac{1}{2} \coth(r) \partial_r w - \frac{n^2}{2 \sinh(r)} w + w=0, \quad r\in (0,\infty), \\
      w(\alpha,r)=  r^n - \frac{1}{2n+2} r^{n+2} +O(r^{n+4}), \quad \text{as } \; r \to 0.
\end{align}
Notice that the solution \eqref{eq:w0} are the spherical functions. Also, one can see that for large $r,$ the equation \eqref{eq:w0} is approximates by  $(\frac{1}{2} \partial_r^2 + \partial_r +1 )w  =0,$  whose solutions are oscillating. Thus the solution \eqref{eq:w0} must exhibit oscillatory behavior. Consequently, for small values of $\alpha$, the function $w(\alpha,r)$ oscillates, and hence so does $v(\alpha,r) = \alpha w(\alpha,r)$. As a result, there exists some $m > 0$ such that for all $\alpha \in (0,m)$, we have $\alpha \in \mathcal{A}$.
\end{proof}

For any $\alpha \in \mathcal{A},$ we define 
\begin{equation}
\label{eq:defy}
y_0(\alpha) := \inf \{ r \in (0,R_ \alpha) |  v_r(\alpha,r)=0   \}.
\end{equation}

\begin{lemma}
\label{lem:Aopen}
    If $\alpha \in \mathcal{A},$ then 
    \begin{align}
        \label{eq:y>n}
    \sinh^{-1}(\frac{n}{\sqrt{2}})< y_0(\alpha) \\
       \label{eq:v<y_0}
       v(\alpha,y_0(\alpha)) < \left( 1 - \frac{n^2}{2 \sinh^2(y_0(\alpha))} \right)^{\frac{1}{2}} \\
       \label{eq:v_rr<0}
       v_{rr}(\alpha,y_0(\alpha))<0.
    \end{align}
 In addition, $\mathcal{A}$ is open and $y_0(\cdot) \in \mathcal{C}^{\infty}(\mathcal{A}).$   
\end{lemma}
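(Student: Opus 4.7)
The plan is to extract all three pointwise inequalities from the ODE itself evaluated at the critical point $y_0(\alpha)$, and then use the implicit function theorem to promote this pointwise information to openness and smoothness. First I would record the preliminary observation that the expansion $v(\alpha,r)=\alpha r^n + O(r^{n+2})$ yields $v_r(\alpha,r)\sim n\alpha r^{n-1}>0$ near $r=0$, and by the minimality in the definition \eqref{eq:defy} of $y_0$ it follows that $v_r(\alpha,\cdot)>0$ on $(0,y_0(\alpha))$. In particular $v(\alpha,y_0(\alpha))>0$.

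Evaluating \eqref{eq:ode-v} at $r=y_0$, where $v_r=0$, gives
\[
v_{rr}(\alpha,y_0)=v(\alpha,y_0)\Bigl[\tfrac{n^2}{\sinh^2(y_0)}-2+2v^2(\alpha,y_0)\Bigr],
\]
so \eqref{eq:v<y_0} and \eqref{eq:v_rr<0} are literally equivalent. Since $v$ is increasing on $(0,y_0)$ with $v_r(y_0)=0$, one obtains the weak inequality $v_{rr}(\alpha,y_0)\le 0$. The main step is the strict inequality, which I would obtain by contradiction: assuming $v_{rr}(y_0)=0$, differentiating the ODE and evaluating at $y_0$ (using $v_r(y_0)=v_{rr}(y_0)=0$) gives
\[
v_{rrr}(\alpha,y_0) = -\tfrac{2n^2\cosh(y_0)}{\sinh^3(y_0)}\,v(\alpha,y_0)<0.
\]
The Taylor expansion then yields $v_r(r)=\tfrac12 v_{rrr}(y_0)(r-y_0)^2+O(|r-y_0|^3)$, which is $\le 0$ on both sides of $y_0$, contradicting $v_r>0$ on $(0,y_0)$. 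This establishes \eqref{eq:v_rr<0}, hence \eqref{eq:v<y_0}; and since $v(\alpha,y_0)>0$, the right-hand side of \eqref{eq:v<y_0} must be positive, forcing $\sinh^2(y_0)>n^2/2$, which is \eqref{eq:y>n}.

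For openness of $\mathcal{A}$ and smoothness of $y_0(\cdot)$, I would apply the implicit function theorem to $F(\alpha,r):=v_r(\alpha,r)$ at $(\alpha_0,y_0(\alpha_0))$ for $\alpha_0\in\mathcal{A}$. Standard ODE theory (for the regularized problem where the $r^n$ prefactor is divided out, using $w(\alpha,r)=v(\alpha,r)/\alpha$ as in the earlier lemmas) gives $C^\infty$ dependence of $v$ on $\alpha$, while the strict inequality $\partial_r F(\alpha_0,y_0)=v_{rr}(\alpha_0,y_0)\ne 0$ just proved supplies the non-degeneracy hypothesis. The IFT then yields a $C^\infty$ local solution $\alpha\mapsto y_0(\alpha)$ of $v_r(\alpha,y_0(\alpha))=0$; to identify this branch with the infimum in \eqref{eq:defy} for nearby $\alpha$, I would use the positivity $v_r(\alpha_0,\cdot)>0$ on $(0,y_0(\alpha_0))$, propagated to nearby $\alpha$ by continuity on compact subintervals together with the uniform positivity near $r=0$ from the expansion.

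The main obstacle is the promotion of $v_{rr}(y_0)\le 0$ to strict inequality; once this is in hand, the rest is routine bookkeeping followed by a direct application of the implicit function theorem.
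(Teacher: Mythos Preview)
Your proof is correct and follows essentially the same strategy as the paper: evaluate the ODE at $y_0$, deduce the weak inequality $v_{rr}(y_0)\le 0$ from $v_r>0$ on $(0,y_0)$, upgrade to strict inequality by contradiction, and then apply the implicit function theorem using $v_{rr}(y_0)\ne 0$. The only difference is in the contradiction step for strictness: the paper observes that the bracket $v^2+\tfrac{n^2}{2\sinh^2 r}-1$ has negative $r$-derivative at $y_0$ (since $v_r(y_0)=0$), hence is positive on $(y_0-\delta,y_0)$, and then integrates the divergence form $\tfrac12(\sinh(r)v_r)_r=\sinh(r)\,v\,[\,\cdot\,]>0$ to force $v_r(y_0-\delta)<0$; you instead differentiate the ODE directly to get $v_{rrr}(y_0)<0$ and Taylor-expand $v_r$ --- both routes yield the same contradiction with $v_r>0$ on $(0,y_0)$.
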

\begin{proof}
    Since $\alpha \in \mathcal{A},$ and by definition of $y_0(\alpha)$ we have $v_r(\alpha,y_0(\alpha))=0  $ and $v_{rr}(\alpha,y_0(\alpha)) \leq 0,$ because $v \sim \alpha r^n$ near $0.$ By the equation \eqref{eq:ode-v}, we have
    \begin{equation}
    \label{eq:ode-v-y_0}
    \frac{1}{2} v_{rr}(\alpha,y_0(\alpha))=v(\alpha,y_0(\alpha)) \left( \frac{n^2}{2\sinh^2(y_0(\alpha))} + v^2(\alpha,y_0(\alpha)) -1  \right) \leq 0.
    \end{equation}
which yields, $0 \leq v^2(\alpha,y_0(\alpha)) \leq 1 - \frac{n^2}{2 \sinh^2(y_0(\alpha))}.$ Therefore, $\sinh^{-1}(\frac{n}{\sqrt{2}})< y_0(\alpha)$ and     $0 \leq v(\alpha,y_0(\alpha)) \leq \left( 1 - \frac{n^2}{2 \sinh^2(y_0(\alpha))} \right)^{\frac{1}{2}}.$ To obtain \eqref{eq:v<y_0}, we need to show that the inequality is strict. Assume $v^2(\alpha,y_0(\alpha)) + \frac{n^2}{2 \sinh^2(y_0(\alpha))}-1 =0.$ Then 
$$\frac{d}{dr} \left(v^2(\alpha,r) + \frac{n^2}{2 \sinh^2(r)}-1 \right)\big|_{r=y_0(\alpha)}= -\frac{n^2 \cosh(y_0(\alpha))}{\sinh^3(y_0(\alpha))}  <0, $$
which implies that $v^2+ \frac{n^2}{\sinh(r)} -1 >0$ in $(y_0(\alpha)-\delta,y_0(a)),$ for some $\delta >0.$ It follows that $\frac{1}{2}(\sinh(r) v_r)_r= \sinh(r)(v^2+ \frac{n^2}{\sinh^2(r)} -1 )>0$ for $r \in (y_0(\alpha)-\delta,y_0(a)).$ Integrating between $y_0(\alpha)-\delta$ and $y_0(a),$ we obtain $y_0(\alpha) v_r(\alpha,y_0(\alpha)) - (y_0(\alpha)-\delta) v_r(\alpha, y_0(\alpha)-\delta)>0.$ Using the fact that $v_r(\alpha,y_0(\alpha))=0,$ we get $v_r(\alpha,y_0(\alpha)-\delta)=0,$ which contradict the definition of $y_0(\alpha).$ This concludes the proof of \eqref{eq:v<y_0}. Finally, using the equation \eqref{eq:ode-v-y_0} with \eqref{eq:v<y_0}, we obtain \eqref{eq:v_rr<0}. The last assertion of the lemma is obtain by Implicit Function Theorem and \eqref{eq:v_rr<0}. Let $\alpha_0 \in \mathcal{A},$ then we have 
$v_r(\alpha_0,y_0(\alpha_0)) =0 $ and $v_{rr}(\alpha_0,y_0(\alpha_0))<0.$ By Implicit Function Theorem, there exists a smooth function defined in neighborhood of $\alpha_0$ to a neighborhood of $y_0(\alpha_0),$ i.e., $f:\mathcal{N}(\alpha_0) \longrightarrow \mathcal{N}(y_0(\alpha_0)), \; \alpha \mapsto f(\alpha),$ such that $f(\alpha_0)=y_0(\alpha_0)$ and $v_r(\alpha,f(\alpha))=0.$ Hence, $\mathcal{A}$ is open and $y_0$ is smooth. 
\end{proof}

Next, we prove that the set $\mathcal{B}$ is open and nonempty. First, we show that if a solution reaches the value $1$ at some finite $r$, then it must blow up.

\begin{lemma}
\label{lem:blowupsv}
    Assume that for some $y_1 \in (0,R_{\alpha}),$ we have $v(\alpha,y_1)=1$ and $v(\alpha,r)<1$ for all $r \in (0,y_1).$ Then, $v_r(\alpha,r) >0$ for all $r \in [y_1,R_{\alpha}).$
\end{lemma}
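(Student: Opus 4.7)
The plan is a maximum principle/continuation argument, organized around the sign of $v_r$ as $r$ crosses $y_1$ and remains beyond it. The key observation is that the equation \eqref{eq:ode-v} can be rewritten as
\[
\tfrac{1}{2} v_{rr}(\alpha,r) = v(\alpha,r)\Bigl(\tfrac{n^2}{2\sinh^2(r)} + v^2(\alpha,r) - 1\Bigr) - \tfrac{1}{2}\coth(r)\, v_r(\alpha,r),
\]
so that at any critical point $r_\star$ of $v(\alpha,\cdot)$ the second derivative $v_{rr}(\alpha,r_\star)$ has the same sign as $\frac{n^2}{2\sinh^2(r_\star)} + v^2(\alpha,r_\star) - 1$. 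Since we are in the regime where $v(\alpha,\cdot)$ is close to or above $1$, this factor will be strictly positive, which is what drives the argument.

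First I would show that $v_r(\alpha,y_1) > 0$. Because $v(\alpha,y_1) = 1$ and $v(\alpha,r) < 1$ for $r \in (0,y_1)$, we certainly have $v_r(\alpha,y_1) \geq 0$. If equality held, evaluating the equation at $y_1$ would give $v_{rr}(\alpha,y_1) = \frac{n^2}{\sinh^2(y_1)} > 0$, so a Taylor expansion yields $v(\alpha,y_1-\varepsilon) = 1 + \tfrac{\varepsilon^2}{2}v_{rr}(\alpha,y_1) + O(\varepsilon^3) > 1$ for small $\varepsilon > 0$, contradicting the hypothesis that $v(\alpha,\cdot) < 1$ on $(0,y_1)$.

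Next, I would argue by contradiction: suppose the set $\{r \in (y_1, R_\alpha): v_r(\alpha,r) \leq 0\}$ is nonempty and let $y_2$ be its infimum. Then $y_2 > y_1$, $v_r(\alpha,r) > 0$ for $r \in [y_1,y_2)$ (using the opening step), and by continuity $v_r(\alpha,y_2) = 0$. Integrating $v_r > 0$ from $y_1$ to $y_2$ gives $v(\alpha,y_2) > v(\alpha,y_1) = 1$, hence $v^2(\alpha,y_2) - 1 > 0$. Evaluating the displayed identity at $y_2$ (where the $\coth(r) v_r$ term vanishes) then yields
\[
\tfrac{1}{2} v_{rr}(\alpha,y_2) = v(\alpha,y_2)\Bigl(\tfrac{n^2}{2\sinh^2(y_2)} + v^2(\alpha,y_2) - 1\Bigr) > 0.
\]
Thus $v_r$ is strictly increasing through $0$ at $y_2$, which forces $v_r(\alpha,r) < 0$ for $r$ just below $y_2$, contradicting the minimality of $y_2$. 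Therefore no such $y_2$ exists and $v_r(\alpha,r) > 0$ on $[y_1, R_\alpha)$.

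I do not expect a serious obstacle here; the argument is structurally identical to the Euclidean shooting analysis in \cite{ChenElliott94}, and the hyperbolic geometry only affects the precise form of the coefficient $\frac{n^2}{2\sinh^2(r)}$, which is irrelevant to the sign computations above. The only subtle point is ruling out $v_r(\alpha,y_1)=0$ (to ensure the contradiction argument has a genuine interval $(y_1,y_2)$ on which $v_r>0$), and this is handled by the Taylor expansion above.
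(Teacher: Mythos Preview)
Your proposal is correct and follows essentially the same two-step argument as the paper: first rule out $v_r(\alpha,y_1)=0$ via the sign of $v_{rr}$ from the equation and a Taylor expansion, then take the first zero $y_2>y_1$ of $v_r$ and derive a contradiction from the equation forcing $v_{rr}(\alpha,y_2)>0$. The only cosmetic difference is that the paper phrases the contradiction at $y_2$ as ``$v_{rr}(\alpha,y_2)\le 0$ from the definition of $y_2$, but $>0$ from the equation,'' whereas you deduce $v_r<0$ just below $y_2$; these are equivalent.
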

\begin{proof}
First, we show that $v_r(\alpha,y_1)>0.$ Assume $v_r(\alpha,y_1)=0,$ then by \eqref{eq:ode-v}  we have  $ \frac{1}{2}v_{rr} \big|_{r=y_1} >0.$ Using Taylor expansion at $y_1$ and $y_1-h,$ we obtain 
\begin{align*}
   v_{rr}(\alpha,y_1)= 2 \lim_{h \to 0^{+}} \frac{v_r(\alpha,y_1) h - (v(\alpha,y_1) -v(\alpha,y_1-h )}{h^2} \leq 0,
\end{align*}
which is a contraction. Then $v_r(\alpha,y_1)>0.$ Assume $v_r(\alpha,r) >0$ for all $r \in (y_1,R_{\alpha})$ is not true. Then, there exist $y_2 \in (y_1,R_{\alpha})$ such that $v(\alpha,y_2)=0$ and $v_r(\alpha,r)>0,$ for all $r \in [y_1,y_2).$ It then follows that $v_rr(\alpha,y_2) \leq 0,$ which contradict the equation \eqref{eq:ode-v}. Therefore, the assertion of the lemma follows. 
\end{proof}
\begin{lemma}
   The set $\mathcal{B}$ is nonempty, there exists $M>0,$ such that $(M,\infty) \subset \mathcal{B}.$ 
\end{lemma}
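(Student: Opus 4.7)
The strategy is to show that when $\alpha$ is very large, the solution $v(\alpha,r)$ already reaches the value $1$ at a very small radius of order $\alpha^{-1/n}$. Once this is established, Lemma~\ref{lem:blowupsv} immediately forces $v$ to remain strictly increasing past the first crossing of $1$, so $\max_{r\in(0,R_\alpha)} v(\alpha,r)>1$ and $\alpha\in\mathcal{B}$. The threshold $M$ will be chosen large enough that this crossing occurs before any possible blow-up of $v$.

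To make this quantitative, I rescale by setting $u(s):=v(\alpha,\alpha^{-1/n}s)$. The initial expansion of $v$ near $r=0$ then gives
\[
u(s)=s^n-\frac{\alpha^{-2/n}}{2n+2}s^{n+2}+o(s^{n+2}),
\]
so $u(s)\to s^n$ pointwise (and on compact sets of $s$) as $\alpha\to\infty$. A direct substitution shows that $u$ satisfies
\[
\tfrac{1}{2}u_{ss}+\tfrac{1}{2}\alpha^{-1/n}\coth(\alpha^{-1/n}s)\,u_s-\tfrac{n^{2}\alpha^{-2/n}}{2\sinh^{2}(\alpha^{-1/n}s)}u+\alpha^{-2/n}(u-u^{3})=0.
\]
As $\alpha\to\infty$, using $\sinh(x)\sim x$, $\coth(x)\sim 1/x$ near $0$, the coefficients converge on compact subsets of $(0,\infty)$ to those of the Euler equation
\[
\tfrac{1}{2}u_{ss}+\tfrac{1}{2s}u_s-\tfrac{n^{2}}{2s^{2}}u=0,
\]
whose unique bounded solution with $u(s)\sim s^n$ near $s=0$ is $u_\infty(s)=s^n$.

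By continuous dependence of solutions on parameters, I expect $u(s)\to s^n$ uniformly on each compact interval $[0,s_0]\subset[0,\infty)$. Choosing $s_0=2^{1/n}$, so that $u_\infty(s_0)=2$, this yields some $M>0$ with $u(s_0)>3/2$ for all $\alpha>M$. Translating back, $v(\alpha,\alpha^{-1/n}s_0)>3/2>1$, while $v(\alpha,0)=0$; by continuity there is a first $y_1\in(0,\alpha^{-1/n}s_0)$ with $v(\alpha,y_1)=1$ and $v(\alpha,r)<1$ for $r\in(0,y_1)$. Lemma~\ref{lem:blowupsv} then guarantees $v_r>0$ on $[y_1,R_\alpha)$, hence $v>1$ on $(y_1,R_\alpha)$, which places $\alpha$ in $\mathcal{B}$.

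The main technical obstacle is justifying the uniform convergence $u\to s^n$ on $[0,s_0]$, because the coefficients of the rescaled ODE are singular at $s=0$, and standard Cauchy-Lipschitz perturbation results do not apply directly. The cleanest way around this is to recast the problem as an integral fixed point for the remainder $w:=u-s^n$ in a weighted norm such as $\|w\|:=\sup_{s\in(0,s_0]}s^{-n}|w(s)|+s^{1-n}|w'(s)|$, using the same Frobenius-type representation as in the existence proof at the beginning of Appendix~\ref{sec:appOde}. In this norm the inhomogeneity produced by the difference between the full and limit equations is of size $O(\alpha^{-2/n})$ on $[0,s_0]$, and the associated Volterra operator is a contraction for $\alpha$ large, which yields $\|w\|\lesssim\alpha^{-2/n}$ and in particular the required estimate $u(s_0)>3/2$. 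A comparison argument using the super-/sub-solution pair $s^n\pm C\alpha^{-2/n}s^{n+2}$ would give the same conclusion with slightly softer bookkeeping.
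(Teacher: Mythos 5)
Your proof is correct but takes a genuinely different route from the paper's. The paper's argument is a monotonicity-in-$\alpha$ argument: it differentiates $w(\alpha,r)=v(\alpha,r)/\alpha$ in $\alpha$, shows via a maximum-principle argument on the interval $(0,\sinh^{-1}(n/\sqrt 2))$ (where the zeroth-order coefficient has the right sign, using \eqref{eq:v_r}) that $w^\alpha>0$, and concludes that at a fixed $r$ the map $\alpha\mapsto v(\alpha,r)=\alpha w(\alpha,r)$ grows at least linearly, hence eventually exceeds $1$. This is soft but fits naturally with the machinery the paper is already developing (the $w^\alpha$ computation is a direct precursor to Lemma~\ref{lem:uniqKey}). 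Your approach instead rescales $r=\alpha^{-1/n}s$ and treats the rescaled ODE as an $O(\alpha^{-2/n})$ perturbation of the Euler equation $\frac12 u_{ss}+\frac{1}{2s}u_s-\frac{n^2}{2s^2}u=0$, so that $u\to s^n$ on a fixed compact $s$-interval, which forces $v$ past $1$ at a radius of order $\alpha^{-1/n}$. This is more quantitative and more self-contained (it needs nothing from Lemma~\ref{lem:Aopen} beyond the Frobenius expansion at $r=0$), at the cost of the weighted fixed-point machinery you flag as the technical obstacle. Both proofs invoke Lemma~\ref{lem:blowupsv} at the end in the same way.

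One caution on the secondary route you suggest: the pointwise comparison with $s^n\pm C\alpha^{-2/n}s^{n+2}$ is not a free lunch here, because the term $\alpha^{-2/n}(u-u^3)$ is not monotone in $u$ on the full range of $u$-values that occur on $[0,s_0]$ (for $|u|<1/\sqrt3$ it has the ``wrong'' sign of $\partial_u$), so the standard second-order comparison principle does not apply directly. The Volterra fixed-point argument in the weighted norm, which is your primary route, does not have this issue and is the one to use: with $\|w\|\lesssim 1$ as the a priori ball, the inhomogeneity coming from the coefficient differences and the cubic term is $O(\alpha^{-2/n})$ on $[0,s_0]$, and the map is a contraction for $\alpha$ large, which simultaneously gives existence of $u$ up to $s_0$ (so $R_\alpha>\alpha^{-1/n}s_0$) and the bound $\|w\|\lesssim\alpha^{-2/n}$, hence $u(s_0)>3/2$ and $\alpha\in\mathcal B$.
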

\begin{proof}
    By Lemma \ref{lem:Aopen},we have for any $\alpha \in (0,\infty)$
\begin{equation}
\label{eq:v_r}
    v_r(\alpha,r) >0 , \quad \forall r \in (0,\min(\sinh^{-1} (\frac{n}{\sqrt{2}}),R_{\alpha}))
\end{equation}
It follows that $v(\alpha,r)>0$ for all $\alpha>0$ and $r \in (0,\min(\sinh^{-1} (\frac{n}{\sqrt{2}}),R_{\alpha})).$     For any $\alpha >0,$ define $w(\alpha,r)=\frac{v(\alpha,r)}{\alpha}, $ and set $w^{\alpha}:=\frac{d}{d\alpha} w(\alpha,r),$ it satisfies 
\begin{align}
\label{eq:dw}
    \frac{1}{2} w^{\alpha}_{rr}  + \frac{1}{2} \coth(r) w^{\alpha}_r &- w^{\alpha}\left( 3 \alpha^2 w^{2} + \frac{n^2}{2 \sinh^2(r) } -1 \right) = 2 \alpha w^3>0, \quad  r \in (0,\min(\sinh^{-1} (\frac{n}{\sqrt{2}}),R_{\alpha})), \\
    w^{\alpha}(\alpha,r)&= r^n + O(r^{n+2}) , \quad \text{ as } r \to 0.
\end{align}
By series expansion for small $r,$ we have 
\begin{align*}
  w^{\alpha}(\alpha,r)= \frac{\alpha}{2n^2 + 3n +1} r^{3n+2} + O(r^{3n+3}) .  
\end{align*}
Since, $w^{\alpha}>0$ near $0,$ then $w^{\alpha}$ can not attain a positive local maximum in $(0,\min(\sinh^{-1} (\frac{n}{\sqrt{2}}),R_{\alpha})).$ If not then there exists a critical point $r_0<\sinh^{-1} (\frac{n}{\sqrt{2}}),$ (i.e., $ 1 < \frac{1}{2 \sinh^2(r_0) }$) such that $w^{\alpha}_r (\alpha,r_0)=0$ and $ w^{\alpha}_{rr}(\alpha,r_0) \leq 0.$ Using the equation \eqref{eq:dw}, we obtain $\frac{1}{2} w^{\alpha}_rr(\alpha,r_0)>0,$ which is a contraction. Therefore, we have $w^{\alpha}(\alpha,r) > 0$, for all $r \in (0, \min(\sinh^{-1}(\frac{n}{\sqrt{2}}), R_{\alpha}))$. Hence $w(\alpha,r)$ is strictly increasing in $\alpha$, for fixed $r \in (0, \min(\sinh^{-1}(\frac{n}{\sqrt{2}}), R_{\alpha}))$. Since $v = \alpha w$, there exists $M > 0$ such that if $a > M$ we have $v(a,r) > 1$. If $R_{\alpha} < n$, then by \eqref{eq:v_r} we have $a \in \mathcal{B}$. If $R_{\alpha} > n$, then $v(a,n) > 1$ and by \eqref{eq:v_r} we have $v_r(a,r) > 0$ for all $r \in (0,n)$. Thus, there exists $r_1$ such that $v(\alpha,r_1) = 1$ and $v(\alpha,r) < 1$ for $r \in (0,r_1)$. Then, by Lemma~\ref{lem:blowupsv} we have $v_r(\alpha,r) > 0$ for all $r \in [r_1, R_{\alpha})$. Therefore, if $a > M$ then $a \in \mathcal{B}$. 
\end{proof}

For any $\alpha \in \mathcal{B},$ define 
\begin{align}
\label{eq:defy1}
   y_1(\alpha):= \inf \{  r \in (0,R_{\alpha}) \, |\; v(\alpha,r)=1  \} .
\end{align}

\begin{lemma}
    The set $\mathcal{B} $ is open and $y_1(\cdot) \in \mathcal{C}^{\infty}(\mathcal{B}).$
\end{lemma}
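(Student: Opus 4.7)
\medskip

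\noindent\textbf{Proof proposal.} The plan is to proceed in parallel to the treatment of the set $\mathcal{A}$ in Lemma~\ref{lem:Aopen}: I will first establish that the transversality condition $v_r(\alpha,y_1(\alpha))>0$ holds at every $\alpha\in\mathcal{B}$, then invoke the implicit function theorem to obtain smoothness of $y_1(\cdot)$ and a candidate neighborhood on which $y_1$ is defined, and finally use continuous dependence on initial data together with Lemma~\ref{lem:blowupsv} to promote this neighborhood to a neighborhood entirely contained in $\mathcal{B}$.

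\medskip

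\noindent Fix $\alpha_0\in\mathcal{B}$. Since $\mathcal{A}$ and $\mathcal{B}$ are disjoint, $v_r(\alpha_0,r)\neq 0$ on $(0,R_{\alpha_0})$; combined with the expansion $v(\alpha_0,r)\sim\alpha_0 r^n$ near $0$ this forces $v_r(\alpha_0,r)>0$ throughout $(0,R_{\alpha_0})$. In particular the definition of $y_1(\alpha_0)$ in \eqref{eq:defy1} gives $v(\alpha_0,r)<1$ for $r\in(0,y_1(\alpha_0))$ and $v(\alpha_0,y_1(\alpha_0))=1$, so Lemma~\ref{lem:blowupsv} applies and yields the strict transversality
\[
v_r(\alpha_0,y_1(\alpha_0))>0.
\]
Since $v$ depends smoothly on $(\alpha,r)$ on the open set $\{(\alpha,r):0<r<R_\alpha\}$, the implicit function theorem applied to $F(\alpha,r):=v(\alpha,r)-1$ at $(\alpha_0,y_1(\alpha_0))$ produces open neighborhoods $U\ni\alpha_0$, $V\ni y_1(\alpha_0)$ and a smooth function $\widetilde y_1:U\to V$ with $v(\alpha,\widetilde y_1(\alpha))=1$ and $\widetilde y_1(\alpha_0)=y_1(\alpha_0)$.

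\medskip

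\noindent It remains to show that (possibly after shrinking $U$) every $\alpha\in U$ lies in $\mathcal{B}$ and that $\widetilde y_1(\alpha)=y_1(\alpha)$. For this, pick $y_2>y_1(\alpha_0)$ with $y_2<R_{\alpha_0}$ such that $v(\alpha_0,y_2)>1$ (which exists because $v_r(\alpha_0,\cdot)>0$ on $(y_1(\alpha_0),R_{\alpha_0})$ by Lemma~\ref{lem:blowupsv}, forcing $v$ to exceed $1$ immediately past $y_1(\alpha_0)$). By continuous dependence of solutions of \eqref{eq:ode-v}--\eqref{eq:bc-v} on the parameter $\alpha$, for $\alpha$ close to $\alpha_0$ the solution $v(\alpha,\cdot)$ exists on $[0,y_2]$ and is $C^1$-close to $v(\alpha_0,\cdot)$ on this compact interval; in particular
\[
v(\alpha,y_2)>1\qquad\text{and}\qquad v_r(\alpha,r)>0\ \text{for all }r\in(0,y_2].
\]
By the intermediate value theorem there exists a smallest $y_1(\alpha)\in(0,y_2)$ with $v(\alpha,y_1(\alpha))=1$, and by uniqueness of the implicit-function branch this must coincide with $\widetilde y_1(\alpha)$, giving the smoothness of $y_1(\cdot)$ on $U$. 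Finally, applying Lemma~\ref{lem:blowupsv} to $v(\alpha,\cdot)$ extends $v_r(\alpha,\cdot)>0$ from $[0,y_2]$ to all of $(0,R_\alpha)$, which together with $v(\alpha,y_2)>1$ shows $\alpha\in\mathcal{B}$.

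\medskip

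\noindent The only delicate point in the argument is the continuous dependence on $\alpha$ up to $r=y_2<R_{\alpha_0}$: because the interval $[0,y_2]$ includes $r=0$ where the equation is singular and the initial data is prescribed in the asymptotic form $v\sim\alpha r^n$, one must invoke continuous dependence for the singular Cauchy problem rather than the classical ODE result. This is however standard once a short-time existence theorem in a suitable weighted space has been established (as done in the analysis of \eqref{eq:ode-v}), so the step is a routine adaptation rather than a genuine obstacle.
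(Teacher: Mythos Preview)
Your proof is correct and follows essentially the same route as the paper: establish the transversality $v_r(\alpha_0,y_1(\alpha_0))>0$, invoke continuous dependence on $\alpha$ to propagate $v_r>0$ to nearby parameters on a compact $r$-interval reaching past $y_1(\alpha_0)$, then use Lemma~\ref{lem:blowupsv} to conclude membership in $\mathcal{B}$ and the implicit function theorem for smoothness of $y_1$. Your version is more carefully written (in particular your remark about continuous dependence through the singular point $r=0$ is a fair caveat that the paper leaves implicit), but the argument is the same.
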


\begin{proof}
    By continuous dependence of $v$ on $\alpha,$ we have for every $\alpha_0$ 
    \begin{align*}
      \exists \delta>0, \text{ such that } v_r(\alpha,r)>0, \quad \forall \alpha \in (\alpha_0-\delta_0,\alpha_0 + \delta_0) ,\text{ and }  \forall r \in (0,y_1(\alpha_0)+\delta_1).
    \end{align*}
 Thus, by Lemma \ref{lem:blowupsv} for some $\delta_2\in(0,\delta_1)$ we have $(\alpha_0 -\delta_1 ,\alpha_0+\delta_1) \subset \mathcal{B}.$ Moreover, using Implicit Function Theorem for $v(a,y_1)-1=0,$ for $y_1=y_1(\alpha),$ we obtain that $y_1(\alpha)$   is smooth and this concludes the proof of the Lemma.
\end{proof}

\textbf{Proof of Theorem \ref{thm:v}.} Recall that $ \mathcal{A} \cup \mathcal{B} \cup \mathcal{C} = (0,\infty)$ and $\mathcal{A}$ and $\mathcal{B}$ are open sets then $\mathcal{C}$ is not empty. Hence by Lemma \ref{lem:C-sol} we obtain the existence of solution to the problem \ref{eq:ode2-app},\eqref{eq:boundary_condition-ode2-app}.

Next, we proof properties of the solution to the problem \ref{eq:ode2-app},\eqref{eq:boundary_condition-ode2-app}.

\begin{lemma}
    Let $u$ be a solution to the problem \ref{eq:ode2-app},\eqref{eq:boundary_condition-ode2-app}, 
    then 
    \begin{align*}
   0<u(r)<1 , \quad   u^{\prime}(r)>0 \quad \text{for all } \; r >0.      
    \end{align*}
\end{lemma}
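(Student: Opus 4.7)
The plan is to identify the solution $u$ with $v(a,\cdot)$ for the parameter $a\in\mathcal{C}$ constructed above, and then read off the three desired properties directly from the definition of $\mathcal{C}$ together with the boundary conditions.

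First I would argue that any solution $u$ of \eqref{eq:ode2-app}--\eqref{eq:boundary_condition-ode2-app} agrees with $v(\alpha,\cdot)$ for some $\alpha>0$. Indeed, because $u$ is bounded near $r=0$, the local analysis performed before Lemma~\ref{lem:C-sol} (power series expansion about $r=0$) shows that $u(r)=\alpha r^n+O(r^{n+2})$ for some $\alpha$, and by standard ODE uniqueness for \eqref{eq:ode-v}--\eqref{eq:bc-v} this forces $u\equiv v(\alpha,\cdot)$ on the common interval of existence. The assumed boundary conditions rule out $\alpha\in\mathcal{A}$ (solutions oscillate and hence fail $u\to 1$ and $u\ge 0$) and $\alpha\in\mathcal{B}$ (solutions blow up or eventually leave $[0,1]$), so necessarily $\alpha\in\mathcal{C}$.

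Once $u=v(a,\cdot)$ with $a\in\mathcal{C}$, the strict monotonicity $u'(r)>0$ for all $r>0$ is exactly part of the definition of $\mathcal{C}$, and $u(r)\le 1$ is the other half. Combining strict monotonicity with $\lim_{r\to 0}u(r)=0$ gives $u(r)>0$ for every $r>0$, which handles the lower bound. For the strict upper bound, I would argue by contradiction: if $u(r_0)=1$ at some finite $r_0>0$, then by strict monotonicity $u(r)>u(r_0)=1$ for every $r>r_0$, contradicting $u\le 1$ on $(0,\infty)$; hence $u(r)<1$ for all $r>0$.

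There is no real obstacle in this argument; the whole content has already been packaged into the definition of $\mathcal{C}$ and into Lemma~\ref{lem:C-sol}. The only point that deserves a line or two is the identification step: explaining that any solution of the two-point problem is of the form $v(\alpha,\cdot)$ and that the boundary conditions force $\alpha\in\mathcal{C}$. This is where the previous lemmas about $\mathcal{A}$ and $\mathcal{B}$ being open and nonempty, and about oscillation/blow-up behavior, do all the work.
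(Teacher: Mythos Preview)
Your overall strategy matches the paper's: both identify $u$ with $v(\alpha,\cdot)$ for some $\alpha>0$ and argue that $\alpha\in\mathcal{C}$, after which the conclusions follow from the definition of $\mathcal{C}$. The $\mathcal{B}$ case is handled correctly in your sketch.

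However, there is a genuine gap in how you exclude $\alpha\in\mathcal{A}$. You write that ``solutions oscillate and hence fail $u\to 1$ and $u\ge 0$,'' and that ``the previous lemmas about \ldots oscillation/blow-up behavior do all the work.'' But no such lemma has been proved at this point in the appendix. The definition of $\mathcal{A}$ only says $v_r$ vanishes somewhere; Lemma~\ref{lem:Aopen} shows this zero $y_0(\alpha)$ is a strict local maximum with $v(\alpha,y_0)<1$, but nothing so far rules out the scenario in which $v$ dips slightly after $y_0$ and then increases monotonically to $1$. The oscillation claim in Theorem~\ref{thm:v} is a statement, not something already established before this lemma.

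The paper closes this gap with a short but essential ODE argument: if $\alpha\in\mathcal{A}$ and the boundary conditions hold, then since $v\ge 0$ and $v\to 1$, the function $v$ must attain a local minimum at some $r_0>y_0(\alpha)$ with $0\le v(\alpha,r_0)<v(\alpha,y_0)$. Evaluating \eqref{eq:ode-v} at $r_0$ (where $v_r=0$, $v_{rr}\ge 0$) and using $v(\alpha,r_0)^2 + \tfrac{n^2}{2\sinh^2 r_0} - 1 < 0$ (which follows from \eqref{eq:v<y_0} and $r_0>y_0$) forces $v(\alpha,r_0)=0$. Then $v=v_r=0$ at $r_0$ implies $v\equiv 0$ by uniqueness, contradicting $v\to 1$. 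You should include this argument explicitly rather than deferring to an oscillation result that has not yet been proved.
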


\begin{proof}
    The assertion of this lemma is consequence of the following claim 
    \begin{claim}
        $u$ be a solution to the problem \ref{eq:ode2-app},\eqref{eq:boundary_condition-ode2-app} if and only if there is $\alpha \in \mathcal{C}$ such that $u(r)=v(\alpha,r).$
    \end{claim}
    \begin{proof}
    Notice that, by Lemma \ref{lem:C-sol}, we have if $a \in \mathcal{C}$ then $u(\cdot)=v(\alpha,\cdot)$ is a solution to the equation \eqref{eq:ode2-app},\eqref{eq:boundary_condition-ode2-app}. Let $u$ be a solution of \eqref{eq:ode2-app} and \eqref{eq:boundary_condition-ode2-app}. Then $u$ is bounded, and there exists $\alpha \in (0,\infty)$ such that $u(r) = v(\alpha,r)$. Since $u$ is bounded, we have $a \notin \mathcal{B}$, and it suffices to show that $a \notin \mathcal{A}$. We argue by contradiction, assume $a \in \mathcal{A}.$ Recall that by Lemma \ref{lem:Aopen}
   we have $  \sinh^{-1}(\frac{n}{\sqrt{2}})< y_0(\alpha),$ $v(\alpha,y_0(\alpha)) < \left( 1 - \frac{n^2}{2 \sinh^2(y_0(\alpha))} \right)^{\frac{1}{2}}$ and $ v_{rr}(\alpha,y_0(\alpha))<0.$
Since $v(\alpha,r) \geq 0,$ and $\lim_{r \to \infty } v(\alpha,r)=1,$ then $v$ must attains local minimum at some point $r_0 > y_0(\alpha),$ such that 
\begin{align}
\label{eq:vr0}
   0\leq v^2(\alpha,r_0) < v^2(\alpha,y_0(\alpha)) < 1 -\frac{n^2}{2 \sinh^2(y_0(\alpha))} < 1 -\frac{n^2}{2 \sinh^2(r_0)} . 
\end{align}
 As $v$ attains local minimum at $r=r_0,$ we have $v_r(\alpha,r_0)=0$ and $v_{rr}(\alpha,r_0) \geq 0.$ Then using \eqref{eq:vr0} and the equation \eqref{eq:ode2-app},  $v_{rr}(\alpha,r_0)=v(\alpha,r_0)(v(\alpha,r_0)^2 + \frac{n^2}{2 \sinh^2(r_0)} -1 ) \geq 0, $ one can see that the only possibility is $v(\alpha,r_0)=0.$ Howoever, the only solution to second order equation with $v(\alpha,r_0)=v_r(\alpha,r_0)=0$ is $v \equiv 0,$ which can not be a solution as $u\to 1,$ as $r \to \infty.$ Thus $a \notin A.$ and this concludes the proof of the claim. 
   \end{proof}
 The proof of the lemma follows from the claim above. 
\end{proof}

\begin{lemma}
    If $u$ solves \eqref{eq:ode2-app} and \eqref{eq:boundary_condition-ode2-app}, then  
    $u(r)=1 - 2 n^2 e^{-2r} + o(e^{-2r})$ as $r \to \infty.$
\end{lemma}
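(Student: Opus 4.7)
The plan is to linearize around the limit $u = 1$ and identify the leading exponential correction via a fixed point argument. Set $w(r) := 1 - u(r)$, so $w(r) \to 0^+$ as $r \to \infty$ since $0 < u < 1$ and $u$ is monotone with $u \to 1$. Using the identity $(1-w) - (1-w)^3 = 2w - 3w^2 + w^3$, the ODE for $u$ (in the normalization of \eqref{eq:ode2} multiplied by $2$) rewrites as
\[
w'' + \coth(r)\, w' - 4 w \;=\; -\frac{n^2}{\sinh^2 r}(1 - w) - 6 w^2 + 2 w^3.
\]
Since $\coth r = 1 + O(e^{-2r})$ and $\frac{1}{\sinh^2 r} = 4 e^{-2r} + O(e^{-4r})$, this becomes
\[
w'' + w' - 4 w \;=\; -4 n^2 e^{-2r} + E(r, w, w'),
\qquad
E = O(e^{-4r}) + O\!\bigl(e^{-2r}(|w| + |w'|)\bigr) + O(w^2).
\]

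The constant-coefficient operator $\partial_r^2 + \partial_r - 4$ has characteristic roots $\lambda_\pm = \tfrac12(-1 \pm \sqrt{17})$ with $\lambda_- < -2 < 0 < \lambda_+$. Crucially $-2$ is not a root, so $w_p(r) := 2 n^2 e^{-2r}$ is an exact (nonresonant) particular solution of the leading forced equation: direct substitution gives $w_p'' + w_p' - 4 w_p = (8 - 4 - 8)\, n^2 e^{-2r} = -4 n^2 e^{-2r}$. This is the source of the explicit constant $2n^2$ in the asymptotic.

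The core of the argument is a Lyapunov--Perron fixed point. Writing $w = w_p + \tilde w$, the new unknown $\tilde w$ satisfies $\tilde w'' + \tilde w' - 4 \tilde w = \tilde E(r, \tilde w, \tilde w')$, where $\tilde E$ inherits the structure of $E$ but with the inhomogeneity $-4 n^2 e^{-2r}$ already absorbed; in particular $\tilde E = O(e^{-4r}) + O\!\bigl(e^{-2r}(|\tilde w| + |\tilde w'|)\bigr) + O(\tilde w^2)$. Using the Green's function for $\partial_r^2 + \partial_r - 4$ that kills the growing mode $e^{\lambda_+ r}$ at infinity, one recasts this as a Volterra-type integral equation $\tilde w = \mathcal{T}[\tilde w]$ on $[R_0, \infty)$. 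On the weighted Banach space
\[
X_\delta := \Bigl\{ v \in C^1([R_0, \infty)) \;:\; \|v\|_\delta := \sup_{r \geq R_0} e^{(2+\delta) r} \bigl(|v(r)| + |v'(r)|\bigr) < \infty \Bigr\},
\]
with small $\delta > 0$ satisfying $2 + \delta < |\lambda_-|$, the operator $\mathcal{T}$ is a contraction on a small ball for $R_0$ large enough (each contribution to $\tilde E$ either gains an extra factor of $e^{-2r}$ or is quadratic in $\tilde w$). This yields a unique fixed point $\tilde w \in X_\delta$, hence $\tilde w(r) = O(e^{-(2+\delta) r}) = o(e^{-2r})$.

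Finally one matches this construction to the actual $w = 1 - u$. Any bounded solution of the nonlinear equation with $w \to 0$ must have zero coefficient of the unstable mode $e^{\lambda_+ r}$, so $w - w_p$ lies in the stable manifold at infinity, which coincides with our fixed-point solution modulo a multiple of $e^{\lambda_- r}$ (itself in $X_\delta$). This gives $w(r) = 2 n^2 e^{-2r} + o(e^{-2r})$ and hence the claimed expansion $u(r) = 1 - 2 n^2 e^{-2r} + o(e^{-2r})$. The main technical obstacle is the matching step: one needs a preliminary crude decay estimate on $w$ (e.g., via a comparison argument exploiting positivity of $w$ and the sign of the leading linear part) to guarantee that $w - w_p$ falls within the range of validity of the contraction, thus pinning down that the constructed decaying solution is genuinely the one determined by our global solution $u$.
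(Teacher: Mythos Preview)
The paper does not actually prove this lemma: it simply states that the asymptotics ``follow from standard ODE arguments'' and omits the proof entirely. Your proposal is a correct execution of exactly those standard arguments---linearization $w=1-u$ around the limiting value, identification of the nonresonant particular solution $w_p=2n^2e^{-2r}$ from the leading forcing $-4n^2e^{-2r}$, and a Lyapunov--Perron contraction on a weighted space for the remainder. The computation of the characteristic roots $\lambda_\pm=\tfrac12(-1\pm\sqrt{17})$, the check $\lambda_-<-2$, and the verification that $w_p''+w_p'-4w_p=-4n^2e^{-2r}$ are all right. (Note in passing that the appendix equation \eqref{eq:ode2-app} has a typo: it should carry $2u-2u^3$ to be the $2\times$ multiple of \eqref{eq:ode2}; you correctly work with the version consistent with the claimed asymptotic.)

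The one place your sketch is slightly informal is the matching step: to feed the actual $w=1-u$ into the contraction you need some a priori decay. But this is easy here---since $w\to 0$, for $r$ large the equation reads $w''+w'-4w=O(e^{-2r})+o(|w|)$, and the boundedness of $w$ together with the fact that the unstable mode $e^{\lambda_+ r}$ must carry zero coefficient already forces $w=O(e^{-2r})$ by variation of parameters; from there the contraction closes on $\tilde w=w-w_p$. So this is not a genuine obstacle, just a routine bootstrap you should mention.
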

\begin{proof}
Since the asymptotics of $u$ as $r \to \infty$ follow from standard ODE arguments, we omit the proof.
\end{proof}
Next, we show that the solution is unique.  
For that, it suffices to prove that $\mathcal{C}$ consists of a single point.  
We first show that both $\mathcal{A}$ and $\mathcal{B}$ are connected, therefore, $\mathcal{C}$ consists of an interval $[a, A]$. We then prove that $a = A$. 

We first start by proving a monotonicity lemma, whcih will play a key role in the uniqueness proof.

\begin{lemma}
\label{lem:uniqKey}
 Assume that $v(\alpha,r)>0$ in $(0,R_1)$ for some $R_1 \leq R_{\alpha}.$ Then 
 \begin{align*}
   v^{\alpha}  :=\frac{d}{d\alpha} v(\alpha,r) > \frac{1}{\alpha} v(\alpha,r), \quad \forall r \in (0,R_1).
 \end{align*}
If in addition we assume that $v_r(\alpha,r)>0$ for $r\in (0,R_1)$ then 
\begin{align}
\label{eq:v_ralpha>0}
    v_r^{\alpha}(\alpha,r) :=\frac{d}{dr} v^{\alpha}(\alpha,r) >0, \quad \forall r \in (0,R_1).
\end{align}
\end{lemma}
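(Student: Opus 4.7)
The plan is to reduce the first inequality $v^\alpha > \frac{v}{\alpha}$ to a maximum principle statement for the linearized operator, and then deduce the derivative inequality $v_r^\alpha > 0$ from a Wronskian computation involving $v$ and $v^\alpha$. Set $w := v^\alpha - \frac{v}{\alpha}$ and denote the linearization of \eqref{eq:ode-v} about $v$ by
\begin{align*}
\mathcal{L}_0 f := \tfrac{1}{2} f_{rr} + \tfrac{1}{2} \coth(r) f_r - \tfrac{n^2}{2\sinh^2 r} f + (1 - 3v^2) f.
\end{align*}
Differentiating \eqref{eq:ode-v} in $\alpha$ gives $\mathcal{L}_0 v^\alpha = 0$, while a direct substitution of $v$ into $\mathcal{L}_0$, together with the fact that $v$ solves \eqref{eq:ode-v}, yields $\mathcal{L}_0 v = -2v^3$. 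Dividing by $\alpha$ and subtracting, I would obtain
\begin{align*}
\mathcal{L}_0 w = \frac{2v^3}{\alpha}>0 \quad\text{on } (0,R_1).
\end{align*}
A power-series calculation at $r=0$ (using $v=\alpha r^n+O(r^{n+2})$ and matching the $r^{3n}$ coefficient via the identity $(3n+2)(3n+1)+(3n+2)-n^2=4(n+1)(2n+1)$) would give $w(r) = \frac{\alpha^2}{(n+1)(2n+1)} r^{3n+2}+O(r^{3n+4})$, so in particular $w$ is strictly positive in a right-neighborhood of $0$.

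For the core positivity argument, I would argue by contradiction: suppose $r_0\in(0,R_1)$ is the first zero of $w$, so that $w(r_0)=0$, $w>0$ on $(0,r_0)$, and $w_r(r_0)\le 0$. Combining $\mathcal{L}_0 w=\frac{2v^3}{\alpha}$ with $\mathcal{L}_0 v=-2v^3$, multiplying the first equation by $v$, the second by $w$, subtracting and regrouping via the Lagrange identity, I expect to obtain
\begin{align*}
\bigl(\sinh(r)\,(v w_r - w v_r)\bigr)_r \;=\; \sinh(r)\Bigl(\tfrac{4v^4}{\alpha} + 4 v^3 w\Bigr).
\end{align*}
Integrating on $(0,r_0)$, the boundary term at $0$ vanishes thanks to the $r^{3n+2}$ leading behavior of $w$ (and $r^n$ of $v$), while at $r_0$ the remaining boundary term reduces to $\sinh(r_0)v(r_0)w_r(r_0)\le 0$ since $w(r_0)=0$. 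But the right-hand side of the integrated identity is strictly positive by the positivity of $v$ and $w$ on $(0,r_0)$. This contradiction forces $w>0$ throughout $(0,R_1)$, which is the first assertion.

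For the second assertion, assuming in addition $v_r>0$ on $(0,R_1)$, I would introduce the Wronskian
\begin{align*}
W(r) := v\, v_r^\alpha - v_r\, v^\alpha.
\end{align*}
Using $\mathcal{L}_0 v^\alpha=0$ together with the nonlinear equation for $v$, a direct computation of $(\sinh(r)W)_r$ should collapse to
\begin{align*}
\bigl(\sinh(r)\,W(r)\bigr)_r \;=\; 4\sinh(r)\,v^3\,v^\alpha.
\end{align*}
The leading asymptotics of $v$ and $v^\alpha$ at $0$ give a cancellation in $W$ of the dominant $r^{2n-1}$ terms, hence $\sinh(r) W(r)\to 0$ as $r\to 0^+$, and integration yields $\sinh(r)W(r) = 4\int_0^r \sinh(s)v^3 v^\alpha\,ds$. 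Since the first part of the lemma already guarantees $v^\alpha>0$ on $(0,R_1)$, the integrand is strictly positive, so $W>0$ on $(0,R_1)$. Combined with $v>0$, $v_r>0$, and $v^\alpha>0$, the inequality $v v_r^\alpha > v_r v^\alpha > 0$ then yields $v_r^\alpha>0$, completing \eqref{eq:v_ralpha>0}.

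The main technical obstacle I anticipate is the careful bookkeeping in the Lagrange/Wronskian identities: verifying that all lower-order and potential terms cancel exactly so that only the cubic nonlinearity contributes a definite-sign source term, and checking that the boundary contributions at $r=0$ vanish with enough margin (this requires the $r^{3n+2}$ order of $w$ in the first step, and the cancellation of leading terms in $W$ in the second). Once the identities are in the right form, the sign argument itself is straightforward.
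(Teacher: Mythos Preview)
Your proof is correct and complete, but it follows a different strategy than the paper's. The paper (following Chen--Elliott) argues via a sliding/comparison method: it writes $\tilde w=v/\alpha$, observes that $v^\alpha$ and $\tilde w$ satisfy the same linearized operator with source terms $0$ and $-2\alpha\tilde w^3<0$ respectively, and then, assuming the inequality fails at some first point $R_2$, introduces the sliding parameter
\[
\kappa=\sup\{c>0:\; c\,v^\alpha\le \tilde w\text{ on }(0,R_2)\},
\]
locates an interior touching point $R_3$, and derives a contradiction from the strong maximum principle at $R_3$. The second inequality $v^\alpha_r>0$ is then obtained by rerunning essentially the same sliding argument.

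Your route is the Lagrange/Wronskian approach: for part one you pair $\mathcal L_0(v^\alpha-v/\alpha)=2v^3/\alpha$ against $\mathcal L_0 v=-2v^3$ to get a definite-sign integral identity, and for part two you integrate the Wronskian identity $(\sinh(r)(vv^\alpha_r-v_rv^\alpha))_r=4\sinh(r)v^3v^\alpha$ directly. This buys you a cleaner argument in part two, where you never need to reintroduce a sliding parameter or re-invoke a maximum principle; the positivity of $v^\alpha_r$ drops out algebraically from $W>0$, $v>0$, $v_r>0$, $v^\alpha>0$. The paper's method is closer to classical sub/supersolution reasoning and generalizes slightly more readily when exact Wronskian cancellations are unavailable, but for this specific problem your approach is at least as efficient.
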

\begin{proof}
    Notice that $v^{\alpha}$ satisifies 
    \begin{align}
        \frac{1}{2} v^{\alpha}_{rr} +\frac{1}{2} &\coth(r) v^{\alpha}_r - v^{\alpha}( 3 v^2+ \frac{n^2}{2 \sinh^2(r)} -1 ) = 0, \quad \forall r \in (0,R_1), \\ 
        v^{\alpha}(\alpha,r) &= r^n - \frac{1}{2n+2} r^{n+2}+O(r^{n+4}), \quad \text{ as } r \to 0. 
    \end{align}
and $w(\alpha,r)= \frac{v(\alpha,r)}{\alpha}$ satisfies 
\begin{align}
    \frac{1}{2} w_{rr} + \frac{1}{2} &\coth(r) w_r - w(  3 v^2 + \frac{n^2}{\sinh^2(r)} -1 ) = -2 \alpha w^3 <0, \quad \forall r \in (0,R_1), \\
    w(\alpha,r)&=r^n - \frac{1}{2n+2} r^{n+2}+O(r^{n+4}), \quad \text{ as } r \to 0. 
\end{align}
First, we show that $v^{\alpha} > w$ for small $r.$ Recall that $w^{\alpha}(\alpha,r) \sim \frac{\alpha}{2n^2+3n+1} r^{3n+2}.$  Note that $w^{\alpha}(\alpha,r)=\frac{v^{\alpha}(\alpha,r)-w(\alpha,r)}{\alpha}>0,$ for small $r.$ Thus, $v^{\alpha}(\alpha,r)>w(\alpha,r)$ for small $r.$ Next, we argue by contradiction. Assume that $v^{\alpha}(\alpha,r) > w(a,r)=\frac{w(\alpha,r)}{\alpha}$ for all $r \in (0,R_1)$ is not true. Then there exist $R_2 \in (0,R_1)$ such that $v^{\alpha}(\alpha,R_2) = w( \alpha,R_2).$ Define
\begin{equation}
\label{eq:defkap}
   \kappa:=\sup \{ c >0 \,| \, c v^{\alpha}(\alpha,r) \leq w(\alpha,r) , \; \text{ for } r \in (0,R_2)  \}.
\end{equation}
Note that for small $r,$ we have $c v^{\alpha} - w \sim (\kappa - 1) r^{n} <0,$ for small $r.$ We conclude that, $\exists R_3 \in (0,R_2)$ such that 
\begin{align*}
   0=c v^{\alpha}(\alpha,R_3) - w(\alpha,R_3) = \max_{ r \in [0,R_2]} \{ \kappa v^{\alpha}(\alpha,r) - w(\alpha,r) \} .
\end{align*}
Then it follows that 
\begin{align}
\label{eq:vv_rv_rr-r3}
  \kappa v^{\alpha}=w, \quad \kappa v_r^{\alpha}= w_r, \quad \kappa v_{rr}^{\alpha} \leq w_{rr} , \quad \text{ for } r=R_3.   
\end{align}
However, this is impossible otherwise we have 
\begin{align*}
    0&=\kappa \left( \frac{1}{2} v_{rr}^{\alpha} + \frac{1}{2} \coth(r) v_r^{\alpha} - v^{\alpha} (3v^2+ \frac{n^2}{2 \sinh^2(r)} -1 ) \right) \bigg|_{r=R_3} \\
    & \leq \frac{1}{2} w_{rr} + \frac{1}{2} w_r - w \left( 3v^2 + \frac{n^2}{2 \sinh^2(r)} -1 \right) \bigg|_{r=R_3} = -2 \alpha^2 w^2(\alpha,R_3)<0,
\end{align*}
which is a contradiction. Therefore, we have $\frac{d}{d\alpha} v(\alpha,r)> \frac{1}{\alpha} v(\alpha,r), $ for all $r\in (0,R_1).$ \\

Next, we prove \eqref{eq:v_ralpha>0}. Note that $v_r^{\alpha}>0$ when $r$ is small. Similarly, we argue by contradiction. Suppose that $\eqref{eq:v_ralpha>0}$ is not true, then there exists $R_2 \in (0,R_1)$ such that $v^{\alpha}_r(\alpha,R_2)=0.$ Define $\kappa$ as in \eqref{eq:defkap}. Since $\kappa v^{\alpha}_r- w_r \big|_{r=R_2} <0,$ we can deduce that there exists $R_3 \in (0,R_2)$ satisfying \eqref{eq:vv_rv_rr-r3}, which by the same arguments leads to a contradiction. 

\end{proof}
 
\begin{lemma} \label{lem:B-connected}
    $\mathcal{B}$ is connected and there exists a constant $A>0$ such that $\mathcal{B}=(A,\infty).$
\end{lemma}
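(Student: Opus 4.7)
The plan is to show that $\mathcal{B}$ is upward closed in $\alpha$, i.e., $\alpha_0 \in \mathcal{B}$ implies $[\alpha_0,\infty) \subset \mathcal{B}$. Together with the openness of $\mathcal{B}$, the fact that $(0,m)\subset\mathcal{A}$ for some $m>0$ and $(M,\infty)\subset\mathcal{B}$ for some $M>0$, and the disjointness of $\mathcal{A}, \mathcal{B}, \mathcal{C}$, this forces $\mathcal{B} = (A,\infty)$ with $A := \inf\mathcal{B} \in [m,M]$; openness of $\mathcal{B}$ then gives $A \notin \mathcal{B}$, concluding the statement.

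The main tool is the strict infinitesimal monotonicity from Lemma~\ref{lem:uniqKey}: wherever $v(\alpha,\cdot) > 0$ on an initial interval $(0,R_1)$, one has $v^{\alpha}(\alpha,r) > v(\alpha,r)/\alpha > 0$ for every $r \in (0,R_1)$. Integrated in $\alpha$, this says that for each fixed $r$, the map $\alpha \mapsto v(\alpha,r)$ is strictly increasing along any $\alpha$-interval on which $v(\alpha,r)$ remains defined and positive. I would then fix $\alpha_0 \in \mathcal{B}$, $\alpha^* > \alpha_0$, and $y_1 := y_1(\alpha_0) < R_{\alpha_0}$ the first point where $v(\alpha_0,\cdot) = 1$, and track the set
\[
S := \{\alpha \in [\alpha_0,\alpha^*] : \text{for every } \tilde\alpha \in [\alpha_0,\alpha],\ v(\tilde\alpha,\cdot) \text{ exists on } [0,y_1] \text{ with } v(\tilde\alpha,r) > 0 \text{ on } (0,y_1]\},
\]
setting $\bar\alpha := \sup S$. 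By continuous dependence of the IVP \eqref{eq:ode-v}--\eqref{eq:bc-v} on $\alpha$ and the positivity of $v(\alpha_0,\cdot)$ on $(0,y_1]$, one has $\bar\alpha > \alpha_0$, and on $[\alpha_0,\bar\alpha)$ Lemma~\ref{lem:uniqKey} yields $v(\alpha,y_1) > v(\alpha_0,y_1) = 1$ strictly for every $\alpha > \alpha_0$. Since $v(\alpha,0) = 0$, the intermediate value theorem then places every such $\alpha$ in $\mathcal{B}$.

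Two cases remain in order to reach $\alpha^*$. If $\bar\alpha = \alpha^*$, I would pass to the limit through $S$: continuous dependence yields $v(\alpha^*,r) \geq v(\alpha_0,r) > 0$ on $(0,y_1]$, so $\alpha^* \in S$, and a fresh application of Lemma~\ref{lem:uniqKey} at $\alpha^*$ upgrades this to $v(\alpha^*,y_1) > 1$, placing $\alpha^* \in \mathcal{B}$. If $\bar\alpha < \alpha^*$, then the defining condition of $S$ must fail at $\bar\alpha$; the monotonicity from below forces $v(\bar\alpha,r) \geq v(\alpha_0,r) > 0$ on $(0,y_1]\cap(0,R_{\bar\alpha})$, so the failure must be $R_{\bar\alpha} \leq y_1$, i.e., $v(\bar\alpha,\cdot) \to \infty$ on $(0,R_{\bar\alpha})$. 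Since $v(\bar\alpha,0)=0$, $v(\bar\alpha,\cdot)$ crosses $1$, so $\bar\alpha \in \mathcal{B}$, and openness of $\mathcal{B}$ then lets me restart the argument from $\bar\alpha$ (with its own smaller $y_1(\bar\alpha)$) and iterate until $\alpha^*$ is reached. The main obstacle is precisely this bookkeeping: integrating the infinitesimal inequality of Lemma~\ref{lem:uniqKey} into a global monotonicity statement in the presence of possible loss of existence $R_\alpha<\infty$ or loss of positivity as $\alpha$ traverses $[\alpha_0,\alpha^*]$. The careful definition of $S$ and the iteration via openness of $\mathcal{B}$ are what handle this interplay.
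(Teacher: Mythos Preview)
Your strategy—showing $\mathcal{B}$ is upward closed via the monotonicity of Lemma~\ref{lem:uniqKey}—is essentially the same idea as the paper's, but your execution has a genuine gap and an unnecessary complication that the paper's argument avoids.

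The gap is in the step ``$v(\alpha,\cdot)$ crosses $1$, so $\alpha\in\mathcal{B}$.'' Membership in $\mathcal{B}$ requires \emph{both} $\max v>1$ \emph{and} $v_r(\alpha,r)>0$ for all $r\in(0,R_\alpha)$. Your set $S$ only tracks positivity of $v$, and the first part of Lemma~\ref{lem:uniqKey} only gives monotonicity of $v$; nothing you have written rules out $\alpha\in\mathcal{A}$ (a critical point of $v(\alpha,\cdot)$ before the first crossing of $1$). This can be repaired—either by also tracking $v_r>0$ and invoking the second conclusion of Lemma~\ref{lem:uniqKey}, or by a separate ODE argument using Lemma~\ref{lem:Aopen}—but you have not done so. The iteration ``restart from $\bar\alpha$ and iterate until $\alpha^*$ is reached'' is also not justified: you give no reason why the sequence of restarting points reaches $\alpha^*$ rather than accumulating below it.

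The paper sidesteps both issues by working directly on a connected component $(\alpha_1,\alpha_2)\subset\mathcal{B}$ and tracking $y_1(\alpha)$ instead of $v(\alpha,y_1(\alpha_0))$. From $v_r(\alpha,y_1(\alpha))>0$ (Lemma~\ref{lem:blowupsv}) and $v^\alpha(\alpha,y_1(\alpha))>0$ (Lemma~\ref{lem:uniqKey}), the implicit function theorem gives $\frac{d}{d\alpha}y_1(\alpha)<0$, so $y_1(\alpha)\downarrow l_0$ as $\alpha\uparrow\alpha_2$. Continuous dependence then yields $v(\alpha_2,l_0)=1$ and $v_r(\alpha_2,\cdot)\geq 0$; the strict inequality $v_{rr}<0$ at any first zero of $v_r$ (Lemma~\ref{lem:Aopen}) is incompatible with $v_r\geq 0$, forcing $v_r(\alpha_2,\cdot)>0$ and hence $\alpha_2\in\mathcal{B}$. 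This contradicts openness unless $\alpha_2=\infty$, and no iteration is needed.
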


\begin{proof}
    Since $\mathcal{B}$ is open, we need only to show that if $(\alpha_1,\alpha_2) \in \mathcal{B},$ then $\alpha_2 \in \mathcal{B}.$ Define $y_1(\alpha)$ as in \eqref{eq:defy1}, then we have $v(\alpha,y_1(\alpha))=1$ and $v(\alpha,r)<1$ for all $r\in (0,y_1(\alpha)).$ Thus, by Lemma \ref{lem:blowupsv} we have $v_r(\alpha,r)>0$ for all $r \in [y_1(\alpha),R_{\alpha})$ and by Lemma \ref{lem:uniqKey} we have $v^{\alpha}(\alpha,y_1(\alpha))>0.$ Using Implicit Function Theorem for $v(\alpha,y_1(\alpha))=1,$ we have 
    \begin{align*}
        \frac{d}{d \alpha } y_1(\alpha)= -\frac{v^{\alpha}(\alpha,y_1(\alpha))}{v_r(\alpha,y_1(\alpha))} <0.
    \end{align*}
It follows that $\lim_{ \alpha \to \alpha_2} y_1(a)=l_0 > 0 $ exists. Using continuous dependence of $v$ on $\alpha,$ we have $v(\alpha_2,l_0)=1$ and $v_r(\alpha_2,r) \geq 0 $ for all $r\in (0,R_{{\alpha}_2})$. Then by Lemma \ref{lem:Aopen}  and \eqref{lem:blowupsv}, we conclude that $\alpha_2 \in \mathcal{B}.$ 
\end{proof}
\begin{lemma} \label{lem:A-connected}
   $\mathcal{A}$ is connected and there is a constant $a>0,$ such that $\mathcal{A}=(0,a).$ 
\end{lemma}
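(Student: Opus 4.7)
The plan is to mirror the argument of Lemma \ref{lem:B-connected}, but with the direction of monotonicity reversed. Since $\mathcal{A}$ is open and we already know $(0,m) \subset \mathcal{A}$, it suffices to show that $\mathcal{A}$ is downward-closed: if $\alpha_0 \in \mathcal{A}$ then $(0, \alpha_0) \subset \mathcal{A}$. The finiteness of $a := \sup \mathcal{A}$ will then follow from the disjointness $\mathcal{A} \cap \mathcal{B} = \emptyset$ and Lemma \ref{lem:B-connected}.

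The first preliminary step will be to show that $y_0(\alpha)$ is non-decreasing on $\mathcal{A}$. Differentiating $v_r(\alpha, y_0(\alpha)) = 0$ in $\alpha$ gives
\[
y_0'(\alpha) \;=\; -\,\frac{v_r^{\alpha}(\alpha, y_0(\alpha))}{v_{rr}(\alpha, y_0(\alpha))},
\]
where the denominator is strictly negative by \eqref{eq:v_rr<0}. For the numerator, note that on $(0, y_0(\alpha))$ we have $v(\alpha,r) > 0$ (since $v \sim \alpha r^n$ near $0$ and $y_0(\alpha)$ is the first critical point of the initially increasing profile) and $v_r(\alpha,r) > 0$. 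Hence Lemma \ref{lem:uniqKey} yields $v_r^{\alpha}(\alpha, r) > 0$ on $(0, y_0(\alpha))$, and by continuity $v_r^{\alpha}(\alpha, y_0(\alpha)) \geq 0$. Therefore $y_0'(\alpha) \geq 0$.

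To prove downward-closedness, I would argue by contradiction. Fix $\alpha_0 \in \mathcal{A}$ and suppose the connected component of $\mathcal{A}$ containing $\alpha_0$ is an open interval $(\alpha_*, \beta)$ with $\alpha_* > 0$. By openness of $\mathcal{A}$ we have $\alpha_* \notin \mathcal{A}$, and by Lemma \ref{lem:B-connected} the set $\mathcal{B} = (A, \infty)$ is open on the right, so if $\alpha_* \in \mathcal{B}$ then a right neighborhood of $\alpha_*$ would lie in $\mathcal{B}$, contradicting $(\alpha_*, \beta) \subset \mathcal{A}$. Hence $\alpha_* \in \mathcal{C}$; in particular $R_{\alpha_*} = \infty$ and $v_r(\alpha_*, r) > 0$ for all $r > 0$.

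The key step, and the main obstacle, will be to pass to the limit $\alpha \to \alpha_*^+$ within the monotone family $y_0(\alpha)$. By the monotonicity above and by \eqref{eq:y>n}, the values $y_0(\alpha)$ for $\alpha \in (\alpha_*, \alpha_0]$ lie in the compact interval $[\sinh^{-1}(n/\sqrt{2}), y_0(\alpha_0)]$, so the limit $l_0 := \lim_{\alpha \to \alpha_*^+} y_0(\alpha)$ exists and is finite and strictly positive. Continuous dependence of solutions of \eqref{eq:ode-v} on the parameter $\alpha$ on the compact interval $[0, l_0 + 1]$ (valid because $R_{\alpha_*} = \infty$) then gives $v_r(\alpha_*, l_0) = \lim_{\alpha \to \alpha_*^+} v_r(\alpha, y_0(\alpha)) = 0$. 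This contradicts $\alpha_* \in \mathcal{C}$ and forces $\alpha_* = 0$, proving $(0, \alpha_0) \subset \mathcal{A}$. Setting $a := \sup \mathcal{A} \leq A < \infty$ finishes the proof with $\mathcal{A} = (0, a)$.
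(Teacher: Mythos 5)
Your proof is correct and follows essentially the same route as the paper: monotonicity of $y_0(\alpha)$ via the implicit function theorem together with \eqref{eq:v_rr<0} and \eqref{eq:v_ralpha>0}, the lower bound \eqref{eq:y>n}, and a continuous-dependence argument at the left endpoint of a component, exactly the analogue of the proof of Lemma~\ref{lem:B-connected} that the paper invokes. The paper leaves that limiting step implicit; your detour through $\alpha_* \in \mathcal{C}$ to guarantee $R_{\alpha_*}=\infty$ before passing to the limit is a clean way of supplying the needed detail.
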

\begin{proof}
    Let $y_0(\alpha)$ defined as \eqref{eq:defy}, then by \eqref{eq:v_ralpha>0} we have $v^{\alpha}_r(\alpha,y_0(\alpha)) \geq 0.$ By Lemma \ref{lem:Aopen} and Implicit Function Theorem, we have 
    \begin{align*}
       \frac{d}{d\alpha}y_0(\alpha)=- \frac{v^{\alpha}_r(\alpha,y_0(\alpha))}{v_{rr}(\alpha,y_0(\alpha))} \geq 0.
    \end{align*}
By an argument analogous to the one in the proof of the previous lemma, we obtain the desired result.
\end{proof}
Finally, we prove the uniqueness of the solution of \eqref{eq:ode2-app},\eqref{eq:boundary_condition-ode2-app}.
\begin{theorem}
    The problem \eqref{eq:ode2-app},\eqref{eq:boundary_condition-ode2-app} admits a unique solution.
\end{theorem}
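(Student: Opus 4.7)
The plan is to exploit the monotonicity result in Lemma~\ref{lem:uniqKey} to show that the interval $\mathcal{C}$ collapses to a single point. From Lemmas~\ref{lem:B-connected} and~\ref{lem:A-connected}, together with the partition $\mathcal{A} \cup \mathcal{B} \cup \mathcal{C} = (0,\infty)$ with $\mathcal{A}=(0,a)$ and $\mathcal{B}=(A,\infty)$ for constants $0<a\le A$, we automatically have $\mathcal{C}=[a,A]$. Uniqueness is equivalent to the identity $a=A$.

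I would argue by contradiction. Suppose $a<A$ and pick any $\alpha_1,\alpha_2\in[a,A]$ with $\alpha_1<\alpha_2$. For every $\alpha\in\mathcal{C}$ the associated $v(\alpha,\cdot)$ is a solution of \eqref{eq:ode2-app}--\eqref{eq:boundary_condition-ode2-app} on $(0,\infty)$, so by the properties already established in Theorem~\ref{thm:v} we have $0<v(\alpha,r)<1$ and $v_r(\alpha,r)>0$ for every $r>0$ and $v(\alpha,r)\to 1$ as $r\to\infty$. These hypotheses allow us to apply Lemma~\ref{lem:uniqKey} with $R_1=\infty$ for every $\alpha\in[\alpha_1,\alpha_2]$, giving
\[
\frac{\partial v}{\partial \alpha}(\alpha,r) \;>\; \frac{v(\alpha,r)}{\alpha}, \qquad \forall\, r>0,\; \alpha\in[\alpha_1,\alpha_2].
\]
Equivalently,
\[
\frac{\partial}{\partial \alpha}\!\left(\frac{v(\alpha,r)}{\alpha}\right) \;=\; \frac{v^\alpha(\alpha,r)}{\alpha}-\frac{v(\alpha,r)}{\alpha^2} \;>\; 0 \qquad \forall\, r>0.
\]

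The decisive step is then to integrate this strict inequality from $\alpha_1$ to $\alpha_2$ for every fixed $r>0$, which yields
\[
\frac{v(\alpha_2,r)}{\alpha_2} \;>\; \frac{v(\alpha_1,r)}{\alpha_1}, \qquad \forall\, r>0.
\]
Passing to the limit $r\to\infty$ and using $v(\alpha_j,r)\to 1$ for $j=1,2$ produces $1/\alpha_2\ge 1/\alpha_1$, i.e.\ $\alpha_2\le \alpha_1$, which contradicts $\alpha_1<\alpha_2$. Therefore $a=A$, $\mathcal{C}$ consists of a single point, and the solution is unique.

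The only place where one has to be careful is the verification that Lemma~\ref{lem:uniqKey} is genuinely applicable on the entire half-line $(0,\infty)$ simultaneously for all $\alpha\in[\alpha_1,\alpha_2]$ — but this is automatic here because every $\alpha\in\mathcal{C}$ produces a solution that is globally defined, strictly positive, and strictly increasing, so the hypotheses on $v$ and $v_r$ in Lemma~\ref{lem:uniqKey} hold with $R_1=R_\alpha=\infty$. No further obstacles arise, and the argument terminates with the desired contradiction.
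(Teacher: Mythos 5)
Your proof is correct and follows essentially the same route as the paper: both rest on the key inequality $v^\alpha>\tfrac{v}{\alpha}$ from Lemma~\ref{lem:uniqKey}, integrated in $\alpha$ over the interval $\mathcal{C}=[a,A]$ and combined with the limit $v(\alpha,r)\to 1$ as $r\to\infty$ to force $a=A$. Your reformulation via the monotonicity of $\alpha\mapsto v(\alpha,r)/\alpha$ is just an equivalent packaging of the paper's integral comparison $1\ge 1+\ln(A/a)$.
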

\begin{proof}
 In view of \eqref{eq:ABC} and Lemma \ref{lem:B-connected}, \ref{lem:A-connected}, we have $\mathcal{C}=[a,A].$ Thus, it suffices to prove that $a = A$. Note that for any $r>0,$ we have 
 \begin{align*}
     v(A,r)=v(a,r)+ \int_a^A  v^{\alpha}(\alpha,r) d\alpha
 \end{align*}
 By definition of the set $\mathcal{C}$ and Lemma \ref{lem:uniqKey}, we have 
  \begin{align*}
     v(A,r)=v(a,r)+ \int_a^A  \frac{1}{\alpha} v(\alpha,r) d\alpha
 \end{align*}
Since for every $\alpha \in \mathcal{C},$ $v(\alpha,r) \to 1 ,$ as $r \to \infty,$ taking the limit $r \to \infty,$ yields $1 \geq 1 + \ln(\frac{a}{A}),$ which implies $a=A.$ This completes the proof of the theorem.   
\end{proof}

\section{Green's kernel for $i\calL_0-z$ } \label{sec:resol-iL0}
In this section we present the proof of Lemma \ref{lem::kernel-resol-L0}. 
We seek to compute the resolvent kernel of $(i\calL_0 -z)^{-1},$ for $\pm \im(z)>0.$ Therefore, we define the resolvent kernel as 
\begin{align}
    \mathcal{R}_0^{\pm}(r,s,z):=\Psi_{(0)}^{\pm}(r,z) S(s,z)\mathbb{1}_{0\leq s \leq r}+ \PPsi_{(0)}^{\pm}(r,z) T(s,z) \mathbb{1}_{r \leq s \leq \infty}. 
\end{align}

Let $\mathfrak{D}^{\pm}(z)=W(\Psi_{(0)}^{\pm}(\cdot,z),\PPsi_{(0)}^{\pm}(\cdot,z) ).$
Using similar argument as in previous sections in particular \S \ref{subsec:GreenKernel} and \ref{subsec:GreenKernel-resonance}, we obtain 
\begin{align*}
   \mathcal{R}_0^{\pm}(r,s,z)&:= i  \Psi_{(0)}^{\pm}(r,z) (\mathfrak{D}^{\pm}(z))^{-t}  \PPsi_{(0)}^{\pm}(s,z)^{t} \sigma_1 \mathbb{1}_{0\leq s \leq r} \\
 &  + i \PPsi_{(0)}^{\pm}(r,z) (\mathfrak{D}^{\pm}(z))^{-1}  \Psi_{(0)}^{\pm}(s,z)^{t} \sigma_1 \mathbb{1}_{r \leq s \leq \infty}.
\end{align*}
Denote by $  \mu_{ij}^{0,\pm}(z)=W(\Psi_{(0,i)}^{\pm}(\cdot,z),\Psi_{(0,j)}^{\pm}(\cdot,z)).$ Then we get 
\begin{align*}
  \mathfrak{D}^{\pm}(z)=\begin{pmatrix}
      \mu_{13}^{0,\pm}(z) &   \mu_{14}^{0,\pm}(z) \\
        \mu_{23}^{0,\pm}(z) &  \mu_{24}^{0,\pm}(z)
  \end{pmatrix}  , \quad \mathfrak{d}^{\pm}(z):=\det(  \mathfrak{D}^{\pm}(z))
\end{align*}
Note that by definition of $k_j,$ we have $1-c_1(z)c_2(z)=0,$ see Section \ref{sec:nearinfty-small-xi}. Then we have $ \mu_{14}^{0,\pm}(z)=\mu_{23}^{0,\pm}(z) =0.$ Therefore, $\mathfrak{d}^{\pm}(z)= \mu_{13}^{0,\pm}(z)  \mu_{24}^{0,\pm}(z).$ Denote by 
\begin{align*}
     \mathfrak{R}_{ij}^{\pm}(r,s,z):=i\bigg( \Psi_{(0,i)}^{\pm}(r,z) \Psi_{(0,j)}^{\pm }(s,z)^t \mathbb{1}_{\{ 0\leq s \leq r\} } + \Psi_{(0,j)}^{\pm}(r,z) \Psi_{(0,i)}^{\pm  }(s,z)^t \mathbb{1}_{\{ r\leq s \leq \infty \} } \bigg) \sigma_1.
\end{align*}
Hence, by direct computation, we obtain 
\begin{align}
    \mathcal{R}_0^{\pm}(r,s,z) &:=\frac{1}{\mathfrak{d}^{\pm}} \left(  \mu_{24}^{0,\pm}(z) \mathfrak{R}_{13}^{\pm}(r,s,z) + \mu_{13}^{0,\pm}(z)   \mathfrak{R}_{24}^{\pm}(r,s,z)\right) \\
    &=\frac{1}{\mu_{13}^{0,\pm}(z)}  \mathfrak{R}_{13}^{\pm}(r,s,z) + \frac{1}{\mu_{24}^{0,\pm}(z)}  \mathfrak{R}_{24}^{\pm}(r,s,z).
\end{align}
Finally, we estimate $\mu_{13}^{0,\pm}(z)$ and $\mu_{24}^{0,\pm}(z):$  
\begin{align*}
    \mu_{13}^{0,\pm}(z):=-(1-c_1^2(z)) W( \psi_0(r,k_1(z)), \phi_0(r,k_1(z))) \\
     \mu_{24}^{0,\pm}(z):=-(1-c_2^2(z)) W( \psi_0(r,k_2(z)), \phi_0(r,k_2(z))) 
\end{align*}
Using the asymptotics  near $0$ for $\phi_0(r)= c r^{\frac{3}{2}}+ O(r^\frac{5}{2})$ and $\psi_0(r)= \tc_1 r^{-\frac{1}{2}} + \tc_2 r^{\frac{3}{2}}+O(1),$ we have
$W( \psi_0(r,k_i(z)), \phi_0(r,k_j(z)))=2 c k_i^{-\frac{1}{2}} k_j^{\frac{3}{2}},$ for $i,j=1,2.$ Recall that for large $|z|,$  we have $|k_1(z)|\simeq\sqrt{|z|},$ $|k_2(z)|\simeq\sqrt{|z|},$  $c_1(z)=-i +O(|z|^{-1})  $ and $c_2(z)=i+O(|z|^{-1}),$ (see Remark \ref{K_j-C_j-D_j-behavior-large-xi}). Thus, we have 
\begin{align*}
      |\mu_{13}^{0,\pm}(z)|\simeq|\mu_{24}^{0,\pm}(z)|\simeq \sqrt{|z|}
\end{align*}
This concludes the proof of Lemma \ref{lem::kernel-resol-L0}.

\bibliographystyle{acm}
\bibliography{references.bib}
\end{document}